\documentclass{article}
\usepackage{geometry}

\geometry{verbose,lmargin=2.8cm,rmargin=2.8cm, bmargin=2cm, tmargin=2.5cm, marginparwidth=2cm}

\usepackage[latin9]{inputenc}
\usepackage{color}
\usepackage{mathtools}
\usepackage{multibib}
\newcites{sup}{References}
\usepackage[authoryear]{natbib}
\usepackage{todonotes}
\usepackage{amsmath}
\usepackage{mathrsfs}
\usepackage{accents}
\usepackage{float}
\usepackage{graphicx}
\usepackage{caption}
\usepackage{subcaption}
\usepackage{enumitem}  
\usepackage[unicode=true,pdfusetitle, bookmarks=true,bookmarksnumbered=false,bookmarksopen=false, breaklinks=false,pdfborder={0 0 1},backref=false,colorlinks=true]{hyperref}
\usepackage{amsthm}
\usepackage{bbm}
\usepackage{amssymb}
\usepackage{chngcntr}
\hypersetup{linkcolor=blue,citecolor=blue}

\setlength{\bibsep}{1.94pt} 

\makeatletter

\renewcommand\paragraph{\@startsection{paragraph}{4}{\z@}%
            {-2.5ex\@plus -1ex \@minus -.25ex}%
            {1.25ex \@plus .25ex}%
            {\normalfont\normalsize\bfseries}}

\theoremstyle{plain}

\theoremstyle{plain}

\floatstyle{ruled}
\newfloat{algorithm}{tbp}{loa}
\providecommand{\algorithmname}{Algorithm}
\floatname{algorithm}{\protect\algorithmname}

\usepackage{algorithm}
\usepackage{dsfont}
\usepackage{algpseudocode}
\usepackage{mathtools}





\DeclareMathOperator*{\argmax}{argmax}

\newcounter{daggerfootnote}
\newcommand*{\daggerfootnote}[1]{%
    \setcounter{daggerfootnote}{\value{footnote}}%
    \renewcommand*{\thefootnote}{\fnsymbol{footnote}}%
    \footnote[2]{#1}%
    \setcounter{footnote}{\value{daggerfootnote}}%
    \renewcommand*{\thefootnote}{\arabic{footnote}}%
    }


\newcommand{\setX}{\mathsf{X}}
\newcommand{\setY}{\mathsf{Y}}

\newcommand{\setZ}{\mathsf{Z}}
\newcommand{\setR}{\mathsf{R}}

\newcommand{\F}{\mathcal{F}}
\newcommand{\dd}{\mathrm{d}}
\newcommand{\bigO}{\mathcal{O}}
\newcommand{\smallo}{{\scriptscriptstyle\mathcal{O}}} 
\newcommand{\R}{\mathbb{R}}
\renewcommand{\P}{\mathbb{P}}
\newcommand{\E}{\mathbb{E}}
\newcommand\iid{\stackrel{\mathclap{\normalfont\mbox{\tiny{iid}}}}{\sim}}
\newcommand{\ind}{\mathbbm{1}}
\newcommand\dist{\stackrel{\mathclap{\normalfont\mbox{\tiny{dist}}}}{=}}

\newcommand\PP{\stackrel{\mathclap{\normalfont\mbox{\tiny{$\P$}}}}{\longrightarrow}}

\newcommand{\cess}{c_{\text{\tiny{\(\mathrm{ESS}\)}}}}

\newtheorem{remark}{Remark}
\newtheorem{theorem}{Theorem}
\newtheorem{lemma}{Lemma}
\newtheorem{proposition}{Proposition}
\newtheorem{definition}{Definition}
\newtheorem{corollary}{Corollary}
\newtheorem{condition}{}

\newtheorem{conditionB}{}

\newtheorem{assumption}{}

\newtheorem{assumptionSSM}{}

\newtheorem{assumptionSSMB}{}

\newtheorem{assumptionMLEB}{}

\makeatother

\setcounter{secnumdepth}{4} 
\setcounter{tocdepth}{4}

\title{Self-Organizing State-Space Models with Artificial Dynamics}
\date{ }

\author{}

\begin{document}
\maketitle

\begin{center}
\vspace{-1cm}
\large{Yuan Chen$^{(1)}$, Mathieu Gerber$^{(1)}$\daggerfootnote{Corresponding author. Address: School of Mathematics, University of  Bristol, Queens Road, Bristol, BS8 1QU, UK. Email: mathieu.gerber@bristol.ac.uk}, Christophe Andrieu$^{(1)}$, Randal Douc$^{(2)}$}\\
\vspace{0.4cm}

\small{(1) School of Mathematics, University of Bristol, UK }\\
\vspace{0.1cm}

\small{(2) SAMOVAR, Telecom SudParis, Institut Polytechnique de Paris, France}
\vspace{0.2cm}

\end{center}

\begin{abstract}
We consider the problem of performing parameter and state inference 
in a state-space model (SSM) parametrized by a static parameter $\theta$. A popular idea to address this problem consists of incorporating $\theta$ in the state of the system and allowing its time evolution, modelled as a Markov chain $(\theta_t)_{t\geq 1}$. This proxy model defines a so-called self-organizing SSM (SO-SSM) to which one may apply standard particle filters. However, the practical implementation of this idea in a theoretically justified manner has remained an open problem until now. In this paper we fill this gap and in particular show that theoretically consistent SO-SSMs can be defined such that $\|\mathrm{Var}(\theta_{t+1}|\theta_{t})\|\rightarrow 0$ slowly as $t\rightarrow\infty$. This, in turn, leads to  particle filter algorithms for online inference in SSMs which we find to be robust in simulation. 
We also develop constructions of $(\theta_t)_{t\geq 1}$ and associated theoretical guarantees tailored to the application of SO-SSMs to  maximum likelihood estimation in SSMs, leading to novel iterated filtering algorithms. The algorithms developed in this work have the advantage of being simple to implement and  
to require minimal tuning to perform well.
 
\textit{Keywords:} state-space models, online inference, maximum likelihood estimation, iterated filtering, particle filtering. 
\end{abstract}
 
\addtocontents{toc}{\protect\setcounter{tocdepth}{-1}}

\section{Introduction\label{sec:intro}}

\subsection{State-space models}

In this work we consider a  times series $(Y_t)_{t\geq 1}$ taking its values in some  space $\setY$   and whose distribution is modelled  using a parametric  state-space model (SSM). Denoting by $\setX$  the state-space of the model and by   $\Theta\subseteq\R^d$ the   parameter space,   we let $\{\chi_\theta(\dd x_1),\,\theta\in\Theta\}$ be a collection of probability distributions on $\setX$ and, for all $t\geq 1$ and $(\theta,x)\in\Theta\times \setX$, we let $ M_{t+1,\theta}(x,\dd x_{t+1})$ be a   probability distribution  on $\setX$ and $f_{t,\theta}(\cdot|x)$  be a  probability density functions on $\setY$  w.r.t.~some reference measure $\dd y$ (so that $f_{t,\theta}(y|x)\dd y$ is a probability distribution on $\setY$). Then,   throughout this paper we suppose that, for some $\theta\in\Theta$, the data generative process of  $(Y_t)_{t\geq 1}$ is  as follows:  
\begin{align}\label{eq:SSM}
X_1\sim \chi_\theta(\dd x_1),\quad Y_t|X_t \sim f_{t,\theta}(y|X_t)\dd y,\quad   X_{t+1}|X_t\sim M_{t+1,\theta}(X_{t},\dd x_{t+1}),\quad\forall t\geq 1.
\end{align}
 We stress that    we do not assume that the SSM is well-specified, that is that there exists a $\theta\in\Theta$ such that    \eqref{eq:SSM} is indeed the data generative process of $(Y_t)_{t\geq 1}$. To simplify notation,  for any $t\geq 1$ we use the shorthand $Y_{1:t}$ for the random variables $(Y_1,\dots,Y_t)$ while $y_{1:t}\in\setY^t$ denotes  an arbitrary realization of $Y_{1:t}$. In addition, for all   $t\geq 1$ we let $\bar{p}_{t,\theta}(\dd x_t|y_{1:t})$ be  the filtering distribution of $X_t$, that is the conditional distribution of $X_t$ given $Y_{1:t}=y_{1:t}$ under model \eqref{eq:SSM}.
 
In many applications, before using SSM \eqref{eq:SSM} to infer the latent states $(X_t)_{t\geq 1}$ or predict future observations, the model parameter $\theta$ must be learnt from the data, typically through either a Bayesian or a frequentist approach.  We refer the reader to  \citet{kantas2015particle} for a comprehensive overview of off-line and online parameter estimation methods, and to \citet{crisan2018nested} for an online algorithm that jointly performs Bayesian inference on both parameters and states in SSMs. Importantly, to address the intractability of both the likelihood function and its gradient in general SSMs, existing parameter inference approaches typically rely on  sophisticated  Monte Carlo algorithms which may be difficult to implement and use. The focus in this manuscript is on simple to implement, robust and easy to use methodology which can be justified theoretically.

\subsection{Self-organizing state-space models}

A  simple approach to bypass the need to estimate the  static parameter in \eqref{eq:SSM},  dating back at least to \citet{cox1964estimation}, involves incorporating it in the state and modelling it as an unobserved Markov chain $(\theta_t)_{t\geq 1}$  such that $\theta_1\sim \pi_1(\dd \theta_1)$ for some probability distribution $\pi_1(\dd \theta_1)$ on $\Theta$. In its most basic form one may assume such that $\P(\theta_t=\theta_1)=1$ for all $t\geq 2$. The resulting parameter-free SSM, later referred to by   \citet{kitagawa1998self} as a self-organizing SSM (SO-SSM), therefore assumes the following generative model for the observation process $(Y_t)_{t\geq 1}$:  
\begin{align}\label{eq:SO-SSM}
Y_t|(\theta_t,X_t)\sim f_{t,\theta_t}(y|X_t)\dd y,\quad\theta_{t+1}=\theta_t,\quad
    X_{t+1}|(\theta_{t+1},X_t)\sim M_{t+1,\theta_{t+1}}(X_{t},\dd x_{t+1})
\end{align}
for all $t\geq 1$, while $\theta_1\sim\pi_1(\dd\theta_1)$ and $X_1\sim \chi_{\theta_1}(\dd x_1)$.
The rationale behind this idea is simple: for any $t\geq 1$,  the filtering distribution of $\theta_t$ under model  \eqref{eq:SO-SSM}  coincides with the Bayesian posterior of $\theta$ under model \eqref{eq:SSM}, associated with the observations $Y_{1:t}$ and the prior distribution $\pi_1(\dd\theta_1)$; hence, deploying a filtering algorithm on SO-SSM  \eqref{eq:SO-SSM} jointly learns both  parameter $\theta$ and the hidden Markov process $(X_t)_{t\geq 1}$.

Although appealing in theory, the key difficulty of this approach is to compute or approximate numerically the filtering distributions of the  SO-SSM. In particular,  even when \eqref{eq:SSM} is linear Gaussian, the corresponding SO-SSM defined by \eqref{eq:SO-SSM} is usually non-linear, implying that the Kalman filter cannot be deployed on this model and more sophisticated numerical methods must be used.   Following the introduction  of particle filter (PF) algorithms for inference in nonlinear and non-Gaussian SSMs by \citet{gordon1993novel}, \citet{kitagawa1998self} proposed their use on SO-SSM \eqref{eq:SO-SSM} for joint parameter and state inference.   However, since the Markov chain $(\theta_t)_{t\geq 1}$ has no forgetting property, the resulting   PF suffers from sample impoverishment, and its estimate of the filtering distribution degrades quickly as $t$ increases \citep[][Section 6.2.1]{kantas2015particle}.

\subsection{Self-organizing state-space models with artificial dynamics}

A popular approach to  facilitate the deployment of    PF algorithms  on   SO-SSMs   consists of introducing  an artificial dynamics on the static parameter. This strategy, proposed e.g.~by  \citet{kadirkamanathan2000particle} \citep[see also][]{liu2001combined, gustafsson2003particle} requires one to choose a sequence $(K_{t})_{t\geq 2}$ of Markov kernels acting on $\Theta$ (that is, $K_{t}(\theta,\dd\theta_{t})$ is a probability distribution on $\Theta$ for all $t\geq 2$ and all $\theta\in\Theta$)  and then define the following instrumental SO-SSM  as generative model for $(Y_t)_{t\geq 1}$: 
\begin{align}\label{eq:SO-SSM_new2}
\begin{cases}
Y_t|(\theta_t,X_t)\sim f_{t,\theta_t}(y|X_t)\dd y,\\
\theta_{t+1}\sim K_{t+1}(\theta_t,\dd\theta_{t+1}),\\
X_{t+1}|(\theta_{t+1},X_t)\sim M_{t+1,\theta_{t+1}}(X_{t},\dd x_{t+1})
\end{cases} \quad t\geq 1
\end{align}
with, as above,   $\theta_1\sim\pi_1(\dd\theta_1)$ and $X_1\sim \chi_{\theta_1}(\dd x_1)$. For instance,  when $\Theta=\R^d$, a common choice is to let $ K_{t}(\theta_{t-1},\dd\theta_t)=\mathcal{N}_d(\theta_{t-1},\sigma^2_{t} I_d)$ for all $t\geq 2$. Algorithm \ref{algo:Generic_SSM} describes a simple bootstrap PF  applied to SO-SSM  \eqref{eq:SO-SSM_new2}. For the generic choice of $(K_{t})_{t\geq 2}$ above, ensuring asymptotic statistical accuracy, i.e.~that inferences from using \eqref{eq:SO-SSM} or \eqref{eq:SO-SSM_new2} eventually agree, requires $\sigma_t^2$ to vanish as $t\rightarrow\infty$ but, as we shall see, reliable 
PF algorithms require $\sigma_t^2$ not to vanish too quickly. Theoretical support   for this approach  has been obtained by \citet{gerber2022global} for models involving i.i.d.~observations (i.e.~assuming that $(Y_t)_{t\geq 1}$ is a sequence of i.i.d.~observations and that $f_{t,\theta}(y|x)\dd y$ is independent of $x$ for all $t\geq 1$ and all $\theta\in\Theta$) but remains missing for more general SSMs.

\begin{algorithm}[t]
\begin{algorithmic}[1]
\Require constants $N\in\mathbb{N}$ and $\cess\in(0,1]$
\vspace{0.1cm}

\State\label{init1}let $\theta_1^n\sim \pi_1(\dd\theta_1)$, $X_1^n\sim\chi_{\theta_1^n}(\dd x_1)$, $w_1^n= f_{1,\theta_1^n}(Y_1| X_1^n)$  and $W_1^n=w_1^n/\sum_{m=1}^N w_1^m$
\vspace{0.1cm}

\State let $p_{1,\Theta}^N(\dd \theta_1|Y_{1})=\sum_{n=1}^N W_1^n \delta_{ \{\theta_1^n\}}(\dd\theta_1)$ and  $p_{1,\setX}^N(\dd x_1|Y_{1})=\sum_{n=1}^N W_1^n \delta_{\{ X_1^n\}}(\dd x_1)$
\vspace{0.1cm}

\For{$t\geq 2$}
\vspace{0.1cm}

\State let $\mathrm{ESS}_{t-1}= 1/\sum_{m=1}^N (W_{t-1}^m)^2$ and $a_{n}=n$
\vspace{0.1cm}

\If{$\mathrm{ESS}_{t-1}\leq  N\,\cess$}

\State \label{Res}let  $\{a_{1},\dots,a_{N}\}= \text{\texttt{Resampling}}(\{W_{t-1}^{m}\}_{m=1}^N)$  and $w_{t-1}^n=1$

\EndIf
\vspace{0.1cm}

\State\label{Mut1} let $\theta_t^n\sim K_{t}(\theta_{t-1}^{a_{n}},\dd\theta_t)$ and $X_t^n\sim M_{t,\theta_{t}^{n}}(X_{t-1}^{a_{n}},\dd x_t)$

\State \label{W}let $w_t^n= w_{t-1}^n f_{t,\theta_t^n}(Y_t|X_t^n)$ and $W_t^n=w_t^n/\sum_{m=1}^N w_t^m$
\vspace{0.1cm}

\State \label{dist}let $p_{t,\Theta}^N(\dd \theta_t|Y_{1:t})=\sum_{n=1}^N W_t^n \delta_{ \{\theta_t^n\}}(\dd\theta_t)$ and  $p_{t,\setX}^N(\dd x_t|Y_{1:t})=\sum_{n=1}^N W_t^n \delta_{\{ X_t^n\}}(\dd x_t)$
\EndFor
\end{algorithmic}
\caption{Bootstrap particle filter for   SO-SSM \eqref{eq:SO-SSM_new2}
\\   (\small{Operations with index $n$ must be performed for all $n\in \{1,\dots,N\}$)}\label{algo:Generic_SSM}}
\end{algorithm}

\subsection{Contributions of the paper\label{sub:contribution}}


For $\Theta$ a compact parameter space we provide general conditions on model \eqref{eq:SSM} and design practically usable Markov kernels $(K_{t})_{t\geq 2}$ for which SO-SSM  \eqref{eq:SO-SSM_new2} combined with a PF can be used  to perform   joint state and parameter inference in SSMs with theoretical guarantees. 
An advantage of our theory is that it imposes minimal conditions on $(K_{t})_{t\geq 2}$. In particular, for any $\alpha>0$ one can choose $(K_{t})_{t\geq 2}$ such that $\|\mathrm{Var}(\theta_{t+1}|\theta_{t})\|\rightarrow 0$ at speed $t^{-\alpha}$, therefore allowing the use of SO-SSMs with slowly vanishing dynamics on $\theta$. Our  numerical experiments show that this slow decay is beneficial, allowing us to employ simple and easy-to-implement algorithms for joint parameter and state inference in SSMs that perform  well with little tuning. This robustness, however, is expected to come at the cost of reduced statistical efficiency, since adding a dynamics to $\theta$ is likely to lead to a loss of information about the target parameter, all the more so that $\|\mathrm{Var}(\theta_{t+1}|\theta_{t})\|\rightarrow 0$ vanishes slowly.

To alleviate this problem we take advantage of the fact that our theory also covers the scenario where $(K_{t})_{t\geq 2}$ is a sequence of random Markov kernels.
More specifically, we design SO-SSMs and associated PF algorithms for joint state and parameter estimation in SSMs that allow $\theta$ to evolve only at specific time instants determined adaptively. The resulting algorithms are computationally cheaper than those introducing a dynamics on $\theta$ at every iteration, and are found to be more accurate due to the reduced perturbations.

Although online  joint state and parameter inference originally motivated   SO-SSMs, this class of models is also useful for estimating the MLE of a given SSM for a finite dataset. Indeed, this latter problem can be equivalently formulated as parameter inference   in  a specific instance of SSM \eqref{eq:SSM} in which the observation process $(Y_t)_{t\geq 1}$ is defined as the periodic repetition of the available data points  \citep{ionides2015inference}.  In this   setting, Algorithm \ref{algo:Generic_SSM} belongs to the class of  iterated filtering (IF) algorithms introduced by \citet{ionides2015inference} for computing the MLE in SSMs. Iterated filtering is  popular among practitioners due to its simplicity and has proved to be a useful tool for parameter inference in challenging SSMs \citep[see e.g.][]{stocks2020model}. However these algorithms currently lack convincing theoretical guarantees \citep[see the results and assumptions in][]{ionides2015inference}. Our theoretical analysis of SO-SSMs therefore also enables us to develop a rigorous theory for IF, and to  introduce efficient IF algorithms with adaptive dynamics on $\theta$.
 
The theory underpinning our main results is not limited to SSMs and applies to a broader class of models we name Feynman-Kac models in random environments. Other applications and extensions are discussed in Section \ref{sec:conclusion}.

\subsection{Set-up, notation and outline of the paper} 

We assume henceforth that $\Theta$ is a regular compact set (see Definition \ref{def:sharp} in Appendix \ref{sec:theory}), that is $\Theta$ is a compact set having corners which are   not too sharp (e.g.~$\Theta$ is an hypercube), and  that the initial distribution $\pi_1(\dd \theta_1)$ has a strictly  positive density on $\Theta$ with respect to the Lebesgue measure on $\R^d$ (e.g.~$\pi_1(\dd\theta_1)$ is the uniform distribution  on $\Theta$).  For technical reasons,  we assume below that all the random variables are defined on a complete   probability space $(\Omega,\F,\P)$,  that the state-space $\setX$ is a Polish space (e.g.~$\setX=\R^{d_x}$), that $\setY$ is a measurable space (e.g.~$\setY=\R^{d_y}$) and that $\setX$ and $\Theta$  are  equipped with their   Borel $\sigma$-algebra.

For all $t\geq 1$     we denote by $p_{t}(\dd (\theta_t,x_t)|y_{1:t})$    the conditional distribution of $(\theta_t,X_t)$  given $Y_{1:t}=y_{1:t}$   under    SO-SSM \eqref{eq:SO-SSM_new2} and we let
\begin{align*}
p_{t,\Theta}(\dd \theta_t|y_{1:t})=\int_{\setX}p_{t}(\dd (\theta_t,x_t)|y_{1:t}),\quad p_{t,\setX}(\dd x_t|y_{1:t})=\int_{\Theta}p_t(\dd (\theta_t,x_t)|y_{1:t}).
\end{align*}
The symbol $\PP$ is used to denote the convergence in probability while $\E[Z]$ denotes the expectation of a random variable $Z$.  We denote by $\delta_{\{z'\}}(\dd z)$ the Dirac mass centred on $z'$, so that for any   set $A$ we have $\delta_{\{z'\}}(A)=1$ if $z'\in A$ while  $\delta_{\{z'\}}(A)=0$ otherwise. To introduce some further notation  let $k\in\mathbb{N}$,   $\Sigma$ be a $k\times k$ symmetric and positive definite matrix, $m\in\R^k$, $\nu\in(0,\infty)$ and $A\subseteq\R^k$. Then, we denote by $\mathrm{TN}_A(m,\Sigma)$    the $k$-dimensional Gaussian distribution with mean $m$ and covariance matrix $\Sigma$, truncated on the set $A$, and we denote by $\mathrm{TS}_A(m,\Sigma,\nu)$ the $k$-dimensional Student-t distribution with location parameter $m$, scale matrix $\Sigma$ and $\nu$ degrees of freedom, truncated on $A$. Below we use the  convention that  $\mathrm{TS}_A(m,c\Sigma,\nu)=\mathrm{TN}_A(m,c\Sigma)=\delta_{\{m\}}(\dd z)$ if $c=0$.

The rest of the paper is organized as follows.   Appendix \ref{sec:theory} contains our general theory for self-organizing Feynman-Kac models  in  random environments. This theory is applied in Section \ref{sec:SSM} to define SO-SSMs  for online joint parameter and state inference in SSMs, and in Section \ref{sec:MLE}  to propose new IF algorithms for MLE estimation. The behaviour of the algorithms discussed in this work is illustrated in Section \ref{sec:num} through numerical experiments, and Section \ref{sec:conclusion} concludes. All the proofs are gathered in the   \hyperlink{SM}{Supplementary Material}  and the code used for the numerical experiments of Section \ref{sec:num} is available on GitHub\footnote{at \url{github.com/mathieugerber/paperSO-SSM}}.

\section{Online state and parameter inference  in SSMs\label{sec:SSM}}

\subsection{Learning task and  first assumptions  \label{sub:thetstar}}

To formally define the parameter value $\theta_\star$ we aim at estimating in this section,  for all  $t\geq 1$ we let $L_t(\cdot)$ denotes the  likelihood function of SSM \eqref{eq:SSM}, that is we let
\begin{align}\label{eq:log_lik}
L_t(\theta)=   \int_{\setX^{t+1}} \chi_\theta(\dd x_1)\prod_{s=1}^t f_{s,\theta}(Y_s|x_s)M_{s+1,\theta}(x_{s},\dd x_{s+1}),\quad\forall \theta\in\Theta.
\end{align}
Under the assumptions   imposed below  on the SSM  there exists a function $l:\Theta\rightarrow\R$ such that  $t^{-1}\log L_t(\theta)\PP l(\theta)$ for all $\theta\in\Theta$, and we let $\theta_\star=\argmax_{\theta\in\Theta}l(\theta)$. To simplify the discussion we suppose   that the function $l(\cdot)$ has a unique global maximum  but this condition is not needed for our theoretical results to apply (see Appendix \ref{sec:theory}). We remind the reader that  the MLE $\hat{\theta}_t\in\argmax_{\theta\in\Theta}L_t(\theta)$ converges to $\theta_\star$ as $t\rightarrow\infty$ \citep[see e.g.][]{Douc_MLE}  and that, in a Bayesian setting, the  posterior distribution   $\bar{\pi}_t(\dd\theta)\propto L_t(\theta)\pi_1(\dd \theta)$ concentrates on $\theta_\star$ as $t\rightarrow\infty$  \citep{douc2020posterior}.  

For all $t\geq 1$ and $\theta\in\Theta$      the filtering distribution $\bar{p}_{t,\theta}(\dd x_t|Y_{1:t})$ is formally defined by
\begin{align*}
\bar{p}_{t,\theta}(\dd x_t|Y_{1:t})=\frac{f_{t,\theta}(Y_t|x_t)\int_{\setX^{t}}  \chi_\theta(\dd x_1) \prod_{s=1}^{t-1}f_{s,\theta}(Y_s|x_s) M_{s+1,\theta}(x_{s}, \dd x_{s+1})}{L_t(\theta)}
\end{align*}
and, with the above definition of $\theta_\star$, the state estimation problem we consider in this section  is that of estimating the optimal filtering distribution  $\bar{p}_{t,\theta_\star}(\dd x_t|Y_{1:t})$ for $t\geq 1$.

In order for the log-likelihood function $t^{-1}\log L_t(\cdot)$ to converge to some function $l(\cdot)$  we need to impose some stationarity conditions on the observation process $(Y_t)_{t\geq 1}$ as well as some minimal regularity assumptions on the model. To this aim, we  let $\tau\in\mathbb{N}$ and throughout this section we    assume  that   $(Y_t)_{t\geq 1}$ is a $\tau$-stationary process and SSM \eqref{eq:SSM} a $\tau$-periodic SSM, in the sense of the following definition:
\begin{definition}\label{def:periodic}
We say that  $(Y_t)_{t\geq 1}$  is a $\tau$-stationary process if $(Z_t)_{t\geq 1}$ is a stationary and ergodic process, where $Z_t=(Y_{(t-1)\tau+1},\dots,Y_{t\tau})$ for all $t\geq 1$, and we say that  \eqref{eq:SSM} is $\tau$-periodic SSM if, for all $t\geq 1$ and all $i\in\{1,\dots,\tau\}$, we have $f_{t\tau+i,\theta}(y|x')=f_{i,\theta}(y|x')$ and $ M_{t\tau +i+1,\theta}(x',\dd x)=  M_{i+1,\theta}(x',\dd x)$ for all $ (\theta,x',y)\in\Theta\times\setX\times\setY$.
\end{definition}

When $\tau=1$  this assumption is the standard choice for studying the behaviour of the MLE and of the Bayesian posterior distribution in SSMs (see e.g.~the last two references). In this case, $(Y_t)_{t\geq 1}$ is assumed to be a stationary process, an assumption that may not hold for the original data but could be satisfied after an appropriate transformation \citep[see][and references therein]{salles2019nonstationary}. If $(Y_t)_{t\geq 1}$ is $\tau$-stationary process for some $\tau>1$, the  observation process is allowed to have a periodic (or seasonal) behaviour.

\subsection{Outline of the main results of this section\label{sub:res_sketch}}

In this section  we apply the theory presented in Appendix \ref{sec:theory} to obtain conditions on    SSM   \eqref{eq:SSM} and to define Markov kernels $(K_t)_{t\geq 2}$  for which  SO-SSM  \eqref{eq:SO-SSM_new2}    can be used  to consistently perform online joint state and parameter inference with the following theoretical guarantees:
\begin{align}\label{eq:goal_online}
\int_\Theta \theta_t\, p_{t,\Theta}(\dd \theta_t |Y_{1:t})\PP \theta_\star,\quad  \big\|p_{t,\setX}(\dd x_t|Y_{1:t})-\bar{p}_{t,\theta_\star}(\dd x_t|Y_{1:t})\big\|_{\mathrm{TV}}\PP 0\quad\text{ as $t\rightarrow\infty$}.
\end{align}

For all $t\geq 1$ let  $p_{t,\Theta}^N(\dd \theta_t|Y_{1:t})$ and  $p_{t,\setX}^N(\dd x_t| Y_{1:N})$ be the particle approximations of the two distributions   $p_{t,\Theta}(\dd \theta_t |Y_{1:t})$ and $p_{t,\setX}(\dd x_t |Y_{1:t})$ computed by Algorithm \ref{algo:Generic_SSM}. Then, under the technical conditions on \eqref{eq:SSM} imposed later in this section, and by following standard calculations used to derive error bounds for the mean squared error of PF algorithms \citep[see e.g.][Section 11.2.2]{chopin2020introduction},  we readily obtain that if \eqref{eq:goal_online} holds then, for some sequences $(C_t)_{t\geq 1}$ and $(\epsilon_t)_{t\geq 1}$  of $(0,\infty)$-valued random variables such that $ \epsilon_t\PP 0$,  we have $\P$-a.s.
\begin{align}\label{eq:cor_SSM2_1}
&\E\Big[ \big\|\int_\Theta\theta_t\,p_{t,\Theta}^N(\dd \theta_t|Y_{1:t})-\theta_\star\big\|\,\Big|Y_{1:t}\Big]\leq\frac{C_t}{\sqrt{N}}+\epsilon_t, \quad \forall (N,t)\in\mathbb{N}^2
\end{align}
and,  for any function $\varphi:\setX\rightarrow [-1,1]$,
\begin{align}\label{eq:cor_SSM2_2}
&\E\Big[\big|\int_\setX\varphi(x_t) \Big(p^N_{t,\setX}(\dd x_t|Y_{1:t})-\bar{p}_{t,\theta_\star}(\dd x_t|Y_{1:t})\Big)\big|\,\Big| Y_{1:t}\Big]\leq\frac{C_t}{\sqrt{N}}+\epsilon_t, \quad \forall (N,t)\in\mathbb{N}^2.
\end{align}

We stress on the fact that \eqref{eq:cor_SSM2_1}-\eqref{eq:cor_SSM2_2} are not informative about the performance of Algorithm \ref{algo:Generic_SSM}   since the behaviour of $C_t$ as $t$ increases is unknown. In particular, \eqref{eq:cor_SSM2_1}-\eqref{eq:cor_SSM2_2} also hold  for SO-SSM \eqref{eq:SO-SSM} without   dynamics on $\theta$ but the algorithm performs poorly. Indeed if in Algorithm \ref{algo:Generic_SSM}  we have  $K_t(\theta,\dd\theta_t)=\delta_{\{\theta\}}(\dd\theta_t)$ for all $t\geq 2$ and all $\theta \in\Theta$   then,  as $t$ increases,  the distribution $p_{t,\Theta}^N(\dd\theta_t| Y_{1:t})$  converges quickly to a Dirac mass at one of the $N$ particles $\{\theta_1^n\}_{n=1}^N$ sampled from $\pi_1(\dd\theta_1)$ on Line \ref{init1}. It can be shown that for this SO-SSM   \eqref{eq:cor_SSM2_1}-\eqref{eq:cor_SSM2_2} only hold  for a sequence of random variables $(C_t)_{t\geq 1}$ that, in some sense,  diverges to infinity quickly   with $t$ \citep[see][and references therein]{del2010forward}. By contrast, it is expected that  adding a  dynamics on $\theta$ improves the dependence in $t$ of the bounds provided in \eqref{eq:cor_SSM2_1}-\eqref{eq:cor_SSM2_2}. In particular, we conjecture that if the   dynamics on $\theta$ vanishes sufficiently slowly  then \eqref{eq:cor_SSM2_1}-\eqref{eq:cor_SSM2_2} hold for a sequence $(C_t)_{t\geq 1}$ bounded in probability.

\subsection{SO-SSM models with fast vanishing artificial dynamics\label{sub:ssm_cont_fast}}


Let $K_{t}(\theta,\dd\theta_t)=\mathrm{TN}_{\Theta}\big(\theta , h^2_t \Sigma\big)$ for all $ \theta \in\Theta$ and all $t\geq 2$,  with $\Sigma$  a $d\times d$ symmetric and positive definite matrix and $(h_t)_{t\geq 2}$   a sequence in $[0,\infty)$. Then, under suitable conditions on  SSM   \eqref{eq:SSM} (see Section \ref{sub:theorySSM}) and the choice $h_t=\smallo(t^{-1})$, the consistency results in \eqref{eq:goal_online}-\eqref{eq:cor_SSM2_2} hold. However, this choice implies that the Markov chain $(\theta_t)_{t\geq 1}$ is such that $\|\mathrm{Var}(\theta_t|\theta_{t-1})\|=\bigO(h^2_t)=\smallo(t^{-2})$, that is, the lack of flexibility in the choice of $(h_t)_{t\geq 2}$ leads to a rapidly vanishing dynamics on $\theta$, making  estimation of the SO-SSM by a PF algorithm challenging.
 
\subsection{SO-SSM models with slowly vanishing  artificial dynamics\label{sub:ssm_cont_slow}}

We now define Markov kernels $(K_t)_{t\geq 2}$ inducing a more persistent dynamics on $\theta$ while also ensuring that the consistency results in \eqref{eq:goal_online}-\eqref{eq:cor_SSM2_2} hold. Let $\Sigma$ be a $d\times d$ symmetric and positive definite matrix, let
$(\alpha,\nu)\in(0,\infty)^2$  and $(t_1,\Delta)\in\mathbb{N}^2$ be some constants,  let  $t_p=t_{p-1}+\Delta\lceil \log (t_{p-1})^{2}\rceil$ for all $p\geq 2$ and  let $(h_t)_{t\geq 2}$ be a sequence in $[0,\infty)$ such that $h_t=\bigO(t^{-\alpha})$ and such that $ \liminf_{p\rightarrow\infty} t_p^\alpha h_{t_p}>0$ (e.g.~$h_t=t^{-\alpha}$ for all $t\geq 2$). Then,  under suitable conditions on  SSM   \eqref{eq:SSM} (see Section \ref{sub:theorySSM}), the results in \eqref{eq:goal_online}-\eqref{eq:cor_SSM2_2} hold  when  
\begin{align}\label{eq:SSM2}
K_t(\theta,\dd\theta_t)=
\begin{cases}
 \mathrm{TS}_{\Theta}\big(\theta , h^2_t\Sigma, \nu\big), &t\in (t_p)_{p\geq 1}\\
  \mathrm{TN}_{\Theta}\big(\theta , h^2_t\Sigma\big), &t\not\in (t_p)_{p\geq 1}
 \end{cases},\quad  \theta \in\Theta,\quad t\geq 2.
\end{align}
Under this definition of $(K_t)_{t\geq 2}$  the  Markov chain $(\theta_t)_{t\geq 1}$ is   such that   $\|\mathrm{Var}(\theta_t|\theta_{t-1})\|=\bigO(t^{-2\alpha})$, and therefore \eqref{eq:SSM2} can be used to   define a  dynamics on $\theta$ that vanishes at any chosen polynomial rate. It is this flexibility that allows the design of robust PF approximations in the sequel.  The definition \eqref{eq:SSM2} of $(K_{t })_{t\geq 2}$ is inspired by that introduced in \citet{gerber2022global} for models with i.i.d.~observations, and we refer the reader to this reference for an explanation of its rationale.

\subsection{SO-SSMs with adaptive artificial dynamics\label{sub:ssm_adapt}}

As argued earlier, adding a non-degenerate dynamics on $\theta$ facilitates the deployment of PF algorithms on SO-SSMs. However, adding a dynamics on $\theta$ is undesirable from a statistical point  of view (as argued in  Section \ref{sub:contribution}) and makes the deployment of a PF on the resulting SO-SSM computationally more expensive.

It therefore seems sensible to introduce a persistent dynamics on $\theta$ only when needed,  in an adaptive manner. More specifically at iteration $t\geq 2$ of  Algorithm \ref{algo:Generic_SSM} it is sensible to impose a non-degenerate dynamics on $\theta$ using $K_{t}$ only if the support of  the distribution $p_{t-1,\Theta}^N(\dd\theta_{t-1}| Y_{1:(t-1)})$ is, in some sense, too small. Following this idea, Algorithm \ref{algo:online_2} presents a PF scheme  for joint parameter and state inference in SSMs where an adaptive strategy is used to determine at which iterations a dynamics on $\theta$ should be added. In particular, in Algorithm \ref{algo:online_2}, if we let $\alpha>1$, then at iteration $t\geq 2$  the Markov kernel $K_t$ defined in Section \ref{sub:ssm_cont_fast} is used to propagate $\theta$  only when a resampling step is performed. Conversely, when $\alpha\leq 1$, the kernel $K_t$ from  Section \ref{sub:ssm_cont_slow} is applied at iteration $t$ if a resampling step is triggered or if $t\in (t_p)_{p\geq 1}$.


We show in Theorem~\ref{thm:SSM2} that the results in \eqref{eq:cor_SSM2_1}-\eqref{eq:cor_SSM2_2} remain  valid for Algorithm \ref{algo:online_2}. Key to the analysis is to view Algorithm \ref{algo:online_2} as a PF applied to the SO-SSM \eqref{eq:SO-SSM_new2} for which the kernel sequence $(K_t)_{t\geq 2}=(K_t^N)_{t\geq 2}$ is selected at random, according to the resampling events at random times of the PF. Both properties of this SO-SSM and its PF implementation then follow from uniform properties of $(K_t^N)_{t\geq 2}$ in the PF realisation, and we will see that, in \eqref{eq:cor_SSM2_1}-\eqref{eq:cor_SSM2_2},  the sequences $(C_t)_{t\geq 1}$ and $(\epsilon_t)_{t\geq 1}$ are independent of the number of particles $N$ under our assumptions.

\begin{algorithm}[!t]

\begin{algorithmic}[1]
\Require constants $(N,t_1,\Delta)\in\mathbb{N}^3$,  $\cess\in(0,1]$, $\alpha\in(0,\infty)$ and $\nu\in (0,\infty)$, and a $d\times d$ symmetric

\hspace{-1.4cm}   and  positive definite  matrix $\Sigma$

\vspace{0,2cm}

\State let $p=1$, $\theta_1^n\sim \pi_1(\dd\theta_1)$, $X_1^n\sim\chi_{\theta_1^n}(\dd x_1)$,   $w_1^n= f_{1,\theta_1^n}(Y_1| X_1^n)$ and $W_1^n=w_1^n/\sum_{m=1}^N w_1^m$
\vspace{0.1cm}
 
\State   let $p_{1,\Theta}^N(\dd \theta_1|Y_{1})=\sum_{n=1}^N W_1^n \delta_{ \{\theta_1^n\}}(\dd\theta_1)$ and  $p_{1,\setX}^N(\dd x_t|Y_{1 })=\sum_{n=1}^N W_1^n \delta_{\{ X_1^n\}}(\dd x_1)$

\For{$t\geq 2$}
\vspace{0,1cm}

\State let $\mathrm{ESS}^N_{t-1}= 1/\sum_{m=1}^N (W_{t-1}^m)^2$, $a_{n}=n$ and $\theta_t^n=\theta_{t-1}^n$ 
\vspace{0,1cm}

\If{$\mathrm{ESS}^N_{t-1}\leq  N\,\cess$} 
\vspace{0,1cm}

\State   let  $\{a_{1},\dots,a_{N}\}=\text{\texttt{Resampling}}(\{W_{t-1}^{m}\}_{m=1}^N)$ and $w_{t-1}^n=1$
\If{$\alpha> 1$}
\State   let  $\theta_t^n\sim \mathrm{TN}_\Theta(\theta_{t-1}^{a_n},t^{-2\alpha}\Sigma)$
\EndIf
\EndIf
\If{$\alpha\leq 1$ and either $t=t_p$\text{ or }$\mathrm{ESS}^N_{t-1}\leq  N\,\cess$}
\State let  $\theta_t^n\sim \mathrm{TS}_\Theta(\theta_{t-1}^{a_n}, t^{-2\alpha}\Sigma,\nu)$,  $t_{p+1}=t_{p}+\Delta\lceil \log (t_{p})^{2}\rceil$ and $p\gets p+1$
\EndIf
\vspace{0,1cm}

\State let $X_t^n\sim M_{t,\theta_{t}^{n}}(X_{t-1}^{a_{n}},\dd x_t)$, $w_t^n= w_{t-1}^n f_{t,\theta_t^n}(Y_t|X_t^n)$ and $W_t^n=w_t^n/\sum_{m=1}^N w_t^m$

\State   let $p_{t,\Theta}^N(\dd \theta_t|Y_{1:t})=\sum_{n=1}^N W_t^n \delta_{ \{\theta_t^n\}}(\dd\theta_t)$ and  $p_{t,\setX}^N(\dd x_t|Y_{1:t})=\sum_{n=1}^N W_t^n \delta_{\{ X_t^n\}}(\dd x_t)$
\EndFor
\vspace{0,2cm}
\end{algorithmic}
\caption{Online inference in SSMs  with  adaptive artificial dynamics on $\theta$\\
(\small{Operations with index $n$ must be performed for all $n\in \{1,\dots,N\}$)}\label{algo:online_2}}
\end{algorithm}

\subsection{Practical recommendations\label{sub:pratical}}

As a default approach for online joint state and parameter inference in SSMs we recommend to take for $\pi_1(\dd\theta_1)$ the uniform distribution on $\Theta$ (recall that $\Theta$ is assumed to be a compact set), and to use Algorithm \ref{algo:online_2} with  $\alpha=0.5$, $\Delta=1$, $\nu=t_1=100$ and with $\Sigma$ the identity matrix, assuming that the model is parametrized in such a way that all the components of $\theta_\star$ are roughly on a similar scale. This default approach is shown to perform well in Section \ref{sec:num}, even on challenging   SSMs.  We however stress that, by    constraining  the values the model parameters can take, the choice of $\Theta$  may play a  key role in the performance of Algorithm \ref{algo:online_2}. 
It is also worth mentioning that, when estimating the filtering distribution of a SO-SSM, more efficient alternatives to the bootstrap PF are  available, including the guided particle filter in general and, when \eqref{eq:SSM} is a linear Gaussian SSM, the marginalized (or Rao-Blackwellised) PF   \citep[see][Chapter 10, for a discussion of these alternative PF algorithms]{chopin2020introduction}.

To motivate the specific choice  $\alpha=0.5$, assume that $\cess=1$ in Algorithm \ref{algo:online_2}, so that $\|\mathrm{Var}(\theta_{t}|\theta_{t-1})\|\approx t^{-2\alpha}$.  In this case, if $\alpha$ is large, then as $t\rightarrow\infty$,  the resulting distribution $p_{t,\Theta}^N(\dd \theta_t|Y_{1:t})$ will typically  concentrate quickly on some   $\theta'\in\Theta$  such that $\theta'\neq \theta_\star$. From an optimization perspective, premature concentration is likely to occur because, for large values of  $\alpha$,  Algorithm \ref{algo:online_2} does not explore  the parameter space $\Theta$ well. On the other hand, if $\alpha$ is too small, Algorithm \ref{algo:online_2} explores $\Theta$ better, but the distribution $p_{t,\Theta}^N(\dd \theta_t|Y_{1:t})$ may concentrate on $\theta_\star$ too slowly. With this trade-off in mind, we recommend setting $\alpha=0.5$ as the largest exponent value for which $\sum_{t\geq 2} \|\mathrm{Var}(\theta_{t}|\theta_{t-1})\|\approx \sum_{t\geq 2} t^{-2\alpha }=\infty$. Choosing $\alpha=0.5$ therefore maximizes the speed at which $p_{t,\Theta}^N(\dd \theta_t|Y_{1:t})$ can concentrate on $\theta_\star$ while still preserving the ability of Algorithm \ref{algo:online_2} to reach any part of $\Theta$ in finite time.

Although our theoretical results require choosing $\nu<\infty$ when $\alpha=0.5$, in several (unreported) numerical experiments we have observed that the choice of $\nu$, and thus of $\Delta$ and $t_1$,  has  no noticeable impact on the behaviour of  Algorithm \ref{algo:online_2}. The choice $\nu=100$ is  motivated by the fact that, in this case, $\mathrm{TS}_{\Theta}\big(\theta , t^{-2\alpha}\Sigma, \nu\big)\approx \mathrm{TN}_{\Theta}\big(\theta, t^{-2\alpha}\Sigma \big)$ for all $t\geq 2$ and all $\theta\in\Theta$, making the values of the two parameters $\Delta$ and $t_1$ even less relevant to the behaviour of  Algorithm \ref{algo:online_2}; as a result the proposed values $\Delta=1$ and $t_1=100$ are somewhat arbitrary. Finally, choosing the uniform distribution on $\Theta$ for $\pi_1(\dd\theta_1)$ and the identity matrix for $\Sigma$ is natural when no prior information about the profile of the likelihood function or the value of  $\theta_\star$ is   available.

\subsection{Theory \label{sub:theorySSM}}

\subsubsection{Technical assumptions on the state-space model\label{sub:assumeSSM}}

In addition to the assumptions already   stated,    we   assume  that for all $(t,\theta,x)\in\mathbb{N}\times\Theta\times\setX$, the probability measure $M_{t+1,\theta}(x,\dd x_{t+1})$ is absolutely continuous w.r.t.~some $\sigma$-finite measure $\lambda(\dd x)$ on $\setX$, and thus for all $t\geq 1$ and $(\theta,x)\in\Theta\times \setX$, we have $M_{t+1,\theta}(x,\dd x_{t+1})=m_{t+1,\theta}(x_{t+1}|x)\lambda(\dd x_{t+1})$ for some (measurable) function $m_{t+1,\theta}(\cdot|x)$. We also consider a set of  four technical assumptions, referred to as Assumptions  \ref{assumeSSM:K_set}-\ref{assumeSSM:smooth}. These assumptions are stated in Appendix \ref{app:assume_online} and here we only briefly describe their role in our analysis. Assumptions \ref{assumeSSM:K_set}-\ref{assumeSSM:G} are borrowed from \citet{Douc_MLE}, whose results allow  us to   show that the log-likelihood function  of SSM \eqref{eq:SSM} converges to some function $l(\cdot)$ (Proposition \ref{prop:conv_SSM_0} below).  
The additional  condition   \ref{assumeSSM:smooth}
notably requires that,  for all $t\geq 1$ and $(y,x,x')\in\setY\times\setX^2$, the two mappings $\theta\mapsto \log f_{t,\theta}(y|x)$  and $\theta\mapsto \log m_{t+1,\theta}(x|x')$ are smooth. Assumption  \ref{assumeSSM:smooth} is required to control the impact that the dynamics on $\theta$ has on the estimation process.  In Section \ref{sup:examples} of the \hyperlink{SM}{Supplementary Material} we prove that, under weak conditions on $(Y_t)_{t\geq 1}$,
Assumptions \ref{assumeSSM:K_set}-\ref{assumeSSM:smooth} are for instance  satisfied for  a large class of multivariate linear Gaussian SSMs (including the SSM used for the numerical experiments in Section \ref{sub:Gaussian}) and, with a simple stochastic volatility model,  we show that the validity of this assumption is no limited to  linear  Gaussian SSMs.

 \subsubsection{Main results\label{sub:assume_informal}}
 
The next proposition shows that under the   assumptions we consider,  the log-likelihood function of SSM \eqref{eq:SSM} converges to some function $l(\cdot)$ as $t\rightarrow\infty$.
 
\begin{proposition}\label{prop:conv_SSM_0}
Assume that Assumptions \ref{assumeSSM:K_set}-\ref{assumeSSM:G} hold and   that there exists a compact set $E\subset\setX$ such that $\lambda(E)>0$ and such that $\chi_\theta(E)>0$ for all $\theta\in\Theta$. Then, there exists a function $l:\Theta\rightarrow\R$ such that $\P\big(\lim_{t\rightarrow\infty} t^{-1}\log L_{t}(\theta)=l(\theta)\big)=1$ for  all $\theta\in\Theta$, where $L_t(\cdot)$ is defined in \eqref{eq:log_lik}.
\end{proposition}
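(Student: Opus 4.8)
The plan is to reduce the $\tau$-periodic model to a time-homogeneous one by grouping the observations into blocks of length $\tau$, and then to invoke the convergence result for the normalised log-likelihood of a \emph{stationary, time-homogeneous} SSM established in \citet{Douc_MLE}, from which Assumptions \ref{assumeSSM:K_set}--\ref{assumeSSM:G} are borrowed precisely for this purpose. Since the claim is a pointwise (in $\theta$) almost-sure statement, I would fix $\theta\in\Theta$ throughout and work with a scalar convergence result; no uniformity over $\theta$ is needed here.

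First I would introduce the blocked observation process $Z_t=(Y_{(t-1)\tau+1},\dots,Y_{t\tau})$, which by Definition \ref{def:periodic} is stationary and ergodic, together with the super-state $\tilde X_t=X_{(t-1)\tau+1}$ recording the hidden state at each block boundary. Using the periodicity relations $f_{t\tau+i,\theta}=f_{i,\theta}$ and $M_{t\tau+i+1,\theta}=M_{i+1,\theta}$, the pair $\big((\tilde X_t),(Z_t)\big)$ is a time-homogeneous SSM: its transition kernel $\tilde M_\theta(\tilde x,\mathrm{d}\tilde x')$ is the $\tau$-fold composition of $M_{2,\theta},\dots,M_{\tau+1,\theta}$, while its emission density $\tilde g_\theta(z\mid\tilde x)$ is obtained by integrating the intermediate states out of $\prod_{i=1}^{\tau} f_{i,\theta}(y_i\mid x_i)$ along one period. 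A direct reorganisation of the integral in \eqref{eq:log_lik} shows that $L_{n\tau}(\theta)=\tilde L_n(\theta)$, where $\tilde L_n(\theta)$ is the likelihood of the first $n$ blocks under this blocked model initialised at $\chi_\theta$. The positivity requirement $\lambda(E)>0$ and $\chi_\theta(E)>0$ on a compact $E$ then guarantees $\tilde L_n(\theta)>0$ almost surely, so that the logarithm is well defined and the initial distribution contributes negligibly. Granting that the blocked model satisfies the hypotheses of the relevant theorem of \citet{Douc_MLE}, the cited result yields a deterministic $\tilde l(\theta)$ with $\P\big(n^{-1}\log\tilde L_n(\theta)\to\tilde l(\theta)\big)=1$, whence $(n\tau)^{-1}\log L_{n\tau}(\theta)\to\tilde l(\theta)/\tau=:l(\theta)$ along the subsequence $t=n\tau$.

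It then remains to upgrade convergence along $t=n\tau$ to convergence along all $t$. Writing $t=n\tau+r$ with $0\le r<\tau$, the difference $\log L_t(\theta)-\log L_{n\tau}(\theta)$ is a sum of at most $\tau$ one-step predictive log-densities; dividing by $t$ and using the integrability of these contributions together with the fact that $n^{-1}\max_{k\le n}\lvert\,\cdot\,\rvert\to 0$ almost surely for a stationary integrable sequence, this difference is $o(1)$ almost surely. Combining with the previous step gives $\P\big(\lim_{t\to\infty} t^{-1}\log L_t(\theta)=l(\theta)\big)=1$.

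I expect the main obstacle to be the verification that the blocked kernel $\tilde M_\theta$ and emission $\tilde g_\theta$ inherit the precise minorisation/forgetting and moment conditions required by \citet{Douc_MLE}. The composite emission $\tilde g_\theta(z\mid\tilde x)$ is itself an integral over a full period, so obtaining the two-sided bounds needed both for geometric forgetting of the blocked filter (which is what makes the conditional log-likelihood increments asymptotically stationary and ergodic) and for the integrability of $\log\tilde g_\theta$ requires care, even though each individual factor $f_{i,\theta}$ and $M_{i+1,\theta}$ is well behaved. Checking that these composite bounds follow from Assumptions \ref{assumeSSM:K_set}--\ref{assumeSSM:G} stated at the one-step level is the delicate, model-specific part of the argument.
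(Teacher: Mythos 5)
Your high-level route --- group the data into $\tau$-blocks, apply the convergence result of \citet{Douc_MLE} along the block times $t=n\tau$, then interpolate to arbitrary $t$ --- is the same as the paper's: the statement is deduced from Proposition \ref{prop:conv_SSM}, whose proof invokes Proposition 10(iii) of that reference along $(t\tau)_{t\geq 1}$ and then interpolates. Your execution, however, has two genuine gaps.

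First, your blocked model is not the right object, and the identity $L_{n\tau}(\theta)=\tilde L_n(\theta)$ you claim for it is false. With $\tilde X_t=X_{(t-1)\tau+1}$, the block of observations $Z_t$ and the next boundary state $\tilde X_{t+1}$ are \emph{not} conditionally independent given $\tilde X_t$: the intermediate states that generate $Z_t$ are the very states that propagate to $\tilde X_{t+1}$. Your factorization into a composed transition $\tilde M_\theta$ and a marginalized emission $\tilde g_\theta(z\mid\tilde x)$ integrates those intermediate states out twice, independently. Already for $\tau=2$ and two blocks, the true likelihood contains $\int M_{2,\theta}(x_1,\dd x_2)\,f_{2,\theta}(y_2|x_2)\,M_{3,\theta}(x_2,\dd x_3)$, whereas your $\tilde L_2$ contains $\big(\int M_{2,\theta}(x_1,\dd x_2')f_{2,\theta}(y_2|x_2')\big)\big(\int M_{2,\theta}(x_1,\dd x_2)M_{3,\theta}(x_2,\dd x_3)\big)$; these differ in general. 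The correct block object is the unnormalized kernel $L_\theta[z](x,\dd x')$ of Definition \ref{def:LD_sets}, in which emission and transition stay coupled (equivalently, take the whole vector $(X_{(t-1)\tau+1},\dots,X_{t\tau})$ as super-state). Recognizing this also dissolves what you flag as the ``main obstacle'': Assumption \ref{assumeSSM:K_set} is \emph{not} a one-step condition that must be propagated through a block --- it is stated directly as a local-Doeblin and tail condition on $L_\theta[z]$ itself, i.e.\ exactly at the block level needed by \citet{Douc_MLE}, while Assumptions \ref{assumeSSM:D_set}--\ref{assumeSSM:G} combine over a block trivially. What the paper's proof actually defers to that reference is different: the assertion, justified by inspection of its proofs, that the argument there remains valid in the $\tau$-periodic setting and with random initial distributions (the latter is needed for Proposition \ref{prop:conv_SSM} in full generality, though not for the present statement, where the initial law is $\chi_\theta$).

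Second, your interpolation step does not go through as stated. Writing $\log L_t(\theta)-\log L_{n\tau}(\theta)$ as a sum of predictive log-densities and ``using the integrability of these contributions'' requires two-sided integrability, and the lower side, $\E\big[\log^-\big(L_s(\theta)/L_{s-1}(\theta)\big)\big]<\infty$, is not available from Assumptions \ref{assumeSSM:K_set}--\ref{assumeSSM:G} without substantial extra work: lower-bounding a predictive density is essentially a filter-stability statement, not a consequence of the stated moment conditions. The paper sidesteps this entirely. For $k\tau<s<(k+1)\tau$, Lemma \ref{lemma:tech_ssm} gives the sandwich $\log L_{(k+1)\tau}(\theta)-\log D_k\leq \log L_{s}(\theta)\leq \log L_{k\tau}(\theta)+\log D_k$, where $D_k=\prod_{s'=k\tau+1}^{(k+1)\tau}\big(1\vee \sup_{(\theta,x)\in\Theta\times\setX}f_{s',\theta}(Y_{s'}|x)\big)$: the intermediate time is compared \emph{upward} to the previous block time and \emph{downward} from the next one, so only $\log^+$ of sup-norms enters. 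These are integrable by Assumption \ref{assumeSSM:G} and form a stationary sequence by $\tau$-periodicity of $(Y_t)_{t\geq 1}$, whence $s^{-1}\log D_k\to 0$ almost surely and both sides of the sandwich converge to the same limit. Your interpolation should be repaired along these lines rather than through predictive densities.
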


The following two theorems are the main results of this section:

\begin{theorem}\label{thm:SSM}
 Assume that  Assumptions \ref{assumeSSM:K_set}-\ref{assumeSSM:smooth} hold, that
\begin{align}\label{eq:stability_SSM}
\text{$\exists$  compact sets  $E, E'\subset \setX$ s.t. } \log \bar{p}_{t,\theta_\star}(E|Y_{1:t})=\bigO_\P(1),\quad \log p_{t,\setX}(E'|Y_{1:t})=\bigO_\P(1)
\end{align}
and, with $l:\Theta\rightarrow\R$   as in  Proposition \ref{prop:conv_SSM_0}, assume that  $\{\theta_\star\}=\argmax_{\theta\in\Theta}l(\theta)$ for some  $\theta_\star\in \mathring{\Theta}$.  Finally, let $\chi$ be  a probability measure on $\setX$  such that $\chi(E'')>0$ for any compact set $E''\subset\setX$ for which $\lambda(E'')>0$, and assume that $\chi_\theta(\dd x)=\int_\setX M_{\tau+1,\theta}(x',\dd x)\chi(\dd x')$ for all $\theta\in\Theta$. Then, \eqref{eq:goal_online} holds for   $(K_t)_{t\geq 2}$ defined in Section  \ref{sub:ssm_cont_fast} and Section \ref{sub:ssm_cont_slow}. In addition, if there exists a compact set $E'''\subset\setX$ such that $\inf_{(\theta,x)\in\Theta\times\setX}M_{t+1,\theta}(x,E''')>0$ for all $t\in\{1,\dots,\tau\}$  then \eqref{eq:stability_SSM}   holds.
\end{theorem}

\begin{theorem}\label{thm:SSM2}
 Assume that  Assumptions \ref{assumeSSM:K_set}-\ref{assumeSSM:smooth} hold, that there exists  a compact set  $E\subset\setX$ such that $\inf_{(\theta,x)\in\Theta\times\setX}M_{t+1,\theta}(x,E)>0$ for all $t\in\{1,\dots,\tau\}$ and, with $l:\Theta\rightarrow\R$   as in  Proposition \ref{prop:conv_SSM_0},  that  $\{\theta_\star\}=\argmax_{\theta\in\Theta}l(\theta)$ for some  $\theta_\star\in \mathring{\Theta}$.  Finally,  assume that $\{\chi_\theta(\dd x_1),\,\theta\in\Theta\}$ is as in Theorem \ref{thm:SSM} and for   all $(t,N)\in\mathbb{N}^2$   let  $p^N_{t,\Theta}(\dd \theta_t|Y_{1:t})$  and   $p^N_{t,\setX}(\dd x_t|Y_{1:t})$ be as defined by Algorithm \ref{algo:online_2}.  Then,   there exist two sequences $(C_t)_{t\geq 1}$ and $(\epsilon_t)_{t\geq 1}$ of   $(0,\infty)$-valued random variables, such that $\epsilon_t\PP 0$ and such that \eqref{eq:cor_SSM2_1}-\eqref{eq:cor_SSM2_2} hold.
\end{theorem}

The assumption stated in   \eqref{eq:stability_SSM}   can be interpreted as a stability requirement for both the SSM and the SO-SSM. Although this condition appears mild, showing that it holds without additional strong assumptions   on the SSM   is not currently easy. Remark that \eqref{eq:stability_SSM}  holds when the state space $\setX$ is compact, while the last part of Theorem \ref{thm:SSM} provides a sufficient criterion when $\setX$ is not a compact set. 
The condition  on  $\{\chi_\theta(\dd x_1),\,\theta\in\Theta\}$ ensures that  SO-SSM \eqref{eq:SO-SSM_new2} fits in the general framework covered by our theory (see Appendix \ref{sec:theory}).  To prove Theorem~\ref{thm:SSM2} we need the second part of \eqref{eq:stability_SSM} to hold uniformly in the realisations of the PF and in the number of particles $N$, which is ensured by assuming $\inf_{(\theta,x)\in\Theta\times\setX}M_{t+1,\theta}(x,E)>0$ for all $t\in\{1,\dots,\tau\}$ and some compact set $E\subset\setX$. 
We expect one to be able to relax this condition in future.


\section{Maximum likelihood estimation in SSMs\label{sec:MLE}}

\subsection{Problem statement }

In this section, we let   $(\tilde{y}_1,\dots,\tilde{y}_T)$ with $T\geq 2$ denote elements of $\setY$,  assumed to be a realization of some  random vector  $(\tilde{Y}_1,\dots,\tilde{Y}_T)$  for which the following parametric SSM   is used as  generative model:
\begin{equation}\label{eq:SSM_MLE}
\begin{split}
&\tilde{Y}_1|\tilde{X}_1\sim \tilde{f}_{1,\theta}(y|\tilde{X}_1)\dd y,\quad \tilde{X}_1\sim\tilde{\chi}_\theta(\dd x_1)\\
&\tilde{Y}_s|\tilde{X}_s\sim \tilde{f}_{s,\theta}(y|\tilde{X}_s)\dd y,\quad  \tilde{X}_{s}|\tilde{X}_{s-1}\sim \tilde{M}_{s,\theta}(\tilde{X}_{s-1},\dd x_{s}),\quad s\in\{2,\dots,T\}
\end{split}
\end{equation}
where, as above, $\theta\in\Theta$ and $\setX$ is the state-space. Then,   we consider the problem of using an SO-SSM to compute the MLE  of $\theta$ in   model \eqref{eq:SSM_MLE}; that is, to compute the parameter value $\tilde{\theta}_T:=\argmax_{\theta\in\Theta} \tilde{L}_T(\theta)$ with the function  $\tilde{L}_T(\cdot)$ defined by
\begin{align}\label{eq:lik_IF}
\tilde{L}_T(\theta)=  \int_{\setX^{T+1}} \tilde{\chi}_\theta(\dd x_1)\tilde{f}_{1,\theta}(\tilde{y}_1|x_1)\prod_{s=2}^T \tilde{f}_{s,\theta}(\tilde{y}_s|x_s)\tilde{M}_{s,\theta}(x_{s-1},\dd x_{s}),\quad \theta\in\Theta.
\end{align}
To simplify,    we assume  that the  likelihood function  $\tilde{L}_T(\cdot)$ has a unique global maximizer but we stress that our theoretical results are not limited to this case.

\subsection{MLE estimation as an online parameter inference problem\label{sub:mle_ssm} }

In order  to compute $\tilde{\theta}_T$ with an SO-SSM,    we first define the observation process  $(Y_t)_{t\geq 1}$ and SSM \eqref{eq:SSM} in such a way that $\tilde{\theta}_T$ corresponds to the parameter value $\theta_\star$ specified in the previous section. With $L_t(\cdot)$ as defined in \eqref{eq:log_lik} for all $t\geq 1$, this means specifying  $(Y_t)_{t\geq 1}$ and SSM \eqref{eq:SSM} so that, as $t\rightarrow\infty$, the  function $t^{-1}\log L_t(\cdot)$   converges to some function $l(\cdot)$ for which we have $\tilde{\theta}_T=\argmax_{\theta\in\Theta}l(\theta)$.

To this aim, and following \citet{ionides2015inference},  we define the  observation process $(Y_t)_{t\geq 1}$ by infinitely cloning the $T$ successive data points $\{\tilde{y}_s\}_{s=1}^T$, specifically, we let $(Y_t)_{t\geq 1}$ be such that $Y_{(t-1)T+s}=\tilde{y}_{s}$ for all $t\geq 1$ and all  $s\in\{1,\dots,T\}$. Then, we  model the distribution of $(Y_t)_{t\geq 1}$  with  SSM \eqref{eq:SSM} where, for all $(\theta,x,y)\in\Theta\times\setX\times\setY$, all $s\in\{1,\dots,T\}$ and all $t\geq 1$,
\begin{align}\label{eq:SSM_t}
& f_{(t-1)T+s,\theta}(y|x)=\tilde{f}_{s,\theta}(y|x),\quad  M_{(t-1)T+s,\theta}(x,\dd x_t)=
\begin{cases}
\tilde{M}_{s,\theta}(x,\dd x_t), &s\neq 1 \\
\tilde{\chi}_\theta(\dd x_t), &s=1.
\end{cases}
\end{align}
With the above definitions of $(Y_t)_{t\geq 1}$ and SSM \eqref{eq:SSM},  we  have $t^{-1}\log L_{tT}(\theta)=\log \tilde{L}_T(\theta)$ for all $\theta\in\Theta$ and $t\geq 1$, from which it   trivially follows that  $(tT)^{-1}\log L_{tT}(\theta)$ converges to $T^{-1}\log\tilde{L}_T(\theta)$ for all $\theta\in\Theta$ as $t\rightarrow\infty$.  Under a mild additional assumption on \eqref{eq:SSM_MLE},  we can show the stronger result that for all $\theta\in\Theta$ we have $t^{-1}\log L_{t}(\theta)\rightarrow T^{-1}\log\tilde{L}_T(\theta)$     as $t\rightarrow\infty$,  and therefore, as required,  the  function $t^{-1}\log L_t(\cdot)$   converges to a function $l(\cdot)$  such that $\tilde{\theta}_T=\argmax_{\theta\in\Theta}l(\theta)$.

Since  the so-defined   observation process $(Y_t)_{t\geq 1}$ is $T$-stationary and the particular instance of SSM \eqref{eq:SSM}  defined above is $T$-periodic (see Definition \ref{def:periodic}  with $\tau=T$), we are precisely in the setting of Section \ref{sec:SSM}. The results of that section can therefore be applied to define (IF)  algorithms that can be used to compute an  estimate $\hat{\theta}_t^N$ of the MLE such that, for some (non-random) sequences $(C_t)_{t\geq 1}$ and $(\epsilon_t)_{t\geq 1}$ in $(0,\infty)$ with $\epsilon_t\rightarrow 0$, 
\begin{align}\label{eq:IF_SSM2_1}
&\E\big[  \|\hat{\theta}_t^N-\tilde{\theta}_T \| \big]\leq\frac{C_t}{\sqrt{N}}+\epsilon_t, \quad \forall (N,t)\in\mathbb{N}^2.
\end{align}
Remark that, since the sequence $(Y_t)_{t\geq 1}$ is deterministic in this section, the expectation in \eqref{eq:IF_SSM2_1} is taken with respect to the distribution of the random variables generated by the IF algorithms. We refer the reader to Section \ref{sub:res_sketch} for comments on this type of error bounds.

\subsection{New IF algorithms and practical recommendations \label{sub:IF}}
 
When $(Y_t)_{t\geq 1}$ and SSM \eqref{eq:SSM}  are as defined in Section \ref{sub:mle_ssm}, Algorithm \ref{algo:Generic_SSM} with $(K_{t})_{t\geq 2}$ as defined in either Section  \ref{sub:ssm_cont_fast} or Section \ref{sub:ssm_cont_slow}, as well as   Algorithm \ref{algo:online_2}, are all IF algorithms such that, under suitable conditions on SSM \eqref{eq:SSM_MLE} (see Section \ref{sub:IF_theory}),  the error bound \eqref{eq:IF_SSM2_1} holds  with $\hat{\theta}_t^N=\int_\Theta \theta_t p_{t,\Theta}^N(\dd\theta_t|Y_{1:t})$  (where  $p_{t,\Theta}^N(\dd\theta_t|Y_{1:t})$ is as defined in these algorithms).

A version of the latter IF algorithm is presented in Algorithm \ref{algo:IF_2},
where for all $s\in\{2,\dots,T\}$  a   dynamics is imposed on $\theta$ when processing observation $\tilde{y}_s$ only  if a resampling step is performed. 
Following the discussion in Section \ref{sub:pratical}, our default recommendation for performing maximum likelihood estimation  in SSMs using IF is to use Algorithm \ref{algo:IF_2} with $\pi_1(\dd\theta_1)$ the uniform distribution on $\Theta$, $\Sigma$ the identity matrix, and parameters $\alpha=0.5$, $\Delta=1$ and $\nu=t_1=100$. In the context of this section,  choosing   $\alpha=0.5$ in Algorithm \ref{algo:IF_2} is expected to reduce the risk that the particle system becomes trapped in a local mode of the likelihood function; however, this choice may also lead to slower convergence to the MLE (see Section \ref{sub:pratical}).

\begin{algorithm}[!t]
 
\begin{algorithmic}[1]
\Require constants $(N,t_1,\Delta)\in\mathbb{N}^3$,  $\cess\in(0,1]$, $\alpha\in(0,\infty)$ and $\nu\in (0,\infty)$, and a $d\times d$ symmetric 

\hspace{-1.4cm}  symmetric and  positive definite  matrix $\Sigma$

\vspace{0.1cm}

\State let $t_1\gets 1+T t_1$, $t=0$, $p=1$, $\theta_{0,T}^n\sim \pi_1(\dd\theta_1)$, $w_{0,T}=1$, $W_{0,T}^n=1/N$ and $\mathrm{ESS}^N_{0,T}=N$ 
\vspace{0.1cm}

\For{$k\geq 1$}
\vspace{0.1cm}

\State $t\gets t+1$
\vspace{0.1cm}

\State let  $\theta_{k,1}^n=\theta_{k-1,T}^n$ and $a_n=n$
\vspace{0.1cm}

\If{$\mathrm{ESS}^N_{k-1,T}\leq N c_{\mathrm{ESS}}$}

\State let $(a_1,\dots,a_N)=\text{\texttt{Resampling}}(\{ W_{k-1,T}^{m}\}_{m=1}^N)$ and  $w_{k-1,T}^n=1$

\If{  $\alpha>1$}
\State let $\theta_{k,1}^n\sim \mathrm{TN}_\Theta(\theta^{a_{n}}_{k-1,T}, t^{-2\alpha} \Sigma)$
\EndIf
\EndIf
\If{$\alpha\leq 1$\text{ and either }$t=t_p$ \text{ or }$\mathrm{ESS}^N_{k-1,T}\leq N c_{\mathrm{ESS}}$}
\State let $\theta_{k,1}^n\sim \mathrm{TS}_\Theta(\theta^{a_{n}}_{k-1,T}, t^{-2\alpha} \Sigma,\nu)$, $t_{p+1}= t_p+\Delta T \lceil \log (t_p)^2\rceil$ and $p\gets p+1$

\EndIf

\vspace{0.1cm}

\State  let $X_{k,1}^n\sim\tilde{\chi}_{\theta_{k,1}^n}(\dd x_1)$, $w_{k,1}^n= w_{k-1,T}^n\tilde{f}_{1,\theta_{k,1}^n}(\tilde{y}_1| X_{k,1}^n)$ and $W_{k,1}^n=w_{k,1}^n/\sum_{m=1}^N w_{k,1}^m$
\vspace{0.1cm}

\State\label{Line_MLE1} 
let $\hat{\theta}_t^N=\sum_{n=1}^N W_{k,1}^n \theta_{k,1}^n$

\For{$s\in\{2,\dots,T\}$}
\vspace{0.1cm}

\State $t\gets t+1$
\vspace{0.1cm}

\State let $\mathrm{ESS}^N_{k, s-1}= 1/\sum_{m=1}^N (W_{k,s-1}^m)^2$, $a_{n}=n$ and $\theta_{k,s}^n=\theta_{k,s-1}^n$
\vspace{0.1cm}

\If{$\mathrm{ESS}^N_{k,s-1}\leq  N\,\cess$}

\State  let  $(a_1,\dots,a_N)=\text{\texttt{Resampling}}(\{ W_{k,s-1}^{m}\}_{m=1}^N)$, $\theta_{k,s}^n\sim \mathrm{TN}_\Theta(\theta^{a_{n}}_{k,s-1}, t^{-2\alpha}\Sigma)$ \hspace*{1.75cm} and  $w_{k,s-1}^n=1$  
\EndIf
\vspace{0.1cm}

\State  let $X_{k,s}^n\sim \tilde{M}_{s,\theta_{k,s}^{n}}(X_{k,s-1}^{a_{n}},\dd x_s)$,   $w_{k,s}^n= w_{k,s-1}^n \tilde{f}_{s,\theta_{k,s}^n}(\tilde{y}_s|X_{k,s}^n)$ and 

\hspace{0.3cm} $W_{k,s}^n=w_{k,s}^n/\sum_{m=1}^N w_{k,s}^m$
\vspace{0.1cm}

\State\label{Line_MLE2}   
let $\hat{\theta}_t^N=\sum_{n=1}^N W_{k,s}^n \theta_{k,s}^n$
\EndFor

\EndFor
\end{algorithmic}
\caption{Iterated filtering   with  adaptive artificial dynamics\\(\small{Operations with index $n$ must be performed for all $n\in \{1,\dots,N\}$)}\label{algo:IF_2}}
\end{algorithm}

The extent of this issue depends on the specific problem. Informally, if the observations  $\{\tilde{y}_s\}_{s=1}^{T}$ are an approximate sample from a stationary process then, based on the results of Section \ref{sec:SSM}, we expect some online learning to occur within each pass through the data performed by Algorithm \ref{algo:IF_2}. As a result, the MLE estimate may improve not only from one full pass to the next,  but also progressively within a single pass. In contrast, in situations where  this online learning mechanism does not occur, more passes through the data will be needed for the particle system in Algorithm \ref{algo:IF_2} to concentrate around the MLE (see Section \ref{supp:dis} of the \hyperlink{SM}{Supplementary Material} for an illustration of this phenomenon). In addition, and still speaking informally, the successive passes through the  non-stationarity data $\{\tilde{y}_s\}_{s=1}^{T}$ may cause the MLE estimate $\hat{\theta}_t^N$ to evolve in a sawtooth pattern. To stabilize the estimation of the MLE, and motivated by a well-known averaging technique to accelerate convergence in stochastic gradient methods \citep{polyak1992acceleration},  we suggest  using the time-averaged estimator $\bar{\theta}^N_{b,t}:=\frac{1}{t-bT}\sum_{i=bT+1}^t \hat{\theta}_i^N$ for $t>bT$, where the integer $b\geq 0$ is a user-specified parameter that defines a burn-in period. The effectiveness of this averaging approach for estimating the MLE is illustrated in Section \ref{sub:covid}.

\subsection{Theory\label{sub:IF_theory}}

\subsubsection{Technical assumptions on the state-space model}

As mentioned above, Theorems \ref{thm:SSM}-\ref{thm:SSM2} can be used to derive conditions  on \eqref{eq:SSM_MLE} under which the IF algorithms introduced in Section \ref{sub:IF} are such that \eqref{eq:IF_SSM2_1} holds. However,  in the context of this section,   $(Y_t)_{t\geq 1}$ and   SSM    \eqref{eq:SSM} have  two key properties that
we can exploit when applying the theory developed in Appendix \ref{sec:theory}. As a result, we can provide theoretical justifications for the specific instances of SO-SSM  \eqref{eq:SO-SSM_new2} considered in this section under  weaker assumptions on  model \eqref{eq:SSM_MLE} than those required to apply Theorems \ref{thm:SSM}-\ref{thm:SSM2}.  The first property is that, as mentioned earlier, for the considered observation  process and SSM \eqref{eq:SSM}, establishing the convergence of the  log-likelihood function  $t^{-1}\log L_t(\cdot)$ to some limiting function $l(\cdot)$ requires only a mild assumption  on \eqref{eq:SSM_MLE}. Secondly, the SSM  defined by \eqref{eq:SSM} and \eqref{eq:SSM_t} is such that the   probability distribution $M_{sT+1,\theta}(x',\dd x)$ is independent of $x'$ for all $s\geq 1$ and all $\theta\in\Theta$, meaning that at   time $t=sT+1$, the model  forgets its past. By exploiting this property in the construction of the Markov kernels $(K_{t})_{t\geq 2}$,  we can further relax the assumptions on  model \eqref{eq:SSM_MLE} needed to apply the results of Appendix \ref{sec:theory}.

We assume henceforth that \eqref{eq:SSM_MLE} is such that, for all $s\in\{2,\dots,T\}$ and all $(\theta,\tilde{x})\in\Theta\times\setX$, the probability measure $\tilde{M}_{s,\theta}(\tilde{x},\dd x_s)$ is absolutely continuous w.r.t.~some $\sigma$-finite measure $\lambda(\dd x)$ on $\setX$, and thus  $\tilde{M}_{s,\theta}(\tilde{x},\dd x_s)=\tilde{m}_{s,\theta}(x_s|\tilde{x})\lambda(\dd x_s)$ for some measurable function $\tilde{m}_{s,\theta}(\cdot|\tilde{x})$. Similarly, we  assume  that for all $\theta\in\Theta$ the probability measure $\tilde{\chi}_\theta(\dd x_1)$ is absolutely continuous w.r.t.~$\lambda(\dd x)$, and thus $\tilde{\chi}_\theta(\dd x_1)=\tilde{p}_\theta(x_1)\lambda(\dd x_1)$ for some measurable function $\tilde{p}_\theta(\cdot)$. Then, to control the impact that the  dynamics on $\theta$ has on the estimation process, we consider  Assumption \ref{assumeSSMB:smooth_MLE} (stated in Appendix \ref{app:assume_mle})  which notably imposes that the mappings $\theta\mapsto \log \tilde{f}_{s,\theta}(y|x)$ and $\theta\mapsto \log \tilde{m}_{s,\theta}(x|\tilde{x})$   are smooth for all $(x,\tilde{x},y)\in\setX^2\times\setY$ and all $s\in\{1,\dots,T\}$, with the convention that  $\tilde{m}_{1,\theta}(x|\tilde{x})=\tilde{p}_\theta(x)$ for all $(x,x')\in\setX^2$ when $s=1$.  In Section \ref{supp_MLE} of the \hyperlink{SM}{Supplementary Material} we prove that Assumption \ref{assumeSSMB:smooth_MLE}  is for instance  satisfied for  a large class of multivariate linear Gaussian SSMs and, with a simple stochastic volatility model, we show that the validity of this assumption is no limited to  linear  Gaussian SSMs.

\subsubsection{Main results}

The following two theorems are the main results of this section:

\begin{theorem}\label{thm:MLE}
Let $(Y_t)_{t\geq 1}$ and SSM \eqref{eq:SSM} be  as defined in Section \ref{sub:mle_ssm},  and assume that $\tilde{\theta}_T\in\mathring{\Theta}$. Let  $(K_{t})_{t\geq 2} $ be as defined in either Section  \ref{sub:ssm_cont_fast} or Section \ref{sub:ssm_cont_slow},  with  $(t_p)_{p\geq 1}$  a sequence in $\{mT+1,\,m\in\mathbb{N}\}$. Then,  $\lim_{t\rightarrow\infty}\int_\Theta \theta_t\, p_{t,\Theta}(\dd \theta_t |Y_{1:t})= \tilde{\theta}_T$.
\end{theorem}

\begin{theorem}\label{thm:MLE2}
Let $(Y_t)_{t\geq 1}$ and SSM \eqref{eq:SSM} be  as defined in Section \ref{sub:mle_ssm}. In addition,  assume that $\tilde{\theta}_T\in\mathring{\Theta}$ and that there exists  a compact set  $E\subset\setX$ such that $\inf_{(\theta,x)\in\Theta\times\setX}\tilde{M}_{s,\theta}(x,E)>0$ for all $s\in\{2,\dots,T\}$, and for  all $(t,N)\in\mathbb{N}^2$  let  $\hat{\theta}_t^N$   be as defined by Algorithm \ref{algo:IF_2}. Then,   there exist two  (non-random) sequences $(C_t)_{t\geq 1}$ and $(\epsilon_t)_{t\geq 1}$ in $(0,\infty)$, such that $\lim_{t\rightarrow\infty}\epsilon_t=0$, for which we have $\E[\|\hat{\theta}_t^N- \tilde{\theta}_T\|]\leq C_t/\sqrt{N}+\epsilon_t$ for all $(N,t)\in\mathbb{N}^2$.
\end{theorem}

Remark that, unlike Theorem \ref{thm:SSM}  obtained for the online setting, Theorem \ref{thm:MLE} does not require a stability condition similar to \eqref{eq:stability_SSM}. We refer the reader to   Section \ref{sub:assume_informal} for comments on the  assumption, required by  Theorem \ref{thm:MLE2}, that  $\inf_{(\theta,x)\in\Theta\times\setX}\tilde{M}_{s,\theta}(x,E)>0$ for all $s\in\{2,\dots,T\}$ and some compact set $E\subset\setX$.

\section{Numerical experiments\label{sec:num}}

In the experiments presented below, Algorithms \ref{algo:Generic_SSM}-\ref{algo:IF_2}  are implemented with $\cess=0.7$ and use SSP resampling  \citep{gerber2019negative}. In addition, adhering to the recommendations outlined in Section \ref{sub:pratical} and Section \ref{sub:IF}, throughout this section $\pi_1(\dd\theta_1)$ is the uniform distribution on $\Theta$  and,   in Algorithms \ref{algo:online_2}-\ref{algo:IF_2}, we let  $(\Delta, t_1,\nu)=(1,100,100)$ and $\Sigma$ be the identity matrix.

\subsection{Online  inference in a periodic linear Gaussian SSM\label{sub:Gaussian}}

To introduce the  model discussed in this subsection, which is motivated  by the real data example of Section \ref{sub:LG_real}, we first need to define additional notation. For $q\in\{2,3,4\}$, we define $\{b_{j}\}_{j=1}^q$ as   basis functions for $\mathcal{S}_q(\{\xi_j\}_{j=1}^q)$, the space of natural cubic splines with knots $0= \xi_1<\dots,\xi_{q+1}=24$, excluding non-zero constant functions (and thus  $b_j(0)=0$ for all $j\in\{1,\dots,q\}$). Informally, the ability to approximate a twice continuously differentiable function by a function in $\mathcal{S}_q(\{\xi_j\}_{j=1}^q)$ improves as $q$ increases. We let $\{\xi_j\}_{j=1}^{q+1}=\{0,12,24\}$ when $q=2$,  $\{\xi_j\}_{j=1}^{q+1}=\{0,8,16,24\}$ when  $q=3$ and $\{\xi_j\}_{j=1}^{q+1}=\{0,6,12,18,24\}$ when $q=4$. In addition, for all $q\in\{2,3,4\}$ and $j\in\{1,\dots,q\}$, we let  $\tilde{b}_j(\cdot)$ be such that $\tilde{b}_j(u)=b_j\big(u-24 \lfloor(u-1)/24\rfloor\big)$ for all $u\geq 0$, so that $\tilde{b}_j(u)=\tilde{b}_j(u+24m)$ for all $u\geq 0$ and all integer $m\geq 0$. 

Let $q\in\{2,3,4\}$,  $\Theta\subset\R^{3q+1}$ and  write $\theta\in\Theta$ as $\theta=(\beta,\rho,\sigma)$ with $\beta\in\R^q$, $\rho\in\R^q$ and $\sigma\in\R^{q+1}$. Then, in this section we assume  that the observation process  $(Y_t)_{t\geq 1}$ is a 24-stationary process    and we consider the following 24-periodic SSM as generative model:
\begin{equation}\label{eq:SSM_LG}
X_{1}\sim\mathcal{N}_q(0,4 I_q),\,\,
\begin{cases}
Y_t|X_t\sim\mathcal{N}_1\Big(\sum_{j=1}^q (\beta_j+X_{t,j})\tilde{b}_j(t),\sigma^2_1\Big)\\
X_{t+1}|X_t\sim\mathcal{N}_q\Big(\mathrm{diag}(\rho_1,\dots,\rho_q) X_{t},\mathrm{diag}(\sigma^2_{2},\dots,\sigma_{q+1}^2)\Big)
\end{cases}  t\geq 1.
\end{equation}

The objective in what follows is to use the SO-SSMs introduced in Section \ref{sec:SSM} to perform  online parameter and state inference in  model \eqref{eq:SSM_LG}. Noting that \eqref{eq:SSM_LG} is a linear Gaussian SSM, and following the discussion  in Section  \ref{sub:pratical}, for this experiment we use    the Rao-Blackwellised version of Algorithms \ref{algo:Generic_SSM}-\ref{algo:online_2}, with $N=10^4$ particles. As above, for all $t\geq 1$ we   denote by $p_{t,\Theta}^N(\dd \theta_t|Y_{1:t})$ and $p_{t,\setX}^N(\dd x_t|Y_{1:t})$  the resulting PF estimates of $p_{t,\Theta}(\dd \theta_t|Y_{1:t})$ and $p_{t,\setX}(\dd x_t|Y_{1:t})$, respectively, and we let $\hat{\theta}_t^N=\int_\Theta\theta_t  p_{t,\Theta}^N(\dd \theta_t|Y_{1:t})$.

\subsubsection{Experiments with synthetic data\label{sub:Num_LG_sim}}

In the experiments presented below  $(Y_t)_{t\geq 1}$ follows the distribution specified by the generative model \eqref{eq:SSM_LG} when $\theta=\theta_{\star}:=(\beta_{\star},\rho_{\star},\sigma_{\star})$, with $\beta_{\star}$ a random draw from the $\mathcal{U}([-2,2]^q)$ distribution, $\rho_{\star}$    a random draw from the $\mathcal{U}([-1,1]^q)$ distribution and    $\sigma_{\star}=(0.5,1,\dots, 1)$. The parameter space is   $\Theta=[-10,10]^q\times[-1,1]^q\times[\epsilon,4]^{q+1}$ for some $\epsilon\in [0,4)$, and   Assumptions \ref{assumeSSM:K_set}-\ref{assumeSSM:smooth}  used in Section \ref{sub:theorySSM}  to prove the convergence results \eqref{eq:goal_online}-\eqref{eq:cor_SSM2_2}  are satisfied if $\epsilon>0$ (by Proposition \ref{prop:Assume_LG} in the \hyperlink{SM}{Supplementary Material}). However,  to avoid imposing a lower bound on the variance parameters of the model, we let $\epsilon=0$. In the following we refer to  Algorithm \ref{algo:Generic_SSM} with $(K_t)_{t\geq 2}$ as defined in Section \ref{sub:ssm_cont_fast} with $\Sigma$ the identity matrix and with $(h_t)_{t\geq 2}=(t^{-\alpha})_{t\geq 2}$ as Algorithm A1-$\alpha$, and  to   Algorithm \ref{algo:online_2}  as Algorithm A2-$\alpha$.  In Figure   \ref{fig:LG_sim} we compare, for $q=2$ and   $q=4$, the performance of Algorithms A1-0.5, A1-1.1, A2-0.4, A2-0.5, A2-0.6 and A2-1.1  for online joint state and parameter inference in model \eqref{eq:SSM_LG}. The results presented in this figure are obtained from  50 independent runs of these algorithms, where  in each   run an independent realization of $(Y_t)_{t\geq 1}$ is used.

The boxplots presented in Figure   \ref{fig:LG_sim} show, for $T=10^4$,  the estimation error $d^{-1}\|\hat{\theta}_T^N-\theta_\star\|$ obtained in each run of each algorithm    (with $\|\cdot\|$ denoting the Euclidean norm on $\R^d$). From these boxplots three key observations emerge. Firstly,  the estimation error  tends to be the largest for Algorithms A1-1.1 and A2-1.1, and the performance of the three algorithms with a fast vanishing dynamics on $\theta$, namely of  Algorithms A1-1.1, A2-1.1 and A2-0.6, deteriorates as the number of  parameters to estimate increases from $d=7$ (when $q=2$) to $d=13$ (when $q=4$),  while that of the other algorithms remains essentially identical. This  observation  is not surprising  in the light of the discussion in Section \ref{sub:pratical}, where we argued that  the ability of Algorithms \ref{algo:Generic_SSM}-\ref{algo:online_2} to explore $\Theta$ deteriorates as the rate of decay of the dynamics on $\theta$ increases. Secondly, and as expected from the discussion of Section \ref{sub:ssm_adapt},   Algorithm A2-$0.5$  tends  to outperform Algorithm A1-$0.5$, and has in addition  the advantage of being computationally cheaper (since it requires sampling new $\theta$ particles less often). For instance, for $q=4$ the running time of Algorithm A1-$0.5$ is about three times longer than that of Algorithm A2-$0.5$. Thirdly,  Algorithm A2-0.5, the default algorithm  recommended in  Section \ref{sub:pratical}, performs best for $q=4$ while for $q=2$ it is almost as good as the best performing algorithm, Algorithm A2-0.6.

In the remaining plots in Figure \ref{fig:LG_sim} we study, for each algorithm and as $t$ increases, the evolution of the average estimation error obtained when $\int_{\R^q} x_t \bar{p}_{t,\theta\star}(\dd x_t| Y_{1:t})$ is estimated by $\int_{\R^q} x_t p^N_{t,\setX}(\dd x_t| Y_{1:t})$. From these plots we observe that the ability of all six algorithms to estimate the filtering mean $\int_{\R^q} x_t \bar{p}_{t,\theta\star}(\dd x_t| Y_{1:t})$ improves over time,   and that all the conclusions about the ranking of the algorithms drawn above for parameter inference remain valid for state inference.    
 

\begin{figure}[!t]
\centering
\hspace*{0cm}\includegraphics[scale=0.44]{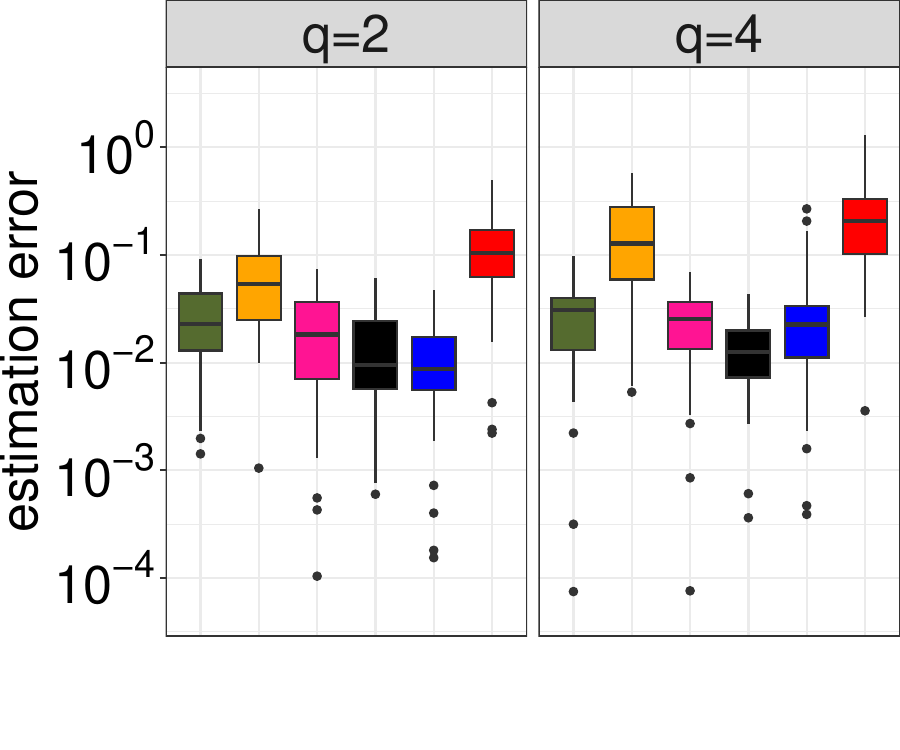}\hspace*{0.35cm}\includegraphics[scale=0.44]{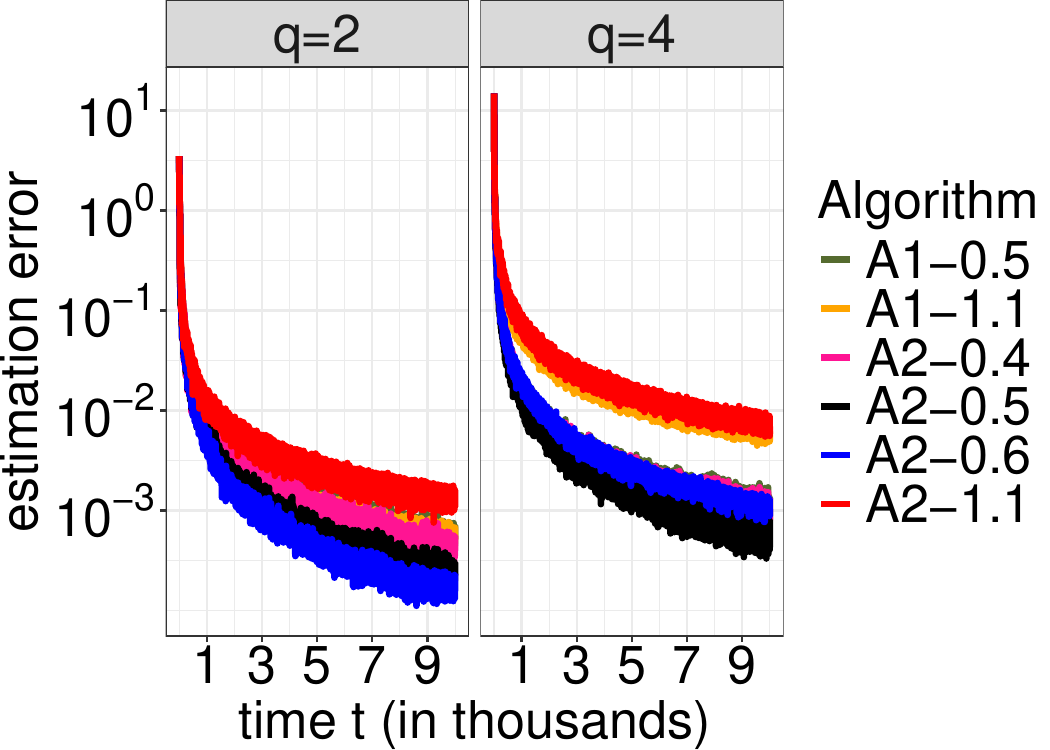}

\caption{Results for the experiments in Section \ref{sub:Num_LG_sim}. The boxplots show, for $T=10^4$, the value of $d^{-1}\|\hat{\theta}_T^N-\theta_\star\|$ obtained in 50 runs of the algorithms, and the other  plots show, as a function of $t$, the value of  $(qt)^{-1}\sum_{s=1}^t\|\int_{\R^q}x_s p^N_{s,\setX}(\dd x_s|y_{1:s})-\int_{\R^q}x_s \bar{p}_{s,\theta_\star}(\dd x_s|y_{1:s})\|$, averaged over the 50 runs of the algorithms.\label{fig:LG_sim}}
\end{figure}

\subsubsection{Application: online temperature forecasting\label{sub:LG_real}}

In this subsection, we consider the hourly temperature in the city of Bristol (UK),  $(W_t)_{t\geq 0}$ where $W_0$ is the temperature on the 1st of January  2020 at 00.00am. Our objective is to estimate future values of this process in an online fashion using SSM \eqref{eq:SSM_LG}, where online   state and parameter inference in this model is performed using the default approach proposed in Section \ref{sub:pratical}, that is using  Algorithm \ref{algo:online_2} with $\alpha=0.5$.

To account for the seasonality patterns inherent to this type of data (e.g.~the temperatures in Summers tend to be higher than in Winters) we define the observation process $(Y_t)_{t\geq 1}$
 by letting $Y_t=W_t-W_{24\lfloor(t-1)/24\rfloor}$ for all  $t\geq 1$. In words, $Y_t$ is the difference between the $t$-th hourly temperature and the temperature recorded on the same day at 00.00am, and we use SSM \eqref{eq:SSM_LG}  to model these intra-day temperature variations.  For all $t\geq 1$  we let $Z_t=(Y_{24(t-1)+1},\dots,Y_{24t})$ and note that model \eqref{eq:SSM_LG}  assumes that $(Z_t)_{t\geq 1}$ is a stationary process. 
 
This latter assumption, however, is not reasonable as intra-day temperature variations tend to be larger in Spring-Summer than in Autumn-Winter. To account for this non-stationarity,  we divide each year into two approximately six-month seasonal periods (namely, March-August and September-February) and assume $(Z_t)_{t\geq 1}$ is stationary within each period. Let $T_i$ denotes  the end of the $i$-th so-defined interval, with the convention that $T_0=0$. While the underlying model structure \eqref{eq:SSM_LG} remains consistent for these alternating seasonal intervals, its parameters are assumed different for every interval, allowing the model to capture seasonal and yearly variations. Our aim is to estimate these parameters for each interval, in an online fashion. 
It is reasonable to expect the distributions of adjacent intervals to share similarities, suggesting that rather than fully restarting   Algorithm \ref{algo:online_2} at the end of each six-month period, we can instead use the last estimate from the previous period as initialisation in the current period. Further, period-wise estimation also requires reinitialising  the sequence $(t^{-1}\Sigma)_{t\geq 2}$ of scale matrices to ensure continued adaptation. For the present  application  we implement Algorithm \ref{algo:online_2} with, for $i\geq 0$ and $t\in\{T_i+1,\dots,T_{i+1}\}$, the scale matrix $t^{-1}\Sigma$ replaced with $(t-T_i)^{-1}\Sigma$.

Let  $\bar{\E}_{t,\theta}[\cdot]$ denotes    expectations  under model \eqref{eq:SSM_LG} conditional upon  $Y_{1:t}$  and    let $\theta_\star=(\beta_\star,\rho_\star,\sigma_\star)$ be the target parameter value (assuming such a target parameter value exists). Then, for $k\in\mathbb{N}$,  we let $\bar{\E}_{t,\theta_\star}[W_{t+k}]$ be the    ideal model-based $k$ step-ahead prediction for $W_{t+k}$. To explain how Algorithm \ref{algo:online_2} is used to estimate this quantity, assume for simplicity that $W_{t+k}$ and $W_t$ are temperatures at two different hours on the same day.
Then, from the model,
\begin{align}\label{eq:model_pred}
\bar{\E}_{t,\theta_\star}\big[W_{t+k}\big]&=W_{24\lfloor\frac{t-1}{24}\rfloor }+\sum_{j=1}^q\big(\beta_{\star,j}+\rho_{\star,j}^k \bar\E_{t,\theta_\star}[X_{t,j}] \big)\tilde{b}_j(t+k).
\end{align}
We estimate $\bar{\E}_{t,\theta_\star}[W_{t+k}]$ using this expression  with the filtering expectation    $\bar{\E}_{t,\theta_\star}[X_{t}]$  estimated by $\int_{\R^q} x_{t} p^N_{t,\setX}(\dd x_t|Y_{1:t})$ and, writing $\hat{\theta}_t^N=(\hat{\beta}^N_t,\hat{\rho}_t^N,\hat{\sigma}_t^N)$, with the parameter $(\beta_\star,\rho_\star)$  estimated by $(\hat{\beta}_t^N,\hat{\rho}^N_t)$.

Starting from the 1st of January   2020 at 1.00am,  we use the above procedure to compute hourly predictions for the temperature in Bristol for $k\in\{1,2,\dots,8\}$ hours ahead. The final temperature prediction is for the 31st of December 2022 at 11pm, resulting  in a total of 26\,296 observations processed \footnote{Using observations bought from the \url{www.visualcrossing.com} website.}. For this experiment, model \eqref{eq:SSM_LG} is used with   $q=3$ and with $q=4$, and  we let $\Theta=[-10,10]^q\times[0,1]^q\times[0,4]^{q+1}$.  As a benchmark  we also consider two simple forecasting methods for predicting $W_{t+k}$, namely using the current temperature  $W_t$ as the prediction, and using the temperature from the same time the day before as the prediction, i.e.~letting $W_{t+k-24}$ be the predicted value of $W_{t+k}$.

 \begin{figure}[!t]
\centering
\hspace*{0.1cm}\includegraphics[scale=0.39]{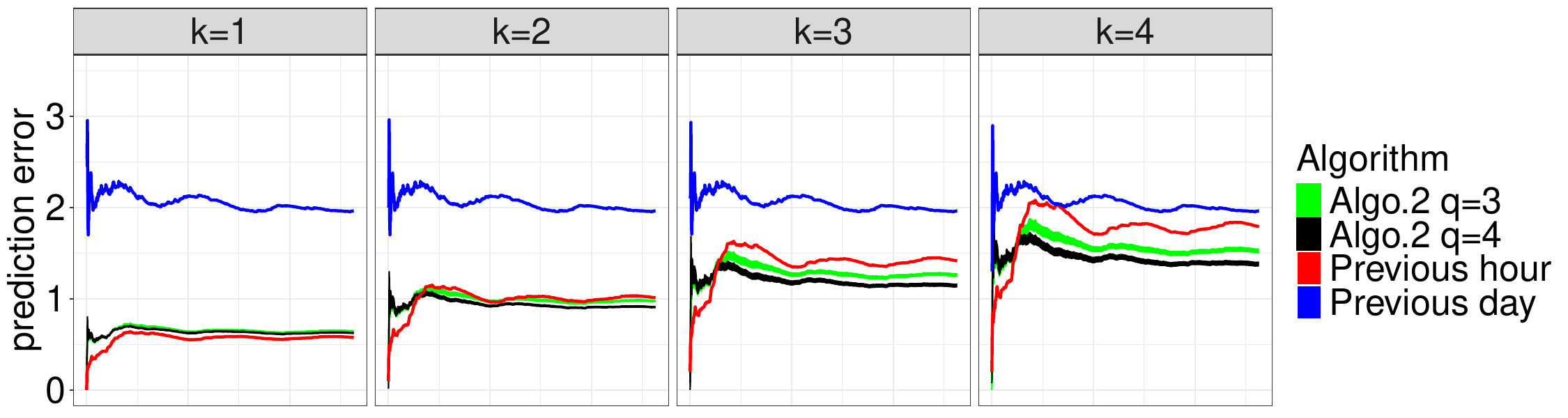}
\vspace{-0.1cm}

\hspace*{0cm}\includegraphics[scale=0.39]{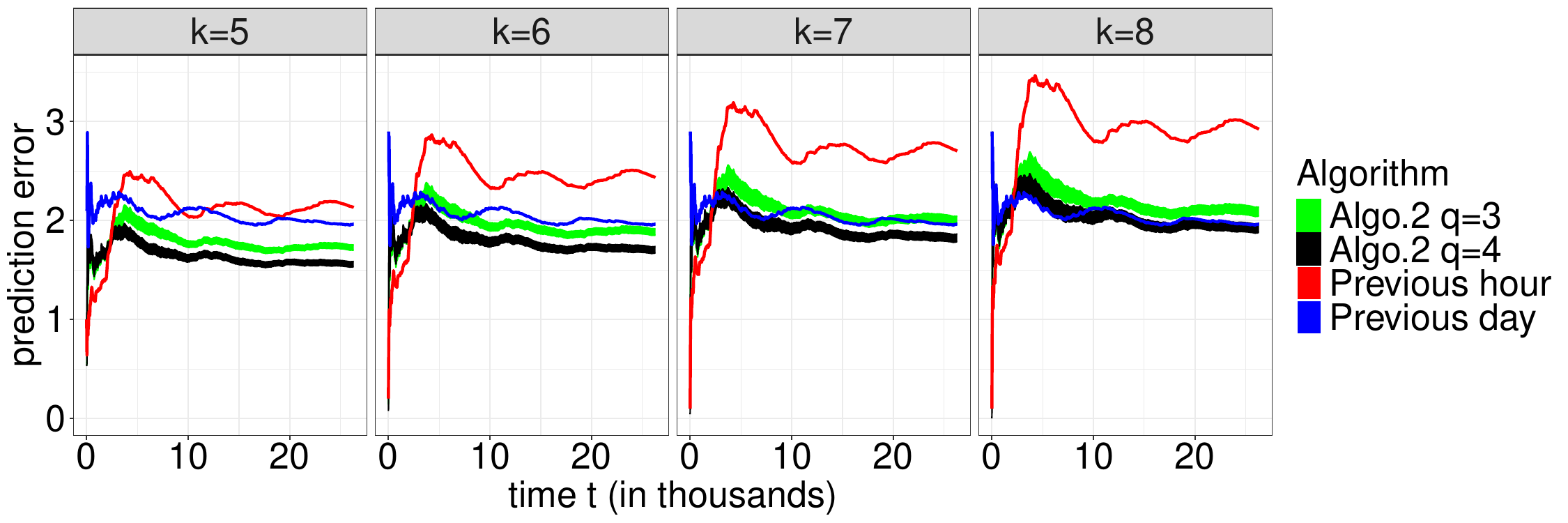}
\caption{Results for the experiments in Section \ref{sub:LG_real}. The plots show the evolution of the average prediction error $t^{-1}\sum_{s=1}^t|\widehat{W}_{s+k}-W_{s+k}|$ as a function of $t$, for all $k\in\{1,\dots,8\}$, and for each of the four forecasting algorithms considered. For Algorithm \ref{algo:online_2} (with $q=3$ and $q=4$),    the shaded areas represent the range of average prediction errors obtained from  50 runs of the algorithm.
\label{fig:LG_real}}

\end{figure}

Figure \ref{fig:LG_real}  shows, for all $k\in\{1,\dots,8\}$, the evolution  of the  average prediction error as $t$ increases, obtained with the four forecasting methods considered. Four comments are in order.
 Firstly,  we unsurprisingly observe  that for $k=1$ the current temperature $W_t$ is the best predictor of $W_{t+k}$, although  the predictive performance of the  model (for both $q=3$ and $q=4$) is  almost identical to that of this simple forecasting strategy.  Secondly, for both $q=3$ and $q=4$, the performance of the model-based predictions remains stable across the 50 runs of Algorithm \ref{algo:online_2}. Thirdly,   for any $k\in\{1,\dots,8\}$  the model-based predictions obtained with  $q=4$ outperform those computed with $q=3$, indicating that  Algorithm \ref{algo:online_2} successfully  exploits the greater flexibility of the model as  $q$ increases to improve predictions. In particular,  for all $k\in\{2,\dots,8\}$ the model-based predictions obtained with  $q=4$ tend to be the most accurate. Fourthly, as $k$ increases from two to eight and for $q=4$,  the model-based average prediction errors become more and more similar to those obtained when $W_{t+k}$ is predicted using $W_{t+k-24}$. This  suggests that model \eqref{eq:SSM_LG} with $q=4$ captures the data well.

To understand this latter assertion, let $\zeta_s=W_s-\E[W_s]$ for all $s\geq 1$ and assume that   model   \eqref{eq:SSM_LG} is well-specified, so that $\theta_\star$ in \eqref{eq:model_pred} is the true parameter value. Let $t\geq 1$ and $k\geq 1$ be such that $W_{t}$ and $W_{t+k}$ are temperatures within the same day. In this scenario, it can be shown from \eqref{eq:model_pred} that when $k$ is sufficiently large, so that $\rho_{\star,j}^k\approx 0$ for all $j\in\{1,\dots,q\}$, 
the model's prediction error is approximately $|\bar{\E}_{t,\theta_\star}[W_{t+k}]-W_{t+k}|\approx |\zeta_{t+k}-\zeta_{24\lfloor\frac{t-1}{24}\rfloor}|$. On the other hand, it is reasonable to expect that $\E[W_{t+k-24}]\approx \E[W_{t+k}]$, so the Previous-day benchmark's prediction error is $|W_{t+k}-W_{t+k-24}|\approx|\zeta_{t+k}-\zeta_{t+k-24}|$. Using these   approximations, and making the sensible assumption that the joint distribution of $(\zeta_{t+k},\zeta_{t+k-24})$ is similar to  that of $(\zeta_{t+k},\zeta_{24\lfloor\frac{t-1}{24}\rfloor})$,  for large values of $T$ we have
\begin{align*}
\frac{1}{T}\sum_{t=1}^T\big|W_{t+k}-\bar{\E}_{t,\theta_\star}[W_{t+k}]\big|&\approx \frac{1}{T}\sum_{t=1}^T \big|\zeta_{t+k}-\zeta_{24\lfloor\frac{t-1}{24}\rfloor}\big|
\approx \frac{1}{T}\sum_{t=1}^T  \big|W_{t+k}-W_{t+k-24}\big|. 
\end{align*}
To sum up, the observation that for $q=4$ and as $k$ increases, the model-based prediction error becomes similar to that of the Previous-day benchmark is consistent with model \eqref{eq:SSM_LG} being well-specified and Algorithm \ref{algo:online_2} learning the true parameter value. By contrast, when $k=8$  the model-based predictions with $q=3$ yield a larger average error than this Previous-day benchmark, indicating that this simpler $q=3$ model is insufficient to capture the underlying process well.

 \begin{figure}[!t]
\centering
\hspace*{-0.8cm}\includegraphics[scale=0.4]{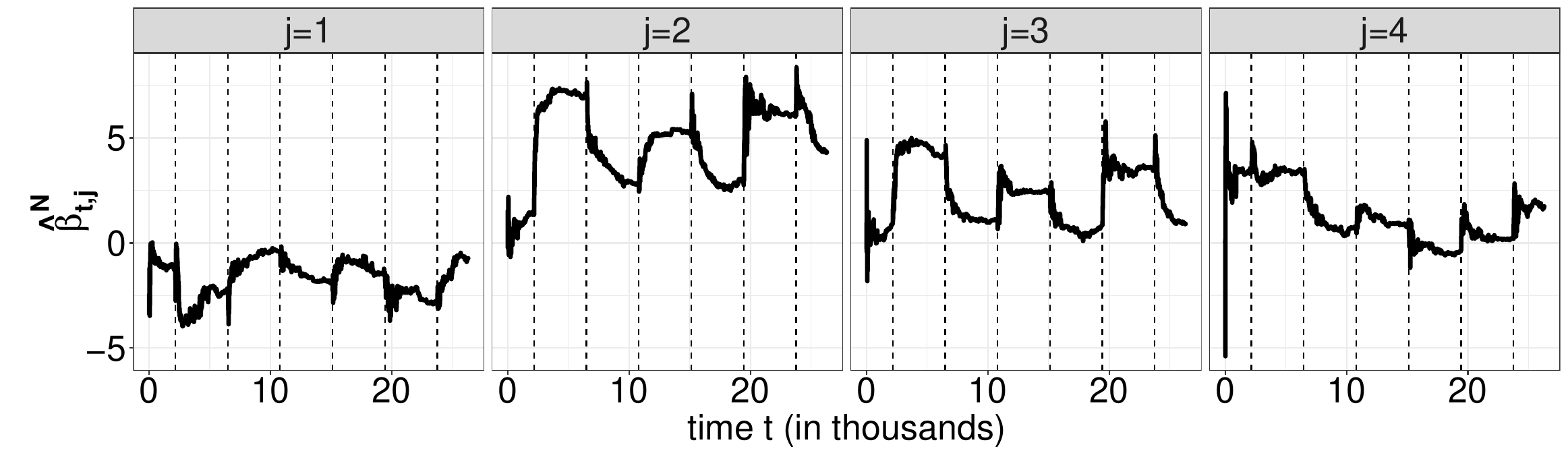}

\caption{Results for experiments in Section \ref{sub:LG_real}. The plots show, for $q=4$ and for the first run of Algorithm \ref{algo:online_2},  the evolution of the four components of $\hat{\beta}_t^N$ as a function of $t$, with the vertical lines representing the time instants $T_i$  after which the sequence   $(t^{-1}\Sigma)_{t\geq 2}$ is refreshed as explained in the text.
\label{fig:LG_real2}}
\end{figure}

Finally, Figure \ref{fig:LG_real2}   shows, for $q=4$ and for the first run of Algorithm \ref{algo:online_2},  the evolution of $\hat{\beta}_t^N$ as a function of $t$. The vertical lines in the plots represent the time instants $T_i$  after which the sequence of scale matrices $(t^{-1}\Sigma)_{t\geq 2}$  is refreshed as explained above. In this figure, we observe that once this sequence is refreshed, the estimate  $\hat{\beta}_t^N$  jumps to a new region of the parameter space and, in most cases, begins  converging to some specific value. This behaviour of $\hat{\beta}_t^N$ is consistent with the fact that the distribution of the data is non-stationary, but approximately stationary within each interval of the form $(T_i,T_{i+1}]$.

\subsection{COVID-19 pandemic analysis with a Beta-Dirichlet SSM\label{sub:covid}}

\subsubsection{Data and model}

We now aim to model the COVID-19 pandemic in the UK in order to estimate the effective reproduction (R)  number  in an offline fashion. For this experiment, we use the data from the \textit{Our World in Data} website\footnote{Available at \url{docs.owid.io/projects/etl/api/covid/\#publications}} consisting of daily new registered COVID-19 cases and daily new deaths for the period  4th March 2020--1st July 2020. We recall that the first UK lockdown was announced on the 23rd March 2020 and was slowly relaxed starting in May 2020.

To address weekly seasonality, where  fewer cases are registered on weekends and holidays, we use as observations the  7-day moving averages of daily new registered cases, denoted by $\{n_s\}_{s=1}^T$,  and  7-day moving averages of daily new deaths, denoted by $\{d_s\}_{s=1}^T$, for the $T=120$ consecutive days considered. The estimated population size in the UK in 2020 was $N_P:=67\,886\,004$ and,  assuming this number remained constant throughout the observation period, we let   $\tilde{y}_s=(n_s/N_P, d_s/N_P)$ for all $s\in\{1,\dots,T\}$.

To model the data $\{\tilde{y}_s\}_{s=1}^T$  we use an SSM that builds upon the  SEIRD model proposed by \citet{Calvetti_SEIRD}  to analyse the COVID-19 pandemic. While the traditional SEIRD model  segments the population  into five compartments, namely the proportion of persons who are Susceptible (S),   Exposed (E),   Infected (I),   Recovered (R) and Dead (D), to account for the specific characteristics  of the COVID-19 pandemic   this latter reference suggests to include infected but asymptomatic individuals in the Exposed group. Specifically, \citet{Calvetti_SEIRD} propose the following equations  to model the dynamics of the  five compartments:
\begin{equation}\label{eq:SEAIRD model}
\begin{split}
&\frac{\dd S_s}{\dd s}  = - \beta_s S_s \big(E_s + q I_s\big),\quad \frac{\dd E_s}{\dd s}  = \beta_s S_s \big(E_s + q I_s\big) - \eta E_s -\gamma E_s  \\
&\frac{\dd I_s}{\dd s}  = \eta E_s - \gamma I_s - \mu I_s,\quad \frac{\dd R_s}{\dd s}  = \gamma E_s + \gamma I_s,\quad \frac{\dd D_s}{\dd s}  =  \mu I_s \end{split}
\end{equation}
where   $\beta_s$ is the  transmission rate  (assumed time-varying due to public health policies implemented and subsequently relaxed during the observation period), $\gamma$ is the recovery rate, $\eta$ is the symptom rate, $\mu$ is the mortality rate and $q$ is the fraction of symptomatic individuals who are infectious. In the context of COVID-19  we expect $q \ll 1$ as individuals with symptoms typically reduce their contact (e.g.~via isolation or hospitalization) more significantly than asymptomatic infectious ones. Consequently, in \eqref{eq:SEAIRD model} the transmission contribution from symptomatic individuals $q I_t$ is expected to be small compared to that from $E_t$, and thus, for simplification and following \citet{menda2021explaining}, we let  $q=0$ in what follows. Under  model \eqref{eq:SEAIRD model}  the R number at time $s$ can be defined as $\mathrm{R}_{s}:=\beta_s S_s /(\gamma+\eta)$ \citep[see][]{Calvetti_SEIRD}.

We assume that $\{\tilde{y}_s\}_{s=1}^T$ is a realization of some random variables $\{\tilde{Y}_s\}_{s=1}^T$ whose joint distribution is, following  an approach proposed in \citet{Osthus_SIR}, modelled using a Dirichlet-Beta SSM derived from \eqref{eq:SEAIRD model}. Specifically,  the continuous time model \eqref{eq:SEAIRD model} is discretized using
Euler's method,  and we let  $F:\R^{9}\rightarrow\R^5$ be such that, for all $( s,e,i,r,d,\eta,\gamma,\mu,\beta)\in\R^{9}$ and with $w=(s,e,i,r,d)$,
\begin{align*}
F(w,\beta,\eta,\gamma,\mu)= \Big(
s(1-\beta  e ),
e(1-\eta-\gamma+ \beta  s),
i(1-\gamma-\mu) + \eta e ,
r + \gamma e  + \gamma i,
d+\mu i
\Big).
\end{align*}
Then, letting $\theta=(\eta,\gamma,\mu, \sigma_\beta, \kappa,\lambda_1,\lambda_2)\in(0,\infty)^7$ and, for all $s\geq 1$, letting  $\tilde{X}_s=(W_s,\beta_s)$ with $W_s=(S_s, E_s, I_s, R_s,D_s )$, we consider the following     generative model for $\{\tilde{Y}_s\}_{s=1}^T$:
\begin{equation}\label{eq:SEIRD_SSM}
\begin{split}
&\tilde{Y}_{s,1}\big |  \tilde{X}_{s} \sim \mathrm{Beta}\bigg(\frac{ \eta E_{s}}{\lambda_1/100},\frac{1-\eta E_{s}}{\lambda_1/100} \bigg),\quad \tilde{Y}_{s,2} \big | \tilde{X}_s   \sim \mathrm{Beta}\bigg(\frac{ \mu I_{s}}{\lambda_2/100},\frac{1- \mu I_{s}}{\lambda_2/100} \bigg)\\
&W_{s+1} \big | \tilde{X}_s   \sim \mathrm{Dirichlet}\bigg(\frac{f(W_s,\beta_s,\eta,\gamma,\mu)}{\kappa/100}\bigg),\quad  \log(\beta_{s+1})|\beta_s \sim\mathrm{TN}_{(-\infty,0]}\big(\log(\beta_s), \sigma_{\beta}^2\big)
\end{split}
\end{equation}
where $D_1=R_1=0$,  $S_1=1-I_1-E_1$ and where
\begin{align}\label{eq:SEIRD_SSM0}
I_1,E_1\iid\mathcal{U}([0,10^{-4}]),\quad\beta_1|S_1\sim\mathcal{U}\Big(0,\min\big\{1,4 (\eta+\gamma)/S_1\big\} \Big).
\end{align}

In \eqref{eq:SEIRD_SSM}, the parameters $\lambda_1$,  $\lambda_2$   and $\kappa$ control  the   variance of the Beta and Dirichlet  distributions  but not their expectations. Given that the observations we consider are exceedingly small numbers, these variances  are expected to be tiny, as discussed by \cite{Osthus_SIR}. The scaling factor $1/100$ applied to these three parameters is therefore used to prevent estimating values excessively close to zero. Recalling that under the model considered the R number is $\mathrm{R}_{s}=\beta_s S_s /(\gamma+\eta)$, the  distribution of $\beta_1$ ensures that  $\mathrm{R}_{1}\leq 4$, where 4 is a realistic upper bound for the R number in the UK on the 4th of March  2020. The conditional  distribution of  $\beta_{t+1}$ given $\beta_t$ is then chosen as in \citet{Calvetti_SEIRD}, with the difference that we constrain its support to be $[0,1]$ (recall that $\beta_t$ is the transmission rate).
To complete the specification of the model, we let $\Theta$ be the set of all $\theta\in [0,1]^7$ such that $\eta\in[0,1-\gamma]$, $\mu\in[0,1-\gamma]$ and $(\kappa,\lambda_1,\lambda_2)\in(0.01,1)^3$. The first two conditions on $\theta$ ensure  that   model \eqref{eq:SEIRD_SSM}-\eqref{eq:SEIRD_SSM0} is well-defined while the lower bound imposed on $\kappa$, $\lambda_1$ and $\lambda_2$  facilitates the deployment of a PF on  SSM \eqref{eq:SEIRD_SSM}-\eqref{eq:SEIRD_SSM0}, that is, it allows effective estimation of the filtering distributions of the SSM using a simple bootstrap PF with a reasonable number of particles.

\subsubsection{Maximum likelihood estimation}

In a first step, we set $N=5\,000$ and estimate the MLE of the model parameter using   Algorithm  \ref{algo:IF_2} (that is, using the estimate $\hat{\theta}_t^N$ computed by this algorithm). Results are presented below for  Algorithm  \ref{algo:IF_2} with $\alpha=0.5$ and with $\alpha=1.1$, and we will refer to former as Algorithm A3-0.5 and to the latter as  Algorithm A3-1.1. Remark that Algorithm A3-0.5 is the default   IF algorithm proposed in Section \ref{sub:IF} while Algorithm A3-1.1  imposes a dynamics on $\theta$ that vanishes quickly over time. For  Algorithm  A3-0.5 we also consider the averaging estimator $\hat{\theta}_{b,t}^N$ defined in  Section \ref{sub:IF} with $b=1\,500$ (i.e.~with a burn-in period of 1\,500 passes through the data), and to simplify the presentation, we will hereafter refer to Algorithm A3-0.5 outputting  this specific estimator as Algorithm AA3-0.5.

Figure \ref{fig:covid1} shows the evolution of the estimated MLE value during the first 2\,000 iterations (i.e.~passes through the data) of a single run for each of the three algorithms. As expected, we observe that   Algorithm A3-1.1 converges   more quickly than  Algorithm A3-0.5. In particular, for the $\sigma_\beta$  component  of $\theta$ we can see that 2\,000 iterations are insufficient for Algorithm A3-0.5 to  converge to a specific value. The difficulty in estimating  $\sigma_\beta$ is not surprising since this parameter governs the distribution of $\beta_{s+1}|\beta_{s}$  with $\beta_s$ being observed only indirectly through $\tilde{Y}_{s+1}$ via $E_{s+1}$. By contrast,  the averaging estimator (Algorithm AA3-0.5) converges  to an element of $\Theta$, illustrating the stabilizing property of averaging mentioned in Section \ref{sub:IF}. Finally, Figure \ref{fig:covid1} also shows that Algorithms A3-1.1 and   AA3-0.5 do not  converge to the same value; see in particular the results obtained for  $\gamma$ and $\sigma_\beta$. 

This latter observation is confirmed by  Figure \ref{fig:covid2}, which shows the estimated value of the MLE obtained after 2\,000 iterations  from 50 runs of the three algorithms considered. In particular,  similar to Figure \ref{fig:covid1}, we observe that the estimated value for $\gamma$, as well as that of $\kappa$, $\lambda_1$ and $\lambda_2$, tends to be smaller for Algorithm A3-1.1 than for Algorithm A3-0.5, while for $\sigma_\beta$ it tends to be larger. From Figure \ref{fig:covid2}  we also note the remarkable stability of Algorithm AA3-0.5 across these runs. The variability in the MLE estimates for Algorithm A3-0.5 observed in this figure (notably for $\gamma$ and $\sigma_\beta$) is therefore due to its slow convergence (see Figure \ref{fig:covid1}), rather than to it converging to different parameter values across runs. By contrast, as illustrated in Figure \ref{fig:covid1}, $2\,000$ iterations are generally sufficient for Algorithm A3-1.1 to converge, and therefore the variability in its MLE estimates shown in Figure \ref{fig:covid2} arises because it converges to different parameter values across runs. Notably, the estimated values for $\gamma$ and $\sigma_\beta$ with this algorithm are particularly unstable across runs. The contrasting behaviours observed in Figure \ref{fig:covid2}, the stability of  Algorithms A3-0.5 and AA3-0.5 versus the instability of Algorithm A3-1.1, confirm  that the filtering distributions of a SO-SSM are easier to estimate if the dynamics on $\theta$ vanishes slowly rather than  quickly.

 \begin{figure}[!t]
 \centering
 \includegraphics[scale=0.39]{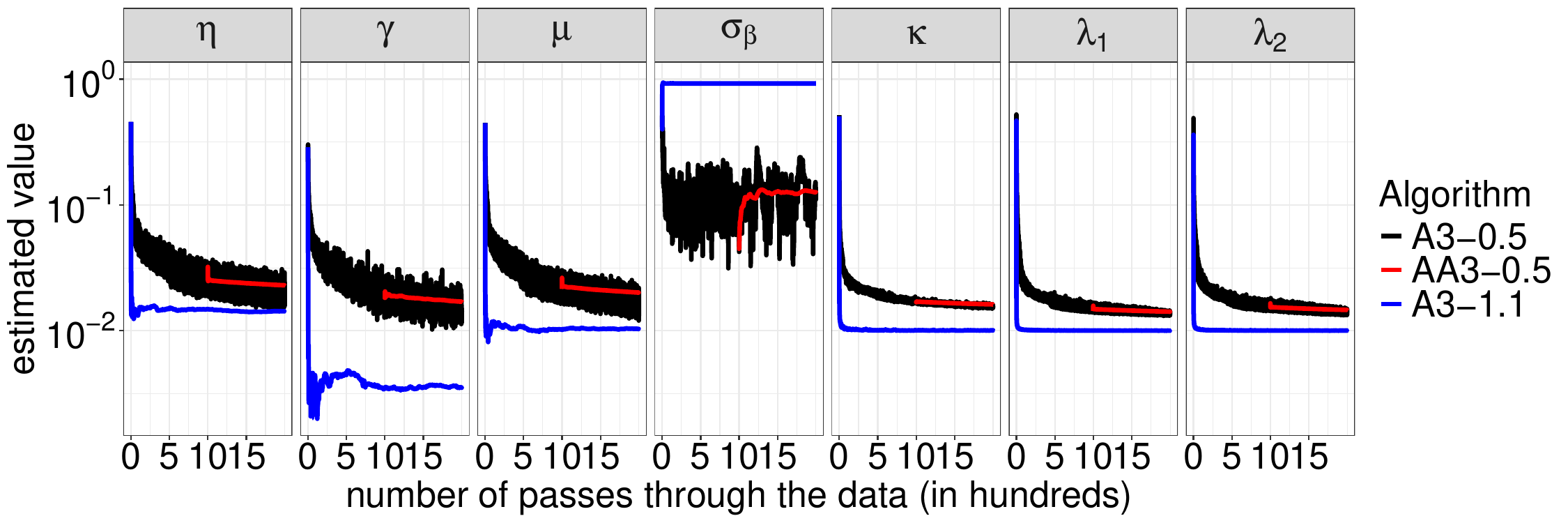}
 \caption{Results for the experiments in Section  \ref{sub:covid}. The plots show the evolution of the estimated  MLE  value during the first 2\,000 passes through the data for a single run of the different algorithms. For Algorithm AA3-0.5 the  MLE estimate is only defined when the number of passes through the data is larger than 1\,500.\label{fig:covid1}}
 \end{figure}

 \begin{figure}[!t]
 \centering
\includegraphics[scale=0.39]{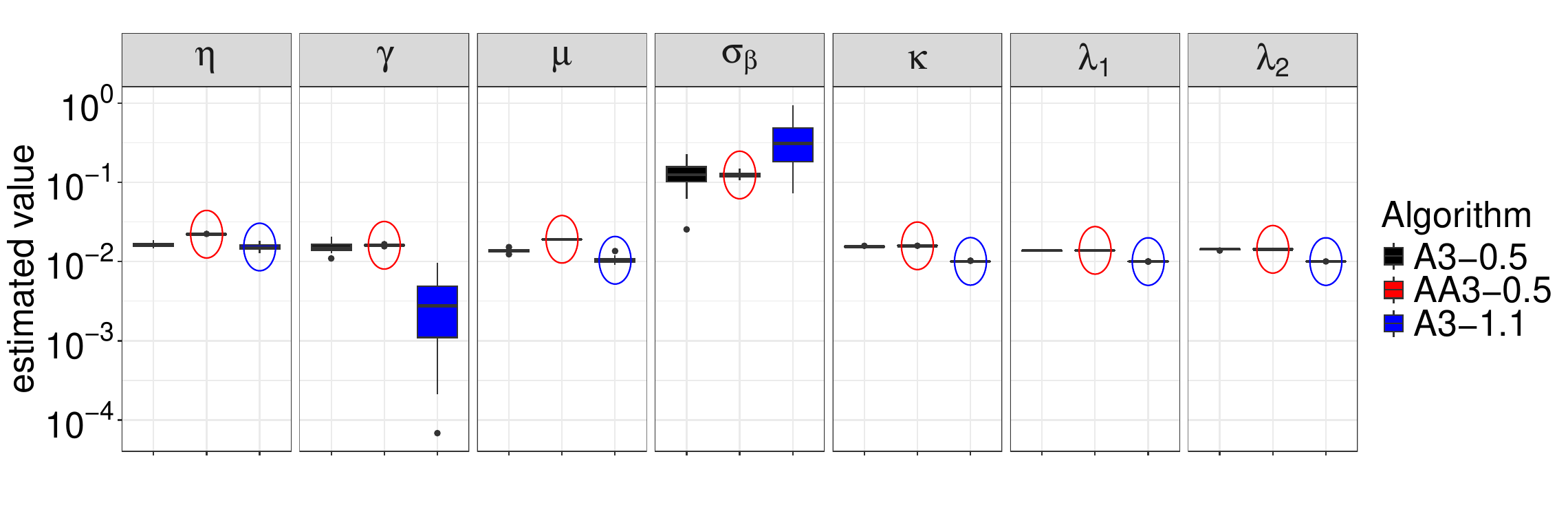}
 \caption{Results for the experiments in Section  \ref{sub:covid}. The boxplots show the estimated   MLE obtained after 2\,000 passes through the data obtained from 50  runs of the  different algorithms. Ellipses are for colour indication only. \label{fig:covid2}}
 \end{figure}
 
 \begin{figure}[!t]
 \centering
   \includegraphics[scale=0.39]{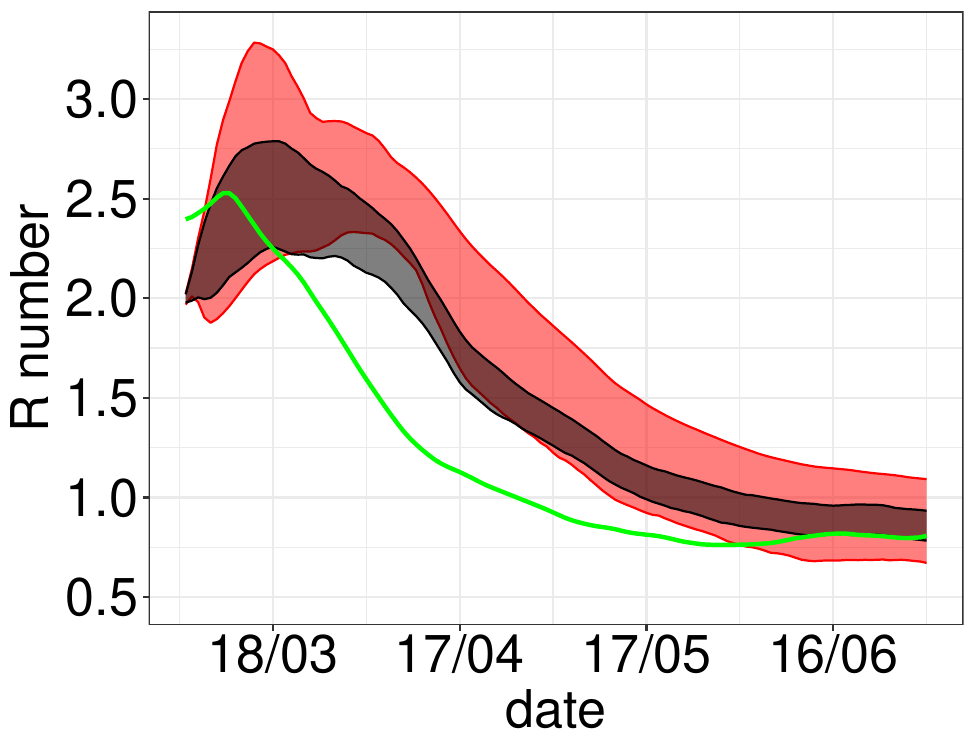}\includegraphics[scale=0.39]{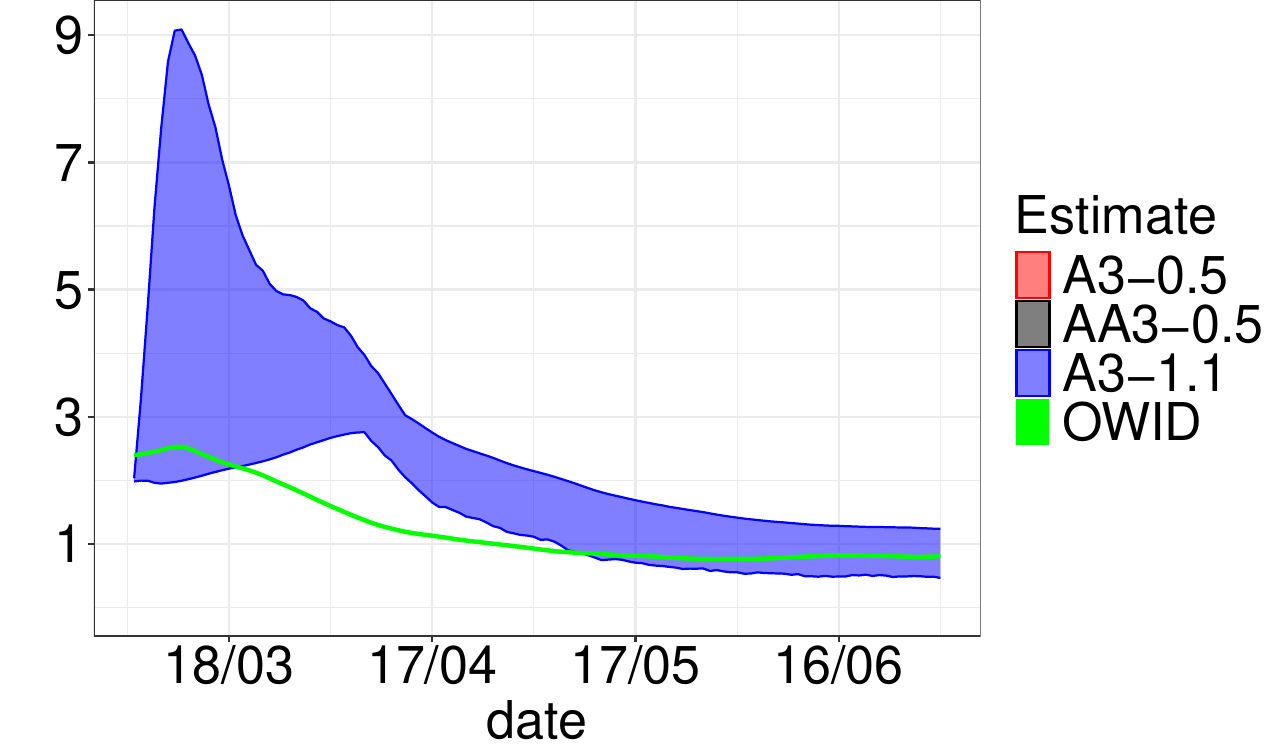}
 \caption{Results for the experiments of Section  \ref{sub:covid}. The plots show the evolution of the R number  during the observation period, as estimated by the different algorithms and as reported by the  \textit{OWID} website. For each algorithm, the shaded areas represents the range of estimated values obtained from 50 independent runs.  \label{fig:covid3}}
 \end{figure}

\subsubsection{Effective reproduction number estimation}

As mentioned above, under  SSM    \eqref{eq:SEIRD_SSM}-\eqref{eq:SEIRD_SSM0} the  R number at time $s$ can be defined as $\mathrm{R}_{s}=\beta_s S_s /(\gamma+\eta)$. For a given $\theta\in\Theta$, we define the filtered R number  as $\mathrm{R}_{s,\theta}=\bar{\E}_{s,\theta}\big[\beta_s S_s\big]/(\gamma+\eta)$, where  $\bar{\E}_{s,\theta}[\beta_s S_s]$ is the filtering expectation of $\beta_s S_s$ under model \eqref{eq:SEIRD_SSM}-\eqref{eq:SEIRD_SSM0}.  Then, our final model-based estimate of the R number at time $s$ is $\mathrm{R}_{s}:=\mathrm{R}_{s,\tilde{\theta}_T}$ where, as in Section \ref{sec:MLE}, $\tilde{\theta}_T$  denotes  the MLE of the model parameter. (Remark that, for simplicity, we use the filtering expectation of $\beta_s S_s$ to estimate the $R$ number at time $s$ although the smoothing expectation is ideally preferred.) For any given $\theta\in\Theta$, the filtered R numbers $\{\mathrm{R}_{s,\theta}\}_{s=1}^T$ can be estimated using a simple bootstrap PF with $M=20\,000$ particles, which provides the estimates $\{\mathrm{R}^M_{s, \theta}\}_{s=1}^T$. Thus, using an estimated MLE value $\tilde{\theta}^N_T$, we approximate the ideal model-based R numbers $\{\mathrm{R}_{s}\}_{s=1}^T$ with $\{\mathrm{R}^M_{s,\tilde{\theta}^N_T}\}_{s=1}^T$.

Figure \ref{fig:covid3} shows a  single realization of $\{\mathrm{R}^M_{s,\tilde{\theta}^N_T}\}_{s=1}^T$ for each MLE estimates $\tilde{\theta}^N_T$ reported in Figure \ref{fig:covid2}, and obtained from 50 runs of Algorithms A3-0.5, AA3-0.5 and A3-1.1, alongside the R numbers provided by the \textit{Our World in Data (OWID)} website. From this figure three main observations emerge. Firstly,  the R number estimates $\{\mathrm{R}^M_{s,\tilde{\theta}^N_T}\}_{s=1}^T$ derived from Algorithms A3-0.5 and  AA3-0.5 align relatively well with the \textit{OWID} benchmark.  Secondly, and in contrast, each MLE estimate $\tilde{\theta}^N_T$ computed with Algorithm A3-1.1 yields to a set of R numbers $\{\mathrm{R}^M_{s,\tilde{\theta}^N_T}\}_{s=1}^T$ that deviates radically from the benchmark, and some of these sets contain unrealistic values for the R number in the UK. Thirdly, the  ordering of the three algorithms in term of the variability of the R number estimates, represented by the shaded areas in Figure \ref{fig:covid3},  is identical to the ordering obtained in Figure \ref{fig:covid2} for parameter inference. This observation  suggests that, for each algorithm, the variability in the R number estimates  shown in Figure \ref{fig:covid3}  is mainly driven by the variability in the underpinning MLE estimates, and not by the randomness of the PF used to estimate  $\{\mathrm{R}^M_{s,\theta}\}_{s=1}^T$ for a given $\theta$.

\subsubsection{Summarizing comments}

This experiment illustrates that IF algorithms with a fast vanishing dynamics on $\theta$ are prone to being trapped in a local mode of the likelihood function. This occurs because estimating the filtering distributions of their underpinning SO-SSM is challenging with a PF.  Echoing the R number estimation results, the IF Algorithm A3-1.1 failed to provide a reasonable model parameter estimate across its 50 runs. Conversely, across 50 runs Algorithm AA3-0.5 consistently converged  to a nearly identical model parameter value, which is deemed sensible as it yields sensible R number estimates. This strong performance by Algorithm AA3-0.5 was expected from the discussion in Section \ref{sub:IF}. Indeed, the slowly vanishing dynamics on $\theta$  enables the particle system generated by this algorithm to effectively explore the parameter space  and escape local modes. Simultaneously, as argued in Section \ref{sub:IF}, the use of averaging allows to define an estimator of the MLE that converges quickly once a high (or the highest) mode of the likelihood function is found.

\section{Concluding remarks\label{sec:conclusion}}

In this work, we provided theoretical guarantees for a large class of SO-SSMs and advocated for deplying  PF algorithms on SO-SSMs where the dynamics on  $\theta$   vanishes slowly as $t\rightarrow\infty$. These algorithms are not only straightforward to implement but, as demonstrated by our numerical experiments in Section \ref{sec:num}, also require only minimal tuning to perform well in practice. We remind the reader, however, that these practical benefits 
are expected to come at the cost of reduced statistical efficiency,  a cost that has yet to be assessed from a theoretical perceptive. In our view, this positions SO-SSMs as particularly advantageous for two groups of researchers: those with limited  expertise in computational statistics who prefer simpler, more accessible methods, and those seeking to easily obtainable preliminary results, for instance while working on the development of a model   for a specific application.


The general theory we develop in Appendix \ref{sec:theory} can be used to justify various extensions of the algorithms presented in this work. First,  while our primary focus has been on SO-SSM \eqref{eq:SO-SSM_new2}, where $\theta_{t+1}$ is updated before $X_{t+1}$, we can alternatively consider a version of \eqref{eq:SO-SSM_new2}  in which $X_{t+1}$ is updated before $\theta_{t+1}$. This variant is formally defined in Section \ref{supp:X_first} of the \hyperlink{SM}{Supplementary Material}, where we show that the theoretical results established for SO-SSM \eqref{eq:SO-SSM_new2} also hold, under largely similar assumptions for SSM \eqref{eq:SSM}. Second, as explained in Section \ref{supp:adapt} of the \hyperlink{SM}{Supplementary Material}, our theory extends to cases where the scale matrix of the (truncated) Gaussian or Student-t Markov kernel $K_t$ is chosen adaptively. An example includes learning this matrix online during PF execution to ensure appropriate scaling for each components of $\theta$. Lastly, theoretically valid SO-SSMs can also be constructed for SSMs containing discrete parameters (i.e.~parameters whose values are restricted to a  finite set). For such models, Section \ref{supp:dis} of the \hyperlink{SM}{Supplementary Material} provides a suitable definition of the Markov kernels $(K_t)_{t\geq 2}$  along with theoretical guarantees and some numerical experiments for the resulting SO-SSM.




\bibliographystyle{apalike}
\bibliography{references}

 \appendix

\counterwithin{theorem}{section}
\counterwithin{definition}{section}
\counterwithin{lemma}{section}
\counterwithin{proposition}{section}
\counterwithin{corollary}{section}
\counterwithin{remark}{section}
\numberwithin{equation}{section}

\section{Self-organizing Feynman-Kac models in random environments\label{sec:theory}}

\subsection{Set-up and  additional notation\label{sub:gen_SetUp}}

All the random variables are assumed to be defined on a complete   probability space $(\Omega,\F,\P)$, and we let $(\F_t)_{t\geq 1}$ be a  filtration of $\F$. Unless   defined otherwise, we use the convention that the index $t\in\mathbb{N}$ of a random variable $Z_t$  indicates that $Z_t$ is $\F_t$-measurable. We let  $\setX$ be  a Polish space,  $\mathcal{X}$ be   the  Borel $\sigma$-algebra on $\setX$, $\mathsf{R}$ be  a separable Banach space and   $\mathcal{R}$ be the Borel $\sigma$-algebra on   $\mathsf{R}$. Next, we let $\Theta\in\mathcal{R}$ be a compact set and $\mathcal{T}$ be the Borel $\sigma$-algebra on $\Theta$.   Remark that these  assumptions on  $(\Omega,\F,\P)$,  $(\setX,\mathcal{X})$ and $(\Theta,\mathcal{T})$ ensure that, for any measurable function $\varphi:\Omega\times\Theta\times \setX\rightarrow\R$ and set $A\in\mathcal{T}\otimes \mathcal{X}$, the mappings $\omega\mapsto\sup_{(\theta,x)\in A}\varphi(\omega,\theta,x)$ and $\omega\mapsto\inf_{(\theta,x)\in A}\varphi(\omega,\theta,x)$ are measurable \citep[][Corollary 2.13, page 14]{Crauel2003}.

To introduce some notation let $(\mathsf{Z},\mathcal{Z})$ be an arbitrary measurable space. Then, we denote by $\mathcal{P}(\setZ)$ the set  of probability measures on $(\mathsf{Z},\mathcal{Z})$ and we say that  $\zeta:\Omega\times\mathcal{Z}\rightarrow[0,1]$ is a random (probability) measure on  $(\mathsf{Z},\mathcal{Z})$ if  for all $A\in\mathcal{Z}$   the mapping $\omega\mapsto \zeta(\omega,A)$ is measurable, and if  for all $\omega\in\Omega$ the mapping $B\rightarrow\zeta(\omega,B)$ is a (probability) measure on $(\mathsf{Z},\mathcal{Z})$. In what follows  we   will often use the shorthand $\zeta^\omega(\dd z)$ for the probability measure   $B\rightarrow\zeta(\omega,B)$. We say that $P$ is a random Markov kernel acting on $(\mathsf{Z},\mathcal{Z})$ if $P(z',\dd z)$ is a    random (probability) measure on  $(\mathsf{Z},\mathcal{Z})$ for all $z'\in\mathsf{Z}$.

Finally, for all $\epsilon\in (0,\infty)$ we let $B_\epsilon(r)=\{r'\in\setR:\, \|r-r'\|< \epsilon\}$ for all $r\in\setR$ and    $\mathcal{N}_\epsilon(A)=\{\theta\in\Theta:\, \inf_{\theta'\in A} \|\theta-\theta'\|<\epsilon\}$ for $A\in\mathcal{T}$, and introduce  the   notion   of regular compact sets:
\begin{definition}\label{def:sharp}
For $d\in\mathbb{N}$  we say that a set $A\subset\R^d$ is a regular compact set  if $A$ is compact and if there exits  a continuous function $f:[0,\infty)\rightarrow [0,\infty)$ such that $\lim_{x\downarrow 0}f(x)=0$ and such that for all $(\epsilon,x)\in (0,\infty)\times  A$ there exists an  $x'\in A$ for which the inclusion $\{u\in\R^d:\, \|u-x'\|< f(\epsilon)\}\subseteq \{u\in\R^d:\, \|u-x\|< \epsilon\}\cap A$ holds.
\end{definition}

 \subsection{The model\label{sub:Assumptions}\label{sub:FK_model} }
 
For all  $t\geq 1$ we let $\tilde{G}_t:\Omega\times\Theta\times \setX^2\rightarrow[0,\infty)$ and $\tilde{m}_t:\Omega\times\Theta\times \setX^2\rightarrow[0,\infty)$ be two measurable functions, and for all  $(\omega,\theta,x,x')\in \Omega\times\Theta\times \setX^2$ we let $G^\omega_{t,\theta}(x',x)=\tilde{G}_t(\omega,\theta,x',x)$ and  $m^\omega_{t,\theta}(x|x')=\tilde{m}_t(\omega,\theta,x',x)$. Next, we let $\chi\in \mathcal{P}(\setX)$ and $\lambda$ be a $\sigma$-finite measure on $(\setX,\mathcal{X})$, and for all $(\omega,\theta,x',t)\in \Omega\times \Theta\times\setX\times\mathbb{N}$, we assume that  $M^\omega_{t,\theta}(x',\dd x):=m^\omega_{t,\theta}(x|x')\lambda(\dd x)\in\mathcal{P}(\setX)$, that is that $\int_\setX M^\omega_{t,\theta}(x',\dd x)=1$. To simplify the notation, for all $(\omega,\theta,x,x',t)\in\Omega\times\Theta\times\setX^2\times\mathbb{N}$ we let $q^\omega_{t,\theta}(x|x')=G^\omega_{t,\theta}(x',x)m^\omega_{t,\theta}(x|x')$ and $Q^\omega_{t,\theta}(x',\dd x)=q^\omega_{t,\theta}(x|x')\lambda(\dd x)$. Then, for any integers $0\leq t_1<t_2$, $\theta\in\Theta$ and $\eta\in\mathcal{P}(\setX)$ we  let
\begin{align*}
L^{\eta}_{(t_1+1):t_2}(A|\theta)=\int_{\setX^{t_2-t_1+1}}\ind_A(x_{t_2}) \eta(\dd x_{t_1}) \prod_{s=t_1+1}^{t_2} Q_{s,\theta}(x_{s-1},\dd x_s),\quad\forall A\in\mathcal{X}
\end{align*}
and we denote by $\bar{p}_{(t_1+1):t_2}^{\eta}(\dd x|\theta)$   the random probability measure on $(\setX,\mathcal{X})$ such that, for all $\omega\in\Omega$, $\bar{p}_{(t_1+1):t_2}^{\omega,\eta}(A|\theta)=\eta(A)$ for all $A\in\mathcal{X}$ if $L^{\omega,\eta}_{(t_1+1):t_2}(\setX|\theta)\not\in (0,\infty)$ and such that
\begin{align*}
\bar{p}_{(t_1+1):t_2}^{\omega,\eta}(A|\theta)=\frac{L^{\omega,\eta}_{(t_1+1):t_2}(A|\theta)}{L^{\omega,\eta}_{(t_1+1):t_2}(\setX|\theta)},\quad\forall A\in\mathcal{X}
\end{align*}
otherwise. With this notation in place, for all $(\omega,\theta)\in \Omega\times\Theta$ such that $L^{\omega,\chi}_{1:t}(\setX|\theta)\in (0,\infty)$ for all $t\geq 1$,  the triplet  $\{ (G_{t,\theta}^\omega)_{t\geq 1}, (M^\omega_{t,\theta})_{t\geq 1},\chi\}$ defines a Feynman-Kac model on $(\setX,\mathcal{X})$. We refer  the reader to \citet[][Chapter 2]{del2004feynman} for a precise definition of Feynman-Kac models but, for the purpose of this work, it is only important to recall that the so-defined Feynman-Kac model specifies a distribution for a sequence $(X_t)_{t\geq 1}$ of $\setX$-valued random variables which is such that $X_t\sim \bar{p}_{1:t}^{\omega,\chi}(\dd x|\theta)$ for all $t\geq 1$.

\subsection{Assumptions on the model\label{sub:assumption}}

To state the first assumption we will impose on the model,  we need the following definition:
\begin{definition}\label{def:consistent}
Let  $\rho\in\mathbb{N}$, $\delta\in(0,\infty)$, $\mathcal{L}:\Theta\rightarrow\R$ and $(\phi_t)_{t\geq 1}$ be a sequence of  random  measurable functions defined on $\setX^2$. Then, we say that a  random probability measure $\eta_{\rho}$  on $(\setX,\mathcal{X})$     is $(\rho,\delta,\mathcal{L},(\phi_t)_{t\geq 1}\big)$-consistent if $\eta_{\rho}$ is $\F_\rho$-measurable and if, for all $\theta\in\Theta$,
\begin{equation*}
\Big|\frac{1}{\rho(t-1)}\log L_{(\rho+1):\rho t}^{\eta_{\rho}}(\setX|\theta)-\mathcal{L}(\theta)\Big|\PP 0,\quad\inf_{t\geq 2}\P\big(L_{(\rho+1):\rho t}^{\eta_{\rho}}(\setX|\theta)>0\big)=1
\end{equation*}
and if we the following two conditions hold:
\begin{align*}
&\frac{1}{\rho t}\log^+  \int_{\setX^{\rho(t-1)+1}}\exp\bigg( \delta  \sum_{s=\rho +1}^{\rho t}\phi_s(x_{s-1}, x_{s})\bigg) \eta_\rho(\dd x_{\rho})  \prod_{s=\rho+1}^{\rho t} Q_{s,\theta}(x_{s-1},\dd x_{s})=\bigO_\P(1)\\ 
&\inf_{t\geq 2}\P\bigg( \int_{\setX^{\rho(t-1)+1}}\exp\bigg( \delta  \sum_{s=\rho +1}^{\rho t}\phi_s(x_{s-1}, x_{s})\bigg) \eta_\rho(\dd x_{\rho})  \prod_{s=\rho+1}^{\rho t} Q_{s,\theta}(x_{s-1},\dd x_{s})<\infty\bigg)=1.
\end{align*}
\end{definition}

We then consider the following three assumptions:

\begin{assumption}\label{assume:Model}  There exist   an integer $r\in\mathbb{N}$, a constant $\delta_\star\in(0,1)$, a set $D\in\mathcal{X}$, a continuous function $\tilde{l}:\Theta\rightarrow\R$, a continuous and strictly  increasing function $g:[0,\infty)\rightarrow [0,\infty)$   such that $g(0)=0$  and a sequence $(\varphi_t)_{t\geq 1}$ of  random measurable functions defined on $\setX^2$, for which the following conditions hold:

\begin{enumerate}[label=A1(\alph*), align=left, leftmargin=*, labelindent=0em]

\item\label{assume:Model_stationary} For all $(\theta,x',x)\in\Theta\times\setX^2$  the sequence $(Z_t)_{t\geq 1}$, such that   $Z_t=\big(W_{(t-1)r+1},\dots, W_{rt}\big)$ with $W_t=\big(\tilde{G}_t(\cdot,\theta,x',x),\tilde{m}_t(\cdot,\theta,x',x) \big)$ for all $t\geq 1$, is  stationary.

\item\label{assume:Model_ThetaStar} The set $\tilde{\Theta}_\star:=\argmax_{\theta\in\Theta}\tilde{l}(\theta)$ is  such that   $\mathcal{N}_{\delta_\star}(\tilde{\Theta}_\star)\subseteq \Theta$.

\item\label{assume:Model_init}   The probability measure $\chi$ is $(r,\delta_\star, \tilde{l}, (\varphi_t)_{t\geq 1})$-consistent.
           
\item \label{assume:Model_smooth} 
 For all $t\geq 1$ and $\lambda$-almost every $x,x'\in\setX$ 
\begin{align*}
\big|\log  q_{t,\theta}(x|x')  -\log q_{t,\theta'}(x |x')\big|\leq  g\big(\|\theta-\theta'\|\big) \varphi_t(x',x),\quad \forall (\theta,\theta')\in \Theta^2,\quad\P-a.s.
\end{align*}

\item\label{assume:Model_G_lower} There exists an  $(r,\delta_\star, \tilde{l}, (\varphi_t)_{t\geq 1})$-consistent  random  probability measure  $\underline{\chi}_{r}(\dd x)$ on $(\setX,\mathcal{X})$ such that
\begin{equation*}
\underline{\chi}_{r}(A) =\int_A \frac{\inf_{(\theta,x')\in \Theta \times D} q_{r,\theta}(x|x') }{\int_{\setX}\big\{\inf_{(\theta,x')\in \Theta \times D} q_{r,\theta}(x|x') \big\}\lambda(\dd x)}\lambda(\dd x),\quad\forall A\in\mathcal{X},\quad \P-a.s.
\end{equation*}

\item\label{assume:Model_sup} 
There exists an  $(r,\delta_\star, \tilde{l}, (\varphi_t)_{t\geq 1})$-consistent random  probability measure  $\bar{\chi}_r(\dd x)$ on $(\setX,\mathcal{X})$ such that
\begin{equation*}
\bar{\chi}_r(A) =\int_A \frac{\sup_{(\theta,x')\in \Theta \times \setX} q_{r,\theta}(x|x') }{\int_{\setX}\big\{\sup_{(\theta,x')\in \Theta \times \setX}q_{r,\theta}(x|x') \big\}\lambda(\dd x)}\lambda(\dd x),\quad\forall A\in\mathcal{X},\quad \P-a.s.
\end{equation*}   

\end{enumerate}

\end{assumption}

\begin{assumption}\label{assume:G_bounded}
$\frac{1}{t}\sum_{s=1}^{t} \log^+ \sup_{(\theta,x,x')\in\Theta\times\setX^2} G_{s,\theta}(x',x)=\bigO_\P(1)$.
\end{assumption}

\begin{assumption}\label{assume:G_l_E}
Under \ref{assume:Model} and with $D\in\mathcal{X}$ as in \ref{assume:Model},  $\log^- \inf_{(\theta,x')\in\Theta\times D}Q_{t,\theta}(x',D)<\infty$ for all $t\geq 1$ and $\P$-a.s.
\end{assumption}

\subsection{Extending the model to include its parameters\label{sub:SO-FK_model}}

Let $\mu_0\in\mathcal{P}(\Theta)$   and $(\mu_t)_{t\geq 1}$ be a sequence of random probability measures on $(\setR,\mathcal{R})$. Then, for all $t\geq 1$,   we let $K_{\mu_t}$ be the Markov kernel on $(\setR,\mathcal{R})$  such that, for all $\theta'\in\setR$, we have $\theta\sim K_{\mu_t}(\theta',\dd\theta)$ if and only if $\theta\dist \theta'+\tilde{U}$ where  $\tilde{U}|\F_t\sim\mu_{t}$, and we let $K_{\mu_t|\Theta}$  be the Markov kernel on $(\Theta,\mathcal{T})$ defined by
\begin{equation*}
K_{\mu_t|\Theta}(\theta',\dd\theta)=\frac{\ind_\Theta(\theta)K_{\mu_t}(\theta',\dd\theta)}{\int_\Theta K_{\mu_t}(\theta',\dd\theta)},\quad\theta'\in\Theta.
\end{equation*}
Next, we let $\pi_0=\mu_0\otimes\chi$ and, for all $t\geq 1$, we   let
\begin{align}\label{eq:normalizing}
\mathfrak{Z}_t= \int_{(\Theta\times\setX)^{t+1}}\mu_0(\dd\theta_0)\chi(\dd x_0) \prod_{s=1}^{t} Q_{s,\theta_{s}}(x_{s-1},\dd x_s) K_{\mu_s|\Theta}(\theta_{s-1},\dd\theta_{s})
\end{align}
and  $\pi_t$ be the random probability measure on $(\Theta\times\setX,\mathcal{T}\times\mathcal{X})$ such that, for all $\omega\in\Omega$, 
\begin{align*}
\pi^\omega_t\big (A\times B)=
\frac{\int_{(\Theta\times\setX)^{t+1}}\ind_{A}(\theta_t)\ind_B(x_t)\mu_0(\dd\theta_0)\chi(\dd x_0) \prod_{s=1}^{t} Q^\omega_{s,\theta_{s}}(x_{s-1},\dd x_s) K^\omega_{\mu_s|\Theta}(\theta_{s-1},\dd\theta_{s})}{\mathfrak{Z}^\omega_t}
\end{align*}
for all $(A,B)\in\mathcal{T}\times\mathcal{X}$ if $\mathfrak{Z}^\omega_t\in(0,\infty)$ while $\pi^\omega_t=\pi_0$ otherwise. In what follows, for all $t\geq 0$  we le $\pi_{t,\Theta}$  and $\pi_{t,\setX}$  be the random probability measures on $(\Theta,\mathcal{T})$ and  $(\setX,\mathcal{X})$, respectively, such that $\pi_{t,\Theta}(A)=\pi_t(A\times\setX)$ and $\pi_{t,\setX}(B)=\pi_t(\Theta\times B)$ for all $(A,B)\in\mathcal{T}\times\mathcal{X}$. Remark that if for all $(\theta',x',\omega,t)\in\Theta\times\setX\times \Omega\times\mathbb{N}$  we let
\begin{align*}
\check{M}_{t}((\theta',x'),\dd (\theta,x))=M_{t,\theta'}(x',\dd x)K_{\mu_t|\Theta}(\theta',\dd\theta),\quad \check{G}_t( (\theta',x'),(\theta,x))=G_{t,\theta}(x',x) 
\end{align*}
then, for all $\omega\in\Omega$ such that $\mathfrak{Z}^\omega_t\in(0,\infty)$ for all $t\geq 1$, the triplet  $\{ (\check{G}_{t}^\omega)_{t\geq 1}, (\check{M}_{t }^\omega)_{t\geq 1},\pi_0\}$ defines a Feynman-Kac model on the extended space $(\Theta\times\setX,\mathcal{T}\times\mathcal{X})$. In particular, this model  specifies a distribution for a sequence of $(\setX\times\Theta)$-valued random variables $((\theta_t,X_t))_{t\geq 1}$  which is such that $(\theta_t,X_t)\sim \pi_t(\dd(\theta,x))$ for all $t\geq 1$.

\subsection{Conditions on the artificial dynamics\label{sub:noise}}


To facilitate the statement of some conditions   we let   $(U_t)_{t\geq 1}$ be  a sequence of $\Theta$-valued random variables such that
\begin{align*}
\P\Big( U_s\in A_s,\,\forall s\in\{1,\dots,t\}\,\big| \F_t\Big)=\prod_{s=1}^t\mu_s(A_s),\quad\forall A_1,\dots, A_t\in\mathcal{T},\quad\forall t\geq 1.
\end{align*}

We will consider the following conditions on $(\mu_t)_{t\geq 0}$:

\begin{condition}\label{condition:mu1}
$\mu_0\big(B_{\epsilon}(\theta)\big)>0$ for all $\theta\in\Theta$ and $\epsilon\in(0,\infty)$.
\end{condition}

\begin{condition}\label{condition:Inf_mu}
There exist a family  of $(0,1]$-valued random variables  $(\Gamma^\mu_{\delta})_{\delta\in(0,\infty)}$ such that, for all $\delta\in(0,\infty)$, we have $\P\big(\inf_{t\geq 1}\mu_t\big(B_\delta(0)\big)\geq \Gamma^\mu_{\delta}\big)=1$.
\end{condition}

\begin{condition}\label{condition:Inf_K}
 $\P\big(\inf_{t\geq 1} \inf_{\theta'\in\Theta}\int_\Theta K_{\mu_t}(\theta',\dd\theta)\geq \Gamma^\mu\big)=1$ for some  $(0,1]$-valued random variable $\Gamma^\mu$.
\end{condition}

\begin{condition}\label{condition:mu_seq} 
There exist (i) two  sequences     $(t_p)_{p\geq 1}$ and $(v_p)_{p\geq 1}$ in $\mathbb{N}$,  such that 
\begin{align*}
\inf_{p\geq 1}(t_{p+1}/t_p)>1,\quad \lim_{p\rightarrow\infty}v_p/(t_{p+1}-t_p)=0,\quad \lim_{p\rightarrow\infty}v_p=\infty,\quad  \limsup_{p\rightarrow\infty}(v_{p+1} /v_p)< 3/2,
\end{align*}
and (ii) a sequence $(f_p)_{p\geq 1}$ of  $(0,\infty]$-valued functions defined on $(0,\infty)$, such that  $\lim_{p\rightarrow\infty}f_p(\epsilon)=0$ for all $\epsilon\in (0,\infty)$, 
for which, with $\P$-probability one, the following conditions hold for all $\epsilon\in(0,\infty)$ and all   $p\geq p':=\inf\{p\in\mathbb{N}: t_p\geq 1+3v_p\}$:
\begin{enumerate}[label=C4(\alph*), align=left, leftmargin=*, labelindent=0em]

\item\label{C41} For all   non-empty sets $A\in\mathcal{T}$ we have $ \frac{1}{t_{p+1}-t_{p}}\inf_{\theta\in \Theta}\log   K_{\mu_{t_{p}}}\big(\theta,\mathcal{N}_{\epsilon}(A)\big)\geq -f_{p}(\epsilon)$.

\item\label{C42} For all   sequences $(l_i)_{i\geq 1}$ and $(u_i)_{i\geq 1}$ in $\mathbb{N}$ such that $t_i\leq l_i< u_i\leq t_{i+1}-1$ for all $i\geq p'$,
\begin{align*}
\frac{1}{u_p-l_p}\log\P\Big(\exists s\in\big\{l_p+1,\dots,u_p\big\}:\, \sum_{i=l_p+1}^s  U_i\not\in B_{\epsilon}(0)\big|\,\F_{u_p}\Big)\leq-\frac{1}{f_p(\epsilon)}.
\end{align*}

\item\label{C43}  For all sequences $(l_i)_{i\geq 1}$ and $(u_i)_{i\geq 1}$ in $\mathbb{N}$ such that $t_{i}-3v_i\leq l_i<  u_i\leq t_{i}+v_i$ for all $i\geq p'$, 
\begin{align*}
\frac{1}{u_p-l_p}\log\P\Big(\exists s\in\big\{l_p+1,\dots,u_p\big\}:\, \sum_{i=l_p+1}^s  U_i\not\in B_{\epsilon}(0)\big|\,\F_{u_p}\Big)\leq -\frac{1}{f_p(\epsilon)}.
\end{align*}

\end{enumerate}
\end{condition}

Instead of \ref{condition:mu_seq}, we will also consider the following alternative condition:

\setcounter{conditionB}{3} 
\begin{conditionB}\label{condition:mu_seq2} 
There exist (i) a sequence $(k_t)_{t\geq 1}$ in $\mathbb{N}$, such that $\lim_{t\rightarrow\infty} k_t/t=\lim_{t\rightarrow\infty}(1/k_t)=0$, and (ii) for all integer $m\in\mathbb{N}_0$ there exists a sequence  $(f_{t,m})_{t\geq 1}$ of  $(0,\infty]$-valued functions defined on $(0,\infty)$, such that $\lim_{t\rightarrow\infty}f_{t,m}(\epsilon)=0$ for all $\epsilon\in (0,\infty)$, for which, with $\P$-probability, the following conditions hold for all $\epsilon\in(0,\infty)$:
\begin{enumerate}[label=C4'(\alph*), align=left, leftmargin=*, labelindent=0em]

\item\label{C4'2}  For all $t\geq 1$ such that $t>2k_t$ and all $l_t\in\{k_t,\dots,2k_t\}$, 
\begin{align*}
\frac{1}{t-l_t}\log\P\Big(\exists s\in\big\{l_t+1,\dots,  t\big\}:\, \sum_{i=l_t+1}^s  U_i\not\in B_{\epsilon}(0)\big|\F_{t}\Big)\leq -\frac{1}{f_{t,m}(\epsilon)}.
\end{align*}  
\item\label{C4'3}  $\frac{1}{t-m}\log\P\big(\sum_{i=m+1}^s  U_i \in B_{\epsilon}(0),\,\forall s\in\big\{m+1,\dots,t\big\}\big|\F_{t}\big)\geq -f_{t,m}(\epsilon)$ for all $t> m$.
\end{enumerate}
\end{conditionB}

Lastly, some of our results will require that $(\mu_t)_{t\geq 1}$  satisfies the following additional condition:
\begin{condition}\label{condition:mu_extra}
There exists a sequence    $(f'_t)_{t\geq 1}$ of $(0,\infty]$-valued functions defined on $[0,\infty)^2$, such that  the function  $f'_t(\cdot,\epsilon)$ is non-decreasing for all $t\geq 1$ and all $\epsilon\in(0,\infty)$  and such that  $\lim_{t\rightarrow\infty}f'_{t}\big(1/\delta'_t, \delta'_t\big)=0$ for some   $(\delta'_t)_{t\geq 1}$ in $(0,\infty)$ verifying $\lim_{t\rightarrow\infty}\delta'_t=0$, for which, with $\P$-probability one, for all integers $1\leq a<b$ we have
\begin{align*}
\frac{1}{b-a}\log\P\Big(\exists s\in\big\{a+1,\dots,  b\big\}:\, \sum_{i=a+1}^s  U_i\not\in B_{\epsilon}(0)\big|\F_{b}\Big)\leq -\frac{1}{f'_{b}(b-a,\epsilon)},\quad\forall\epsilon\in(0,\infty).
\end{align*}
\end{condition}

Examples of random probability measures  $(\mu_t)_{t\geq 1}$ verifying the above conditions are given in Section \ref{sub:continuous} of the  \hyperlink{SM}{Supplementary Material} (see also Section \ref{sec:SSM}).

\subsection{Main results}

\begin{theorem}\label{thm:main}
Assume that Assumptions \ref{assume:Model}-\ref{assume:G_bounded} hold and that $(\mu_t)_{t\geq 0}$ is such that Conditions \ref{condition:mu1}-\ref{condition:mu_seq}  are satisfied, and let $r\in\mathbb{N}$, $\tilde{\Theta}_\star\subset\Theta$ and $D\in\mathcal{X}$ be as in  \ref{assume:Model}. In addition, assume that either   \ref{condition:mu_seq} holds for a sequence $(t_p)_{p\geq 1}$ in $\{sr,\,s\in\mathbb{N}\}$ or that Assumption  \ref{assume:G_l_E} holds. Finally, assume  that the following condition holds:
\begin{equation}\label{eq:W_star}
\exists\text{   $(W^*_{t})_{t\geq 0}$} 
\text{ s.t. $\inf_{t\geq 1}\P\big(\pi_{t,\setX}(D)\geq W^*_{t}\big)=1$  and s.t. $\log W^*_{t}=\bigO_\P(1)$}.
\end{equation} 
Then,  for all $\epsilon\in(0,\infty)$, there exists a sequence $(\xi_{\epsilon,t})_{t\geq 1}$ of $[0,1]$-valued random variables, depending on $(\mu_t)_{t\geq 1}$  only through  (i)  $(W^*_{t})_{t\geq 0}$, (ii) the sequence $(\Gamma^\mu_{\delta})_{\delta\in(0,\infty)}$ defined in \ref{condition:Inf_mu}, (iii) the random variable $\Gamma^\mu$ defined in \ref{condition:Inf_K} and (iv) the sequence $( (t_p,v_p,f_p))_{p\geq 1}$ defined in \ref{condition:mu_seq}, such that $\xi_{\epsilon,t}\PP 0$ and such that $\pi_{t,\Theta}\big(\mathcal{N}_\epsilon(\tilde{\Theta}_\star)\big)\geq 1-\xi_{\epsilon,t}$ for all $t\geq 1$ and $\P$-a.s.
\end{theorem}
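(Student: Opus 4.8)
The plan is to establish the lower bound on $\pi_{t,\Theta}\big(\mathcal{N}_\epsilon(\tilde\Theta_\star)\big)$ by a two–phase argument that mirrors the informal explanation in Section \ref{sub:ssm_cont_slow}: a \emph{contraction} phase in which the mass of $\pi_{t,\Theta}$ concentrates near $\tilde\Theta_\star$ between the special times $t_p$ and $t_{p+1}$, and a \emph{replenishment} phase in which the occasional heavy-tailed moves at times $t\in(t_p)_{p\geq 1}$ (encoded in \ref{condition:mu_seq}\ref{C41}) guarantee that enough mass is reinjected around $\tilde\Theta_\star$. First I would fix $\epsilon\in(0,\infty)$ and, using the likelihood-ratio structure of $\pi_t$ built from \eqref{eq:normalizing}, write the ratio of the mass $\pi_{t,\Theta}\big(\mathcal{N}_\epsilon(\tilde\Theta_\star)\big)$ to its complement as an integral over parameter paths $(\theta_0,\dots,\theta_t)$ of the product of the Feynman-Kac increments $Q_{s,\theta_s}$ against the kernels $K_{\mu_s|\Theta}$. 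The key is to lower-bound the numerator by restricting to paths that enter a small ball $B_\delta(\theta^\dagger)$ around some $\theta^\dagger\in\tilde\Theta_\star$ at an appropriate earlier time and then stay within $\mathcal{N}_\epsilon(\tilde\Theta_\star)$, and to upper-bound the contribution of paths that spend time away from $\tilde\Theta_\star$.

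The analytic engine for the contraction phase is Assumption \ref{assume:Model}. By \ref{assume:Model_smooth}, for a path staying inside a ball $B_\delta$ the log-likelihood along the path differs from the log-likelihood evaluated at a fixed $\theta$ by at most $g(\delta)\sum_s\varphi_s(x_{s-1},x_s)$; the consistency of $\chi$, $\underline\chi_r$ and $\bar\chi_r$ (conditions \ref{assume:Model_init}, \ref{assume:Model_G_lower}, \ref{assume:Model_sup}, phrased via Definition \ref{def:consistent}) then lets me control the exponential moments of these $\varphi$-sums uniformly in $t$, so that $g(\delta)$ can be chosen small enough to make this perturbation negligible relative to the gap $\tilde l(\theta^\dagger)-\tilde l(\theta)$ created by \ref{assume:Model_ThetaStar}. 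Thus, up to a $\smallo_\P(1)$ error, the effective weight of a path concentrated near $\theta$ behaves like $\exp\big(t\,\tilde l(\theta)\big)$, and the strict maximality of $\tilde l$ on $\tilde\Theta_\star$ forces the posterior mass off $\mathcal{N}_\epsilon(\tilde\Theta_\star)$ to decay geometrically between consecutive $t_p$'s. Here I would use the lower-bound measures $\underline\chi_r$ and the upper-bound measures $\bar\chi_r$ to sandwich the normalized path integrals from below and above respectively, and condition \eqref{eq:W_star} together with \ref{condition:Inf_mu}–\ref{condition:Inf_K} to guarantee that the initial mass placed near $\tilde\Theta_\star$ (and the kernel's probability of staying in $\Theta$) does not degenerate.

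The replenishment phase is where \ref{condition:mu_seq} is essential. Condition \ref{C41} furnishes a quantitative lower bound $\tfrac{1}{t_{p+1}-t_p}\inf_\theta\log K_{\mu_{t_p}}(\theta,\mathcal{N}_\epsilon(A))\geq -f_p(\epsilon)$ on the probability that a single heavy-tailed jump at time $t_p$ lands within $\epsilon$ of any target set $A$; taking $A$ to be a small ball around $\theta^\dagger$, this reseeds mass of order $\exp\big(-(t_{p+1}-t_p)f_p(\epsilon)\big)$ there. Conditions \ref{C42} and \ref{C43} control, via exponential tail bounds on partial sums $\sum U_i$, the probability that the light-tailed Gaussian moves drift the particle out of a ball during the windows between and around the special times; these are used to show that once mass is reseeded it is not immediately lost and that the drift over a window of length $u_p-l_p$ contributes only $\exp\big(-(u_p-l_p)/f_p(\epsilon)\big)$. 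The growth/spacing constraints $\inf_p(t_{p+1}/t_p)>1$, $v_p/(t_{p+1}-t_p)\to0$, $v_p\to\infty$, $\limsup(v_{p+1}/v_p)<3/2$ are precisely what balance these competing exponential rates: the reseeding cost $f_p(\epsilon)\to0$ and the contraction gain dominate the drift losses, so the combined bound is $\smallo_\P(1)$. The branch-point ``either \ref{condition:mu_seq} holds for $(t_p)$ in $\{sr\}$ or \ref{assume:G_l_E} holds'' is needed to align the special times with the period-$r$ structure of the stationary increments in \ref{assume:Model_stationary}, or else to supply the local lower bound $\log^-\inf Q_{t,\theta}(x',D)<\infty$ that keeps the state chain inside $D$; in either case I can invoke the consistency machinery at the sampled times.

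\textbf{Main obstacle.} The hard part will be assembling the two phases into a single $\smallo_\P(1)$ bound that holds \emph{uniformly over all $t\geq1$} (not merely along the subsequence $(t_p)$), while carefully tracking the dependence of $\xi_{\epsilon,t}$ only on the four listed ingredients $(W^*_t)$, $(\Gamma^\mu_\delta)$, $\Gamma^\mu$, and $((t_p,v_p,f_p))$. This requires a delicate bookkeeping argument: one must show that for any intermediate $t$ with $t_p\le t<t_{p+1}$ the mass near $\tilde\Theta_\star$ at the last reseeding time $t_p$ has not yet been eroded, which forces simultaneous use of the contraction estimate (for the elapsed portion $t-t_p$) and the drift-control estimates \ref{C42}–\ref{C43} over the corresponding window, and then optimizing the free radius $\delta$ and the window endpoints $l_p,u_p$ against the rate functions $f_p$. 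Verifying that the resulting error is genuinely $\smallo_\P(1)$ — rather than merely bounded — and that no step secretly depends on $(\mu_t)$ beyond the permitted quantities, is the crux; this is where the interplay of the exponential-moment conditions from Assumption \ref{assume:Model} with the probabilistic conditions \ref{condition:mu1}–\ref{condition:mu_seq} must be made fully quantitative.
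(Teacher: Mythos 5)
Your outline does match the paper's strategy in spirit: a path-integral sandwich for $\pi_{t,\Theta}$ (the paper's Lemmas \ref{lemma:Zt} and \ref{lemma:gamma}), an upper bound of order $\exp\{(b-a)\sup_{\theta\in A}\tilde l(\theta)\}$ for the mass on a set $A$ (Lemma \ref{lemma::upper_general}), a matching lower bound for the normalisation driven by the mass near $\tilde\Theta_\star$ at the start of the block (Lemma \ref{lemma::lower_general}), reseeding via \ref{C41}, and drift control via \ref{C42}--\ref{C43}; these are assembled in the paper into a single key result (Theorem \ref{thm:key}). However, the step you explicitly defer as the ``main obstacle'' --- a bound valid for \emph{every} $t\geq 1$, not just along well-chosen times --- is precisely the heart of the proof, and the mechanism you hint at for it would fail.

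Concretely, you propose that for $t_p\le t<t_{p+1}$ one should use the mass reseeded at $t_p$ and show it ``has not yet been eroded'' by time $t$. But the reseeding bound from \ref{C41} reads $\inf_\theta K_{\mu_{t_p}}\big(\theta,\mathcal N_\epsilon(A)\big)\ge \exp\{-(t_{p+1}-t_p)f_p(\epsilon)\}$: it is exponentially small in the \emph{full} window length $t_{p+1}-t_p$, and nothing in \ref{condition:mu_seq} prevents $(t_{p+1}-t_p)f_p(\epsilon)\to\infty$ (this is in fact the case for the slowly vanishing dynamics of Section \ref{sub:ssm_cont_slow}). This cost can only be amortised by likelihood contraction over an elapsed time comparable to $t_{p+1}-t_p$; for $t$ shortly after $t_p$, the normalised cost $\frac{t_{p+1}-t_p}{t-t_p}\,f_p(\epsilon)$ does not vanish and the hypothesis \eqref{eq:Thm_C2} of the key theorem cannot be verified, so your two phases do not glue. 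The paper's missing idea is a three-regime bootstrap (Lemmas \ref{lemma:piece1}--\ref{lemma:piece3}), in which the conclusion of each regime serves as the initial-mass hypothesis of the next: (i) for $t$ late in a window, $t\in\{t_{p+1}-3v_p,\dots,t_{p+1}-r\}$, start the Feynman--Kac block just after $t_p$ and use \ref{C41} together with Lemma \ref{lemma:Extend_initit}, since then $b-a\ge t_{p+1}-t_p-4v_p$ and the reseeding cost is amortised; (ii) for $t$ near or just after $t_{p+1}$, do \emph{not} invoke the jump at $t_{p+1}$ at all --- start the block at $a=t-2v_p-m$, back inside the previous window, and feed \eqref{eq:Thm_C2} with the concentration already proved in regime (i) at time $a-1$, which survives the jump because $\bar\pi_{a}\big(\mathcal N_\delta(\tilde\Theta_\star)\big)\ge \mu_{a}\big(B_{\delta/2}(0)\big)\,\pi_{a-1,\Theta}\big(\mathcal N_{\delta/2}(\tilde\Theta_\star)\big)\ge \Gamma^\mu_{\delta/2}\big(1-\xi^{(1)}\big)$ by Condition \ref{condition:Inf_mu}, with drift control from \ref{C43}; (iii) for the bulk of the next window, repeat with the regime-(ii) conclusion as input and \ref{C42}. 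This chained inheritance --- concentration established \emph{before} a jump surviving it via \ref{condition:Inf_mu}, rather than erosion-tracking from the last reseed --- is what makes the error $\smallo_\P(1)$ uniformly in $t$, and it also automatically yields the claimed dependence of $\xi_{\epsilon,t}$ on $(\mu_t)_{t\geq 1}$ only through $(W^*_t)_{t\geq 0}$, $(\Gamma^\mu_\delta)_{\delta\in(0,\infty)}$, $\Gamma^\mu$ and $((t_p,v_p,f_p))_{p\geq 1}$, since these are the only quantities entering Theorem \ref{thm:key} and the three lemmas.
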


The following proposition gives a sufficient condition for \eqref{eq:W_star} to hold:
\begin{proposition}\label{prop:stability}
Assume that Assumptions \ref{assume:Model}-\ref{assume:G_bounded} hold  and that $(\mu_t)_{t\geq 0}$ is such that Conditions \ref{condition:mu1}-\ref{condition:Inf_mu} hold, and let $D\in\mathcal{X}$ be as in \ref{assume:Model}. In addition, for all $t\geq 1$  let
\begin{align*}
W_t=\inf_{(\theta,x')\in\Theta\times\setX}\frac{Q_{t,\theta}(x',D)}{Q_{t,\theta}(x',\setX)} 
\end{align*}
and assume that $\chi$ is such that $\chi(D)>0$ and that $\log W_t=\bigO_\P(1)$. Then, \eqref{eq:W_star} holds with $W^*_0=\chi(D)$ and $W^*_t=W_t$ for all $t\geq 1$.
\end{proposition}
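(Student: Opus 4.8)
The goal is to establish the stability condition \eqref{eq:W_star} for the random measure $\pi_{t,\setX}$ on the set $D$, given the lower bound $W_t=\inf_{(\theta,x')\in\Theta\times\setX}Q_{t,\theta}(x',D)/Q_{t,\theta}(x',\setX)$ on the one-step mass transferred into $D$. The plan is to directly estimate $\pi_{t,\setX}(D)$ from below by exploiting the Feynman-Kac structure recorded in Section \ref{sub:SO-FK_model}. Since $\pi_t$ is the normalized measure $\mathfrak{Z}_t^{-1}$ times the integral of $\prod_{s=1}^t Q_{s,\theta_s}(x_{s-1},\dd x_s)K_{\mu_s|\Theta}(\theta_{s-1},\dd\theta_s)$, and $\pi_{t,\setX}(D)=\pi_t(\Theta\times D)$, the idea is that the final transition $Q_{t,\theta_t}(x_{t-1},\cdot)$ already places at least a fraction $W_t$ of its total mass into $D$ regardless of the previous state $x_{t-1}$ and the parameter $\theta_t$.

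Concretely, first I would write out $\pi_{t,\setX}(D)$ as the ratio of two integrals sharing the same weights up to time $t-1$: the numerator has $\ind_D(x_t)$ inserted in the innermost integral $\int_\setX \ind_D(x_t) Q_{t,\theta_t}(x_{t-1},\dd x_t)=Q_{t,\theta_t}(x_{t-1},D)$, while the denominator has $\int_\setX Q_{t,\theta_t}(x_{t-1},\dd x_t)=Q_{t,\theta_t}(x_{t-1},\setX)$. By the definition of $W_t$ as an infimum over all $(\theta,x')$, we have the pointwise bound $Q_{t,\theta_t}(x_{t-1},D)\geq W_t\, Q_{t,\theta_t}(x_{t-1},\setX)$ for every admissible $(\theta_t,x_{t-1})$, $\P$-a.s. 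Substituting this bound into the numerator's integrand, all the remaining factors (the initial $\mu_0\otimes\chi$, the kernels up to $s=t-1$, the kernel $K_{\mu_t|\Theta}$, and $Q_{t,\theta_t}(x_{t-1},\setX)$) are identical to those of the denominator, so after cancellation one obtains $\pi_{t,\setX}(D)\geq W_t$ on the event $\{\mathfrak{Z}_t^\omega\in(0,\infty)\}$.

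I would then handle the degenerate case: on the complementary event where $\mathfrak{Z}_t^\omega\notin(0,\infty)$, the definition sets $\pi_t^\omega=\pi_0=\mu_0\otimes\chi$, whence $\pi_{t,\setX}(D)=\chi(D)$. This is precisely why the statement sets $W^*_0=\chi(D)$ and, more to the point, why the hypothesis $\chi(D)>0$ is needed. Taking $W^*_t=W_t$ for $t\geq 1$ (and noting $\inf_{t\geq 1}\P(\pi_{t,\setX}(D)\geq W_t^*)=1$ follows from the almost-sure bound just derived, together with the fact that on the degenerate event $\chi(D)$ may be smaller or larger but the argument should be arranged so that the claimed inequality still holds a.s.), the first requirement of \eqref{eq:W_star} is met. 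The second requirement, $\log W^*_t=\bigO_\P(1)$, is exactly the assumed hypothesis $\log W_t=\bigO_\P(1)$.

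The only genuine subtlety, and the step I would check most carefully, is the measurability and the clean cancellation in the ratio: one must verify that $x_{t-1}\mapsto Q_{t,\theta_t}(x_{t-1},D)$ and the infimum defining $W_t$ are measurable (which is guaranteed by the Remark after the standing assumptions, since $(\setX,\mathcal{X})$ is Polish and $\Theta$ compact), and that the bound $Q_{t,\theta}(x',D)\geq W_t Q_{t,\theta}(x',\setX)$ can legitimately be pulled outside the integral uniformly before integrating against the weights. Because $W_t$ does not depend on $(\theta,x')$, it factors out of every integral immediately, so this is essentially bookkeeping rather than a real obstacle; no appeal to Conditions \ref{condition:mu1}-\ref{condition:Inf_mu} or to Assumptions \ref{assume:Model}-\ref{assume:G_bounded} is actually required for the bound itself, those hypotheses presumably being inherited merely to place the proposition within the framework where $\pi_t$ is well-defined. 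I would therefore expect the proof to be short, with the main care being the treatment of the normalizing constant's possible degeneracy and the role of $\chi(D)>0$.
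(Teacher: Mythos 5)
Your core computation is exactly the paper's: write $\pi_{t,\setX}(D)$ as the ratio of path integrals, integrate out the last coordinate so the numerator carries $Q_{t,\theta_t}(x_{t-1},D)$ and the denominator $Q_{t,\theta_t}(x_{t-1},\setX)$, and apply the uniform bound $Q_{t,\theta}(x',D)\geq W_t\,Q_{t,\theta}(x',\setX)$ to conclude $\pi_{t,\setX}(D)\geq W_t$. The genuine gap is in how you dispose of the degenerate event $\{\mathfrak{Z}_t\notin(0,\infty)\}$, and it is tied to a claim of yours that is wrong. On that event $\pi_t=\pi_0=\mu_0\otimes\chi$, so $\pi_{t,\setX}(D)=\chi(D)$, and nothing forces $\chi(D)\geq W_t$; your sentence that ``the argument should be arranged so that the claimed inequality still holds a.s.'' is not an argument. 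Since the proposition takes $W_t^*=W_t$ for $t\geq 1$ (not, say, $W_t\wedge\chi(D)$), the conclusion $\inf_{t\geq 1}\P\big(\pi_{t,\setX}(D)\geq W_t\big)=1$ requires the degenerate event to be $\P$-null. That is precisely what Lemma \ref{lemma:Zt} provides, and Lemma \ref{lemma:Zt} is where Assumptions \ref{assume:Model}--\ref{assume:G_bounded} and Conditions \ref{condition:mu1}--\ref{condition:Inf_mu} are consumed: positivity of $\mathfrak{Z}_t$ uses \ref{assume:Model} together with \ref{condition:mu1}--\ref{condition:Inf_mu}, and finiteness uses \ref{assume:G_bounded}. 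So your assertion that these hypotheses are not ``actually required'' and are inherited ``merely to place the proposition within the framework'' is exactly backwards: they are what upgrades your main inequality from holding on an event of unknown probability to holding $\P$-a.s., which is the content of the proposition.

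A second, smaller point: your reading of why $\chi(D)>0$ is assumed is also off. It is not there to rescue the degenerate event (which is null anyway); it is there because \eqref{eq:W_star} asks for a family $(W_t^*)_{t\geq 0}$ indexed from $t=0$ with $\log W_t^*=\bigO_\P(1)$, and since $\pi_{0,\setX}=\chi$ the choice $W_0^*=\chi(D)$ needs $\chi(D)>0$ for $\log W_0^*$ to be finite (this $t=0$ term is what later enters Lemma \ref{lemma:gamma}). With the appeal to Lemma \ref{lemma:Zt} inserted in place of your hand-waving, your proof coincides with the paper's.
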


The next theorem is the counterpart  of Theorem \ref{thm:main}   when we use \ref{condition:mu_seq2}   instead of  \ref{condition:mu_seq}:

\begin{theorem}\label{thm:main2}
Assume that Assumptions \ref{assume:Model}-\ref{assume:G_bounded} hold and that $(\mu_t)_{t\geq 0}$ is such that Conditions \ref{condition:mu1}-\ref{condition:Inf_K} and \ref{condition:mu_seq2} are satisfied. In addition, assume that \eqref{eq:W_star} holds and let $r\in\mathbb{N}$ and $\tilde{\Theta}_\star\in\Theta$ be as in \ref{assume:Model}. Then,  for all $\epsilon\in(0,\infty)$, there exists a sequence $(\xi_{\epsilon,t})_{t\geq 1}$ of $[0,1]$-valued random variables, depending on $(\mu_t)_{t\geq 1}$  only through (i) the sequence $(W^*_{t})_{t\geq 0}$ defined in \eqref{eq:W_star}, (ii)  the sequence $(\Gamma^\mu_{\delta})_{\delta\in(0,\infty)}$ defined in \ref{condition:Inf_mu}, (iii) the random variable $\Gamma^\mu$ defined in \ref{condition:Inf_K} and (iv) the sequence $( (k_t, f_{t,r+1}))_{t\geq 1}$ defined in \ref{condition:mu_seq2}, such that $\xi_{\epsilon,t}\PP 0$ and such that $\pi_{t,\Theta}\big(\mathcal{N}_\epsilon(\tilde{\Theta}_\star)\big)\geq 1-\xi_{\epsilon,t}$ for all $t\geq 1$ and $\P$-a.s.

\end{theorem}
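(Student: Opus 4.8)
The plan is to run the proof of Theorem \ref{thm:main} essentially unchanged, isolating the two points at which Condition \ref{condition:mu_seq} is invoked and replacing them by the corresponding estimates extracted from Condition \ref{condition:mu_seq2}. Writing $\Theta_\epsilon=\mathcal N_\epsilon(\tilde\Theta_\star)$, $\ell^\star=\max_{\theta\in\Theta}\tilde l(\theta)$, and denoting by $N_t^\epsilon$ the path integral defining $\mathfrak Z_t$ in \eqref{eq:normalizing} but with the final variable constrained to $\theta_t\in\Theta_\epsilon^c$, the quantity to bound is $\pi_{t,\Theta}(\Theta_\epsilon^c)=N_t^\epsilon/\mathfrak Z_t$. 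As in Theorem \ref{thm:main}, I would first reduce the problem, using the smoothness bound \ref{assume:Model_smooth}, the block-stationarity \ref{assume:Model_stationary}, and Definition \ref{def:consistent} together with \ref{assume:Model_init}, \ref{assume:Model_G_lower} and \ref{assume:Model_sup}, to the statement that on a path whose parameter stays within a ball of radius $\delta$ of a fixed $\theta$ from some time onwards, the log-potential is $t\,\tilde l(\theta)+g(\delta)\,\bigO_\P(t)+\smallo_\P(t)$ uniformly in $\theta$, where the $g(\delta)$ term can be made $<c_\epsilon t/2$ for small $\delta$ via the exponential-moment control built into Definition \ref{def:consistent}; the truncation of the dynamics onto $\Theta$ is absorbed exactly as in Theorem \ref{thm:main} through the uniform lower bound $\Gamma^\mu$ of \ref{condition:Inf_K}. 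Since $\tilde l$ is continuous with maximizing set $\tilde\Theta_\star$ and $\mathcal N_{\delta_\star}(\tilde\Theta_\star)\subseteq\Theta$ by \ref{assume:Model_ThetaStar}, there is $c_\epsilon>0$ with $\sup_{\theta\notin\Theta_{\epsilon/2}}\tilde l(\theta)\le \ell^\star-c_\epsilon$, so it suffices to show $\log\mathfrak Z_t\ge t\,\ell^\star-\smallo_\P(t)$ and $\log N_t^\epsilon\le t(\ell^\star-c_\epsilon)+\smallo_\P(t)$.

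For the denominator I would restrict the $\theta_0$-integral to $B_{\delta}(\theta^\circ)$ for a fixed $\theta^\circ\in\tilde\Theta_\star$, which carries positive mass by \ref{condition:mu1}, and restrict the parameter path to the event that the free increments satisfy $\sum_{i=m+1}^sU_i\in B_{\delta}(0)$ for all $m<s\le t$ with $m=r+1$. On this event every $\theta_s$ lies in $\mathcal N_{2\delta}(\tilde\Theta_\star)$, and by \ref{C4'3} the event has conditional log-probability at least $-(t-m)f_{t,r+1}(\delta)=\smallo_\P(t)$ since $f_{t,r+1}(\delta)\to0$. Combining this with the reduced log-potential estimate and with \eqref{eq:W_star} to control the state marginal on $D$ yields the lower bound, whose dependence on $(\mu_t)$ enters only through $(W^*_t)$, $(\Gamma^\mu_\delta)$, $\Gamma^\mu$ and $f_{t,r+1}$, as required.

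For the numerator I would split according to the behaviour of the free increments after a burn-in index $l_t\in\{k_t,\dots,2k_t\}$. On the escape event $\{\exists s\in\{l_t+1,\dots,t\}:\sum_{i=l_t+1}^sU_i\notin B_{\delta}(0)\}$, Condition \ref{C4'2} gives conditional probability at most $\exp(-(t-l_t)/f_{t,r+1}(\delta))$, which after integrating out the potentials (uniformly controlled by \ref{assume:G_bounded}) is negligible on the exponential scale. On the complementary localized event the parameter stays within $B_{2\delta}$ of its position at time $l_t$; taking $\delta<\epsilon/4$, a path ending in $\Theta_\epsilon^c$ then remains in $\Theta_{\epsilon/2}^c$ throughout $\{l_t,\dots,t\}$, so the reduced log-potential over this window is at most $(t-l_t)(\ell^\star-c_\epsilon)+\smallo_\P(t)$, while the initial window $\{1,\dots,l_t\}$ contributes $\bigO_\P(l_t)$ by \ref{assume:G_bounded}. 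Since $l_t\le 2k_t=\smallo(t)$ by the defining property of $(k_t)_{t\ge1}$ in \ref{condition:mu_seq2}, the initial window is $\smallo_\P(t)$ and the bound on $\log N_t^\epsilon$ follows; taking the ratio gives $\pi_{t,\Theta}(\Theta_\epsilon^c)\le \exp(-c_\epsilon t+\smallo_\P(t))=:\xi_{\epsilon,t}$ with $\xi_{\epsilon,t}=\smallo_\P(1)$.

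The main obstacle, and the step I would treat most carefully, is calibrating the burn-in scale $k_t$: it must be large enough ($1/k_t\to0$) for the localization and staying rates $f_{t,r+1}(\delta)$ to have become small, yet small enough ($k_t=\smallo(t)$) that the uncontrolled initial window $\{1,\dots,2k_t\}$ contributes only $\smallo_\P(t)$ to the free energy and hence does not erode the strictly negative gap $c_\epsilon$; these are precisely the two requirements imposed on $(k_t)_{t\ge1}$ in \ref{condition:mu_seq2}, and reconciling them is what replaces the epoch bookkeeping of \ref{condition:mu_seq}. A secondary point is the passage from the free random walk $(\sum U_i)$ governing \ref{C4'2}--\ref{C4'3} to the truncated parameter chain actually appearing in $\mathfrak Z_t$ and $N_t^\epsilon$, which I would handle by reusing the $\Gamma^\mu$-coupling already established in the proof of Theorem \ref{thm:main}; doing so is what keeps the present argument a genuine modification of the earlier one rather than a fresh proof.
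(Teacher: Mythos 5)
Your proposal is correct and follows essentially the same route as the paper's own proof of Theorem \ref{thm:main2}: the paper likewise splits the numerator at a burn-in time $l_t\in\{k_t,\dots,2k_t\}$ (rounded to a multiple of $r$), re-initializes the state there with the dominating measure from \ref{assume:Model_sup} so that Lemma \ref{lemma::upper_general} applies with \ref{C4'2} supplying the escape estimate and with the pre-$l_t$ window absorbed into an $\smallo_\P(t)$ term, and lower-bounds the denominator by a single application of Lemma \ref{lemma::lower_general} from the fixed time $r$ with \ref{C4'3}, where the positivity of $\bar{\pi}_{r}\big(\mathcal{N}_\epsilon(\tilde{\Theta}_\star)\big)$ is obtained exactly by your \ref{condition:mu1}/$\Gamma^\mu_\delta$ localization of the initial segment. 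The two calibration requirements on $(k_t)_{t\geq 1}$ that you single out, and the $\Gamma^\mu$-coupling between the free walk and the truncated chain, are precisely the ingredients the paper exploits, so your argument is a faithful reconstruction rather than a different proof.
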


Finally, the following theorem studies the behaviour of $\pi_{t,\setX}$ as $t\rightarrow\infty$:
\begin{theorem}\label{thm:pred}
Consider the set-up of Theorem \ref{thm:main} (resp.~the set-up of Theorem \ref{thm:main2}) and let $r\in\mathbb{N}$ be as in \ref{assume:Model}. In addition, assume  that $(\mu_t)_{t\geq 1}$ is such that Condition \ref{condition:mu_extra} holds and  that \ref{assume:Model} holds with $\tilde{\Theta}_\star=\{\tilde{\theta}_\star\}$ for some $\tilde{\theta}_\star\in \Theta$. Then, there exists a sequence $(s_t)_{t\geq 1}$ in $\{sr,\,s\in\mathbb{N}_0\}$, depending on $(\mu_t)_{t\geq 1}$ only through the sequence $(f'_t)_{t\geq 1}$ defined in \ref{condition:mu_extra} and such that $\sup_{t\geq 1}(s_t/t)< 1$ and   $\lim_{t\rightarrow\infty}(t-s_t)=\infty$, and there exists a sequence $(\xi_t)_{t\geq 1}$ of $[0,1]$-valued random variables, depending on $(\mu_t)_{t\geq 1}$ only through (i) the  sequence $(\Gamma^\mu_{\delta})_{\delta\in(0,\infty)}$ defined in \ref{condition:Inf_mu}, (ii) the random variable $\Gamma^\mu$ defined in \ref{condition:Inf_K}, (iii) the sequence $((t_p,v_p,f_p))_{p\geq 1}$ defined in \ref{condition:mu_seq} (resp.~the sequence $((k_t,f_{t,r+1}))_{t\geq 1}$ defined in \ref{condition:mu_seq2}) and (iv) the sequence $(f'_t)_{t\geq 1}$ defined in \ref{condition:mu_extra}, such that $\xi_t\PP 0$ and such that
\begin{align*}
\|\pi_{t,\setX}(\dd x)-\bar{p}_{(s_t+1):t}^{\pi_{s_t,\setX}}(\dd x|\tilde{\theta}_\star)\|_{\mathrm{TV}}\leq \xi_{t},\quad\forall t\geq 1,\quad\P-a.s.
\end{align*}
\end{theorem}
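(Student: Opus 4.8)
The plan is to fix, for each $t$, a block--boundary time $s_t\in\{sr,\,s\in\mathbb{N}_0\}$ with $s_t\to\infty$ and $\sup_t s_t/t<1$ (so that $t-s_t\to\infty$ while the window stays a definite fraction of $t$), together with a confinement radius $\epsilon_t\downarrow 0$, and to compare the two filters over the single window $(s_t,t]$. First I would invoke the Feynman--Kac recursion together with the disintegration $\pi_{s_t}(\dd(\theta,x))=\pi_{s_t,\setX}(\dd x)\,\pi^{\Theta|x}_{s_t}(\dd\theta)$ of the joint filter at time $s_t$ to rewrite $\pi_{t,\setX}$ as a reweighting of the comparison measure $\bar{p}^{\pi_{s_t,\setX}}_{(s_t+1):t}(\dd x|\tilde\theta_\star)$. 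Writing $\bar{\mathbb{Q}}$ for the normalized path law on $x_{s_t:t}$ attached to $\pi_{s_t,\setX}(\dd x_{s_t})\prod_{s=s_t+1}^{t}Q_{s,\tilde\theta_\star}(x_{s-1},\dd x_s)$ (well defined since the relevant normalizers are positive under the consistency hypotheses), I would factor each $\theta$--dependent transition through the fixed--parameter one and integrate out the whole parameter path to produce the effective weight
\begin{align*}
\bar R(x_{s_t:t})=\int \pi^{\Theta|x_{s_t}}_{s_t}(\dd\theta_{s_t})\prod_{s=s_t+1}^{t}K_{\mu_s|\Theta}(\theta_{s-1},\dd\theta_s)\,\exp\Big(\sum_{s=s_t+1}^{t}\big[\log q_{s,\theta_s}(x_s|x_{s-1})-\log q_{s,\tilde\theta_\star}(x_s|x_{s-1})\big]\Big),
\end{align*}
so that for any $\varphi:\setX\to[-1,1]$ one has $\int\varphi\,\dd\pi_{t,\setX}=\E_{\bar{\mathbb{Q}}}[\varphi\,\bar R]/\E_{\bar{\mathbb{Q}}}[\bar R]$ while $\int\varphi\,\dd\bar{p}^{\pi_{s_t,\setX}}_{(s_t+1):t}(\cdot|\tilde\theta_\star)=\E_{\bar{\mathbb{Q}}}[\varphi]$.

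The second step reduces the total variation distance to an $L^1$ statement about $\bar R$. Since $|\varphi|\le 1$, the difference of the two expressions above equals $\mathrm{Cov}_{\bar{\mathbb{Q}}}(\varphi,\bar R)/\E_{\bar{\mathbb{Q}}}[\bar R]$, whence
\begin{align*}
\big\|\pi_{t,\setX}(\dd x)-\bar{p}^{\pi_{s_t,\setX}}_{(s_t+1):t}(\dd x|\tilde\theta_\star)\big\|_{\mathrm{TV}}\le \frac{2\,\E_{\bar{\mathbb{Q}}}\big|\bar R-1\big|}{\E_{\bar{\mathbb{Q}}}[\bar R]}.
\end{align*}
It therefore suffices to show that $\E_{\bar{\mathbb{Q}}}|\bar R-1|=\smallo_\P(1)$ and that $\E_{\bar{\mathbb{Q}}}[\bar R]$ stays bounded away from $0$ with probability tending to one; the first of these forces the numerator to vanish and, a fortiori, the denominator to stay near $1$.

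To control $\bar R$ I would work on the good event that the parameter chain never leaves $B_{\epsilon_t}(\tilde\theta_\star)$ on $[s_t,t]$. Its complement is handled in two pieces: Theorem \ref{thm:main} (resp.\ Theorem \ref{thm:main2}), applied at time $s_t$ with $\tilde\Theta_\star=\{\tilde\theta_\star\}$, gives $\pi_{s_t,\Theta}\big(B_{\epsilon_t}(\tilde\theta_\star)\big)\ge 1-\xi_{\epsilon_t,s_t}$, whereas Condition \ref{condition:mu_extra} bounds the probability that the free increment walk $\sum_i U_i$ exits $B_{\epsilon_t}(0)$ over the window by $\exp\big(-(t-s_t)/f'_t(t-s_t,\epsilon_t)\big)$. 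Since $B_{\epsilon_t}(\tilde\theta_\star)\subseteq\mathcal{N}_{\delta_\star}(\tilde\Theta_\star)\subseteq\Theta$ for $t$ large, the truncation in $K_{\mu_s|\Theta}$ is inactive there, so the truncated chain coincides with the free walk and Condition \ref{condition:Inf_K} keeps the truncation normalizers bounded below. On the good event the smoothness bound \ref{assume:Model_smooth} yields $|\log\bar R|\le g(2\epsilon_t)\,\Phi$ with $\Phi=\sum_{s=s_t+1}^t\varphi_s(x_{s-1},x_s)$, and the $(r,\delta_\star,\tilde l,(\varphi_s))$--consistency of $\chi$ together with the sandwiching measures $\underline{\chi}_r,\bar\chi_r$ of \ref{assume:Model_G_lower}--\ref{assume:Model_sup} (Definition \ref{def:consistent}) transfers the exponential--moment control to the window started at $\pi_{s_t,\setX}$, giving $\E_{\bar{\mathbb{Q}}}[\exp(\delta_\star\Phi)]\le e^{\bigO_\P(1)(t-s_t)}$. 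A Chernoff split at a level $\{\Phi>K\}$ then dominates both the good--event contribution of $|\bar R-1|$ and the residual mass on the bad event.

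The delicate part, and the main obstacle, is the simultaneous calibration of the three scales. Because $\sup_t s_t/t<1$ forces the window length $t-s_t$ to grow linearly in $t$, the smoothness penalty $g(2\epsilon_t)\,\Phi\approx g(2\epsilon_t)(t-s_t)$ can be made $\smallo(1)$ only by sending $\epsilon_t\to 0$ fast enough that $g(\epsilon_t)(t-s_t)\to 0$; yet $\epsilon_t$ must not shrink so fast that the confinement probability collapses, which is exactly where the hypothesis $\lim_t f'_t(1/\delta'_t,\delta'_t)=0$ with $\delta'_t\to0$, and the monotonicity of $f'_t(\cdot,\epsilon)$, are used to pin down an admissible pair $(s_t,\epsilon_t)$, and where a quantitative--in--$\epsilon$ reading of the $\smallo_\P(1)$ bound of Theorems \ref{thm:main}--\ref{thm:main2} is needed so that $\xi_{\epsilon_t,s_t}\to0$ along the chosen radius. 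Reconciling these with the exponential tails of $\Phi$ — so that the truncation level $K$ can be placed between $\bigO(t-s_t)$ and $1/g(2\epsilon_t)$ — is the crux; once a consistent choice is made, $\E_{\bar{\mathbb{Q}}}|\bar R-1|=\smallo_\P(1)$ follows, the dependence of the resulting $\xi_t$ on $(\mu_t)$ is seen to enter only through $\Gamma^\mu_\delta$, $\Gamma^\mu$, the window data and $f'_t$ as claimed, and the displayed bound holds with $s_t$ a multiple of $r$.
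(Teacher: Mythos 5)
Your reduction is structurally the same mechanism the paper uses, so the architecture is sound: your integrated likelihood-ratio weight $\bar R$, the confinement event for the parameter path, the smoothness bound \ref{assume:Model_smooth} giving $|\log \bar R|\le g(2\epsilon_t)\Phi$, and the exponential-moment control of $\Phi$ via $(r,\delta_\star,\tilde l,(\varphi_t))$-consistency are exactly the ingredients of the paper's Lemmas \ref{lemma:upper_X} and \ref{lemma:lower_X}, which sandwich $\Lambda^{\pi_{s_t}}_{(s_t+1):t}\big(\mathcal{N}_{\epsilon}(\{\tilde\theta_\star\})\times A\big)$ between $\bar P^{\pi_{s_t,\setX}}_{(s_t+1):t}(\tilde\theta_\star)\big(\bar p^{\pi_{s_t,\setX}}_{(s_t+1):t}(A|\tilde\theta_\star)+\xi^+_t\big)$ and a matching lower bound on $\mathcal{N}_{2\epsilon}\times\setX$, and which are then combined with the concentration $\pi_{t,\Theta}\big(\mathcal{N}_{\epsilon'_t}(\{\tilde\theta_\star\})\big)\ge 1-\xi_t$ from Theorem \ref{thm:main} (resp.\ \ref{thm:main2}) through the elementary inequality $\pi_{t,\setX}(A)\le \pi_{t,\Theta}\big(\Theta\setminus\mathcal{N}_{\epsilon'_t}\big)+\pi_t\big(\mathcal{N}_{\epsilon'_t}\times A\big)$.

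The genuine gap is precisely the point you label ``the crux'' and leave unresolved, and your framing makes it unresolvable. You read $\sup_{t\ge1}(s_t/t)<1$ as forcing $t-s_t$ to grow linearly in $t$; then $g(\epsilon_t)(t-s_t)\to0$ forces $\epsilon_t$ to decay at rate $o$ of $g^{-1}(1/t)$, which would indeed require a rate-quantified (in $\epsilon$) version of Theorems \ref{thm:main}--\ref{thm:main2}. No such quantitative bound exists or is needed: those theorems are fixed-$\epsilon$ and purely qualitative, and moreover Condition \ref{condition:mu_extra} controls the exit probability only at the pair $(1/\delta'_t,\delta'_t)$, not at (linear window, fast radius). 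The paper resolves the tension by going the opposite way: the window grows \emph{slowly}. It first extracts, by diagonalization over fixed $\epsilon$, a \emph{slowly} decaying radius $\epsilon'_t\ge\delta'_t$ with $\pi_{t,\Theta}\big(\mathcal{N}_{\epsilon'_t/2}(\{\tilde\theta_\star\})\big)\ge1-\xi_t$, $\xi_t=\smallo_\P(1)$, and only \emph{then} chooses $s_t$ satisfying $t-1/\delta'_t\le s_t<t$, $\epsilon'_t\ge\epsilon'_{s_t}/2$, with $t-s_t\to\infty$ slowly enough that $\log g(3\epsilon'_t)/(t-s_t)\to-\infty$ and $(t-s_t)\,f'_t(1/\delta'_t,\delta'_t)\to0$ (its displays \eqref{eq:g_lim0}--\eqref{eq:g_lim}); because the radius is fixed before the window, all constraints are simultaneously satisfiable, the confinement cost is $\exp\{-(t-s_t)/f'_t(1/\delta'_t,\delta'_t)\}$ by \ref{condition:mu_extra}, and the smoothness penalty $g(3\epsilon'_t)e^{\bigO_\P(1)(t-s_t)}$ vanishes since $g(3\epsilon'_t)$ is superexponentially small in the (slowly growing) window length. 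Note also that in the paper's own construction $s_t/t\to1$, so the condition ``$\sup_{t\ge1}(s_t/t)<1$'' you treated as binding is not what the proof actually delivers (it appears to be a slip in the statement; only $s_t<t$ and $t-s_t\to\infty$ are used). Once you allow slow windows, your own scheme closes without any new input; as written, however, the proposal does not establish that an admissible calibration $(s_t,\epsilon_t)$ exists, and under your linear-window reading none does.
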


Remark that Theorem \ref{thm:pred} is only a partial result, in the sense that we ideally want to show  that $
\|\pi_{t,\setX}(\dd x)-\bar{p}_{1:t}^{\chi}(\dd x|\tilde{\theta}_\star)\|_{\mathrm{TV}}\PP 0$. One way to obtain such a result using Theorem \ref{thm:pred} is to show that, as $t\rightarrow\infty$,
\begin{align*}
\|\bar{p}_{(s_t+1):t}^{\pi_{s_t,\setX}}(\dd x|\tilde{\theta}_\star)-\bar{p}_{1:t}^{\chi}(\dd x|\tilde{\theta}_\star)\|_{\mathrm{TV}}=\|\bar{p}_{(s_t+1):t}^{\pi_{s_t,\setX}}(\dd x|\tilde{\theta}_\star)-\bar{p}_{(s_t+1):t}^{\bar{p}_{1:s_t}^{\chi}(\dd x|\tilde{\theta}_\star)}(\dd x|\tilde{\theta}_\star)\|_{\mathrm{TV}}\PP0
\end{align*}
which, informally speaking, requires to study the speed at  which the distribution of $X_t$ under the model $\{ (G_{t,\theta_\star}^\omega)_{t\geq 1}, (M^\omega_{t,\theta_\star})_{t\geq 1},\chi\}$  forgets its initial distribution as $t$ increases. Such a result  is given in Proposition \ref{prop:forget} of the \hyperlink{SM}{Supplementary Material}  in the context of SSMs.

\section{Technical assumptions}

\subsection{Technical assumptions  for the results in Section \ref{sec:SSM}\label{app:assume_online}}

For all $t\geq 1$ we let $i_t=t-\tau\lfloor (t-1)/\tau\rfloor$ and, for all   $s\in\{1,\dots,\tau\}$ and $(\theta,x')\in\Theta\times \setX$,  we let $m_{s,\theta}(x|x')$ denotes  the  Radon-Nikodym derivative of $M_{s,\theta}(x',\dd x)$ w.r.t.~$\lambda(\dd x)$. Next, for all $\theta\in\Theta$ and $z=(y_1,\dots,y_\tau)\in\setY^{\tau}$,   we let ${L}_\theta[z]$ be the unnormalized kernel on $(\setX,\mathcal{X})$ defined by
\begin{align*}
{L}_\theta[z](x_1,A)=\int_{\setX^{\tau}} \ind_A(x_{\tau+1} ) \prod_{s=1}^{\tau}f_{s,\theta}(y_s|x_s)M_{s+1,\theta}(x_{s},\dd x_{s+1}),\quad (x_1,A)\in\setX\times \mathcal{X}.
\end{align*}
Finally, to state our assumptions on the SSM we   need the following definition:

\begin{definition}\label{def:LD_sets}
A set $C\in\mathcal{X}$  is a  $\tau$-local Doeblin set if there exist (i) two positive functions $\epsilon^-_C:\setY^{\tau}\rightarrow(0,\infty)$ and  $\epsilon^+_C:\setY^{\tau}\rightarrow(0,\infty)$, (ii) a family $\{\lambda^\theta_C[z],\,(\theta,z)\in \Theta\times \setY^{\tau}\}$ of probability measures on $(\setX,\mathcal{X})$ such that $\lambda^\theta_C[z](C)=1$ for all $(\theta,z)\in\Theta\times \setY^{\tau}$ and (iii) positive functions  $\{\psi^\theta_C[z],\,(\theta,z)\in \Theta\times \setY^{\tau}\}$, such that
\begin{align*}
\epsilon^-_C(z)\psi^\theta_C[z](x)\lambda_C^\theta[z](A)\leq L_\theta[z](x,A\cap C)\leq \epsilon^+_C(z)\psi^\theta_C[z](x)\lambda_C^\theta[z](A),\quad\forall (x,A)\in C\times\mathcal{X}.
\end{align*} 
\end{definition}

The following three assumptions are from \citet{Douc_MLE}:

\setcounter{assumptionSSM}{0} 

\begin{assumptionSSM}\label{assumeSSM:K_set}
There exists a set $K\in\mathcal{Y}^{\otimes \tau}$ such that the following holds:
\begin{enumerate}
\item $\P(Z_1\in K)>2/3$.
\item For all $\eta\in(0,\infty)$ there exists an $\tau$-local Doeblin set $C\in\mathcal{X}$ such that,    with $\epsilon_C^+$ and $\epsilon_C^-$ as in Definition \ref{def:LD_sets},
\begin{align*}
\sup_{x\in C^c}{L}_\theta[z](x,\setX)\leq \eta \sup_{x\in\setX}{L}_\theta[z](x,\setX),\quad  \inf_{z\in K}\frac{\epsilon_C^-(z)}{\epsilon_C^+(z)}>0,\quad\forall(\theta,z)\in\Theta\times K.
\end{align*}
\end{enumerate}
\end{assumptionSSM}

\begin{assumptionSSM}\label{assumeSSM:D_set}
For all compact set $E\in\mathcal{X}$ such that $\lambda(E)>0$, we have
\begin{align*}
\E\Big[\log^-\inf_{(\theta,x)\in\Theta\times E} f_{s,\theta}(Y_s|x)\Big]<\infty,\quad \inf_{(\theta,x,x')\in\Theta\times E\times E} m_{s+1,\theta}(x|x')>0,\quad\forall s\in\{1,\dots,\tau\}.
\end{align*}
\end{assumptionSSM}

\begin{assumptionSSM}\label{assumeSSM:G}
For all $s\in\{1,\dots,\tau\}$ we have both $f_{s,\theta}(y|x)>0$ for all $(\theta,y,x)\in\Theta\times\setY\times\setX$  and
\begin{align*}
\E\Big[\log^+\sup_{(\theta,x)\in\Theta\times\setX}f_{s,\theta}(Y_s|x)\Big]<\infty.
\end{align*}
\end{assumptionSSM}

\begin{assumptionSSM}\label{assumeSSM:smooth}
Let $m_{1,\theta}(x|x')=m_{\tau+1,\theta}(x|x')$  and, for all $t\in\mathbb{N}$, let  $q_{t,\theta}(x|x')=f_{t,\theta}(Y_t|x )m_{t,\theta}(x|x')$ for all    $(\theta,x,x')\in\Theta\times\setX^2$.  Then, we $\P$-a.s.~have  $ \int_{\setX}\big\{\sup_{(\theta,x')\in \Theta \times \setX} q_{\tau,\theta}(x|x')\big\}\lambda(\dd x)<\infty$. 
In addition, there exist a  $\delta\in(0,\infty)$, a continuous and strictly  increasing function $g':[0,\infty)\rightarrow [0,\infty)$   such that $g'(0)=0$  and a family  $\{\varphi'_s\}_{s=1}^\tau$ of   measurable functions defined on $\setY\times \setX^2$, for which the following conditions hold:
 \begin{enumerate}
 \item  For all  $(x,x')\in\setX^2$, all $(\theta,\theta')\in \Theta\times\Theta$ and all $s\in\{1,\dots,\tau\}$, we have
\begin{align*}
\big|\log q_{s,\theta}(x|x')   -\log q_{s,\theta'}(x|x') \big|\leq  g'\big(\|\theta-\theta'\|\big) \varphi'_s(Y_s,x',x).
\end{align*}
\item There exists  a measurable function $F:\setY^\tau\rightarrow [1,\infty)$ such that $\E[\log F(Z_1)]<\infty$ and such that, letting $Z_t=(Y_{(t-1)\tau+1},\dots,Y_{t\tau})$ for all $t\geq 1$, for all $\theta\in\Theta$ and $t\geq 2$  we have, $\P$-a.s.,
\begin{align*}
\int_{\setX^{\tau t+1}} \exp\Big(\delta \sum_{s=1}^{\tau t}\varphi'_{i_s}(Y_s,x_{s-1},x_s)\Big)\chi_\theta(\dd x_0) &\prod_{s=1}^{\tau t} q_{i_s,\theta}(x_s| x_{s-1})\lambda(\dd x_s)\leq \prod_{s=1}^t F(Z_s). 
\end{align*}
\item For all $\theta\in\Theta$ we have $\frac{1}{t}\log^+ W_{t,\theta}=\bigO_\P(1)$ and $\inf_{t\geq 2}\P(W_{t,\theta}<\infty)=1$  where
\begin{align*}
 W_{t,\theta}&=\int_{\setX^{\tau (t-1)+1}}\hspace{-0.6cm}  \exp\Big(\delta \hspace{-0.2cm}\sum_{s=\tau+1}^{\tau t}  \varphi'_{i_s}(Y_s,x_{s-1},x_s)\Big) \bar{q}_{\tau}(x_\tau) \lambda(\dd x_\tau)\prod_{s=\tau+1}^{\tau t}   q_{i_s,\theta}(x_s| x_{s-1})\lambda(\dd x_s).
\end{align*}
for all $t\geq 2$ and with $\bar{q}_{\tau}(x)=\sup_{(\theta',x')\in \Theta \times \setX}q_{\tau,\theta'}(x| x')$ for all $x\in\setX$.

\end{enumerate}

\end{assumptionSSM}

\subsection{Technical assumption   for the results in Section \ref{sec:MLE}\label{app:assume_mle}}

\begin{assumptionMLEB}\label{assumeSSMB:smooth_MLE}
We have $\tilde{L}_T(\theta)>0$ for all $\theta\in\Theta$ and  $\sup_{(\theta,x)\in\Theta\times\setX} \tilde{f}_{s,\theta}(\tilde{y}_s|x)<\infty$ for all $s\in\{1,\dots,T\}$. In addition,  there exist   a continuous and strictly  increasing function $g':[0,\infty)\rightarrow [0,\infty)$   such that $g'(0)=0$  and a family  $\{\varphi'_s\}_{s=1}^T$ of   measurable functions defined on $\setX^2$ and such that $\varphi'_1(x',x)=\varphi'_1(x,x)$ for all $(x,x')\in\setX^2$, for which the following conditions hold:
 \begin{enumerate}
 \item   For all  $(x,x',\theta,\theta')\in \setX^2 \times\Theta^2$ and all $s\in\{2,\dots,T\}$, we have
\begin{align*}
\big|\log\big(\tilde{f}_{s,\theta}(\tilde{y}_s|x)\tilde{m}_{s,\theta}(x|x')\big)  -\log\big(\tilde{f}_{s,\theta'}(\tilde{y}_s|x)\tilde{m}_{s,\theta'}(x|x')\big) \big|\leq  g'\big(\|\theta-\theta'\|\big) \varphi'_s(x',x).
\end{align*}
\item We have, for all $(x, \theta,\theta')\in \setX \times \Theta^2$,
\begin{align*}
\big|\log\big(\tilde{f}_{1,\theta}(\tilde{y}_1|x)\tilde{p}_{\theta}(x)\big)  -\log\big(\tilde{f}_{1,\theta'}(\tilde{y}_1|x)\tilde{p}_{\theta'}(x)\big) \big|\leq  g'\big(\|\theta-\theta'\|\big) \varphi'_1(x,x)|.
\end{align*}

\item  There exists a $\delta\in(0,\infty)$ such that, with the convention that $x_{s-1}=x_s$ when $s=1$.  
\begin{align*}
  \int_{\setX^{T}}\hspace{-0.3cm} \exp\Big(  \delta\sum_{s=1}^{T}\varphi'_s(x_{s-1},x_{s})\Big)\tilde{\chi}_\theta(\dd x_1)\tilde{f}_{1,\theta}(\tilde{y}_{1}|x_{1})  \prod_{s=2}^{T}\tilde{f}_{s,\theta }(\tilde{y}_{s}|x_{s})\tilde{M}_{s,\theta}(x_{s-1},\dd x_{s})<\infty.
\end{align*}

 \end{enumerate}

\end{assumptionMLEB}

\clearpage

\makeatletter
\@removefromreset{theorem}{section}
\@removefromreset{lemma}{section}
\@removefromreset{figure}{section}
\@removefromreset{proposition}{section}
\@removefromreset{definition}{section}
\@removefromreset{corollary}{section}
\@removefromreset{remark}{section}
\counterwithout{equation}{section}
\makeatother

\renewcommand*{\thelemma}{S\arabic{lemma}} 
\renewcommand*{\thefigure}{S\arabic{figure}} 
\renewcommand*{\thetheorem}{S\arabic{theorem}}
\renewcommand*{\theproposition}{S\arabic{proposition}}
\renewcommand*{\thedefinition}{S\arabic{definition}}
\renewcommand*{\thecorollary}{S\arabic{corollary}}
\renewcommand*{\theremark}{S\arabic{remark}}
\newcommand{\snum}{S}
\renewcommand{\theequation}{\snum.\arabic{equation}}
 
\renewcommand{\thepage}{S\arabic{page}}

\renewcommand{\thesection}{S\the\numexpr\value{section}-3\relax}

\begin{center}

\Large{\hypertarget{SM}{Supplementary Material}  for ``Self-Organizing State-Space Models with\\
\vspace{0.1cm}

 Artificial Dynamics'' }
\vspace{0.7cm}

\large{Yuan Chen$^{(1)}$, Mathieu Gerber$^{(1)}$, Christophe Andrieu$^{(1)}$, Randal Douc$^{(2)}$}\\
\vspace{0.4cm}

\small{(1) School of Mathematics, University of Bristol, UK }\\
\vspace{0.1cm}

\small{(2) SAMOVAR, Telecom SudParis, Institut Polytechnique de Paris, France}

\vspace{0.2cm}

\end{center}

\tableofcontents

\addtocontents{toc}{\protect\setcounter{tocdepth}{2}}

\setcounter{lemma}{0}
\setcounter{theorem}{0}
\setcounter{proposition}{0}
\setcounter{definition}{0}
\setcounter{corollary}{0}
\setcounter{equation}{0}
\setcounter{page}{1}

\section{Set-up, notation and references to the main article}

Throughout this document, unless  stated otherwise, we consider the general set-up introduced in Section \ref{sub:gen_SetUp}  and  the notation   is as introduced in the main article. All references to objects (such as equations, sections, results or assumptions) not defined in this document refer to objects defined in the main article.

\section{Extensions}

\subsection{SO-SSMs where    \texorpdfstring{$X$}{Lg} moves before \texorpdfstring{$\theta$}{Lg}}\label{supp:X_first}

In this subsection we consider the set-up of Section \ref{sec:SSM}, and thus $\Theta\subset\R^d$ is assumed to be a regular compact set (see Definition \ref{def:sharp}) and, for some $\tau\in\mathbb{N}$, the observation process $(Y_t)_{t\geq 1}$ is assumed to be a $\tau$-stationary process and SSM \eqref{eq:SSM} is assumed to be a $\tau$-periodic SSM (see Definition \ref{def:periodic}).

\subsubsection{The model}
The self-organizing SSM \eqref{eq:SO-SSM_new2} moves $\theta_t$ before $X_t$, but we can also consider an SO-SSM where $X_t$ moves before $\theta_t$. In this case, the SO-SSM assumes the following generative model  for the observation process $(Y_t)_{t\geq 1}$:  
\begin{align}\label{eq:SO-SSM_new}
\theta_1\sim\pi_1(\dd\theta_1),\quad X_1\sim \chi_{\theta_1}(\dd x_1),\quad \begin{cases}
Y_t|(\theta_t,X_t)\sim f_{t,\theta_t}(y|X_t)\dd y,\\
X_{t+1}|(\theta_t, X_t)\sim M_{t+1,\theta_{t}}(X_{t},\dd x_{t+1}),\\
 \theta_{t+1}|\theta_{t}\sim K_{t+1}(\theta_{t},\dd \theta_{t+1}) 
\end{cases} \quad t\geq 1.
\end{align}

\subsubsection{Technical assumptions}

In order to obtain theoretical guarantees for this SO-SSM we need to replace Assumption \ref{assumeSSM:smooth} by the following Assumption \ref{assumeSSMB:smooth}:

\setcounter{assumptionSSMB}{3} 

\begin{assumptionSSMB}\label{assumeSSMB:smooth}
Let $q_{t,\theta}(x|x')=f_{t,\theta}(Y_t|x')m_{t+1,\theta}(x|x')$ for all $(t,\theta,x,x')\in\mathbb{N}\times\Theta\times\setX^2$. Then, we $\P$-a.s.~have  $ \int_{\setX}\big\{\sup_{(\theta,x')\in \Theta \times \setX} q_{\tau,\theta}(x|x')\big\}\lambda(\dd x)<\infty$. 
In addition, there exist a  $\delta\in(0,\infty)$, a continuous and strictly  increasing function $g':[0,\infty)\rightarrow [0,\infty)$   such that $g'(0)=0$  and a family  $\{\varphi'_s\}_{s=1}^\tau$ of   measurable functions defined on $\setY\times \setX^2$, for which the following conditions hold:
 \begin{enumerate}
 \item  For all  $(x,x')\in\setX^2$, all $(\theta,\theta')\in \Theta\times\Theta$ and all $s\in\{1,\dots,\tau\}$, we have
\begin{align*}
\big|\log q_{s,\theta}(x|x')   -\log q_{s,\theta'}(x|x') \big|\leq  g'\big(\|\theta-\theta'\|\big) \varphi'_s(Y_s,x',x).
\end{align*}
\item There exists  a measurable function $F:\setY^\tau\rightarrow [1,\infty)$ such that $\E[\log F(Z_1)]<\infty$ and such that, letting $Z_t=(Y_{(t-1)\tau+1},\dots,Y_{t\tau})$ for all $t\geq 1$, for all $\theta\in\Theta$ and $t\geq 2$  we have, $\P$-a.s.,
\begin{align*}
\int_{\setX^{\tau t+1}} \exp\Big(\delta \sum_{s=1}^{\tau t}\varphi'_{i_s}(Y_s,x_{s-1},x_s)\Big)\chi_\theta(\dd x_0) &\prod_{s=1}^{\tau t} q_{i_s,\theta}(x_s| x_{s-1})\lambda(\dd x_s)\leq \prod_{s=1}^t F(Z_s). 
\end{align*}
\item For all $\theta\in\Theta$ we have $\frac{1}{t}\log^+ W_{t,\theta}=\bigO_\P(1)$ and $\inf_{t\geq 2}\P(W_{t,\theta}<\infty)=1$  where, for all $t\geq 2$,
\begin{align*}
W_{t,\theta}=\int_{\setX^{\tau (t-1)+1}}\hspace{-0.6cm}  \exp\Big(\delta \hspace{-0.2cm}\sum_{s=\tau+1}^{\tau t}  \varphi'_{i_s}(Y_s,x_{s-1},x_s)\Big)\hspace{-0.1cm}\sup_{(\theta',x')\in \Theta \times \setX}\hspace{-0.3cm}q_{\tau,\theta'}(x_\tau| x')  \lambda(\dd x_\tau)\prod_{s=\tau+1}^{\tau t}   q_{i_s,\theta}(x_s| x_{s-1})\lambda(\dd x_s).
\end{align*}

\end{enumerate}

\end{assumptionSSMB}

The theoretical analysis of SO-SSM \eqref{eq:SO-SSM_new} is  then  conducted under Assumptions  \ref{assumeSSM:K_set}-\ref{assumeSSM:G}  and \ref{assumeSSMB:smooth}. Examples of SSMs for which these assumptions are satisfied are provided in Section \ref{sup:examples}.

\subsubsection{Theoretical guarantees}

We   have the following result for SO-SSM \eqref{eq:SO-SSM_new}:
\begin{theorem}\label{thm:SSMB}
Consider the set-up of Theorem \ref{thm:SSM} where  \ref{assumeSSMB:smooth} is assumed instead of \ref{assumeSSM:smooth} and  where  $\chi_\theta(\dd x)=\chi(\dd x)$ for all $\theta\in\Theta$. For all $t\geq 1$ let $p_{t,\Theta}(\dd \theta_t |Y_{1:t})$ and $p_{t,\setX}(\dd x_t |Y_{1:t})$ denote the filtering distribution of $\theta_t$ and the filtering distribution $X_t$ under  \eqref{eq:SO-SSM_new}. Then, 
\begin{align*}
\int_\Theta \theta_t\, p_{t,\Theta}(\dd \theta_t |Y_{1:t})\PP \theta_\star,\quad  \big\|p_{t,\setX}(\dd x_t|Y_{1:t})-\bar{p}_{t,\theta_\star}(\dd x_t|Y_{1:t})\big\|_{\mathrm{TV}}\PP 0\quad\text{(as $t\rightarrow\infty$)}.
\end{align*}
\end{theorem}
\begin{proof}
See Section \ref{app:proof_TSSM}.
\end{proof}

Following the discussion of Section \ref{sub:ssm_adapt}, we can easily define the counterpart of Algorithm \ref{algo:online_2} and of Theorem \ref{thm:SSM2}  when we consider  SO-SSM \eqref{eq:SO-SSM_new} instead of SO-SSM \eqref{eq:SO-SSM_new2}. The details are however omitted to save space.

\subsection{SO-SSMs with adaptive scale matrices\label{supp:adapt}}

All the definitions of Markov kernels $(K_t)_{t\geq 2}$ discussed in the main article are such that $K_t(\theta_{t-1},\dd \theta_t)$ is a (truncated) Gaussian or Student-t distribution with scale matrix $h_t^2\Sigma$ (for some $h_t>0$). In the numerical experiments of Section \ref{sec:num}  it is observed that taking for $\Sigma$ the identity matrix  works well in practice, but our theoretical results allow to use an adaptive strategy in order to learn a ``good'' scale matrix. More specifically, our theory allows to replace, in the definition of $K_t(\theta_{t-1},\dd \theta_t)$, the fixed scale matrix $\Sigma$ by a scale matrix   $\Sigma_t$  depending on the observations $\{Y_s\}_{s=1}^{t-1}$ and on all the random variables generated    by the particle filter algorithm up to the time instant  $t-1$ (included), provided that for some constant $c\in (0,1]$ it holds that $\P( \|\Sigma_t\|\in [c,1/c])=1$ for all $t\geq 1$. For instance, denoting by $\tilde{s}_{t,i}^{N}$ the variance of the $i$-th component of $\theta$ under $p^N_{t-1,\Theta}(\dd\theta_t|Y_{1:(t-1)})$, for some constant $\epsilon\in(0,1]$ we can let 
\begin{align*}
\Sigma_t=  \mathrm{diag}\big(s_{t,1}^N,\dots,s_{t,d}^N\big),\quad s_{t,i}^N=
\begin{cases}
\tilde{s}_{t,i}^{N}, &\tilde{s}_{t,i}^{N}\in[\epsilon,1/\epsilon]\\
\epsilon, &\tilde{s}_{t,i}^{N}< \epsilon\\
(1/\epsilon), &\tilde{s}_{t,i}^{N}>1/\epsilon
\end{cases} \quad i\in\{1,\dots,d\}.
\end{align*}
This approach may be useful in applications where   different components on $\theta_\star$ are on different scales, and the scales of each component of $\theta_\star$ is unknown/uncertain.

\subsection{SO-SSMs for models containing discrete parameters\label{supp:dis}}

In this subsection we consider the case where SSM \eqref{eq:SSM} contains both continuous and discrete parameters. More specifically, we consider the case where $\Theta=\Theta_1\times \Theta_2$ with $\Theta_1\subset\R^{d_1}$ and $ \Theta_2\subset\mathbb{Z}^{d_2}$ for some $(d_1,d_2)\in\mathbb{N}^2$, and assuming that $\Theta_1$ is a regular compact set (see Definition \ref{def:sharp}) and  that $\Theta_2$ is a finite set.   In this subsection we write $\theta\in\Theta$ as $\theta=(\vartheta,\psi)$ with $\vartheta\in\Theta_1$ a continuous parameter and $\psi\in\Theta_2$ a discrete parameter.  If SSM \eqref{eq:SSM} contains only continuous parameters, respectively only discrete parameters, then we can let $\pi_1$ and $(K_t)_{t\geq 2}$ be as defined below, omitting their $\psi$, respectively $\vartheta$, component.

\subsubsection{Definition of the SO-SSM\label{sub:def_disc}}
For   SSMs with both continuous and discrete parameters we let $\pi_1(\dd\theta_1)=(\pi_{1,1}\otimes \pi_{2,1})(\dd (\vartheta_1,\psi_1))$ be the initial distribution of the Markov chain $(\theta_t)_{t\geq 1}$, with $\pi_{1,1}(\dd \vartheta_1)$   a probability distribution on $\Theta_1$  having a   strictly positive density w.r.t.~the Lebesgue measure on $\R^{d_1}$ and with $\pi_{2,1}(\dd\psi_1)$     a probability  distribution on $\Theta_2$ such that $\pi_{2,1}(\{\psi\})>0$ for all $\psi\in\Theta_2$ (e.g.~$\pi_1(\dd\theta_1)$ is the uniform distribution on $\Theta$). 

Next, to introduce    Markov kernels $(K_t)_{t\geq 2}$ suitable for  such SSMs, we let $a$ and $b$ be two integers such that $\Theta_2\subseteq\{a,a+1,\dots,b\}^{d_2}$  and we let $(\alpha_1,\alpha_2,\alpha_3, c)\in(0,\infty)^4$, $\beta\in(0,1/2)$  and $(t_1,\Delta)\in\mathbb{N}^2$ be some constants. In addition, for all $t\geq 2$ we let $\beta_t=  \log (t)^{-\beta}$, for all $p\geq 2$ we let  $t_p=t_{p-1}+\Delta\lceil  (\log t_{p-1})^{1-\beta/2}\rceil$, and for all $t\geq 1$ we let $p_t=c\, t^{-\alpha_1}$ if $t\not\in(t_p)_{p\geq 1}$ and $p_t=c  t^{-\alpha_2\beta_t}$ otherwise. Then, for all $t\geq 2$ and $\psi'=(\psi'_1,\dots,\psi'_{d_2})\in\mathbb{Z}^{d_2}$, we let $\tilde{K}_t(\psi',\dd \psi)$ denote the probability distribution on $\mathbb{Z}^{d_2}$ such that $(\psi_1,\dots,\psi_{d_2}) \sim \tilde{K}_{t}(\psi',\dd \psi)$ if and only if
\begin{align*}
\psi_i=\psi_i'+(2I_{i,t}-1)B_{i,t},\quad B_{i,t}\sim \mathrm{Binomial}(b-a, p_t),\quad I_{i,t}\sim\mathrm{Bernoulli}(0.5),\quad\forall i\in\{1,\dots,d_2\}
\end{align*}
where $\{ (I_{i,t},B_{i,t})\}_{i=1}^{d_2}$ is a set of $2d_2$ independent random variables. Remark that   sampling from the distribution $\tilde{K}_{t}(\psi',\dd \psi)$ amounts to, independently for all $i\in\{1,\dots,d_2\}$, adding  or subtracting  the Binomial random variable $B_{i,t}$ to the $i$-th component of $\psi'$  with probability  1/2. Finally, for all $t\geq 2$ and $\psi'\in\Theta_2$ we let $\tilde{K}_{t,\Theta_2}(\psi',\dd\psi)$ denote the restriction of $\tilde{K}_{t}(\psi',\dd\psi)$ to the set $\Theta_2$,  and we let $\Sigma$ be a $d_1\times d_1$ symmetric and positive definite matrix and $\nu\in(0,\infty)$.

With this notation in place, we let $(K_t)_{t\geq 2}$  be such that, for all $\theta=(\vartheta,\psi)\in\Theta$,
\begin{equation}\label{eq:SSM3}
\begin{aligned}
\theta_t\sim K_t(\theta ,\dd \theta_t)\Leftrightarrow \psi_t \sim \tilde{K}_{t,\Theta_2}(\psi, \dd \psi_t) \text{ and }\vartheta_t \sim
\begin{cases}
 \mathrm{TS}_{\Theta_1}\big(\vartheta, t^{-2\alpha_3\beta_t} \Sigma, \nu\big), & t \in (t_p)_{p \geq 1} \\
  \mathrm{TN}_{\Theta_1}\big(\vartheta, t^{-2\alpha_3} \Sigma\big), & t \not\in (t_p)_{p \geq 1}
  \end{cases} \quad \ \forall t \geq 2.
\end{aligned}
\end{equation}

\subsubsection{Some comments}\label{sub:explain}


In the definition \eqref{eq:SSM3} of $(K_t)_{t\geq 2}$  the sequence $(t_p)_{p\geq 1}$ is depends on $\beta$ and, in this sense, is   determined by the discrete parameter $\psi$. This is the case for  the following reason. Informally speaking, it is  harder to move in the finite space $\Theta_2$, where each move must be at least of length one,   than in the continuous space $\Theta_1$, where the size of the moves can be  arbitrarily small. Therefore, to prevent the distribution $p_{t,\Theta}(\dd \theta_t|Y_{1:t})$ from concentrating on the wrong parameter value,  the presence of a discrete parameter   often necessitates using a Markov kernel $K_t$  under which $\|\mathrm{Var}(\theta_{t}|\theta_{t-1})\|$ is not too small, that is to use a sequence $(t_p)_{p\geq 1}$ such that $(t_{p+1}-t_p)\rightarrow\infty$ slowly. In particular, remark that the speed at which $(t_{p+1}-t_p)\rightarrow\infty$ is  slower  in \eqref{eq:SSM3} than for the   sequence $(t_p)_{p\geq 1}$ used in \eqref{eq:SSM2} to define $(K_t)_{t\geq 2}$ when the model contains only continuous parameters.

Focusing now on  the continuous parameter, we observe that when $t\in   (t_p)_{p\geq 1}$ not only a distribution with heavier tails is used, as in Section \ref{sub:ssm_cont_slow}, but also that another, larger,  scale  factor is applied to the matrix $\Sigma$. In particular, in \eqref{eq:SSM3} the scale factor $t^{-2\alpha_3\beta_t}$ applied to $\Sigma$  when   $t\in   (t_p)_{p\geq 1}$ converges to zero at a slower rate than that of the scale factor $t^{-2\alpha_3}$ used when $t\not\in (t_p)_{p\geq 1}$. The scale factors  applied to $\Sigma$  are defined in that way and not as in Section \ref{sub:ssm_cont_slow}   because the  dynamics  imposed on $\theta$ when $t\in(t_p)_{p\geq 1}$ depends, to some extend, on the  sequence $(t_p)_{p\geq 1}$. Indeed,  following an idea introduced in  \citet{gerber2022global}, the   definition \eqref{eq:SSM3}    of $K_t$ for $t\in (t_p)_{p\geq 1}$ is used to ensure that $p_{t_{p},\Theta}(\dd\theta_{t_p}|Y_{1:t_{p}})$ has enough mass around $\theta_\star$ while that of $K_t$ for $t\not\in (t_p)_{p\geq 1}$ allows to guarantee that the distribution $p_{t,\Theta}(\dd\theta_{t}|Y_{1:t})$  concentrates on $\theta_\star$ between time $t=t_p+1$ and time $t=t_{p+1}-1$. Then, letting $U_\star$ be a small neighbourhood of $\theta_\star$,  for this mechanism to ensure that  $p_{t,\Theta}(\dd\theta_{t}|Y_{1:t})$   concentrates on $\theta_\star$ as $t\rightarrow\infty$ it is needed that $p_{t_{p+1}-1,\Theta}(U_\star|Y_{1:(t_{p+1}-1)})$ is sufficiently large. Informally speaking, this latter quantity is increasing with $(t_{p+1}-t_p)$ (i.e.~the time  the distribution has to concentrate on $\theta_\star$ between time $t=t_p+1$ and time $t=t_{p+1}-1$) and with $p_{t_{p},\Theta}(U_\star|Y_{1:t_{p}})$ (i.e.~the mass of the `initial' distribution   on $U_\star$). For this reason,  as explained above, since the presence of a discrete parameter  imposes to use in \eqref{eq:SSM3} a sequence $(t_p)_{p\geq 1}$  such that  $(t_{p+1}-t_p)\rightarrow\infty$ slowly, we need to define $(K_{t_p})_{p\geq 1}$  in such a way   that $\|\mathrm{Var}(\theta_{t_p}|\theta_{t_p-1})\|$  converges to zero slowly to  guarantee that $p_{t_{p},\Theta}(U_\star|Y_{1:t_{p}})$ is large enough for   $p_{t,\Theta}(\dd\theta_{t}|Y_{1:t})$  to  concentrate  on $\theta_\star$ as $t\rightarrow\infty$.

\subsubsection{Practical recommendations\label{sub:U_recom}}

As argued and illustrated in the main article, SO-SSMs with adaptive artificial dynamics should be preferred to those that impose  a dynamics  on $\theta$ at every time instant $t\geq 2$. On a model having only discrete parameters we find that, in the above definition of $(K_t)_{t\geq 2}$,  letting $c=1$, $\beta=0.01$ and $\alpha_1=\alpha_2=0.5$ works well in practice (see Section \ref{supp:num} below).  If the SSM also contains continuous parameters,  following the discussion in Section \ref{sub:pratical} we propose as default approach to let $\alpha_3=0.5$,  $\nu=t_1=100$, $\Delta=1$, and $\Sigma$ be the identity matrix. Remark that for the proposed small value of $\beta$, we have $t^{-2\alpha_3\beta_t}\approx t^{-2\alpha_3}$ for any realistic values of $t$ so that, in  \eqref{eq:SSM3}, essentially the same scaling factor is applied to $\Sigma$ at any realistic  values of $t$.

\subsubsection{Theoretical guarantees}

We  have the following result for SO-SSM  \eqref{eq:SO-SSM_new2} when SSM \eqref{eq:SSM} contains   discrete parameters.
\begin{theorem}\label{thm:dis}
Consider the set-up of Theorem \ref{thm:SSM} with $\Theta$, $\pi_1$ and $(K_t)_{t\geq 2}$ as in Section \ref{sub:def_disc}.  Then,  
\begin{align*}
\int_\Theta \theta_t\, p_{t,\Theta}(\dd \theta_t |Y_{1:t})\PP \theta_\star,\quad  \big\|p_{t,\setX}(\dd x_t|Y_{1:t})-\bar{p}_{t,\theta_\star}(\dd x_t|Y_{1:t})\big\|_{\mathrm{TV}}\PP 0\quad\text{(as $t\rightarrow\infty$)}.
\end{align*}
\end{theorem}
\begin{proof}
See Section \ref{app:proof_TSSM}.
\end{proof}

Algorithm \ref{algo:online_2} and   Theorem \ref{thm:SSM2} can then be easily adapted to the setting of this subsection, and we can easily obtain a counterpart of Theorem \ref{thm:dis} when we consider SO-SSM  \eqref{eq:SO-SSM_new} instead of  SO-SSM  \eqref{eq:SO-SSM_new2} (as well as a version of Algorithm \ref{algo:online_2} and  of Theorem \ref{thm:SSM2} adapted to the setting of this subsection). The details are however omitted to save space.

\subsubsection{Application: Maximum likelihood estimation in the Bernoulli-Laplace urn model\label{supp:num}}

We consider the following Bernoulli-Laplace urn model. There are two urns,  Urn 1   and Urn 2, containing  $j_\star\in\mathbb{N}$ and $k_\star\in\mathbb{N}$ balls  respectively, with a total of $r_\star<j_\star+k_\star$ red balls. Initially, all the red balls are placed in Urn 1, and at each time $s\geq 2$   one ball is chosen uniformly at random from each urn  and swapped into the other urn. Denoting by  $W_s$ the number of red balls in Urn 2 at time $s\geq 1$, below we aim at estimating $\theta_\star:=(j_\star, k_\star, r_\star)$ in an offline fashion when only a realization of $(W_s)_{s\geq 1}$ is observed.

For all  $\theta\in\{(j,k,r)\in\mathbb{N}^3\text{ s.t. }j+k\leq r\}$   we let $S_\theta=\{\max\{0,r-j\},\dots,\min\{k,r\}\}$ and
\begin{align*}
p_\theta(w_2|w_1)=
\begin{cases}
\frac{(j-r+w_1)w_1}{jk}, &\text{if $w_2=w_1-1$ and $w_1\in S_\theta$}\\
 \frac{(r-w_1)w_1+(j-r+w_1)(k-w_1)}{jk}, &\text{if $w_2=w_1$ and $w_1\in S_\theta$}\\
 \frac{(r-w_1)(k-w_1)}{jk}, &\text{if $w_2=w_1+1$ and $w_1\in S_\theta$}\\
 0, &\mathrm{otherwise}
\end{cases},\quad\forall (w_1,w_2)\in\mathbb{Z}^2.
\end{align*}
With this notation in place, it can be shown that  the Markov chain $(W_s)_{s\geq 1}$ takes its values in  the set $S_{\theta_\star}$ and is such that we have $\P(W_{s+1}=w_2|W_s=w_1)= p_{\theta_\star}(w_2|w_1)$ for all $(w_1,w_2)\in S_{\theta_\star}^2$ and all $t\geq 1$. Then, for some $T\geq 1$   we simulate a realization  $\{w_s\}_{s=1}^{T+1}$ of $\{W_t\}_{t=1}^{T+1}$   and consider the problem of computing  the estimate $\tilde{\theta}_T:=\argmax_{\theta\in\Theta} \sum_{s=1}^{T} \log p_\theta(w_{s+1}|w_s)$ of $\theta_\star$ using an IF algorithm. 

To do so we need to define the observations $\{\tilde{y}_s\}_{s=1}^T$ and SSM \eqref{eq:SSM_MLE} in such a way that the likelihood function $\tilde{L}_T(\cdot)$  defined in \eqref{eq:lik_IF}  satisfies  $\tilde{\theta}_T=\argmax_{\theta\in\Theta}\tilde{L}_T(\theta)$. It is straightforward to see that this is achieved by letting, for all $s\in\{1,\dots,T\}$,  $\tilde{y}_s=(w_{s}, w_{s+1})$ and $\tilde{f}_{s,\theta}(y|x)=  p_\theta(y_2|y_1)\varphi(y_1)$ for all $(x,y)\in\setX\times \mathbb{Z}^2$, where   $\varphi:\mathbb{Z}:\rightarrow[0,\infty)$ is    some arbitrary function such that $\sum_{y_1\in\mathbb{Z}}\varphi(y_1)=1$.    Remark that the resulting SSM \eqref{eq:SSM_MLE} is peculiar, in the sense that it does not assume that the distribution of $\{\tilde{Y}_s\}_{s=1}^T$ depends on some unobservable  random variables $\{\tilde{X}_s\}_{s=1}^T$. We also stress that $\varphi(\cdot)$ is only used to  define SSM \eqref{eq:SSM_MLE} and  that the IF algorithms discussed in this work  for maximizing the  corresponding function $\tilde{L}_T(\cdot)$ do not depend on this function. 

Following the discussion in Section \ref{sub:U_recom}, we compute the MLE $\tilde{\theta}_T$ using a specific version of  the IF  Algorithm \ref{algo:IF_2} obtained by defining $(K_t)_{t\geq 2}$ as specified in Section \ref{sub:def_disc}, with  $t_1=100$, $\Delta=c=1$, $\beta=0.01$ and $\alpha_1=\alpha_2=\alpha$. Different values of $\alpha$ will also be considered in the experiments. All the results presented below are obtained from 50 runs of the  algorithm  using $N=10^3$ particles.  In addition, letting $v_T=\max\{w_s,\,s=1,\dots,T+1\}$ and noting that we have both  $r_\star\geq v_T$ and $k_\star\geq v_T$,  we let 
\begin{align*}
\Theta\subseteq A_T:=\big\{ (j,k,r)\in\{1,\dots,200\}^3:\,\,  j+k\geq r,\,\,r\geq v_T,\,k\geq v_T\big\}.
\end{align*}
We remark that if $\Theta$ is the largest subset of $A_T$ such that  $p_\theta(w_{s+1}|w_s)>0$ for all $\theta\in\Theta$ and all $s\in\{1,\dots,T\}$ then Assumption \ref{assumeSSMB:smooth_MLE} holds, but to simply the definition of the parameter space  we simply let $\Theta=A_T$ in what follows. Finally,  we  let $\pi_1(\dd\theta_1)$ be the uniform distribution on $\Theta$ and $\theta_\star=(c_\star,c_\star,c_\star)$  where two different values of $c_\star\in\mathbb{N}$ are considered. Recall that as $c_\star$ increases the mixing time of the Markov chain $(W_s)_{s\geq 1}$  becomes longer \citepsup[see][and references therein]{nestoridi2019shuffling}, making the estimation of $\theta_\star$ more challenging.

\begin{figure}[!t]
\centering
\includegraphics[scale=0.4]{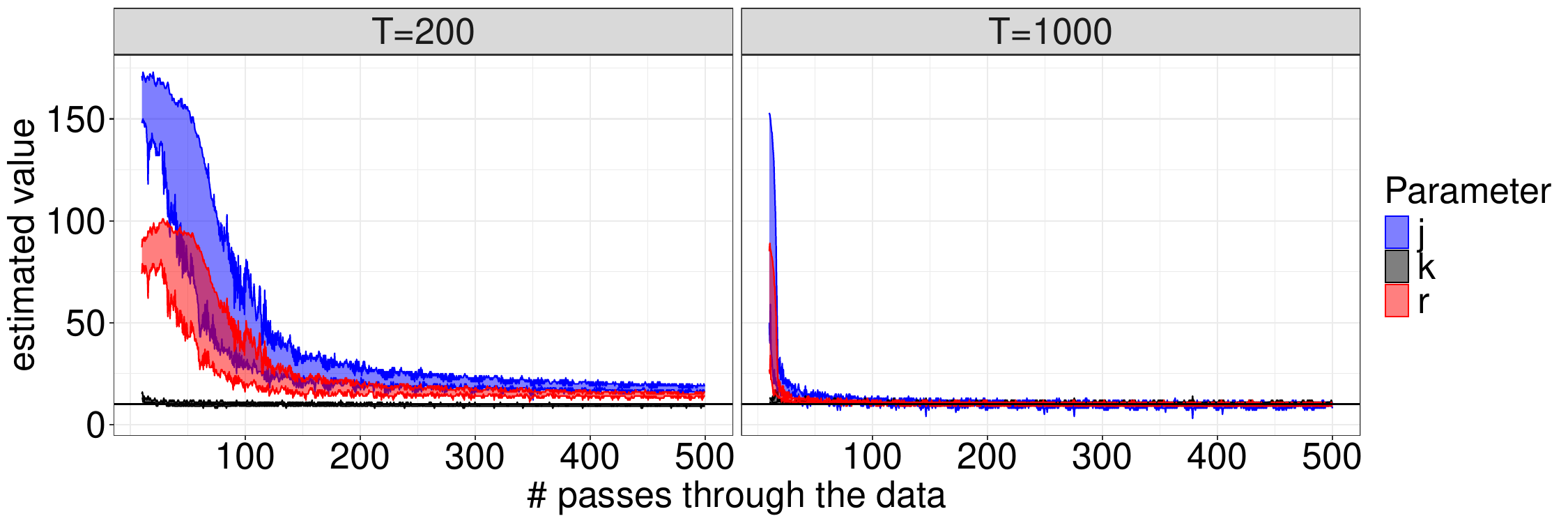}
\caption{Results for the experiments in Section \ref{p-sub:noise}. The plots show the estimated value of the MLE as a function of the number of passes through the data. The results are for $c_\star=10$ and $\alpha=0.5$, the horizontal lines show the value of $c_\star$, and the shaded areas represent the range of estimated values of the model parameters   obtained in 50 independent runs of the IF algorithm. \label{fig:urn}}
\end{figure}

\begin{figure}[!t]
\centering
\includegraphics[scale=0.4]{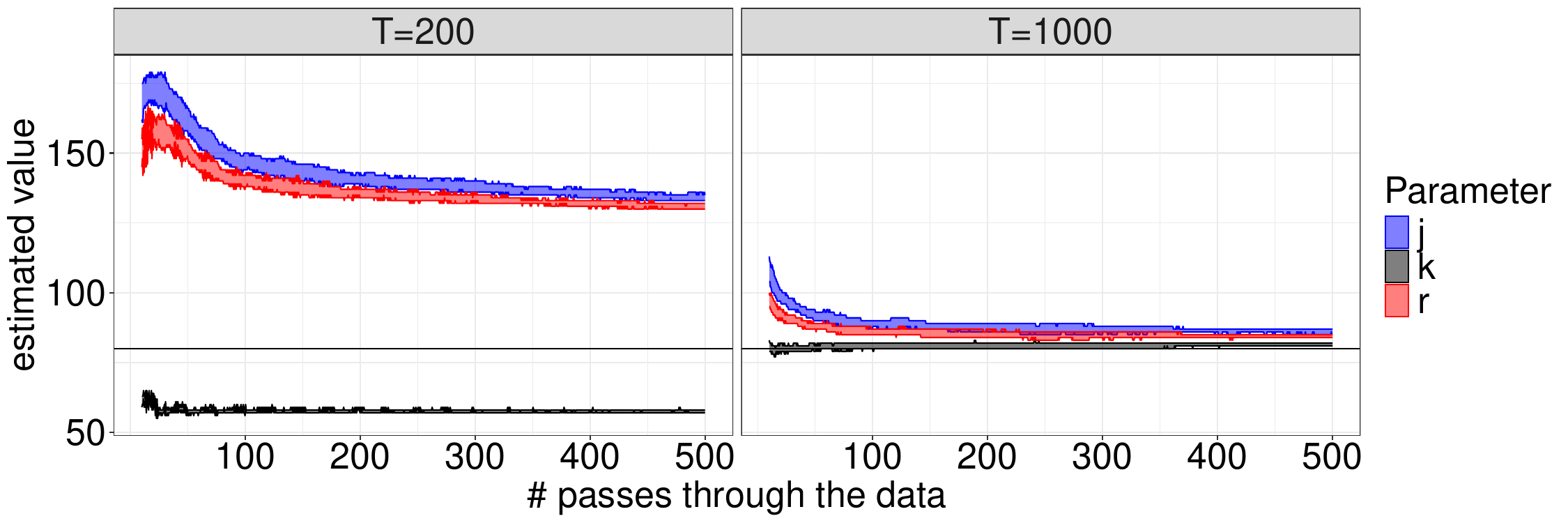}
\caption{Results for the experiments in Section \ref{p-sub:noise}. The plots show the estimated value of the MLE as a function of the number of passes through the data. The results are for $c_\star=80$ and $\alpha=0.5$, the horizontal lines show the value of $c_\star$, and the shaded areas represent the range of estimated values of the model parameters   obtained in 50  independent runs of the IF algorithm. \label{fig:urn2}}
\end{figure}

\begin{figure}[!t]
\centering
\includegraphics[scale=0.4]{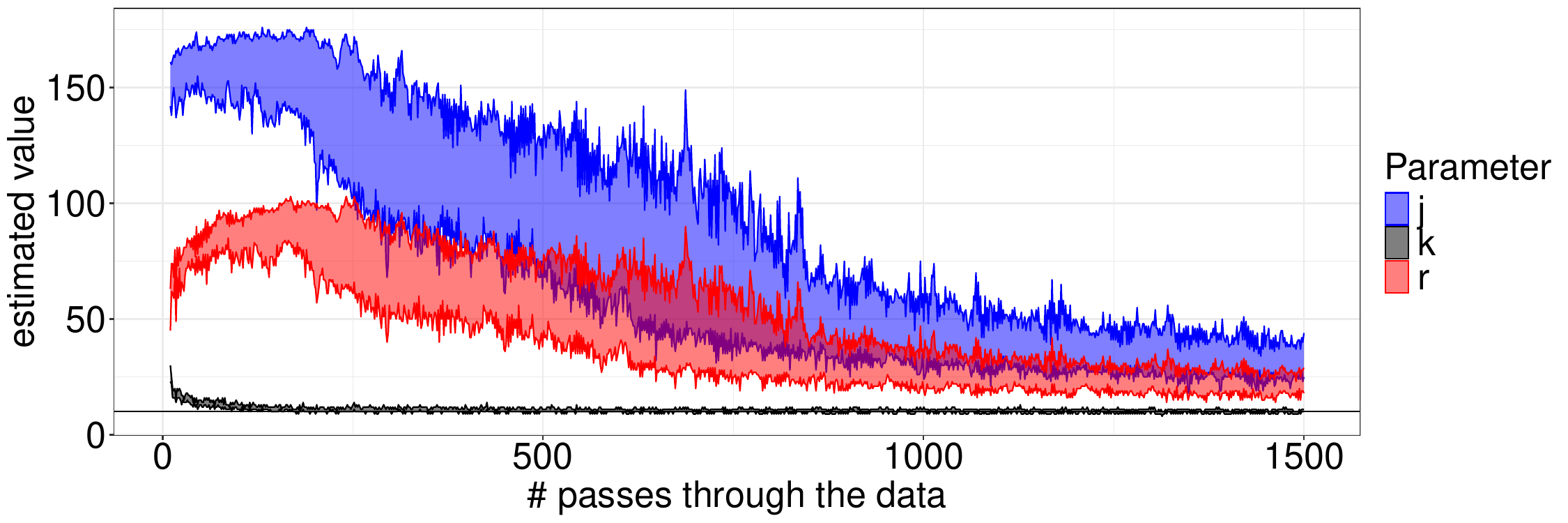}
\caption{Results for the experiments in Section \ref{p-sub:noise}. The plots show the estimated value of the MLE as a function of the number of passes through the data. The results are for $T=200$, $c_\star=10$ and $\alpha=0.4$, the horizontal lines show the value of $c_\star$, and the shaded areas represent the range of estimated values of the model parameters   obtained in 50 independent runs of the IF algorithm.  \label{fig:urn3}}
\end{figure}

Figures \ref{fig:urn}-\ref{fig:urn2} show,  both for $T=200$ and for $T=1\,000$, the estimated values of the three model parameters as a function of the number of passes through the data when $\alpha=0.5$, with $c_\star=10$ for  Figure \ref{fig:urn}  and  $c_\star=80$ for Figure \ref{fig:urn2}. From these two figures, several comments are in order. Firstly, we observe that  the convergence behaviour of the IF algorithm is stable across its 50 runs.  Secondly, for a given sample size $T$ the difference between the estimated value of the MLE and $\theta_\star$ is larger when $c_\star=80$ than when $c_\star=10$. Remark that this result was expected, since estimating $\theta_\star$ is statistically harder for the former value of $c_\star$  than for the latter. Thirdly, it appears in Figures \ref{fig:urn}-\ref{fig:urn2} that the number of iterations needed for the IF algorithm to converge is larger when $c_\star=80$ than when $c_\star=10$, a phenomenon which was expected for the following reason. When $c_\star$ is small, the Markov chain $(W_s)_{s\geq 1}$ converges quickly to its stationary distribution (recall that $\P(W_1=0)=1$) meaning that, informally speaking,  the observations  $\{w_s\}_{s=1}^{T+1}$ is approximately a sample from a stationary process. In this case, based on the results of Section \ref{sec:SSM} for online parameter inference, we expect  online learning to occur within each pass of the data, and thus the estimate of the model parameter to improve not only from one pass to the next  but also within a single pass. This phenomenon is particularly clear when $T=1\,000$, for instance, the IF algorithm converges in less than 50 passes through the data for $c_\star = 10$ (as shown in Figure \ref{fig:urn}); however, when $c_\star=80$ more iterations are needed for the IF  algorithm to converge (as shown in Figure \ref{fig:urn2}) because the underlying  Markov chain $(W_s)_{s\geq 1}$ converges slowly to its stationary distribution. 


Finally, Figure \ref{fig:urn3}  shows for $T=200$, the estimated values of the three model parameters as a function of the number of passes through the data when $\alpha=0.4$ and $c_\star=10$. Comparing the results shown in this figure with those in Figure \ref{fig:urn} (obtained with $\alpha=0.5$),  we observe that, as expected, the IF algorithm requires more iterations to converge when we reduce the speed at which the dynamics on $\theta$ decreases (i.e., when using a smaller $\alpha$).

\section{Examples of SSMs satisfying  Assumptions \ref{assumeSSM:K_set}-\ref{assumeSSM:smooth} and  \ref{assumeSSMB:smooth}\label{sup:examples}}
 
\subsection{Linear Gaussian models\label{app:sup:LG}}

As shown in the following proposition, Assumptions \ref{assumeSSM:K_set}-\ref{assumeSSM:smooth} and \ref{assumeSSMB:smooth} are satisfied for  a large class of multivariate linear Gaussian SSMs.

\begin{proposition}\label{prop:Assume_LG}
Let $(\tau,d,d_x,d_y) \in \mathbb{N}^4$  and $\Theta\subset\R^d$ be a (non-empty) compact set. In addition, for all $t\in\{1,\dots,\tau\}$ let $m_t:\Theta\rightarrow\R^{d_y}$, $A_t:\Theta\rightarrow\R^{d_y\times d_x}$,  $B_t:\Theta\rightarrow\R^{d_y\times d_y}$, $C_t:\Theta\rightarrow\R^{d_x\times d_x}$ and $D_t:\Theta\rightarrow\R^{d_x\times d_x}$ be four    continuously differentiable mappings such that, for all $\theta\in\Theta$, the matrices $B_t(\theta)$ and $D_t(\theta)$ are symmetric and positive definite and such that the matrix  $A_t(\theta)$ is full rank column.  Let \eqref{eq:SSM} be a $\tau$-periodic SSM such that, for all $(\theta,x)\in\Theta\times\R^{d_x}$,
 \begin{align*}
&f_{t,\theta}(y|x)\dd y=\mathcal{N}_{d_y}\big(m_t(\theta)+A_t(\theta)x, B_t(\theta)\big),\quad M_{t+1,\theta}\big(x,\dd x_{t+1}\big)=\mathcal{N}_{d_x}\big(C_{t}(\theta) x, D_{t}(\theta)\big),\,\,\,\forall t\in\{1,\dots,\tau\}
\end{align*}
and assume that $(Y_t)_{t\geq 1}$ is a $\tau$-stationary process such that $\E[\|Y_t\|^2]<\infty$ for all $t\in\{1,\dots,\tau\}$. Then, Assumptions  \ref{assumeSSM:K_set}-\ref{assumeSSM:G} and \ref{assumeSSMB:smooth} hold. In addition, Assumption \ref{assumeSSM:smooth} also holds if  $\chi_\theta(\dd x_1)=\mathcal{N}_{d_x}(\mu(\theta),\Sigma(\theta))$ for all $\theta\in\Theta$ and  some continuously differentiable mappings $\mu:\Theta\rightarrow\R^{d_x}$ and $\Sigma:\Theta\rightarrow\R^{d_x\times d_x}$ such that $\Sigma(\theta)$ is a symmetric and positive  definite matrix for all $\theta\in\Theta$.
\end{proposition}

\subsection{Stochastic volatility models}

As illustrated by the next proposition, the validity of Assumptions \ref{assumeSSM:K_set}-\ref{assumeSSM:smooth} and \ref{assumeSSMB:smooth} is of course  not limited to linear Gaussian SSMs.
\begin{proposition}\label{prop:Online_SV_model}
Let $\Theta\subset\R^3$ be a compact set such that for all $\theta=(\alpha,\beta,\sigma)\in\Theta$ we have both $\beta>0$ and $\sigma>0$. Let   \eqref{eq:SSM} be a $1$-periodic SSM such that, for all $(\theta,x)\in\Theta\times\R$ and some $\nu\in(0,\infty)$,
\begin{align*}
f_{1,\theta}(y|x)\dd y=\mathcal{N}_1\big(0,\beta^2\exp(x)\big),\quad M_{2,\theta}(x,\dd x_2)=t_{1,\nu}\big(\alpha x,\sigma^2\big)
\end{align*}
and assume that $(Y_t)_{t\geq 1}$ is a $1$-stationary process such that $\E\big[\big|\log(|Y_1|)\big|\big]<\infty$.
Then, Assumptions  \ref{assumeSSM:K_set}-\ref{assumeSSM:G} and \ref{assumeSSMB:smooth} hold. In addition, Assumption \ref{assumeSSM:smooth} also holds if  $\int_\R e^{c|x|}\chi_\theta(\dd x)<\infty$ for all $\theta\in\Theta$ and for some constant $c\in(0,\infty)$.
\end{proposition}

\begin{remark}
If, in Proposition  \ref{prop:Online_SV_model},  the $t_{1,\nu}(\alpha x,\sigma^2)$ distribution is replaced by the $\mathcal{N}_1(\alpha x,\sigma^2)$ distribution then the resulting SSM is a standard stochastic volatility model considered e.g.~in \citetsup{kim1998stochastic}. However, for technical reasons, for this model we did not manage to establish that Assumptions \ref{assumeSSM:smooth} and \ref{assumeSSMB:smooth} hold.
\end{remark}

\subsection{Proofs}

Throughout this subsection $\lambda(\dd x)$ denotes the Lebesgue measure on $\R^{p_x}$ and   $\dd y$ denote the Lebesgue measure on $\R^{p_y}$, where the value  $(p_x,p_y)\in\mathbb{N}^2$ will be clear from the context. 

\subsubsection{Proof of Proposition \ref{prop:Assume_LG}}

\paragraph{Preliminaries}

Let $p\in\mathbb{N}$. Then, in what follows for any $x\in\R^p$ we denote by $\|x\|$ the Euclidean norm while for any $p\times p$ matrix $M$ the notation $\|M\|$ is used for the spectral norm of $M$. Finally, for $\mu\in\R^p$ and a $p\times p$ symmetric and positive definite matrix $\Sigma$ we use the notation $\phi_p(\cdot; \mu,\Sigma)$ for the density function of the $\mathcal{N}_p(\mu,\Sigma)$ distribution (w.r.t.~$\lambda(\dd x)$).

With this notation in place the following two lemmas will be used to prove the propositions.

\begin{lemma}\label{lemma::lipchitz}

Let $d\in\mathbb{N}$, $\Theta\subset\R^d$ be a non-empty compact set and   $(d_z,d_x)\in\mathbb{N}^2$. Next, let $Q:\Theta\rightarrow \R^{d_z\times d_z}$ and $F:\Theta\rightarrow\R^{d_z\times d_x}$ be two continuously differentiable functions such that, for all $\theta\in\Theta$, the matrix $Q(\theta)$ is symmetric and positive definite. Then, there exists a constant $\kappa\in (0,\infty)$ such that, for all $(z,x)\in \mathbb{R}^{d_z}\times\mathbb{R}^{d_x}$ and all $(\theta,\theta')\in\Theta^2$, we have
\begin{align*}
\big|\log \phi_{d_z}\big(z; F(\theta) x,Q(\theta)\big) - \log \phi_{d_z}\big(z; F(\theta') x,Q(\theta')\big)\big| \leq \kappa\,\|\theta - \theta'\| \big(1+\|z\|^2 + \|x\|^2\big).
\end{align*}

\begin{proof}

Let $(\theta,z,x)\in\Theta\times\R^{d_z}\times\R^{d_x}$ and $i\in\{1,\dots,d\}$. Then,
\begin{equation}\label{eq:LG_assume1}
\begin{split}
\frac{\partial}{\partial \theta_i} \log \phi_{d_z}\big(z; F(\theta) x,Q(\theta)\big)& = - \frac{1}{2} \frac{\partial}{\partial \theta_i}  \log \mathrm{det}\big(Q(\theta)\big) \\
&- \frac{1}{2} \frac{\partial}{\partial \theta_i} \bigg\{ \big(z-F(\theta)x\big)^\top Q(\theta)^{-1}\big(z-F(\theta)x\big)\bigg\}
\end{split}
\end{equation}
where 
\begin{equation}\label{eq:LG_assume2}
\begin{split}
\Big|\frac{\partial}{\partial \theta_i} \log \mathrm{det}\big(Q(\theta)\big)\Big|=\Big| \mathrm{tr}\Big(Q(\theta)^{-1} \frac{\partial }{\partial \theta_i}Q(\theta) \Big)\Big|&\leq  \mathrm{tr}\Big(Q(\theta)^{-1}\Big)\Big|\mathrm{tr}\Big(\frac{\partial}{\partial \theta_i}Q(\theta)\Big)\Big|
\end{split}    
\end{equation}
and where
\begin{equation}\label{eq:LG_assume3}
\begin{split}
\frac{\partial}{\partial \theta_i} &\big(z-F(\theta)x\big)^\top  Q(\theta)^{-1}\big(z-F(\theta)x\big)\\
&= -2\big(z-F(\theta)x\big)^\top Q(\theta)^{-1} \frac{\partial F(\theta)}{\partial \theta_i}x -  \big(z-F(\theta)x\big)^\top Q(\theta)^{-1} \frac{\partial Q(\theta)}{\partial \theta_i} Q(\theta)^{-1} \big(z-F(\theta)x\big). 
\end{split}    
\end{equation}
To proceed further note that
\begin{equation}\label{eq:LG_assume4}
\begin{split}
\Big| \big(z-F(\theta)x\big)^\top Q(\theta)^{-1} \frac{\partial F(\theta)}{\partial \theta_i}x\Big|&\leq \|Q(\theta)^{-1}\| \Big\| \frac{\partial F(\theta)}{\partial \theta_i}\Big\|\Big(\|z\| \|x\|+\|F(\theta)\|\|x\|^2 \Big)\\
&\leq \|Q(\theta)^{-1}\| \Big\| \frac{\partial F(\theta)}{\partial \theta_i}\Big\|\Big(\|z\|^2+ \|x\|^2+\|F(\theta)\|\|x\|^2 \Big)
\end{split}    
\end{equation}
while
\begin{equation}\label{eq:LG_assume5}
\begin{split}
\Big|\big(z -F(\theta)x\big)^\top  Q(\theta)^{-1} \frac{\partial Q(\theta)}{\partial \theta_i} &Q(\theta)^{-1} \big(z-F(\theta)x\big)\Big|\\
&\leq \big\|Q(\theta)^{-1}\|^2\Big\|\frac{\partial Q(\theta)}{\partial \theta_i}\Big\|\,\|z-F(\theta)x\|^2\\
&\leq 2\big\|Q(\theta)^{-1}\|^2\Big\|\frac{\partial Q(\theta)}{\partial \theta_i}\Big\|\big(\|z\|^2+  \|F(\theta)\|^2 \|x\|^2\big).
\end{split}    
\end{equation}
Since $\Theta$ is compact and the mappings $Q(\cdot)$ and $F(\cdot)$ are continuous,  we have
\begin{align*}
\sup_{\theta\in\Theta}\|Q(\theta)^{-1}\|<\infty,\quad   \sup_{\theta\in\Theta}\mathrm{tr}\Big(Q(\theta)^{-1}\Big)<\infty,\quad\sup_{\theta\in\Theta}\Big|\mathrm{tr}\Big(\frac{\partial}{\partial \theta_i}Q(\theta)\Big)\Big|<\infty,\quad \sup_{\theta\in\Theta}\frac{\partial F(\theta)}{\partial \theta_i}<\infty
\end{align*}
which, together with \eqref{eq:LG_assume1}-\eqref{eq:LG_assume5}, implies that there exists a constant $C\in(0,\infty)$ such that 
\begin{align*}
\Big| \frac{\partial}{\partial \theta_i} \log \phi_{d_z}\big(z; F(\theta) x,Q(\theta)\big)\Big|\leq C\big(1+\|z\|^2+\|x\|^2\big),\quad\forall (\theta,z,x)\in\Theta\times\R^{d_z}\times\R^{d_x}.
\end{align*}
The result of the lemma follows.
\end{proof}
\end{lemma}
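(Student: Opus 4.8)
The plan is to reduce the claim to a uniform bound on the gradient of the map $\theta\mapsto g_{z,x}(\theta):=\log\phi_{d_z}\big(z; F(\theta)x, Q(\theta)\big)$ and then to integrate this bound along a path joining $\theta$ and $\theta'$. First I would write out the log-density explicitly,
\begin{align*}
g_{z,x}(\theta)=-\frac{d_z}{2}\log(2\pi)-\frac12\log\det Q(\theta)-\frac12\big(z-F(\theta)x\big)^\top Q(\theta)^{-1}\big(z-F(\theta)x\big),
\end{align*}
so that the entire $\theta$-dependence is isolated in the log-determinant term and in the quadratic form, both of which are $C^1$ in $\theta$ because $Q$ is $C^1$ with values in the positive definite cone (so $\theta\mapsto Q(\theta)^{-1}$ is $C^1$) and $F$ is $C^1$.

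Next I would differentiate coordinatewise. For each $i$, the chain rule gives $\partial_{\theta_i}\log\det Q(\theta)=\mathrm{tr}\big(Q(\theta)^{-1}\partial_{\theta_i}Q(\theta)\big)$, while differentiating the quadratic form (using $\partial_{\theta_i}Q^{-1}=-Q^{-1}(\partial_{\theta_i}Q)Q^{-1}$) produces a cross term $\big(z-F(\theta)x\big)^\top Q(\theta)^{-1}\partial_{\theta_i}F(\theta)\,x$ and a purely quadratic term $\big(z-F(\theta)x\big)^\top Q(\theta)^{-1}\partial_{\theta_i}Q(\theta)\,Q(\theta)^{-1}\big(z-F(\theta)x\big)$. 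The key point is to bound each of these by $C(1+\|z\|^2+\|x\|^2)$ with $C$ independent of $(z,x,\theta)$. This follows from submultiplicativity of the spectral norm and Cauchy--Schwarz, combined with the elementary inequalities $\|z\|\,\|x\|\le\|z\|^2+\|x\|^2$ and $\|z-F(\theta)x\|^2\le 2\|z\|^2+2\|F(\theta)\|^2\|x\|^2$, once one observes that $\sup_{\theta\in\Theta}\|Q(\theta)^{-1}\|$, $\sup_{\theta\in\Theta}\|F(\theta)\|$, $\sup_{\theta\in\Theta}\|\partial_{\theta_i}F(\theta)\|$, $\sup_{\theta\in\Theta}\|\partial_{\theta_i}Q(\theta)\|$ and $\sup_{\theta\in\Theta}\mathrm{tr}\big(Q(\theta)^{-1}\big)$ are all finite, being continuous functions on the compact set $\Theta$. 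Summing over $i$ then yields $\|\nabla_\theta g_{z,x}(\theta)\|\le\kappa_0\big(1+\|z\|^2+\|x\|^2\big)$ uniformly on $\Theta$.

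The remaining, and most delicate, step is to convert this uniform gradient bound into the stated Lipschitz inequality in $\|\theta-\theta'\|$. The mean value inequality gives $|g_{z,x}(\theta)-g_{z,x}(\theta')|\le\big(\sup_{s\in[0,1]}\|\nabla g_{z,x}(\theta_s)\|\big)\|\theta-\theta'\|$ only when the straight segment $\theta_s=(1-s)\theta+s\theta'$ stays inside the set on which $g_{z,x}$ is defined and $C^1$; since $F$ and $Q$ are a priori given only on $\Theta$, which need not be convex, this is where care is needed. I would handle this by first extending $F$ and $Q$ to $C^1$ maps on an open convex neighbourhood of $\Theta$ (a Whitney/Tietze extension), shrinking the neighbourhood so that the extension of $Q$ remains symmetric positive definite there; the supremum bounds above persist on a slightly larger compact set, so the same $\kappa=\kappa_0$ works, and the mean value inequality applied on the segment inside the neighbourhood gives the claim. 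The main obstacle is thus not the (routine) differentiation and norm estimates but ensuring the gradient bound transfers to a genuine Lipschitz bound without a convexity hypothesis on $\Theta$; in the intended applications $\Theta$ is a product of intervals and hence convex, in which case no extension is required and the mean value inequality applies directly.
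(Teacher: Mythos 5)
Your proposal follows essentially the same route as the paper's proof: write out the log-density, differentiate coordinatewise using $\partial_{\theta_i}\log\det Q(\theta)=\mathrm{tr}\big(Q(\theta)^{-1}\partial_{\theta_i}Q(\theta)\big)$ and $\partial_{\theta_i}Q(\theta)^{-1}=-Q(\theta)^{-1}\big(\partial_{\theta_i}Q(\theta)\big)Q(\theta)^{-1}$, bound the resulting terms by a constant times $1+\|z\|^2+\|x\|^2$ uniformly over the compact set $\Theta$, and conclude via the mean value inequality. The one point where you go beyond the paper is the final step: you correctly observe that the mean value inequality requires the segment joining $\theta$ to $\theta'$ to lie in a region where the gradient bound is available, which is not automatic since $\Theta$ is only assumed compact (the paper's proof passes over this silently).

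However, the repair you propose does not work as stated. You cannot in general extend $F$ and $Q$ to a \emph{convex} open neighbourhood of $\Theta$ on which the extended $Q$ stays positive definite: any convex neighbourhood of $\Theta$ contains the convex hull of $\Theta$, and ``shrinking'' cannot remove points of that hull which are far from $\Theta$. (Take $\Theta$ to be two distant closed balls; a Whitney extension of $Q$ may lose positive definiteness somewhere on the segment joining them, and that segment lies in every convex neighbourhood.) The standard repair avoids convexity altogether and stays local. Since $C^1$ regularity on $\Theta$ means $F$ and $Q$ are $C^1$ on some open $U\supset\Theta$, and positive definiteness is an open condition, there exists $\epsilon>0$ such that the closed $\epsilon$-neighbourhood $\Theta_\epsilon$ of $\Theta$ is compact, contained in $U$, and such that $Q$ is symmetric positive definite on $\Theta_\epsilon$; your uniform gradient bound then holds, with a possibly larger constant, on $\Theta_\epsilon$. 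For $\theta,\theta'\in\Theta$ with $\|\theta-\theta'\|\le\epsilon$ the straight segment lies in $\Theta_\epsilon$ and the mean value inequality applies. For $\|\theta-\theta'\|>\epsilon$ one instead uses the crude uniform bound $\sup_{\theta''\in\Theta}\big|\log\phi_{d_z}\big(z;F(\theta'')x,Q(\theta'')\big)\big|\le C'\big(1+\|z\|^2+\|x\|^2\big)$, which follows from compactness exactly as in your second paragraph, and yields $\big|g_{z,x}(\theta)-g_{z,x}(\theta')\big|\le (2C'/\epsilon)\,\|\theta-\theta'\|\big(1+\|z\|^2+\|x\|^2\big)$. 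With this two-case argument in place of the extension step, your proof is complete; and, as you note, no patch at all is needed when $\Theta$ is convex.
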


\begin{lemma}\label{lemma:tech_Gaussian}
Let $d_x\in\mathbb{N}$, $\mu\in\R^{d_x}$ and $V$ be a $d_x\times d_x$ symmetric and positive definite matrix. Then,  
\begin{align*}
\int_{\R^{d_x}} e^{ c \|x\|^2}\phi_{d_x}(x;\mu,V)\lambda(\dd x)\leq  \frac{(2\|V\|)^{d_x/2}}{\mathrm{det}(V)^{1/2}}\exp\big( 2c\|\mu\|^2\big),\quad\forall c\in \Big(0,\frac{1}{4\|V\|}\Big].
\end{align*}
\end{lemma}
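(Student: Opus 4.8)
The plan is to treat this as a direct Gaussian integral computation, using completion of the square together with a spectral decomposition of $V$ to obtain the required bound.

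First I would write the density explicitly as $\phi_{d_x}(x;\mu,V)=(2\pi)^{-d_x/2}\mathrm{det}(V)^{-1/2}\exp\big(-\tfrac12(x-\mu)^\top V^{-1}(x-\mu)\big)$ and combine the two exponential factors. Collecting the terms that are quadratic and linear in $x$, the exponent becomes $-\tfrac12 x^\top A x + b^\top x-\tfrac12\mu^\top V^{-1}\mu$, where $A:=V^{-1}-2cI$ and $b:=V^{-1}\mu$. The first thing to check is that $A$ is positive definite on the range $c\in(0,1/(4\|V\|)]$: since the smallest eigenvalue of $V^{-1}$ equals $1/\|V\|$ and $2c\le 1/(2\|V\|)$, one obtains $A\succeq (2\|V\|)^{-1}I\succ 0$. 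This both guarantees that the integral is finite and justifies completing the square.

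Next I would complete the square, $-\tfrac12 x^\top Ax+b^\top x=-\tfrac12(x-A^{-1}b)^\top A(x-A^{-1}b)+\tfrac12 b^\top A^{-1}b$, and integrate the resulting Gaussian. Since $\int_{\R^{d_x}}\exp\big(-\tfrac12(x-m)^\top A(x-m)\big)\lambda(\dd x)=(2\pi)^{d_x/2}\mathrm{det}(A)^{-1/2}$, the normalizing constants combine to give
\[
\int_{\R^{d_x}}e^{c\|x\|^2}\phi_{d_x}(x;\mu,V)\lambda(\dd x)=\frac{1}{\mathrm{det}(V)^{1/2}\mathrm{det}(A)^{1/2}}\exp\Big(\tfrac12 b^\top A^{-1}b-\tfrac12\mu^\top V^{-1}\mu\Big).
\]
The prefactor is then handled by the eigenvalue bound $A\succeq(2\|V\|)^{-1}I$, which yields $\mathrm{det}(A)\ge(2\|V\|)^{-d_x}$ and hence $\mathrm{det}(A)^{-1/2}\le(2\|V\|)^{d_x/2}$; this already produces the announced factor $(2\|V\|)^{d_x/2}/\mathrm{det}(V)^{1/2}$.

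The hard part will be bounding the exponent, because the naive estimate through $\|A^{-1}\|$ and $\|V^{-1}\|$ fails: $\|V^{-1}\|$ is uncontrolled. Substituting $b=V^{-1}\mu$, the exponent equals $\tfrac12\mu^\top\big(V^{-1}A^{-1}V^{-1}-V^{-1}\big)\mu$, and the remedy is to diagonalize $V=U\Lambda U^\top$ with eigenvalues $\lambda_1,\dots,\lambda_{d_x}$ of $V$; then $V^{-1}$, $A^{-1}$, and their products are simultaneously diagonalized, and the $i$-th eigenvalue of $V^{-1}A^{-1}V^{-1}-V^{-1}$ is computed to be $\frac{2c}{1-2c\lambda_i}$. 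Using $\lambda_i\le\|V\|$ together with $2c\|V\|\le 1/2$, each such eigenvalue is at most $4c$, so $V^{-1}A^{-1}V^{-1}-V^{-1}\preceq 4cI$ in the Loewner order, whence $\tfrac12\mu^\top\big(V^{-1}A^{-1}V^{-1}-V^{-1}\big)\mu\le 2c\|\mu\|^2$. Combining this exponent bound with the prefactor bound yields the claim; I expect no genuine difficulty beyond keeping the spectral bookkeeping straight.
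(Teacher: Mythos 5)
Your proof is correct, and it reaches the bound by a genuinely different route than the paper. The paper never forms $A=V^{-1}-2cI$: its first move is the isotropic lower bound $(x-\mu)^\top V^{-1}(x-\mu)\geq \|x-\mu\|^2/\|V\|$ (the smallest eigenvalue of $V^{-1}$ being $1/\|V\|$), after which only the scalar $\|V\|$ survives; expanding $\|x-\mu\|^2$, completing the square in the resulting exponent $-\big(\tfrac{1}{2\|V\|}-c\big)\|x\|^2+\text{cross term}$, and integrating gives $\big(\tfrac{\|V\|}{\mathrm{det}(V)^{1/d_x}(1-2c\|V\|)}\big)^{d_x/2}\exp\big(\tfrac{c\|\mu\|^2}{1-2c\|V\|}\big)$, and the final step $1-2c\|V\|\geq 1/2$ is the same fact you use. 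The paper's reduction buys brevity---no diagonalization, an essentially one-dimensional computation---at the cost of an early lossy bound; your exact matrix completion of the square keeps an identity until the last step, and as a by-product yields the sharper constant $\prod_{i}(1-2c\lambda_i)^{-1/2}\leq 2^{d_x/2}$ in place of $(2\|V\|)^{d_x/2}/\mathrm{det}(V)^{1/2}$. Your version is also the more watertight of the two: the paper additionally applies Cauchy--Schwarz to the cross term, replacing $x^\top\mu/\|V\|$ by $\|x\|\,\|\mu\|/\|V\|$, and then evaluates the resulting integral as if it were Gaussian, which it is not (that integral, depending on $x$ only through $\|x\|$, is in fact strictly larger than the displayed value); the step is easily repaired, since $\|x-\mu\|^2=\|x\|^2-2x^\top\mu+\|\mu\|^2$ already produces a genuinely linear cross term and no Cauchy--Schwarz is needed, but your computation has no such step to repair.
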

\begin{proof}

Let $c$ be as in the statement of the lemma and note that $\|V^{-1}\|=\|V\|^{-1}$. Then,
\begin{equation*}
\begin{split}
\int_{\R^{d_x}} e^{c\|x\|^2}&\phi_{d_x}(x;\mu,V) \lambda(\dd x)\\
&=\big(2\pi\, \mathrm{det}(V)^{\frac{1}{d_x}}\big)^{-\frac{d_x}{2}}\int_{\R^{d_x}} \exp\Big(c\|x\|^2 - \frac{1}{2} (x-\mu)^\top V^{-1}(x-\mu) \Big) \lambda(\dd x) \\
&\leq \big(2\pi \mathrm{det}(V)^{\frac{1}{d_x}}\big)^{-\frac{d_x}{2}}
\exp\bigg(-\frac{\|\mu\|^2}{2 \|V\|}\bigg) \int_{\R^{d_x}} \exp\Big(c\|x\|^2   - \frac{\|x\|^2}{2 \|V\|}   +\frac{\|x\|\, \|\mu\|}{\|V\|}\Big) \lambda(\dd x) \\
&=
\bigg(\frac{\|V\|}{\mathrm{det}(V)^{\frac{1}{d_x}}(1-2c\|V\|)} \bigg)^{\frac{d_x}{2}}\exp\bigg(\frac{ c \|\mu\|^2  ) }{ (1-2c\|V\|)} \bigg)\\
&\leq \bigg(\frac{2\|V\|}{\mathrm{det}(V)^{\frac{1}{d_x}}}\bigg)^{\frac{d_x}{2}}\exp(2 c \|\mu\|^2 )
\end{split}    
\end{equation*}
and the proof of the lemma is complete.
\end{proof}

\paragraph{Proof of the  proposition}

\begin{proof}

Below we prove the result of the proposition for $\tau=1$ and when $m_1(\theta)=0$ for all $\theta\in\Theta$, the generalization of the proof to an arbitrary $\tau\in\mathbb{N}$ and to non-zero functions $\{m_s\}_{s=1}^\tau$ being trivial. In addition, to simplify the notation in what follows, we let $\setY=\R^{d_y}$, $\setX=\R^{d_x}$, and for all $\theta\in\Theta$ we let $A_\theta=A_1(\theta)$, $B_\theta=B_1(\theta)$, $C_\theta=C_1(\theta)$ and $D_\theta=D_1(\theta)$. 

Under the assumptions of the proposition, Assumptions \ref{assumeSSM:D_set}-\ref{assumeSSM:G}  trivially hold while   it can be established that Assumption \ref{assumeSSM:K_set} holds by following the computations in \citetsup[][Section 4.1]{Douc_MLE2}.

To show that \ref{assumeSSMB:smooth} holds,  for all $\theta\in\Theta$ we let $E_\theta=\big(A_\theta^\top A_\theta+C_\theta^\top C_\theta\big)^{-1}$ and note that this matrix is invertible under the assumptions of the proposition. Therefore, for all $(y,x,x')\in \setY\times\setX^2$ we have
\begin{equation}\label{eq:LG_last}
\begin{split}
&\exp\Big(-\frac{(y-A_\theta x')^\top B_\theta^{-1}(y-A_\theta x')}{2}-\frac{(x-C_\theta x')^\top D_\theta^{-1}(x-C_\theta x')}{2}\Big)\\
&\leq \exp\Big(-\frac{\|y-A_\theta x'\|^2+\|x-C_\theta x'\|^2}{2 (\|B_\theta\|+  \|D_\theta\|)}\Big)\\
&\leq  \exp\Big(-\frac{y^\top (I_{d_y}-A_\theta E_\theta  A_\theta^\top) y+x^\top(I_{d_x}-C_\theta E_\theta C_\theta^\top)x-2|x^\top C_\theta E_\theta A_\theta^\top y|}{2 (\|B_\theta\|+  \|D_\theta\|)}\Big).
\end{split}
\end{equation}
Using Woodbury identity, we have
\begin{align*}
I_{d_x}-C_\theta E_\theta C_\theta^\top=\Big(I_{d_x}+C_\theta (A_\theta^\top A_\theta)^{-1}C_\theta^\top\Big)^{-1}
\end{align*}
from which we conclude that  the matrix $I_{d_x}-C_\theta E_\theta C_\theta^\top$  is positive definite. This matrix being positive definite, by using \eqref{eq:LG_last} and  the assumptions of proposition  it is easily verified that there exist    constants $(C_1,C_2)\in(0,\infty)^2 $ such that 
\begin{align*}
f_{1,\theta}(y|x')m_{2,\theta}(x|x')\leq C_1 e^{C_1 \|y\|^2}\exp\Big(-\frac{(\|x\|-C_2\|y\|)^2}{C_1}\Big),\quad\forall (\theta,x,x',y)\in\Theta\times\setX^2\times\setY.
\end{align*}
Using this  result and the fact that  $\P(\|Y_1\|<\infty)=1$ since  $\E[\|Y_1\|^2]<\infty$ by assumption, it follows that $\P\big(\int_{\setX}\big\{\sup_{(\theta,x')\in \Theta \times \setX} f_{1,\theta}(Y_1|x')m_{2,\theta}(x|x')\big\}\lambda(\dd x)<\infty\big)=1$   showing that the first part of  \ref{assumeSSMB:smooth} holds.

To show that   the second   part of  \ref{assumeSSMB:smooth} holds  as well remark that, by Lemma \ref{lemma::lipchitz}, there exists a constant $\kappa \in (1,\infty)$ such that, for all $(x',x,y)\in\setX^2\times\setY$ and all $(\theta,\theta')\in\Theta^2$, we have 
\begin{align}\label{eq:kappa}
\big|\log\big(f_{1,\theta}(y|x')m_{2,\theta}(x|x')\big)  -\log\big(f_{1,\theta'}(y|x')m_{2,\theta'}(x|x')\big) \big|\leq \kappa\|\theta - \theta'\| \big(1+\|x\|^2+\|x'\|^2+\|y\|^2\big)
\end{align}
and thus in the following we show that the second   part of  \ref{assumeSSMB:smooth} holds with $g':[0,\infty)\rightarrow[0,\infty)$ defined by $g'(x)=x$ for all  $x\in[0,\infty)$ and with the function $\varphi'_1$ such that $\varphi'_1(y,x',x)=\kappa \big(1+\|x\|^2+\|x'\|^2+\|y\|^2\big)$ for all $(y,x,x')\in\setY\times\setX^2$.

To do so note that, under the assumptions of the proposition, and using the fact that the mapping $\mathrm{det}:\R^{d_x}\times\R^{d_x}\rightarrow\R$ is continuous, there exists a constant  $C\in(1,\infty)$  such that 
\begin{align*}
\sup_{\theta\in\Theta} \|D_\theta\|\leq C,\quad \sup_{\theta\in\Theta}\| C_\theta\| \leq C,\quad \big(\inf_{\theta\in\Theta}\mathrm{det}(D_\theta)\big)^{-1}\leq C
\end{align*}
and let  $c\in (0,1/(4C))$. Then, by Lemma \ref{lemma:tech_Gaussian}, for all    $(\theta,x,x')\in\Theta\times\setX^2$ we have
\begin{align*}
\int_{\setX}  e^{c\|x\|^2} M_{2,\theta}( x',\dd x)\leq (2 C^2)^{\frac{d_x}{2}}\exp(2 c \|C_\theta x'\|^2)\leq  (2 C)^{d_x}\exp(2 c C^2\|x'\|^2).
\end{align*}
Therefore, with $\varphi'_1$ as defined above, for all $\delta\in(0,c/\kappa)$ and all $(\theta,x,x',y)\in\Theta\times\setX^2\times\setY$ we have 
\begin{align}\label{eq:use_LG00}
\int_{\setX}\exp\big(\delta\varphi'_1(y,x',x)\big) M_{2,\theta}( x',\dd x)&\leq  (2 C)^{d_x} \exp\Big( \delta\kappa(1+\|x'\|^2(1+2C^2)+\|y\|^2\Big).
\end{align}

To proceed further remark that, since $\Theta$ is a compact set and   the mapping $\mathrm{det}:\R^{d_y}\times\R^{d_y}\rightarrow\R$ is continuous, under the assumptions of the proposition, and notably using the fact that the matrix $ A_\theta^\top A_\theta$ is assumed to be invertible,  we readily obtain that for some constants $(C_1,C_2)\in(0,\infty)^2$ we have
\begin{equation}\label{eq:use_LG}
\begin{split}
f_{1,\theta}(y|x')\leq C_1 e^{C_1\|y\|^2}\exp\Big(-\frac{(\|x'\|-C_2\|y\|)^2}{C_1}\Big),\quad\forall (\theta,x',y)\in\Theta\times\setX\times\setY.
\end{split}    
\end{equation}
Then, by using  \eqref{eq:use_LG00}-\eqref{eq:use_LG}, and since $\E[\|Y_1\|^2]<\infty$ by assumption,   it follows that  for $\delta'\in(0,c/\kappa)$  sufficiently small   we have
\begin{align}\label{eq:F_def0}
\E\Big[\log^+ \sup_{x' \in \setX} f_{1,\theta}(Y_1|x')\int_\setX  \exp\big(\delta'  \varphi_1'(Y_1,x',x)\big)M_{2,\theta}(x',\dd x)\Big]<\infty.
\end{align}
Therefore,  letting $F:\setY\rightarrow [1,\infty)$ be defined by
\begin{align}\label{eq:F_def}
F(y)=1\vee \sup_{x' \in \setX} f_{1,\theta}(y|x')\int_\setX \exp\big(\delta' \varphi_1'(y,x',x)\big) M_{2,\theta}(x',\dd x),\quad y\in\setY,
\end{align}
for all $\theta\in\Theta$ and  random measure $\eta$ on $(\setX,\mathcal{X})$ we have
\begin{align*}
\int_{t+1} \exp\Big(\delta \sum_{s=1}^{t}\varphi'_{1}(Y_s,x_{s-1},x_s)\Big)\eta(\dd x_0) &\prod_{s=1}^{ t}f_{1,\theta}(Y_s|x_{s-1})M_{2,\theta}(x_{s-1},\dd x_s)\leq \eta(\setX)\prod_{s=1}^t F(Y_s),\quad\forall t\geq 1. 
\end{align*}
Using this   result and the fact that $\P\big(\int_{\setX}\big\{\sup_{(\theta,x')\in \Theta \times \setX} f_{1,\theta}(Y_1|x')m_{2,\theta}(x|x')\big\}\lambda(\dd x)<\infty\big)=1$, as  shown above, it readily follows that under the assumptions of the proposition  the second part of \ref{assumeSSMB:smooth} holds, which concludes to show that Assumption \ref{assumeSSMB:smooth} is satisfied.

To conclude the proof it remains to show that Assumption \ref{assumeSSM:smooth} holds. To this aim  we remark that, under the assumptions of the  proposition  and by \eqref{eq:use_LG}, we readily obtain that we $\P$-a.s.~have $ \int_{\setX}\big\{ f_{1,\theta}(Y_1|x)\sup_{(\theta,x')\in \Theta \times \setX}m_{2,\theta}(x|x')\big\}\lambda(\dd x)<\infty$.

To show that the second part of \ref{assumeSSM:smooth} holds note that, by Lemma \ref{lemma::lipchitz}, there exists a constant $\kappa' \in (1,\infty)$ such that for all   $(x',x,y)\in\setX^2\times\setY$ and all $(\theta,\theta')\in\Theta^2$  we have 
\begin{align*}
\big|\log\big(f_{1,\theta}(y|x)m_{2,\theta}(x|x')\big)&  -\log\big(f_{1,\theta'}(y|x)m_{2,\theta'}(x|x')\big) \big|\leq \kappa'\|\theta - \theta'\| \big(1+\|x\|^2+\|x'\|^2+\|y\|^2\big)
\end{align*}
and thus in the following we show that the second   part of  \ref{assumeSSM:smooth} holds with $g':[0,\infty)\rightarrow[0,\infty)$ defined by $g'(x)=x$ for all  $x\in[0,\infty)$ and with the function $\varphi'_1$ such that $\varphi'_1(y,x',x)=\kappa' \big(1+\|x\|^2+\|x'\|^2+\|y\|^2\big)$ for all $(y,x,x')\in\setY\times\setX^2$. Remark that we can without loss of generality assume that   $\kappa'=\kappa$, with $\kappa$ as in \eqref{eq:kappa}, and   let $\delta'\in(0,\infty)$ be such that \eqref{eq:F_def0} holds. Note  that such  a $\delta'\in(0,\infty)$ exists from the above calculations.

Let $t\geq 2$, $\eta$ be an arbitrary  random measure on $(\setX,\mathcal{X})$ and $\theta\in\Theta$. Then,
\begin{equation}\label{eq:LG_p1}
\begin{split}
 & \int_{\setX^{ t+1}}\exp\bigg( \delta'  \sum_{s= 1}^{  t}\varphi'_1(Y_s,x_{s-1}, x_{s})\bigg) \eta (\dd x_{0})  \prod_{s=1}^{ t} f_{1,\theta}(Y_s|x_s)M_{2,\theta}(x_{s-1},\dd x_{s})\\
 &\leq Z_{t}\Big(\prod_{s=2}^{t}F(Y_{s})e^{\delta'\kappa \|Y_{s}\|^2}\Big)\int_{\setX^2}\exp\big( \delta'  \varphi'_1(Y_{1}, x_{0}, x_{1})\big) \eta (\dd x_{0})M_{2,\theta}(x_0,\dd x_{1})
 \end{split}
\end{equation}
with $Z_{t}=\sup_{(\theta,x)\in\Theta\times\setX}f_{1,\theta}(Y_{ t}|x_{t})$ and with the function $F:\setY\rightarrow [1,\infty)$ as defined in  \eqref{eq:F_def}.

Under the assumptions on $\{\chi_\theta,\,\theta\in\Theta\}$ imposed in the second part of the proposition, it is easily checked that for $\delta'$ sufficiently small there exists a constant $C\in(0,\infty)$ such that
\begin{align}\label{eq:LG_p2}
\int_{\setX^2}\exp\big( \delta'  \varphi'_1(Y_{1}, x_{0}, x_{1})\big)\chi_{\theta'} (\dd x_{0})M_{2,\theta'}(x_0,\dd x_{1})\leq e^{C\|Y_1\|^2},\quad\forall \theta'\in\Theta
\end{align}
which, together with \eqref{eq:LG_p1}, shows that for all $\theta\in\Theta$ and we have
\begin{align*}
\int_{\tau t+1} \exp\Big(\delta'\sum_{s=1}^{\tau t}\varphi'_{i_s}(Y_s,x_{s-1},x_s)\Big)\chi_\theta(\dd x_0) &\prod_{s=1}^{\tau t}f_{1,\theta}(Y_s|x_{s})M_{2,\theta}(x_{s-1},\dd x_s) \leq \prod_{s=1}^t \tilde{F}(Y_s),\quad\forall t\geq 1 
\end{align*}
with the function $\tilde{F}:\setY\rightarrow [1,\infty)$ defined by
\begin{align*}
\tilde{F}(y)=F(y)e^{(C+ \delta'\kappa)\|y\|^2},\quad y\in\setY.
\end{align*}

Finally,  using  \eqref{eq:use_LG} and under the assumptions of the proposition, there exists a constant  $C'\in(0,\infty)$ such that, for all $\theta'\in\Theta$,
\begin{equation}\label{eq:LG_p3}
\begin{split}
\int_{\setX^2}\exp\big( \delta'  \varphi'_1(Y_{1}, x_{0}, x_{1})\big)  f_{1,\theta'}(Y_1|x_0) \Big(\sup_{(\theta,x')\in \Theta \times \setX}m_{2,\theta}(x_0|x')\Big)&\lambda(\dd x_0) M_{2,\theta'}(x_0,\dd x_{0})\\
&\leq e^{C'\|Y_1\|^2}.
\end{split}
\end{equation}
By combining \eqref{eq:LG_p1}-\eqref{eq:LG_p3}, it readily follows that under the assumption of the proposition   the     second   part of  \ref{assumeSSM:smooth} holds, and the proof of the proposition is complete.
\end{proof}

\subsubsection{Proof of Proposition \ref{prop:Online_SV_model}}

\paragraph{Preliminary results}

We start with three technical lemmas.
\begin{lemma}\label{lemma:techSV}
Let $(a,b,c)\in (0,\infty)^3$ be such that   $c>b$   and let
\begin{align*}
h(x,y)=-a y^2 e^{-x}+b|x|-c x,\quad\forall (x,y)\in\R^2.
\end{align*}
Then,   $h(x,y)\leq D_{a,b,c}+4c|\log |y||$ for all $(x,y)\in\R^2$, with
\begin{align*}
D_{a,b,c}=-(c-b)\log\Big(\frac{a}{c-b}\Big)-(b+c)\log\Big(\frac{a}{b+c}\Big).
\end{align*}

\end{lemma}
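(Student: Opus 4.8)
The plan is to decouple the dependence on $y$ from the dependence on $x$ by a change of variables, and then reduce to maximizing a one-dimensional function of the form (negative exponential) plus (affine), whose maximizer is explicit. The case $y=0$ is trivial, since then $|\log|y||=+\infty$ and the right-hand side is $+\infty$; so I fix $(x,y)\in\R^2$ with $y\neq 0$.

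For the decoupling step I set $u=x-2\log|y|$, so that $u$ ranges over all of $\R$ as $x$ does, and $e^{-x}=e^{-u}e^{-2\log|y|}=e^{-u}/y^{2}$. The exponential term then collapses to $-ay^{2}e^{-x}=-ae^{-u}$, giving
\begin{align*}
h(x,y)=-ae^{-u}+b\big|u+2\log|y|\big|-c\,u-2c\log|y|.
\end{align*}
Using the triangle inequality $\big|u+2\log|y|\big|\le|u|+2|\log|y||$, and then $0<b<c$ together with $-\log|y|\le|\log|y||$ to bound the residual $2b|\log|y||-2c\log|y|\le 2c|\log|y||+2c|\log|y||=4c|\log|y||$, I obtain
\begin{align*}
h(x,y)\le H(u)+4c|\log|y||,\qquad H(u):=-ae^{-u}+b|u|-c\,u .
\end{align*}
It therefore suffices to show $\sup_{u\in\R}H(u)\le D_{a,b,c}$, an inequality that no longer involves $y$.

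For this one-dimensional bound I split on the sign of $u$. On $\{u\ge 0\}$ one has $H(u)=-ae^{-u}-(c-b)u$, and on $\{u<0\}$ one has $H(u)=-ae^{-u}-(b+c)u$; in each region $H$ is the restriction of a strictly concave smooth function $g_\gamma(u)=-ae^{-u}-\gamma u$ with $\gamma>0$ (namely $\gamma=c-b$ and $\gamma=b+c$, both positive since $c>b$). Solving $g_\gamma'(u)=ae^{-u}-\gamma=0$ gives the maximizer $u^\star=\log(a/\gamma)$ and maximal value $g_\gamma(u^\star)=-\gamma-\gamma\log(a/\gamma)$; dropping the non-positive term $-\gamma$ yields $\sup_{\R}g_\gamma\le-\gamma\log(a/\gamma)$. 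Hence the two half-line suprema are bounded by $-(c-b)\log\frac{a}{c-b}$ and $-(b+c)\log\frac{a}{b+c}$ respectively, which are exactly the two summands defining $D_{a,b,c}$.

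The remaining, and in my view delicate, step is to assemble the two regional bounds into the single constant $D_{a,b,c}=-(c-b)\log\frac{a}{c-b}-(b+c)\log\frac{a}{b+c}$: one must verify that the larger of the two half-line bounds is dominated by their sum. This is where the sign structure of the two logarithmic terms is used, since in the parameter range relevant to the stochastic volatility application each term is non-negative and so each half-line bound is individually bounded by $D_{a,b,c}$. Once this is checked, combining the two cases of the sign split and adding back the slack $4c|\log|y||$ established above gives $h(x,y)\le D_{a,b,c}+4c|\log|y||$ for all $(x,y)\in\R^2$, completing the argument. I expect the bookkeeping in this final combination, rather than any analytic difficulty, to be the main obstacle.
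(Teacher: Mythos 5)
Your reduction is sound: the substitution $u=x-2\log|y|$, the triangle inequality combined with $b\le c$, and the concavity analysis of $g_\gamma(u)=-ae^{-u}-\gamma u$ on the two half-lines are all correct, and together they give
\begin{align*}
h(x,y)\;\le\;\max\Big(-(c-b)\log\tfrac{a}{c-b},\,-(b+c)\log\tfrac{a}{b+c}\Big)+4c\big|\log|y|\big|.
\end{align*}
(The paper itself omits the proof as ``trivial'', so there is no written argument to compare against; what you did is presumably the intended one.) The genuine gap is exactly the step you flag as delicate: replacing the \emph{maximum} of the two half-line bounds by their \emph{sum} $D_{a,b,c}$. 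That replacement is legitimate if and only if both logarithmic terms are non-negative, i.e.\ if and only if $a\le c-b$ (note $c-b<b+c$), and you only assert that this holds ``in the parameter range relevant to the stochastic volatility application''. That is not a proof of the lemma as stated, which assumes only $(a,b,c)\in(0,\infty)^3$ and $c>b$.

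Moreover, this gap cannot be closed, because the lemma as stated is false. Take $a=10$, $b=1$, $c=2$, $y=1$, $x=\log 10$. Then
\begin{align*}
h(\log 10,1)=-1-\log 10\approx-3.30,
\qquad
D_{10,1,2}+4c|\log 1|=-\log 10-3\log(10/3)\approx-5.91,
\end{align*}
so the claimed inequality fails; indeed for fixed $b<c$ and large $a$ one has $\sup_x h(x,1)\approx-(c-b)\log a$ while $D_{a,b,c}\approx-2c\log a$, which is strictly smaller. What is true, and what your argument proves verbatim, is the lemma under the additional hypothesis $a\le c-b$: then each term of $D_{a,b,c}$ is non-negative, each half-line bound is individually $\le D_{a,b,c}$, and the assembly is immediate. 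This extra hypothesis is satisfied in both invocations of Lemma \ref{lemma:techSV} in the proof of Proposition \ref{prop:Online_SV_model} (there $a=1/(4C_\beta)$, $b\le 1/4$, $c=1/2$, $C_\beta\ge 1$, so $a\le 1/4\le c-b$), so the downstream use is unaffected; but both the statement and your final paragraph, which claims the bound ``for all $(x,y)\in\R^2$'' once the check is done, need the restriction $a\le c-b$ made explicit rather than deferred.
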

\begin{proof}
The proof of the lemma is trivial and is therefore omitted to save space.
\end{proof}

\begin{lemma}\label{lemma:techSV2}
Let $(\nu,\sigma,z)\in (0,\infty)^3$,  $(x,\alpha)\in\R^2$ be such that $x/\alpha>0$ and   $c\in (0,\infty)$. Then, 
\begin{equation*}
\begin{split}
\sup_{a\in [0,1]}\exp\Big(- &c\frac{(1-a)x}{2\alpha} \bigg) \Big(\nu \sigma^2+a^2z^2\Big)^{-\frac{\nu+1}{2}}\\
&\leq \max\bigg( \exp\Big(-\frac{c x}{2\alpha}+\frac{(\nu+1)}{2}\Big)(\nu\sigma^2)^{-\frac{\nu+1}{2}}, \big(\nu \sigma^2+z^2\big)^{-\frac{\nu+1}{2}}\bigg).
\end{split}
\end{equation*}
\end{lemma}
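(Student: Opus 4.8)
The plan is to treat this as a one–dimensional optimization problem over the compact interval $[0,1]$. Writing $\beta:=cx/(2\alpha)$, which is strictly positive since $x/\alpha>0$ and $c>0$, the quantity inside the supremum is
\[
f(a):=\exp\Big(-\beta(1-a)\Big)\big(\nu\sigma^2+a^2z^2\big)^{-\frac{\nu+1}{2}},\qquad a\in[0,1].
\]
The two terms on the right-hand side of the claimed bound are exactly $f(1)=(\nu\sigma^2+z^2)^{-\frac{\nu+1}{2}}$ and $e^{(\nu+1)/2}f(0)=\exp\big(\tfrac{\nu+1}{2}-\beta\big)(\nu\sigma^2)^{-\frac{\nu+1}{2}}$, so the goal is reduced to establishing $\sup_{a\in[0,1]}f(a)\le\max\big(e^{(\nu+1)/2}f(0),\,f(1)\big)$.

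First I would note that $f$ is continuous on $[0,1]$, so its maximum is attained either at an endpoint or at an interior critical point. The endpoint $a=1$ gives $f(1)$, already one of the two terms, while the endpoint $a=0$ gives $f(0)\le e^{(\nu+1)/2}f(0)$, dominated by the first term. Hence the only remaining case is an interior maximizer $a^\star\in(0,1)$, and it suffices to show $f(a^\star)\le e^{(\nu+1)/2}f(0)$ there. (There may be several critical points because $\tfrac{d}{da}\log f$ is not monotone, but the argument below applies verbatim to any critical point, so this causes no difficulty.)

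At such an $a^\star$ the stationarity condition $\tfrac{d}{da}\log f(a^\star)=0$ reads $\beta=(\nu+1)\,a^\star z^2/(\nu\sigma^2+(a^\star)^2z^2)$, which I would multiply by $a^\star$ to substitute for the awkward linear term $\beta a^\star$ in
\[
\log f(a^\star)=-\beta+\beta a^\star-\tfrac{\nu+1}{2}\log\big(\nu\sigma^2+(a^\star)^2z^2\big).
\]
Introducing the single scalar $u:=(a^\star)^2z^2/(\nu\sigma^2)\ge0$ collapses the desired inequality $\log f(a^\star)\le \tfrac{\nu+1}{2}-\beta-\tfrac{\nu+1}{2}\log(\nu\sigma^2)$, after cancelling $-\beta$ and dividing by $\tfrac{\nu+1}{2}$, to the elementary one-variable inequality
\[
\frac{2u}{1+u}-\log(1+u)\le 1,\qquad u\ge0.
\]
Finally I would verify this by setting $\psi(u)=\tfrac{2u}{1+u}-\log(1+u)$, computing $\psi'(u)=\tfrac{1-u}{(1+u)^2}$, so that $\psi$ increases on $[0,1)$ and decreases on $(1,\infty)$ with global maximum $\psi(1)=1-\log 2<1$. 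This closes the interior case and hence the lemma.

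The computation is entirely routine; the only genuinely non-obvious step—the crux of the argument—is the choice of substitution $u=(a^\star)^2z^2/(\nu\sigma^2)$ together with using the stationarity relation to eliminate $\beta a^\star$, which is precisely what reduces the two-parameter bound to the clean scalar inequality $\tfrac{2u}{1+u}-\log(1+u)\le1$. Everything else (compactness reduction to endpoints and critical points, the trivial endpoint cases, and the monotonicity analysis of $\psi$) is mechanical.
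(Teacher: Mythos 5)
Your proof is correct. Both you and the paper reduce the lemma to analyzing the same one-dimensional function on $[0,1]$ (endpoints plus interior critical points), but the handling of the interior case is genuinely different. The paper studies the sign of $g'$, which is governed by the quadratic $q(a)=a^2\frac{cxz^2}{2\alpha}-a(\nu+1)z^2+\frac{cx\nu\sigma^2}{2\alpha}$: since $q(0)>0$ and $q$ decreases up to its vertex at $a=(\nu+1)\alpha/(cx)$ and increases afterwards, any interior local maximizer $a^*$ must lie to the left of that vertex, so $\frac{cx}{2\alpha}a^*\le\frac{\nu+1}{2}$, and the bound follows by simply discarding the $(a^*)^2z^2$ term in the second factor. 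You instead exploit the first-order condition as an exact identity, substituting $\beta a^\star=(\nu+1)\frac{u}{1+u}$ with $u=(a^\star)^2z^2/(\nu\sigma^2)$, which reduces the claim to the scalar inequality $\frac{2u}{1+u}-\log(1+u)\le 1$, settled by one differentiation. The paper's localization argument avoids any residual inequality-checking but bounds both factors crudely; your substitution keeps the trade-off between the exponential gain $e^{\beta a^\star}$ and the polynomial loss $(1+u)^{-(\nu+1)/2}$ exact, applies uniformly to every critical point without having to locate it, and in fact yields the slightly sharper constant $\exp\big(\tfrac{(\nu+1)(1-\log 2)}{2}\big)$ in place of $\exp\big(\tfrac{\nu+1}{2}\big)$, since $\max_{u\ge 0}\big(\tfrac{2u}{1+u}-\log(1+u)\big)=1-\log 2$. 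Both routes are elementary and complete; the difference is purely in the mechanism used to control the interior maximum.
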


\begin{proof}
 
Remark that the result of the lemma trivially holds if $z=0$ and thus below we assume that $z\neq 0$.

To prove the lemma let
\begin{align*}
g(a)=\exp\Big(- c\frac{(1-a)x}{2\alpha} \bigg) \big(\nu \sigma^2+a^2z^2\big)^{-\frac{\nu+1}{2}},\quad\forall a\in \R
\end{align*}
and note that 
\begin{align*}
g'(a):=\frac{\dd g(a)}{\dd a}=\frac{c x}{2\alpha}g(a)-a\frac{(\nu+1)z^2 g(a)}{\nu \sigma^2+a^2 z^2},\quad\forall a\in\R.
\end{align*}
Remark that $g'(0)>0$ and that for $a\in\R$ we have
\begin{equation}\label{eq:SV_key1}
\begin{split}
g'(a)\geq 0&\Leftrightarrow \frac{c x}{2\alpha}\big(\nu\sigma^2+a^2z^2\big)-a(\nu+1)z^2\geq 0\\
&\Leftrightarrow a^2  \frac{cxz^2}{2\alpha}-a(\nu+1)z^2+\frac{cx\nu\sigma^2}{2\alpha}\geq 0 
\end{split}
\end{equation}
where 
\begin{equation}\label{eq:SV_key2}
\begin{split}
\frac{\partial}{\partial a}\Big( a^2  \frac{  cx z^2}{2\alpha} -a(\nu+1) z^2\Big)< 0\Leftrightarrow   a  \frac{c x z^2}{ \alpha}-(\nu+1)z^2< 0 \Leftrightarrow a\leq \frac{(\nu+1)\alpha}{x c}.
\end{split}
\end{equation}
We now let $a^*\in \argmax_{a\in[0,1]}g(a)$. Then, by \eqref{eq:SV_key1}-\eqref{eq:SV_key2}, we have either  $a^*\in [0, (\nu+1)\alpha/(x c)]$ or $a^*=1$, and thus, noting that $\sup_{a\in[0,1]}g(a)=g(a^*)$ since   $g(\cdot)$ is continuous and $[0,1]$ is a compact set,
\begin{align*}
\sup_{a\in[0,1]}g(a) \leq \max\bigg( \exp\Big(-\frac{c x}{2\alpha}+\frac{(\nu+1)}{2}\Big)(\nu\sigma^2)^{-\frac{\nu+1}{2}}, \big(\nu \sigma^2+z^2\big)^{-\frac{\nu+1}{2}}\bigg)
\end{align*}
showing the result of the lemma.
\end{proof}

Using Lemma \ref{lemma:techSV2} we obtain the following result.
\begin{lemma}\label{lemma:techSV3}
Let $(\nu,\sigma,\mu)\in (0,\infty)^3$, $a\in [0,1]$ and $(x,\alpha,w)\in\R^3$ be such that one of following three conditions holds:
\begin{enumerate}
\item $x/\alpha>0$,
\item $x/\alpha<0$ and $\mu\geq 1$,
\item $x/\alpha<0$,  $\mu<1$ and, for some $\epsilon\in (0,1)$,    $|x|\geq   2 |\alpha|/(\epsilon \mu )$ and $a\in[0,1-\epsilon]$.
\end{enumerate}
Then,
\begin{equation*}
\begin{split}
\exp\Big(-\frac{(1-a)x}{2\alpha}&-\frac{1}{2}\mu^{1-a}   e^{-(1-a)\frac{x}{\alpha}} \Big) \Big(\nu\sigma^2+a^2 (x-\alpha w)^2 \Big)^{-\frac{\nu+1}{2}}\\
&\leq \max\bigg( \exp\Big(-\frac{ |x|}{4|\alpha|}+\frac{(\nu+1)}{2}\Big)(\nu\sigma^2)^{-\frac{\nu+1}{2}}, \big(\nu \sigma^2+(x-\alpha w)^2\big)^{-\frac{\nu+1}{2}}\bigg).
\end{split}
\end{equation*}
\end{lemma}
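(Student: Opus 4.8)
The plan is to reduce all three cases to Lemma \ref{lemma:techSV2} (applied with $c=1$) by first proving the single pointwise exponent bound
\[
-\frac{(1-a)x}{2\alpha}-\frac{1}{2}\mu^{1-a}e^{-(1-a)x/\alpha}\leq -\frac{(1-a)|x|}{2|\alpha|},
\]
valid under each of the three hypotheses. Writing $z=x-\alpha w$ and keeping the factor $(\nu\sigma^2+a^2z^2)^{-(\nu+1)/2}$ unchanged, this bound turns the left-hand side of the statement into a quantity dominated by
\[
\exp\Big(-\frac{(1-a)|x|}{2|\alpha|}\Big)\big(\nu\sigma^2+a^2z^2\big)^{-\frac{\nu+1}{2}},
\]
which is exactly the expression controlled by Lemma \ref{lemma:techSV2} with $(x,\alpha)$ replaced by $(|x|,|\alpha|)$ (so that $|x|/|\alpha|>0$, the required sign condition holds, and the cases $x/\alpha\neq 0$ already force $x\neq0$, $\alpha\neq0$), with the same $z$ and $c=1$. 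Since $|x|/(2|\alpha|)\geq |x|/(4|\alpha|)$, the first term of the resulting maximum is at most $\exp(-|x|/(4|\alpha|)+(\nu+1)/2)(\nu\sigma^2)^{-(\nu+1)/2}$ while the second term is identical, which is precisely the right-hand side claimed.

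The key exponent bound I would verify case by case, each reducing to an elementary scalar inequality. In Case 1, where $x/\alpha>0$, one has $-(1-a)x/\alpha\leq 0$ and $\mu^{1-a}>0$, so the second summand is nonpositive while $-(1-a)x/(2\alpha)=-(1-a)|x|/(2|\alpha|)$, and the bound is immediate. In Cases 2 and 3, where $x/\alpha<0$, I would set $s:=(1-a)(-x/\alpha)=(1-a)|x|/|\alpha|\geq 0$, so the exponent equals $\tfrac{s}{2}-\tfrac{1}{2}\mu^{1-a}e^{s}$ and the target bound is equivalent to $\tfrac{1}{2}\mu^{1-a}e^{s}\geq s$. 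In Case 2, $\mu\geq1$ gives $\mu^{1-a}\geq 1$, so it suffices that $e^{s}\geq 2s$ for all $s\geq0$, which holds because $e^{s}-2s$ attains its minimum value $2-2\log 2>0$.

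Case 3 is the main obstacle and is where the two scalar constraints must be combined carefully. Here $\mu<1$, so $\mu^{1-a}\geq\mu$ (as $1-a\leq1$ and $t\mapsto\mu^{t}$ is decreasing), reducing the goal to $\mu e^{s}\geq 2s$. The hypotheses $a\leq 1-\epsilon$ and $|x|\geq 2|\alpha|/(\epsilon\mu)$ combine to give
\[
s=(1-a)\frac{|x|}{|\alpha|}\geq \epsilon\cdot\frac{2}{\epsilon\mu}=\frac{2}{\mu},
\]
hence $\mu\geq 2/s$. Since $e^{s}\geq s^{2}$ for all $s\geq0$ (again because $e^{s}-s^{2}$ has derivative $e^{s}-2s>0$ and value $1$ at $s=0$), one gets $2/s\geq 2s/e^{s}$, so $\mu\geq 2s/e^{s}$, i.e. $\mu e^{s}\geq 2s$, as needed. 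With the exponent bound established in all three cases, the reduction to Lemma \ref{lemma:techSV2} described above closes the argument; the only genuine subtlety is the bookkeeping in Case 3 that links $a\leq 1-\epsilon$ and $|x|\geq 2|\alpha|/(\epsilon\mu)$ to the lower bound $s\geq 2/\mu$, which is exactly what makes the heavy-tailed factor controllable despite $\mu$ being small.
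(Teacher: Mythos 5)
Your proposal is correct and takes essentially the same route as the paper: a case-by-case bound on the exponential factor, followed in each case by a reduction to Lemma \ref{lemma:techSV2} applied with $(|x|,|\alpha|)$ in place of $(x,\alpha)$. The only (cosmetic) difference is that you establish the uniform exponent bound $-(1-a)|x|/(2|\alpha|)$ in all three cases via $e^{s}\geq 2s$ and $e^{s}\geq s^{2}$, and always invoke Lemma \ref{lemma:techSV2} with $c=1$, whereas the paper uses $ue^{-u}\leq e^{-1}$ in Case 2 to get the coefficient $1/4$ directly and then invokes that lemma with $c=1/2$.
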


\begin{proof}

We first consider the case where $x/\alpha>0$. In this case, for all $a\in[0,1]$ we have
\begin{align*}
\exp\Big(-\frac{(1-a)x}{2\alpha}-&\frac{1}{2}\mu^{1-a}   e^{-(1-a)\frac{x}{\alpha}} \Big) \Big(\nu\sigma^2+a^2(x-\alpha w)^2 \Big)^{\frac{\nu+1}{2}}\\
&\leq  \exp\Big(-\frac{(1-a)x}{2\alpha}\Big)\Big(\nu\sigma^2+a^2(x-\alpha w)^2 \Big)^{\frac{\nu+1}{2}}\\
&\leq  \max\bigg( \exp\Big(-\frac{ x}{2\alpha}+\frac{(\nu+1)}{2}\Big)(\nu\sigma^2)^{-\frac{\nu+1}{2}}, \Big(\nu \sigma^2+(x-\alpha w)^2\Big)^{-\frac{\nu+1}{2}}\bigg)
\end{align*}
where the second inequality holds by applying Lemma \ref{lemma:techSV2} with $c=1$ and with $z=(x-\alpha w)$. 

We now consider the case where $x/\alpha<0$ and $\mu\geq 1$. In this situation,   that for all $a\in[0,1]$ we have
\begin{align*}
\exp\Big(-\frac{(1-a)x}{2\alpha}-\frac{1}{2}\mu^{1-a}   e^{-(1-a)\frac{x}{\alpha}} \Big)&=\exp\Big( \frac{(1-a)|x|}{2|\alpha|}-\frac{1}{2}\mu^{1-a}   e^{ (1-a)\frac{|x|}{|\alpha|}} \Big)\\
&\leq \exp\Big( \frac{(1-a)|x|}{2|\alpha|}-\frac{1}{2}    e^{ (1-a)\frac{|x|}{|\alpha|}} \Big)\\
&\leq \exp\Big(-(e^1-1) \frac{(1-a)|x|}{2|\alpha|}\Big)\\
&\leq \exp\Big(-  \frac{(1-a)|x|}{4|\alpha|}\Big)
\end{align*}
where the penultimate inequality uses the fact that $u e^{-u}\leq e^{-1}$ for all $u\in\R$. Then, the result of the lemma for the case where $x/\alpha<0$ and where $\mu\geq 1$ follows by   applying Lemma \ref{lemma:techSV2} with $c=1/2$ and with $z=(x-\alpha w)$. 

We finally let $\epsilon\in (0,1)$ and consider the case where $x/\alpha<0$, $\mu<1 $, $|x|\geq    2|\alpha|/(\epsilon\mu)$ and $a\in[0,1-\epsilon]$. To this aim, remark that for all $u\in \R$ we have
\begin{align*}
\frac{1}{2}u-\frac{1}{2}\mu e^u\leq -\frac{1}{2}u\Leftrightarrow ue^{-u}\leq\frac{\mu}{2} 
\end{align*}
and thus, using the fact that $u e^{-u}\leq 1/u$ for all $u>0$, it follows that
\begin{align}\label{eq:u_in}
\frac{1}{2}u-\frac{1}{2}\mu e^u\leq -\frac{1}{2}u,\quad\forall u\geq\frac{2}{\mu}.
\end{align}
Since the condition on $|x|$ and $a$ ensures that
\begin{align*}
\frac{(1-a)|x|}{|\alpha|}\geq   \frac{2}{\mu} 
\end{align*}
it follows from  \eqref{eq:u_in} that
\begin{align*}
 \frac{(1-a)|x|}{2|\alpha|}-\frac{1}{2}\mu    e^{ (1-a)\frac{|x|}{|\alpha|}}\leq  - \frac{(1-a)|x|}{2|\alpha|}.
\end{align*}
Therefore,  noting that $\mu\leq \mu^{1-a}$ for all $a\in[0,1]$ since $\mu\leq 1$, we have
\begin{align*}
\exp\Big(-\frac{(1-a)x}{2\alpha}-\frac{1}{2}\mu^{1-a}   e^{-(1-a)\frac{x}{\alpha}} \Big)&\leq \exp\Big(-\frac{(1-a)x}{2\alpha}-\mu\frac{1}{2}   e^{-(1-a)\frac{x}{\alpha}} \Big)\\
&= \exp\Big( \frac{(1-a)|x|}{2|\alpha|}-\frac{1}{2}\mu    e^{ (1-a)\frac{|x|}{|\alpha|}} \Big)\\
&\leq \exp\Big(- \frac{(1-a)|x|}{2|\alpha|} \Big).
\end{align*}
The result of the last part of the lemma then follows by applying Lemma \ref{lemma:techSV2} with $c=1$ and with $z=(x-\alpha w)$. The proof of the lemma is complete.
\end{proof}

\paragraph{Proof of the  proposition}

\begin{proof}
Under the assumptions of the proposition, Assumptions \ref{assumeSSM:D_set}-\ref{assumeSSM:G}  trivially hold while   Assumption \ref{assumeSSM:K_set} holds as shown in \citetsup[][Section 4.3]{Douc_MLE2}.

We now show that Assumption \ref{assumeSSMB:smooth} holds. To this aim let
\begin{align*}
\tilde{f}_\theta(y|x') = \exp\Big(-\frac{x'}{2}-\frac{y^2}{2\beta^2} e^{-x'} \Big),\quad \tilde{m}_\theta(x|x') = \Big(1+\frac{(x-\alpha x')^2}{\nu\sigma^2}\Big)^{-\frac{\nu+1}{2}},\quad\forall (\theta, x,x',y)\in\Theta\times\R^3 
\end{align*}
and note that,  under the assumptions of the proposition, there exists a constant $C_1\in(0,\infty)$ such that
\begin{equation}\label{eq:B_f0}
\begin{split}
\int_\R \Big\{\sup_{(\theta,x')\in\Theta\times\R } f_{1,\theta}(Y_1|x')m_{2,\theta}(x|x')\Big\}\lambda(\dd x) \leq  C_1 \int_\R \Big\{\sup_{(\theta,x')\in\Theta\times\R } \tilde{f}_{\theta}(Y_1|x')\tilde{m}_{\theta}(x|x')\Big\}\lambda(\dd x) 
\end{split}    
\end{equation}
and such that
\begin{align}\label{eq:B_f}
 \tilde{f}_{\theta}(y|x')\leq \frac{C_1}{|y|},\quad\forall (\theta,x',y)\in\Theta\times\R^2.
\end{align}

We now let  $\Theta_0=\{(\alpha,\beta,\sigma)\in\Theta:\alpha\neq 0\}$ and note that 
\begin{equation}\label{eq:SV1}
\begin{split}
\int_\R \Big\{\sup_{(\theta,x')\in\Theta\times\R } \tilde{f}_{\theta}(Y_1|x')\tilde{m}_{\theta}(x|x')\Big\}\lambda(\dd x)&\leq \int_\R \Big\{\sup_{(\theta,x')\in\Theta_0\times\R } \tilde{f}_{\theta}(Y_1|x')\tilde{m}_{\theta}(x|x')\Big\}\lambda(\dd x)\\
&+\int_\R \Big\{\sup_{(\theta,x')\in(\Theta\setminus\Theta_0)\times\R } \tilde{f}_{\theta}(Y_1|x')\tilde{m}_{\theta}(x|x')\Big\}\lambda(\dd x)\\
&\leq \int_\R \Big\{\sup_{(\theta,x')\in\Theta_0\times\R } \tilde{f}_{\theta}(Y_1|x')\tilde{m}_{\theta}(x|x')\Big\}\lambda(\dd x)+\frac{C_1}{|Y_1|} 
\end{split}    
\end{equation}
where the last inequality holds by \eqref{eq:B_f}.

To study the last integral in \eqref{eq:SV1}  let $y\in\R$ be such that $y\neq 0$. Then, for a given $ \theta \in\Theta$  the function $\tilde{x}\mapsto \tilde{f}_\theta(y|\tilde{x})$  has a unique maximum at $\tilde{x}_{\theta,y}:=2\log(|y|/\beta)$, and is increasing on the interval $(-\infty,\tilde{x}_{\theta,y}]$ and decreasing on the interval $[\tilde{x}_{\theta,y},\infty)$. In addition, for a given $(\theta,x)\in\Theta_0\times\R$ the function $\tilde{x}\mapsto  \tilde{m}_\theta(x|\tilde{x})$ as a unique maximum at  $\tilde{x}'_{\theta,x}:=x/\alpha$, and is    increasing on the interval $(-\infty,\tilde{x}'_{\theta,x}]$ and decreasing on the interval $[\tilde{x}'_{\theta,x},\infty)$. Therefore, for all $(\theta,x)\in\Theta_0\times\R^2$ and letting 
\begin{align*}
K_\theta(x,y)  = \big\{x'\in\R: x' = a \tilde{x}_{\theta,y}+(1-a) \tilde{x}'_{\theta,x},\quad a\in[0,1]\big\},
\end{align*}
 we have
\begin{align*}
\sup_{ x'\in \R\setminus  K_\theta(x,y)} \tilde{f}_{\theta}(y|x')\tilde{m}_{\theta}(x|x')&=  \sup_{ x'\in   K_\theta(x,y)} \tilde{f}_{\theta}(y|x')\tilde{m}_{\theta}(x|x')\\
&=\sup_{a\in [0,1]}\tilde{f}_{\theta}\Big(y|a \tilde{x}_{\theta,y}+(1-a) \tilde{x}'_{\theta,x}\Big)\tilde{m}_{\theta}\Big(x|a \tilde{x}_{\theta,y}+(1-a) \tilde{x}'_{\theta,x}\Big)
\end{align*}
and thus
\begin{equation}\label{eq:SV22}
\begin{split}
\int_\R \Big\{ &\sup_{(\theta,x')\in\Theta_0\times\R }  \tilde{f}_{\theta}(y|x')\tilde{m}_{\theta}(x|x')\Big\}\lambda(\dd x)\\
&=\int_\R \Big\{\sup_{(\theta,a)\in\Theta_0\times[0,1] } \tilde{f}_{\theta}\Big(y|a \tilde{x}_{\theta,y}+ (1-a) \tilde{x}'_{\theta,x}\Big)\tilde{m}_{\theta}\Big(x|a \tilde{x}_{\theta,y}+(1-a) \tilde{x}'_{\theta,x}\Big)\Big\}\lambda(\dd x).
\end{split}    
\end{equation}

We low let $\epsilon\in(0,1)$ and note that
\begin{equation}\label{eq:SV_eps0}
\begin{split}
\int_\R &\Big\{\sup_{(\theta,a)\in\Theta_0\times[0,1] } \tilde{f}_{\theta}\Big(y|a \tilde{x}_{\theta,y}+ (1-a) \tilde{x}'_{\theta,x}\Big)\tilde{m}_{\theta}\Big(x|a \tilde{x}_{\theta,y}+(1-a) \tilde{x}'_{\theta,x}\Big)\Big\}\lambda(\dd x)\\
&\leq \int_\R \Big\{\sup_{(\theta,a)\in\Theta_0\times[0,1-\epsilon] } \tilde{f}_{\theta}\Big(y|a \tilde{x}_{\theta,y}+ (1-a) \tilde{x}'_{\theta,x}\Big)\tilde{m}_{\theta}\Big(x|a \tilde{x}_{\theta,y}+(1-a) \tilde{x}'_{\theta,x}\Big)\Big\}\lambda(\dd x)\\
&+\int_\R \Big\{\sup_{(\theta,a)\in\Theta_0\times[1-\epsilon,1] } \tilde{f}_{\theta}\Big(y|a \tilde{x}_{\theta,y}+ (1-a) \tilde{x}'_{\theta,x}\Big)\tilde{m}_{\theta}\Big(x|a \tilde{x}_{\theta,y}+(1-a) \tilde{x}'_{\theta,x}\Big)\Big\}\lambda(\dd x)
\end{split}
\end{equation}
 where, using \eqref{eq:B_f}, for all $(\theta,x)\in\Theta_0\times\R$ we have
\begin{equation}\label{eq:SV_eps}
\begin{split}
\sup_{a\in[1-\epsilon,1]}\tilde{f}_{\theta}\Big(y|a \tilde{x}_{\theta,y}+&(1-a) \tilde{x}'_{\theta,x}\Big)\tilde{m}_{\theta}\Big(x|a \tilde{x}_{\theta,y}+(1-a) \tilde{x}'_{\theta,y}\Big)\\
&=\frac{C_1}{|y|}\sup_{a\in[1-\epsilon,1]} \Big(1+a^2\frac{( x- \alpha   \log(y^2/\beta^2))^2}{\nu\sigma^2}\Big)^{-\frac{\nu+1}{2}}\\
&\leq \frac{C_1}{|y|} \Big(1+(1-\epsilon)^2\frac{( x- \alpha   \log(y^2/\beta^2))^2}{\nu \sigma^2 }\Big)^{-\frac{\nu+1}{2}}.
\end{split}
\end{equation}
To proceed further let $\bar{C}\in (1,\infty)$ be such that
\begin{align*}
|\alpha|\leq \bar{C},\quad \frac{1}{\bar{C}}\leq \beta\leq \bar{C},\quad \frac{1}{\bar{C}}\leq \sigma\leq \bar{C},\quad \forall (\alpha,\beta,\sigma)\in\Theta
\end{align*}
and note that
\begin{align*}
 \alpha   \log(y^2/\beta^2)\leq \kappa_y:=2 \bar{C}|\log(|y|)|+ 2 \bar{C} \log(\bar{C}),\quad\forall\theta\in\Theta.
\end{align*}
In addition, note that for any constant  $D\in(0,\infty)$ and  any $\theta\in\Theta$ we have 
\begin{align}\label{eq:S_b1}
 \Big(1+ \frac{( x- \alpha   \log(y^2/\beta^2))^2}{D}\Big)^{-\frac{\nu+1}{2}}\leq  \Big(1+ \frac{( x- \kappa_y)^2}{D}\Big)^{-\frac{\nu+1}{2}},\,\, \forall x\in[\kappa_y,\infty)
\end{align}
and
\begin{align}\label{eq:S_b2}
\Big(1+ \frac{( x- \alpha   \log(y^2/\beta^2))^2}{D}\Big)^{-\frac{\nu+1}{2}}\leq  \Big(1+ \frac{( x+ \kappa_y)^2}{D}\Big)^{-\frac{\nu+1}{2}},\,\, \forall x\in(-\infty,-\kappa_y].
\end{align}
Therefore, by using \eqref{eq:SV_eps}-\eqref{eq:S_b2}, we obtain that
\begin{equation}\label{eq:SV_eps2}
\begin{split}
\int_\R \sup_{a\in[1-\epsilon,1]} \tilde{f}_{\theta}\Big(y|a \tilde{x}_{\theta,y}+ (1-a) \tilde{x}'_{\theta,x}\Big)\tilde{m}_{\theta}&\Big(x|a \tilde{x}_{\theta,y}+(1-a) \tilde{x}'_{\theta,y}\Big)\lambda(\dd x)\\
&\leq  \frac{2C_1}{|y|}\Big(2\bar{C}|\log(|y|)+2 \bar{C} \log(\bar{C})+C'\Big)
\end{split}
\end{equation}
with
\begin{equation}\label{eq:tildeC}
\begin{split}
C'=\frac{\Gamma(\nu/2)\big(\pi\nu \bar{C}^2/(1-\epsilon)^2\big)^{1/2}}{\Gamma\big((\nu+1)/2\big)}&=\int_\R \Big(1+(1-\epsilon)^2\frac{( x- \kappa_y)^2}{\nu \bar{C}^2}\Big)^{-\frac{\nu+1}{2}}\lambda(\dd x)\\
&=\int_\R \Big(1+(1-\epsilon)^2\frac{( x+\kappa_y)^2}{\nu \bar{C}^2}\Big)^{-\frac{\nu+1}{2}}\lambda(\dd x).
\end{split}
\end{equation}

Next, to bound the first integral in \eqref{eq:SV_eps0}, we note first that  for all $a\in [0,1]$ and $(\theta,x)\in\Theta_0\times\R$  we have
\begin{align*}
\tilde{f}_{\theta}\Big(y|a \tilde{x}_{\theta,y}+(1-a) \tilde{x}'_{\theta,x}\Big)&=\exp\Big(-\frac{(1-a)x}{2\alpha}-a\log(|y|/\beta)-\frac{y^2}{2\beta^2}\big( e^{-(1-a)\frac{x}{\alpha}}e^{-2a\log(|y|/\beta)} \Big)\\
&=\Big(\frac{\beta}{|y|}\Big)^a\exp\Big(-\frac{(1-a)x}{2\alpha}-\frac{1}{2}\Big(\frac{y^2}{\beta^2}\Big)^{1-a}  e^{-(1-a)\frac{x}{\alpha}} \Big)
\end{align*}
and we let $\xi_{y}=(2 \bar{C})/\big(\epsilon\min(1,|y|/\bar{C})^2\big)$ and $\kappa_y$ be as above 

Then, by applying Lemma \ref{lemma:techSV3} with $\mu=(y/\beta)^2$ and with $w=\kappa_y/\alpha$,  and by using \eqref{eq:S_b1}-\eqref{eq:S_b2}, it follows that for all $(\theta,x)\in\Theta_0\times\R$ such that $x\geq  \max(\xi_{y},\kappa_y)$ we have
\begin{align*}
\sup_{a\in[0,1-\epsilon]}\tilde{f}_{\theta}\Big(y|a \tilde{x}_{\theta,y}+&(1-a) \tilde{x}'_{\theta,x}\Big)\tilde{m}_{\theta}\Big(x|a \tilde{x}_{\theta,y}+(1-a) \tilde{x}'_{\theta,y}\Big)\\
&\leq  \frac{\bar{C}}{\min(1,|y|)}\max\bigg( \exp\Big(-\frac{ |x|}{4\bar{C}}+\frac{(\nu+1)}{2}\Big) , \Big(1+ \frac{(x-\kappa_y)^2}{\nu \bar{C}^2}\big)^{-\frac{\nu+1}{2}}\bigg)
\end{align*}
while,   for all $(\theta,x)\in\Theta_0\times\R$ such that $x\leq -  \max(\xi_{y},\kappa_y)$, we have
\begin{align*}
\sup_{a\in[0,1-\epsilon]}\tilde{f}_{\theta}\Big(y|a \tilde{x}_{\theta,y}+&(1-a) \tilde{x}'_{\theta,x}\Big)\tilde{m}_{\theta}\Big(x|a \tilde{x}_{\theta,y}+(1-a) \tilde{x}'_{\theta,y}\Big)\\
&\leq \frac{\bar{C}}{\min(1,|y|)}\max\bigg( \exp\Big(-\frac{ |x|}{4\bar{C}}+\frac{(\nu+1)}{2}\Big) , \Big(1+ \frac{(x+\kappa_y)^2}{\nu \bar{C}^2}\big)^{-\frac{\nu+1}{2}}\bigg).
\end{align*}
Using the latter two results, we readily obtain that
\begin{equation}\label{eq:SV_eps222}
\begin{split}
 \int_\R \Big\{&\sup_{(\theta,a)\in\Theta_0\times[0,1-\epsilon] } \tilde{f}_{\theta}\Big(y|a \tilde{x}_{\theta,y}+ (1-a) \tilde{x}'_{\theta,x}\Big)\tilde{m}_{\theta}\Big(x|a \tilde{x}_{\theta,y}+(1-a) \tilde{x}'_{\theta,x}\Big)\Big\}\lambda(\dd x)\\
 &\leq \frac{2C_1}{|y|}  \max(\xi_{y},\kappa_y)+ \frac{\bar{C}}{\min(1,|y|)}\bigg(C'+\int_\R \exp\Big(-\frac{ |x|}{4\bar{C}}+\frac{(\nu+1)}{2}\Big)\bigg)
\end{split}
\end{equation}
 with $C_1$ as in \eqref{eq:B_f} and with $C'$ as defined in \eqref{eq:tildeC}.

Under the assumptions of the proposition   we have $\P(|Y_1|\in(0,\infty))=1$, and thus by combining \eqref{eq:B_f0}, \eqref{eq:SV1}, \eqref{eq:SV22}, \eqref{eq:SV_eps0}, \eqref{eq:SV_eps2} and \eqref{eq:SV_eps222},   we   obtain that
\begin{align*}
\P\Big(\int_\R \Big\{\sup_{(\theta,x')\in\Theta\times\setX } f_{1,\theta}(Y_1|x')m_{2,\theta}(x|x')\Big\}\lambda(\dd x)<\infty\Big)=1 
\end{align*}
showing that the first part of  \ref{assumeSSMB:smooth} holds.
 
 To show that the second part of  \ref{assumeSSMB:smooth} holds as well remark first that under the assumptions of the proposition there exists a constant $C\in(0,\infty)$ such that  
\begin{align*}
\Big|\frac{\partial}{\partial \beta} \log f_{1,\theta}(y|x')\Big|\leq C(1+y^2e^{-x'}),\quad\forall (\theta,x',y)\in\Theta\times \R^2
\end{align*}
and such that
\begin{equation*}
\begin{split}
\bigg|\frac{\partial}{\partial \alpha} \log m_{2,\theta}(x|x')  + \frac{\partial}{\partial \sigma} \log m_{2,\theta}(x|x') \bigg| &\leq C |1+ x' |,\quad\forall (\theta,x,x',\theta)\in\Theta\times\R^2.
\end{split}    
\end{equation*}      
Consequently, there exists a constant $\kappa\in (1,\infty)$ such that, for all $(\theta,\theta')\in\Theta^2$, we have
\begin{equation*}
\big|\log\big(f_{1,\theta}(y|x')m_{2,\theta}(x|x')\big)  -\log\big(f_{1,\theta'}(y|x')m_{2,\theta'}(x|x')\big) \big|\leq \|\theta - \theta'\|  \kappa(1+|x'| +y^2e^{-x'}),\quad \forall (\tilde{x},x,y)\in\R^3 
\end{equation*}
and thus in the following we show that the second   part of  \ref{assumeSSMB:smooth} holds with $g':[0,\infty)\rightarrow[0,\infty)$ defined by $g'(x)=x$ for all  $x\in[0,\infty)$ and with the function $\varphi'_1$ such that $\varphi'_1(y,x',x)=\kappa  (1+ |x'|+  y^2e^{-x'})$ for all $(y,x,x')\in\R^3$.

To this aim let $C_\beta\in (1,\infty)$ be such that $(\beta,\beta^2)\in(1/C_\beta, C_\beta)^2$ for all $\theta=(\alpha,\beta,\sigma)\in\Theta$, and let $\delta =1/(4C_\beta\kappa)$ and   $D_{a,b,c}$ be as defined in Lemma \ref{lemma:techSV} for all $(a,b,c)\in (0,\infty)^3$.

Then, for all $(\theta,x,x',y)\in\Theta\times\R^3$ we have
\begin{equation*}
\begin{split}
f_{1,\theta}(y|x')  \int_\R  \exp\Big(\delta \kappa  (1+ |x'|+  y^2e^{-x'})\Big)  M_{2,\theta}(x',\dd x) &=f_{1,\theta}(y|x')\exp\Big(\delta \kappa  (1+ |x'|+  y^2e^{-x'})\Big)\\
&\leq \frac{C_\beta e^{\delta\kappa}}{ \sqrt{2\pi}}
\exp\Big(-y^2 e^{-x'}\frac{1}{4C_\beta} + \frac{1}{4} |x'| - \frac{x'}{2}\Big)\\
&\leq\frac{C_\beta e^{\delta\kappa}}{ \sqrt{2\pi}}
\exp\Big(D_{\frac{1}{4C_\beta},\frac{1}{4},\frac{1}{2}}+2|\log (|y|)|\Big)
\end{split}    
\end{equation*}
where the last inequality holds by Lemma \ref{lemma:techSV}.

Therefore, since by assumption $\E[|\log (|Y_1|)|]<\infty$, this shows that
\begin{align*}
\E\Big[\log^+ \sup_{x' \in \R} f_{1,\theta}(Y_1|x')\int_\R \exp\big(\delta  \varphi_1'(Y_1,x',x)\big) M_{2,\theta}(x',\dd x)\Big]<\infty 
\end{align*}
and, using a similar argument as in the proof of Proposition \ref{prop:Assume_LG}, we conclude 
that under the assumptions of the proposition    the second part of \ref{assumeSSMB:smooth} holds. This concludes to show  that Assumption   \ref{assumeSSMB:smooth} is satisfied.

To conclude the proof it remains to show that Assumption \ref{assumeSSM:smooth} holds. To do so   remark first that, under the assumptions of the proposition,   we readily have that \begin{align*}
\P\big(\int_{\R}\big\{ f_{1,\theta}(Y_1|x)\sup_{(\theta,x')\in \Theta \times \R}m_{2,\theta}(x|x')\big\}\lambda(\dd x)<\infty\big)=1.
\end{align*}

To show the second part of  \ref{assumeSSM:smooth} note that, by using the above computations, there exists a constant $\kappa\in(0,\infty)$ such that for all $(\theta,\theta')\in\Theta^2$ we have
\begin{equation*}
\big|\log\big(f_{1,\theta}(y|x)m_{2,\theta}(x|x')\big)  -\log\big(f_{1,\theta'}(y|x)m_{2,\theta'}(x|x')\big) \big|\leq \|\theta - \theta'\|  \kappa(1+|x'| +y^2e^{-x}),\quad \forall (\tilde{x},x,y)\in\R^3 
\end{equation*}
and thus in the following we show that the second   part of  \ref{assumeSSM:smooth} holds with $g':[0,\infty)\rightarrow[0,\infty)$ defined by $g'(x)=x$ for all  $x\in[0,\infty)$ and with the function $\varphi'_1$ such that $\varphi'_1(y,x',x)=\kappa  (1+ |x'|+  y^2e^{-x})$ for all $(y,x,x')\in\R^3$.

To this aim, let $\delta\in(0,1/(4C_\beta\kappa))$, with $C_\beta$ as above, $\bar{C}= C^2_\beta  e^{ \delta\kappa}$ and let $D_{a,b,c}$ be as defined in Lemma \ref{lemma:techSV} for all $(a,b,c)\in (0,\infty)^3$. Without loss of generality, we assume in what follows that  $D_{\frac{1}{4C_\beta},\delta\kappa,\frac{1}{2}}\leq \log C_\beta$. Then, for all $(\theta, x',y)\in\Theta\times\R^2$ such that $y\neq 0$, we have,
\begin{equation}\label{eq:SV_int2}
\begin{split}
\int_{\R} e^{\delta\kappa |x|} &\exp\Big(\delta \varphi'_1 (y,x',x )\Big) f_{1,\theta}(y|x) M_{2,\theta}(x',\dd x)\\
&=\frac{1}{\beta\sqrt{2\pi}}e^{\delta\kappa(1+|x'|)}\int_{\R}\exp\Big(- y^2e^{-x }\big(\frac{1}{2\beta^2}-\delta\kappa\big)-\frac{x }{2}+\delta\kappa|x| \Big) M_{2,\theta}(x',\dd x)\\
&\leq   C_\beta e^{\delta\kappa(1+|x'|)}\int_{\R}\exp\Big(- y^2e^{-x } \frac{1}{4C_\beta}-\frac{x }{2} +\delta\kappa|x|\Big) M_{2,\theta}(x',\dd x)\\
&\leq   C_\beta e^{ \delta\kappa(1+ |x'|)}\exp\Big(D_{\frac{1}{4C_\beta},\delta\kappa,\frac{1}{2}}+2|\log (|y|)|\Big)\\
&\leq   C^2_\beta e^{ \delta\kappa(1+ |x'|)}\exp\Big( 2|\log (|y|)|\Big)\\
&=\bar{C}\exp(2|\log (|y|)|) e^{\delta\kappa |x'|}.
\end{split}
\end{equation}
where the second inequality holds by Lemma \ref{lemma:techSV}.

Then, by using \eqref{eq:SV_int2}, we obtain that for all $\theta\in\Theta$,   integers $t_2> t_1\geq 0$ and $x_{t_1}\in\R$, we have
\begin{equation}\label{eq:last_SV}
\begin{split}
\int_{\R^{t_2-t_2+1}}\exp\Big(\delta\sum_{s=t_1+1}^{t_2}&\varphi'_1(Y_s,x_{s-1},x_s)\Big)\prod_{s=t_1+1}^{t_2} f_{1,\theta}(Y_s|x_s)M_{2,\theta}(x_{s-1},\dd x_s)\\
&\leq e^{\delta\kappa |x_{t_1}|}\bar{C}^{\,t_2-t_1}\exp\Big(2\sum_{s=t_1+1}^{t_2}|\log(|Y_s|)|\Big).
\end{split}
\end{equation}

Under the assumptions om $\{\chi_\theta,\,\theta\in\Theta\}$ imposed in the second part of the proposition  there exists a constant $c\in(0,\infty)$ such that $\int_{\R} e^{c|x|}\chi_\theta(\dd x)<\infty$ for all $\theta\in\Theta$ while it is readily checked that, if $c$ is sufficiently small,
\begin{align*}
    \int_{\R}f_{1,\theta}(Y_1|x)e^{c|x|}\lambda(\dd x)<\infty,\quad\P-a.s.
\end{align*}
Therefore, since $\E[|\log(|Y_1|)]<\infty$ by assumption, it follows from \eqref{eq:last_SV}  that the second part of  assumption \ref{assumeSSM:smooth} holds for any $\delta\in(0,c/\kappa)$ sufficiently small. The proof of the proposition is complete.
\end{proof}

\section{Examples of SSMs satisfying   Assumption \ref{assumeSSMB:smooth_MLE}\label{supp_MLE}}

\subsection{Linear Gaussian models}

\begin{proposition}\label{prop:Assume_LG_MLE}
Let $(d,d_x,d_y) \in \mathbb{N}^3$,    $\Theta\subset\R^d$ be a non empty compact set and, with $\tau=T$, let $\mu$, $\Sigma$, $\{m_t\}_{t=1}^\tau$, $\{A_t\}_{t=1}^\tau$,  $\{B_t\}_{t=1}^\tau$, $\{C_t\}_{t=1}^\tau$  and $\{D_t\}_{t=1}^\tau$  be as in Proposition \ref{prop:Assume_LG}. Then, SSM \eqref{eq:SSM_MLE} where, for all $(\theta,x)\in\Theta\times\R^{d_x}$, 
 \begin{align*}
&\tilde{f}_{t,\theta}(y|x)\dd y=\mathcal{N}_{d_y}\big(m_t(\theta)+A_t(\theta)x, B_t(\theta)\big),\quad \forall t\in\{1,\dots,T\}\\
&\tilde{M}_{t,\theta}\big(x,\dd x_t\big)=\mathcal{N}_{d_x}\big(C_{t}(\theta) x, D_{t}(\theta)\big),\quad\hspace{1.1cm}\forall t\in\{2,\dots,T\}
\end{align*}
and where $\tilde{\chi}_\theta(\dd x)=\mathcal{N}_{d_x}(\mu(\theta), \Sigma(\theta))$ is such that Assumptions  \ref{assumeSSMB:smooth_MLE} holds.

\end{proposition} 

\subsection{Stochastic volatility  models}

The validity of Assumption  \ref{assumeSSMB:smooth_MLE}  is of course not limited to linear Gaussian SSMs. For instance, as shown in the next proposition, it holds for the stochastic volatility model considered e.g.~in \citetsup{kim1998stochastic}.

\begin{proposition}\label{prop:SV_model}
Let $\Theta\subset\R^3$ be a compact set such that for all $\theta=(\alpha,\beta,\sigma)\in\Theta$ we have both $\beta>0$ and $\sigma>0$, and let $m:\Theta\rightarrow\R$ and $\sigma_0:\Theta\rightarrow (0,\infty)$ be two continuously differentiable functions. Consider SSM \eqref{eq:SSM_MLE} where, for all $\theta\in\Theta$, we have $\tilde{\chi}_\theta(\dd x)=\mathcal{N}_1\big(m(\theta),\sigma^2_0(\theta)\big)$, $\tilde{f}_{t,\theta}=\tilde{f}_{1,\theta}$ for all $t\in\{1,\dots,T\}$, and   $\tilde{M}_{t,\theta}=\tilde{M}_{2,\theta}$ for all $t\in\{2,\dots,T\}$, with $\tilde{f}_{1,\theta}(y|x)\dd y=\mathcal{N}_1\big(0,\beta^2 e^x\big)$ and $\tilde{M}_{2,\theta}(x,\dd x_2)=\mathcal{N}_1(\alpha x,\sigma^2)$ for all $x\in\R$. Assume that $\tilde{y}_t\neq 0$ for all $t\in\{1,\dots,T\}$. Then, Assumption  \ref{assumeSSMB:smooth_MLE} holds. In addition,  Assumption  \ref{assumeSSMB:smooth_MLE} also holds if $\tilde{M}_{2,\theta}(x,\dd x_2)=t_{1,\nu}(\alpha x,\sigma^2)$ for all $x\in\R$ and some $\nu\in(0,\infty)$.
\end{proposition}

\subsection{Proofs}

Proposition \ref{prop:Assume_LG_MLE} can be readily established using the calculations done in the proof of Proposition \ref{prop:Assume_LG}, and its proof is therefore omitted to save space. 

\subsubsection{Proof of   Proposition \ref{prop:SV_model}}

\begin{proof}

The second part of the proposition can be readily established using the calculations done in the proof of Proposition \ref{prop:Online_SV_model}, and thus below we only prove its first part. More precisely, we only show that  \ref{assumeSSMB:smooth_MLE} holds

First, we note that under the assumptions of the proposition  we   have  $\sup_{(\theta,x)\in\Theta\times\R} \tilde{f}_{s,\theta}(\tilde{y}_s|x)<\infty$  for all    $s\in\{1,\dots,T\}$ and $\tilde{L}_T(\theta)>0$ for all $\theta\in\Theta$, showing that the first part of \ref{assumeSSMB:smooth_MLE} holds.

 Next,  simple computations show that  the next two parts  of \ref{assumeSSMB:smooth_MLE} hold with $g':[0,\infty)\rightarrow [0,\infty)$ such that $g'(x)=x$ for all $x\in[0,\infty)$ and with $\{\varphi'_s\}_{s=2}^{T+1}$ defined, for some constant $\kappa\in(2,\infty)$, by
\begin{align*}
\varphi_{1}'(\tilde{x},x)=\kappa(1+x^2+\tilde{y}_1^2e^{-x}),\quad \varphi_{s}'(\tilde{x},x)=\kappa\big(1+x^2+\tilde{x}^2+\tilde{y}^2_{s} e^{-x}\big),\quad (x,\tilde{x})\in\R^2,\quad s\in\{2,\dots,T\}.
\end{align*}
 
To show that the last part of \ref{assumeSSMB:smooth_MLE} holds as well,  we let $c\in(0,1)$ be such that  for all $\theta=(\alpha,\beta,\sigma)\in\Theta$  we have 
\begin{align*}
 \alpha^2 \leq \frac{1}{c},\quad c\leq \beta\leq  \frac{1}{c},\quad c\leq \sigma\leq  \frac{1}{c}, \quad \int_\R e^{c^2x^2}\tilde{\chi}_\theta(\dd x) <\infty
\end{align*}
and we let $\bar{\delta}= c^2/(4\kappa)$.

As preliminary computations, we note that for all $\delta\in(0,\bar{\delta})$, all   $\theta=(\alpha,\beta,\sigma)\in\Theta$ and all $\tilde{x}\in\R$, we have
\begin{equation}\label{eq:SV_0}
\begin{split}
\int_{\R}\exp\Big(-\frac{(x-\alpha \tilde{x})^2}{2\sigma^2}+\delta \kappa x^2\Big)\dd x&=\bigg(\frac{2\pi\sigma^2}{1-2\delta \kappa\sigma^2}\bigg)^{\frac{1}{2}}\exp\Big(\frac{\alpha^2\delta \kappa\tilde{x}^2}{1-2\delta \kappa\sigma^2 }\Big)\\
&\leq \bigg(\frac{2\pi }{c^2(1-2\delta \kappa\sigma^2)}\bigg)^{\frac{1}{2}}\exp\Big(\frac{\delta \kappa\tilde{x}^2}{ c (1-2\delta \kappa\sigma^2) }\Big)\\
&\leq  (4\pi/c^2)^{\frac{1}{2}}\exp\big((2/c)\delta \kappa\tilde{x}^2\big).
\end{split}
\end{equation}
Using this latter result   it follows that for all $\delta\in(0,\bar{\delta})$ we have, for all $\theta\in\Theta$ and all $s\in\{2,\dots,T\}$,
\begin{equation}\label{eq:SV_1}
\begin{split}
\int_{\R}\tilde{f}_{s,\theta}(\tilde{y}_s|x_s)&\exp\Big(\delta \kappa(1+x_{s-1}^2+x_s^2+\tilde{y}_s^2 e^{-x_s})\Big)\tilde{M}_{s,\theta}(x_{s-1},\dd x_{s})\\
&=\frac{e^{\delta\kappa(1+x_{s-1}^2)}}{\beta(2\pi)^{1/2}}\int_{\R} \exp\Big(- \tilde{y}_s^2 e^{-x_s}\big(\frac{1}{2\beta^2}-\delta\kappa\big)-\frac{x_s}{2}+ \delta \kappa x_s^2 \Big)\tilde{M}_{s,\theta}(x_{s-1},\dd x_{s})\\
&\leq  \frac{e^{\delta\kappa(1+x_{s-1}^2)}}{c(2\pi)^{1/2}}\int_{\R} \exp\Big(- \tilde{y}_s^2 e^{-x_s} \frac{c^2}{4}-\frac{x_s}{2}+ \delta \kappa x_s^2 \Big)\tilde{M}_{s,\theta}(x_{s-1},\dd x_{s})\\
&\leq   \frac{e^{\delta\kappa(1+x_{s-1}^2)}}{c^2|\tilde{y}_s| \pi ^{1/2}}\int_{\R} e^{\delta \kappa x_s^2}\tilde{M}_{s,\theta}(x_{s-1},\dd x_{s})\\
&\leq \frac{e^{\delta\kappa(1+x_{s-1}^2)}}{c^4|\tilde{y}_s| (\pi/2)^{1/2})}  \exp\big((2/c)\delta \kappa x_{s-1}^2\big)  \\
&\leq \exp\Big((3/c)\delta \kappa (1+x_s^2)\Big) \frac{\bar{C}}{|\tilde{y}_s|}
\end{split}
\end{equation}
where $\bar{C}=(c^4 (\pi/2)^{1/2})^{-1}$ and where the fourth inequality uses \eqref{eq:SV_0}.

We now let $\delta\in(0,1)$ be such that $(4/c)^T\delta< \bar{\delta}$. Then,  for all $\theta\in\Theta$   we have,   using \eqref{eq:SV_1},
\begin{align*}
 \int_{\R^{T}}&\exp\Big(\delta \varphi'_1(x_1,x_1)+\delta \sum_{s=2}^{T}\varphi'_s(x_{s-1},x_{s})\Big)\tilde{\chi}_\theta(\dd x_1)\tilde{f}_{1,\theta}(\tilde{y}_1|x_1)\prod_{s=2}^{T}\tilde{f}_{s,\theta}(\tilde{y}_s|x_s)\tilde{M}_{s,\theta}(x_{s-1},\dd x_{s})\\
&\leq \frac{\bar{C}^{T-1}}{\prod_{s=2}^T|\tilde{y}_s|}\int_{\R} \exp\Big((4/c)^T\delta \kappa(1+x_1^2) \Big)\tilde{f}_{1,\theta}(\tilde{y}_1|x_1)\tilde{\chi}_\theta(\dd x_1) \\
&\leq \frac{\bar{C}^{T-1}}{c^2 \prod_{s=1}^T|\tilde{y}_s|}\int_{\R} \exp\Big((4/c)^T\delta \kappa(1+x_1^2) \Big) \tilde{\chi}_\theta(\dd x_1) \\
&\leq \frac{\bar{C}^{T-1}}{c^2 \prod_{s=1}^T|\tilde{y}_s|}\int_{\R} \exp\big(c^2(1+x_1^2)\big)\tilde{\chi}_\theta(\dd x_1) \\
&<\infty.
\end{align*}
This completes to show that   \ref{assumeSSMB:smooth_MLE} holds and the proof of the proposition is complete.

\end{proof}

\section{Examples of artificial dynamics verifying the conditions of Section \ref{sub:noise}\label{p-sub:noise}}

\subsection{Artificial dynamics for models with continuous parameters \label{sub:continuous}}

In this subsection we consider the case where $\setR=\R^d$ for some $d\in\mathbb{N}$. 

\begin{proposition}\label{prop:Gauss}

Let $\setR=\R^d$ for some $d\in\mathbb{N}$ and assume that $\Theta$ is a regular compact set. Let    $(\alpha,\nu) \in (0,\infty)^2$, $c\in (0,1)$ and $(\tilde{t}_1,m)\in\mathbb{N}^2$ be some constants, and let  $(\alpha_t)_{t\geq 1}$ and $(\delta_t)_{t\geq 1}$ be two sequences in $(0,\infty)$ such that  
\begin{align*}
\lim_{t\rightarrow\infty} \delta_t=0,\quad \lim_{t\rightarrow\infty} \frac{\alpha_t \log t}{t^{\alpha} \delta_t}=0,\quad\lim_{t\rightarrow\infty} t^{\alpha-1}\delta_t=0,\quad\lim_{t\rightarrow\infty}\alpha_t\log t=\infty.
\end{align*}
In addition, let  $\tilde{t}_{p}=\tilde{t}_{p-1}+ m\lceil \tilde{t}^\alpha_{p-1} \delta_{\tilde{t}_{p-1}}\rceil$ for all $p\geq 2$,  $(\Sigma_t)_{t\geq 1}$ be a sequence of random $d\times d$ symmetric and positive definite matrices such that $\P(\|\Sigma_t\|\in [c,1/c])=1$ for all $t\geq 1$,  and let  $(\beta_t)_{t\geq 1}$ and $(\gamma_t)_{t\geq 1}$ be two sequences of $[0,1]$-valued random variables, with $(\gamma_t)_{t\geq 1}$ such that $\P(\gamma_{\tilde{t}_p}\leq 1-c)=1$ for all $p\geq 1$. Finally, let $(h_t)_{t\geq 1}$ be a sequence in $[0,\infty)$ such that $h_t=\bigO(t^{-\alpha})$.  Then, if $(\mu_t)_{t\geq 1}$ is defined by
\begin{align*}
\mu_t(\dd\theta)=
\begin{cases}
\beta_t\delta_{\{0\}}(\dd\theta)+(1-\beta_t)\mathcal{N}_d(0,h_t^2\Sigma_t), &t\not\in (\tilde{t}_p)_{p\geq 1}\\
\gamma_t\delta_{\{0\}}(\dd\theta)+(1-\gamma_t)t_{d,\nu}(0, t^{-2\alpha_t}\Sigma_t), & t\in (\tilde{t}_p)_{p\geq 1}
\end{cases},\quad  t\geq 1 
\end{align*}
Conditions \ref{condition:Inf_mu}-\ref{condition:mu_extra} hold with $(\Gamma^\mu_\delta)_{\delta\in(0,\infty)}$,  $\Gamma^\mu$, $\big((t_p, v_p, f_p)\big)_{p\geq 1}$  and $(f'_t)_{t\geq 1}$ independent of the sequences $(\Sigma_t)_{t\geq 1}$, $(\beta_t)_{t\geq 1}$ and $(\gamma_t)_{t\geq 1}$.  In particular, \ref{condition:mu_seq} holds with $(t_p)_{p\geq 1}=(\tilde{t}_p)_{p\geq 1}$.
\end{proposition}
\begin{proof}
See Section \ref{p-prop:Gauss}.
\end{proof}

The following proposition shows that if the artificial dynamics vanish sufficiently quickly as $t\rightarrow\infty$ then the use of Student-t distributions is not needed.

\begin{proposition}\label{prop:Gauss2}

Let $\setR=\R^d$ for some $d\in\mathbb{N}$ and assume that $\Theta$ is a regular compact set.  Let $(\Sigma_t)_{t\geq 1}$ be a sequence of random $d\times d$ symmetric and positive definite matrices such that $\P(\|\Sigma_t\|\in [c,1/c])=1$ for all $t\geq 1$ and some constant $c\in (0,1)$, and let   $(\beta_t)_{t\geq 1}$ be a sequence  of $[0,1]$-valued random variables and $(h_t)_{t\geq 1}$ be a sequence in $[0,\infty)$ such that $h_t=\smallo(t^{-1})$. Then, if $(\mu_t)_{t\geq 1}$ is defined by
\begin{align*}
\mu_t(\dd\theta)=
\beta_t\delta_{\{0\}}(\dd\theta)+(1-\beta_t)\mathcal{N}_d(0,h_t^2\Sigma_t), \quad  t\geq 1 
\end{align*}
 Conditions \ref{condition:Inf_mu}-\ref{condition:Inf_K}, \ref{condition:mu_seq2} and \ref{condition:mu_extra} hold with $(\Gamma^\mu_{\delta})_{\delta\in(0,\infty)}$,  $\Gamma^\mu$, $\big((k_t, f_{t,m})\big)_{t\geq 1}$ and $(f'_t)_{t\geq 1}$ independent of $(\beta_t)_{t\geq 1}$ (for all $m\in\mathbb{N}_0$).

\end{proposition}
\begin{proof}
See Section \ref{p-prop:Gauss2}.
\end{proof}

\subsection{Artificial dynamics for models with discrete parameters\label{sub:discrete}}

 In this subsection we consider the case where $\setR=\mathbb{Z}^d$ for some $d\in\mathbb{N}$, so that $\Theta$ contains finitely many elements.

\begin{proposition}\label{prop:discrete}
Let $\setR=\mathbb{Z}^d$ for some $d\in\mathbb{N}$ and    $(a,b)\in\mathbb{Z}^2$ be such that $\Theta\subseteq\{a,a+1,\dots,b\}^d$. Let $(\alpha_1,\alpha_2)\in(0,\infty)^2$, $c\in (0,1)$ and $(\tilde{t}_1,m)\in\mathbb{N}^2$ be some constants,
and let $(\delta_t)_{t\geq 1}$ be  a sequence in $(0,\infty)$ such that $\lim_{t\rightarrow\infty}\delta_t=0$, $\lim_{t\rightarrow}(\delta_{t+1}/\delta_t)=1$, and  such that $\lim_{t\rightarrow\infty}\delta^{2}_t\log t=\infty$. In addition, for all $p\geq 2$ let $\tilde{t}_p=\tilde{t}_{p-1}+m\lceil \delta^{1/2}_{\tilde{t}_{p-1}}\log \tilde{t}_{p-1}\rceil$ and for all $t\geq 1$ let  $\{p_t(i)\}_{i=a-b}^{b-a}\subset[0,1]$ be such that $\sum_{i=a-b}^{b-a}p_t(i)=1$ and such that  $p_t(0)\geq 1-ct^{-\alpha_1}$ if  $t\not\in (\tilde{t}_p)_{p\geq 1}$ and such that we have both $p_{t}(0)\geq 1-ct^{-\alpha_2\delta_{t}}$ and  $p_{t}(i)\geq c t^{-(\alpha_2\delta_{\tilde{t}_p})^{3/4}}$ if $t\in (\tilde{t}_p)_{p\geq 1}$. Finally, let $(\beta_t)_{t\geq 1}$ and $(\gamma_t)_{t\geq 1}$ be two sequences of $[0,1]$-valued random variables, with $(\gamma_t)_{t\geq 1}$ such that $\P(\gamma_{\tilde{t}_p}\leq 1-c)=1$ for all $p\geq 1$. Then, if for all $t\geq 1$ we let $\mu_t=\otimes_{i=1}^d\tilde{\mu}_t$ with     $(\tilde{\mu}_t)_{t\geq 1}$   defined by
\begin{align*}
\tilde{\mu}_t(\{i\})=
\begin{cases}
\beta_t\ind_{\{0\}}(i)+(1-\beta_t)p_t(i), &t\not\in(\tilde{t}_p)_{p\geq 1}\\
\gamma_t\ind_{\{0\}}(i)+(1-\gamma_t)p_t(i), &t \in(\tilde{t}_p)_{p\geq 1}
\end{cases},\quad  i\in\{a-b,a-b+1,\dots,b-a\},
\end{align*}
Conditions \ref{condition:Inf_mu}-\ref{condition:mu_extra} hold with $(\Gamma^\mu_\delta)_{\delta\in(0,\infty)}$,  $\Gamma^\mu$, $\big((t_p, v_p, f_p)\big)_{p\geq 1}$  and $(f'_t)_{t\geq 1}$ independent of the sequences   $(\beta_t)_{t\geq 1}$ and $(\gamma_t)_{t\geq 1}$. In particular, \ref{condition:mu_seq} holds with $(t_p)_{p\geq 1}=(\tilde{t}_p)_{p\geq 1}$.
\end{proposition}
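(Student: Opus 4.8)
The plan is to verify Conditions \ref{condition:Inf_mu}--\ref{condition:mu_extra} for the discrete-parameter dynamics by reducing everything to the behaviour of the one-dimensional increments $U_{i,t}$, which under this construction are i.i.d.\ across coordinates $i\in\{1,\dots,d\}$ at each time $t$. Since $\mu_t=\otimes_{i=1}^d\tilde\mu_t$, an increment $U_t=(U_{1,t},\dots,U_{d,t})$ stays at $0$ in coordinate $i$ with probability at least $\tilde\mu_t(\{0\})$, and the events $\{\sum_{i=a+1}^s U_i\in B_\epsilon(0)\}$ decompose coordinate-wise because the Euclidean ball contains the cube of half-side $\epsilon/\sqrt d$. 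The guiding principle throughout is that the only relevant quantity is the probability that a partial sum of integer-valued increments \emph{leaves} a small ball, and for integers leaving $B_\epsilon(0)$ with $\epsilon<1$ simply means making at least one nonzero move in some coordinate. I would therefore first record the elementary bound: for any $a<b$ and any $\epsilon\in(0,1)$,
\begin{align*}
\P\Big(\exists s\in\{a+1,\dots,b\}:\,\textstyle\sum_{i=a+1}^s U_i\not\in B_\epsilon(0)\,\big|\,\F_b\Big)\leq \sum_{i=a+1}^b d\,\big(1-\tilde\mu_i(\{0\})\big),
\end{align*}
by a union bound over times and coordinates, using that the first nonzero increment forces departure from the ball.

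First I would establish Conditions \ref{condition:Inf_mu} and \ref{condition:Inf_K}. For \ref{condition:Inf_mu}, the mass at $0$ of each $\mu_t$ is at least $\min_t \tilde\mu_t(\{0\})^d$, which is bounded below by $(1-c)^d>0$ uniformly (using $p_t(0)\geq 1-ct^{-\alpha_1}$ or $1-ct^{-\alpha_2\delta_t}$ and $\gamma_t\leq 1-c$, $\beta_t\in[0,1]$); this yields a deterministic $\Gamma^\mu_\delta$ independent of $(\beta_t)$ and $(\gamma_t)$. Condition \ref{condition:Inf_K} is immediate here because $\Theta$ is finite and each $K_{\mu_t}(\theta',\cdot)$ assigns positive mass to staying at $\theta'$, so a uniform lower bound $\Gamma^\mu$ exists over the finitely many $\theta'$. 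The main content is Condition \ref{condition:mu_seq}, with $(t_p)_{p\geq 1}=(\tilde t_p)_{p\geq 1}$. Here I would check the growth requirements on $(t_p)$ and choose $v_p$ appropriately: the spacing $\tilde t_p-\tilde t_{p-1}=m\lceil\delta^{1/2}_{\tilde t_{p-1}}\log\tilde t_{p-1}\rceil$ grows, and one verifies $\inf_p(t_{p+1}/t_p)>1$, $v_p/(t_{p+1}-t_p)\to0$, $v_p\to\infty$, $\limsup(v_{p+1}/v_p)<3/2$ for a suitable polynomial-in-$\log$ choice of $v_p$. For \ref{C41} I would lower-bound $K_{\mu_{t_p}}(\theta,\mathcal N_\epsilon(A))$ by the probability of a single successful move into $\mathcal N_\epsilon(A)$, which is governed by the guaranteed lower bound $p_{t_p}(i)\geq c\,t_p^{-(\alpha_2\delta_{\tilde t_p})^{3/4}}$; dividing its logarithm by $(t_{p+1}-t_p)$ and using that this spacing is of order $\delta^{1/2}_{t_p}\log t_p$ while the exponent is of order $(\delta_{t_p})^{3/4}\log t_p$, one checks the ratio tends to $0$, giving $f_p(\epsilon)\to0$. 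For \ref{C42} and \ref{C43} I would apply the union bound above together with $\log(1+x)\le x$, so that the departure probability over a window of length $u_p-l_p$ is at most $d(u_p-l_p)\max_i(1-\tilde\mu_i(\{0\}))$, and the logarithm divided by $(u_p-l_p)$ is controlled by $\log\big(d\,\ell\,c\,t^{-\alpha}\big)/\ell$ for the relevant rate $\alpha\in\{\alpha_1,\alpha_2\delta_t\}$; the key is that on windows disjoint from $(t_p)$ the rate $t^{-\alpha_1}$ makes this bound decay like $-\alpha_1\log t$, yielding the required $-1/f_p(\epsilon)$ with $f_p\to0$.

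Finally, Condition \ref{condition:mu_extra} follows from the same union bound applied on an arbitrary interval $(a,b]$: the departure probability is at most $\sum_{i=a+1}^b d\,(1-\tilde\mu_i(\{0\}))\le d(b-a)c\,\max_i i^{-\alpha_i}$ where each exponent is either $\alpha_1$ or $\alpha_2\delta_i$, and one sets $f'_b(b-a,\epsilon)$ from the resulting decay, checking monotonicity in its first argument and the limit $f'_t(1/\delta'_t,\delta'_t)\to0$ for a suitable $\delta'_t\to0$; since $\epsilon<1$ makes the bound $\epsilon$-independent for integer moves, the function $f'_t$ can be taken independent of $\epsilon$. The step I expect to be the main obstacle is verifying \ref{C41} simultaneously with \ref{C42}: the two conditions pull in opposite directions, since \ref{C41} demands that at the times $t_p$ the kernel moves \emph{enough} (forcing $p_{t_p}(i)$ not too small, i.e.\ the slowly-vanishing rate $(\alpha_2\delta_{t_p})^{3/4}$), while \ref{C42} demands that \emph{between} the $t_p$ the chain moves \emph{little} (the fast rate $\alpha_1$). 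Making the spacing $\tilde t_p-\tilde t_{p-1}=m\lceil\delta^{1/2}_{\tilde t_{p-1}}\log\tilde t_{p-1}\rceil$ reconcile these — so that the per-window log-probability in \ref{C41} is $o(1)$ while the windows remain long enough for \ref{C42}/\ref{C43} to produce a divergent $-1/f_p$ — is the delicate calibration, and it is exactly why the exponent $3/4$ and the conditions $\lim\delta^2_t\log t=\infty$, $\lim(\delta_{t+1}/\delta_t)=1$ are imposed. I would handle this by carrying the explicit asymptotics $\log t_p\sim\log\tilde t_{p-1}$ and $t_{p+1}-t_p\asymp\delta^{1/2}_{t_p}\log t_p$ through both estimates and confirming the required limits, deferring the routine but lengthy verification to Section \ref{p-prop:discrete}.
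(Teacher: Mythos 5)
Your proposal is correct and takes essentially the same route as the paper's proof: Conditions \ref{condition:Inf_mu}--\ref{condition:Inf_K} from the uniform lower bound on the staying probability, \ref{C41} from a one-step move using the rate $c\,t_p^{-(\alpha_2\delta_{t_p})^{3/4}}$ calibrated against the spacing $m\lceil\delta_{t_p}^{1/2}\log t_p\rceil$ (so the normalized log tends to $0$ like $\delta_{t_p}^{1/4}$), and \ref{C42}--\ref{C43} and \ref{condition:mu_extra} by reducing exit from $B_\epsilon(0)$ (integer increments, $\epsilon<1$) to the event that some increment is nonzero, where your union bound $\sum_i d\,(1-\tilde\mu_i(\{0\}))$ plays the role of the paper's exact identity $1-\prod_s\mu_s(\{0\})$ combined with the monotonicity of $x\mapsto(1-a^x)^{1/x}$, an immaterial difference. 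One small slip: the genuine tension you flag is between \ref{C41} and \ref{C43} (both concern the jump times $t_p$, where the individual-move lower bound must coexist with a small total moving probability over windows of length $v_p$), not between \ref{C41} and \ref{C42}, but your calibration argument via $\delta_t^{3/4-1/2}=\delta_t^{1/4}\to0$ and the conditions $\delta_t^2\log t\to\infty$, $\delta_{t+1}/\delta_t\to1$ is exactly the one the paper carries out.
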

\begin{proof}
See Section \ref{p-prop:discrete}.
\end{proof}

The following lemma can be used to define, in practice, the sequence $(\tilde{t}_p)_{p\geq 1}$ and the   probabilities  $(\{p_t(i)\}_{i=a-b}^{b-a})_{t\geq 1}$ required by Proposition \ref{prop:discrete} to defined $(\mu_t)_{t\geq 1}$.
\begin{lemma}\label{lemma:discrete}
Let  $(\alpha_1,\alpha_2)\in(0,\infty)^2$, $\beta\in(0,1/2)$, $c \in(0,1)$, $(\tilde{t}_1,m)\in\mathbb{N}^2$ and for all $t\geq 1$ let $\alpha_t= (\log t)^{-\beta}$. Next, let  $\tilde{t}_p=\tilde{t}_{p-1}+m\lceil   (\log \tilde{t}_{p-1})^{1-\beta/2}\rceil$ for all $p\geq 2$ and, for all $t\geq 1$ let $h_t=c t^{-\alpha_1}$ if $t\not\in(\tilde{t}_p)_{p\geq 1}$ and $h_t=c t^{-\alpha_2\alpha_t}$ otherwise, and let $\eta_t$ be the probability measure on $\mathbb{Z}$ such that
\begin{align*}
V_t\sim \eta_t\Leftrightarrow V_t\dist (2I_t-1) B_t,\quad B_t\sim \mathrm{Binomial}(b-a, h_t),\quad I_t\sim\mathrm{Bernoulli}(0.5),\quad\forall t\geq 1.
\end{align*}
Finally, let $p_t(i)=\eta_t(\{i\})$ for all   $i\in\{a-b,a-b+2,\dots,b-a\}$ and all $t\geq 1$. Then, the sequence $(\tilde{t}_p)_{p\geq 1}$ and the   probabilities  $(\{p_t(i)\}_{i=a-b}^{b-a})_{t\geq 1}$  satisfy the assumptions of Proposition \ref{prop:discrete}.

\end{lemma}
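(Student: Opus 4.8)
The plan is to verify directly that, upon setting the sequence $(\delta_t)_{t\geq 1}$ appearing in Proposition \ref{prop:discrete} equal to $\delta_t = \alpha_t = (\log t)^{-\beta}$ and keeping the same constants $(\alpha_1,\alpha_2,\tilde t_1,m)$, the objects $(\tilde t_p)_{p\geq 1}$ and $(\{p_t(i)\}_{i=a-b}^{b-a})_{t\geq 1}$ constructed here satisfy every hypothesis of that proposition. The verification splits into three essentially independent pieces: the asymptotics of $(\delta_t)$, the consistency of the recursion defining $(\tilde t_p)$, and the two-sided control of the masses $p_t(i)$.

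First I would check the three limit conditions on $\delta_t = (\log t)^{-\beta}$. The limits $\delta_t \to 0$ and $\delta_{t+1}/\delta_t \to 1$ are immediate (the latter since $\log(t+1)/\log t \to 1$), while $\delta_t^2 \log t = (\log t)^{1-2\beta} \to \infty$ uses precisely the assumption $\beta \in (0,1/2)$, which guarantees $1 - 2\beta > 0$. Substituting $\delta_{\tilde t_{p-1}} = (\log \tilde t_{p-1})^{-\beta}$ into the recursion $\tilde t_p = \tilde t_{p-1} + m\lceil \delta_{\tilde t_{p-1}}^{1/2}\log\tilde t_{p-1}\rceil$ of Proposition \ref{prop:discrete} returns $\tilde t_p = \tilde t_{p-1} + m\lceil (\log\tilde t_{p-1})^{1-\beta/2}\rceil$, which is exactly the recursion used here; this step is a one-line algebraic identity.

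The substantive work is the control of $p_t(i) = \eta_t(\{i\})$. Since $V_t \dist (2I_t-1)B_t$ with $B_t \sim \mathrm{Binomial}(b-a,h_t)$ is supported in $\{a-b,\dots,b-a\}$, the normalization $\sum_i p_t(i)=1$ is automatic, and one reads off $p_t(0) = \P(B_t=0) = (1-h_t)^{b-a}$ and, for $i \neq 0$, $p_t(i) = \tfrac12\binom{b-a}{|i|}h_t^{|i|}(1-h_t)^{b-a-|i|}$. For the upper bounds on $1-p_t(0)$ I would use $1-(1-h_t)^{b-a}\leq (b-a)h_t$; inserting $h_t = c\,t^{-\alpha_1}$ when $t\notin(\tilde t_p)$ and $h_t = c\,t^{-\alpha_2\alpha_t}$ when $t\in(\tilde t_p)$, together with $\alpha_t = \delta_t$, yields the two required lower bounds on $p_t(0)$ with the proposition's constant taken as a fixed multiple of the present $c$. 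For the lower bound on $p_t(i)$, I would note that once $h_t \leq 1/2$ (hence for all large $t$, since $h_t \to 0$) one has $p_t(i) \geq \tfrac12 h_t^{b-a}$ for every $i\neq 0$, and then compare exponents: $h_t^{b-a} = c^{b-a} t^{-(b-a)\alpha_2\delta_t}$, and the ratio $(b-a)\alpha_2\delta_t / (\alpha_2\delta_t)^{3/4} = (b-a)(\alpha_2\delta_t)^{1/4} \to 0$ shows $(b-a)\alpha_2\delta_t = \smallo\big((\alpha_2\delta_t)^{3/4}\big)$, whence $t^{-(b-a)\alpha_2\delta_t} \geq t^{-(\alpha_2\delta_t)^{3/4}}$ for all large $t$ and the required bound $p_t(i) \geq \tfrac12 c^{b-a} t^{-(\alpha_2\delta_{\tilde t_p})^{3/4}}$ follows (recalling $\delta_{\tilde t_p}=\delta_t$ for $t=\tilde t_p$).

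I expect this last exponent comparison to be the conceptual heart of the argument: it is exactly the point at which the binomial construction, which forces the smallest mass $p_t(\pm(b-a))$ to decay like $h_t^{b-a}$, i.e.\ with $(b-a)$ times the exponent of $h_t$, is reconciled with the $3/4$-power exponent appearing in Proposition \ref{prop:discrete}, and the slack $3/4 < 1$ is what makes the reconciliation possible. The remaining difficulty is purely bookkeeping: the lower bound on $p_t(i)$ is only established beyond a threshold in $t$, and all three masses must ultimately be governed by a single constant $c\in(0,1)$ in the sense of Proposition \ref{prop:discrete}. I would therefore close the argument by absorbing the finitely many initial indices (taking $\tilde t_1$ large, or shrinking $c$) and selecting the proposition's constant as an appropriate function of $c$, $b-a$ and $\alpha_2$.
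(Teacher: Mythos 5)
The paper states Lemma~\ref{lemma:discrete} without any proof (it is presented as a practical recipe immediately after Proposition~\ref{prop:discrete}), so there is no authors' argument to compare yours against. Judged on its own terms, your verification follows the natural route and its asymptotic core is correct: $\delta_t=\alpha_t=(\log t)^{-\beta}$ with $\beta\in(0,1/2)$ gives the three limit conditions, the recursion identity is exact, the binomial masses are $p_t(0)=(1-h_t)^{b-a}$ and $p_t(i)=\tfrac12\binom{b-a}{|i|}h_t^{|i|}(1-h_t)^{b-a-|i|}$ with the correct normalization, and the decisive step is indeed the comparison $(b-a)\alpha_2\delta_t/(\alpha_2\delta_t)^{3/4}=(b-a)(\alpha_2\delta_t)^{1/4}\to 0$.

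The gap is in your closing bookkeeping, and it is not merely cosmetic. First, taking the proposition's constant to be ``a fixed multiple of the present $c$'', i.e.\ $(b-a)c$, is inadmissible when $(b-a)c\geq 1$, since Proposition~\ref{prop:discrete} requires its constant to lie in $(0,1)$; the repair is to trade a strictly smaller exponent (say $\alpha_1/2$ for $\alpha_1$ and $\alpha_2/2$ for $\alpha_2$) against the constant, and that repair only takes effect beyond a finite time threshold. Second, your proposed treatment of the early indices --- ``taking $\tilde t_1$ large, or shrinking $c$'' --- is not available to you: $\tilde t_1$ and $c$ are universally quantified in the lemma, so the construction is fixed, and only the proposition's constants and its sequence $(\delta_t)$ may be chosen. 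Moreover, no choice of those can rescue the early indices in general: with $b-a=2$, $c=0.99$, $\tilde t_1=2$, $\beta=1/4$, $\alpha_1=\alpha_2=1$, the requirement at $t=1$ forces the proposition's constant $c'$ to satisfy $c'\geq 1-(1-c)^{b-a}=0.9999$, while at $t=\tilde t_1=2$ (where $h_2\approx 0.463$, $p_2(0)\approx 0.288$, $p_2(\pm 2)=\tfrac12 h_2^2\approx 0.107$) the bound on $p_2(0)$ together with $c'<1$ caps $\alpha_2'\delta_2$ at about $0.49$, and the bound on $p_2(\pm 2)$ then forces $c'\leq 0.107\cdot 2^{(0.49)^{3/4}}\approx 0.16$ --- a contradiction, whatever $\alpha_1'$, $\alpha_2'$, $m'$ and $(\delta_t)$ you pick. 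So, read literally (mass bounds for all $t\geq 1$ with a single $c\in(0,1)$), the lemma cannot be verified for all admissible parameter values. The legitimate way to finish is to observe that Proposition~\ref{prop:discrete} never uses its mass bounds at small $t$: its proof sets the functions $f_p$ equal to $+\infty$ below a finite index, and the Conditions~\ref{condition:Inf_mu}--\ref{condition:mu_extra} it delivers are purely asymptotic. Your large-$t$ estimates are exactly what that proof consumes, e.g.\ with $c'=\min\{\tfrac12 c^{\,b-a},\tfrac12\}$, $\alpha_1'=\alpha_1/2$, $\alpha_2'=\alpha_2/2$; stating explicitly that the hypotheses of Proposition~\ref{prop:discrete} are only needed beyond a finite threshold, and supplying them there, is the missing step that makes the argument close.
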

 
 \subsection{Artificial dynamics for models with both continuous and discrete   parameters\label{sub:hybrid}}

We   finally consider the situation where, for some $d_1,d_2\in\mathbb{N}$, we have $\Theta=\Theta_1\times \Theta_2$  with $\Theta_1\subset\R^{d_1}$ a regular compact set and $ \Theta_2\subset\mathbb{Z}^{d_2}$, in which case $\setR=\R^{d_1}\times \mathbb{Z}^{d_2}$. 

By combining Proposition  \ref{prop:Gauss} and Proposition \ref{prop:discrete} we readily obtain the following result:
\begin{corollary}\label{cor:mix}
Let $\setR=\R^{d_1}\times \mathbb{Z}^{d_2}$ for some $(d_1,d_2)\in\mathbb{N}^2$ and let $\Theta=\Theta_1\times \Theta_2$  with $\Theta_1\subset\R^{d_1}$ a regular compact set and $ \Theta_2\subset\mathbb{Z}^{d_2}$. Let $(\mu_t^{(1)})_{t\geq 1}$ be as constructed  in Proposition \ref{prop:Gauss} (for $\Theta=\Theta_{d_1}$ and $d=d_1$) and $(\mu_t^{(2)})_{t\geq 1}$  be as constructed  in  Proposition \ref{prop:discrete} (for $\Theta=\Theta_2$ and $d=d_2$). Assume that these two sequences of random probability measures are defined using the same sequence $(\tilde{t}_p)_{p\geq 1}$ and  the same two sequences $(\beta_t)_{t\geq 1}$ and $(\gamma_t)_{t\geq 1}$ of $[0,1]$-valued random variables. Let  $\mu_t=\mu_t^{(1)}\otimes \mu_t^{(2)}$ for all $t\geq 1$. Then,  Conditions \ref{condition:Inf_mu}-\ref{condition:mu_extra} hold with $(\Gamma^\mu_\delta)_{\delta\in(0,\infty)}$,  $\Gamma^\mu$, $\big((t_p, v_p, f_p)\big)_{p\geq 1}$  and $(f'_t)_{t\geq 1}$ independent of the sequences   $(\beta_t)_{t\geq 1}$ and $(\gamma_t)_{t\geq 1}$. In particular, \ref{condition:mu_seq} holds with $(t_p)_{p\geq 1}=(\tilde{t}_p)_{p\geq 1}$.
\end{corollary}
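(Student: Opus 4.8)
The plan is to verify Conditions \ref{condition:Inf_mu}, \ref{condition:Inf_K}, \ref{condition:mu_seq} and \ref{condition:mu_extra} for the product sequence $(\mu_t)_{t\geq 1}$ by reducing each of them to the corresponding condition for the two factor sequences $(\mu_t^{(1)})_{t\geq 1}$ and $(\mu_t^{(2)})_{t\geq 1}$, which hold by Propositions \ref{prop:Gauss} and \ref{prop:discrete}. Write $U_t=(U_t^{(1)},U_t^{(2)})$ with $U_t^{(1)}\in\R^{d_1}$ and $U_t^{(2)}\in\mathbb{Z}^{d_2}$; since $\mu_t=\mu_t^{(1)}\otimes\mu_t^{(2)}$, the two coordinate processes are, conditionally on $\F_t$, independent with laws $\mu_t^{(1)}$ and $\mu_t^{(2)}$, and every partial sum splits as $\sum_i U_i=(\sum_i U_i^{(1)},\sum_i U_i^{(2)})$. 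Equipping $\setR=\R^{d_1}\times\mathbb{Z}^{d_2}$ with the Euclidean norm, I will repeatedly use the two elementary inclusions $B_{\epsilon/\sqrt 2}^{(1)}(0)\times B_{\epsilon/\sqrt 2}^{(2)}(0)\subseteq B_\epsilon(0)\subseteq B_\epsilon^{(1)}(0)\times B_\epsilon^{(2)}(0)$, where the superscripts indicate the balls of the two factor spaces.

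The two ``static'' conditions are immediate from the product structure. For \ref{condition:Inf_mu}, the first inclusion gives $\mu_t(B_\delta(0))\geq \mu_t^{(1)}(B_{\delta/\sqrt2}^{(1)}(0))\,\mu_t^{(2)}(B_{\delta/\sqrt2}^{(2)}(0))$, so the factor bounds supplied by \ref{condition:Inf_mu} yield \ref{condition:Inf_mu} for $\mu_t$ with $\Gamma^\mu_\delta=\Gamma^{\mu,(1)}_{\delta/\sqrt2}\Gamma^{\mu,(2)}_{\delta/\sqrt2}$. For \ref{condition:Inf_K}, since $\Theta=\Theta_1\times\Theta_2$ the normalising mass factorises as $\int_\Theta K_{\mu_t}(\theta',\dd\theta)=\big(\int_{\Theta_1}K_{\mu_t^{(1)}}(\vartheta',\dd\vartheta)\big)\big(\int_{\Theta_2}K_{\mu_t^{(2)}}(\psi',\dd\psi)\big)$ for $\theta'=(\vartheta',\psi')$, whence \ref{condition:Inf_K} holds with $\Gamma^\mu=\Gamma^{\mu,(1)}\Gamma^{\mu,(2)}$. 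In both cases the resulting random variables inherit from the two propositions their independence of $(\beta_t)_{t\geq 1}$ and $(\gamma_t)_{t\geq 1}$.

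For \ref{condition:mu_seq} I take the common sequence $(t_p)_{p\geq 1}=(\tilde t_p)_{p\geq 1}$ shared by the two constructions together with a common admissible $(v_p)_{p\geq 1}$. Part \ref{C41} follows by fixing any point $(a_1,a_2)$ of the nonempty set $A$ and bounding $K_{\mu_{t_p}}(\theta,\mathcal{N}_\epsilon(A))\geq K_{\mu^{(1)}_{t_p}}(\vartheta,B^{(1)}_\epsilon(a_1))\,K_{\mu^{(2)}_{t_p}}(\psi,\{a_2\})$, since keeping the discrete coordinate fixed at $a_2$ and the continuous coordinate within $\epsilon$ of $a_1$ lands inside $\mathcal{N}_\epsilon(A)$ (for $\epsilon<1$ the set $\mathcal{N}_\epsilon(\{a_2\})$ reduces to $\{a_2\}$); applying \ref{C41} to each factor and summing the two rate functions gives the claim. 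Parts \ref{C42} and \ref{C43}, as well as Condition \ref{condition:mu_extra}, all concern the probability that the partial sums leave $B_\epsilon(0)$, and here the first inclusion and a union bound give
\begin{align*}
\P\Big(\exists s:\ \textstyle\sum_{i} U_i\not\in B_\epsilon(0)\,\big|\,\F\Big)\leq \P\Big(\exists s:\ \textstyle\sum_i U_i^{(1)}\not\in B^{(1)}_{\epsilon/\sqrt2}(0)\,\big|\,\F\Big)+\P\Big(\exists s:\ \textstyle\sum_i U_i^{(2)}\not\in B^{(2)}_{\epsilon/\sqrt2}(0)\,\big|\,\F\Big),
\end{align*}
and bounding each summand by the corresponding component estimate shows that the combined bound holds with rate function given by $1/f_p(\epsilon)=\max\big(0,\min_i 1/f_p^{(i)}(\epsilon/\sqrt2)-\log 2\big)$ over a window of length at least one, and analogously for the function of \ref{condition:mu_extra}, where one chooses $\delta'_t=\max_i\delta_t^{\prime(i)}$. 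Since each component rate function tends to $0$ and the correction is an additive constant, the combined functions still tend to $0$ and, being reciprocals of maxima of non-increasing functions of the window length, retain the monotonicity demanded by \ref{condition:mu_extra}; as before this uses only component data, hence is independent of $(\beta_t)_{t\geq 1}$ and $(\gamma_t)_{t\geq 1}$.

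The one genuinely delicate point is the bookkeeping in this last step: matching the two factor constructions onto a single admissible $(t_p,v_p)$ and, above all, converting the union-bound estimate into the precise exponential form $-1/f_p(\epsilon)$ (resp.\ $-1/f'_t(b-a,\epsilon)$) required by \ref{condition:mu_seq} and \ref{condition:mu_extra} while preserving both the monotonicity of $f'_t(\cdot,\epsilon)$ and the limit $\lim_t f'_t(1/\delta'_t,\delta'_t)=0$. The subtlety is only that the additive $\log 2$ from the union bound must be absorbed without destroying these structural properties, which is handled by setting the combined function to $+\infty$ on the (small-window) indices where the correction would make the rate nonpositive; everywhere else the bound is genuine because $\tfrac1n\log\P\leq 0$ holds trivially. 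Once this is in place, the conclusion of the corollary follows at once.
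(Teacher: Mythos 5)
Your overall strategy — componentwise reduction through the product structure, the two ball inclusions, and a union bound for the exit probabilities — is exactly the intended one: the paper gives no proof of this corollary at all (it asserts the result is "readily" obtained by combining Propositions \ref{prop:Gauss} and \ref{prop:discrete}), and Conditions \ref{condition:Inf_mu}, \ref{condition:Inf_K} and the exit-probability parts of \ref{condition:mu_seq} and \ref{condition:mu_extra} do reduce to the factors in the way you describe. However, two of your steps fail as written. In \ref{C41}, the inequality $K_{\mu_{t_p}}(\theta,\mathcal{N}_\epsilon(A))\geq K_{\mu^{(1)}_{t_p}}(\vartheta,B^{(1)}_\epsilon(a_1))\,K_{\mu^{(2)}_{t_p}}(\psi,\{a_2\})$ is false: by the paper's definition, $\mathcal{N}_\epsilon(A)=\{\theta\in\Theta:\,\inf_{\theta'\in A}\|\theta-\theta'\|<\epsilon\}$ is a subset of $\Theta$, whereas $B^{(1)}_\epsilon(a_1)\times\{a_2\}$ is not contained in $\Theta$, so the right-hand side counts mass that the left-hand side does not; your claim that staying within $\epsilon$ of $a_1$ "lands inside $\mathcal{N}_\epsilon(A)$" ignores the constraint of remaining in $\Theta_1$. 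The repair is immediate: replace $B^{(1)}_\epsilon(a_1)$ by $B^{(1)}_\epsilon(a_1)\cap\Theta_1$, whose product with $\{a_2\}$ does lie in $\mathcal{N}_\epsilon(A)$, and which is precisely the set that the factor condition \ref{C41} (applied on $\Theta_1$ with $A=\{a_1\}$) bounds from below, so the rest of that step is unchanged.

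The second, more substantive gap is in \ref{condition:mu_extra}: the choice $\delta'_t=\max_i\delta_t^{\prime(i)}$ does not close the argument. Your combined rate evaluated at $(1/\delta'_t,\delta'_t)$ involves $f_t^{\prime(i)}(1/\delta'_t,\delta'_t/\sqrt2)$, and $\delta'_t/\sqrt2$ is strictly smaller than $\delta_t^{\prime(i)}$ for the index attaining the maximum; since \ref{condition:mu_extra} gives no monotonicity of $f_t^{\prime(i)}$ in its second argument, the factor limits $f_t^{\prime(i)}(1/\delta_t^{\prime(i)},\delta_t^{\prime(i)})\to0$ do not transfer, so this step is unjustified (your "only subtlety is the $\log 2$" remark misses it). It is repairable: because the exit events are nested in $\epsilon$, one may assume without loss of generality that each $f_t^{\prime(i)}(\Delta,\cdot)$ is non-increasing, by replacing it with $\inf_{\epsilon'\in(0,\epsilon]}f_t^{\prime(i)}(\Delta,\epsilon')$ (this preserves the bound, the monotonicity in $\Delta$ and the limit condition); then taking $\delta'_t=\sqrt2\,\max_i\delta_t^{\prime(i)}$ gives $1/\delta'_t\leq 1/\delta_t^{\prime(i)}$ and $\delta'_t/\sqrt2\geq\delta_t^{\prime(i)}$, and the two monotonicities yield $f_t^{\prime(i)}(1/\delta'_t,\delta'_t/\sqrt2)\leq f_t^{\prime(i)}(1/\delta_t^{\prime(i)},\delta_t^{\prime(i)})\to0$; alternatively one can check the explicit rate functions constructed in the proofs of Propositions \ref{prop:Gauss} and \ref{prop:discrete}, which do have the needed monotonicity in $\epsilon$. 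Finally, the "common admissible $(v_p)_{p\geq1}$" also needs justification: the minimum of the two factor sequences can violate $\limsup_p v_{p+1}/v_p<3/2$, so one should construct it, e.g.\ $v_p=\min(m_p,v_{p-1}+1)$ with $m_p=\inf_{q\geq p}\min(v^{(1)}_q,v^{(2)}_q)$, which diverges, is dominated by both factor sequences (so the factor bounds in \ref{C42}--\ref{C43} still apply, the windows only shrinking), and has ratios tending to one.
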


To construct  $(\mu_t^{(1)})_{t\geq 1}$ and $(\mu_t^{(2)})_{t\geq 1}$ such that the assumptions of Corollary \ref{cor:mix} hold we can for instance proceed as follows. First, let  $(\mu_t^{(2)})_{t\geq 1}$ be as in Proposition \ref{prop:discrete}, with  $(\tilde{t}_p)_{p\geq 1}$ and the   probabilities  $(\{p_t(i)\}_{i=a-b}^{b-a})_{t\geq 1}$ as in  Lemma \ref{lemma:discrete} for some $\beta\in(0,1/2)$.  Then, with the sequences $(\beta_t)_{t\geq 1}$, $(\gamma_t)_{t\geq 1}$  and $(\tilde{t}_p)_{p\geq 1}$ used to define $(\mu_t^{(2)})_{t\geq 1}$, and with
$(h_t)_{t\geq 1}$ and $(\Sigma_t)_{t\geq 1}$ as in  Proposition \ref{prop:Gauss} (for $d=d_1$),  let $(\mu_t^{(1)})_{t\geq 1}$ be defined by
\begin{align*}
\mu^{(1)}_t(\dd\theta)=
\begin{cases}
\beta_t\delta_{\{0\}}(\dd\theta)+(1-\beta_t)\mathcal{N}_{d_1}(0,h_t^2\Sigma_t), &t\not\in (\tilde{t}_p)_{p\geq 1}\\
\gamma_t\delta_{\{0\}}(\dd\theta)+(1-\gamma_t)t_{d_1,\nu}(0, t^{-2\alpha_t}\Sigma_t), & t\in (\tilde{t}_p)_{p\geq 1}
\end{cases},\quad  t\geq 1
\end{align*}
with $\alpha_t=c(\log t)^{-\beta}$ for all $t\geq 1$ and some constant $c\in(0,\infty)$. It is readily checked that this sequence  $(\mu_t^{(1)})_{t\geq 1}$  satisfies the assumption of Proposition \ref{prop:Gauss}, and therefore the conclusion of Corollary holds for the so-defined sequences $(\mu_t^{(1)})_{t\geq 1}$ and $(\mu_t^{(2)})_{t\geq 1}$.

\subsection{Proofs of the results presented in this section}

\subsubsection{Proof of Proposition \ref{prop:Gauss}\label{p-prop:Gauss}}

\begin{proof} 

Remark first that since $\lim_{t\rightarrow\infty}h_t=\lim_{t\rightarrow\infty}t^{-\alpha_t}=0$ and $\Theta$ is assumed to be a regular compact set it is trivial to check that Conditions \ref{condition:Inf_mu} and  \ref{condition:Inf_K}   hold. Below we show that \ref{condition:mu_seq} holds with $(t_p)_{p\geq 1}=(\tilde{t}_p)_{p\geq 1}$ and that condition \ref{condition:mu_extra} is satisfied as well

To do so we start with some preliminary computations. First, noting that under the assumptions of the proposition we have $\lim_{p\rightarrow\infty}(t_{p+1}-t_p)/t_p=0$, we remark that  there exists a constant $C_1\in(0,\infty)$ such that
\begin{equation}\label{eq:var_limit}
\begin{split}
\limsup_{p\rightarrow\infty}(t_{p+2}-t_p)\sum_{i=t_p}^{t_{p+2}} h_s^2\leq C_1 \limsup_{p\rightarrow\infty}\delta_{t_{p}}^2=0
\end{split}
\end{equation}
and let $p'\in\mathbb{N}\setminus\{1\}$ be such that $t_{p+1}\geq t_p+2$ for all $p\geq p'$.

Next, using the definition of $(\mu_t)_{t\not\in(t_p)_{p\geq 1}}$, the assumptions on $(\Sigma_t)_{t\geq 1}$ and   Doob's martingale inequality, it is easily checked that there exists a constant $C_2\in(0,\infty)$  such that, $\P$-a.s., for all $\epsilon\in(0,\infty)$, $p\geq p'$ and integers $t_p\leq a<b<t_{p+1}$, we have
\begin{align*}
\P\Big(\exists s\in\{a+1,\dots,b\}:\, \sum_{i=a+1}^s  U_i\not\in B_{\epsilon}(0)\big|\F_{b}\Big)\leq  C_2\exp\bigg(-\frac{\epsilon^2}{C_2\sum_{i=t_p}^{t_{p+2}} h_s^2}\bigg).
\end{align*}
Together with \eqref{eq:var_limit}, this implies that, $\P$-a.s.,   for all $\epsilon\in(0,\infty)$, all $p\geq p'$ and all integers $t_p\leq a<b<t_{p+1}$, we have
\begin{equation}\label{eq:fp_pp1}
\begin{split}
\frac{\log \P\Big(\exists s\in\{a+1,\dots,b\}:\, \sum_{i=a+1}^s  U_i\not\in B_{\epsilon}(0)\big|\F_{b}\Big)}{b-a}   \leq g^{(1)}_{t_p}(\epsilon)
\end{split}
\end{equation}
where, for some constant   $C_3\in(0,\infty)$,
\begin{align}\label{eq:g_1}
g^{(1)}_{t_p}(\epsilon)=\min\bigg(0,-\frac{\epsilon^2}{C_3 \delta_{t_p}^2}+\log C_3\bigg),\quad\forall p\geq 1,\quad\forall\epsilon>0.
\end{align}

To proceed further note that we $\P$-a.s.~have, for all $\epsilon\in(0,\infty)$, all $p\geq p'$ and all integers $a$ and $b$ such that  $t_{p-1}\leq a<b<t_{p+1}$,
\begin{align*}
\P\Big(\exists  s\in\{a+1,\dots,&b\}  :\, \sum_{i=a+1}^s   U_i\not\in B_{\epsilon}(0)\big| \F_{b}\Big)\\
&=\P\Big(\exists s\in\{a+1,\dots,b\} :\, \Big\|\sum_{i=a+1}^s   U_i\Big\|\geq \epsilon\big| \F_{b}\Big)\\
&\leq \P\Big(\exists s\in\{a+1,\dots,b\} :\,  \Big\|\sum_{i=a+1}^s   U_i-\ind_{t_p}(s)U_{t_p}\Big\|+\|U_{t_p}\| \geq \epsilon\big| \F_{b}\Big)\\
&\leq \P\Big(\exists s\in\{a+1,\dots,b\} :\,  \Big\|\sum_{i=a+1}^s   U_i-\ind_{t_p}(s)U_{t_p}\Big\|\geq\epsilon/2\text{ or }\|U_{t_p}\| \geq \epsilon/2\big| \F_{b}\Big)\\
&\leq \P\Big(\exists s\in\{a+1,\dots,b\} :\,  \Big\|\sum_{i=a+1}^s   U_i-\ind_{t_p}(s)U_{t_p}\Big\|\geq\epsilon/2\Big| \F_b\Big)+\P\big(\|U_{t_p}\| \geq \epsilon/2\big).
\end{align*}
Using this result,  the definition of $(\mu_{t})_{t\geq 1}$,  the assumptions on $(\Sigma_t)_{t\geq 1}$ and  Doob's martingale inequality, as well as the fact that
\begin{align*}
\int_a^\infty \Big(1+\frac{x^2}{\nu}\Big)^{-\frac{\nu+1}{2}}\dd x\leq \nu^{\frac{\nu+1}{2}}\int_a^\infty x^{-(\nu+1)}\dd x=\nu^{\frac{\nu-1}{2}} a^{-\nu},\quad\forall a>0,
\end{align*}
it is readily checked that  there exists a constant $C_4\in(0,\infty)$  such that we $\P$-a.s.~have, for all $\epsilon\in(0,\infty)$, all $p\geq p'$ and all integers $a$ and $b$ such that  $t_{p-1}\leq a<b<t_{p+1}$,  
\begin{equation}\label{eq:Doob}
\begin{split}
\P\Big(\exists s\in\{a+1,\dots,b\} :\, \sum_{i=a+1}^s&  U_i\not\in B_{\epsilon}(0)\big| \F_{b}\Big)\leq C_4\,\exp\Big(-\frac{\epsilon^2}{C_4\sum_{i=t_{p-1}}^{t_{p+1}} h_s^2}\Big)+C_4\Big(\frac{t_p^{-\alpha_{t_p}}}{\epsilon}\Big)^\nu.
\end{split}
\end{equation}
Together with \eqref{eq:var_limit},  \eqref{eq:Doob}  implies that, $\P$-a.s., for all $\epsilon\in(0,\infty)$, all $p\geq p'$ and all integers $a$ and $b$ such that  $t_{p-1}<a<b<t_{p+1}$, we have
\begin{equation}\label{eq:fp_pp2}
\begin{split}
\frac{\log \P\Big(\exists s\in\{a+1,\dots,b\} :\, \sum_{i=a+1}^s  U_i\not\in B_{\epsilon}(0)\big| \F_{b}\Big)}{b-a}\leq g_{t_p}^{(2)}(b-a,\epsilon),\,\,\forall p\geq 1,\,\,\forall\epsilon>0
\end{split}
\end{equation}
where, for some constant $C_6\in(0,\infty)$,
\begin{align}\label{eq:g_2}
g_{t_p}^{(2)}(\Delta,\epsilon)=\min\bigg(0, \log C_6 - \min\bigg(\frac{\epsilon^2}{C_6\delta_{t_{p-1}}^2},\frac{\alpha_{t_p}\log t_p+\log\epsilon}{C_6\Delta}\bigg)\bigg),\quad\forall p\geq 1,\quad\forall(\Delta,\epsilon)\in(0,\infty)^2.
\end{align}

We are now in position to establish that Conditions \ref{condition:mu_seq}-\ref{condition:mu_extra} hold.

First, Condition \ref{condition:mu_extra} directly follows from \eqref{eq:fp_pp2}-\eqref{eq:g_2}.

To show that \ref{condition:mu_seq} holds   remark first that, as required by \ref{condition:mu_seq},  $(t_p)_{p\geq 1}$ is such that $\lim_{p\rightarrow\infty}(t_{p+1}-t_p)=\infty$ while $t_{p+1}> t_p$ for all $p\geq 1$. Next, let $(v_p)_{p\geq 1}$ be a sequence in $\mathbb{N}$ such that $v_p\rightarrow\infty$ sufficiently slowly so that
\begin{align*}
\limsup_{p\rightarrow\infty}\frac{v_{p+1}}{v_p}<\frac{3}{2},\quad \lim_{p\rightarrow\infty}\frac{\alpha_{t_p}\log t_p}{v_p}=\infty,\quad=\lim_{p\rightarrow\infty}\frac{v_p}{t_{p}-t_{p-1}}=0.
\end{align*}
Remark that such a sequence $(v_p)_{p\geq 1}$ exists under the assumptions of the proposition and is, as requested by \ref{condition:mu_seq}, such that $\lim_{p\rightarrow\infty} v_p/(t_{p+1}-t_p)=0$.

Next, let $p''\in\mathbb{N}$ be such that $t_{p-1}\leq t_p-3v_p<t_p+v_p<t_{p+1}$ for all $p\geq p''$. Then, for any sequences $(l_p)_{p\geq 1}$ and $(u_p)_{p\geq 1}$   in $\mathbb{N}$ such that $t_p-3v_p\leq l_p<u_p\leq t_{p}+v_p$ for all $p\geq p'''$, it follows from  \eqref{eq:fp_pp2}-\eqref{eq:g_2} that we $\P$-a.s.~have, for all $p\geq p''$ and all $\epsilon\in(0,\infty)$,
\begin{equation}\label{eq:fp_pp22}
\begin{split}
\frac{\log \P\Big(\exists s\in\{l_p+1,\dots,u_p\} :\, \sum_{i=l_p+1}^s  U_i\not\in B_{\epsilon}(0)\big| \F_{u_p}\Big)}{u_p-l_p}\leq g_p^{(2)}(u_p-l_p,\epsilon)\leq g_p^{(2)}(4v_p+1,\epsilon).
\end{split}
\end{equation}

Next, for all $t\geq 1$ we let $p_{t}(\cdot)$ denote  the density of the $t_{d,\nu}(0, t^{-2\alpha_t}\Sigma_t)$ distribution w.r.t.~$\dd\theta$, the Lebesgue measure on $\R^d$, and note that, for all $\theta'\in\Theta$, $A\in\mathcal{T}$ and $\epsilon\in (0,\infty)$,
\begin{align*}
K_{\mu_{t_{p}}}\big(\theta',\mathcal{N}_\epsilon(A)\big)\geq (1-\gamma_{t_p})\int_{\mathcal{N}_\epsilon(A)} p_{t_p}(\theta'-\theta)\dd \theta.
\end{align*}
Then, using the definition of $p_{t_p}(\cdot)$, the assumptions on $(\Sigma_t)_{t\geq 1}$ and the fact that $\Theta$ is assumed to be a  regular compact set, it is easily verified that there exist a constant  $c_1 \in(0,1)$ such that we $\P$-a.s.~have, for all $p\geq 1$, $A\in\mathcal{T}$ and $\epsilon\in(0,\infty)$, 
\begin{align*}
\inf_{\theta'\in\Theta} K_{\mu_{t_{p}}}(\theta',\mathcal{N}_\epsilon(A))    \geq (1-\gamma_{t_p})c_1   f(\epsilon) t_p^{-\nu\alpha_{t_p}}.
\end{align*}
for some  continuous function $f:[0,\infty)\rightarrow[0,\infty)$ such that $\lim_{x\downarrow 0}f(x)=0$.

Therefore, recalling that by assumption we have   $\P(1-\gamma_{t_p}\geq c)=1$ for all $p\geq 1$ and   some constant $c\in(0,1)$, it follows that there exist a constant $c_2\in (-\infty,0)$ such that we $\P$-a.s.~have,  for all $p\geq 1$ and $\epsilon\in(0,\infty)$,
\begin{align}\label{eq:fp_pp3}
\frac{1}{t_{p+1}-t_{p}}\,\inf_{(\theta,A)\in\Theta\times\mathcal{T}}\log   K_{\mu_{t_{p}}}(\theta,\mathcal{N}_\epsilon(A))&\geq \frac{c_2+ \log f(\epsilon)-\nu\,\alpha_{t_p} \log t_p}{t_{p+1}-t_{p}}=:\psi_p(\epsilon).
\end{align}

To conclude the proof note that 
\begin{align*}
\lim_{p\rightarrow\infty}g^{(1)}_p(\epsilon)=\lim_{p\rightarrow\infty}g_p^{(2)}(4v_p+1,\epsilon)=-\infty,\quad \lim_{p\rightarrow\infty}\psi_p(\epsilon)=0,\quad\forall \epsilon\in(0,\infty)
\end{align*}
and therefore, it follows from \eqref{eq:fp_pp1}, \eqref{eq:fp_pp22} and  \eqref{eq:fp_pp3} that conditions \ref{C41}-\ref{C43} hold, for instance,  for the sequence of functions $(f_p)_{p\geq 1}$ such that 
\begin{align*}
f_p(\epsilon)=\max\bigg(-\frac{1}{g_p^{(1)}(\epsilon)}, -\frac{1}{g_p^{(2)}(4v_p+1,\epsilon)}, -\psi_p(\epsilon)\bigg),\quad\forall\epsilon\in(0,\infty),\quad\forall p> \max\{p',p'',p'''\}
\end{align*}
and such that $f_p(\epsilon)=\infty$ for  all $\epsilon\in(0,\infty)$ and all $p\in\{1,\dots,\max\{p',p'',p'''\}\}$.
The proof of the lemma is complete.

\end{proof}

\subsubsection{Proof of Proposition \ref{prop:Gauss2}\label{p-prop:Gauss2}}

In what follows  we will need the following result, due to \citetsup[][Lemma 6.2]{smallball1994}:  
 \begin{lemma}\label{lemmaB:smallball}  
Let $(W_s)_{s\in[0,\infty)}$ be a centred Gaussian process. Assume that there exists a non-negative and non-decreasing function $\sigma:(0,\infty)\rightarrow\R$  such that
\begin{align*}
\E[W_s-W_u]^2 \leq \sigma(|s-u|)^2,\quad \forall s,u\in [0,\infty)
\end{align*}
and such that, for some constant $\alpha\in(0,\infty)$ and $\gamma\in(1,\infty)$, we have
\begin{align*}
\sigma(as)&\leq \gamma a^\alpha \sigma(s),\quad \forall (a,s)\in(0,1)\times (0,\infty)  \\
\sigma(ks)&\leq k^2 \sigma(s),\quad \hspace{0.23cm} \forall (k,s)\in\mathbb{N}\times (0,\infty).
\end{align*}
Then, there exists a constant $d_\alpha\in(0,\infty)$, depending only on $\alpha$,   such that
\begin{align*}
\P\Big(\sup_{s\in[0,t]} |W_s - W_0| \leq \theta \sigma(\delta) \Big) \geq \exp\Big(-\frac{d_\alpha t}{\delta}\Big),\quad \forall \delta\in (0,t),\quad\forall t\in (0,\infty).
\end{align*}
\end{lemma}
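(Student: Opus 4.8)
The final statement to prove is Lemma \ref{lemmaB:smallball}, a small-ball probability estimate for centred Gaussian processes. However, since the lemma is explicitly attributed to \citetsup[][Lemma 6.2]{smallball1994}, the honest framing is that this is a citation rather than a result to be derived from scratch; nevertheless, I sketch how one would prove such a small-ball bound, since that is the mathematically substantive task. The plan is to reduce the supremum over the continuous interval $[0,t]$ to a supremum over a finite net, control the increments using the modulus bound $\sigma$, and then apply a chaining/entropy argument together with Gaussian correlation or Slepian-type inequalities to lower bound the probability that the whole path stays in a tube of width $\theta\sigma(\delta)$.

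\textbf{Approach.} First I would dyadically partition $[0,t]$ into subintervals of length comparable to $\delta$, giving roughly $t/\delta$ blocks. The scaling hypotheses on $\sigma$ are the crucial structural input: the condition $\sigma(as)\leq \gamma a^\alpha\sigma(s)$ for $a\in(0,1)$ gives H\"older-type control that makes the metric entropy of $[0,t]$ under the canonical (intrinsic) pseudometric $d(s,u)=\big(\E[W_s-W_u]^2\big)^{1/2}\leq \sigma(|s-u|)$ summable in the chaining sum, while the condition $\sigma(ks)\leq k^2\sigma(s)$ for $k\in\mathbb{N}$ controls how the modulus grows over long ranges, which is what produces the linear-in-$t/\delta$ exponent. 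The key step is then to write the event $\{\sup_{s\in[0,t]}|W_s-W_0|\leq\theta\sigma(\delta)\}$ as an intersection of block events and to lower bound the probability of staying small on each block.

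\textbf{Key steps in order.} (i) Fix the net: choose grid points $s_j=j\delta$ for $j=0,\dots,\lceil t/\delta\rceil$ and bound the continuous supremum by the supremum over the net plus an oscillation term, using the modulus $\sigma$ and a standard chaining estimate to show the oscillation within each block of size $\delta$ is small with high probability. (ii) Apply a Gaussian small-ball lower bound blockwise: conditionally on staying small, each increment $W_{s_{j+1}}-W_{s_j}$ has variance at most $\sigma(\delta)^2$, so the probability of confinement per block is bounded below by a fixed positive constant depending on $\theta$ and $\alpha$. (iii) Combine the $\approx t/\delta$ blocks. The naive product of per-block probabilities would give $\exp(-c\,t/\delta)$ for some constant $c$, matching the claimed form $\exp(-d_\alpha t/\delta)$; to make this rigorous one invokes a correlation inequality (Gaussian correlation inequality, or a Markov/one-step-at-a-time conditioning argument exploiting that Gaussian conditioning only shrinks conditional variances) to justify multiplying, or near-multiplying, the block events. (iv) Track the dependence of the constant on $\alpha$ through the chaining sum, verifying that $\gamma$ and the $k^2$-growth enter only into an absolute multiplicative constant, so that the final $d_\alpha$ depends on $\alpha$ alone.

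\textbf{Main obstacle.} The delicate point is step (iii): multiplying the per-block confinement probabilities is not immediate because the blocks are correlated. The cleanest route is to use the Gaussian correlation inequality to lower bound $\P\big(\bigcap_j E_j\big)\geq \prod_j \P(E_j)$ where each $E_j$ is a symmetric convex confinement event, which is exactly the kind of inequality the result of \citetsup{smallball1994} is built to exploit. The other subtlety is calibrating $\theta$ and the net spacing so that the oscillation term in step (i) is genuinely negligible relative to $\theta\sigma(\delta)$ uniformly over blocks; here the H\"older scaling $\sigma(as)\leq\gamma a^\alpha\sigma(s)$ is what guarantees the entropy integral converges and the oscillation is controlled with a constant depending only on $\alpha$ and $\gamma$. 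Since this is a known result quoted verbatim from the literature, in the paper the appropriate thing is simply to cite it; the sketch above indicates how one would reconstruct the proof were it needed in full.
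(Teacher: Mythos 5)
The paper offers no proof of this lemma: it is stated as a quoted result from the literature (Lemma 6.2 of the cited small-ball reference), which is precisely the conclusion you reach, so your treatment matches the paper's. Your supplementary sketch --- blocking $[0,t]$ at scale $\delta$, per-block Gaussian small-ball estimates, and a Khatri--\v{S}id\'ak-type correlation inequality to combine the roughly $t/\delta$ block events --- is a sensible outline of how such bounds are proved, but no such argument appears (or is needed) in the paper.
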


From Lemma \ref{lemmaB:smallball} we obtain the following corollary that will be used to prove Proposition \ref{prop:Gauss2}:

\begin{corollary}\label{corB:small_balls}
Let $t\in\mathbb{N}$, $\{Z_s\}_{s=1}^t$ be a collection of i.i.d.~$\mathcal{N}_1(0,1)$ random variables  and $\{h_s\}_{s=1}^t\subset(0,\bar{h})$ for some constant $\bar{h}\in (0,\infty)$. Then, for all $a\in\{1,\dots,t\}$,
\begin{align*}
\P\Big(\max_{s\in\{a,\dots,t\}} \big|\sum_{i=a}^s h_i Z_i\big|\leq \epsilon\Big)\geq \exp\Big(-\frac{d_{1/2} \bar{h}^2\,t}{\epsilon^2}\Big),\quad\forall\epsilon\in \big(0,\bar{h} t^{1/2}\big)
\end{align*}
with $d_{1/2}\in(0,\infty)$ as in Lemma \ref{lemmaB:smallball}.
\end{corollary}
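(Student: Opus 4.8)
\textbf{Plan for proving Corollary \ref{corB:small_balls}.}
The strategy is to construct an auxiliary continuous-time centred Gaussian process whose increments dominate the partial sums $\sum_{i=a}^{s}h_iZ_i$ appearing in the statement, and then to apply Lemma \ref{lemmaB:smallball} to that process. The discrete small-ball probability we want is a lower bound on the probability that all partial sums stay within $\epsilon$, and Lemma \ref{lemmaB:smallball} gives exactly a lower bound of the form $\exp(-d_\alpha t/\delta)$ for the supremum of a Gaussian process remaining below $\theta\sigma(\delta)$; so the whole task is to package the discrete object as (a functional of) such a process and read off the parameters $\alpha$, $\sigma$, $\delta$, $\theta$ that make the bookkeeping match $d_{1/2}\bar h^2 t/\epsilon^2$.

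First I would define, for $s\in[0,\infty)$, the process $W_s=\sum_{i=1}^{\lfloor s\rfloor} \bar h\,Z_i$ (a scaled random walk embedded in continuous time), or more cleanly its piecewise-linear/Brownian interpolation, so that $W$ is a centred Gaussian process and the discrete partial sums $\sum_{i=a}^{s}h_iZ_i$ are controlled by increments $W_s-W_{a-1}$ because $h_i<\bar h$. The key variance estimate to verify is $\E[(W_s-W_u)^2]\le \sigma(|s-u|)^2$ with $\sigma(x)=\bar h\, x^{1/2}$: for the random-walk embedding $\E[(W_s-W_u)^2]=\bar h^2\lfloor|s-u|\rfloor\le \bar h^2|s-u|=\sigma(|s-u|)^2$, which identifies the H\"older exponent as $\alpha=1/2$ (hence the constant $d_{1/2}$ in the conclusion). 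I would then check the two scaling hypotheses of Lemma \ref{lemmaB:smallball} for this $\sigma$: the bound $\sigma(ax)\le \gamma a^{1/2}\sigma(x)$ for $a\in(0,1)$ holds with $\gamma$ any constant exceeding $1$ since $\sigma(ax)=a^{1/2}\sigma(x)$, and $\sigma(kx)=k^{1/2}\sigma(x)\le k^2\sigma(x)$ for $k\in\mathbb{N}$ is immediate.

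With these verified, applying Lemma \ref{lemmaB:smallball} on the interval $[0,t]$ with $\theta=1$ and $\delta$ chosen so that $\sigma(\delta)=\bar h\,\delta^{1/2}$ equals (a fixed multiple of) $\epsilon$ — i.e.\ $\delta=\epsilon^2/\bar h^2$ — yields $\P(\sup_{s\in[0,t]}|W_s-W_0|\le \epsilon)\ge \exp(-d_{1/2}t/\delta)=\exp(-d_{1/2}\bar h^2 t/\epsilon^2)$, which is exactly the claimed bound; the constraint $\delta\in(0,t)$ translates precisely into the stated range $\epsilon\in(0,\bar h t^{1/2})$. The remaining step is to argue that the event $\{\max_{s\in\{a,\dots,t\}}|\sum_{i=a}^s h_iZ_i|\le\epsilon\}$ contains (or has probability at least that of) the supremum event for $W$; since $|h_i|<\bar h$, the partial sums of $h_iZ_i$ are, by a comparison/Anderson-type or Slepian-type argument for the centred Gaussian increments, at least as concentrated as those of $\bar h\,Z_i$. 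I expect the main obstacle to be this last comparison: justifying rigorously that shrinking the coefficients from $\bar h$ to $h_i<\bar h$ only increases the small-ball probability (so that the $\bar h$-bound is a valid lower bound), and handling the shift of the starting index from $s=0$ to $s=a$ cleanly, possibly by restarting the process at time $a-1$ and using stationarity of the increments. Anderson's inequality for symmetric convex sets applied to the Gaussian vector $(\sum_{i=a}^s h_iZ_i)_{s}$ versus $(\sum_{i=a}^s \bar h Z_i)_s$ is the natural tool to close this gap.
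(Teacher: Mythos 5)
Your proposal reaches the right bound but by a genuinely different route than the paper. The paper applies Lemma \ref{lemmaB:smallball} \emph{directly} to the inhomogeneous partial-sum process $\tilde W_s=\sum_{i=a}^s h_iZ_i$: since the lemma only requires the \emph{upper bound} $\E[(W_s-W_u)^2]\le\sigma(|s-u|)^2$, the computation $\E[(\tilde W_u-\tilde W_s)^2]=\sum_{i=s+1}^{u}h_i^2\le\bar h^2(u-s)$ shows that $\sigma(x)=\bar h\,x^{1/2}$ works for the actual process, so no comparison between the $h_i$-weighted and $\bar h$-weighted walks is ever needed; one then extends $(\tilde W_a,\dots,\tilde W_t)$ to a continuous-time centred Gaussian process on $[0,t-a]$ satisfying the same variance bound, uses $\P(\max_{s\in\{0,\dots,t-a\}}|W_s|\le\epsilon)\ge\P(\sup_{s\in[0,t-a]}|W_s|\le\epsilon)$, and applies the lemma with $\delta=\epsilon^2/\bar h^2$. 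You instead apply the lemma to the constant-coefficient walk $\sum_i\bar h Z_i$ and must then transfer the bound to the $h_i$-walk, which is exactly where your Anderson step enters. That step does close the gap: writing $V_s=\sum_{i=a}^s h_iZ_i$ and $\bar V_s=\sum_{i=a}^s\bar h Z_i$, the difference of the two covariance matrices is itself the covariance of the independent Gaussian walk $U_s=\sum_{i=a}^s(\bar h^2-h_i^2)^{1/2}Z_i'$, so $\bar V\dist V+U$ with $U$ independent of $V$, and Anderson's inequality for the symmetric convex set $K=\{v:\max_s|v_s|\le\epsilon\}$ gives $\P(\bar V\in K)\le\P(V\in K)$ as you need (Slepian, which you also mention, is not the appropriate tool for such two-sided events). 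So your route is valid, but it costs an extra inequality and an extra covariance-splitting argument that the paper's choice of process makes unnecessary. One concrete correction: for the step-function embedding $W_s=\sum_{i\le\lfloor s\rfloor}\bar h Z_i$, your claimed estimate $\E[(W_s-W_u)^2]\le\bar h^2|s-u|$ is false (take $s=0.9$, $u=1.1$: the increment variance is $\bar h^2$ while $|s-u|=0.2$), so you must indeed use the Brownian interpolation $W_s=\bar h B_s$ that you offer as the alternative, for which the bound holds with equality.
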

\begin{proof}
Let $\tilde{W}_s=\sum_{i=a}^s h_i Z_i$ for all $s\in\{a,\dots,t\}$  and note that, for all $a\leq s<u\leq t$ (and using the convention that empty sums equal zero)
\begin{align}\label{eqB:sigmaW}
\E\big[(\tilde{W}_u-\tilde{W}_s)^2\big]=\sum_{i=s+1}^{u}h_s^2\leq \bar{h}^2(u-s).
\end{align}
Therefore, letting $\tilde{\sigma}:(0,\infty)\rightarrow\R$ be defined by $\tilde{\sigma}(s)=\bar{h} s^{1/2}$, $s\geq 0$, it follows from \eqref{eqB:sigmaW} that
\begin{align*}
\E\big[(\tilde{W}_u-\tilde{W}_s)^2\big]\leq \tilde{\sigma}(|u-s|)^2,\quad\forall s,u\in\{a,\dots,t\}.
\end{align*}
Next, let $(W_\tau)_{\tau\in[0,\infty)}$ be a centred Gaussian process such that 
\begin{align*}
 (W_0,\dots, W_{t-a})\dist (\tilde{W}_{a},\dots,\tilde{W}_t) 
\end{align*}
and such that 
\begin{align*}
\E\big[(W_s-W_u)^2\big]\leq \tilde{\sigma}(|u-s|)^2,\quad\forall (s,u)\in (0,\infty).
\end{align*}
In addition, for all  $(k,a,s)\in\mathbb{N}\times(0,1)\times (0,\infty)$ we have
\begin{align*}
\tilde{\sigma}(a s)=a^{1/2} \bar{h} s^{1/2}=a^{1/2}\tilde{\sigma}(s),\quad \tilde{\sigma}(ks)=k^{1/2} \bar{h} s^{1/2}=k^{1/2}\tilde{\sigma}(s)\leq k^2\sigma(s).
\end{align*}
Therefore, for all $\epsilon\in \big(0,\bar{h} t^{1/2}\big)$, by applying Lemma \ref{lemmaB:smallball} with $\gamma=1$, $\alpha=1/2$, $\sigma=\tilde{\sigma}$ and $\delta=\epsilon^2/\bar{h}^2\in(0,t)$, we have
\begin{align*}
\P\Big(\max_{s\in\{a,\dots,t\}} \big|\sum_{i=a}^s h_i Z_i\big|\leq \epsilon\Big)&=\P\big(\max_{s\in\{0,\dots,t-a\}} \big|W_s\big|\leq \epsilon\big)\geq \P\big(\sup_{s\in[0,t-a]} \big|W_s\big|\leq \epsilon\big)\geq \exp\Big(-\frac{d_{1/2} \bar{h}^2 t}{\epsilon^2}\Big)
\end{align*}
and the proof of the lemma is complete.
\end{proof}

We are now in position to prove Proposition \ref{prop:Gauss2}:

\begin{proof}[Proof of Proposition \ref{prop:Gauss2}]

Since $\lim_{t\rightarrow\infty} h_t=0$ and $\Theta$ is assumed to be a regular compact set Conditions \ref{condition:Inf_mu} and \ref{condition:Inf_K} trivially  holds. In addition, using Doob's martingale inequality and the assumptions on $(h_t)_{t\geq 1}$ it is direct to see that \ref{condition:mu_extra} holds,  and thus below we only show that \ref{condition:mu_seq2} is satisfied. To do so we assume that $h_t>0$ for infinitely many  $t\in\mathbb{N}$, since  otherwise \ref{condition:mu_seq2} trivially holds.

For all $t\geq 1$ let $\delta_t=\max\{h_s,\,s\geq t \}$ and note  that the resulting sequence $(\delta_t)_{t\geq 1}$ is strictly positive, non-increasing, and such that we have both $\lim_{t\rightarrow\infty} t\delta_t=0$ and   $h_t\leq \delta_t$ for all $t\geq 1$. Next, let  $(k_t)_{t\geq 1}$ be a sequence in $\mathbb{N}$ such that $\lim_{t\rightarrow\infty} k_t/t=\lim_{t\rightarrow\infty}(1/k_t)=0$ and such that    $\lim_{t\rightarrow\infty} t \delta_{k_t}=0$.  Remark that such a sequence $(k_t)_{t\geq 1}$ exists since for all $t\geq 1$ we have $t\delta_{k_t}= (t/k_t)(k_t\delta_{k_t})$ where $\lim_{t\rightarrow\infty}k_t\delta_{k_t}=0$. In what follows we let $t'\in\mathbb{N}$ be such that $t\geq 2k_t+1$ for all $t\geq t'$.

Then, using the definition of $(\mu_t)_{t\geq 1}$, the assumptions on $(\Sigma_t)_{t\geq 1}$ and  Doob's Martingale inequality, it is easily checked that there exists a constant $C_1\in(0,\infty)$ such that we $\P$-a.s.~have, for all $t\geq t'$, $\epsilon\in(0,\infty)$ and integer $l_t\in\{k_t,\dots,2k_t\}$,
\begin{equation}\label{eq:prop2_p1}
\begin{split}
\frac{1}{t-l_t}\log\P\Big(\exists s\in\{l_t+1,\dots,t\}:\, &\sum_{i=l_t+1}^s  U_i\not\in B_{\epsilon}(0)\big|\,\F_{t}\Big)\\
&\leq \frac{t}{t-l_t}\min\bigg(0, \frac{\log C_1}{t}-\frac{\epsilon^2}{C_1\,t\sum_{s=k_t+1}^t h_s^2}\bigg)\\
&\leq g^{(1)}_t(\epsilon).
\end{split}
\end{equation}
where
\begin{align*}
g^{(1)}_t(\epsilon)= \frac{t}{t-k_t}\min\bigg(0,\frac{\log C_1}{t}-\frac{\epsilon^2}{C_1\,t^2 \delta_{k_t}^2}\bigg),\quad\forall t\geq 1,\quad\forall \epsilon>0.
\end{align*}

To proceed further we assume without loss of generality that $\|\cdot\|$ is the supremum norm, and we let $(\tilde{Z}_t)_{t\geq 1}$ be a sequence of independent $\mathcal{N}_d(0, I_d)$ random variables and  $(Z_t)_{t\geq 1}$ be a sequence of independent $\mathcal{N}_1(0, 1)$ random variables.

Next, let $a\in\mathbb{N}$ and note that, using the definition of $(\mu_t)_{t\geq 1}$ and the assumptions on $(\Sigma_t)_{t\geq 1}$, there exists a   constant $c\in(0,1]$ such that we $\P$-a.s.~have,  for all $t\geq a$ and $\epsilon\in(0,\infty)$,
\begin{equation}\label{eq:lower_p00}
\begin{split}
\P\Big( \sum_{i=a}^s U_i\in B_\epsilon(0),\,\forall s\in\{a,\dots,t\}\big|\F_t\Big)&=\P\Big(\max_{s\in\{a,\dots,t\}}\big\|\sum_{i=a}^s U_i\big\|_\infty< \epsilon\big|\F_t\Big)\\
&\geq \P\Big(\max_{s\in\{a,\dots,t\}}\big\|\Sigma_t^{1/2}\sum_{i=a}^s h_i Z_i\big\|_\infty< \epsilon\big|\F_t\Big)\\
&\geq \P\Big(\max_{s\in\{a,\dots,t\}}\big\|\sum_{i=a}^s h_i Z_i\big\|_\infty< c\,\epsilon\Big)\\
&=\P\Big(\max_{s\in\{a,\dots,t\}}\big|\sum_{i=a}^s h_i Z_i\big|< c\,\epsilon\Big)^d.
\end{split}
\end{equation}

We now let $\bar{h}=\max\{h_t,\,t\geq 1\}$, $\epsilon\in(0,\infty)$ and  $k_{a,\epsilon}\in\mathbb{N}$ be such that $k_{a,\epsilon}\geq a \vee (\epsilon/\bar{h})^2$. Then, for all $t>k_{a,\epsilon}$ we have
\begin{equation}\label{eq:lower_p0}
\begin{split}
&\P\Big(\max_{s\in\{a,\dots,t\}}|\sum_{i=a}^s h_i Z_i|\leq c\, \epsilon\Big)\\
&\geq \P\Big(\max_{s\in\{a,\dots,t\}}|\sum_{i=a}^s h_i Z_i|\leq c\, \epsilon\,\big| \max_{s\in\{a,\dots,k_{a,\epsilon}\}}|\sum_{i=a}^s h_i Z_i|\leq c\,\frac{\epsilon}{2}\Big)\P\Big(\max_{s\in\{a,\dots,k_{a,\epsilon}\}}|\sum_{i=a}^s h_i Z_i|\leq c\,\frac{\epsilon}{2}\Big)
\end{split}
\end{equation}
where, by Corollary \ref{corB:small_balls},
\begin{align}\label{eq:lower_p1}
\P\Big(\max_{s\in\{a,\dots,k_{a,\epsilon}\}}|\sum_{i=a}^s h_i Z_i|\leq c\,\frac{\epsilon}{2}\Big)\geq\exp\Big(-\frac{4 d_{1/2} \bar{h}^2 k_{a,\epsilon}}{(c\,\epsilon)^2}\Big)
\end{align}
with the constant $d_{1/2}\in(0,\infty)$ as in the statement of Corollary \ref{corB:small_balls}.

On the other hand, for all $t>k_{a,\epsilon}$ we have
\begin{equation}\label{eq:lower_p2}
\begin{split}
\P\Big(\max_{s\in\{a,\dots,t\}}|\sum_{i=a}^s h_i Z_i|\leq  & c\, \epsilon\Big| \max_{s\in\{a,\dots,k_{a,\epsilon}\}}|\sum_{i=a}^s h_i Z_i|\leq c\,\frac{\epsilon}{2}\Big)\\
&=\P\Big(\max_{s\in\{k_{a,\epsilon}+1,\dots,t\}}|\sum_{i=a}^s h_i Z_i|\leq  c\, \epsilon\Big| \max_{s\in\{a,\dots,k_{a,\epsilon}\}}|\sum_{i=a}^s h_i Z_i|\leq c\,\frac{\epsilon}{2}\Big)\\
&\geq \P\Big(\max_{s\in\{k_{a,\epsilon}+1,\dots,t\}} |\, \sum_{i=k_{a,\epsilon}+1}^s h_i Z_i|\leq c\,\frac{\epsilon}{2}\Big)\\
&\geq 0\vee\bigg(1-C\exp\Big(-\frac{\epsilon^2}{C\sum_{s=k_{a,\epsilon}+1}^t h_s^2}\Big)\bigg)
\end{split}
\end{equation}
where the last inequality holds by Doob's martingale  inequality and for some  constant $C\in(0,\infty)$ independent of $\epsilon$.

Assuming that $k_{a,\epsilon}$ is sufficiently large, for all $t>k_{a,\epsilon}$ we have
\begin{align*}
C\exp\Big(-\frac{\epsilon^2}{C\sum_{s=k_{a,\epsilon}+1}^t h_s^2}\Big)\leq \frac{1}{2}
\end{align*}
which, together with   \eqref{eq:lower_p00}-\eqref{eq:lower_p2}, shows that, $\P$-a.s.,
\begin{equation}\label{eq:prop2_p2}
\frac{\log \P\Big( \sum_{i=a}^s U_i\in B_\epsilon(0),\,\forall s\in\{a,\dots,t\}\big|\F_t\Big)}{t-a+1}\geq g_{a,t}^{(2)}(\epsilon),\quad\forall t>k_{a,\epsilon},\quad\forall\epsilon\in(0,\infty)
\end{equation}
where 
\begin{align*}
 g_{a,t}^{(2)}(\epsilon)=\frac{d}{t-a+1}\bigg(-\frac{4 d_{1/2} \bar{h}^2 k_{a, \epsilon}}{(c\,\epsilon)^2}-\log 2\bigg), \quad\forall \epsilon\in(0,\infty). 
\end{align*}

To conclude the proof note that for all $m\in\mathbb{N}_0$ we have
\begin{align*}
\lim_{t\rightarrow\infty}g^{(1)}_t(\epsilon)=-\infty,\quad \lim_{t\rightarrow\infty}g^{(2)}_{m+1,t}(\epsilon)=0,\quad\forall \epsilon\in(0,\infty)
\end{align*}
and therefore, using \eqref{eq:prop2_p1} and  \eqref{eq:prop2_p2} with $a=m+1$, it follows that for all $m\in\mathbb{N}_0$ condition \ref{condition:mu_seq2}  holds  for the sequence of functions $(f_{t,m})_{t\geq 1}$ such that, for all $\epsilon\in(0,\infty)$,
\begin{align*}
f_{t,m}(\epsilon)=\max\bigg(-\frac{1}{g_t^{(1)}(\epsilon)},-g^{(2)}_{m+1,t}(\epsilon)\bigg),\quad\forall t> \max(t',k_{m+1,\epsilon})
\end{align*}
and such that $f_{t,m}(\epsilon)=\infty$ for  all $t\in\{1,\dots,\max\{t', k_{m+1,\epsilon}\}\}$.
The proof of the proposition is complete.
\end{proof}

\subsubsection{Proof of Proposition \ref{prop:discrete}\label{p-prop:discrete}}

\begin{proof}

Since $\lim_{t\rightarrow\infty}p_t(\{0\})=1$ Conditions \ref{condition:Inf_mu} and \ref{condition:Inf_K} trivially  hold, and thus below we only show that \ref{condition:mu_seq} holds with $(t_p)_{p\geq 1}=(\tilde{t}_p)_{p\geq 1}$ and that \ref{condition:mu_extra} holds. It is also trivial to see that $(\mu_t)_{t\geq 1}$ is such that to prove the result of the proposition it is enough to consider the case where $d=1$, and thus below we assume that $d=1$.

To prove the proposition for $d=1$ we let
\begin{align*}
h_t=
\begin{cases}
c t^{-\alpha_1}, &t\not\in(t_p)_{p\geq 1}\\
c t^{-\alpha_2\delta_t}, &t\in(t_p)_{p\geq 1}
\end{cases},\quad p_t=\mu_t(\{0\}),\quad \Delta_t=\delta^{1/4}_t \log (1/h_t),\quad \kappa_t=(h_t\Delta_t)^{1/\Delta_t},\quad\forall t\geq 1 
\end{align*}
and start with some preliminary computations.

First, we remark that $\lim_{t\rightarrow\infty}\kappa_t=\lim_{t\rightarrow\infty}(1/\Delta_t)=\lim_{t\rightarrow\infty}h_t \Delta_t=0$ and we let  $t'\in\mathbb{N}$ be such that $h_t\Delta_t<1$, $\alpha_2\delta_t\leq \alpha_1$, and such that $\Delta_t\geq 1$ for all $t\geq t'$. Then, using Bernoulli's inequality, we have
\begin{align*}
(1-h_t)^{\Delta_t}\geq 1-h_t\Delta_t=1-\kappa_t^{\Delta_t},\quad\forall t\geq t'
\end{align*} 
and thus
\begin{align}\label{eq:use_htbound2}
\Big(1-(1-h_t)^{\Delta_t}\Big)^{\frac{1}{\Delta_t}}\leq \kappa_t,\quad\forall t\geq t'.
\end{align}

Next, remark that that under the assumptions of the proposition there exists a $p'\in\mathbb{N}$   such that $(t_{p+1}-t_p)\leq \Delta_{t_p+1}$ for all $p\geq p'$. We assume henceforth that $p'$ is sufficiently large so that  $t\geq t'$ for all $t\geq t_{p'}$.

Then, $\P$-a.s., for all $p>p'$, integers $a$ and $b$ such that $t_p\leq a<b<t_{p+1}$ and $\epsilon\in(0,1)$, we have
\begin{equation}\label{eq:discrete}
\begin{split}
\frac{1}{b-a}\log \P\Big(\exists s\in\{a+1,\dots,b\} :\, \sum_{i=a+1}^s & U_i\not\in B_{\epsilon}(0)\,\big|\F_{b}\Big)\\
&=\frac{1}{b-a}\log\Big(1-\prod_{s=a+1}^{b} p_s\Big)\\
&\leq \log\Big( \big(1-(1-h_{a})^{b-a}\big)^{\frac{1}{b-a}}\Big)\\
&\leq \log\Big( \big(1-(1-h_{t_p+1})^{t_{p+1}-t_p}\big)^{\frac{1}{t_{p+1}-t_p}}\Big)\\
&\leq \log\Big( \big(1-(1-h_{t_p+1})^{\Delta_{t_p+1}}\big)^{\frac{1}{\Delta_{t_p+1}}}\Big)
\end{split}
\end{equation}
where the last  two inequalities use  the fact that the function $x\mapsto (1-a^x)^{1/x}$ is increasing on $(0,\infty)$ for all $a\in(0,1)$. Together with \eqref{eq:use_htbound2}, this shows that, $\P$-a.s., for all $p>p'$, integers $a$ and $b$ such that $t_p\leq a<b<t_{p+1}$ and $\epsilon\in(0,1)$, we have
\begin{align}\label{eq:prop3_p1}
\frac{\log \P\Big(\exists s\in\{a+1,\dots,b\}:\, \sum_{i=a+1}^s  U_i\not\in B_{\epsilon}(0)\big|\,\F_{b}\Big)}{b-a}\leq \log( \kappa_{t_p+1}).
\end{align}

In addition, using similar computations, it follows that we $\P$-a.s.~have, for all $p>p'$, integers $a$ and $b$ such that $t_{p-1}\leq a<b<t_{p+1}$ and $\epsilon\in(0,\infty)$, 
\begin{equation}\label{eq:discrete2}
\begin{split}
\frac{1}{b-a}\log\P\Big(\exists s\in\{a+1,\dots,b\}:\,  \sum_{i=a+1}^s  &U_i\not\in B_{\epsilon}(0)\,\big|\F_{b}\Big)\leq\log\Big( (1-(1-h_{t_{p-1}}^{b-a})^{\frac{1}{b-a}}\Big).
\end{split}
\end{equation}

We are now in position to prove the proposition.

First, using \eqref{eq:discrete2} and the fact that the function $x\mapsto (1-a^x)^{1/x}$ is increasing on $(0,\infty)$ for all $a\in(0,1)$, we readily obtain that \ref{condition:mu_extra} holds.

To show that \ref{condition:mu_seq} holds   remark first that, as required by \ref{condition:mu_seq},  $(t_p)_{p\geq 1}$ is such that $\lim_{p\rightarrow\infty}(t_{p+1}-t_p)=\infty$ while $t_{p+1}> t_p$ for all $p\geq 1$. Next, let $(v_p)_{p\geq 1}$ be a sequence in $\mathbb{N}$ such that $v_p\rightarrow\infty$ sufficiently slowly so that
\begin{align*}
\limsup_{p\rightarrow\infty}\frac{v_{p+1}}{v_p}<\frac{3}{2},\quad \lim_{p\rightarrow\infty}\frac{\Delta_{t_{p-1}}}{v_p}=\infty,\quad=\lim_{p\rightarrow\infty}\frac{v_p}{t_{p}-t_{p-1}}=0.
\end{align*}
Remark that such a sequence $(v_p)_{p\geq 1}$ exists under the assumptions of the proposition and is, as requested by \ref{condition:mu_seq}, such that $\lim_{p\rightarrow\infty} v_p/(t_{p+1}-t_p)=0$. 

Let $p''\in\mathbb{N}$ be such that
\begin{align*}
4 v_p\leq \Delta_{t_{p-1}},\quad t_p-3v_p\geq t_{p-1},\quad t_p+v_p<t_{p+1},\quad h_{t_{p-1}}\leq \min\{h_t\}_{t=t_p-3v_p}^{t_p+v_p},\quad\forall p\geq p''.
\end{align*}
 Then,  $\P$-a.s., for all $p\geq p''$, all integers $l_p$ and $u_p$ such that $t_p-3v_p\leq l_p<u_p\leq t_{p}+v_p$ and all $\epsilon\in(0,1)$, we have
\begin{equation}\label{eq:discrete22}
\begin{split}
\frac{1}{u_p-l_p}\log\P\Big(\exists s\in\{l_p+1,\dots,u_p\}:\,  \sum_{i=l_p+1}^s  &U_i\not\in B_{\epsilon}(0)\,\big|\F_{u_p}\Big)\\
&\leq\log\Big( (1-(1-h_{t_{p-1}}^{u_p-l_p})^{\frac{1}{u_p-l_p}}\Big)\\
&\leq\log\Big( \big(1-(1-h_{t_{p-1}}^{\Delta_{t_{p-1}}}\big)^{\frac{1}{\Delta_{t_{p-1}}}}\Big)\\
&\leq \log\big(\kappa_{t_{p-1}}\big)
\end{split}
\end{equation}
where the first two inequalities use the fact that the function $x\mapsto (1-a^x)^{1/x}$ is increasing on $(0,\infty)$ while the last inequality holds by \eqref{eq:use_htbound2}. 

Next, remark that under the assumptions of the proposition there exists a $p'''\in\mathbb{N}$  such that, with $c\in(0,1)$ and $m\in\mathbb{N}$ as in the statement of the proposition,
\begin{align}\label{eq:discrete3}
\frac{1}{t_{p+1}-t_p}\log  \min_{i\in\{a-b,\dots,b-a\}}\mu_{t_p}(\{i\})&\geq \frac{\log(1-c)-  (\alpha_2\delta_{t_p})^{3/4}  \log  t_{p}}{2m\delta^{1/2}_{t_p} \log  t_{p}+2m},\quad\forall p\geq p''',\quad\P-a.s.
\end{align}

Therefore, recalling that $\lim_{t\rightarrow\infty}\delta_t=\lim_{t\rightarrow\infty}\kappa_t=0$, it follows from \eqref{eq:prop3_p1}, \eqref{eq:discrete22} and \eqref{eq:discrete3} that  \ref{condition:mu_seq} holds for  the sequence of functions $(f_p)_{p\geq 1}$ such that, for all $\epsilon\in(0,\infty)$, 
\begin{align*}
f_p(\epsilon)=
\begin{cases}
\max\bigg(-\frac{1}{\min\big(0,\log (\gamma_{t_p+1})\big)}, -\frac{1}{\min\big(0,\log (\gamma_{t_{p-1}})\big)},\frac{ (\alpha_2\delta_{t_p})^{3/4} \log  t_{p}-\log(1-c)}{2m\delta^{1/2}_{t_p}\log t_p+2m}\bigg), &p> \max(p',p'', p''')\\
\infty, &p\leq \max(p',p'',p'').
\end{cases}
\end{align*}
The proof is complete.
\end{proof}

\section{Proofs of the results in Appendix \ref{sec:theory}\label{p-sec:theory}}

\subsection{Additional notation}

Let $t_1$ and $t_2$ be two strictly positive integers. Then, we let
\begin{align}\label{eq:Dst}
D_{t_1:t_2}=\prod_{s=t_1}^{t_2}\sup_{(\theta,x,x')\in\Theta\times\setX^2} G_{s,\theta}(x,x'). 
\end{align}
with the convention $D_{t_1:t_2}=1$  if $t_1>t_2$. For a    random probability measure  $\zeta$ on $(\Theta\times\setX, \mathcal{T}\otimes\mathcal{X}$), we denote by  $\E_{t_1:t_2}^{\zeta}[\cdot]$ the expectation operator under the random probability measure
\begin{align}\label{eq:eta_distribution}
\zeta_{t_1:t_2}:=\zeta\big(\dd (\theta_{t_1-1},x_{t_1-1})\big)\prod_{s=t_1}^{t_2}m_{s,\theta_{s}}(x_s|x_{s-1})\lambda(\dd x_s)K_{\mu_s|\Theta}(\theta_{s-1},\dd \theta_s)
\end{align}
using the convention that $\zeta_{t_1:t_2}=\zeta$ if $t_1>t_2$. Next, letting $\big( (\vartheta^{\eta}_{t_1-1}, X^{\eta}_{t_1-1}),\dots,(\vartheta^{\eta}_{t_2},   X^{\eta}_{t_2})\big)$ be an $(\setR\times\setX)^{t_2-t_1+2}$-valued random variable such that
\begin{align*}
\Big( (\vartheta^{\eta}_{t_1-1}, X^{\eta}_{t_1-1}),\dots,(\vartheta^{\eta}_{t_2},   X^{\eta}_{t_2})\Big)\big|\F_{t_2}\sim \zeta_{t_1:t_2}
\end{align*}
 we let
  \begin{align*}
\Lambda^{\zeta}_{t_1:t_2}(A\times B)=\E_{t_1:t_2}^{\zeta}\Big[\ind_A(\vartheta^\eta_{t_2})\ind_B(X^{\eta}_{t_2})\prod_{s=t_1}^{t_2} G_{s,\vartheta^\eta_s}(X_{s-1}^{\eta}, X^{\eta}_s)\Big],\quad\forall (A,B)\in\mathcal{T}\times\mathcal{X}
\end{align*}
and 
  \begin{align*}
P^{\zeta}_{t_1:t_2}(A)=\Lambda^{\zeta}_{t_1:t_2}(A\times \setX),\quad\forall A\in\mathcal{T}
\end{align*}
using the convention that $\Lambda^{\zeta}_{t_1:t_2}(A\times B)=\zeta(A\times B)$ and $P^{\zeta}_{t_1:t_2}(A)=\zeta(A\times\setX)$ for all $(A,B)\in\mathcal{T}\times\mathcal{X}$ if $t_1>t_2$. In addition, for $t_1\leq t_2$ and  all $\eta\in\mathcal{P}(\setX)$ we let
\begin{align}\label{eq:Pbamodelef}
\bar{P}^{\eta}_{t_1: t_2}(\theta) =\int_{\setX^{t_2-t_1+2}}\eta(\dd x_{t_1-1})  \prod_{s=t_1}^{t_2} Q_{s,\theta}(x_{s-1},\dd x_{s}),\quad\forall \theta\in\Theta, 
\end{align}
and, whenever  \ref{assume:Model} is assumed to hold, we let
\begin{align}\label{eq:W_def} 
W^{\eta}_{t_1:t_2}=\sup_{\theta\in\Theta}\int_{\setX^{t_2-t_1+2}} \exp\Big( \frac{\delta_\star}{2}\sum_{s=t_1}^{t_2}\varphi_s(x_{s-1},x_{s})\Big) \frac{\eta(\dd x_{t_1-1})  \prod_{s=t_1}^{t_2} Q_{s,\theta}(x_{s-1},\dd x_{s})}{\bar{P}^{\eta}_{t_1:t_2}(\theta)}
\end{align}
with $g(\cdot)$, $\delta_\star\in(0,1)$ and $(\varphi_t)_{t\geq 1}$  as in \ref{assume:Model}.

For all integer $t\geq 1$ and $\epsilon\in(0,\infty)$ we let $f_{t,\epsilon}:\setR^{t+1}\rightarrow\{0,1\}$ be  the function defined by
\begin{align}\label{eq:ft_def}
f_{t,\epsilon}(u_1,\dots,u_{t+1})=
\begin{cases}
1, &\text{if }\max_{1\leq i\leq t+1}\|u_i-u_1\|< \epsilon\\
0, &\text{otherwise}
\end{cases},\quad  (u_1,\dots,u_{t+1})\in\setR^{t+1}
\end{align}
and $\bar{\pi}_t$ be the random probability measure on $(\Theta\times\mathcal{T})$ such that
\begin{align}\label{eq:pi_bar}
\bar{\pi}_t(A)=\int_{\Theta}  K_{\mu_{t}|\Theta}(\theta_{t-1},A)\pi_{t-1,\Theta}(\dd\theta_{t-1}),\quad\forall A\in\mathcal{T}.
\end{align}

\subsection{Preliminary results}

We start with a useful technical lemma:
\begin{lemma}\label{lemma:tech}
Let $1\leq t_1\leq t_2<t_3$ be three integers and $\zeta$ be random probability measure on $(\Theta\times\setX, \mathcal{T}\otimes\mathcal{X}$). Then,
\begin{align*}
P_{t_1: t_2}^\zeta(\Theta)\geq D_{(t_2+1):t_3}^{-1}  P_{t_1: t_3}^\zeta(\Theta).
\end{align*}
\end{lemma}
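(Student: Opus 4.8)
The plan is to recognize $P^\zeta_{t_1:t_2}(\Theta)$ as the total mass of an unnormalized Feynman--Kac flow, and to read off the claimed bound from the fact that both the mutation kernels $M_{s,\theta}$ and the parameter kernels $K_{\mu_s|\Theta}$ are probability kernels. Recalling that $q_{s,\theta}(x|x')=G_{s,\theta}(x',x)m_{s,\theta}(x|x')$, so that $G_{s,\theta_s}(x_{s-1},x_s)\,m_{s,\theta_s}(x_s|x_{s-1})\lambda(\dd x_s)=Q_{s,\theta_s}(x_{s-1},\dd x_s)$, I would first rewrite, for any integers $t_1\le T$,
\begin{align*}
P^\zeta_{t_1:T}(\Theta)=\int_{(\Theta\times\setX)^{T-t_1+2}}\zeta\big(\dd(\theta_{t_1-1},x_{t_1-1})\big)\prod_{s=t_1}^{T}Q_{s,\theta_s}(x_{s-1},\dd x_s)\,K_{\mu_s|\Theta}(\theta_{s-1},\dd\theta_s),
\end{align*}
which follows directly from the definitions of $\Lambda^\zeta_{t_1:T}$ and of $P^\zeta_{t_1:T}(A)=\Lambda^\zeta_{t_1:T}(A\times\setX)$ evaluated at $A=\Theta$. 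This rewriting holds for each fixed $\omega\in\Omega$, with all integrands nonnegative and measurable (the measurability of the suprema entering $D_{(t_2+1):t_3}$ being guaranteed by the Remark in the main text).

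Applying this representation with $T=t_3$ and splitting $\prod_{s=t_1}^{t_3}=\prod_{s=t_1}^{t_2}\cdot\prod_{s=t_2+1}^{t_3}$, I would bound the tail potentials pointwise by their suprema, using $G_{s,\theta_s}(x_{s-1},x_s)\le\sup_{(\theta,x,x')\in\Theta\times\setX^2}G_{s,\theta}(x,x')$, so that $\prod_{s=t_2+1}^{t_3}G_{s,\theta_s}(x_{s-1},x_s)\le D_{(t_2+1):t_3}$. This yields
\begin{align*}
P^\zeta_{t_1:t_3}(\Theta)\le D_{(t_2+1):t_3}\int\zeta\big(\dd(\theta_{t_1-1},x_{t_1-1})\big)\prod_{s=t_1}^{t_2}Q_{s,\theta_s}(x_{s-1},\dd x_s)K_{\mu_s|\Theta}(\theta_{s-1},\dd\theta_s)\prod_{s=t_2+1}^{t_3}m_{s,\theta_s}(x_s|x_{s-1})\lambda(\dd x_s)K_{\mu_s|\Theta}(\theta_{s-1},\dd\theta_s).
\end{align*}

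Then I would integrate out the variables $(\theta_s,x_s)$ for $s=t_3,t_3-1,\dots,t_2+1$ successively by Tonelli's theorem, which is legitimate since the integrand is nonnegative. For each such $s$ the inner integral over $x_s$ gives $\int_\setX m_{s,\theta_s}(x_s|x_{s-1})\lambda(\dd x_s)=M^\omega_{s,\theta_s}(x_{s-1},\setX)=1$, because $M^\omega_{s,\theta}\in\mathcal{P}(\setX)$, and the integral over $\theta_s$ gives $\int_\Theta K_{\mu_s|\Theta}(\theta_{s-1},\dd\theta_s)=1$, since $K_{\mu_s|\Theta}$ is a Markov kernel on $\Theta$. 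After these $t_3-t_2$ integrations the remaining integral is exactly $P^\zeta_{t_1:t_2}(\Theta)$, giving $P^\zeta_{t_1:t_3}(\Theta)\le D_{(t_2+1):t_3}\,P^\zeta_{t_1:t_2}(\Theta)$; dividing by $D_{(t_2+1):t_3}$ yields the claim (the inequality being trivial when $D_{(t_2+1):t_3}=\infty$, since then $D_{(t_2+1):t_3}^{-1}=0$). There is no serious obstacle here: the only care required is the bookkeeping of the iterated-integral representation and the justification of the interchange of integrations, which is immediate by nonnegativity.
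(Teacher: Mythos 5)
Your proof is correct and follows essentially the same route as the paper's: both arguments rest on bounding the potentials $G_{s,\theta}$ on the segment $\{t_2+1,\dots,t_3\}$ by their suprema (the paper does this by normalizing to $\tilde{G}_{s,\theta}=G_{s,\theta}/D_{s:s}\leq 1$, you do it by extracting the factor $D_{(t_2+1):t_3}$ directly) and then using that $m_{s,\theta_s}(\cdot|x_{s-1})\lambda(\dd x_s)$ and $K_{\mu_s|\Theta}(\theta_{s-1},\dd\theta_s)$ are probability kernels to marginalize out the tail variables via Tonelli. The difference is purely one of bookkeeping, so there is nothing to add.
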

\begin{proof}
See Section \ref{p-lemma:tech}.
\end{proof}

The following two lemmas play a key role in establishing the subsequent results:

\begin{lemma}\label{lemma::W}
Assume that Assumption \ref{assume:Model} holds    and let $\eta_r$ be an  $(r,\delta_\star, \tilde{l},(\varphi_t)_{t\geq 1})$-consistent random probability measures on $(\setX,\mathcal{X})$, with   $(r,\delta_\star, \tilde{l},(\varphi_t)_{t\geq 1})$ as in  \ref{assume:Model}. Then,  
\begin{align*}
\frac{1}{rt}\log W^{\eta_r}_{(r+1):rt}=\bigO_\P(1),\quad \inf_{t\geq 2}\P\Big(\log W^{\eta_r}_{(r+1):rt}\in[0,\infty)\Big)=\inf_{t\geq 2}\P\Big(\inf_{\theta\in\Theta}\log \bar{P}^{\eta_r}_{(r+1):rt}(\theta) >-\infty\Big)=1.
\end{align*}
\end{lemma}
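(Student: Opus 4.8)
## Plan for Proving Lemma~\ref{lemma::W}

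The plan is to unpack the definition \eqref{eq:W_def} of $W^{\eta_r}_{(r+1):rt}$ and relate it directly to the quantities controlled by the consistency hypothesis. Recall that
\begin{align*}
W^{\eta_r}_{(r+1):rt}=\sup_{\theta\in\Theta}\frac{\int_{\setX^{r(t-1)+1}} \exp\big( \tfrac{\delta_\star}{2}\sum_{s=r+1}^{rt}\varphi_s(x_{s-1},x_{s})\big) \,\eta_r(\dd x_{r})  \prod_{s=r+1}^{rt} Q_{s,\theta}(x_{s-1},\dd x_{s})}{\bar{P}^{\eta_r}_{(r+1):rt}(\theta)},
\end{align*}
where the denominator $\bar{P}^{\eta_r}_{(r+1):rt}(\theta)=L^{\eta_r}_{(r+1):rt}(\setX|\theta)$ by the definition \eqref{eq:Pbamodelef} and the setup in Section~\ref{sub:FK_model}. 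The key observation is that the numerator is \emph{exactly} the exponential-moment integrand appearing in Definition~\ref{def:consistent}, but with the constant $\delta_\star/2$ in place of $\delta$ and the functions $\varphi_s$ in place of $\phi_s$. Since $\eta_r$ is $(r,\delta_\star,\tilde l,(\varphi_t)_{t\ge1})$-consistent, Definition~\ref{def:consistent} directly gives control of both the numerator (via the $\bigO_\P(1)$ exponential-moment bound and the finiteness statement, applied with $\delta=\delta_\star$, hence a fortiori for $\delta_\star/2$) and the denominator (via the log-likelihood convergence statement and the positivity statement).

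First I would take logarithms and split $\tfrac{1}{rt}\log W^{\eta_r}_{(r+1):rt}$ into a numerator term minus a denominator term, after interchanging $\sup_\theta$ with $\log$. For the numerator, I would observe that dropping to $\delta_\star/2 < \delta_\star$ only shrinks the exponent (since $\varphi_s\ge 0$ may be assumed, or one controls the sign), so the consistency bound
\begin{align*}
\frac{1}{rt}\log^+ \int_{\setX^{r(t-1)+1}}\exp\Big(\delta_\star\sum_{s=r+1}^{rt}\varphi_s(x_{s-1},x_s)\Big)\eta_r(\dd x_r)\prod_{s=r+1}^{rt}Q_{s,\theta}(x_{s-1},\dd x_s)=\bigO_\P(1)
\end{align*}
dominates the $\delta_\star/2$ version uniformly in $\theta$. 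The $\sup_\theta$ over the compact set $\Theta$ should be handled using the smoothness/equicontinuity afforded by \ref{assume:Model_smooth}, which controls $|\log q_{t,\theta}-\log q_{t,\theta'}|$ by $g(\|\theta-\theta'\|)\varphi_t$; this lets me pass from pointwise-in-$\theta$ bounds to the supremum at the cost of an additional exponential moment already absorbed into the $\delta_\star$ budget. For the denominator I would use the first display of Definition~\ref{def:consistent}, namely $\big|\tfrac{1}{r(t-1)}\log\bar P^{\eta_r}_{(r+1):rt}(\theta)-\tilde l(\theta)\big|=\smallo_\P(1)$, together with continuity and boundedness of $\tilde l$ on the compact $\Theta$, to conclude $\inf_\theta \tfrac1{rt}\log\bar P^{\eta_r}_{(r+1):rt}(\theta)=\bigO_\P(1)$ and $\inf_{t\ge2}\P(\inf_\theta\log\bar P^{\eta_r}_{(r+1):rt}(\theta)>-\infty)=1$. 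Combining these two estimates yields $\tfrac1{rt}\log W^{\eta_r}_{(r+1):rt}=\bigO_\P(1)$ and the finiteness $\inf_{t\ge2}\P(\log W^{\eta_r}_{(r+1):rt}\in[0,\infty))=1$, the lower bound $0$ coming from the fact that the ratio is an average of a nonnegative weight against the normalized measure, so $W\ge$ (the integral of $1$) $=1$ by Jensen or by direct inspection.

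The main obstacle I anticipate is the uniform-in-$\theta$ control needed to move the supremum inside, because consistency is stated \emph{pointwise} in $\theta$ while $W^{\eta_r}$ involves $\sup_{\theta\in\Theta}$. The clean way through is to use \ref{assume:Model_smooth}: writing $q_{s,\theta}=q_{s,\theta_0}\exp(\log q_{s,\theta}-\log q_{s,\theta_0})$ and bounding the exponent by $g(\|\theta-\theta_0\|)\varphi_s$, a finite cover of $\Theta$ by $g$-small balls converts the supremum into a finite maximum of pointwise-consistent quantities, each enjoying the exponential moment bound with budget $\delta_\star$; choosing the mesh so that $g(\text{radius})\cdot\tfrac{\delta_\star}{2}\le \tfrac{\delta_\star}{2}$ (i.e.\ radius small) keeps the combined exponent within the available $\delta_\star$ moment. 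Since a finite maximum of $\bigO_\P(1)$ sequences is again $\bigO_\P(1)$, and a finite intersection of probability-one events is probability one, the pointwise consistency upgrades to the uniform statement. I would close by verifying the two finiteness claims are equivalent in the stated form, noting that $W^{\eta_r}_{(r+1):rt}<\infty$ iff the numerator is finite and the denominator is strictly positive, both guaranteed $\P$-a.s.\ for each fixed $t\ge2$ by Definition~\ref{def:consistent}.
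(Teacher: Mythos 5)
Your covering strategy for the numerator is sound and is in the same spirit as the paper's proof: cover the compact $\Theta$ by finitely many balls whose radius is calibrated through $g^{-1}$, use \ref{assume:Model_smooth} to transfer the exponential moment at an arbitrary $\theta$ to the ball centre at the cost of an extra $g(\text{radius})\sum_s\varphi_s(x_{s-1},x_s)$ in the exponent, and then invoke consistency at the finitely many centres. The genuine gap is in your treatment of the denominator. You claim that $\inf_{\theta\in\Theta}\tfrac{1}{rt}\log \bar{P}^{\eta_r}_{(r+1):rt}(\theta)=\bigO_\P(1)$ (and its almost sure finiteness for each $t$) follows from the pointwise consistency statement together with continuity and boundedness of $\tilde{l}$ on the compact $\Theta$. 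It does not: consistency gives convergence in probability of $\tfrac{1}{rt}\log \bar{P}^{\eta_r}_{(r+1):rt}(\theta)$ at each \emph{fixed} $\theta$, and continuity of the deterministic limit $\tilde{l}$ says nothing about the random functions $\theta\mapsto\log \bar{P}^{\eta_r}_{(r+1):rt}(\theta)$ at finite $t$; for each $t$ there could be (random) parameter values at which $\bar{P}^{\eta_r}_{(r+1):rt}(\theta)$ is arbitrarily small, or zero. What you would need is uniform-in-$\theta$ control of the log-likelihood, but that is precisely Lemma~\ref{lemma:Unif_convergence} of the paper, whose proof \emph{uses} the present lemma — invoking it here would be circular. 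And since $W^{\eta_r}_{(r+1):rt}$ is a supremum of ratios, $\sup_\theta$ of the ratio is only bounded by the ratio of $\sup_\theta$ (numerator) over $\inf_\theta$ (denominator), so this denominator infimum cannot be avoided in your decomposition.

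The missing idea is a quantitative local lower bound on the denominator, and it is the technical heart of the paper's argument. For $\theta$ in a ball of radius $\epsilon_\star=g^{-1}(\delta_\star^2/4)$ around a centre $\theta'$, the paper writes $\log\big(\bar{P}^{\eta_r}_{(r+1):rt}(\theta')/\bar{P}^{\eta_r}_{(r+1):rt}(\theta)\big)$ as the logarithm of an exponential integral under the \emph{normalized} measure $\eta_{rt,\theta}$ defined in \eqref{eq:eta_theta}; by \ref{assume:Model_smooth} and Jensen's inequality this is at most $g(\|\theta-\theta'\|)^{1/2}\log W_{rt,\theta}(\delta_\star/2)\le \delta_\star \log W_{rt,\theta}(\delta_\star/2)$, with $W_{rt,\theta}(\cdot)$ as in \eqref{eq:W_theta} — this is \eqref{eq:W_2}. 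The price is a term proportional to the very quantity being bounded; the paper resolves this apparent circularity by combining \eqref{eq:W_1} and \eqref{eq:W_2} into the self-bounding inequality $(1-\delta_\star)\tfrac{1}{rt}\log W_{rt,\theta}(\delta_\star/2)\le \tfrac{1}{rt}\log W_{rt,\theta'}(\delta_\star)$, i.e.\ \eqref{eq:W2}, after which consistency at the finitely many centres delivers all three claims — the lower bound on $\inf_\theta\log \bar{P}^{\eta_r}_{(r+1):rt}(\theta)$ coming from \eqref{eq:W_2} together with consistency of $\log\bar{P}^{\eta_r}_{(r+1):rt}(\theta'_i)$ at the centres. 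Your unnormalized numerator transfer is a clean (and slightly simpler) step, but on its own it does not produce a consistency-controlled bound for the supremum of the ratio; to complete the proof you need this change-of-measure lower bound and the $(1-\delta_\star)^{-1}$ bootstrapping, or an equivalent substitute.
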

\begin{proof}
See Section \ref{p-lemma::W}.
\end{proof}

\begin{lemma}\label{lemma:Unif_convergence}
Assume that Assumption \ref{assume:Model} holds    and let $\eta_r$ be an  $(r,\delta_\star, \tilde{l},(\varphi_t)_{t\geq 1})$-consistent random probability measure on $(\setX,\mathcal{X})$, with  $(r,\delta_\star, \tilde{l},(\varphi_t)_{t\geq 1})$ as in  \ref{assume:Model}. Then,    
\begin{align*}
 \sup_{\theta\in \Theta}\Big|  \frac{1}{rt}\log \bar{P}^{\eta_r}_{(r+1):rt}(\theta)-\tilde{l}(\theta)\Big|=\smallo_\P(1).
\end{align*}
\end{lemma}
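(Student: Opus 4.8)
The goal is to establish the uniform convergence
\[
\sup_{\theta\in\Theta}\Big|\tfrac{1}{rt}\log\bar P^{\eta_r}_{(r+1):rt}(\theta)-\tilde l(\theta)\Big|=\smallo_\P(1),
\]
and my plan is to upgrade the pointwise convergence guaranteed by the consistency of $\eta_r$ (the first display in Definition \ref{def:consistent}, which gives $\tfrac{1}{r(t-1)}\log L^{\eta_r}_{(r+1):rt}(\setX|\theta)\to\mathcal L(\theta)$ in probability for each fixed $\theta$) into uniform convergence, exploiting continuity of $\tilde l$ and an equicontinuity-type estimate on the random functions $\theta\mapsto\tfrac{1}{rt}\log\bar P^{\eta_r}_{(r+1):rt}(\theta)$. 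Note $\bar P^{\eta_r}_{(r+1):rt}(\theta)=L^{\eta_r}_{(r+1):rt}(\setX|\theta)$, so consistency directly delivers the limit $\tilde l$ (with $\mathcal L=\tilde l$ by \ref{assume:Model_init}), and the $1/(rt)$ versus $1/(r(t-1))$ discrepancy is asymptotically negligible.

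First I would prove a Lipschitz-in-$\theta$ bound for the random log-likelihood. Using the smoothness assumption \ref{assume:Model_smooth}, namely $|\log q_{s,\theta}(x|x')-\log q_{s,\theta'}(x|x')|\le g(\|\theta-\theta'\|)\varphi_s(x',x)$, I can bound the ratio $\bar P^{\eta_r}_{(r+1):rt}(\theta)/\bar P^{\eta_r}_{(r+1):rt}(\theta')$ by controlling the difference of the integrands: writing the two path integrals, the integrand for $\theta$ is at most $\exp\big(g(\|\theta-\theta'\|)\sum_{s=r+1}^{rt}\varphi_s(x_{s-1},x_s)\big)$ times the integrand for $\theta'$. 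This is exactly where the exponential-moment quantity $W^{\eta_r}_{(r+1):rt}$ from \eqref{eq:W_def} enters. Choosing $\|\theta-\theta'\|$ small enough that $g(\|\theta-\theta'\|)\le\delta_\star/2$ (possible since $g$ is continuous, strictly increasing, and $g(0)=0$), I obtain
\[
\Big|\tfrac{1}{rt}\log\bar P^{\eta_r}_{(r+1):rt}(\theta)-\tfrac{1}{rt}\log\bar P^{\eta_r}_{(r+1):rt}(\theta')\Big|\le \tfrac{1}{rt}\log W^{\eta_r}_{(r+1):rt}.
\]
By Lemma \ref{lemma::W}, $\tfrac{1}{rt}\log W^{\eta_r}_{(r+1):rt}=\bigO_\P(1)$, so the random functions have a modulus of continuity that is stochastically bounded at the scale $g^{-1}(\delta_\star/2)$; iterating over a chain of intermediate points covering a ball gives a genuine Lipschitz-type control once $\|\theta-\theta'\|$ is split into $g^{-1}(\delta_\star/2)$-sized increments.

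Next I would run the standard compactness argument. Since $\Theta$ is compact and $\tilde l$ is continuous (hence uniformly continuous on $\Theta$), fix $\epsilon>0$ and choose a finite $\rho$-net $\{\theta_1,\dots,\theta_K\}$ of $\Theta$ with $\rho$ small enough that (i) $g(\rho)\le\delta_\star/2$ and (ii) $|\tilde l(\theta)-\tilde l(\theta_k)|<\epsilon$ whenever $\|\theta-\theta_k\|\le\rho$. At each net point, pointwise convergence in probability gives $\big|\tfrac{1}{rt}\log\bar P^{\eta_r}_{(r+1):rt}(\theta_k)-\tilde l(\theta_k)\big|=\smallo_\P(1)$; taking the maximum over the finitely many $k$ preserves $\smallo_\P(1)$. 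For arbitrary $\theta$, pick the nearest net point $\theta_k$ and bound
\[
\Big|\tfrac{1}{rt}\log\bar P^{\eta_r}_{(r+1):rt}(\theta)-\tilde l(\theta)\Big|\le \tfrac{1}{rt}\log W^{\eta_r}_{(r+1):rt}+\big|\tfrac{1}{rt}\log\bar P^{\eta_r}_{(r+1):rt}(\theta_k)-\tilde l(\theta_k)\big|+\epsilon,
\]
where the first term is the equicontinuity bound and the last is from the choice of net. Taking $\sup_\theta$ and letting the net-maximum go to zero in probability, then $\epsilon\downarrow 0$, yields the claim; the $\bigO_\P(1)$ term must be handled by noting it is multiplied by a single small increment when the chaining is organized carefully, or absorbed by choosing the net fine enough relative to the stochastic bound.

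The main obstacle is controlling the equicontinuity term uniformly in $t$: the quantity $\tfrac{1}{rt}\log W^{\eta_r}_{(r+1):rt}$ is only $\bigO_\P(1)$, not $\smallo_\P(1)$, so a naive chaining across a fixed net would leave a residual $\bigO_\P(1)$ contribution rather than something vanishing. The resolution is that the Lipschitz bound scales with $g(\|\theta-\theta'\|)$, which is genuinely small for nearby points — more precisely, the per-step exponential-moment control lets me take $\|\theta-\theta'\|$ as small as I like so that $g(\|\theta-\theta'\|)\sum_s\varphi_s$ contributes, after normalizing by $rt$, a term that is itself $\smallo_\P(1)$ as the net is refined, provided the exponential moments at the reduced rate $g(\|\theta-\theta'\|)$ remain finite and suitably bounded (again furnished by \ref{assume:Model_smooth} and Lemma \ref{lemma::W}). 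I would therefore need to verify that the exponential-moment bound holds not just at the threshold $\delta_\star/2$ but scales continuously as the radius shrinks, so that the equicontinuity modulus can be driven to zero in probability. This interplay between the fixed exponential-integrability budget $\delta_\star$ and the refinement of the net is the delicate point, and it is precisely why Lemma \ref{lemma::W} is stated with the factor $\delta_\star/2$ rather than $\delta_\star$, leaving headroom for the Lipschitz argument.
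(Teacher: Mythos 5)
Your overall strategy is the same as the paper's: reduce uniform convergence to pointwise convergence (supplied by the consistency of $\eta_r$) plus a stochastic-equicontinuity estimate obtained from \ref{assume:Model_smooth}, and conclude by compactness of $\Theta$ and continuity of $\tilde{l}$. The paper performs this reduction by invoking a generic uniform-convergence theorem of Andrews (1992), while you do it by hand with a finite net; that difference is cosmetic. The problem is that your proposal stops exactly where the real work begins. Your only quantitative equicontinuity estimate is
\begin{align*}
\Big|\tfrac{1}{rt}\log \bar{P}^{\eta_r}_{(r+1):rt}(\theta)-\tfrac{1}{rt}\log \bar{P}^{\eta_r}_{(r+1):rt}(\theta')\Big|\le \tfrac{1}{rt}\log W^{\eta_r}_{(r+1):rt},
\end{align*}
which carries no factor that shrinks with $\|\theta-\theta'\|$: it is the same $\bigO_\P(1)$ bound however fine the net is, so the compactness argument leaves a non-vanishing residual (and your remark about "iterating over a chain of intermediate points" only makes this worse, since it sums several such $\bigO_\P(1)$ increments). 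You correctly diagnose the obstacle in your last paragraph, but the fix you describe -- "verify that the exponential-moment bound \dots scales continuously as the radius shrinks" -- is precisely the step you never carry out, and it is not the statement you suggest: since $\sum_{s=r+1}^{rt}\varphi_s$ has order $rt$ terms, finiteness or even $\bigO_\P(1)$-boundedness of $\tfrac{1}{rt}\log\int\exp\big(g(\|\theta-\theta'\|)\sum_s\varphi_s\big)$ under the normalized path measure, at each fixed small rate, does not imply that this quantity becomes small uniformly in $t$ as $g(\|\theta-\theta'\|)\downarrow 0$.

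The missing ingredient, which is the heart of the paper's proof (see \eqref{eq::uni1} and \eqref{eq:use_split}), is a Jensen-inequality step that extracts an explicit small multiplicative factor. Writing $u=g(\|\theta-\theta'\|)^{1/2}\le 1$ and using concavity of $x\mapsto x^{u}$ under the normalized path measure $\eta_{rt,\theta'}$ of \eqref{eq:eta_theta},
\begin{align*}
\log\int e^{g\sum_s\varphi_s}\,\eta_{rt,\theta'}\big(\dd(x_r,\dots,x_{rt})\big)
\le g^{1/2}\log\int e^{g^{1/2}\sum_s\varphi_s}\,\eta_{rt,\theta'}\big(\dd(x_r,\dots,x_{rt})\big)
\le g^{1/2}\log W^{\eta_r}_{(r+1):rt},
\end{align*}
the last inequality holding once $g^{1/2}\le\delta_\star/2$ by monotonicity of the exponential moment in the rate (the $\varphi_s$ are non-negative). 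This turns the equicontinuity modulus into $g(\|\theta-\theta'\|)^{1/2}\cdot\tfrac{1}{rt}\log W^{\eta_r}_{(r+1):rt}$, i.e.\ an arbitrarily small deterministic factor times an $\bigO_\P(1)$ random variable. One then picks $M_\epsilon$ from Lemma \ref{lemma::W} so that $\tfrac{1}{rt}\log W^{\eta_r}_{(r+1):rt}\ge M_\epsilon$ has probability at most $\epsilon$ in the limit, and chooses the net radius so that (after decomposing each difference through the two nearest net points, which costs a factor $3$) $3\kappa^{1/2}M_\epsilon\le\epsilon$; this is exactly the paper's choice $\kappa=\min\big(\epsilon^2/(9M_\epsilon^2),\,\delta_\star^2/4\big)$. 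Without this Jensen step -- or an equivalent convexity-of-the-cumulant-generating-function argument giving $\Lambda_t(g)\le (2g/\delta_\star)\Lambda_t(\delta_\star/2)$ -- your proof does not go through, so as written the proposal has a genuine gap at its decisive step.
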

\begin{proof}
See Section \ref{p-lemma:Unif_convergence}.
\end{proof}

The next lemma shows that $\P\big(\mathfrak{Z}_t\in(0,\infty)\big)=1$ for all $t\geq 1$, where $(\mathfrak{Z}_t)_{t\geq 1}$ is as defined in \eqref{eq:normalizing} and noting that $\mathfrak{Z}_t=\Lambda^{\mu_0\otimes\chi}_{1:t}(\Theta\times\setX)$ for all $t\geq 1$.

\begin{lemma}\label{lemma:Zt}
Assume that  Assumptions \ref{assume:Model}-\ref{assume:G_bounded} hold and that $(\mu_t)_{t\geq 0}$ is such that   Conditions \ref{condition:mu1}-\ref{condition:Inf_mu} are satisfied. Then,   for all $t\geq 1$, we have
\begin{align*}
\pi_{t}(A\times B)=\frac{  \Lambda^{\pi_0}_{1:t}(A\times B)}{P^{\pi_0}_{1:t}(\Theta)},\quad\forall (A,B)\in\mathcal{T}\times\mathcal{X},\quad \P-a.s.
\end{align*}
\end{lemma}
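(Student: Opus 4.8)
The statement to prove, Lemma \ref{lemma:Zt}, asserts two things: first that the normalizing constant $\mathfrak{Z}_t = \Lambda^{\pi_0}_{1:t}(\Theta\times\setX) = P^{\pi_0}_{1:t}(\Theta)$ lies in $(0,\infty)$ almost surely for every $t\geq 1$, and second that the posterior $\pi_t$ then admits the Feynman--Kac representation $\pi_t(A\times B) = \Lambda^{\pi_0}_{1:t}(A\times B)/P^{\pi_0}_{1:t}(\Theta)$. The second part is essentially a bookkeeping identity: once $\mathfrak{Z}_t\in(0,\infty)$, the definition of $\pi_t$ in Section \ref{sub:SO-FK_model} together with the definitions of $\Lambda^{\zeta}_{1:t}$ and $P^{\zeta}_{1:t}$ with $\zeta = \pi_0 = \mu_0\otimes\chi$ is just a rewriting of the iterated integral defining $\pi_t^\omega$. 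So the real content is showing $0 < \mathfrak{Z}_t < \infty$ almost surely.

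My plan is to treat the upper and lower bounds on $\mathfrak{Z}_t$ separately. For the \textbf{upper bound}, I would bound the potentials $G_{s,\theta}$ by their suprema: writing $\mathfrak{Z}_t$ as the integral in \eqref{eq:normalizing} and using that each $M_{s,\theta_s}$ and $K_{\mu_s|\Theta}$ is a probability kernel while $\mu_0,\chi$ are probability measures, I get $\mathfrak{Z}_t \leq \prod_{s=1}^t \sup_{(\theta,x,x')} G_{s,\theta}(x',x) = D_{1:t}$, the quantity defined in \eqref{eq:Dst}. Assumption \ref{assume:G_bounded} gives $\frac{1}{t}\sum_{s=1}^t \log^+\sup_{(\theta,x,x')} G_{s,\theta}(x',x) = \bigO_\P(1)$, which in particular guarantees each supremum is finite $\P$-a.s., so $D_{1:t}<\infty$ and hence $\mathfrak{Z}_t<\infty$ almost surely.

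For the \textbf{lower bound} I expect the main work. I would relate $\mathfrak{Z}_t = P^{\pi_0}_{1:t}(\Theta)$ to the consistency and positivity structure built into Assumption \ref{assume:Model} and Conditions \ref{condition:mu1}--\ref{condition:Inf_mu}. The strategy is to restrict the domain of integration: bound $\mathfrak{Z}_t$ below by integrating $\theta$ over a small ball around $\tilde\theta_\star$ (or over any fixed $\theta$), using Condition \ref{condition:mu1} to ensure $\mu_0$ charges such balls and Condition \ref{condition:Inf_K}/\ref{condition:Inf_mu} to ensure the kernels $K_{\mu_s|\Theta}$ keep strictly positive mass on a neighbourhood, then use the consistency of $\chi$ (Assumption \ref{assume:Model_init}) together with $G_{s,\theta}>0$ (which follows in the SSM setting from positivity of $f_{s,\theta}$, and in general is built into the $(r,\delta_\star,\tilde l,(\varphi_t))$-consistency requiring $L^{\chi}_{1:t}(\setX|\theta)>0$) to conclude the integral is strictly positive. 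Concretely, consistency of $\chi$ gives $\inf_{t\geq 2}\P\big(L^{\chi}_{(r+1):rt}(\setX|\theta)>0\big)=1$, and combined with Lemma \ref{lemma::W}, which asserts $\inf_{t\geq 2}\P\big(\inf_{\theta}\log\bar P^{\eta_r}_{(r+1):rt}(\theta)>-\infty\big)=1$, this propagates strict positivity of the restricted integral. The obstacle is handling the interplay between the finite-$t$ claim (which must hold for \emph{every} $t\geq 1$, not just along the subsequence $rt$) and the consistency statements, which are phrased at scale $r$; I would bridge the gap using Lemma \ref{lemma:tech}, which gives $P^\zeta_{t_1:t_2}(\Theta)\geq D_{(t_2+1):t_3}^{-1} P^\zeta_{t_1:t_3}(\Theta)$, allowing me to pass from an arbitrary $t$ up to the next multiple of $r$ while only losing a finite factor $D$. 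Once strict positivity of $\mathfrak{Z}_t$ is established for all $t$, the representation of $\pi_t$ follows immediately from its definition, completing the proof.
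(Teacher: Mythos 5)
Your overall skeleton matches the paper's proof: reduce the lemma to showing $\P(\mathfrak{Z}_t\in(0,\infty))=1$, get the upper bound from $\mathfrak{Z}_t\leq D_{1:t}$ and Assumption \ref{assume:G_bounded}, prove strict positivity first for $t\in\{sr,\,s\in\mathbb{N}\}$, and bridge to arbitrary $t$ via Lemma \ref{lemma:tech}. However, your lower-bound argument has a genuine gap at its core. The integral defining $\mathfrak{Z}_t$ involves kernels $Q_{s,\theta_s}$ with $\theta_s$ \emph{varying along the path}, whereas every consistency statement you invoke (positivity of $L^{\chi}_{(r+1):rt}(\setX|\theta)$, finiteness of $\inf_\theta\log\bar{P}^{\chi}_{(r+1):rt}(\theta)$ from Lemma \ref{lemma::W}) concerns a \emph{fixed} $\theta$. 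Fixed-$\theta$ positivity does not "propagate" to the path-varying integral: nothing in the assumptions rules out, say, kernels $q_{s,\theta}(\cdot|x')$ supported on $\theta$-dependent sets that are mutually singular across nearby $\theta$'s, in which case each fixed-$\theta$ integral is positive while the varying-$\theta$ integral vanishes. The missing ingredient is Assumption \ref{assume:Model_smooth}: after restricting (as you do) to $\theta$-paths with $\max_s\|\theta_s-\theta_0\|<\epsilon$, one uses it to bound $q_{s,\theta_s}(x_s|x_{s-1})\geq q_{s,\theta_0}(x_s|x_{s-1})\exp\big(-g(\epsilon)\varphi_s(x_{s-1},x_s)\big)$, which factorizes $\mathfrak{Z}_t$ below into
\begin{align*}
\mathfrak{Z}_t\;\geq\; L_t\cdot\Big(\text{mass of $\theta$-paths staying in the $\epsilon$-tube}\Big),\qquad
L_t=\inf_{\theta\in\Theta}\int \exp\Big(-g(\epsilon)\sum_{s=1}^t\varphi_s(x_{s-1},x_s)\Big)\chi(\dd x_0)\prod_{s=1}^t Q_{s,\theta}(x_{s-1},\dd x_s).
\end{align*}
Positivity of $L_t$ then needs more than the $\bar{P}$-part of Lemma \ref{lemma::W} that you quote: one applies Jensen's inequality to get $\log L_t\geq -\log W^{\chi}_{1:t}+\inf_{\theta\in\Theta}\log\bar{P}^{\chi}_{1:t}(\theta)$, and the finiteness of the exponential-moment term $W^{\chi}_{1:t}$ (the other half of Lemma \ref{lemma::W}, rooted in the moment conditions built into $(r,\delta_\star,\tilde{l},(\varphi_t))$-consistency) is essential; Assumption \ref{assume:Model_stationary} is also needed to shift the time window so that these lemmas, stated for windows $(r+1){:}rt$, apply to $1{:}t$.

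A secondary problem: you invoke Condition \ref{condition:Inf_K}, but the lemma only assumes Conditions \ref{condition:mu1}--\ref{condition:Inf_mu}. You do not need \ref{condition:Inf_K}: the probability that the $\theta$-path stays in the tube is bounded below by $\mu_0\big(\mathcal{N}_{\delta_\star/2}(\tilde{\Theta}_\star)\big)\prod_{s=1}^t\mu_s\big(B_{\epsilon/t}(0)\big)$, using \ref{assume:Model_ThetaStar} to keep the path inside $\Theta$, \ref{condition:mu1} for the first factor, \ref{condition:Inf_mu} for the product, and the fact that $\ind_\Theta(\theta_s)K_{\mu_s}(\theta_{s-1},\dd\theta_s)\leq K_{\mu_s|\Theta}(\theta_{s-1},\dd\theta_s)$, so one may simply drop the kernel normalization rather than bound it with \ref{condition:Inf_K}.
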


\begin{proof}
See Section \ref{p-lemma:Zt}.
\end{proof}

Together with    Lemma \ref{lemma:Zt}, the following result will be used to obtain an upper bound for $\pi_t(A\times B)$ for any  $(A,B)\in\mathcal{T}\times\mathcal{X}$. 
\begin{lemma}\label{lemma:gamma}
Assume that Assumptions  \ref{assume:Model}-\ref{assume:G_bounded} and \eqref{eq:W_star}  hold and  that $(\mu_t)_{t\geq 0}$ is such that   Conditions \ref{condition:mu1}-\ref{condition:Inf_mu} are satisfied, and let $(r,\underline{\chi}_{r},\bar{\chi}_{r})$ be as in \ref{assume:Model} and $(W^*_{t})_{t\geq 0}$ be as in \eqref{eq:W_star}. Then, there exist a  sequence $(\tilde{\xi}_{rt})_{t\geq 1}$ of  $[1,\infty)$-valued random variables and two sequences $(\underline{\chi}_{rt})_{t\geq 1}$ and $(\bar{\chi}_{rt})_{t\geq 1}$ of  random probability measures on $(\setX,\mathcal{X})$, all three sequences  depending of $(\mu_t)_{t\geq 1}$ only through the sequence $(W^*_{t})_{t\geq 0}$, such that $\tilde{\xi}_{rt}=\bigO_\P(1)$, such that
\begin{align*}
\underline{\chi}_{rt}\dist\underline{\chi}_{r},\quad \bar{\chi}_{rt}\dist \bar{\chi}_{r},\quad\forall t\geq 1
\end{align*}
and such that, for all $t\geq r$ and $\tau\in\{r,\dots,t\}\cap\{sr,\,s\in\mathbb{N}\}$, we have
\begin{align*}
\frac{1}{\tilde{\xi}_{\tau}}  \Lambda^{\bar{\pi}_\tau\otimes\underline{\chi}_\tau}_{(\tau+1):t}(A\times B)\leq  \frac{\Lambda^{\pi_0}_{1:t}(A\times B)}{P^{\pi_0}_{1:(\tau-1)}(\Theta)}\leq  \tilde{\xi}_\tau  \Lambda^{\bar{\pi}_\tau\otimes\bar{\chi}_\tau}_{(\tau+1):t}(A\times B),\quad\forall (A,B)\in\mathcal{T}\times\mathcal{X},\quad\P-a.s.
\end{align*}
\end{lemma}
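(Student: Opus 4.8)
The plan is to prove the two-sided bound by sandwiching the integrand appearing in $\Lambda^{\pi_0}_{1:t}$ between the corresponding integrands obtained by replacing the ``past'' up to time $\tau$ with the random probability measure $\bar{\pi}_\tau\otimes\underline{\chi}_\tau$ (for the lower bound) and $\bar{\pi}_\tau\otimes\bar{\chi}_\tau$ (for the upper bound). First I would recall from Lemma \ref{lemma:Zt} that $\pi_t(A\times B) = \Lambda^{\pi_0}_{1:t}(A\times B)/P^{\pi_0}_{1:t}(\Theta)$, so that the object $\Lambda^{\pi_0}_{1:t}(A\times B)/P^{\pi_0}_{1:(\tau-1)}(\Theta)$ can be written as the product of $\pi_{\tau-1}$-type quantities and a forward Feynman--Kac propagation from time $\tau$ to time $t$. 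The key algebraic identity is that, using the Markov structure of $\check{M}_t$ and $\check{G}_t$ defined in Section \ref{sub:SO-FK_model}, one can factor
\begin{align*}
\frac{\Lambda^{\pi_0}_{1:t}(A\times B)}{P^{\pi_0}_{1:(\tau-1)}(\Theta)} = \int_{(\Theta\times\setX)}\Lambda^{\delta_{\{(\theta,x)\}}}_{(\tau):t}(A\times B)\,\pi_{\tau-1}\big(\dd(\theta,x)\big),
\end{align*}
so the whole problem reduces to controlling, uniformly over the starting state $(\theta,x)$, the forward kernel from $\pi_{\tau-1}$ against the forward kernel started from $\bar{\pi}_\tau\otimes\underline{\chi}_\tau$ or $\bar{\pi}_\tau\otimes\bar{\chi}_\tau$.

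The central step is to bound the state-marginal of $\pi_{\tau-1}$ (pushed one step forward through $M_{\tau,\theta}$) above and below by constant multiples of $\bar{\chi}_\tau$ and $\underline{\chi}_\tau$ respectively. Here I would invoke Assumption \ref{assume:Model}\ref{assume:Model_G_lower} and \ref{assume:Model_sup}, which give exactly the pointwise sandwich
\begin{align*}
\inf_{(\theta,x')\in\Theta\times D}q_{\tau,\theta}(x|x')\,\propto\,\underline{\chi}_\tau,\qquad \sup_{(\theta,x')\in\Theta\times\setX}q_{\tau,\theta}(x|x')\,\propto\,\bar{\chi}_\tau,
\end{align*}
for the density $q_{\tau,\theta}(x|x')$ at the ``refresh'' time $\tau\in\{sr\}$. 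The lower bound requires restricting the conditioning state $x'$ to the set $D$, which is why the factor $W^*_\tau = \pi_{\tau-1,\setX}(D)$ enters: the mass that $\pi_{\tau-1}$ places on $D$, guaranteed to be at least $W^*_\tau$ by the stability hypothesis \eqref{eq:W_star}, is precisely what allows the propagated measure to dominate a constant multiple of $\underline{\chi}_\tau$. For the upper bound, no such restriction is needed since the supremum is over all of $\setX$. The random constants $\tilde{\xi}_\tau$ would then be assembled from $1/W^*_\tau$, the normalizing integrals $\int_\setX\{\inf_{\Theta\times D}q_{r,\theta}\}\lambda(\dd x)$ and $\int_\setX\{\sup_{\Theta\times\setX}q_{r,\theta}\}\lambda(\dd x)$ (which are $\P$-a.s.\ finite and positive by the consistency of $\underline{\chi}_r,\bar{\chi}_r$ together with \ref{assume:Model_smooth}), and $\Gamma^\mu$ from Condition \ref{condition:Inf_K} to handle the truncation of $K_{\mu_\tau}$ to $\Theta$. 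Taking $\log W^*_\tau = \bigO_\P(1)$ from \eqref{eq:W_star} yields $\tilde{\xi}_\tau = \bigO_\P(1)$, and the distributional equalities $\underline{\chi}_{rt}\overset{d}{=}\underline{\chi}_r$, $\bar{\chi}_{rt}\overset{d}{=}\bar{\chi}_r$ follow from stationarity in \ref{assume:Model_stationary} since the defining densities at times $\tau=sr$ are all equal in law.

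I would carry out the argument by fixing $t\geq r$ and $\tau\in\{r,\dots,t\}\cap\{sr\}$, writing the factorization above, inserting the one-step pushforward at time $\tau$, applying the two pointwise sandwich inequalities to pass from the $\pi_{\tau-1}$-weighted integrand to the $\underline{\chi}_\tau$- and $\bar{\chi}_\tau$-started integrands, and finally reconstituting the right-hand sides as $\Lambda^{\bar{\pi}_\tau\otimes\underline{\chi}_\tau}_{(\tau+1):t}$ and $\Lambda^{\bar{\pi}_\tau\otimes\bar{\chi}_\tau}_{(\tau+1):t}$ using the definition \eqref{eq:pi_bar} of $\bar{\pi}_\tau$, which absorbs the $K_{\mu_\tau|\Theta}$ transition on the $\theta$-coordinate. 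The main obstacle I anticipate is bookkeeping the shift in time index between the density $q_{\tau,\theta}$ that appears at the refresh step and the reference densities $q_{r,\theta}$ used to define $\underline{\chi}_r,\bar{\chi}_r$ in \ref{assume:Model}: one must use $\tau$-periodicity (the condition $\tau\in\{sr\}$ is exactly what makes $q_{\tau,\theta}$ and $q_{r,\theta}$ identically distributed, so the pushforward is dominated by a law-equivalent copy of $\underline{\chi}_r$ resp.\ $\bar{\chi}_r$) and keep careful track of which randomness is $\F_\tau$-measurable so that the resulting $\underline{\chi}_\tau,\bar{\chi}_\tau$ are genuinely random measures equal in distribution to the originals. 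A secondary technical point is verifying that all the normalizing constants are simultaneously finite and strictly positive $\P$-a.s., which rests on combining \ref{assume:Model_smooth}, \ref{assume:Model_G_lower}, \ref{assume:Model_sup} and Lemma \ref{lemma::W}.
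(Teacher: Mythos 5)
Your proposal follows essentially the same route as the paper's own proof: bound the transition density at the refresh time $\tau\in\{sr,\,s\in\mathbb{N}\}$ via the sandwich $\ind_D(x')\inf_{(\theta'',x'')\in\Theta\times D}q_{\tau,\theta''}(x|x'')\leq q_{\tau,\theta}(x|x')\leq \sup_{(\theta'',x'')\in\Theta\times\setX}q_{\tau,\theta''}(x|x'')$, which decouples the path integral $\Lambda^{\pi_0}_{1:t}$ at time $\tau$ into a forward part started from $\bar{\pi}_\tau\otimes\underline{\chi}_\tau$ (resp.\ $\bar{\pi}_\tau\otimes\bar{\chi}_\tau$), with Lemma \ref{lemma:Zt} and \eqref{eq:W_star} giving $\Lambda^{\pi_0}_{1:(\tau-1)}(\Theta\times D)=\pi_{\tau-1,\setX}(D)\,P^{\pi_0}_{1:(\tau-1)}(\Theta)\geq W^*_{\tau-1}P^{\pi_0}_{1:(\tau-1)}(\Theta)$ for the lower bound, stationarity \ref{assume:Model_stationary} giving the distributional equalities, and $\tilde{\xi}_\tau$ assembled from the two normalizing integrals and $1/W^*_{\tau-1}$ exactly as you describe.

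Two small corrections are worth making before you write this out in full. First, the factor $\Gamma^\mu$ from Condition \ref{condition:Inf_K} is not needed and must not enter $\tilde{\xi}_\tau$: in your factorization (as in the paper's), the kernel $K_{\mu_\tau|\Theta}(\theta_{\tau-1},\cdot)$ appears identically in the quantity being bounded and in the bounding quantity, and is absorbed exactly into $\bar{\pi}_\tau$ through its definition \eqref{eq:pi_bar}, so no lower bound on the truncated kernel is ever invoked; if $\Gamma^\mu$ did appear in $\tilde{\xi}_\tau$, you would prove a strictly weaker statement, since the lemma asserts that all three sequences depend on $(\mu_t)_{t\geq 1}$ only through $(W^*_t)_{t\geq 0}$. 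Second, mind the index: the stability hypothesis is applied to $\pi_{\tau-1,\setX}(D)$, so the relevant random variable is $W^*_{\tau-1}$, not $W^*_\tau$ (and it is a lower bound on that mass, not the mass itself). With these two adjustments your outline reproduces the paper's argument, including the closing step where positivity and finiteness of $\underline{Z}_r$ and $\bar{Z}_r$ follow from the consistency of $\underline{\chi}_r$ and $\bar{\chi}_r$ in \ref{assume:Model_G_lower}--\ref{assume:Model_sup}, and $\tilde{\xi}_{rt}=\bigO_\P(1)$ follows from $\log W^*_{t}=\bigO_\P(1)$.
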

\begin{proof}
See Section \ref{p-lemma:gamma}.
\end{proof}

\subsection{Proof of Theorem \ref{thm:main}}

\subsubsection{Additional preliminary results}

By combining Lemma \ref{lemma:Zt} and Lemma \ref{lemma:gamma} we obtain that
\begin{align}\label{eq:bound_piA}
\pi_{t,\Theta}(A)\leq \tilde{\xi}^2_{\tau}\,\frac{P^{\bar{\pi}_\tau\otimes\bar{\chi}_\tau}_{(\tau+1):t}(A)}{P^{\bar{\pi}_\tau\otimes\underline{\chi}_\tau}_{(\tau+1):t}(\Theta)},\quad\forall A\in\mathcal{T}\quad\forall \tau\in\{r,\dots,t\}\cap\{sr,\,s\in\mathbb{N}\},\quad t\geq r,\quad\P-a.s.
\end{align}
with $r$ as in \ref{assume:Model} and with $(\tilde{\xi}_{rt})_{t\geq 1}$,  $(\underline{\chi}_{rt})_{t\geq 1}$ and $(\bar{\chi}_{rt})_{t\geq 1}$ as in Lemma \ref{lemma:gamma}. Noting that under \ref{assume:Model}  both   $\underline{\chi}_{\tau}$ and  $\bar{\chi}_{\tau}$ are   $(r,\delta_\star, \tilde{l},(\varphi_t)_{t\geq 1})$-consistent random probability measures on $(\setX,\mathcal{X})$, the  following two lemmas will be used to  study the upper   bound for $\pi_{t,\Theta}(A)$ given in \eqref{eq:bound_piA}.

\begin{lemma}\label{lemma::upper_general}
Assume that Assumptions \ref{assume:Model}-\ref{assume:G_bounded} hold and  that $(\mu_t)_{t\geq 1}$ is such that Condition \ref{condition:Inf_K} is satisfied. Let  $(\kappa_{\tau})_{\tau\geq 1}$ be  a  sequence of random probability measures on $(\Theta,\mathcal{T})$,   $(a_\tau)_{\tau\geq 1}$  be a   sequence  in $\{sr,\,s\in\mathbb{N}\}$, with $r$ as in \ref{assume:Model},  and $(b_\tau)_{\tau\geq 1}$ be a sequence in $\mathbb{N}$ such that  $a_\tau< b_\tau$ for all $\tau\geq 1$ and such that $\lim_{\tau\rightarrow\infty} (b_\tau-a_\tau)=\infty$. Assume that there exist  two sequences   $(c_{\tau})_{\tau\geq 1}$ and  $(\epsilon_{\tau})_{\tau\geq 1}$ in $(0,\infty)$ such that $ \lim_{\tau\rightarrow\infty}\epsilon_{\tau}=\lim_{\tau\rightarrow\infty}(1/c_{\tau})=0$ and such that
\begin{align}\label{eq:conv_prob}
\frac{1}{b_\tau-a_\tau}\log\P\Big(\exists s\in\{a_\tau+1,\dots,  b_\tau\}:  \sum_{i=a_\tau+1}^s  U_i\not\in B_{\epsilon_{\tau}}(0)\,\big|\, \F_{b_\tau}\Big)\leq -c_{\tau},\quad\forall \tau\geq 1,\quad\P-a.s.
\end{align}
Finally, with  $(\delta_\star, \tilde{l},(\varphi_t)_{t\geq 1})$ as in \ref{assume:Model}, let  $\eta_r$  be an $(r,\delta_\star, \tilde{l},(\varphi_t)_{t\geq 1})$-consistent  random probability measure   on $(\setX,\mathcal{X})$ and $(\eta_{\tau})_{\tau\geq 1}$ be a  sequence  of random probability measures on $(\setX,\mathcal{X})$ such that $\eta_{\tau}\dist \eta_r$ for all $\tau\geq 1$. 
Then, there exists a  sequence $(\xi^+_{b_\tau})_{\tau\geq 1}$ of $(0,\infty]$-valued random variables, independent of $(\kappa_{\tau})_{\tau\geq 1}$ and depending on $(\mu_t)_{t\geq 1}$ only through the sequences $(c_{\tau})_{\tau\geq 1}$ and $(\epsilon_{\tau})_{\tau\geq 1}$ and through the random variable $\Gamma^\mu$  defined in \ref{condition:Inf_K}, such  that $\xi_{b_\tau}^+=\smallo_\P(1)$ and such that
\begin{align*}
\P\bigg(P^{\kappa_{a_\tau}\otimes\eta_{a_\tau}}_{(a_\tau+1):b_\tau}(A)\leq (1+\xi^+_{b_\tau})\exp\Big\{ (b_\tau-a_\tau)\big(\sup_{\theta\in A} \tilde{l}(\theta)+\xi^+_{b_\tau}\big)\Big\}\bigg)=1,\quad\forall A\in\mathcal{T},\quad\forall \tau\geq 1.
\end{align*}
\end{lemma}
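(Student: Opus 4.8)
\textbf{Proof strategy for Lemma \ref{lemma::upper_general}.}

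The plan is to obtain an upper bound on $P^{\kappa_{a_\tau}\otimes\eta_{a_\tau}}_{(a_\tau+1):b_\tau}(A)$ by exploiting the smoothness condition \ref{assume:Model_smooth} to replace the time-varying parameters $\theta_{a_\tau+1},\dots,\theta_{b_\tau}$ generated along the chain by a single fixed value, at the cost of an exponential correction factor governed by the functions $(\varphi_s)$. First I would unpack the definition of $P^{\kappa_{a_\tau}\otimes\eta_{a_\tau}}_{(a_\tau+1):b_\tau}(A)=\Lambda^{\kappa_{a_\tau}\otimes\eta_{a_\tau}}_{(a_\tau+1):b_\tau}(A\times\setX)$, which integrates the product $\prod_{s=a_\tau+1}^{b_\tau}G_{s,\vartheta_s}(X_{s-1},X_s)$ against the law $(\kappa_{a_\tau}\otimes\eta_{a_\tau})_{(a_\tau+1):b_\tau}$ of the chain, restricted to $\vartheta_{b_\tau}\in A$. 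The key idea is to split the event according to whether the parameter increments stay controlled, i.e.\ whether $\sum_{i=a_\tau+1}^s U_i\in B_{\epsilon_\tau}(0)$ for all $s$. On the ``good'' event, every $\vartheta_s$ lies within distance $\epsilon_\tau$ of $\vartheta_{b_\tau}\in A$ (using the telescoping structure relating the increments $U_i$ to the parameter path), so by \ref{assume:Model_smooth} each factor $G_{s,\vartheta_s}$ differs from $G_{s,\theta}$ for a reference $\theta\in\mathcal{N}_{\epsilon_\tau}(A)$ by at most $\exp\{g(\epsilon_\tau)\varphi_s(X_{s-1},X_s)\}$. This lets me bound the good-event contribution by $W^{\eta}$-type integrals controlled via Lemma \ref{lemma::W} together with the uniform log-likelihood convergence of Lemma \ref{lemma:Unif_convergence}, yielding the factor $\exp\{(b_\tau-a_\tau)(\sup_{\theta\in A}\tilde l(\theta)+o(1))\}$ after passing $\epsilon_\tau\to 0$ and using continuity of $\tilde l$.

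Next I would handle the ``bad'' event, where some partial sum of increments escapes $B_{\epsilon_\tau}(0)$. Here the hypothesis \eqref{eq:conv_prob} gives that this event has conditional probability at most $\exp\{-c_\tau(b_\tau-a_\tau)\}$ given $\F_{b_\tau}$, and since $c_\tau\to\infty$ this is negligible on the exponential scale. To convert this probability bound into a bound on the expectation defining $P^{\kappa_{a_\tau}\otimes\eta_{a_\tau}}_{(a_\tau+1):b_\tau}(A)$, I would use the Cauchy--Schwarz inequality (or Hölder) to separate the product of potentials from the indicator of the bad event, controlling the potential moments through Assumption \ref{assume:G_bounded} which bounds $\frac{1}{t}\sum_s\log^+\sup G_{s,\theta}=\bigO_\P(1)$. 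The normalization by $\int_\Theta K_{\mu_{t}}(\theta',\dd\theta)\geq\Gamma^\mu$ from Condition \ref{condition:Inf_K} enters when I pass from the unrestricted kernel $K_{\mu_s}$ to its truncation $K_{\mu_s|\Theta}$ on $\Theta$, contributing at most a factor $(\Gamma^\mu)^{-(b_\tau-a_\tau)}$, which is harmless since $\Gamma^\mu\in(0,1]$ is $\F$-measurable and its logarithm contributes only an $\bigO_\P(1)$ per-step term that can be absorbed into $\xi^+_{b_\tau}$.

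The main obstacle I anticipate is the careful bookkeeping needed to show that the combined error term $\xi^+_{b_\tau}$ is genuinely $\smallo_\P(1)$ and depends on $(\mu_t)$ \emph{only} through $(c_\tau)$, $(\epsilon_\tau)$ and $\Gamma^\mu$, as the statement demands. This requires that the reference measure $\eta_{a_\tau}$ enters only through its distributional equality $\eta_{a_\tau}\dist\eta_r$, so that all the $W^{\eta_r}$ and $\bar P^{\eta_r}$ quantities appearing in Lemmas \ref{lemma::W}--\ref{lemma:Unif_convergence} have laws independent of $\tau$; I would need to argue that the $\bigO_\P(1)$ and $\smallo_\P(1)$ controls obtained for $\eta_r$ transfer uniformly to the family $(\eta_{a_\tau})_{\tau\geq 1}$ by this distributional identity. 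The delicate point is combining the good-event exponential rate $\tilde l(\theta)$ with the subleading $\varphi$-correction and the bad-event decay in a single clean bound valid \emph{pointwise} for all $A\in\mathcal{T}$ and all $\tau\geq 1$ almost surely, rather than merely in probability; this forces me to define $\xi^+_{b_\tau}$ as an explicit supremum (over the relevant quantities) of the per-scale errors and then verify its stochastic smallness through the convergence results already established, taking $\epsilon_\tau\to 0$ slowly enough that $g(\epsilon_\tau)\to 0$ while the bad-event rate $c_\tau$ still dominates.
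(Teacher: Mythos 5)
Your proposal follows essentially the same route as the paper's proof: the same good/bad-event decomposition on the indicator that the parameter increments stay in $B_{\epsilon_\tau}(0)$, the same treatment of the good event via \ref{assume:Model_smooth} with a Jensen-type correction controlled by Lemmas \ref{lemma::W}--\ref{lemma:Unif_convergence} (transferred from $\eta_r$ to $\eta_{a_\tau}$ through the distributional identity and \ref{assume:Model_stationary}), and the same treatment of the bad event via a pointwise sup bound on the potentials under \ref{assume:G_bounded} (your H\"older bound in its $L^\infty$--$L^1$ form), the $(\Gamma^\mu)^{-(b_\tau-a_\tau)}$ change of measure from the truncated kernels $K_{\mu_s|\Theta}$ to the untruncated ones, and hypothesis \eqref{eq:conv_prob}. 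The points you gloss over are routine rather than gaps: the smoothness factor is $g(2\epsilon_\tau)$ (the path is within $\epsilon_\tau$ of $\vartheta_{a_\tau}$, hence within $2\epsilon_\tau$ of $\vartheta_{b_\tau}$), the sequences $(\epsilon_\tau,c_\tau)$ are given by hypothesis rather than chosen, and since the consistency of $\eta_r$ is indexed by multiples of $r$ the paper cuts the window at the largest $d_\tau\in\{sr,\,s\in\mathbb{N}\}$ below $b_\tau$ and absorbs the at most $r$ leftover potentials into an $\bigO_\P(1)$ factor.
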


\begin{proof}
See Section \ref{p-lemma::upper_general}.
\end{proof}

\begin{lemma}\label{lemma::lower_general}
Assume that  Assumptions \ref{assume:Model}-\ref{assume:G_bounded} hold. Let  $(a_\tau)_{\tau\geq 1}$, $(b_\tau)_{\tau\geq 1}$,  $(\kappa_\tau)_{\tau\geq 1}$ and $(\eta_\tau)_{\tau\geq 1}$    be as in Lemma \ref{lemma::upper_general}, $(r,\delta_\star,\tilde{l},\tilde{\Theta}_\star)$ be as in \ref{assume:Model} and for all $\tau\geq 1$ let $e_\tau\in\{sr,\,s\in\mathbb{N}\}$ be such that $e_\tau-r<b_\tau\leq e_\tau$. Assume that there exist  two sequences   $(c_{\tau})_{\tau\geq 1}$ and  $(\epsilon_{\tau})_{\tau\geq 1}$ in $(0,\infty)$ such that $ \lim_{\tau\rightarrow\infty}\epsilon_{\tau}=\lim_{\tau\rightarrow\infty}(1/c_{\tau})=0$ and such that
\begin{align*}
\frac{1}{e_\tau-a_\tau}\log \P\Big( \sum_{i=a_\tau+1}^s  U_i\in B_{\epsilon_\tau}(0),\,\forall   s\in\{a_\tau+1,\dots,  e_\tau\}\big| \,\F_{e_\tau}\Big)\geq -\frac{1}{c_\tau},\quad\forall \tau \geq 1.
\end{align*}
Then, there exist  an $\bar{\epsilon}\in (0,\delta_\star/2)$, independent of $(\mu_t)_{t\geq 1}$,  and a sequence $(\xi^-_{b_\tau})_{\tau\geq 1}$ of $(0,\infty]$-valued  random variables, independent of $(\kappa_{\tau})_{\tau\geq 1}$ and depending on  $(\mu_t)_{t\geq 1}$ only through the sequences $(c_\tau)_{\tau\geq 1}$ and $(\epsilon_\tau)_{\tau\geq 1}$, such  that $\xi_{b_\tau}^-=\smallo_\P(1)$ and such that
\begin{align*}
\P\bigg( P^{\kappa_{a_\tau}\otimes\eta_{a_\tau}}_{(a_\tau+1):b_\tau}(\Theta)  &\geq  \kappa_{a_\tau}\big(\mathcal{N}_\epsilon(\tilde{\Theta}_\star)\big)  \exp\Big\{ (b_\tau-a_\tau)\big(\inf_{\theta\in \mathcal{N}_\epsilon(\tilde{\Theta}_\star)}\tilde{l}(\theta)-\xi_{b_\tau}^-\big)\Big\}\bigg)=1,\quad\forall \epsilon\in (0,\bar{\epsilon}].
\end{align*}
\end{lemma}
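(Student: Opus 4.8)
The plan is to bound $P^{\kappa_{a_\tau}\otimes\eta_{a_\tau}}_{(a_\tau+1):b_\tau}(\Theta)$ from below by discarding every contribution except those coming from paths whose parameter component both \emph{starts} in $\mathcal{N}_\epsilon(\tilde{\Theta}_\star)$ and \emph{stays} in a small ball around its starting value. Writing the quantity out with $G_{s,\theta_s}m_{s,\theta_s}=q_{s,\theta_s}$,
\begin{align*}
P^{\kappa_{a_\tau}\otimes\eta_{a_\tau}}_{(a_\tau+1):b_\tau}(\Theta)=\int\kappa_{a_\tau}(\dd\theta_{a_\tau})\eta_{a_\tau}(\dd x_{a_\tau})\prod_{s=a_\tau+1}^{b_\tau}q_{s,\theta_s}(x_s|x_{s-1})\lambda(\dd x_s)\,K_{\mu_s|\Theta}(\theta_{s-1},\dd\theta_s),
\end{align*}
I would insert the indicators of $\{\theta_{a_\tau}\in\mathcal{N}_\epsilon(\tilde{\Theta}_\star)\}$ and of $A_{\epsilon_\tau}:=\{\|\theta_s-\theta_{a_\tau}\|<\epsilon_\tau,\ \forall s\in\{a_\tau+1,\dots,b_\tau\}\}$. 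On $A_{\epsilon_\tau}$, Assumption~\ref{assume:Model_smooth} and monotonicity of $g$ give $\prod_s q_{s,\theta_s}(x_s|x_{s-1})\ge e^{-g(\epsilon_\tau)V}\prod_s q_{s,\theta_{a_\tau}}(x_s|x_{s-1})$ with $V=\sum_{s=a_\tau+1}^{b_\tau}\varphi_s(x_{s-1},x_s)$, which freezes the parameter at $\theta_{a_\tau}$ in the potentials. Since $A_{\epsilon_\tau}$ and the frozen potentials then depend on disjoint blocks of variables, Fubini factorises the bound into a pure $\theta$-path factor times a pure $x$-path factor.

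For the $\theta$-path factor I would fix $\bar\epsilon\in(0,\delta_\star/2)$; then for $\epsilon\le\bar\epsilon$ and $\tau$ large enough that $\epsilon_\tau<\delta_\star/2$, every $\theta_{a_\tau}\in\mathcal{N}_\epsilon(\tilde{\Theta}_\star)$ satisfies $B_{\epsilon_\tau}(\theta_{a_\tau})\subseteq\mathcal{N}_{\delta_\star}(\tilde{\Theta}_\star)\subseteq\Theta$, so no truncation occurs along paths confined to $B_{\epsilon_\tau}(\theta_{a_\tau})$. Because the normalising denominator of $K_{\mu_s|\Theta}$ is at most $1$, one has $K_{\mu_s|\Theta}(\theta',\dd\theta)\ge\ind_\Theta(\theta)K_{\mu_s}(\theta',\dd\theta)$, so the restricted-chain probability of $A_{\epsilon_\tau}$ dominates the unrestricted one, which equals $\P(\sum_{i=a_\tau+1}^s U_i\in B_{\epsilon_\tau}(0),\ \forall s\le b_\tau\mid\F_{b_\tau})\ge\P(\cdots\ \forall s\le e_\tau\mid\F_{e_\tau})\ge e^{-(e_\tau-a_\tau)/c_\tau}$ by hypothesis. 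As $e_\tau-b_\tau<r$ is bounded while $b_\tau-a_\tau\to\infty$, the resulting per-step rate $(e_\tau-a_\tau)/\big(c_\tau(b_\tau-a_\tau)\big)\to0$. Note this step uses neither $(\kappa_\tau)$ nor $\Gamma^\mu$.

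For the $x$-path factor I would compare it to $\bar{P}^{\eta_{a_\tau}}_{(a_\tau+1):b_\tau}(\theta_{a_\tau})$. Using $\eta_{a_\tau}\dist\eta_r$ together with the block-stationarity in Assumption~\ref{assume:Model} and absorbing the at most $r$ boundary transitions between $b_\tau$ and $e_\tau$ via Assumption~\ref{assume:G_bounded}, Lemma~\ref{lemma:Unif_convergence} yields $\tfrac{1}{b_\tau-a_\tau}\log\bar{P}^{\eta_{a_\tau}}_{(a_\tau+1):b_\tau}(\theta)\to\tilde{l}(\theta)$ uniformly, so this factor is at least $\exp\{(b_\tau-a_\tau)(\inf_{\theta\in\mathcal{N}_\epsilon(\tilde{\Theta}_\star)}\tilde{l}(\theta)-\smallo_\P(1))\}$. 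It then remains to show that the freezing penalty $\int e^{-g(\epsilon_\tau)V}\,\dd\bar{p}^{\theta}$, where $\bar{p}^\theta$ denotes the normalised flow, costs only $e^{-\smallo_\P(b_\tau-a_\tau)}$. This is the crux, and it is where a crude tail split fails, since $W^{\eta_{a_\tau}}_{(a_\tau+1):b_\tau}$ is controlled only up to $e^{\bigO_\P(b_\tau-a_\tau)}$ by Lemma~\ref{lemma::W}. The fix is convexity: by Jensen $\int e^{-g(\epsilon_\tau)V}\dd\bar{p}^\theta\ge e^{-g(\epsilon_\tau)\E_{\bar{p}^\theta}[V]}$, and a second Jensen step gives $\E_{\bar{p}^\theta}[V]\le\tfrac{2}{\delta_\star}\log\E_{\bar{p}^\theta}[e^{\delta_\star V/2}]\le\tfrac{2}{\delta_\star}\log W^{\eta_{a_\tau}}_{(a_\tau+1):b_\tau}$ uniformly in $\theta$; since $g(\epsilon_\tau)\to0$ while $\tfrac{1}{b_\tau-a_\tau}\log W=\bigO_\P(1)$, the exponent is $g(\epsilon_\tau)\,\bigO_\P(1)\,(b_\tau-a_\tau)=\smallo_\P(1)\cdot(b_\tau-a_\tau)$, as required.

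Collecting the three contributions, integrating the $\theta$-start over $\mathcal{N}_\epsilon(\tilde{\Theta}_\star)$ to produce the factor $\kappa_{a_\tau}\big(\mathcal{N}_\epsilon(\tilde{\Theta}_\star)\big)$, and gathering the uniform-convergence error, the Jensen/$W$ error and the staying-rate $(e_\tau-a_\tau)/\big(c_\tau(b_\tau-a_\tau)\big)$ into a single $\xi^-_{b_\tau}=\smallo_\P(1)$ (set to $\infty$ for the finitely many early $\tau$ with $\epsilon_\tau\ge\delta_\star/2$) then gives the claim. Since none of these error terms involves $(\kappa_\tau)$ and they depend on $(\mu_t)_{t\geq1}$ only through $(c_\tau)_{\tau\ge1}$ and $(\epsilon_\tau)_{\tau\ge1}$, the stated dependence of $\xi^-_{b_\tau}$ follows. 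I expect the main obstacle to be exactly the freezing penalty just described: reconciling the slowly shrinking smoothness modulus $g(\epsilon_\tau)$ against an exponential-moment bound $W$ that is itself exponentially large over a window of diverging length, which is what forces the convexity argument rather than a direct tail estimate.
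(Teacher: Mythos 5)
Your overall strategy coincides with the paper's proof of this lemma: restrict to parameter paths that start in $\mathcal{N}_\epsilon(\tilde{\Theta}_\star)$ and whose increments stay in $B_{\epsilon_\tau}(0)$, freeze the parameter via \ref{assume:Model_smooth} at the cost of a factor $\exp\big(-g(\epsilon_\tau)\sum_s\varphi_s\big)$, lower-bound the $\theta$-factor through $K_{\mu_s|\Theta}(\theta',\dd\theta)\geq\ind_\Theta(\theta)K_{\mu_s}(\theta',\dd\theta)$ and the staying-probability hypothesis (your reduction $\P(E_{b_\tau}|\F_{b_\tau})\geq\P(E_{e_\tau}|\F_{e_\tau})\geq e^{-(e_\tau-a_\tau)/c_\tau}$ is correct), and lower-bound the frozen $x$-factor via the uniform law of large numbers of Lemma~\ref{lemma:Unif_convergence}. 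Your handling of the freezing penalty, one Jensen step $\int e^{-g(\epsilon_\tau)V}\dd\bar{p}^{\theta}\geq e^{-g(\epsilon_\tau)\E_{\bar{p}^\theta}[V]}$ followed by the moment bound $\E_{\bar{p}^\theta}[V]\leq\tfrac{2}{\delta_\star}\log W$, is a legitimate variant of the paper's double-Jensen inequality \eqref{eq:double_Jensen}; both reduce the penalty to $g(\epsilon_\tau)^{\text{power}}\cdot\log W$, which is $\smallo_\P(1)\cdot(b_\tau-a_\tau)$ once $\tfrac{1}{b_\tau-a_\tau}\log W=\bigO_\P(1)$ is available.

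That last point is where your argument, as written, has a gap. You invoke $\tfrac{1}{b_\tau-a_\tau}\log W^{\eta_{a_\tau}}_{(a_\tau+1):b_\tau}=\bigO_\P(1)$ by appeal to Lemma~\ref{lemma::W}, but that lemma (together with \ref{assume:Model_stationary} and the consistency of $\eta_r$) controls $W$ only over windows whose endpoints both lie in $\{sr,\,s\in\mathbb{N}\}$, i.e. over $(a_\tau+1):e_\tau$; your window $(a_\tau+1):b_\tau$ is generally not of this form, and since the potentials $G_{s,\theta}$ admit no lower bound there is no clean monotonicity relating $W^{\eta_{a_\tau}}_{(a_\tau+1):b_\tau}$ to $W^{\eta_{a_\tau}}_{(a_\tau+1):e_\tau}$ (neither the numerator nor the normalized path measure on the shorter window compares to its full-block counterpart). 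The cure is the boundary-absorption you already perform for $\bar{P}$ but omit for $W$: extend the window \emph{before} normalizing. Concretely, writing $V_{b_\tau}=\sum_{s=a_\tau+1}^{b_\tau}\varphi_s(x_{s-1},x_s)$ and $V_{e_\tau}$ analogously, bound the frozen integral below by
\begin{align*}
\int e^{-g(\epsilon_\tau)V_{b_\tau}}\,\eta_{a_\tau}(\dd x_{a_\tau})\prod_{s=a_\tau+1}^{b_\tau}Q_{s,\theta}(x_{s-1},\dd x_s)\geq D_{(b_\tau+1):e_\tau}^{-1}\int e^{-g(\epsilon_\tau)V_{e_\tau}}\,\eta_{a_\tau}(\dd x_{a_\tau})\prod_{s=a_\tau+1}^{e_\tau}Q_{s,\theta}(x_{s-1},\dd x_s),
\end{align*}
which holds because each added factor satisfies $e^{-g(\epsilon_\tau)\varphi_s}q_{s,\theta}\leq\big(\sup_{(\theta',x,x')}G_{s,\theta'}(x',x)\big)m_{s,\theta}$ and the $m$'s integrate to one, while $D_{(b_\tau+1):e_\tau}$ has at most $r$ terms so that $\tfrac{1}{b_\tau-a_\tau}\log D_{(b_\tau+1):e_\tau}=\smallo_\P(1)$ under \ref{assume:Model_stationary} and \ref{assume:G_bounded}. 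Only then factor the right-hand side as $\bar{P}^{\eta_{a_\tau}}_{(a_\tau+1):e_\tau}(\theta)\int e^{-g(\epsilon_\tau)V_{e_\tau}}\dd\bar{p}^{\theta}$ and run your Jensen argument: both $\bar{P}$ and $W$ are now full-block quantities covered by Lemmas~\ref{lemma:Unif_convergence} and~\ref{lemma::W}. This is precisely what the paper does globally, applying Lemma~\ref{lemma:tech} at the outset to replace $b_\tau$ by $e_\tau$ at cost $D_{(e_\tau-r+1):e_\tau}^{-1}$. With this rearrangement your proof is complete, and the claimed dependence of $\xi^-_{b_\tau}$ on $(\mu_t)_{t\geq 1}$ only through $(c_\tau)_{\tau\geq 1}$ and $(\epsilon_\tau)_{\tau\geq 1}$, and its independence of $(\kappa_\tau)_{\tau\geq 1}$, hold as you state.
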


\begin{proof}
See Section \ref{p-lemma::lower_general}.
\end{proof}

By combining \eqref{eq:bound_piA} and Lemmas \ref{lemma::upper_general}-\ref{lemma::lower_general} we obtain the following key result for proving Theorem \ref{thm:main}.

\begin{theorem}\label{thm:key}
Assume that Assumptions \ref{assume:Model}-\ref{assume:G_bounded} and \eqref{eq:W_star} hold  and that $(\mu_t)_{t\geq 0}$ is such that Conditions \ref{condition:mu1}-\ref{condition:Inf_K} are satisfied. Let  $(r,\delta_\star,\tilde{\Theta}_\star)$ be as in \ref{assume:Model} and let $(a_\tau)_{\tau\geq 1}$  be a    sequence  in $\{sr,\,s\in\mathbb{N}\}$ and   $(b_\tau)_{\tau\geq 1}$ be a sequence in $\mathbb{N}$  such that $ a_\tau< b_\tau$ for all $\tau\geq 1$ and such that $\lim_{\tau\rightarrow\infty} (b_\tau-a_\tau)=\infty$.  In addition, for all $\tau\geq 1$ let $e_\tau\in\{sr,\,s\in\mathbb{N}\}$ be such that $e_\tau-r<b_\tau\leq e_\tau$ and assume that there exist two sequences  $(\tilde{c}_{\tau})_{\tau\geq 1}$  and  $(\tilde{\epsilon}_{\tau})_{\tau\geq 1}$  in $(0,\infty)$ such that $\lim_{\tau\rightarrow\infty}\tilde{\epsilon}_{\tau}=\lim_{\tau\rightarrow\infty}(1/\tilde{c}_\tau)=0$ and such that 
\begin{align}\label{eq:Thm_C1}
\frac{1}{e_\tau-a_\tau}\log\P\Big(\exists s\in\{a_\tau+1,\dots,  e_\tau\}:\,  \sum_{i=a_\tau+1}^s  U_i\not\in B_{\tilde{\epsilon}_{\tau}}(0)\big|\,\F_{e_\tau}\Big)\leq - \tilde{c}_\tau,\quad\forall \tau\geq 1,\quad\P-a.s.
\end{align}
Moreover, assume that for all $\delta\in (0,\delta_\star/2)$ there exists a sequence $(\xi_{\delta,a_\tau})_{\tau\geq 1}$ of $[0,\infty]$-valued random variables such that $\xi_{\delta,a_\tau}=\smallo_\P(1)$ and such that
\begin{align}\label{eq:Thm_C2}
 \P\bigg(\frac{1}{b_\tau-a_\tau}\log \bar{\pi}_{a_\tau}\big(\mathcal{N}_{\delta}(\tilde{\Theta}_\star)\big)\geq -\xi_{\delta,a_\tau}\bigg)=1,\quad\forall \tau\geq 1.
\end{align}
Then, for all $\epsilon\in(0,\infty)$ there exists a sequence $(\xi^*_{\epsilon,b_\tau})_{\tau \geq 1}$ of $[0,1]$-valued random variables, depending on $(\mu_t)_{t\geq 1}$  only through the sequences  $(\tilde{c}_{\tau})_{\tau\geq 1}$, $(\epsilon_{\tau})_{\tau\geq 1}$ and $((\xi_{\delta,a_\tau})_{\delta\in(0,\delta_\star/2)})_{\tau\geq 1}$, through the sequence $(W^*_{t})_{t\geq 0}$ defined in \eqref{eq:W_star}  and  through the random variable   $\Gamma^\mu$  defined in \ref{condition:Inf_K}, such that $\xi^*_{\epsilon,b_\tau}=\smallo_\P(1)$ and such that 
\begin{align*}
\P\Big(\pi_{b_\tau,\Theta}\big(\mathcal{N}_\epsilon(\tilde{\Theta}_\star)\big)\geq 1-\xi^*_{\epsilon,b_\tau}\Big)=1,\quad\forall \tau\geq 1.
\end{align*}
\end{theorem}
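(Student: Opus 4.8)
The plan is to combine the upper-bound estimate coming from Lemma \ref{lemma::upper_general} with the lower-bound estimate from Lemma \ref{lemma::lower_general}, using the pointwise inequality \eqref{eq:bound_piA} as the organizing device. First I would pick a convenient $\tau(b_\tau)\in\{r,\dots,b_\tau\}\cap\{sr,\,s\in\mathbb{N}\}$ at which to decompose $\pi_{b_\tau,\Theta}$; the natural choice is $\tau=a_\tau$, so that \eqref{eq:bound_piA} reads
\begin{align*}
\pi_{b_\tau,\Theta}(A)\leq \tilde{\xi}^2_{a_\tau}\,\frac{P^{\bar{\pi}_{a_\tau}\otimes\bar{\chi}_{a_\tau}}_{(a_\tau+1):b_\tau}(A)}{P^{\bar{\pi}_{a_\tau}\otimes\underline{\chi}_{a_\tau}}_{(a_\tau+1):b_\tau}(\Theta)},\quad\forall A\in\mathcal{T},\quad\P-a.s.
\end{align*}
Since $\bar{\chi}_{a_\tau}$ and $\underline{\chi}_{a_\tau}$ are $(r,\delta_\star,\tilde{l},(\varphi_t)_{t\geq 1})$-consistent (by \ref{assume:Model} and Lemma \ref{lemma:gamma}), both sit in the hypotheses of Lemmas \ref{lemma::upper_general}-\ref{lemma::lower_general} with $\kappa_{a_\tau}=\bar{\pi}_{a_\tau}$ and $\eta_{a_\tau}\in\{\bar{\chi}_{a_\tau},\underline{\chi}_{a_\tau}\}$.

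Next I would apply Lemma \ref{lemma::upper_general} with $A=\Theta\setminus\mathcal{N}_\epsilon(\tilde{\Theta}_\star)$ to the numerator, obtaining
\begin{align*}
P^{\bar{\pi}_{a_\tau}\otimes\bar{\chi}_{a_\tau}}_{(a_\tau+1):b_\tau}\big(\Theta\setminus\mathcal{N}_\epsilon(\tilde{\Theta}_\star)\big)\leq (1+\xi^+_{b_\tau})\exp\Big\{(b_\tau-a_\tau)\big(\sup_{\theta\notin\mathcal{N}_\epsilon(\tilde{\Theta}_\star)}\tilde{l}(\theta)+\xi^+_{b_\tau}\big)\Big\},
\end{align*}
valid by the hypothesis \eqref{eq:Thm_C1} (which supplies the deviation bound \eqref{eq:conv_prob} with $a_\tau,e_\tau$ and forces the requisite $(c_\tau,\epsilon_\tau)$). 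For the denominator I would invoke Lemma \ref{lemma::lower_general}, which requires the complementary small-ball lower bound on $\P(\sum U_i\in B_{\epsilon_\tau}(0)\,\forall s)$; I expect to derive this from \eqref{eq:Thm_C1} by a union/complement argument (or it is available under Condition \ref{condition:mu_seq}, which is what ultimately supplies both \eqref{eq:Thm_C1} and the matching lower bound). This yields, for small $\epsilon'\leq\bar{\epsilon}$,
\begin{align*}
P^{\bar{\pi}_{a_\tau}\otimes\underline{\chi}_{a_\tau}}_{(a_\tau+1):b_\tau}(\Theta)\geq \bar{\pi}_{a_\tau}\big(\mathcal{N}_{\epsilon'}(\tilde{\Theta}_\star)\big)\exp\Big\{(b_\tau-a_\tau)\big(\inf_{\theta\in\mathcal{N}_{\epsilon'}(\tilde{\Theta}_\star)}\tilde{l}(\theta)-\xi^-_{b_\tau}\big)\Big\}.
\end{align*}
The crucial quantitative input now is the \emph{identifiability gap}: because $\tilde{\Theta}_\star=\argmax\tilde{l}$, continuity of $\tilde{l}$ (from \ref{assume:Model}) gives, for each $\epsilon>0$, a strictly positive margin $\gamma_\epsilon:=\max_\Theta\tilde{l}-\sup_{\theta\notin\mathcal{N}_\epsilon(\tilde{\Theta}_\star)}\tilde{l}(\theta)>0$, while $\inf_{\theta\in\mathcal{N}_{\epsilon'}}\tilde{l}(\theta)\to\max_\Theta\tilde{l}$ as $\epsilon'\downarrow0$. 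Choosing $\epsilon'$ small enough that this infimum exceeds $\sup_{\theta\notin\mathcal{N}_\epsilon}\tilde{l}+\gamma_\epsilon/2$, the ratio of the two exponentials decays like $\exp\{-(b_\tau-a_\tau)\gamma_\epsilon/2\}$ up to the $\smallo_\P(1)$ error terms $\xi^\pm_{b_\tau}$.

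Assembling the pieces, I would bound
\begin{align*}
\pi_{b_\tau,\Theta}\big(\Theta\setminus\mathcal{N}_\epsilon(\tilde{\Theta}_\star)\big)\leq \tilde{\xi}^2_{a_\tau}\,\frac{(1+\xi^+_{b_\tau})\,\exp\{(b_\tau-a_\tau)(\sup_{\theta\notin\mathcal{N}_\epsilon}\tilde{l}+\xi^+_{b_\tau})\}}{\bar{\pi}_{a_\tau}(\mathcal{N}_{\epsilon'}(\tilde{\Theta}_\star))\,\exp\{(b_\tau-a_\tau)(\inf_{\mathcal{N}_{\epsilon'}}\tilde{l}-\xi^-_{b_\tau})\}},
\end{align*}
and set $\xi^*_{\epsilon,b_\tau}:=1\wedge(\text{this right-hand side})$, so the conclusion $\pi_{b_\tau,\Theta}(\mathcal{N}_\epsilon(\tilde{\Theta}_\star))\geq 1-\xi^*_{\epsilon,b_\tau}$ holds $\P$-a.s. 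It remains to check $\xi^*_{\epsilon,b_\tau}=\smallo_\P(1)$: the exponential carries the decaying factor $\exp\{-(b_\tau-a_\tau)\gamma_\epsilon/2\}$ with $b_\tau-a_\tau\to\infty$; the prefactor $\tilde{\xi}^2_{a_\tau}(1+\xi^+_{b_\tau})=\bigO_\P(1)$; and the one genuinely delicate term is $1/\bar{\pi}_{a_\tau}(\mathcal{N}_{\epsilon'}(\tilde{\Theta}_\star))$, which is exactly controlled by the hypothesis \eqref{eq:Thm_C2}: it gives $\bar{\pi}_{a_\tau}(\mathcal{N}_{\epsilon'}(\tilde{\Theta}_\star))\geq\exp\{-(b_\tau-a_\tau)\xi_{\epsilon',a_\tau}\}$ with $\xi_{\epsilon',a_\tau}=\smallo_\P(1)$, so its contribution to the exponent is $o(b_\tau-a_\tau)$ and is dominated by the fixed gap $\gamma_\epsilon/2$. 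The main obstacle, and the step to handle with care, is precisely this balancing: one must ensure that the $\smallo_\P(1)$ rates $\xi^\pm_{b_\tau}$ and the initial-mass rate $\xi_{\epsilon',a_\tau}$ are all beaten by the \emph{constant} margin $\gamma_\epsilon/2$ for every fixed $\epsilon$, and that the dependence of $\xi^*_{\epsilon,b_\tau}$ on $(\mu_t)$ enters only through the advertised objects $(\tilde{c}_\tau),(\epsilon_\tau),((\xi_{\delta,a_\tau})),(W^*_t),\Gamma^\mu$ — which it does, since $\tilde{\xi}_{a_\tau}$ depends on $(\mu_t)$ only through $(W^*_t)$ (Lemma \ref{lemma:gamma}) and $\xi^\pm_{b_\tau}$ only through $(\tilde{c}_\tau,\epsilon_\tau,\Gamma^\mu)$ (Lemmas \ref{lemma::upper_general}-\ref{lemma::lower_general}).
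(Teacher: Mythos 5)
Your proposal is correct and follows essentially the same route as the paper's proof: decompose $\pi_{b_\tau,\Theta}$ at time $a_\tau$ via Lemmas \ref{lemma:Zt}--\ref{lemma:gamma} (the bound \eqref{eq:bound_piAA}), apply Lemma \ref{lemma::upper_general} to the numerator and Lemma \ref{lemma::lower_general} to the denominator with $\kappa_{a_\tau}=\bar{\pi}_{a_\tau}$ and $\eta_{a_\tau}\in\{\bar{\chi}_{a_\tau},\underline{\chi}_{a_\tau}\}$, then beat all the $\smallo_\P(1)$ error terms (including the initial-mass term controlled by \eqref{eq:Thm_C2}) with the fixed identifiability gap $v_\epsilon>0$. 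The one step you hedge on --- obtaining the small-ball lower bound required by Lemma \ref{lemma::lower_general} from \eqref{eq:Thm_C1} --- is settled precisely by your primary suggestion (complementation), which the paper implements through the elementary inequality $\tfrac1z\log(1-p)\geq 1/(\tfrac1z\log p)$ in \eqref{eq:Bound_log}; note your fallback via Condition \ref{condition:mu_seq} would not be available here, since that condition is not among the theorem's hypotheses.
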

\begin{proof}
See Section \ref{p-thm:key}.
\end{proof}

The following   result  will be used  to establish that condition \eqref{eq:Thm_C2}  of Theorem \ref{thm:key} holds.
\begin{lemma}\label{lemma:Extend_initit}
Assume that Assumptions \ref{assume:Model}-\ref{assume:G_l_E} and \eqref{eq:W_star} hold and that $(\mu_t)_{t\geq 0}$ is such that   Conditions \ref{condition:mu1}-\ref{condition:Inf_mu} are satisfied, and let $(\delta_\star,\tilde{\Theta}_\star)$ be as in \ref{assume:Model}. Then, for all $\bar{m}\in\mathbb{N}$ there exists a sequence $(\xi'_{t,\bar{m}})_{t\geq 1}$ of $(0,1)$-valued random variables, depending on $(\mu_t)_{t\geq 1}$ only through the sequence $(W^*_t)_{t\geq 1}$ defined  in \eqref{eq:W_star}, such that $\log \xi'_{t,\bar{m}}=\bigO_\P(1)$ and such that, $\P$-a.s.,
\begin{align*}
\bar{\pi}_{t}\big(\mathcal{N}_\delta(\tilde{\Theta}_\star)\big)\geq \xi'_{t,\bar{m}} \inf_{\theta\in\Theta} K_{\mu_{t-m}}\big(\theta, \mathcal{N}_{\delta/\bar{m}}(\tilde{\Theta}_\star)\big),\quad\forall \delta\in (0,\delta_\star),\quad \forall t\geq \bar{m}+1,\quad\forall m\in\{1,\dots,\bar{m}\}.
\end{align*}
If $\bar{m}=1$ then Assumption  \ref{assume:G_l_E} can be omitted.
\end{lemma}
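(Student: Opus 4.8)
The statement to prove is Lemma \ref{lemma:Extend_initit}, which provides a lower bound on the mass that the predictive distribution $\bar{\pi}_t$ assigns to a neighbourhood of $\tilde{\Theta}_\star$, in terms of the mass placed there by the Markov kernel $K_{\mu_{t-m}}$ applied from an arbitrary starting point.

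The plan is to start from the definition \eqref{eq:pi_bar} of $\bar{\pi}_t$, namely $\bar{\pi}_t(A)=\int_\Theta K_{\mu_t|\Theta}(\theta_{t-1},A)\,\pi_{t-1,\Theta}(\dd\theta_{t-1})$, and to unfold it backwards by $m$ steps so that the innermost object is $\pi_{t-m,\Theta}$ (or more precisely the joint $\pi_{t-m}$ via Lemma \ref{lemma:Zt}). The key geometric observation is that if $\theta_{t-m}$ already lies in $\mathcal{N}_{\delta(m-1)/\bar m}(\tilde{\Theta}_\star)$ and each subsequent increment $U_{t-m+1},\dots,U_t$ has norm at most $\delta/\bar m$, then the triangle inequality guarantees $\theta_t\in\mathcal{N}_\delta(\tilde{\Theta}_\star)$; this is exactly where the factor $1/\bar m$ in $\mathcal{N}_{\delta/\bar m}$ comes from. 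First I would isolate a single step: lower bound $\bar\pi_t(\mathcal{N}_\delta(\tilde\Theta_\star))$ by restricting the integral to starting points in a slightly smaller neighbourhood and using $K_{\mu_t|\Theta}(\theta',\mathcal{N}_\delta(\tilde\Theta_\star))\ge K_{\mu_t}(\theta',\mathcal{N}_\delta(\tilde\Theta_\star))$ (since $K_{\mu_t|\Theta}$ is the renormalisation of $K_{\mu_t}$ to $\Theta$ and the denominator $\int_\Theta K_{\mu_t}(\theta',\dd\theta)$ is at most one). Iterating this $m$ times reduces the problem to bounding $\pi_{t-m,\Theta}(\mathcal{N}_{\delta/\bar m}(\tilde\Theta_\star))$ from below together with the probability that the intermediate noise increments stay small, which Condition \ref{condition:Inf_mu} controls uniformly through the random variables $(\Gamma^\mu_\delta)_{\delta\in(0,\infty)}$.

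The main quantitative content is the lower bound on the mass $\pi_{t-m,\Theta}$ places near $\tilde\Theta_\star$. Here I would invoke Lemma \ref{lemma:Zt} to write $\pi_{t-m}$ as $\Lambda^{\pi_0}_{1:(t-m)}/P^{\pi_0}_{1:(t-m)}(\Theta)$, and then the lower half of Lemma \ref{lemma:gamma} to sandwich this from below by a quantity of the form $\Lambda^{\bar\pi_\tau\otimes\underline\chi_\tau}_{(\tau+1):(t-m)}/\tilde\xi_\tau$ for a suitable $\tau$ divisible by $r$. The stationarity and consistency assumptions (\ref{assume:Model} together with \eqref{eq:W_star}) then let me control the normalising constant $P^{\pi_0}_{1:(t-m)}(\Theta)$ via the uniform bounds in Lemmas \ref{lemma::W} and \ref{lemma:Unif_convergence}; the resulting ratio contributes the $\bigO_\P(1)$ factor absorbed into $\log\xi'_{t,\bar m}$. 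The role of Assumption \ref{assume:G_l_E} is precisely to handle the case $\bar m\ge 2$: when we peel off more than one step we must lower bound a short product of the kernels $Q_{s,\theta}$ over the set $D$, and \ref{assume:G_l_E} guarantees that $\log^-\inf_{(\theta,x')\in\Theta\times D}Q_{t,\theta}(x',D)<\infty$, i.e.\ these local transitions do not vanish. For $\bar m=1$ no such intermediate transition is needed, which is why the final sentence of the lemma states that \ref{assume:G_l_E} may then be dropped.

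The hard part will be assembling the three error factors — the noise-confinement probability from \ref{condition:Inf_mu}, the $\tilde\xi_\tau=\bigO_\P(1)$ factor from Lemma \ref{lemma:gamma}, and the $\bigO_\P(1)$ contribution from the normalising-constant ratio — into a single random variable $\xi'_{t,\bar m}$ that is genuinely $\bigO_\P(1)$ on the log scale and that, crucially, depends on $(\mu_t)_{t\ge1}$ \emph{only} through the sequence $(W^*_t)_{t\ge1}$ of \eqref{eq:W_star}. Tracking this dependence structure is the delicate bookkeeping step: I must verify that the noise increments are bounded using the \emph{uniform} (in $t$) constants $\Gamma^\mu_\delta$ so that no additional $(\mu_t)$-dependence leaks in, and that the consistency bounds are stated for the \emph{fixed} reference measures $\underline\chi_r,\bar\chi_r$ rather than the running $\mu_t$-dependent objects. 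Once the uniformity is pinned down, the triangle-inequality composition over the $m\le\bar m$ steps and the final $\inf_{\theta\in\Theta}K_{\mu_{t-m}}(\theta,\mathcal{N}_{\delta/\bar m}(\tilde\Theta_\star))$ factor emerge directly, giving the claimed inequality valid simultaneously for all $\delta\in(0,\delta_\star)$, all $t\ge\bar m+1$ and all $m\in\{1,\dots,\bar m\}$.
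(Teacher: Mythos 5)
Your opening moves coincide with the paper's proof: unfold $\bar{\pi}_t$ backwards through the kernels (via Lemma \ref{lemma:Zt}), use $K_{\mu_s|\Theta}(\theta',A)\geq K_{\mu_s}(\theta',A)$ for $A\subseteq\Theta$ because the normalising denominator is at most one, confine the last increments using \ref{condition:Inf_mu}, and your diagnosis of \ref{assume:G_l_E} --- that peeling off $m\geq 2$ steps forces one to lower bound a short product of the quantities $\inf_{(\theta,x)\in\Theta\times D}Q_{s,\theta}(x,D)$, which is unnecessary when $\bar{m}=1$ --- is exactly right. The gap is in what you call the ``main quantitative content''. The lemma keeps $\inf_{\theta\in\Theta}K_{\mu_{t-m}}(\theta,\mathcal{N}_{\delta/\bar{m}}(\tilde{\Theta}_\star))$ as an \emph{explicit} factor on the right-hand side precisely so that no information about where the parameter marginal of $\pi$ puts its mass is ever required: after peeling, the paper's bound is (weight ratio)$\,\times(\Gamma^\mu_{\delta/(2\bar{m})})^{\bar{m}}\times\pi_{t-m-1,\setX}(D)\times\inf_{\theta\in\Theta}K_{\mu_{t-m}}(\theta,\mathcal{N}_{\delta/2}(\tilde{\Theta}_\star))$, in which the only ``mass'' term is the \emph{state}-marginal mass $\pi_{t-m-1,\setX}(D)$, exactly what \eqref{eq:W_star} controls. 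You instead absorb the kernel step into $\pi_{t-m,\Theta}$ and set out to lower bound $\pi_{t-m,\Theta}(\mathcal{N}_{\delta/\bar{m}}(\tilde{\Theta}_\star))$ by an $\bigO_\P(1)$-on-the-log-scale quantity using Lemma \ref{lemma:gamma} and the consistency machinery (Lemmas \ref{lemma::W} and \ref{lemma:Unif_convergence}). That step is both unnecessary and unworkable.

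It is unworkable because it is circular and because the rates do not cancel. Lemma \ref{lemma:gamma} reduces $\pi_{t-m,\Theta}(A)$ to a ratio of the form $\Lambda^{\bar{\pi}_\tau\otimes\underline{\chi}_\tau}_{(\tau+1):(t-m)}(A\times\setX)\big/P^{\bar{\pi}_\tau\otimes\bar{\chi}_\tau}_{(\tau+1):(t-m)}(\Theta)$; lower bounding the numerator with $A=\mathcal{N}_{\delta/\bar{m}}(\tilde{\Theta}_\star)$ requires either knowing that $\bar{\pi}_\tau$ already has mass near $\tilde{\Theta}_\star$ (circular: the present lemma is an ingredient of Theorem \ref{thm:key}, which is what eventually establishes that concentration, via Lemmas \ref{lemma:piece1}--\ref{lemma:piece3}) or re-inserting a kernel regeneration factor --- which is the paper's route. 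Even granting a start near $\tilde{\Theta}_\star$, Lemmas \ref{lemma::W}--\ref{lemma:Unif_convergence} only control \emph{normalised} logs, and the normalisation leaves a factor of order $\exp\{(t-m-\tau)(\inf_{\mathcal{N}}\tilde{l}-\sup_{\Theta}\tilde{l})\}$ whose logarithm grows linearly in $t$; no $\bigO_\P(1)$ bound can come out of this. Note also that what you are attempting is strictly stronger than the lemma: under \ref{C41} the kernel factor $\inf_{\theta\in\Theta}K_{\mu_{t-m}}(\theta,\cdot)$ may be exponentially small in $t_{p+1}-t_p$, and the lemma is deliberately only this strong. The repair is to keep that factor and build $\xi'_{t,\bar{m}}$ from three $\bigO_\P(1)$-on-the-log-scale ingredients: (i) the ratio of $\prod_s\inf_{(\theta,x)\in\Theta\times D}Q_{s,\theta}(x,D)$ to $\prod_s\sup_{(\theta,x,x')}G_{s,\theta}(x',x)$ over a window of length at most $\bar{m}+2r$, controlled by \ref{assume:Model_stationary}, \ref{assume:G_bounded} and \ref{assume:G_l_E}; (ii) $(\Gamma^\mu_{\delta/(2\bar{m})})^{\bar{m}}$ from \ref{condition:Inf_mu}, with the paper's split $\delta/2$ for the kernel target and $\delta/(2m)$ for the increments (your split $\delta/\bar{m}$ per increment overshoots $\delta$ when $m=\bar{m}$); and (iii) $\min_{m}\pi_{t-m-1,\setX}(D)\geq\min_m W^*_{t-m-1}$ from \eqref{eq:W_star}.
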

\begin{proof}
See Section \ref{p-lemma:Extend_initit}. 
\end{proof}

\subsubsection{Proof of the theorem }

The result of Theorem \ref{thm:main}   is proved by three successive applications of Theorem \ref{thm:key}, given in Lemmas \ref{lemma:piece1}-\ref{lemma:piece3} below.

Firstly, under \ref{condition:mu_seq}, a first application of Theorem \ref{thm:key} yields the following result:

\begin{lemma}\label{lemma:piece1}
Assume that Assumptions \ref{assume:Model}-\ref{assume:G_l_E} and \eqref{eq:W_star} hold  and that $(\mu_t)_{t\geq 0}$ is such that Conditions \ref{condition:mu1}-\ref{condition:mu_seq} are satisfied. Let $r$ be as in   \ref{assume:Model}, $(W^*_{t})_{t\geq 0}$ be as in \eqref{eq:W_star}, $\Gamma^\mu$ be as in \ref{condition:Inf_K} and  $((t_p,v_p,f_{p}))_{p\geq 1}$ be as in \ref{condition:mu_seq}.  In addition, let $p^*\in\mathbb{N}$ be such that
\begin{align}\label{eq:P_star}
t_{p+1}> t_p+4v_p,\quad t_p\geq 2v_p+r,\quad 3v_p\geq 2 (v_{p+1}+r)+1,\quad v_{p+1}>3r,\quad\forall p\geq p^*
\end{align}
 and
\begin{align*}
\mathbb{N}^{(1)}=\bigcup_{p\geq p^*}\big\{t_{p+1}-3v_p,\dots,t_{p+1}-r\big\}.
\end{align*}
 Then, for  all  strictly increasing sequence $(t'_k)_{k\geq 1}$ in $\mathbb{N}^{(1)}$ and all  $\epsilon\in(0,\infty)$,    there exists a sequence $(\xi^{(1)}_{\epsilon,t'_k})_{k \geq 1}$ of $[0,1]$-valued random variables, depending on $(\mu_t)_{t\geq 1}$  only through $(W^*_{t})_{t\geq 0}$, $\Gamma^\mu$ and through the sequence $\big((t_p, v_p, f_p)\big)_{p\geq 1}$, such that $\xi^{(1)}_{\epsilon,t'_k}=\smallo_\P(1)$ and such that
\begin{align*}
\P\Big(\pi_{t'_k,\Theta}\big(\mathcal{N}_\epsilon(\tilde{\Theta}_\star)\big) \geq 1-\xi^{(1)}_{\epsilon,t'_k}\Big)=1,\quad\forall k\geq 1.
\end{align*}
If $(t_p)_{p\geq 1}$ is a sequence in $\{sr,\,s\in\mathbb{N}\}$ Assumption  \ref{assume:G_l_E} can be omitted.
\end{lemma}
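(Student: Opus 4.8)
The plan is to deduce Lemma \ref{lemma:piece1} from a single application of Theorem \ref{thm:key}, choosing the sequences $(a_\tau)_{\tau\geq 1}$ and $(b_\tau)_{\tau\geq 1}$ so that each window $(a_\tau,b_\tau]$ sits strictly between two consecutive jump times and abuts the next one. First I would reindex. Since $(t'_k)_{k\geq 1}$ is strictly increasing and, under \eqref{eq:P_star}, the blocks $\{t_{p+1}-3v_p,\dots,t_{p+1}-r\}$ are disjoint and ordered (indeed $t_{p+2}-3v_{p+1}\geq t_{p+1}+v_{p+1}>t_{p+1}-r$), each $t'_k$ lies in exactly one block, of index $p=p(k)$, with $p(k)\to\infty$. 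I set $b_k=t'_k$, take $a_k=r\big(\lfloor t_{p(k)}/r\rfloor+1\big)$ (the smallest multiple of $r$ strictly above $t_{p(k)}$), and let $e_k=r\lceil b_k/r\rceil$. Then $a_k\in\{sr:s\in\mathbb{N}\}$, the offset $m_k:=a_k-t_{p(k)}\in\{1,\dots,r\}$ is bounded, $e_k-r<b_k\leq e_k$, and since $b_k\leq t_{p(k)+1}-r$ we get $e_k\leq t_{p(k)+1}-1$; moreover $b_k-a_k\geq(t_{p+1}-t_p)-3v_p-r>v_p-r\to\infty$ by $t_{p+1}>t_p+4v_p$ and $v_p\to\infty$ from \ref{condition:mu_seq}.

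Next I would verify the noise hypothesis \eqref{eq:Thm_C1}. By construction $\{a_k+1,\dots,e_k\}\subseteq\{t_{p(k)}+2,\dots,t_{p(k)+1}-1\}$ contains no jump time, so embedding $(a_k,e_k)$ into admissible sequences $(l_i),(u_i)$ and invoking Condition \ref{C42} at index $i=p(k)$ gives, $\P$-a.s.,
\[
\frac{1}{e_k-a_k}\log\P\Big(\exists s\in\{a_k+1,\dots,e_k\}:\sum_{i=a_k+1}^s U_i\notin B_{\epsilon}(0)\,\big|\,\F_{e_k}\Big)\leq-\frac{1}{f_{p(k)}(\epsilon)},\quad\forall\epsilon>0.
\]
Since $f_p(\epsilon)\to 0$ for each fixed $\epsilon$, a standard diagonal argument yields a deterministic sequence $\tilde\epsilon_k\downarrow 0$ with $f_{p(k)}(\tilde\epsilon_k)\to 0$; taking $\tilde c_k=1/f_{p(k)}(\tilde\epsilon_k)$ then gives $\tilde\epsilon_k\to 0$, $1/\tilde c_k\to 0$, which is exactly \eqref{eq:Thm_C1}.

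For the mass hypothesis \eqref{eq:Thm_C2} I would lower-bound $\bar\pi_{a_k}(\mathcal{N}_\delta(\tilde\Theta_\star))$ for $\delta\in(0,\delta_\star/2)$. Applying Lemma \ref{lemma:Extend_initit} with $\bar m=r$ and $m=m_k$ (so that $a_k-m_k=t_{p(k)}$) gives $\bar\pi_{a_k}(\mathcal{N}_\delta(\tilde\Theta_\star))\geq\xi'_{a_k,\bar m}\inf_{\theta}K_{\mu_{t_{p(k)}}}(\theta,\mathcal{N}_{\delta/\bar m}(\tilde\Theta_\star))$ with $\log\xi'_{a_k,\bar m}=\bigO_\P(1)$, and Condition \ref{C41} (with $A=\tilde\Theta_\star$, $\epsilon=\delta/\bar m$) bounds the infimum below by $\exp\big(-(t_{p+1}-t_p)f_p(\delta/\bar m)\big)$. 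Dividing by $b_k-a_k$ and using that $(t_{p+1}-t_p)/(b_k-a_k)\to 1$ — which follows from $v_p/(t_{p+1}-t_p)\to 0$ together with the sandwich $(t_{p+1}-t_p)-3v_p-r\leq b_k-a_k\leq(t_{p+1}-t_p)-r$ — and that $\log\xi'_{a_k,\bar m}/(b_k-a_k)=\smallo_\P(1)$ since $b_k-a_k\to\infty$, I obtain $\frac{1}{b_k-a_k}\log\bar\pi_{a_k}(\mathcal{N}_\delta(\tilde\Theta_\star))\geq-\xi_{\delta,a_k}$ with $\xi_{\delta,a_k}=\smallo_\P(1)$. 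Theorem \ref{thm:key} then yields the conclusion, and tracking dependences through \eqref{eq:Thm_C1}, \eqref{eq:Thm_C2} and Lemma \ref{lemma:Extend_initit} shows $\xi^{(1)}_{\epsilon,t'_k}$ depends on $(\mu_t)_{t\geq 1}$ only through $(W^*_t)_{t\geq 0}$, $\Gamma^\mu$ and $((t_p,v_p,f_p))_{p\geq 1}$, as claimed. In the special case $(t_p)\subset\{sr\}$ I would instead take $a_k=t_{p(k)}$, so that $\bar\pi_{t_{p(k)}}(\mathcal{N}_\delta(\tilde\Theta_\star))\geq\inf_\theta K_{\mu_{t_{p(k)}}|\Theta}(\theta,\mathcal{N}_\delta(\tilde\Theta_\star))\geq\inf_\theta K_{\mu_{t_{p(k)}}}(\theta,\mathcal{N}_\delta(\tilde\Theta_\star))$ (using $\mathcal{N}_\delta(\tilde\Theta_\star)\subseteq\Theta$ and that the normalizing denominator in $K_{\mu_t|\Theta}$ is at most $1$); this makes the $\bar m>1$ version of Lemma \ref{lemma:Extend_initit} unnecessary, and hence \ref{assume:G_l_E} can be dropped.

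The main obstacle is the combinatorial bookkeeping ensuring that $(a_k,e_k]$ lands precisely in the regime where the thin-tailed ``bulk'' estimate \ref{C42} (rather than the jump-window estimate \ref{C43}) is available, while keeping $a_k$ a multiple of $r$ and within a bounded distance $m_k\leq r$ of the jump time $t_{p(k)}$ so that \ref{C41} can be invoked exactly at $t_{p(k)}$. The quantitative heart of the argument is the ratio $(t_{p+1}-t_p)/(b_k-a_k)\to 1$: it is what allows the small but non-summable kernel cost $f_p(\delta/\bar m)$ to be absorbed into an $\smallo_\P(1)$ term, and it rests entirely on the scale separation $v_p=\smallo(t_{p+1}-t_p)$ encoded in \ref{condition:mu_seq}.
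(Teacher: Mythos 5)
Your proposal is correct and follows essentially the same route as the paper's proof: a single application of Theorem \ref{thm:key} with $b_\tau=t'_\tau$, $a_\tau$ a multiple of $r$ within distance $r$ of the jump time $t_{p(\tau)}$, the noise condition \eqref{eq:Thm_C1} verified via \ref{C42} on the jump-free window (after the same diagonal choice of $\tilde\epsilon_\tau$ with $f_{p(\tau)}(\tilde\epsilon_\tau)\to 0$), and the mass condition \eqref{eq:Thm_C2} obtained from Lemma \ref{lemma:Extend_initit} combined with \ref{C41}, using the scale separation $v_p=\smallo(t_{p+1}-t_p)$ exactly as the paper does in \eqref{eq:Show_lower}. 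The only deviations are cosmetic and, if anything, slightly cleaner: you take $m_k\in\{1,\dots,r\}$ with $\bar m=r$ (so every invocation of Lemma \ref{lemma:Extend_initit} sits strictly inside its stated range $m\geq 1$, whereas the paper allows $m_\tau=0$), and in the special case $(t_p)_{p\geq 1}\subset\{sr,\,s\in\mathbb{N}\}$ you bound $\bar\pi_{t_{p(k)}}$ directly by $\inf_{\theta}K_{\mu_{t_{p(k)}}}(\theta,\cdot)$ rather than routing through the $\bar m=1$ version of the lemma, which is precisely why \ref{assume:G_l_E} can be dropped there.
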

\begin{remark}
To show that there exists a $p^*\in\mathbb{N}$ such that \eqref{eq:P_star} holds note that we trivially have $t_{p+1}\geq t_p+4v_p$, $t_p\geq 2v_p+r$ and $v_{p+1}> 3r$ for $p$ large enough since $\lim_{p\rightarrow\infty}(t_{p+1}-t_p)/v_p=\lim_{p\rightarrow\infty}v_p=\infty$. In addition, since $\limsup_{p\rightarrow\infty} v_{p+1}/v_p<3/2$ there exist a constant $c\in(0,1/2)$ and a $p'\in\mathbb{N}$ such that
\begin{align*}
(1+c)v_p\geq v_{p+1},\quad v_p\geq \frac{2r+1}{1-2c},\quad\forall p\geq p'
\end{align*}
implying that 
\begin{align*}
3v_p\geq 2 (1+c)v_p+2r+1\geq 2v_{p+1}+2r+1,\quad\forall p\geq p'.
\end{align*}
\end{remark}

\begin{proof}
Let $\delta_\star$ be as in \ref{assume:Model} and $(\gamma_p)_{p\geq 1}$ be a sequence in $(0,\infty)$ such that $\lim_{p\rightarrow\infty}\gamma_p=\lim_{p\rightarrow\infty} f_p(\gamma_p)=0$. Remark that such  a sequence  $(\gamma_p)_{p\geq 1}$ exists since, by assumption, $\lim_{p\rightarrow\infty} f_p(\gamma)=0$ for all $\gamma\in(0,\infty)$. Next, let $(t'_k)_{k\geq 1}$ be as in the statement of the lemma and for all $k\geq 1$ let $p_k$ be such that $t_{p_k}< t'_k< t_{p_k+1}$ and $m_k\in\{0,\dots,r-1\}$ be such that $t_{p_k}+m_k\in\{sr,\,s\in\mathbb{N}\}$. Remark that  
\begin{align}\label{eq:Tk_in}
t_{p_k}+m_k\leq t_{p_k}+v_{p_k}\leq t'_k<t_{p_k+1},\quad\forall k\geq 1  
\end{align}
and let  $\bar{m}=1+\max\{m_k,\,k\in\mathbb{N}\}$. Finally, for all $t\geq r+1$   let $(\xi'_{t,\bar{m}})_{t\geq 1}$ be as defined in Lemma \ref{lemma:Extend_initit}.

Then, the result of the lemma follows by applying Theorem \ref{thm:key}  with $(a_\tau)_{\tau\geq 1}$, $(b_\tau)_{\tau}$, $(\tilde{c}_\tau)_{\tau\geq 1}$, $(\tilde{\epsilon}_\tau)_{\tau\geq 1}$ and $( (\xi_{\delta,a_\tau})_{\delta\in(0,\delta_\star/2)})_{\tau\geq 1}$  defined by
\begin{equation}\label{eq:def_seq}
\begin{split}
&a_\tau=t_{p_\tau}+m_{\tau},\quad b_\tau=t'_{\tau},\quad \tilde{c}_\tau=\frac{1}{f_{p_\tau}(\gamma_{p_\tau})},\quad\tilde{\epsilon}_\tau=\gamma_{p_\tau},\quad\hspace{1.8cm}\tau\geq 1\\
&\xi_{\delta,a_\tau}= \bigg(\frac{t_{p_\tau+1}-t_{p_\tau}}{t_{p_\tau+1}-t_{p_\tau}-4v_{p_\tau}} f_{p_\tau}(\delta)-\frac{1}{t_{p_\tau+1}-t_{p_\tau}-4v_{p_\tau} }\log \xi'_{a_\tau,\bar{m}}\bigg),\quad\tau\geq 1,\quad\delta\in(0,\delta_\star/2).
\end{split}
\end{equation}

To see this  remark  first that,  as required by Theorem \ref{thm:key},  the  sequence  $(a_\tau)_{\tau\geq 1}$ defined in \eqref{eq:def_seq} belongs to $\{sr,\,s\in\mathbb{N}\}$   and the sequence $(b_\tau)_{\tau\geq 1}$  defined in \eqref{eq:def_seq} is  such that $ b_\tau> a_\tau$ for all $\tau\geq 1$. Next, by \eqref{eq:Tk_in}, $\liminf_{\tau\rightarrow\infty}(b_\tau-a_\tau)\geq \liminf_{\tau\rightarrow\infty}(v_{p_\tau}-m_\tau)\geq \liminf_{\tau\rightarrow\infty}(v_{p_\tau}-r)$ and thus,  as required by Theorem \ref{thm:key}, $\lim_{\tau\rightarrow\infty} (b_\tau-a_\tau)=\infty$ since $\lim_{p\rightarrow\infty}v_p=\infty$  by assumption. In addition, as required by Theorem \ref{thm:key}, the  sequences   $(\tilde{c}_{\tau})_{\tau\geq 1}$ and $(\tilde{\epsilon}_{\tau})_{\tau\geq 1}$ defined in \eqref{eq:def_seq} are such that $ \lim_{\tau\rightarrow\infty}\tilde{\epsilon}_\tau=\lim_{\tau\rightarrow\infty}(1/\tilde{c}_\tau)=0$. To proceed further for all $\tau\geq 1$ we let $e_\tau\in\{sr,\,s\in\mathbb{N}\}$ be such that $e_\tau-r<t'_\tau\leq e_\tau$. Then, noting that $t_{p_\tau}<e_\tau<t_{p_\tau+1}$ for all $\tau\geq 1$ and using \ref{C42} with $(l_p)_{p\geq 1}=(a_p)_{p\geq 1}$ and $(u_p)_{p\geq 1}=(e_{p})_{p\geq 1}$, it follows that  the sequences  $(a_\tau)_{\tau\geq 1}$, $(b_\tau)_{\tau}$, $(\tilde{c}_\tau)_{\tau\geq 1}$  and $(\tilde{\epsilon}_\tau)_{\tau\geq 1}$ defined in \eqref{eq:def_seq} are such that \eqref{eq:Thm_C1} holds. Finally, using Lemma \ref{lemma:Extend_initit} and   under \ref{C41}, for all $\delta\in(0,\delta_\star)$ and $\tau\geq 1$  we $\P$-a.s.~have 
\begin{equation}\label{eq:Show_lower}
\begin{split}
0&\geq \frac{1}{b_\tau-a_\tau}\log\bar{\pi}_{a_\tau}\big(\mathcal{N}_\delta(\tilde{\Theta}_\star)\big)
\\
&\geq   \bigg(\frac{t_{p_\tau+1}-t_{p_\tau}}{t'_\tau-t_{p_\tau}-m_\tau}\bigg)\frac{1}{t_{p_\tau+1}-t_{p_\tau}} \inf_{\theta\in\Theta}\log K_{\mu_{t_{p_\tau}}}(\theta, \mathcal{N}_{\delta/\bar{m}}(\tilde{\Theta}_\star))+\frac{1}{t'_\tau-t_{p_\tau}-m_\tau}\log \xi'_{a_\tau,\bar{m}}\\
&\geq -\frac{t_{p_\tau+1}-t_{p_\tau}}{t'_{\tau}-t_{p_\tau}-m_\tau} f_{p_\tau}(\delta/\bar{m})+\frac{1}{t'_\tau-t_{p_\tau}-m_\tau}\log \xi'_{a_\tau,\bar{m}}\\
&\geq -\bigg(\frac{t_{p_\tau+1}-t_{p_\tau}}{t_{p_\tau+1}-t_{p_\tau}-4v_{p_\tau}} f_{p_\tau}(\delta/\bar{m})-\frac{1}{t_{p_\tau+1}-t_{p_\tau}-4v_{p_\tau} }\log \xi'_{a_\tau,\bar{m}}\bigg).
\end{split}
\end{equation}
where the last inequality uses the fact that $t'_{\tau}-t_{p_\tau}-m_\tau\geq t_{p_\tau+1}-t_{p_\tau}-4v_{p_\tau}\geq 1$ for all $\tau\geq 1$. By assumption we have $\lim_{p\rightarrow\infty}v_p/(t_{p+1}-t_p)=0$ and $\lim_{p\rightarrow\infty}f_p(\delta)=0$ for all $\delta\in(0,\infty)$ and thus, using the fact that $\log \xi'_{a_\tau,r}=\bigO_\P(1)$,
\begin{align*}
\bigg(\frac{t_{p_\tau+1}-t_{p_\tau}}{t_{p_\tau+1}-t_{p_\tau}-4v_{p_\tau}} f_{p_\tau}(\delta)-\frac{1}{t_{p_\tau+1}-t_{p_\tau}-4v_{p_\tau} }\log \xi'_{a_\tau,r}\bigg)=\smallo_\P(1),\quad\forall\delta\in(0,\infty)
\end{align*}
and thus it follows from \eqref{eq:Show_lower} that  \eqref{eq:Thm_C2} holds for the sequences  $( (\xi_{\delta,a_\tau})_{\delta\in(0,\delta_\star/2)})_{\tau\geq 1}$, $(a_\tau)_{\tau\geq 1}$ and $(b_\tau)_{\tau\geq 1}$ defined in \eqref{eq:def_seq}. The proof of the lemma is complete upon noting that \ref{assume:G_l_E} is only required by Lemma \ref{lemma:Extend_initit} and that $\bar{m}=1$ if $(t_p)_{p\geq 1}$ is a sequence in $\{sr,\,s\in\mathbb{N}\}$.
\end{proof}

Secondly, under \ref{condition:mu_seq} and  using Lemma \ref{lemma:piece1},   a second application of  Theorem \ref{thm:key} yields the following result:

\begin{lemma}\label{lemma:piece2}
Assume that Assumptions \ref{assume:Model}-\ref{assume:G_l_E} and \eqref{eq:W_star} hold and that $(\mu_t)_{t\geq 0}$ is such that Conditions \ref{condition:mu1}-\ref{condition:mu_seq} are satisfied.  Let  $r$ be as in \ref{assume:Model},  $(W^*_{t})_{t\geq 0}$ be as in \eqref{eq:W_star},  $(\Gamma^\mu_\delta)_{\delta\in(0,\infty)}$ be as in \ref{condition:Inf_mu}, $\Gamma^\mu$ be as in \ref{condition:Inf_K} and $\big((t_p, v_p, f_p)\big)_{p\geq 1}$ be as in \ref{condition:mu_seq}. In addition let  $p^*\in\mathbb{N}$ be as in Lemma \ref{lemma:piece1}   and
\begin{align*}
\mathbb{N}^{(2)}= \bigcup_{p\geq p^*}\big\{t_{p+1}-r,\dots,t_{p+1}+v_{p+1}\big\}.
\end{align*}
 Then, for all strictly increasing sequence $(t''_k)_{k\geq 1}$ in $\mathbb{N}^{(2)}$  and all  $\epsilon\in(0,\infty)$, there exists a sequence $(\xi^{(2)}_{\epsilon,t''_k})_{k \geq 1}$ of $[0,1]$-valued random variables, depending on $(\mu_t)_{t\geq 1}$  only through $(W^*_{t})_{t\geq 0}$, $(\Gamma^\mu_\delta)_{\delta\in(0,\infty)}$,  $\Gamma^\mu$ and trough the $\big((t_p, v_p, f_p)\big)_{p\geq 1}$, such that $\xi^{(2)}_{\epsilon,t''_k}=\smallo_\P(1)$ and such that
\begin{align*}
\P\Big(\pi_{t''_k,\Theta}\big( \mathcal{N}_\epsilon(\tilde{\Theta}_\star)\big)\geq 1-\xi^{(2)}_{\epsilon,t''_k}\Big)=1,\quad\forall k\geq 1.
\end{align*}
If $(t_p)_{p\geq 1}$ is a sequence in $\{sr,\,s\in\mathbb{N}\}$ then Assumption  \ref{assume:G_l_E} can be omitted.
\end{lemma}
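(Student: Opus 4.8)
The plan is to prove Lemma \ref{lemma:piece2} by a second application of Theorem \ref{thm:key}, this time bootstrapping from the concentration already established in Lemma \ref{lemma:piece1}. The conceptual picture is as follows. Lemma \ref{lemma:piece1} guarantees that at the end of each ``quiet'' block, namely at times in $\mathbb{N}^{(1)}=\bigcup_{p\geq p^*}\{t_{p+1}-3v_p,\dots,t_{p+1}-r\}$, the measure $\pi_{\cdot,\Theta}$ already puts mass arbitrarily close to $1$ on $\mathcal{N}_\epsilon(\tilde{\Theta}_\star)$. The present lemma concerns the ``jump'' region around the dispersive times $t_{p+1}$, that is the set $\mathbb{N}^{(2)}=\bigcup_{p\geq p^*}\{t_{p+1}-r,\dots,t_{p+1}+v_{p+1}\}$, where a heavy-tailed/more dispersed kernel is applied. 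I want to show that concentration is \emph{preserved} through these jumps. The mechanism is that we can anchor Theorem \ref{thm:key} at an index $a_\tau$ sitting just before the jump, inside $\mathbb{N}^{(1)}$, where Lemma \ref{lemma:piece1} supplies the lower bound \eqref{eq:Thm_C2} on $\bar{\pi}_{a_\tau}(\mathcal{N}_\delta(\tilde{\Theta}_\star))$, and set $b_\tau=t''_\tau$ the target time in $\mathbb{N}^{(2)}$.

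Concretely, for a given strictly increasing sequence $(t''_k)_{k\geq 1}$ in $\mathbb{N}^{(2)}$, for each $k$ let $p_k\geq p^*$ be the index with $t''_k\in\{t_{p_k+1}-r,\dots,t_{p_k+1}+v_{p_k+1}\}$. First I would choose the anchor $a_\tau\in\{sr,\,s\in\mathbb{N}\}$ to lie in the interval $\{t_{p_\tau+1}-3v_{p_\tau},\dots,t_{p_\tau+1}-r\}\subset\mathbb{N}^{(1)}$; the spacing conditions \eqref{eq:P_star} (in particular $3v_p\geq 2(v_{p+1}+r)+1$ and $v_{p+1}>3r$) are exactly what guarantees this interval is long enough to contain a multiple of $r$ and to leave room below $b_\tau=t''_\tau$. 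Then $b_\tau-a_\tau$ is of order $v_{p_\tau}$, hence $\to\infty$. The crucial probabilistic input is the bound \eqref{eq:Thm_C1} of Theorem \ref{thm:key} on the probability that the random-walk increments $\sum_{i=a_\tau+1}^s U_i$ escape a small ball $B_{\tilde\epsilon_\tau}(0)$: here the relevant window straddles the dispersive time $t_{p_\tau+1}$, so I would invoke condition \ref{C43} of \ref{condition:mu_seq} (rather than \ref{C42}, which governs the quiet intervals). Condition \ref{C43} is stated precisely for sequences with $t_i-3v_i\leq l_i<u_i\leq t_i+v_i$, which matches our $l_\tau=a_\tau$, $u_\tau=e_\tau$ with $e_\tau\in\{sr,\,s\in\mathbb{N}\}$, $e_\tau-r<b_\tau\leq e_\tau$; one checks $e_\tau\leq t_{p_\tau+1}+v_{p_\tau+1}\leq t_{p_\tau+1}+v_{p_\tau}$ up to the $\limsup v_{p+1}/v_p<3/2$ bookkeeping, so the window stays inside the admissible range. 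Choosing $(\gamma_p)_{p\geq 1}$ with $\gamma_p\to 0$ and $f_p(\gamma_p)\to 0$, and setting $\tilde c_\tau=1/f_{p_\tau}(\gamma_{p_\tau})$, $\tilde\epsilon_\tau=\gamma_{p_\tau}$, verifies the escape hypothesis \eqref{eq:Thm_C1}.

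For condition \eqref{eq:Thm_C2} I would use Lemma \ref{lemma:piece1}: since $a_\tau\in\mathbb{N}^{(1)}$, that lemma gives $\pi_{a_\tau,\Theta}(\mathcal{N}_{\delta'}(\tilde{\Theta}_\star))\geq 1-\xi^{(1)}_{\delta',a_\tau}$ with $\xi^{(1)}_{\delta',a_\tau}=\smallo_\P(1)$ for every $\delta'$. Passing from $\pi_{a_\tau,\Theta}$ to $\bar{\pi}_{a_\tau}=\int K_{\mu_{a_\tau}|\Theta}(\theta_{a_\tau-1},\cdot)\,\pi_{a_\tau-1,\Theta}(\dd\theta_{a_\tau-1})$ as in \eqref{eq:pi_bar} costs only one application of the kernel, whose mass on a slightly enlarged neighbourhood is controlled below by $\Gamma^\mu_\delta$ of \ref{condition:Inf_mu}; so $\bar{\pi}_{a_\tau}(\mathcal{N}_\delta(\tilde{\Theta}_\star))$ is bounded below, and $-(b_\tau-a_\tau)^{-1}\log$ of it is $\smallo_\P(1)$, giving \eqref{eq:Thm_C2} with a suitable $\xi_{\delta,a_\tau}$. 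Theorem \ref{thm:key} then yields $\pi_{b_\tau,\Theta}(\mathcal{N}_\epsilon(\tilde{\Theta}_\star))\geq 1-\xi^{(2)}_{\epsilon,t''_\tau}$ with $\xi^{(2)}_{\epsilon,t''_\tau}=\smallo_\P(1)$, depending on $(\mu_t)_{t\geq 1}$ only through the listed objects, since all inputs ($\tilde c_\tau,\tilde\epsilon_\tau$ from $f_p$, the $\xi_{\delta,a_\tau}$ from Lemma \ref{lemma:piece1} and $\Gamma^\mu_\delta$, and $\Gamma^\mu$, $W^*_t$ inside Theorem \ref{thm:key}) are of the permitted form.

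I expect the main obstacle to be the index bookkeeping: verifying that for \emph{every} target $t''_\tau$ in the jump window one can place an anchor $a_\tau$ that simultaneously (i) is a multiple of $r$, (ii) lies in $\mathbb{N}^{(1)}$ so Lemma \ref{lemma:piece1} applies, and (iii) produces a window $\{a_\tau+1,\dots,e_\tau\}$ falling within the range $t_i-3v_i\leq \cdot\leq t_i+v_i$ required by condition \ref{C43}. These three constraints are precisely reconciled by the spacing inequalities \eqref{eq:P_star} and the $\limsup v_{p+1}/v_p<3/2$ condition, and the delicate point is confirming that the $r$-step rounding (the gap $m_\tau\in\{0,\dots,r-1\}$ between $a_\tau$ and the nearest admissible multiple of $r$) never pushes the window outside the admissible zone nor shrinks $b_\tau-a_\tau$ below a quantity tending to infinity. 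Everything else is a routine transcription of the argument structure already used in Lemma \ref{lemma:piece1}, with \ref{C43} substituted for \ref{C42}.
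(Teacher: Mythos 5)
Your proposal is correct and follows essentially the same route as the paper's proof: anchor at a multiple of $r$ placed roughly $2v_{p_\tau}$ below $t''_\tau$, verify \eqref{eq:Thm_C1} via \ref{C43}, verify \eqref{eq:Thm_C2} by combining Lemma \ref{lemma:piece1} with the one-step kernel bound $\mu_{a_\tau}\big(B_{\delta/2}(0)\big)\geq \Gamma^\mu_{\delta/2}$ from \ref{condition:Inf_mu}, and conclude with Theorem \ref{thm:key}. The only slip is an off-by-one: since $\bar{\pi}_{a_\tau}$ is built from $\pi_{a_\tau-1,\Theta}$, Lemma \ref{lemma:piece1} must be invoked at time $a_\tau-1$ (so it is $a_\tau-1$, not $a_\tau$, that must lie in $\mathbb{N}^{(1)}$), which is exactly how the paper proceeds by taking $a_\tau=t''_\tau-2v_{p_\tau}-m_\tau$ with $t''_\tau-2v_{p_\tau}-1-m_\tau\in\mathbb{N}^{(1)}$.
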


\begin{proof}
Let $\delta_\star$ be as in \ref{assume:Model}, $(\gamma_p)_{p\geq 1}$ be as defined in the proof of Lemma \ref{lemma:piece1}, $(t''_k)_{k\geq 1}$ be as in the statement of the lemma, and  for all $k\geq 1$, let $p_k\in\mathbb{N}$ be such that $t''_k\in\{t_{p_k}-r,\dots,t_{p_k}+v_{p_k}\}$ and $m_k\in\{0,\dots, r-1\}$ be such that $t''_k-2v_{p_k}-m_k\in\{sr,\,s\in\mathbb{N}\}$. Since we have both $v_{p+1}>r$ and $3v_p\geq 2(v_{p+1}+r)+1$ for all $p\geq p^*$ it follows that $t''_{k}-2v_{p_k}-1-m_k\in\mathbb{N}^{(1)}$ for all $k\geq 1$, with $\mathbb{N}^{(1)}$ as defined in Lemma \ref{lemma:piece1}. In what follows, for all $\epsilon\in(0,\infty)$ we let $(\xi^{(1)}_{\epsilon, t'_k})_{\tau\geq 1}$ be such that the conclusion of Lemma \ref{lemma:piece1} holds when $t'_k=t''_{k}-2v_{p_k}-1-m_k$ for all $k\geq 1$. Then,  the result of the lemma follows by applying Theorem \ref{thm:key} with    $(a_\tau)_{\tau\geq 1}$, $(b_\tau)_{\tau}$, $(\tilde{c}_\tau)_{\tau\geq 1}$,  $(\tilde{\epsilon}_\tau)_{\tau\geq 1}$ and $((\xi_{\delta,a_\tau})_{\delta\in(0,\delta_\star/2)})_{\tau\geq 1}$ defined by
\begin{equation}\label{eq:def_seq2}
\begin{split}
&a_\tau=t''_{\tau}-2v_{p_\tau}-m_\tau,\quad b_\tau=t''_{\tau},\quad \tilde{c}_\tau=\frac{1}{f_{p_\tau}(\gamma_{p_\tau})},\quad\tilde{\epsilon}_\tau=\gamma_{p_\tau},\quad\tau\geq 1\\
&\xi_{\delta,a_\tau}= -\frac{\log  \Gamma^\mu_{\delta/2}+\log \big(1-\xi^{(1)}_{\delta/2,a_\tau-1}\big)}{3v_{p_\tau}},\hspace{3.7cm} \tau\geq 1,\quad \delta\in(0,\delta_\star/2).
\end{split}
\end{equation}

To see this  remark  first that,   as required by Theorem \ref{thm:key},  the   sequence $(a_\tau)_{\tau\geq 1}$ defined in \eqref{eq:def_seq2} belongs to $\{sr,\,s\in\mathbb{N}\}$ and   the sequence $(b_\tau)_{\tau\geq 1}$ defined in \eqref{eq:def_seq2} is  such that $b_\tau> a_\tau$ for all $\tau\geq 1$. Next,   as required by Theorem \ref{thm:key},  the conditions on  $((t_p,v_p))_{p\geq 1}$ ensure that $\lim_{\tau\rightarrow\infty} (b_\tau-a_\tau)=\infty$. In addition, as required by Theorem \ref{thm:key}, the  sequences   $(\tilde{c}_{\tau})_{\tau\geq 1}$ and $(\tilde{\epsilon}_{\tau})_{\tau\geq 1}$ defined in \eqref{eq:def_seq2} are such that $ \lim_{\tau\rightarrow\infty}\tilde{\epsilon}_\tau=\lim_{\tau\rightarrow\infty}(1/ \tilde{c}_\tau)=0$. To proceed further for all $\tau\geq 1$ we let $e_\tau\in\{sr,\,s\in\mathbb{N}\}$ be such that $e_\tau-r<t''_\tau\leq e_\tau$. Then, noting that $t_{p_\tau}-3v_{p_\tau}\leq a_\tau<e_\tau\leq t_{p_{\tau}}+v_{p_\tau}$ for all $\tau\geq 1$, by  using \ref{C43} with $(l_p)_{p\geq 1}=(a_p)_{p\geq 1}$ and  $(u_p)_{p\geq 1}=(e_p)_{p\geq 1}$  it follows   that the sequences   $(a_\tau)_{\tau\geq 1}$, $(b_\tau)_{\tau}$, $(\tilde{c}_\tau)_{\tau\geq 1}$ and $(\tilde{\epsilon}_{\tau})_{\tau\geq 1}$ defined in \eqref{eq:def_seq2} are such that \eqref{eq:Thm_C1} holds.

 Finally, to prove that \eqref{eq:Thm_C2} holds let $\delta\in(0,\delta_\star/2)$. Then, under \ref{assume:Model_ThetaStar}  we have $\theta+u\in \mathcal{N}_\delta(\tilde{\Theta}_\star)\subseteq\Theta$ for all $\theta\in \mathcal{N}_{\delta/2}(\tilde{\Theta}_\star)$ and all $u\in B_{\delta/2}(0)$. Therefore,  for all $\tau\geq 1$ we have
\begin{equation}\label{eq:split_ll1}
\begin{split}
\bar{\pi}_{a_\tau}\big(\mathcal{N}_\delta(\tilde{\Theta}_\star)\big)=\int_\Theta K_{\mu_{a_\tau}|\Theta}\big(\theta_{a_\tau-1}, \mathcal{N}_\delta(\tilde{\Theta}_\star)\big) \pi_{a_\tau-1,\Theta}(\dd \theta_{a_\tau-1})
&\geq\mu_{a_\tau}\big( B_{\delta/2}(0)\big)\pi_{a_\tau-1,\Theta}\big(\mathcal{N}_{\delta/2}(\tilde{\Theta}_\star)\big) 
\end{split}
\end{equation}
and thus, under \ref{condition:Inf_mu} and  using Lemma \ref{lemma:piece1},  
\begin{align}\label{eq:split_ll2}
\frac{1}{b_\tau-a_\tau}\log \bar{\pi}_{a_\tau}\big(\mathcal{N}_\delta(\tilde{\Theta}_\star)\big) 
&\geq \frac{1}{3v_{p_\tau}}\Big( \log \Gamma^\mu_{\delta/2}+\log\big(1- \xi^{(1)}_{\delta/2,a_\tau-1}\big)\Big),\quad\forall\tau\geq 1,\quad \P-a.s.
\end{align}
where  $( \xi^{(1)}_{\delta/2,a_\tau-1})_{\tau\geq 1}$ depends on  $(\mu_t)_{t\geq 1}$  only through $(W^*_{t})_{t\geq 0}$, $\Gamma^\mu$ and  $\big((t_p,v_p,f_p)\big)_{p\geq 1}$. Since under \ref{condition:Inf_mu}-\ref{condition:Inf_K}  we have $\P(\Gamma^\mu_{\delta/2}>0)=\P(\Gamma^\mu>0)=1$ while, by  Lemma \ref{lemma:piece1}, we have $\xi^{(1)}_{\delta/2,a_\tau-1}=\smallo_\P(1)$, it follows that
\begin{align*}
\frac{1}{3v_{p_\tau}}\Big( \log \Gamma^\mu_{\delta/2}+\log\big(1- \xi^{(1)}_{\delta/2,a_\tau-1}\big)\Big)\PP 0.
\end{align*}
This concludes to show that \eqref{eq:Thm_C2} holds for the sequences $(a_\tau)_{\tau\geq 1}$, $(b_\tau)_{\tau\geq 1}$ and $( (\xi_{\delta,a_\tau})_{\delta\in(0,\delta_\star/2)})_{\tau\geq 1}$  defined in \eqref{eq:def_seq2} and the proof of the lemma is complete.
\end{proof}

\begin{lemma}\label{lemma:piece3}
Assume that Assumptions \ref{assume:Model}-\ref{assume:G_l_E} and \eqref{eq:W_star} hold  and that $(\mu_t)_{t\geq 0}$ is such that Conditions \ref{condition:mu1}-\ref{condition:mu_seq} are satisfied. Let $r$ be as in \ref{assume:Model}, $(W^*_{t})_{t\geq 0}$ be as in \eqref{eq:W_star}, $(\Gamma^\mu_\delta)_{\delta\in(0,\infty)}$ be as in \ref{condition:Inf_mu}, $\Gamma^\mu$ be as in \ref{condition:Inf_K} and $\big((t_p, v_p, f_p)\big)_{p\geq 1}$ be as in \ref{condition:mu_seq}. In addition let $p^*$ be as in Lemma \ref{lemma:piece1}  and
\begin{align*}
\mathbb{N}^{(3)}= \bigcup_{p\geq p^*}\big\{t_{p+1}+v_{p+1},\dots,t_{p+2}-r\big\}.
\end{align*}
 Then, for all strictly increasing sequence $(t'''_k)_{k\geq 1}$ in $\mathbb{N}^{(3)}$  and  all $\epsilon\in(0,\infty)$,  there exists a sequence $(\xi^{(3)}_{\epsilon,t'''_k})_{k \geq 1}$ of $[0,1]$-valued random variables, depending on $(\mu_t)_{t\geq 1}$  only through $(W^*_{t})_{t\geq 0}$,  $(\Gamma^\mu_\delta)_{\delta\in(0,\infty)}$,  $\Gamma^\mu$ and through the sequence $\big((t_p, v_p, f_p)\big)_{p\geq 1}$, such that $\xi^{(3)}_{\epsilon,t'''_k}=\smallo_\P(1)$ and such that
\begin{align*}
\P\Big(\pi_{t'''_k,\Theta}\big( \mathcal{N}_\epsilon(\tilde{\Theta}_\star)\big)\geq 1-\xi^{(3)}_{\epsilon,t'''_k}\Big)=1,\quad\forall k\geq 1.
\end{align*}
If $(t_p)_{p\geq 1}$ is a sequence in $\{sr,\,s\in\mathbb{N}\}$ then Assumption  \ref{assume:G_l_E} can be omitted.
\end{lemma}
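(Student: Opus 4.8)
The plan is to prove Lemma \ref{lemma:piece3} by a third and final application of Theorem \ref{thm:key}, exactly paralleling the structure of the proofs of Lemmas \ref{lemma:piece1} and \ref{lemma:piece2}. The three lemmas together partition the index set $\mathbb{N}$ (past a threshold) into the three families $\mathbb{N}^{(1)}$, $\mathbb{N}^{(2)}$, $\mathbb{N}^{(3)}$: Lemma \ref{lemma:piece1} establishes concentration at times just \emph{before} the $t_{p+1}$ (on the interval $\{t_{p+1}-3v_p,\dots,t_{p+1}-r\}$), Lemma \ref{lemma:piece2} covers the short window \emph{straddling} $t_{p+1}$ (on $\{t_{p+1}-r,\dots,t_{p+1}+v_{p+1}\}$), and the present lemma must cover the long stretch $\{t_{p+1}+v_{p+1},\dots,t_{p+2}-r\}$ that runs up to the next dispersed time. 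For this last stretch the Gaussian (thin-tailed, fast-vanishing) part of the dynamics on $\theta$ does the work: the conclusion of Lemma \ref{lemma:piece2} guarantees enough mass around $\tilde{\Theta}_\star$ at a time just after $t_{p+1}+v_{p+1}$, and then the kernels $K_t$ for $t\not\in(t_p)_{p\geq 1}$ force further concentration.

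Concretely, I would first fix a strictly increasing sequence $(t'''_k)_{k\geq 1}$ in $\mathbb{N}^{(3)}$ and, for each $k$, let $p_k$ be the index with $t'''_k\in\{t_{p_k}+v_{p_k},\dots,t_{p_k+1}-r\}$. I would choose a starting time $a_\tau$ slightly after $t_{p_\tau}+v_{p_\tau}$, rounded down to a multiple of $r$, so that $a_\tau\in\{sr,\,s\in\mathbb{N}\}$ as Theorem \ref{thm:key} requires, and take $b_\tau=t'''_\tau$. The point $a_\tau-1$ would be arranged to fall in $\mathbb{N}^{(2)}$, so that Lemma \ref{lemma:piece2} supplies a sequence $(\xi^{(2)}_{\delta/2,a_\tau-1})_{\tau\geq 1}$ with $\xi^{(2)}_{\delta/2,a_\tau-1}=\smallo_\P(1)$ controlling $\pi_{a_\tau-1,\Theta}(\mathcal{N}_{\delta/2}(\tilde{\Theta}_\star))$ from below. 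Verifying hypothesis \eqref{eq:Thm_C1} of Theorem \ref{thm:key} is the routine part: with $e_\tau$ the next multiple of $r$ after $t'''_\tau$, one has $t_{p_\tau}\leq a_\tau<e_\tau<t_{p_\tau+1}$, so Condition \ref{C42} of \ref{condition:mu_seq} applies with $(l_p,u_p)=(a_p,e_p)$ and yields the required exponential small-deviation bound with $\tilde{c}_\tau=1/f_{p_\tau}(\gamma_{p_\tau})$ and $\tilde{\epsilon}_\tau=\gamma_{p_\tau}$, where $(\gamma_p)_{p\geq 1}$ is chosen as in the earlier proofs so that $\gamma_p\downarrow 0$ and $f_p(\gamma_p)\downarrow 0$.

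For the lower bound \eqref{eq:Thm_C2}, I would replay the computation in \eqref{eq:split_ll1}--\eqref{eq:split_ll2}: for $\delta\in(0,\delta_\star/2)$, using \ref{assume:Model_ThetaStar} to ensure $\theta+u\in\mathcal{N}_\delta(\tilde{\Theta}_\star)\subseteq\Theta$ whenever $\theta\in\mathcal{N}_{\delta/2}(\tilde{\Theta}_\star)$ and $u\in B_{\delta/2}(0)$, one gets $\bar{\pi}_{a_\tau}(\mathcal{N}_\delta(\tilde{\Theta}_\star))\geq \mu_{a_\tau}(B_{\delta/2}(0))\,\pi_{a_\tau-1,\Theta}(\mathcal{N}_{\delta/2}(\tilde{\Theta}_\star))$, and then Condition \ref{condition:Inf_mu} bounds $\mu_{a_\tau}(B_{\delta/2}(0))\geq\Gamma^\mu_{\delta/2}$. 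The resulting definition would be
\begin{align*}
\xi_{\delta,a_\tau}=-\frac{\log\Gamma^\mu_{\delta/2}+\log\big(1-\xi^{(2)}_{\delta/2,a_\tau-1}\big)}{b_\tau-a_\tau},\quad\tau\geq 1,\quad\delta\in(0,\delta_\star/2),
\end{align*}
and since $\P(\Gamma^\mu_{\delta/2}>0)=1$, $\xi^{(2)}_{\delta/2,a_\tau-1}=\smallo_\P(1)$, and $b_\tau-a_\tau\to\infty$, we get $\xi_{\delta,a_\tau}=\smallo_\P(1)$, so \eqref{eq:Thm_C2} holds. Applying Theorem \ref{thm:key} then produces the desired $(\xi^{(3)}_{\epsilon,t'''_k})_{k\geq 1}$.

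The main thing to watch, rather than a deep obstacle, is the bookkeeping of the index arithmetic: I must check that the lengths $b_\tau-a_\tau$ genuinely diverge and that $a_\tau-1$ lands in $\mathbb{N}^{(2)}$ and $e_\tau$ stays strictly below $t_{p_\tau+1}$, all of which rely on the inequalities in \eqref{eq:P_star} (in particular $t_{p+1}>t_p+4v_p$, $v_{p+1}>3r$, and $\lim_{p}v_p/(t_{p+1}-t_p)=0$). The dependence-tracking of $(\xi^{(3)}_{\epsilon,t'''_k})_{k\geq1}$ on $(W^*_t)_{t\geq0}$, $(\Gamma^\mu_\delta)_{\delta}$, $\Gamma^\mu$, and $((t_p,v_p,f_p))_{p\geq1}$ follows automatically because every ingredient invoked (Lemma \ref{lemma:piece2}, Condition \ref{condition:Inf_mu}, Theorem \ref{thm:key}) depends on $(\mu_t)_{t\geq1}$ only through those same quantities. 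Finally, the remark that Assumption \ref{assume:G_l_E} may be dropped when $(t_p)_{p\geq1}\subset\{sr,\,s\in\mathbb{N}\}$ is inherited verbatim, since \ref{assume:G_l_E} enters only through Lemma \ref{lemma:Extend_initit} used inside Lemma \ref{lemma:piece1}.
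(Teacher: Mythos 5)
Your overall architecture is the right one and matches the paper's: a third application of Theorem \ref{thm:key}, with hypothesis \eqref{eq:Thm_C1} supplied by Condition \ref{C42} (via $(l_p,u_p)=(a_p,e_p)$, $\tilde c_\tau=1/f_{p_\tau}(\gamma_{p_\tau})$, $\tilde\epsilon_\tau=\gamma_{p_\tau}$), and hypothesis \eqref{eq:Thm_C2} supplied by the chain $\bar{\pi}_{a_\tau}(\mathcal{N}_\delta(\tilde{\Theta}_\star))\geq \mu_{a_\tau}(B_{\delta/2}(0))\,\pi_{a_\tau-1,\Theta}(\mathcal{N}_{\delta/2}(\tilde{\Theta}_\star))\geq \Gamma^\mu_{\delta/2}\,(1-\xi^{(2)}_{\delta/2,a_\tau-1})$, using Lemma \ref{lemma:piece2} at the time $a_\tau-1$. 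The dependence-tracking and the remark about dropping Assumption \ref{assume:G_l_E} are also fine.

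However, your concrete choice of anchor point creates a genuine gap. You take $a_\tau$ ``slightly after $t_{p_\tau}+v_{p_\tau}$'' (rounded to a multiple of $r$) and $b_\tau=t'''_\tau$, and you then need $\lim_{\tau\to\infty}(b_\tau-a_\tau)=\infty$ for Theorem \ref{thm:key}. This fails: the set $\mathbb{N}^{(3)}$ \emph{starts} at $t_{p+1}+v_{p+1}$ in each block, so the strictly increasing sequence $t'''_k=t_{p^*+k}+v_{p^*+k}$ is admissible, and for it $b_\tau-a_\tau$ stays within roughly $[-1,\,r-1]$ for all $\tau$ — bounded, possibly even nonpositive. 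No rearrangement keeping $a_\tau$ near $t_{p_\tau}+v_{p_\tau}$ can fix this, because the left endpoints of the blocks of $\mathbb{N}^{(3)}$ sit exactly there. The resolution (the one the paper uses) is to anchor much earlier: take $a_\tau=t_{p_\tau}+m_\tau$ with $m_\tau\in\{0,\dots,r-1\}$ chosen so that $a_\tau\in\{sr,\,s\in\mathbb{N}\}$, i.e.\ just after the dispersed time $t_{p_\tau}$ itself. Then $a_\tau-1=t_{p_\tau}+m_\tau-1$ still lies in $\mathbb{N}^{(2)}$ (so Lemma \ref{lemma:piece2} applies there), and for \emph{every} sequence in $\mathbb{N}^{(3)}$ one has $b_\tau-a_\tau\geq v_{p_\tau}-m_\tau\geq v_{p_\tau}-r\to\infty$; accordingly the normalization in $\xi_{\delta,a_\tau}$ should be by the deterministic lower bound $v_{p_\tau}-r$ rather than by $b_\tau-a_\tau$. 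With this single change your argument goes through verbatim.
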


\begin{proof}
Let $\delta_\star$ be as in \ref{assume:Model}, $(\gamma_p)_{p\geq 1}$ be as defined in the proof of Lemma \ref{lemma:piece1},  $(t'''_k)_{k\geq 1}$ be as in the statement of the lemma, and  for all $k\geq 1$  let $p_k=\sup\big\{p\in\mathbb{N}: t_p<t'''_k\}$ and $m_k\in\{0,\dots,r-1\}$ be such that $t_{p_k}+m_k\in\{sr,\,s\in\mathbb{N}\}$. In addition, for all $\epsilon\in(0,\infty)$ we let $(\xi^{(2)}_{\epsilon, t''_k})_{\tau\geq 1}$ be such that the conclusion of Lemma \ref{lemma:piece2} holds when $t''_k=t_{p_k}+m_k-1$ for all $k\geq 1$. Then,  the result of the lemma follows by applying Theorem \ref{thm:key} with    $(a_\tau)_{\tau\geq 1}$, $(b_\tau)_{\tau}$, $(\tilde{c}_\tau)_{\tau\geq 1}$,  $(\tilde{\epsilon}_\tau)_{\tau\geq 1}$ and $((\xi_{\delta,a_\tau})_{\delta\in(0,\delta_\star/2)})_{\tau\geq 1}$ defined by
\begin{equation}\label{eq:def_seq3}
\begin{split}
&a_\tau=t_{p_\tau}+m_\tau,\quad b_\tau=t'''_{\tau},\quad \tilde{c}_\tau=\frac{1}{f_{p_\tau}(\gamma_{p_\tau})},\quad\tilde{\epsilon}_\tau=\gamma_{p_\tau},\quad\tau\geq 1\\
&\xi_{\delta,a_\tau}= -\frac{\log  \Gamma^\mu_{\delta/2}+\log \big(1-\xi^{(2)}_{\delta/2,a_\tau-1}\big)}{v_{p_\tau}-r},\hspace{2.8cm} \tau\geq 1,\quad \delta\in(0,\delta_\star/2).
\end{split}
\end{equation}

To see this  remark  first that,  as required by Theorem \ref{thm:key}, the conditions  the   sequence $(a_\tau)_{\tau\geq 1}$ defined in \eqref{eq:def_seq2} belongs to $\{sr,\,s\in\mathbb{N}\}$ and that the sequence $(b_\tau)_{\tau\geq 1}$ defined in \eqref{eq:def_seq2} is  such that $b_\tau> a_\tau$ for all $\tau\geq 1$. Next, since $\lim_{p\rightarrow\infty}v_p=\infty$ it follows  that $\lim_{\tau\rightarrow\infty} (b_\tau-a_\tau)=\infty$,  as required by Theorem \ref{thm:key}. In addition, as required by Theorem \ref{thm:key}, the  sequences   $(\tilde{c}_{\tau})_{\tau\geq 1}$ and $(\tilde{\epsilon}_{\tau})_{\tau\geq 1}$ defined in \eqref{eq:def_seq2} are such that $ \lim_{\tau\rightarrow\infty}\tilde{\epsilon}_\tau=\lim_{\tau\rightarrow\infty}(1/\tilde{c}_\tau)=0$. To proceed further for all $\tau\geq 1$ we let $e_\tau\in\{sr,\,s\in\mathbb{N}\}$ be such that $e_\tau-r<t'''_\tau\leq e_\tau$. Then, noting that $e_\tau\leq t_{p_{\tau}+1}-1$ for all $\tau\geq 1$ and using \ref{C42} with $(l_p)_{p\geq 1}=(a_p)_{p\geq 1}$ and  $(u_p)_{p\geq 1}=(e_p)_{p\geq 1}$, it follows  that the sequences   $(a_\tau)_{\tau\geq 1}$, $(b_\tau)_{\tau}$, $(\tilde{c}_\tau)_{\tau\geq 1}$ and $(\tilde{\epsilon}_{\tau})_{\tau\geq 1}$ defined in \eqref{eq:def_seq2} are such that \eqref{eq:Thm_C1} holds.

 Finally, to prove that \eqref{eq:Thm_C2} holds let $\delta\in(0,\delta_\star/2)$. Then, under \ref{assume:Model_ThetaStar} and \ref{C41}, and noting that $b_\tau-a_\tau\geq v_{p_\tau}-m_\tau\geq v_\tau-r>0$ for all $\tau\geq 1$, by using similar   computations  as in \eqref{eq:split_ll1}-\eqref{eq:split_ll2}  it follows from Lemma \ref{lemma:piece2}  
\begin{align}\label{eq:split_ll3}
\frac{1}{b_\tau-a_\tau}\log \bar{\pi}_{a_\tau}\big(\mathcal{N}_\delta(\tilde{\Theta}_\star)\big) 
&\geq \frac{1}{v_{p_\tau}-r}\Big( \log \Gamma^\mu_{\delta/2}+\log\big(1- \xi^{(2)}_{\delta/2,a_\tau-1}\big)\Big),\quad\forall \tau\geq 1,\quad\P-a.s.
\end{align}
where the sequence $( \xi^{(2)}_{\delta/2,a_\tau-1})_{\tau\geq 1}$ depends on  $(\mu_t)_{t\geq 1}$  only through $(W^*_{t})_{t\geq 0}$, $(\Gamma^\mu_{\epsilon})_{\epsilon\in (0,\infty)}$, $\Gamma^\mu$ and through the sequence $\big((t_p,v_p,f_p)\big)_{p\geq 1}$. Since under \ref{condition:Inf_mu}-\ref{condition:Inf_K}  we have $\P(\Gamma^\mu_{\delta/2}>0)=\P(\Gamma^\mu>0)=1$ while $\xi^{(2)}_{\delta/2,a_\tau-1}=\smallo_\P(1)$  by  Lemma \ref{lemma:piece2}, it follows that
\begin{align*}
\frac{1}{v_{p_\tau}-r}\Big( \log \Gamma^\mu_{\delta/2}+\log\big(1- \xi^{(2)}_{\delta/2,a_\tau-1}\big)\Big)\PP 0.
\end{align*}
Thus concludes to show that \eqref{eq:Thm_C2} holds for the sequences $(a_\tau)_{\tau\geq 1}$, $(b_\tau)_{\tau\geq 1}$ and $( (\xi_{\delta,a_\tau})_{\delta\in(0,\delta_\star/2)})_{\tau\geq 1}$  defined in \eqref{eq:def_seq2}. The proof of the lemma is complete.
\end{proof}

We are now in position to prove   Theorem \ref{thm:main}:

\begin{proof}[Proof of Theorem \ref{thm:main}]
Let $p^*$ be as in Lemma \ref{lemma:piece1} and let $\mathbb{N}^{(2)}$ and $\mathbb{N}^{(3)}$ be as defined in Lemma \ref{lemma:piece2} and in Lemma \ref{lemma:piece3}, respectively. Let  $\epsilon\in(0,\infty)$  and, for all $t\geq t_{p^*+1}$, let
\begin{align*}
W_{\epsilon,t}^{(1)}=
\begin{cases}
\pi_{t,\Theta}\big(\mathcal{N}_\epsilon(\tilde{\Theta}_\star)^c\big), &t\in\mathbb{N}^{(2)}\\
0, &\text{otherwise}
\end{cases},\quad W_{\epsilon,t}^{(2)}=
\begin{cases}
 \pi_{t,\Theta}\big(\mathcal{N}_\epsilon(\tilde{\Theta}_\star)^c\big), &t\in\mathbb{N}^{(3)}\\
0, &\text{otherwise}
\end{cases}.
\end{align*}

Then, by Lemmas \ref{lemma:piece2}-\ref{lemma:piece3}, there exists    a sequence $(\xi^\star_{\epsilon,t})_{t \geq 1}$ of $[0,1]$-valued random variables, depending on $(\mu_t)_{t\geq 1}$  only   through $(W^*_{t})_{t\geq 0}$, $(\Gamma^\mu_\delta)_{\delta\in(0,\infty)}$,  $\Gamma^\mu$ and $\big((t_p, v_p, f_p)\big)_{p\geq 1}$, such that $\xi^\star_{\epsilon,t}=\smallo_\P(1)$ and such that
\begin{align*}
W_{\epsilon,t}^{(1)}\leq \xi^\star_{\epsilon,t},\quad W_{\epsilon,t}^{(2)}\leq \xi^\star_{\epsilon,t},\quad\forall t\geq t_{p_\star+1},\quad\P-a.s.
\end{align*}
Therefore, since $\mathbb{N}^{(2)}\cup \mathbb{N}^{(3)}=\{t\in\mathbb{N}:\, t\geq t_{p_\star+1}-r\}$, it follows that  
\begin{align*}
0\leq \pi_{t,\Theta}\big( \mathcal{N}_\epsilon(\tilde{\Theta}_\star)^c\big)=W_{\epsilon,t}^{(1)}+W_{\epsilon,t}^{(2)}\leq 2 \xi^\star_{\epsilon,t},\quad\forall t\geq t_{p_\star+1}
\end{align*} 
and the result of the theorem follows.
\end{proof}

\subsection{Proof of Proposition \ref{prop:stability}\label{p-prop:stability}}
\begin{proof}
By Lemma \ref{lemma:Zt}, for all $t\geq 1$ we $\P$-a.s.~have
\begin{align*}
&\pi_{t,\setX}(D)\\
&=\frac{\int_{(\Theta\times\setX)^{t+1}} \ind_D(x_t)\mu_0(\dd\theta_0)\chi(\dd x_0)\prod_{s=1}^t Q_{s,\theta_s}(x_{s-1},\dd x_s)K_{\mu_s|\Theta}(\theta_{s-1},\dd\theta_s)}{\int_{(\Theta\times\setX)^{t+1}} \mu_0(\dd\theta_0)\chi(\dd x_0)\prod_{s=1}^t Q_{s,\theta_s}(x_{s-1},\dd x_s)K_{\mu_s|\Theta}(\theta_{s-1},\dd\theta_s)}\\
&=\frac{\int_{ \Theta^{t+1}\times\setX^{t}} Q_{t,\theta_t}(x_{t-1},D)  \mu_0(\dd\theta_0)\chi(\dd x_0)\Big(\prod_{s=1}^{t-1} Q_{s,\theta_s}(x_{s-1},\dd x_s)K_{\mu_s|\Theta}(\theta_{s-1},\dd\theta_s)\Big)K_{\mu_t|\Theta}(\theta_{t-1},\dd\theta_t) }{\int_{ \Theta^{t+1}\times\setX^{t}} Q_{t,\theta_t}(x_{t-1},\setX) \mu_0(\dd\theta_0)\chi(\dd x_0)\Big(\prod_{s=1}^{t-1} Q_{s,\theta_s}(x_{s-1},\dd x_s)K_{\mu_s|\Theta}(\theta_{s-1},\dd\theta_s)\Big)K_{\mu_t|\Theta}(\theta_{t-1},\dd\theta_t)}\\
&\geq W_t
\end{align*}
and the proof of the proposition is complete.
\end{proof}

\subsection{Proof the Theorem \ref{thm:main2}}

\begin{proof}

Let $A\in\mathcal{T}$, $(\tilde{\xi}_{rt})_{t\geq 1}$ and $(\underline{\chi}_{rt})_{t\geq 1}$ be as in the statement of Lemma \ref{lemma:gamma}  and note that, by Lemma \ref{lemma:Zt} and Lemma \ref{lemma:gamma}, 
\begin{align}\label{eq:bound_piAAA}
\pi_{t,\Theta}(A)\leq \tilde{\xi}^2_{r}\,\frac{P^{\pi_r}_{(r+1):t}(A)}{P^{\bar{\pi}_{r}\otimes\underline{\chi}_{r}}_{(r+1):t}(\Theta)},\quad \forall t>r,\quad \P-a.s.
\end{align}

To study the r.h.s.~of \eqref{eq:bound_piAAA} let $(k_t)_{t\geq 1}$ be as in \ref{condition:mu_seq2} and, for all $t\geq 1$, let $l_t\in\{sr,\,s\in\mathbb{N}\}$ be such that $l_t-r< k_t\leq l_t$. Next,  let $t'\in\mathbb{N}$ be such that  $t>2k_t>l_t>r+1$  for all $t\geq t'$, for all $t\geq 1$ let  $\bar{\pi}_{t}$ be as defined in \eqref{eq:pi_bar}, and let
\begin{align*}
\pi'_{t}(\dd\theta_{t})=\int_{\Theta^{t-r}} \bar{\pi}_{r,\Theta}(\dd\theta_r)\prod_{s=r+1}^{t}K_{\mu_{s}|\Theta}(\theta_{s-1},\dd\theta_{s}),\quad\forall t>r.
\end{align*}
Finally, for all integers $1\leq t_1\leq t_2$ we let $D_{t_1:t_2}$ be as defined in \eqref{eq:Dst}.

Then,  for all $t\geq t'$ we have
\begin{align}\label{eq:bound1}
 P_{(r+1):t}^{\pi_r}(A)\leq Z_{l_t}  P_{(l_t+1):t}^{\pi'_{l_t}\otimes\bar{\chi}'_{l_t}}(A),\quad \P-a.s.
\end{align}
where $\bar{\chi}'_{l_t}$ is a random probability measure on $(\setX,\mathcal{X})$ such that
\begin{align*}
\bar{\chi}'_{l_t}(A)=\frac{\int_A \Big(\sup_{(\theta,x')\in \Theta \times \setX}  q_{l_t,\theta}(x|x')\Big)\lambda(\dd x) }{\int_\setX\Big( \sup_{(\theta,x')\in \Theta \times \setX}  q_{l_t,\theta}(x|x')\Big)\lambda(\dd x) },\quad\forall A\in\mathcal{X},\quad\P-a.s.
\end{align*}
and where
\begin{align*}
Z_{l_t}=D_{(r+1):(l_t-1)}\bigg(\int_\setX \sup_{(\theta,x')\in \Theta \times \setX}  q_{l_t,\theta}(x|x') \lambda(\dd x)\bigg).
\end{align*}

To proceed further let $\tilde{f}_t=f_{t,r+1}$ for all $t\geq 1$ and   $(\gamma_t)_{t\geq 1}$ be a sequence in $(0,\infty)$ such that $\lim_{t\rightarrow\infty}\gamma_t=\lim_{t\rightarrow\infty} \tilde{f}_t(\gamma_t)=0$. Remark that such  a sequence  $(\gamma_t)_{t\geq 1}$ exists since, by assumption, $\lim_{t\rightarrow\infty} \tilde{f}_t(\gamma)=0$ for all $\gamma\in(0,\infty)$. In addition, remark that  under \ref{assume:Model_stationary} and \ref{assume:Model_sup}  $(\bar{\chi}'_{l_t})_{t\geq 1}$ is a sequence of $(r,\delta_\star,\tilde{l},(\varphi_t)_{t\geq 1})$-consistent random probability measures on  $(\setX,\mathcal{X})$, with $(\delta_\star,\tilde{l},(\varphi_t)_{t\geq 1})$ as in \ref{assume:Model}. Therefore,  under \ref{condition:mu_seq2}, we can apply  Lemma \ref{lemma::upper_general} with the sequences $(\kappa_\tau)_{\tau\geq 1}$, $(\eta_{\tau})_{\tau\geq 1}$ and $(a_\tau)_{\tau\geq 1}$, $(b_\tau)_{\tau\geq 1}$, $(c_\tau)_{\tau\geq 1}$ and $(\epsilon_\tau)_{\tau\geq 1}$ such that
\begin{align*}
\kappa_{a_\tau}=\pi'_{l_{t'+\tau}},\quad \eta_{a_\tau}=\bar{\chi}_{l_{t'+\tau}},\quad a_\tau=l_{t'+\tau},\quad b_\tau=t'+\tau,\quad c_\tau=\frac{1}{\tilde{f}_{t'+\tau}(\gamma_{t'+\tau})},\quad\epsilon_\tau=\gamma_{t'+\tau},\quad   \forall\tau\geq 1
\end{align*}
and thus there exists a sequence $(\xi^+_{t})_{t\geq 1}$ of $[0,\infty]$-valued random variables, depending on $(\mu_t)_{t\geq 1}$ only through    $\Gamma^\mu$ and $((k_t,f_t))_{t\geq 1}$,  such that $\xi^+_{t}=\smallo_\P(1)$ and such that
\begin{align}\label{eq:bound22}
\P\bigg(  P_{(l_t+1):t}^{\pi'_{l_t}\otimes\bar{\chi}_{l_t}}(A)\leq (1+\xi^+_{t})\exp\Big\{ (t-l_t)\big(\sup_{\theta\in A} \tilde{l}(\theta)+\xi^+_{t}\big)\Big\}\bigg)=1,\quad\forall t\geq 1. 
\end{align}
Remark now that, under \ref{assume:Model_stationary}, \ref{assume:G_bounded} and \ref{assume:Model_sup}, and noting  that $\lim_{t\rightarrow\infty} l_t/t=0$,  
\begin{align}\label{eq:bound3}
\frac{1}{t-l_t}\log^+ Z_{l_t}=\smallo_\P(1).
\end{align}
Therefore, by combining \eqref{eq:bound1}-\eqref{eq:bound3}, it follows that   there exists a sequence $(\tilde{\xi}^+_{t})_{t\geq 1}$ of $[0,\infty]$-valued random variables, depending on $(\mu_t)_{t\geq 1}$ only through  $\Gamma^\mu$ and $((k_t,f_t))_{t\geq 1}$,  such that $\tilde{\xi}^+_{t}=\smallo_\P(1)$ and such that
\begin{align}\label{eq:bound2}
\P\bigg(  P_{(r+1):t}^{\pi_{r}}(A)\leq (1+\tilde{\xi}^+_{t})\exp\Big\{ (t-l_t)\big(\sup_{\theta\in A} \tilde{l}(\theta)+\tilde{\xi}^+_{t}\big)\Big\}\bigg)=1,\quad\forall t\geq 1. 
\end{align}

To proceed further, for all $t\geq 1$ we let $e_t\in\{sr,\,s\in\mathbb{N}\}$ be such that $ e_t-r <t\leq e_t$ and recall that, by Lemma \ref{lemma:gamma}, $(\underline{\chi}_{rt})_{t\geq 1}$ is a sequence of   $(r,\delta_\star,\tilde{l},(\varphi_t)_{t\geq 1})$-consistent random probability measures on  $(\setX,\mathcal{X})$. Therefore, under \ref{condition:mu_seq2}, we can apply Lemma \ref{lemma::lower_general}   with the sequences $(\kappa_\tau)_{\tau\geq 1}$, $(\eta_{\tau})_{\tau\geq 1}$ and $(a_\tau)_{\tau\geq 1}$, $(b_\tau)_{\tau\geq 1}$, $(c_\tau)_{\tau\geq 1}$ and $(\epsilon_\tau)_{\tau\geq 1}$ such that
\begin{align*}
\kappa_{a_\tau}=\bar{\pi}_{r},\quad\eta_{a_\tau}=\underline{\chi}_{r},\quad a_\tau=r,\quad b_\tau=t'+\tau,\quad c_\tau=\frac{1}{\tilde{f}_{e_{t'+\tau}}(\gamma_{e_{t'+\tau}})},\quad\epsilon_\tau=\gamma_{e_{t'+\tau}},\quad \forall\tau\geq 1,
\end{align*}
and thus there exist an  $\bar{\epsilon}\in (0,\delta_\star/2)$, independent of $(\mu_t)_{t\geq 1}$,   and a sequence $(\xi^-_{t})_{t\geq 1}$ of $[0,\infty]$-valued random variables,  depending on $(\mu_t)_{t\geq 1}$ only through through  $((k_t,f_t))_{t\geq 1}$, such that $\xi^-_{t}=\smallo_\P(1)$ and such that 
\begin{align}\label{eq:bound222}
\P\bigg( P^{\bar{\pi}_{r}\otimes\underline{\chi}_{r}}_{(r+1):t}(\Theta)  &\geq  \bar{\pi}_{r}\big(\mathcal{N}_\epsilon(\tilde{\Theta}_\star)\big) \exp\Big\{ (t-r)\big(\inf_{\theta\in \mathcal{N}_\epsilon(\tilde{\Theta}_\star)}\tilde{l}(\theta)-\xi_{t}^-\big)\Big\}\bigg)=1,\quad\forall \epsilon\in (0,\bar{\epsilon}], \quad\forall t\geq 1. 
\end{align}

In order to complete the proof of the theorem we need to show that
\begin{align}\label{eq:lower_pir}
\P\big(\bar{\pi}_{r}\big(\mathcal{N}_\epsilon(\tilde{\Theta}_\star)\big)>0\big)=1,\quad\forall\epsilon\in(0,\infty).
\end{align}
 To this aim let $\epsilon\in(0,\infty)$, $(\mathfrak{Z}_t)_{t\geq 1}$ be as defined in \eqref{eq:normalizing} and note that, by Lemma \ref{lemma:Zt}, we $\P$-a.s.~have
 \begin{align*}
\bar{\pi}_{r}\big(\mathcal{N}_\epsilon(\tilde{\Theta}_\star)\big)&=\int_{\Theta}  K_{\mu_{r}|\Theta}\big(\theta_{r-1},\mathcal{N}_{\epsilon}(\tilde{\Theta}_\star)\big)\pi_{r-1,\Theta}(\dd\theta_{r-1})\\
&=\frac{\int_{(\Theta\times\setX)^{r}}K_{\mu_{r}|\Theta}\big(\theta_{r-1},\mathcal{N}_{\epsilon}(\tilde{\Theta}_\star)\big) \mu_0(\dd\theta_0)\chi(\dd x_0)\prod_{s=1}^{r-1} Q_{s,\theta_s}(x_{s-1},\dd x_s)K_{\mu_s|\Theta}(\theta_{s-1},\dd\theta_s)}{\mathfrak{Z}_{r-1}}\\
&\geq \frac{\int_{(\Theta\times\setX)^{r+1}}\ind_{\mathcal{N}_{\epsilon}(\tilde{\Theta}_\star)}(\theta_{r}) \mu_0(\dd\theta_0)\chi(\dd x_0)\prod_{s=1}^{r} Q_{s,\theta_s}(x_{s-1},\dd x_s)K_{\mu_s|\Theta}(\theta_{s-1},\dd\theta_s)}{\mathfrak{Z}_{r-1}D_{r:r}}
\end{align*}
where $\P(\mathfrak{Z}_{r-1}<\infty)=1$   by \eqref{eq:ZT_show} while  $\P(D_{r,r}<\infty)=1$  under  \ref{assume:G_bounded}.

For all $t\geq 1$ we now let $L_t$  be as defined in \eqref{eq:Lt_def} and $f_{t,\epsilon}$ be as defined in \eqref{eq:ft_def}  for all $\epsilon\in(0,\infty)$. Then, following the computations   in \eqref{eq:lower_z1}-\eqref{eq:lower_z2}, for all $\epsilon\in (0,\delta_\star)$ we have
\begin{align*}
\int_{(\Theta\times\setX)^{r+1}}\ind_{\mathcal{N}_{\epsilon}(\tilde{\Theta}_\star)}&(\theta_r) \mu_0(\dd\theta_0)\chi(x_0)\prod_{s=1}^{r} Q_{s,\theta,s}(x_{s-1},\dd x_s)K_{\mu_s|\Theta}(\theta_{s-1},\dd\theta_s)\\
&\geq L_{r} \int_{ \setR^{r+1}} \ind_{\mathcal{N}_{\epsilon}(\tilde{\Theta}_\star)}(\theta_{r})f_{r,\epsilon}(\theta_0,\dots,\theta_r)\,\mu_0(\dd\theta_0) \prod_{s=1}^{r}   \ind_{\Theta}(\theta_s)K_{\mu_{s}}(\theta_{s-1},\dd\theta_{s})\\
&\geq L_{r}\mu_0\big(\mathcal{N}_{\epsilon/2}(\tilde{\Theta}_\star)\big)\prod_{s=1}^{r}\mu_s\big(B_{\epsilon/(2r)}(0)\big)\\
&\geq   L_{r}\mu_0\big(\mathcal{N}_{\epsilon/2}(\tilde{\Theta}_\star)\big)\big(\Gamma^\mu_{\epsilon/(2r)}\big)^{r}
\end{align*}
where the last inequality holds under \ref{condition:Inf_mu}. Since   $\mu_0\big(\mathcal{N}_{\epsilon/2}(\tilde{\Theta}_\star)\big)>0$  under \ref{condition:mu1},    $\P(\Gamma^\mu_{\epsilon/(2r)}>0)=1$ under \ref{condition:Inf_mu} while $\P(L_{r}>0)=1$ as shown in the proof of Lemma \ref{lemma:Zt}, the proof of \eqref{eq:lower_pir} is complete.

The result of the theorem then follows directly from   \eqref{eq:bound_piAAA},  \eqref{eq:bound2}-\eqref{eq:lower_pir}  and by using Assumption \ref{assume:Model_ThetaStar}.

\end{proof}

\subsection{Proof of Theorem \ref{thm:pred}}

\subsubsection{Additional preliminary results}

The strategy of the proof of Theorem \ref{thm:pred} is as follows. By Theorem \ref{thm:main},   there exists a sequence $(\epsilon_t)_{t\geq 1}$ in $(0,\infty)$ such that $\lim_{t\rightarrow\infty}\epsilon_t=0$  and such that $\pi_{t,\Theta}\big(\mathcal{N}_{\epsilon_t}(\{\tilde{\Theta}_\star\})\big)\PP 1$, and thus to establish the result of the theorem it is enough to study $\pi_{t}(\mathcal{N}_{\epsilon_t}(\{\tilde{\Theta}_\star\})\times A)$ for all $A\in\mathcal{X}$. 

Together with Lemma \ref{lemma:Zt}, the following three lemmas will be used to obtain, for an arbitrary set $A\in\mathcal{X}$, an upper bound for $\pi_{t}(\mathcal{N}_{\epsilon_t}(\{\tilde{\Theta}_\star\})\times A)$.

\begin{lemma}\label{lemma:pred_def}
Assume that Assumptions \ref{assume:Model}-\ref{assume:G_bounded} and \eqref{eq:W_star} hold and that $(\mu_t)_{t\geq 0}$ is such that   Conditions \ref{condition:mu1}-\ref{condition:Inf_mu} are satisfied, and let $r$ be as in \ref{assume:Model}. Then, for all $t\geq 2r$, all $\tau\in\{2r,\dots,t\}\cap\{sr,\,s\in\mathbb{N}\}$ and all $\theta\in\Theta$,
\begin{align*}
\bar{p}_{(\tau+1):t}^{\pi_{\tau,\setX}}(A|\theta)=\frac{\int_{\setX^{t-\tau+1}}\ind_A(x_{t}) \pi_{\tau,\setX}(\dd x_{\tau}) \prod_{s=\tau+1}^{t} Q_{s,\theta}(x_{s-1},\dd x_s)}{\int_{\setX^{t-\tau+1}}  \pi_{\tau,\setX}(\dd x_{\tau}) \prod_{s=\tau+1}^{t} Q_{s,\theta}(x_{s-1},\dd x_s)},\quad\forall A\in\mathcal{X},\quad\P-a.s.
\end{align*}
\end{lemma}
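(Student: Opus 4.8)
The plan is to prove this lemma by recognizing that the claimed identity is essentially the definition of the conditional distribution $\bar{p}_{(\tau+1):t}^{\pi_{\tau,\setX}}(\cdot|\theta)$ given in Section \ref{sub:FK_model}, once we substitute $\eta=\pi_{\tau,\setX}$ and $(t_1,t_2)=(\tau+1,t)$, combined with the representation of $\pi_{\tau,\setX}$ from Lemma \ref{lemma:Zt}. The only genuine content is to verify that the denominator $L_{(\tau+1):t}^{\pi_{\tau,\setX}}(\setX|\theta)$ lies in $(0,\infty)$ with $\P$-probability one, so that the generic definition produces the displayed ratio rather than the fallback value $\pi_{\tau,\setX}(A)$.

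First I would recall that, by definition in Section \ref{sub:FK_model}, for any $\eta\in\mathcal{P}(\setX)$ and integers $\tau<t$ we have
\begin{align*}
L_{(\tau+1):t}^{\eta}(A|\theta)=\int_{\setX^{t-\tau+1}}\ind_A(x_{t})\,\eta(\dd x_{\tau})\prod_{s=\tau+1}^{t} Q_{s,\theta}(x_{s-1},\dd x_s),
\end{align*}
and that $\bar{p}_{(\tau+1):t}^{\eta}(A|\theta)=L_{(\tau+1):t}^{\eta}(A|\theta)/L_{(\tau+1):t}^{\eta}(\setX|\theta)$ whenever the normalizing constant $L_{(\tau+1):t}^{\eta}(\setX|\theta)\in(0,\infty)$. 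Setting $\eta=\pi_{\tau,\setX}$ gives exactly the right-hand side of the claim. Since $\pi_{\tau,\setX}$ is a random probability measure, I would apply this definition pathwise: for each $\omega$ such that $L_{(\tau+1):t}^{\pi_{\tau,\setX}^{\omega}}(\setX|\theta)\in(0,\infty)$, the displayed ratio holds. Thus the proof reduces to establishing the integrability and strict positivity of this denominator.

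The finiteness is immediate from Assumption \ref{assume:G_bounded} together with the boundedness structure of the model: since $Q_{s,\theta}(x_{s-1},\dd x_s)=G_{s,\theta}(x_{s-1},x_s)M_{s,\theta}(x_{s-1},\dd x_s)$ and $M_{s,\theta}$ are Markov kernels, iterated integration bounds $L_{(\tau+1):t}^{\pi_{\tau,\setX}}(\setX|\theta)$ by $\prod_{s=\tau+1}^{t}\sup_{(\theta,x,x')}G_{s,\theta}(x',x)=D_{(\tau+1):t}$, which is $\P$-a.s.~finite under \ref{assume:G_bounded}. For strict positivity I would argue that, under the standing assumptions, $\P(\mathfrak{Z}_t\in(0,\infty))=1$ for all $t$ (as established in Lemma \ref{lemma:Zt}), so that $\pi_{\tau,\setX}$ is genuinely of the form given by Lemma \ref{lemma:Zt} rather than the fallback $\pi_0$; tracing through, positivity of $L_{(\tau+1):t}^{\pi_{\tau,\setX}}(\setX|\theta)$ follows from the positivity arguments already carried out in the proof of Lemma \ref{lemma:Zt} (in particular the fact that $\P(L_t>0)=1$ there), applied to the sub-chain from time $\tau$ to $t$. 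The main obstacle, then, is not any deep estimate but the careful bookkeeping: one must confirm that the conditioning event in \eqref{eq:W_star} and Condition \ref{condition:Inf_mu} propagate correctly to guarantee $L_{(\tau+1):t}^{\pi_{\tau,\setX}}(\setX|\theta)>0$ on a set of full probability uniformly over the relevant $\tau$ and $t$, so that the fallback branch in the definition of $\bar{p}$ is never triggered. Once that is secured, the identity is exactly the defining formula and the proof concludes.
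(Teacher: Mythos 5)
Your reduction of the lemma to the single claim $\P\big(L^{\pi_{\tau,\setX}}_{(\tau+1):t}(\setX|\theta)\in(0,\infty)\big)=1$, and your finiteness argument via the bound by $D_{(\tau+1):t}$ under Assumption \ref{assume:G_bounded}, are exactly what the paper does. The gap is in the positivity step, which is the entire substance of the lemma. You propose to recycle ``the positivity arguments already carried out in the proof of Lemma \ref{lemma:Zt}, applied to the sub-chain from time $\tau$ to $t$'', but this does not go through, for two reasons. First, the positivity argument in Lemma \ref{lemma:Zt} hinges on the initial law being $\chi$, which is $(r,\delta_\star,\tilde{l},(\varphi_t)_{t\geq 1})$-consistent by \ref{assume:Model_init}; consistency is precisely what yields $\P\big(\inf_{\theta\in\Theta}\log \bar{P}^{\chi}_{1:t}(\theta)>-\infty\big)=1$ through Lemmas \ref{lemma::W}--\ref{lemma:Unif_convergence}. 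Your sub-chain starts from the random measure $\pi_{\tau,\setX}$, which is neither assumed nor known to be consistent, so there is nothing to feed into that machinery. Second, the dynamics differ: Lemma \ref{lemma:Zt} concerns $\mathfrak{Z}_t$, i.e.\ the self-organized flow in which the parameter evolves under the kernels $K_{\mu_s|\Theta}$, whereas here the parameter is frozen at an arbitrary fixed $\theta\in\Theta$, and positivity of $\mathfrak{Z}_t$ does not imply positivity for every fixed $\theta$. Since in the general Feynman-Kac setting the potentials $G_{s,\theta}$ are only $[0,\infty)$-valued, the integral $\int\pi_{\tau,\setX}(\dd x_\tau)\prod_{s=\tau+1}^{t}Q_{s,\theta}(x_{s-1},\dd x_s)$ could in principle vanish if $\pi_{\tau,\setX}$ charged only points from which the frozen-$\theta$ flow is killed; ruling this out is the whole problem, and it cannot be done by citing Lemma \ref{lemma:Zt}.

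The missing ingredient is Lemma \ref{lemma:gamma}, which is also where the hypothesis \eqref{eq:W_star} — which you dismiss as ``bookkeeping'' — actually enters. The paper's argument runs as follows: by Lemma \ref{lemma:Zt}, $\pi_{\tau,\setX}(B)=\Lambda^{\pi_0}_{1:\tau}(\Theta\times B)/\Lambda^{\pi_0}_{1:\tau}(\Theta\times\setX)$; applying the two-sided bound of Lemma \ref{lemma:gamma} at horizon $\tau$ (lower bound on the numerator, upper bound on the denominator) gives the domination of measures $\pi_{\tau,\setX}(\dd x)\geq \tilde{\xi}_\tau^{-2}\,\underline{\chi}_\tau(\dd x)$, with $\P(\tilde{\xi}_\tau<\infty)=1$ and $\underline{\chi}_\tau\dist\underline{\chi}_r$. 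Extending the product from $t$ to $k_t\in\{sr,\,s\in\mathbb{N}\}$ at the price of a factor $D_{(t+1):k_t}^{-1}$ (a.s.\ finite under \ref{assume:G_bounded}) then yields the lower bound $\tilde{\xi}_\tau^{-2}D_{(t+1):k_t}^{-1}\bar{P}^{\underline{\chi}_\tau}_{(\tau+1):k_t}(\theta)$, and by stationarity \ref{assume:Model_stationary} we have $\bar{P}^{\underline{\chi}_\tau}_{(\tau+1):k_t}(\theta)\dist\bar{P}^{\underline{\chi}_r}_{(r+1):(k_t+r-\tau)}(\theta)$, which is a.s.\ positive because $\underline{\chi}_r$ is consistent by \ref{assume:Model_G_lower}. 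It is this transfer — from the uncontrolled random measure $\pi_{\tau,\setX}$ to the consistent measure $\underline{\chi}_r$ — that your proposal lacks; without it, the positivity claim is unsupported.
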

\begin{proof}
See Section \ref{p-lemma:pred_def}.
\end{proof}

\begin{lemma}\label{lemma:upper_X}
Assume that  \ref{assume:Model}-\ref{assume:G_bounded} and \eqref{eq:W_star} holds and  that $(\mu_t)_{t\geq 1}$ is such that Condition \ref{condition:Inf_K} is satisfied, and let $r$ and $g(\cdot)$ be as in \ref{assume:Model}. Let  $(a_\tau)_{\tau\geq 1}$  be a sequence in  $\{sr,\,s\in\mathbb{N}\}$ and $(b_\tau)_{\tau\geq 1}$  be a   sequence  in $\mathbb{N}_0$   such that  $a_\tau< b_\tau$ for all $\tau\geq 1$ and such that $\lim_{\tau\rightarrow\infty} (b_\tau-a_\tau)=\infty$, and assume that there exist  two sequences   $(c_{\tau})_{\tau\geq 1}$ and  $(\epsilon_{\tau})_{\tau\geq 1}$ in $(0,\infty)$ such that $ \lim_{\tau\rightarrow\infty}\epsilon_{\tau}=\lim_{\tau\rightarrow\infty}(1/c_{\tau})=0$, such that
\begin{align*}
\frac{1}{b_\tau-a_\tau}\log\P\Big(\exists s\in\{a_\tau+1,\dots,  b_\tau\}:  \sum_{i=a_\tau+1}^s  U_i\not\in B_{\epsilon_{\tau}}(0)\,\big|\, \F_{b_\tau}\Big)\leq -c_{\tau},\quad\forall \tau\geq 1,\quad\P-a.s.
\end{align*}
and  such that $\lim_{\tau\rightarrow\infty}(b_\tau-a_\tau)^{-1}\log g(3\epsilon_\tau)=-\infty$. Then, for all $\theta\in\Theta$ there exist  a $\tau'\in\mathbb{N}$ and a  sequence $(\xi^+_{b_\tau})_{\tau\geq 1}$ of $(0,\infty]$-valued random variables, both depending on $(\mu_t)_{t\geq 1}$ only through the sequences $(c_{\tau})_{\tau\geq 1}$ and $(\epsilon_{\tau})_{\tau\geq 1}$, through the sequence $(W^*_{t})_{t\geq 0}$ defined in \eqref{eq:W_star} and through the random variable $\Gamma^\mu$  defined in \ref{condition:Inf_K}, such  that $\xi_{b_\tau}^+=\smallo_\P(1)$ and such that, for all $\tau\geq \tau'$,
\begin{align*}
\Lambda^{\pi_{a_\tau}}_{(a_\tau+1):b_\tau}\big(\mathcal{N}_{\epsilon_t}(\{\theta\})\times A\big)\leq \bar{P}^{\pi_{a_\tau,\setX}}_{(a_\tau+1):b_\tau}(\theta)\big( \bar{p}_{(a_\tau+1):b_\tau}^{\pi_{a_\tau,\setX}}(A|\theta)+\xi_{b_\tau}^+\big),\quad\forall A\in  \mathcal{X},\quad\P-a.s.
\end{align*}
\end{lemma}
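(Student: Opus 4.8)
\textbf{Proof strategy for Lemma \ref{lemma:upper_X}.}
The plan is to bound the unnormalized measure $\Lambda^{\pi_{a_\tau}}_{(a_\tau+1):b_\tau}\big(\mathcal{N}_{\epsilon_t}(\{\theta\})\times A\big)$ by comparing the likelihood contributions $\prod_{s} G_{s,\vartheta_s}(\cdot)$ evaluated along a trajectory $(\vartheta_{a_\tau},\dots,\vartheta_{b_\tau})$ with those evaluated at the fixed parameter $\theta$. The key observation is that on the event that all the noise increments $U_s=\vartheta_s-\vartheta_{s-1}$ stay inside $B_{\epsilon_\tau}(0)$, every $\vartheta_s$ lies within $\bigO(\epsilon_\tau)$ of $\theta$ (more precisely within $3\epsilon_\tau$ after accounting for the initial displacement from $\theta$ and the constraint $\vartheta_s\in\mathcal{N}_{\epsilon_t}(\{\theta\})$), so the smoothness bound \ref{assume:Model_smooth} lets us replace each $\log q_{s,\vartheta_s}$ by $\log q_{s,\theta}$ at a multiplicative cost controlled by $\exp\big(g(3\epsilon_\tau)\sum_s \varphi_s\big)$.

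First I would decompose $\Lambda^{\pi_{a_\tau}}_{(a_\tau+1):b_\tau}\big(\mathcal{N}_{\epsilon_t}(\{\theta\})\times A\big)$ into the contribution from trajectories whose noise increments remain in $B_{\epsilon_\tau}(0)$ throughout $\{a_\tau+1,\dots,b_\tau\}$, and the complementary ``bad-path'' contribution. For the bad-path part, the large-deviation hypothesis gives $\P\big(\exists s:\sum U_i\notin B_{\epsilon_\tau}(0)\mid \F_{b_\tau}\big)\leq \exp(-c_\tau(b_\tau-a_\tau))$; combining this with the boundedness of the weights coming from Assumption \ref{assume:G_bounded} (to control $D_{(a_\tau+1):b_\tau}$, up to a $\bigO_\P$ factor in the exponential rate) and with the lower bound \eqref{eq:W_star} on $\pi_{a_\tau,\setX}(D)$ and Condition \ref{condition:Inf_K} on the kernel normalization $\Gamma^\mu$, shows that the bad-path contribution, after normalizing by $\bar{P}^{\pi_{a_\tau,\setX}}_{(a_\tau+1):b_\tau}(\theta)$, is $\smallo_\P(1)$; this produces the $\xi_{b_\tau}^+$ term. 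For the good-path part I would apply \ref{assume:Model_smooth} along the whole block, bounding the ratio $\prod_s q_{s,\vartheta_s}/\prod_s q_{s,\theta}$ by $\exp\big(g(3\epsilon_\tau)\sum_{s}\varphi_s(x_{s-1},x_s)\big)$; integrating, the good-path contribution is at most $\bar{P}^{\pi_{a_\tau,\setX}}_{(a_\tau+1):b_\tau}(\theta)\,\bar{p}_{(a_\tau+1):b_\tau}^{\pi_{a_\tau,\setX}}(A\mid\theta)$ times a factor that, by the definition of $W^{\eta}_{t_1:t_2}$ in \eqref{eq:W_def} and Lemma \ref{lemma::W}, is $\exp\big(\bigO_\P(b_\tau-a_\tau)\,g(3\epsilon_\tau)\big)$. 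The condition $\lim_\tau (b_\tau-a_\tau)^{-1}\log g(3\epsilon_\tau)=-\infty$ is precisely what forces $g(3\epsilon_\tau)$ to vanish fast enough that this exponential factor tends to $1$ in $\P$-probability, so the excess over the target bound is absorbed into $\xi_{b_\tau}^+$.

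The main obstacle I anticipate is controlling the exponential-moment factor uniformly: the Hölder-type smoothness bound only buys control when the aggregated multiplier $g(3\epsilon_\tau)\sum_{s=a_\tau+1}^{b_\tau}\varphi_s$ can be tamed, and the sum of the $\varphi_s$'s grows linearly in the block length. This is exactly where the consistency machinery of Definition \ref{def:consistent} and the bound on $W^{\eta}_{t_1:t_2}$ from Lemma \ref{lemma::W} are essential, since they provide an $\bigO_\P(b_\tau-a_\tau)$ control on $\log$ of the exponential moment of $\tfrac{\delta_\star}{2}\sum_s\varphi_s$. The delicate quantitative step is to match the rate $c_\tau$ in the large-deviation bound against the growth rate of $D_{(a_\tau+1):b_\tau}$ and of the exponential-moment factor, ensuring all three error sources are $\smallo_\P(1)$ after dividing by $\bar{P}^{\pi_{a_\tau,\setX}}_{(a_\tau+1):b_\tau}(\theta)$; I would handle this by invoking Lemma \ref{lemma::upper_general}-style estimates together with Lemma \ref{lemma:Unif_convergence} to pin down the growth of the denominator, and by choosing $\tau'$ large enough that the regime $g(3\epsilon_\tau)\to 0$ dominates.
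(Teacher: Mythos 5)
You have the right architecture, and it is indeed the skeleton of the paper's own proof: split the unnormalized measure according to whether the increments $\sum_i U_i$ stay in $B_{\epsilon_\tau}(0)$, kill the bad paths with the large-deviation hypothesis together with $\Gamma^\mu$ and a weight bound, and treat the good paths with the smoothness assumption \ref{assume:Model_smooth}. Your bad-path treatment is essentially the paper's. One bookkeeping point you only gesture at, but which must carry the whole argument: $\pi_{a_\tau,\setX}$ is not known to be $(r,\delta_\star,\tilde l,(\varphi_t))$-consistent, so neither Lemma \ref{lemma::W} nor Lemma \ref{lemma:Unif_convergence} applies to $W^{\pi_{a_\tau,\setX}}_{(a_\tau+1):b_\tau}$ or to $\bar{P}^{\pi_{a_\tau,\setX}}_{(a_\tau+1):b_\tau}(\theta)$ directly. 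One must first sandwich all $\pi_{a_\tau,\setX}$-quantities between $\underline{\chi}_{a_\tau}$- and $\bar{\chi}_{a_\tau}$-quantities via Lemma \ref{lemma:gamma} (this is precisely where \eqref{eq:W_star} enters, and what makes $\tau'$ and $\xi^+_{b_\tau}$ depend on $(\mu_t)_{t\geq 1}$ only through $(W^*_t)_{t\geq 0}$, $\Gamma^\mu$, $(c_\tau)$ and $(\epsilon_\tau)$), and then invoke stationarity \ref{assume:Model_stationary} to shift time indices back to the consistent measures $\underline{\chi}_r,\bar{\chi}_r$.

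The genuine gap is in your good-path bound. After \ref{assume:Model_smooth}, the good-path contribution is bounded by $\int \ind_A(x_{b_\tau})\exp\big(g(3\epsilon_\tau)\sum_{s=a_\tau+1}^{b_\tau}\varphi_s(x_{s-1},x_s)\big)\,\mu_\theta(\dd x)$, where $\mu_\theta(\dd x)=\pi_{a_\tau,\setX}(\dd x_{a_\tau})\prod_{s=a_\tau+1}^{b_\tau}Q_{s,\theta}(x_{s-1},\dd x_s)$. Your claim that this is at most $\mu_\theta(\ind_A)\cdot\exp\big(\bigO_\P(b_\tau-a_\tau)\,g(3\epsilon_\tau)\big)=\bar{P}^{\pi_{a_\tau,\setX}}_{(a_\tau+1):b_\tau}(\theta)\,\bar{p}^{\pi_{a_\tau,\setX}}_{(a_\tau+1):b_\tau}(A|\theta)\exp(\cdots)$ does not follow from the cited tools and is false in general: factoring out $\mu_\theta(\ind_A)$ requires controlling the exponential moment of $g(3\epsilon_\tau)\sum_s\varphi_s$ \emph{conditionally} on $\{x_{b_\tau}\in A\}$, uniformly in $A\in\mathcal{X}$, whereas \eqref{eq:W_def} and Lemma \ref{lemma::W} only control the \emph{unconditional} moment. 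Since the $\varphi_s$ are unbounded (e.g.\ quadratic in the state for linear Gaussian SSMs), one can take $A$ concentrated where $\sum_s\varphi_s$ is huge; such sets have tiny but positive $\mu_\theta$-mass compatible with any unconditional moment bound, and the conditional factor then exceeds any $1+\smallo_\P(1)$ for fixed $\tau$. The correct step is additive, not multiplicative: writing $e^{u}=1+(e^{u}-1)$ and using $|e^{u}-1|\le|u|e^{|u|}$ together with $u\le \frac{4}{\delta_\star}e^{(\delta_\star/4)u}$ for $u\ge 0$ gives, once $g(3\epsilon_\tau)\le\delta_\star/4$,
\[
\int \ind_A(x_{b_\tau})e^{g(3\epsilon_\tau)\sum_s\varphi_s}\,\mu_\theta(\dd x)\;\le\;\bar{P}^{\pi_{a_\tau,\setX}}_{(a_\tau+1):b_\tau}(\theta)\Big(\bar{p}^{\pi_{a_\tau,\setX}}_{(a_\tau+1):b_\tau}(A|\theta)+\tfrac{4}{\delta_\star}\,g(3\epsilon_\tau)\,W^{\pi_{a_\tau,\setX}}_{(a_\tau+1):b_\tau}\Big),
\]
with $W$ as in \eqref{eq:W_def}. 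Then $\log W^{\pi_{a_\tau,\setX}}_{(a_\tau+1):b_\tau}=\bigO_\P(b_\tau-a_\tau)$ (after the Lemma \ref{lemma:gamma} sandwich) combines with the hypothesis $(b_\tau-a_\tau)^{-1}\log g(3\epsilon_\tau)\to-\infty$ to give $g(3\epsilon_\tau)W^{\pi_{a_\tau,\setX}}_{(a_\tau+1):b_\tau}=\smallo_\P(1)$; this additive error is exactly the $\xi^+_{b_\tau}$ of the conclusion. So your target inequality is reached, but only through the additive decomposition; the multiplicative inequality you route through cannot hold uniformly in $A$.
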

\begin{proof}
See Section \ref{p-lemma:upper_X}.
\end{proof}

\begin{lemma}\label{lemma:lower_X}
Consider the set-up of Lemma \ref{lemma:upper_X}. Then, for all $\theta\in\Theta$ there exists a  sequence $(\xi^-_{b_\tau})_{\tau\geq 1}$ of $(0,1]$-valued random variables,  depending on $(\mu_t)_{t\geq 1}$ only through the sequences $(c_{\tau})_{\tau\geq 1}$ and $(\epsilon_{\tau})_{\tau\geq 1}$ and through the sequence $(W^*_{t})_{t\geq 0}$ defined in \eqref{eq:W_star}, such  that $\xi_{b_\tau}^-=\smallo_\P(1)$ and such that, $\P$-a.s.,
\begin{align*}
\Lambda^{\pi_{a_\tau}}_{(a_\tau+1):b_\tau}(\mathcal{N}_{2\epsilon_t}(\{\theta\})\times \setX)\geq \pi_{a_\tau,\Theta}\big(\mathcal{N}_{\epsilon_t}(\{\theta\})\big)e^{-\frac{b_\tau-a_\tau}{c_\tau}}\bar{P}^{\pi_{a_\tau,\setX}}_{(a_\tau+1):b_\tau}(\theta)( 1-\xi^-_{b_\tau}),\quad\forall \tau\geq 1.
\end{align*}
\end{lemma}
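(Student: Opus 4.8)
The plan is to prove Lemma \ref{lemma:lower_X} by isolating, within the full weighted path integral $\Lambda^{\pi_{a_\tau}}_{(a_\tau+1):b_\tau}$, the contribution of paths on which the parameter component stays close to $\theta$, and then comparing the resulting quantity to the reference normalizing constant $\bar{P}^{\pi_{a_\tau,\setX}}_{(a_\tau+1):b_\tau}(\theta)$. Recall from \eqref{eq:eta_distribution}--\eqref{eq:Pbamodelef} that $\Lambda^{\pi_{a_\tau}}_{(a_\tau+1):b_\tau}(\mathcal{N}_{2\epsilon_t}(\{\theta\})\times\setX)$ integrates $\prod_{s} G_{s,\theta_s}(X_{s-1},X_s)$ against the law in which both the state and the parameter evolve, with $\theta_s$ driven by the increments $(U_i)$, while $\bar{P}^{\pi_{a_\tau,\setX}}_{(a_\tau+1):b_\tau}(\theta)$ integrates the same path weights but with the parameter frozen at $\theta$. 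So the first step is to write $\Lambda^{\pi_{a_\tau}}_{(a_\tau+1):b_\tau}(\mathcal{N}_{2\epsilon_t}(\{\theta\})\times\setX)$ as an expectation over the parameter path and the state path, and restrict attention to the event $E_\theta=\{\theta_{a_\tau}\in\mathcal{N}_{\epsilon_t}(\{\theta\})\}\cap\{\sum_{i=a_\tau+1}^s U_i\in B_{\epsilon_t}(0)\ \forall s\}$, on which every $\theta_s$ lies in $\mathcal{N}_{2\epsilon_t}(\{\theta\})$ (triangle inequality), so the indicator $\ind_{\mathcal{N}_{2\epsilon_t}(\{\theta\})}(\theta_{b_\tau})$ is one.

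On this event the smoothness assumption \ref{assume:Model_smooth} gives the key control: for every $s$ and $\lambda$-a.e.\ $(x,x')$ we have $\log q_{s,\theta_s}(x|x')\geq \log q_{s,\theta}(x|x') - g(\|\theta_s-\theta\|)\varphi_s(x',x)\geq \log q_{s,\theta}(x|x') - g(2\epsilon_t)\varphi_s(x',x)$, using that $g$ is increasing and $\|\theta_s-\theta\|<2\epsilon_t$. Summing over $s$ and exponentiating converts the parameter-dependent path weights into the frozen-at-$\theta$ weights times $\exp(-g(2\epsilon_t)\sum_s\varphi_s)$. The plan is then to factor the integral over the state path from the integral over the parameter path: conditionally on the increment event, the parameter path decouples from the state dynamics, so I can bound below by $\pi_{a_\tau,\Theta}(\mathcal{N}_{\epsilon_t}(\{\theta\}))$ times the $\F_{b_\tau}$-conditional probability of the increment event times a state-path integral of the form $\int \exp(-g(2\epsilon_t)\sum_s\varphi_s)\,\pi_{a_\tau,\setX}(\dd x_{a_\tau})\prod_s Q_{s,\theta}(x_{s-1},\dd x_s)$. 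Handling the decoupling rigorously — conditioning on $\F_{b_\tau}$ so that the $(U_i)$ law is the prescribed product $\prod\mu_i$, and matching the $\theta_{a_\tau}$ marginal of $\pi_{a_\tau}$ against the state marginal $\pi_{a_\tau,\setX}$ — is where the measure-theoretic bookkeeping must be done carefully, and this is the step I expect to be the main obstacle, because $\pi_{a_\tau}$ is a joint law on $\Theta\times\setX$ whose two marginals are not independent.

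The final step is to turn the state-path integral into $\bar{P}^{\pi_{a_\tau,\setX}}_{(a_\tau+1):b_\tau}(\theta)$ times a factor $(1-\xi^-_{b_\tau})$ with $\xi^-_{b_\tau}=\smallo_\P(1)$. Since $g(2\epsilon_t)\to 0$ and $\sum_s\varphi_s$ is controlled in the exponential-moment sense by the consistency property (the $W^{\eta}$ quantities of \eqref{eq:W_def} and Lemma \ref{lemma::W}), a Cauchy--Schwarz or Jensen argument shows $\int \exp(-g(2\epsilon_t)\sum_s\varphi_s)\,(\cdots)\geq \bar{P}^{\pi_{a_\tau,\setX}}_{(a_\tau+1):b_\tau}(\theta)(1-\xi^-_{b_\tau})$; here I would use $e^{-x}\geq 1-x$ together with a bound on $\E[\sum_s\varphi_s]$ normalized by $\bar{P}$, so the error scales like $g(2\epsilon_t)$ times a $\bigO_\P(b_\tau-a_\tau)$ term, which vanishes because $\lim_\tau (b_\tau-a_\tau)^{-1}\log g(3\epsilon_\tau)=-\infty$ forces $g(2\epsilon_t)$ to decay faster than any polynomial in $(b_\tau-a_\tau)$. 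Finally, the increment-event probability contributes the factor $e^{-(b_\tau-a_\tau)/c_\tau}$ exactly as in the hypothesis, via $\P(\bigcap_s\{\sum U_i\in B_{\epsilon_\tau}(0)\}\mid\F_{b_\tau})\geq e^{-(b_\tau-a_\tau)/c_\tau}$ (the complementary bound in the hypothesis combined with a union/one-minus argument, or directly from the analogue of the lower-bound condition). Collecting the three factors gives the claimed inequality, and tracking which objects each bound depends on confirms that $\xi^-_{b_\tau}$ depends on $(\mu_t)$ only through $(c_\tau)$, $(\epsilon_\tau)$ and $(W^*_t)$, as asserted.
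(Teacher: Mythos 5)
Your proposal follows the paper's own proof essentially step for step: the paper restricts to exactly the event you describe (initial parameter in $\mathcal{N}_{\epsilon_\tau}(\{\theta\})$ and all increment partial sums in $B_{\epsilon_\tau}(0)$, so that every $\vartheta_s$ lies in $\mathcal{N}_{2\epsilon_\tau}(\{\theta\})$), applies \ref{assume:Model_smooth} to replace $q_{s,\vartheta_s}$ by $q_{s,\theta}$ at the cost of $\exp\big(-g(3\epsilon_\tau)\sum_s\varphi_s\big)$, and converts the hypothesis into the factor $e^{-(b_\tau-a_\tau)/c_\tau}$ via the elementary inequality \eqref{eq:Bound_log}, which is your ``one-minus'' argument. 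Two points of comparison are worth recording. First, the decoupling step that you flag as the main obstacle is not resolved by any additional bookkeeping in the paper: in \eqref{eq:lower_thm2_0} and in the third inequality of \eqref{eq:lower_thm2}, the expectation $\E^{\pi_{a_\tau}}_{(a_\tau+1):b_\tau}\big[\ind_{\mathcal{N}_{\epsilon_\tau}(\{\theta\})}(\vartheta_{a_\tau})\tilde f_\tau\prod_s G_{s,\vartheta_s}\big]$ is bounded below by $\pi_{a_\tau,\Theta}\big(\mathcal{N}_{\epsilon_\tau}(\{\theta\})\big)e^{-(b_\tau-a_\tau)/c_\tau}$ times the state integral taken under the \emph{full} marginal $\pi_{a_\tau,\setX}$, with no discussion of the dependence between the $\Theta$- and $\setX$-marginals of $\pi_{a_\tau}$ (i.e.\ of the sign of the correlation between $\ind_{\mathcal{N}_{\epsilon_\tau}}(\vartheta_{a_\tau})$ and the state functional of $\tilde X_{a_\tau}$). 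So your concern points at a genuinely terse step of the published argument rather than at an idea you are missing; contrast with Lemma \ref{lemma::lower_general}, where the initial law is a product $\kappa_{a_\tau}\otimes\eta_{a_\tau}$ and the same factorization is exact.

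Second, your treatment of the residual factor differs slightly, but harmlessly, from the paper's. You propose $e^{-x}\geq 1-x$ plus a first-moment bound on $\sum_s\varphi_s$; the paper instead keeps the factor as $\tilde W^{\pi_{a_\tau,\setX}}_{(a_\tau+1):b_\tau}$, lower-bounds it by $\exp\big(-g(3\epsilon_\tau)^{1/2}\log W^{\pi_{a_\tau,\setX}}_{(a_\tau+1):e_\tau}\big)$ via the double-Jensen inequality \eqref{eq:double_Jensen}, and then uses \eqref{eq:WWWW}. Either way, one must control the exponential-moment quantity $W^{\pi_{a_\tau,\setX}}$ of \eqref{eq:W_def}, and since $\pi_{a_\tau,\setX}$ is not itself a consistent measure this has to be routed through Lemma \ref{lemma:gamma} (sandwiching by $\underline{\chi}_{a_\tau}$ and $\bar{\chi}_{a_\tau}$ at the cost of the factors $\tilde\xi_{a_\tau}$), which is precisely where the dependence of $\xi^-_{b_\tau}$ on $(W^*_t)_{t\geq 0}$ enters; your sketch cites Lemma \ref{lemma::W} but should invoke Lemma \ref{lemma:gamma} for this transfer. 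Both variants then conclude from the assumption $\lim_{\tau\rightarrow\infty}(b_\tau-a_\tau)^{-1}\log g(3\epsilon_\tau)=-\infty$, exactly as you indicate.
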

\begin{proof}
See Section \ref{p-lemma:lower_X}.
\end{proof}

\subsubsection{Proof of the theorem}

\begin{proof}

Let $(\delta'_t)_{t\geq 1}$  be as in \ref{condition:mu_extra} and, to simplify the notation, for all $\epsilon\in(0,\infty)$ let $\mathcal{N}^\star_\epsilon=\mathcal{N}_\epsilon(\{\tilde{\theta}_\star\})$. Then, by Theorem \ref{thm:main} (resp.~by  Theorem \ref{thm:main2} if \ref{condition:mu_seq2} holds) there exists a sequence $(\epsilon'_t)_{t\geq 1}$ in $(0,\infty)$ and a sequence $(\xi_t)_{t\geq 1}$ of $[0,1]$-valued random variables, both sequences depending on $(\mu_t)_{t\geq 1}$ only through   $(W^*_{t})_{t\geq 0}$, $(\Gamma^\mu_\delta)_{\delta\in (0,\infty)}$, $\Gamma^\mu$ and $((v_p, t_p,f_p))_{p\geq 1}$ (resp.~$(k_t,f_{t,r+1})_{t\geq 1}$), such that $\lim_{t\rightarrow\infty}\epsilon'_t=0$, such that $\epsilon'_t\geq \delta'_t$ for all $t\geq 1$,  such that  $\xi_t=\smallo_\P(1)$ and such that 
\begin{align}\label{eq:lower_PI}
\pi_{t,\Theta} ( \mathcal{N}^\star_{\epsilon'_t/2})\geq 1-\xi_t,\quad \forall t\geq 1,\quad\P-a.s.
\end{align}

Let $r$ and $g(\cdot)$ be as in \ref{assume:Model},     $c'_{t}=1/f_{t}'(1/\delta'_{t},\delta'_{t})$ for all $t\geq 1$  and note that $\lim_{t\rightarrow\infty}c_t'=\infty$ under \ref{condition:mu_extra}. Next, let $(s_t)_{t\geq 1}$ be  a sequence in $\{sr,\,s\in\mathbb{N}_0\}$ such that
\begin{align}\label{eq:g_lim0}
t-\frac{1}{\delta'_{t}}\leq s_t<t,\quad \epsilon'_t\geq\frac{\epsilon'_{s_t}}{2},\quad \forall t\geq 1
\end{align}
and such that 
\begin{align}\label{eq:g_lim}
\lim_{t\rightarrow\infty}\frac{\log g(3\epsilon'_{t})}{t-s_t}=-\infty,\quad \lim_{t\rightarrow\infty}\frac{t-s_t}{c'_t}=0,\quad \lim_{t\rightarrow\infty}(t-s_t)=\infty,\quad\liminf_{t\rightarrow\infty}\frac{\epsilon'_t}{\epsilon'_{s_t}}\geq \frac{3}{4}.
\end{align}
Remark that  \eqref{eq:g_lim} is satisfied if $(t-s_t)\rightarrow\infty$ sufficiently slowly. In addition, remark that
\begin{equation}\label{eq:cond_K}
\begin{split}
\frac{1}{t-s_t} &\log\P\Big(\exists    s\in\{s_t+1,\dots,  t\}:  \sum_{i=s_t+1}^s  U_i\not\in  B_{\epsilon'_{t}}(0)\,\big|\, \F_{t}\Big)\\
&\leq \frac{1}{t-s_t}\log\P\Big(\exists s\in\{s_t+1,\dots,  t\}:  \sum_{i=s_t+1}^s  U_i\not\in B_{\delta'_{t}}(0)\,\big|\, \F_{t}\Big)\\
&\leq -c'_{t},\hspace*{9cm}\quad\forall t\geq 1,\quad\P-a.s.
\end{split} 
\end{equation}
where the first inequality uses the fact that $\epsilon'_t\geq \delta'_t$ for all $t\geq 1$ while the second inequality holds under \ref{condition:mu_extra}. Finally, remark that, by \eqref{eq:lower_PI} and the last condition given in \eqref{eq:g_lim}, there exists a $t'\in\mathbb{N}$ such that
\begin{align}\label{eq:lower_PI2}
\pi_{s_t,\Theta}(\mathcal{N}^\star_{\epsilon'_t})\geq \pi_{s_t,\Theta}( \mathcal{N}^\star_{\epsilon'_{s_t}/2})\geq 1-\xi_{s_t},\quad \forall t\geq t',\quad\P-a.s.
\end{align}

To prove the result of the theorem remark first that, by Lemma \ref{lemma:Zt}, for all $t\geq 1$ we have
\begin{align}\label{eq:bound_AX}
\pi_{s_t}\big(\mathcal{N}^\star_{\epsilon'_t}\times A\big)=\frac{\Lambda^{\pi_{s_t}}_{(s_t+1):t}\big(\mathcal{N}^\star_{\epsilon'_t}\times A\big)}{\Lambda^{\pi_{s_t}}_{(s_t+1):t}(\Theta\times \setX)}\leq  \frac{\Lambda^{\pi_{s_t}}_{(s_t+1):t}\big(\mathcal{N}^\star_{\epsilon'_t}\times A\big)}{\Lambda^{\pi_{s_t}}_{(s_t+1):t}\big(\mathcal{N}^\star_{2\epsilon'_{t}}\times \setX\big)},\quad\forall A\in\mathcal{X},\quad\P-a.s
\end{align}

By  \eqref{eq:g_lim}-\eqref{eq:cond_K}, the assumptions of Lemma \ref{lemma:upper_X}  hold for the sequences $(a_\tau)_{\tau\geq 1}$, $(b_\tau)_{\tau\geq 1}$,  $(c_\tau)_{\tau\geq 1}$ and $(\epsilon_\tau)_{\tau\geq 1}$ defined by
\begin{align}\label{eq:seu_lemmas}
a_\tau=s_\tau,\quad b_\tau=\tau,\quad \epsilon_\tau=\epsilon'_{\tau},\quad c_\tau=c'_{\tau},\quad  \tau\geq 1
\end{align}
and thus, by applying this lemma with $\theta=\tilde{\theta}_\star$, it follows that there exist  a $t''\in\mathbb{N}$ and a  sequence $(\xi^+_{t})_{\tau\geq 1}$ of $(0,\infty]$-valued random variables, both  depending on $(\mu_t)_{t\geq 1}$ only through the sequences  $(W^*_{t})_{t\geq 0}$, $(c_{\tau})_{\tau\geq 1}$ and $(\epsilon_{\tau})_{\tau\geq 1}$ and through the random variable $\Gamma^\mu$  defined in \ref{condition:Inf_K}, such  that $\xi_{t}^+=\smallo_\P(1)$ and such that, for all $t\geq t''$,
\begin{align}\label{eq:uu}
\Lambda^{\pi_{s_t}}_{(s_t+1):t}\big(\mathcal{N}^\star_{\epsilon'_{t}}\times A\big)\leq \bar{P}^{\pi_{s_t,\setX}}_{(s_t+1):t}(\tilde{\theta}_\star)\big( \bar{p}_{(s_t+1):t}^{\pi_{s_t,\setX}}(A|\tilde{\theta}_\star)+\xi_{t}^+\big),\quad\forall A\in  \mathcal{T},\quad\P-a.s.
\end{align}

In  addition,  by  \eqref{eq:g_lim}-\eqref{eq:cond_K} the assumptions of Lemma \ref{lemma:lower_X}  hold for the sequences $(a_\tau)_{\tau\geq 1}$, $(b_\tau)_{\tau\geq 1}$, $(c_\tau)_{\tau\geq 1}$ and  $(\epsilon_\tau)_{\tau\geq 1}$ defined in \eqref{eq:seu_lemmas}, and thus  by applying this lemma with $\theta=\tilde{\theta}_\star$, it follows that there exist   a  sequence $(\xi^-_{t})_{\tau\geq 1}$ of $[0,1]$-valued random variables,    depending on $(\mu_t)_{t\geq 1}$ only through the sequences  $(W^*_{t})_{t\geq 0}$, $(c_{\tau})_{\tau\geq 1}$ and $(\epsilon_{\tau})_{\tau\geq 1}$, such  that $\xi_{t}^-=\smallo_\P(1)$ and such that, for all $t\geq 1$,
\begin{align}\label{eq:ll}
\Lambda^{\pi_{s_t}}_{(s_t+1):t}\big(\mathcal{N}^\star_{2\epsilon'_{t}}\times \setX\big)\geq \pi_{s_t}(\mathcal{N}^\star_{\epsilon'_{t}})\exp\Big(-\frac{t-s_t}{c'_{t}}\Big)\bar{P}^{\pi_{s_t,\setX}}_{(s_t+1):t}(\tilde{\theta}_\star)( 1-\xi^-_{t}),\quad\forall t\geq 1,\quad\P-a.s.
\end{align}

By combining \eqref{eq:lower_PI2}, \eqref{eq:bound_AX}, \eqref{eq:uu} and \eqref{eq:ll}, we obtain that for all $t\geq t''':=t'\vee t''$ we have
\begin{align*}
\pi_{t}\big(\mathcal{N}^\star_{\epsilon'_{t}} \times A\big)\leq \frac{\exp\big(-\frac{t-s_t}{c'_{t}}\big)}{(1-\xi_{s_t})(1-\xi^-_{t})}\Big(\bar{p}_{(s_t+1):t}^{\pi_{s_t,\setX}}(A|\tilde{\theta}_\star)+\xi_{t}^+\Big),\quad\forall A\in\mathcal{X},\quad\P-a.s.
\end{align*}
and thus, letting
\begin{align*}
Z_{t}=\Big|\frac{\exp\big(\frac{t-s_t}{c'_{t}}\big)}{(1-\xi_{s_t})(1-\xi^-_{t})}-1\Big|+\frac{\exp\big(\frac{t-s_t}{c'_{t}}\big)}{(1-\xi_{s_t})(1-\xi^-_{t})} \xi^+_{t},\quad\forall t\geq 1 
\end{align*}
it follows that
\begin{align}\label{eq:main_bound_1}
\pi_{t}\big(\mathcal{N}^\star_{\epsilon'_{t}}\times A\big)-\bar{p}_{(s_t+1):t}^{\pi_{s_t,\setX}}(A|\tilde{\theta}_\star)\leq  Z_{t},\quad\forall A\in\mathcal{X},\quad\P-a.s.,\quad\forall t\geq t'''.
\end{align}
 
To complete the proof remark that
\begin{align}\label{eq:main_bound_2}
\pi_{t,\setX}(A)\leq \pi_{t,\Theta}\big(\Theta\setminus\mathcal{N}^\star_{\epsilon'_{t}}\big)+\pi_{t}\big(\mathcal{N}^\star_{\epsilon'_{t}}\times A\big),\quad\forall A\in\mathcal{X},\quad\forall t\geq 1
\end{align}
and thus, by  \eqref{eq:lower_PI}, \eqref{eq:main_bound_1} and \eqref{eq:main_bound_2}, we obtain that
\begin{align}\label{eq:main_bound3}
\sup_{A\in\mathcal{X}}\Big(\pi_{t,\setX}(A)-\bar{p}_{(s_t+1):t}^{\pi_{s_t,\setX}}(A|\tilde{\theta}_\star)\Big)\leq  \xi_{t}+Z_{t},\quad\P-a.s.,\quad\forall t\geq t'''.
\end{align}
It is readily checked that \eqref{eq:main_bound3} is equivalent to
\begin{align*}
\sup_{A\in\mathcal{X}}\Big|\pi_{t,\setX}(A)-\bar{p}_{(s_t+1):t}^{\pi_{s_t,\setX}}(A|\tilde{\theta}_\star)\Big|\leq  \xi_{t}+Z_{t},\quad\P-a.s.,\quad\forall t\geq t'''
\end{align*}
and the proof the theorem is complete upon noting that  $\xi_{t}+Z_{t}=\smallo_\P(1)$ and that both $t'''$ and the sequence $(\xi_{t}+Z_{t})_{t\geq t'''}$  depend on $(\mu_t)_{t\geq 1}$ only through  $(W^*_{t})_{t\geq 0}$, $(\Gamma^\mu_\delta)_{\delta\in (0,\infty)}$, $\Gamma^\mu$, $((v_p,t_p,f_p))_{p\geq 1}$ (resp.~$((k_t,f_{t,r+1}))_{t\geq 1})$ if \ref{condition:mu_seq2} holds) and $(f'_t)_{t\geq 1}$.
\end{proof}

\subsection{Proof of the preliminary results}

\subsubsection{Proof of Lemma \ref{lemma:tech}\label{p-lemma:tech}}
\begin{proof}

For $t\in\{t_2,t_3\}$ let
\begin{align*}
\zeta_{t}\big(\dd(\theta_{t_1-1},\dots,\theta_t,x_{t_1-1},\dots,x_t)\big)= \zeta\big(\dd(\theta_{t_1-1},x_{t_1-1})\big) \prod_{s=t_1}^{t}m_{s,\theta_{s}}(x_s|x_{s-1})\lambda(\dd x_s) K_{\mu_s|\Theta}(\theta_{s-1},\dd\theta_{s})
\end{align*}
and, for all $t\geq 1$, let
\begin{align*}
\tilde{G}_{t,\theta}(x',x)=\frac{G_{t,\theta}(x',x)}{D_{t:t}},\quad \forall(\theta,x,x')\in\Theta\times\setX^2 
\end{align*}
where, for all integers $1\leq s_1\leq s_2$, the random variable $D_{s_1:s_2}$ is as defined in \eqref{eq:Dst}.

Then,
\begin{align*}
P_{t_1: t_2}^\zeta(\Theta)&=\int_{(\Theta\times\setX)^{t_2-t_1+2}} \zeta_{t_2}\big(\dd(\theta_{t_1-1},\dots,\theta_{t_2},x_{t_1-1},\dots,x_{t_2})\big)\prod_{s=t_1}^{t_2} G_{s,\theta_s}(x_{s-1},x_s)\\
&= \bigg(\int_{(\Theta\times\setX)^{t_2-t_1+2}} \zeta_{t_2}\big(\dd(\theta_{t_1-1},\dots,\theta_{t_2},x_{t_1-1},\dots,x_{t_2})\big)\prod_{s=t_1}^{t_2} \tilde{G}_{s,\theta_s}(x_{s-1},x_s)\bigg) D_{t_1:t_2}\\
&\geq  \bigg(\int_{(\Theta\times\setX)^{t_3-t_1+2}} \zeta_{t_3}\big(\dd(\theta_{t_1-1},\dots,\theta_{t_3},x_{t_1-1},\dots,x_{t_3})\big)\prod_{s=t_1}^{t_3} \tilde{G}_{s,\theta_s}(x_{s-1},x_s)\bigg)D_{t_1:t_2}\\
&=P_{t_1: t_3}^\zeta(\Theta) D_{(t_2+1):t_3}^{-1}.
\end{align*}
\end{proof}

\subsubsection{Proof of Lemma \ref{lemma::W}\label{p-lemma::W}}
\begin{proof}
For all $t\geq 2$ let $n_t=r(t-1)+1$ and, for all $\theta\in\Theta$ and $\delta\in [0,\infty)$, let
\begin{align}\label{eq:W_theta}
W_{rt,\theta}(\delta)=\frac{1}{rt}\log \frac{ \int_{\setX^{n_t}}\exp\Big( \delta   \sum_{s= r +1}^{ r t}\varphi_s(x_{s-1}, x_{s})\Big) \eta_r(\dd x_{r})  \prod_{s=r+1}^{r t} Q_{s,\theta}(x_{s-1},\dd x_{s})}{\bar{P}^{\eta_r}_{(r+1):rt}(\theta)}
\end{align}
where, for all integers $1\leq t_1\leq t_2$ and $\eta\in\mathcal{P}(\setX)$, the random variable  $\bar{P}^\eta_{t_1:t_2}(\theta)$ is as defined in \eqref{eq:Pbamodelef}.

By assumption, $\eta_r$ is  $(r,\delta_\star, \tilde{l},(\varphi_t)_{t\geq 1}\big)$-consistent and thus $\P\big(\bar{P}^{\eta_r}_{(r+1):rt}(\theta)\in (0,\infty)\big)=1$ for all $t\geq 2$ and all $\theta\in\Theta$. Therefore, for all $t\geq 2$ and $\theta\in\Theta$ there exists a random probability measure $\eta_{rt,\theta}$ on $(\setX^{n_t},\otimes^{n_t}\mathcal{X})$ such that
\begin{align}\label{eq:eta_theta}
\eta_{rt,\theta}\big(A\big)=\int_A\frac{ \eta_r(\dd x_{r}) \prod_{s=r+1}^{rt} Q_{s,\theta}(x_{s-1},\dd x_{s})}{\bar{P}^{\eta_r}_{(r+1):rt}(\theta)},\quad\forall A\in \otimes^{n_t}\mathcal{X},\quad\P-a.s.
\end{align}
 
To proceed further let $g(\cdot)$ be as in \ref{assume:Model}, $\epsilon_\star= g^{-1}(\delta_\star^2/4)$ and $t\geq 2$. Then, $\P$-a.s., for all $\theta'\in\Theta$ and   $\theta\in B_{\epsilon_\star}(\theta')\cap\Theta$  we have
 \begin{equation}\label{eq:W_1}
 \begin{split}
\frac{1}{rt}&\log W_{rt,\theta}(\delta_\star/2)\\
&=\frac{1}{rt}\log   \int_{\setX^{n_t}}\exp\Big(\frac{\delta_\star}{2} \sum_{s=r+1}^{rt}\varphi_s(x_{s-1}, x_{s})\Big)\eta_{rt,\theta}(\dd(x_{r},\dots,x_{rt})\big)\\
 &\leq \frac{1}{rt}\log \int_{\setX^{n_t}}\exp\Big(\Big\{\frac{\delta_\star}{2}+g\big(\|\theta-\theta'\|\big)\Big\}\sum_{s=r+1}^{rt}\varphi_s(x_{s-1}, x_{s})\Big)\eta_{rt,\theta'}(\dd(x_{r},\dots,x_{rt})\big)\\
 &+ \frac{1}{rt}\log\frac{\bar{P}^{\eta_r}_{(r+1):rt}(\theta')}{\bar{P}^{\eta_r}_{(r+1):rt}(\theta)}\\
  &\leq   \frac{1}{rt}\log \int_{\setX^{n_t}}\exp\Big(  \delta_\star\sum_{s=r+1}^{rt}\varphi_s(x_{s-1}, x_{s})\Big)\eta_{rt,\theta'}(\dd(x_{r},\dots,x_{rt})\big)+ \frac{1}{rt}\log\frac{\bar{P}^{\eta_r}_{(r+1):rt}(\theta')}{\bar{P}^{\eta_r}_{(r+1):rt}(\theta)}\\ 
  &=\frac{1}{rt}\log W_{rt,\theta'}(\delta_\star)+ \frac{1}{rt}\log\frac{\bar{P}^{\eta_r}_{(r+1):rt}(\theta')}{\bar{P}^{\eta_r}_{(r+1):rt}(\theta)}
  \end{split}
 \end{equation}
 where the first inequality holds under  \ref{assume:Model_smooth}  while the second inequality uses the fact that $\delta_\star\in(0,1)$.

To control the last term in \eqref{eq:W_1} remark that, $\P$-a.s., for all $\theta'\in\Theta$ and   $\theta\in B_{\epsilon_\star}(\theta')\cap\Theta$  we have
\begin{equation}\label{eq:W_2}
\begin{split}
 \frac{1}{rt}&\log\frac{\bar{P}^{\eta_r}_{(r+1):rt}(\theta')}{\bar{P}^{\eta_r}_{(r+1):rt}(\theta)}\\
 &\leq \frac{1}{rt}\log   \int_{\setX^{n_t}}\exp\Big(g\big(\|\theta-\theta'\|\big) \sum_{s=r+1}^{rt}\varphi_s(x_{s-1}, x_{s})\Big)\eta_{rt,\theta}(\dd(x_{r},\dots,x_{rt})\big)\\
 &\leq g\big(\|\theta-\theta'\|\big)^{1/2}\frac{1}{rt}\log   \int_{\setX^{n_t}}\exp\Big(g\big(\|\theta-\theta'\|\big)^{1/2} \sum_{s=r+1}^{rt}\varphi_s(x_{s-1}, x_{s})\Big)\eta_{rt,\theta}(\dd(x_{r},\dots,x_{rt})\big)\\
 &\leq  \frac{\delta_{\star}}{rt}\log W_{rt,\theta}(\delta_\star/2)
\end{split}
\end{equation}
where the first inequality holds under   \ref{assume:Model_smooth}  while the second inequality uses  Jensen's inequality.

By combining \eqref{eq:W_1} and \eqref{eq:W_2}  we obtain  that, for all $t\geq 2$,
\begin{align}\label{eq:W2}
\frac{1}{rt}\log W_{rt,\theta}(\delta_{\star}/2)\leq\frac{1}{1-\delta_\star}  \frac{1}{rt}\log W_{rt,\theta'}(\delta_\star),\quad \forall \theta\in B_{\epsilon_\star}(\theta')\cap\Theta,\quad\forall \theta'\in\Theta,    \quad \P-a.s. 
\end{align}

We now let  $\{\theta'_i\}_{i=1}^k\subset \Theta$ be a finite set such that, for all $\theta\in \Theta$, there exists an $i\in\{1,\dots,k\}$ for which $\|\theta-\theta'_{i}\|\leq \epsilon_\star$. Remark  that such a finite set   exists since    $\Theta$  is  compact. Then, by \eqref{eq:W2},  
\begin{equation}\label{eq:Z_boubd}
\begin{split}
\frac{1}{rt}\log W^{\eta_r}_{(r+1):rt} \leq \frac{1}{1-\delta_\star}\, \max_{i\in\{1,\dots,k\}}\frac{1}{rt}\log W_{rt ,\theta'_i}(\delta_\star),   \quad\forall t\geq 2,\quad \P-a.s. 
\end{split}
\end{equation}
where,  recalling that $\eta_r$ is assumed to be  $(r,\delta_\star, \tilde{l},(\varphi_t)_{t\geq 1}\big)$-consistent,
\begin{equation}\label{eq:W_bigO}
 \frac{1}{rt} \log W_{rt ,\theta'_i}( \delta_\star)=\bigO_\P(1),\quad\inf_{t\geq 2}\P\Big(\log W_{rt ,\theta'_i}( \delta_\star)\in [0,\infty)\Big)=1,\quad\forall i\in\{1,\dots,k\}.
\end{equation}
Then, by using \eqref{eq:Z_boubd}-\eqref{eq:W_bigO}, it follows that 
\begin{align}\label{eq:W_bigOO}
\inf_{t\geq 2}\P\big(\log W^{\eta_r}_{(r+1):rt}\in[0,\infty)\big)=1
\end{align}
while, by using  \eqref{eq:W_2} and  \eqref{eq:Z_boubd}-\eqref{eq:W_bigO}, and noting that since $\eta_r$ is  $(r,\delta_\star, \tilde{l},(\varphi_t)_{t\geq 1}\big)$-consistent we have $\inf_{t\geq 2}\P\big(\min_{i\in \{1,\dots,k\}}\log \bar{P}^{\eta_r}_{(r+1):rt}(\theta'_i)>-\infty)=1$, it follows that
\begin{align*}
\inf_{t\geq 2}\P\big(\inf_{\theta\in\Theta}\log \bar{P}^{\eta_r}_{(r+1):rt}(\theta)>-\infty\big)=1.
\end{align*}

To conclude the proof it remains to show that $(rt)^{-1}\log   W^{\eta_r}_{(r+1):rt}=\bigO_\P(1)$. To this aim let $\epsilon\in(0,\infty)$ and note that, by \eqref{eq:W_bigO}, for all $i\in\{1,\dots,k\}$ there exists a  constant  $M_i\in(0,\infty)$   such that
\begin{align*}
\limsup_{t\rightarrow\infty}\P\left(\frac{1}{rt} \log W_{rt ,\theta'_i}( \delta_\star)\geq  M_i\right)\leq \frac{\epsilon}{k}.
\end{align*}
Consequently, letting $M=\max_{i\in\{1,\dots,k\}}M_i/(1-\delta_\star)$ and using \eqref{eq:Z_boubd} and \eqref{eq:W_bigOO},  we have
\begin{equation*}
\begin{split}
\limsup_{t\rightarrow\infty}\P\bigg(\Big|\frac{1}{rt}\log W^{\eta_r}_{(r+1):rt}\Big|  \geq M \bigg)&=\limsup_{t\rightarrow\infty}\P\left(\frac{1}{rt}\log W^{\eta_r}_{(r+1):rt} \geq M \right)\\
&\leq \limsup_{t\rightarrow\infty}\P\left(\max_{i\in\{1,\dots,k\}}\frac{1}{rt}\log W_{rt,\theta'_i}(\delta_{\star})\geq (1-\delta_\star) M\right)\\
&\leq \sum_{i=1}^k  \limsup_{t\rightarrow\infty}\P\left(\frac{1}{rt}\log W_{rt ,\theta'_i}(\delta_\star)\geq M_i\right)\\
&\leq \epsilon 
\end{split}
\end{equation*}
and the proof of the lemma is complete.
\end{proof}

\subsubsection{Proof of Lemma \ref{lemma:Unif_convergence}}\label{p-lemma:Unif_convergence}

\begin{proof}

Since (i) the set $\Theta$ is compact, (ii) the function $l$ is continuous on $\Theta$ under \ref{assume:Model} and (iii) $\eta_r$ is $(r,\delta_\star, \tilde{l},(\varphi_t)_{t\geq 1})$-consistent by assumption, it follows from \citetsup[][Theorem 1]{andrews1992generic} that to prove the result of the lemma it suffices to show that
\begin{equation}\label{eq:SEC}
\begin{split}
\forall\epsilon\in(0,\infty),\quad&\exists\delta\in(0,\infty)\,\,\text{ such that }\\
&\limsup_{t\rightarrow\infty}\P\Big(\sup_{\theta'\in \Theta} \sup_{\theta\in B_{\delta}(\theta')\cap\Theta }\Big|\frac{1}{rt}\log \bar{P}^{\eta_r}_{(r+1):rt}(\theta)-\frac{1}{rt}\log \bar{P}^{\eta_r}_{(r+1):rt}(\theta')\Big|>\epsilon\Big)<\epsilon.
\end{split}
\end{equation}
 
To show \eqref{eq:SEC} let $g(\cdot)$ be as in \ref{assume:Model},  $\eta_{rt,\theta}$ be as defined in \eqref{eq:eta_theta} for all for all $t\geq 2$ and all $\theta\in\Theta$, and  for all $t\geq 2$, $\theta\in\Theta$ and $\delta\in(0,1)$  let $W_{rt,\theta}(\delta)$ be as defined in \eqref{eq:W_theta}.

Then, $\P$-a.s.~and for all $t\geq 2$, for all  $\theta'\in\Theta$ and  $\theta\in B_{\delta_\star}(\theta')\cap\Theta$  such that  $g\big(\|\theta-\theta'\|\big)<1$ we have
\begin{equation}\label{eq::uni1}
\begin{split}
&\frac{1}{rt}\log  \frac{\bar{P}^{\eta_r}_{(r+1):rt}(\theta)}{\bar{P}^{\eta_r}_{(r+1):rt}(\theta')}\\
&\leq \frac{1}{rt}\log\int_{\setX^{r(t-1)+1}} \exp\Big(g\big(\|\theta-\theta'\|\big)\sum_{s=r+1}^{rt}\varphi_s(x_{s-1},x_{s})\Big)\eta_{t,\theta'}\big(\dd(x_{r},\dots,x_{rt})\big)\\
 &\leq g\big(\|\theta-\theta'\|\big)^{1/2}\frac{1}{rt}\log\int_{\setX^{r(t-1)+1}} \exp\Big(g\big(\|\theta-\theta'\|\big)^{1/2}\sum_{s=r+1}^{rt}\varphi_s(x_{s-1},x_{s})\Big)\eta_{t,\theta'}\big(\dd(x_{r},\dots,x_{rt})\big)\\
 &=g\big(\|\theta-\theta'\|\big)^{1/2}\frac{1}{rt}\log  W_{rt,\theta'}\Big(g\big(\|\theta-\theta'\|\big)^{1/2}\Big)
\end{split}    
\end{equation}
where the first inequality holds under \ref{assume:Model_smooth}  while  the second inequality holds by Jensen's inequality.

We now let $\epsilon\in(0,1)$ and $\kappa\leq \delta_\star^2/4$ and   choose a
$\delta_\kappa\in(0,1)$ and a $\gamma_\kappa\in(0,\kappa)$   sufficiently small so  that
\begin{align}\label{eq:kappa_cond}
2 g^{-1}(\delta_\kappa)+\gamma_\kappa\leq g^{-1}(\kappa). 
\end{align}
Next, we let  $\{\theta'_i\}_{i=1}^k\subset \Theta$ be a finite set such that, for all $\theta\in \Theta$, there exists an $i\in\{1,\dots,k\}$ for which $d_{\setR}(\theta,\theta'_{i})\leq  g^{-1}(\delta_\kappa)$. Remark that such a finite set  exists since $\Theta$ is compact. Finally, for all $\theta\in \Theta$ we let $a_\theta\in \{\theta'_i\}_{i=1}^k$   be such that $\|\theta-a_\theta\|\leq   g^{-1}(\delta_\kappa)$. 

Then, $\P$-.a.s.~and  for all $t \geq 2$, for all $\theta\in \Theta$ and $\theta'\in B_{\gamma_\kappa}(\theta)\cap\Theta$, we have
\begin{equation}\label{eq:use_split}
\begin{split}
\Big|\frac{1}{r t}\log \frac{\bar{P}^{\eta_r}_{(r+1):rt}(\theta)}{\bar{P}^{\eta_r}_{(r+1):rt}(\theta')}\Big|&=\Big|\frac{1}{rt}\log \frac{\bar{P}^{\eta_r}_{(r+1):rt}(\theta)}{\bar{P}^{\eta_r}_{(r+1):rt}(a_\theta)}-\frac{1}{rt}\log \frac{\bar{P}^{\eta_r}_{(r+1):rt}(\theta')}{\bar{P}^{\eta_r}_{(r+1):rt}(a_{\theta'})}+\frac{1}{rt}\log \frac{\bar{P}^{\eta_r}_{(r+1):rt}(a_\theta)}{\bar{P}^{\eta_r}_{(r+1):rt}(a_{\theta'})}\Big|\\
&\leq \kappa^{1/2}\bigg(\frac{1}{rt}\log W_{rt,a_{\theta}}\big(\kappa^{1/2}\big)+\frac{1}{rt}\log W_{rt,a_{\theta'}}\big(\kappa^{1/2}\big)+\frac{1}{rt}\log W_{rt,a_{\theta'}}\big(\kappa^{1/2}\big)\bigg)\\
&\leq 3\kappa^{1/2}\max_{i\in\{1,\dots,k\}}\frac{1}{rt}\log W_{rt,\theta'_i}\big(\kappa^{1/2})\\
&\leq 3\kappa^{1/2}\max_{i\in\{1,\dots,k\}}\frac{1}{rt}\log W_{rt,\theta'_i}\big(\delta_\star/2) \\
&\leq  3\kappa^{1/2}\frac{1}{rt}\log W_{(r+1):rt}^{\eta_r}
\end{split}
\end{equation}
where, for all integers $1\leq t_1<t_2$ and $\eta\in\mathcal{P}(\setX)$, the random variable  $W_{t_1:t_2}^\eta$ is as defined in \eqref{eq:W_def} and where the first inequality uses \eqref{eq::uni1} and the fact that, by \eqref{eq:kappa_cond}, we have both $\delta_\kappa<\kappa\leq \delta_\star^2/4$  and
\begin{align*}
d_{\setR}(a_\theta,a_{\theta'})\leq 2 g^{-1}(\delta_\kappa)+\gamma_\kappa\leq g^{-1}(\kappa),\quad\forall \theta\in B_{\Theta,\gamma_\kappa}(\theta')\quad\forall \theta'\in\Theta.
\end{align*}

To proceed further let $M_\epsilon\in (0,\infty)$ be such that
\begin{align}\label{eq:Mepsilon}
\limsup_{t\rightarrow\infty}\P\Big( \frac{1}{rt }\log W_{(r+1):rt}^{\eta_r} \geq  M_\epsilon\Big)< \epsilon.
\end{align}
Remark that such a constant $M_\epsilon$ exists by Lemma  \ref{lemma::W}. Then, using  \eqref{eq:use_split}  with 
\begin{align*}
\kappa=\min\Big(\frac{\epsilon^2}{9 M_\epsilon^2},\,\frac{\delta^2_\star}{4}\Big)
\end{align*}
 and \eqref{eq:Mepsilon}, it follows that 
\begin{align*}
\limsup_{t\rightarrow\infty}\P\Big(\sup_{\theta'\in \Theta} \sup_{\theta\in B_{\gamma_\kappa}(\theta')\cap\Theta}\big|\frac{1}{rt} \log \bar{P}^{\eta_r}_{(r+1):rt}(\theta)-\frac{1}{rt}\log &\bar{P}^{\eta_r}_{(r+1):rt}(\theta')\big|>\epsilon\Big) \\
&\leq \limsup_{t\rightarrow\infty} \P\Big(\frac{1}{rt}\log W_{(r+1):rt}^{\eta_r}\geq M_\epsilon\Big)\\
&<\epsilon.
\end{align*}
 This  shows  \eqref{eq:SEC} and the proof of the lemma is therefore complete.
\end{proof}

\subsubsection{Proof of Lemma \ref{lemma:Zt}\label{p-lemma:Zt}}

\begin{proof}

To prove the lemma it is enough to show that
\begin{align}\label{eq:ZT_show}
 \P\big(\mathfrak{Z}_t\in (0,\infty)\big)=1,\quad\forall t\geq 1
\end{align}
with $(\mathfrak{Z}_t)_{t\geq 1}$  as defined in \eqref{eq:normalizing}.

To show \eqref{eq:ZT_show} remark first that $\mathfrak{Z}_t\leq D_{1:t}$ for all $t\geq 1$, where for all $1\leq t_1\leq t_2$ the random variable $D_{t_1:t_2}$ is as defined in \eqref{eq:Dst}. Therefore, under  \ref{assume:G_bounded},  we have $\P(\mathfrak{Z}_t<\infty)=1$ for all $t\geq 1$.

Letting $r$ be  as in \ref{assume:Model}, we now show that
\begin{align}\label{eq:Zr}
\P(\mathfrak{Z}_t>0)=1,\quad\forall t\in\{sr,\,s\in\mathbb{N}\}.
\end{align}

To this aim,   let $t\in\{sr,\,s\in\mathbb{N}\}$, $\delta_\star\in(0,1)$  be as in \ref{assume:Model}, $\epsilon\in(0, \delta_\star/2)$ be such that $g(\epsilon)< \delta_\star/2 $,  with $g$ as in \ref{assume:Model}, and let
\begin{align}\label{eq:Lt_def}
L_t=\inf_{\theta\in\Theta}\int_{\setX^{t+1}} \exp\Big(-g(\epsilon)\sum_{s=1}^{t} \varphi_{s}(x_{s-1},x_{s})\Big)\chi(\dd x_0)\prod_{s=1}^{t} Q_{s,\theta}(x_{s-1},\dd x_{s}) 
\end{align}
with $(\varphi_s)_{s\geq 1}$ as in  \ref{assume:Model}  and   $f_{t,\epsilon}$ be as defined in \eqref{eq:ft_def}.

Then, we $\P$-a.s.~have 
\begin{equation}\label{eq:lower_z1}
\begin{split}
 \mathfrak{Z}_t&=\int_{(\Theta\times\setX)^{t+1}}\mu_0(\dd\theta_0)\chi(\dd x_0) \prod_{s=1}^{t} Q_{s,\theta_{s}}(x_{s-1},\dd x_{s}) K_{\mu_{s}|\Theta}(\theta_{s-1},\dd\theta_{s})\\
&\geq \int_{(\Theta\times\setX)^{t+1}} f_{t,\epsilon}(\theta_0,\dots,\theta_t)\mu_0(\dd\theta_0)\chi(\dd x_0) \prod_{s=1}^{t} Q_{s,\theta_{s}}(x_{s-1},\dd x_{s}) K_{\mu_{s}|\Theta}(\theta_{s-1},\dd\theta_{s})\\
&\geq L_t \int_{ \Theta^{t+1}} f_{t,\epsilon}(\theta_0,\dots,\theta_t)\,\mu_0(\dd\theta_0) \prod_{s=1}^{t}   K_{\mu_{s}|\Theta}(\theta_{s-1},\dd\theta_{s})\\
&\geq L_t \int_{ \setR^{t+1}} f_{t,\epsilon}(\theta_0,\dots,\theta_t)\,\mu_0(\dd\theta_0) \prod_{s=1}^{t}   \ind_{\Theta}(\theta_s)K_{\mu_{s}}(\theta_{s-1},\dd\theta_{s})
\end{split}
\end{equation}
where the second inequality holds under    \ref{assume:Model_smooth}.

To proceed further  recall that under \ref{assume:Model_ThetaStar} we have   $\mathcal{N}_{\delta_\star}(\tilde{\Theta}_\star)\subseteq \mathring{\Theta}$ and thus, since $\epsilon<\delta_\star/2$, it follows that $\theta+u\in \mathcal{N}_{\delta_\star}(\tilde{\Theta}_\star)\subset \mathring{\Theta}\subset\Theta$ for all $\theta\in \mathcal{N}_{\delta_\star/2}(\tilde{\Theta}_\star)$ and  all $u\in B_\epsilon(0)$. Therefore,
\begin{equation}\label{eq:lower_z2}
\begin{split}
\int_{\setR^{t+1}} f_{t,\epsilon}(\theta_0,\dots,\theta_t)\,\mu_0(\dd\theta_0) \prod_{s=1}^{t}   \ind_{\Theta}(\theta_s) & K_{\mu_{s}}(\theta_{s-1},\dd\theta_{s})\\
&\geq \mu_0\big(\mathcal{N}_{\delta_\star/2}(\tilde{\Theta}_\star)\big)\P\Big(\sum_{i=1}^s U_i\in B_\epsilon(0),\,\forall s\in\{1,\dots,t\} \,\big| \F_t \Big)\\
&\geq  \mu_0\big(\mathcal{N}_{\delta_\star/2}(\tilde{\Theta}_\star)\big)\P\Big( U_s\in B_{\epsilon/t}(0),\,\forall s\in\{1,\dots,t\} \,\big| \F_t  \Big)\\
&=\mu_0\big(\mathcal{N}_{\delta_\star/2}(\tilde{\Theta}_\star)\big)\prod_{s=1}^t \mu_s\big(B_{\epsilon/t}(0)\big) 
\end{split}
\end{equation}
where $\mu_0\big(\mathcal{N}_{\delta_\star/2}(\tilde{\Theta}_\star)\big)>0$ under \ref{condition:mu1}  and where, under \ref{condition:Inf_mu}, $\P(\mu_s\big( B_{\epsilon/t}(0)\big)>0)=1$ for all $s\in\{1,\dots,t\}$. Therefore,
\begin{align*}
\int_{ \setR^{t+1}} f_{t,\epsilon}(\theta_0,\dots,\theta_t)\,\mu_0(\dd\theta_0) \prod_{s=1}^{t}   K_{\mu_{s}}(\theta_{s-1},\dd\theta_{s})>0,\quad\P-a.s.
\end{align*}
and we now show that $\P(L_t>0)=1$.

To do so remark that, using Jensen's inequality,
\begin{equation}\label{eq:lemmLower1}
\begin{split}
 \log L_t &\geq -\log W_{1:t}^\chi+\inf_{\theta\in\Theta}\log  \bar{P}_{1:t}^\chi(\theta)
\end{split}
\end{equation}
 where, for all integers $1\leq t_1\leq t_2$ and $\eta\in\mathcal{P}(\setX)$, the random variable $\bar{P}_{t_1,t_2}^\eta(\theta)$ is as defined in \eqref{eq:Pbamodelef} for all $\theta\in\Theta$ while the random variable $W^{\eta}_{t_1:t_2}$ is as defined in \eqref{eq:W_def}.

Under \ref{assume:Model_stationary} we have
\begin{align*}
\inf_{\theta\in\Theta}\bar{P}_{1:t}^\chi(\theta)\dist \inf_{\theta\in\Theta}\bar{P}_{(r+1):(t+r)}^\chi(\theta),\quad W_{1:t}^\chi\dist W_{(r+1):(t+r)}^\chi
\end{align*}
and thus, since $\chi$ is $(r,\delta_\star,\tilde{l},(\varphi_t)_{t\geq 1})$-consistent under    \ref{assume:Model_init} and   $t+r\in\{sr,\,s\in\mathbb{N}\}$, it follows from Lemmas \ref{lemma::W}-\ref{lemma:Unif_convergence} that 
\begin{align*}
\P\big(\log W_{1:t}^\chi<\infty)=\P\big(\inf_{\theta\in\Theta}\log \bar{P}_{1:t}^\chi(\theta)>-\infty)=1.
\end{align*}
Together with \eqref{eq:lemmLower1}  this shows that $\P(L_t>0)\geq 1$  and the proof of \eqref{eq:Zr} is complete.

To complete the proof of \eqref{eq:ZT_show} it remains to show that, for $r>1$, we have $\P(\mathfrak{Z}_t>0)=1$ for all $t\not\in \{sr,\,s\in\mathbb{N}\}$. To this aim assume that $r>1$ and let $t\not\in \{sr,\,s\in\mathbb{N}\}$ and $\tau\in\mathbb{N}$ be such that $(\tau-1)r<t<\tau r$. Then, by applying  Lemma \ref{lemma:tech} with $\eta=\mu_0\otimes\chi$, $t_1=1$, $t_2=t$ and $t_3=\tau r$, we obtain that $\mathfrak{Z}_t\geq \mathfrak{Z}_{\tau r}  D_{(t+1):\tau t}^{-1}$. Since,   $\P(\mathfrak{Z}_{\tau r}>0)=1$ by \eqref{eq:Zr} while,  under  \ref{assume:G_bounded} we have $\P\big(D_{(t+1):\tau t}^{-1}>0\big)=1$, the proof of \eqref{eq:ZT_show} and thus of the lemma is complete.

\end{proof}

\subsubsection{Proof of Lemma \ref{lemma:gamma}\label{p-lemma:gamma}}

\begin{proof}
Let $D\in\mathcal{X}$ be as in \ref{assume:Model}, $t\geq  r$, $\tau\in\{r,\dots,t\}\cap\{sr,\,s\in\mathbb{N}\}$,
\begin{align*}
&\underline{Z}_{\tau}=\int_{\setX}\Big(\inf_{(\theta,x')\in \Theta \times D} q_{\tau,\theta}(x|x')\Big)\lambda(\dd x),\quad\bar{Z}_{\tau}=\int_{\setX}\Big(\sup_{(\theta,x')\in \Theta \times \setX} q_{\tau,\theta}(x|x') \Big)\lambda(\dd x)
\end{align*}
and let $\underline{\chi}_{\tau}$ and $\bar{\chi}_{\tau}$ be two random probability measures on $(\setX,\mathcal{X})$ such that, $\P$-a.s., for all $A\in\mathcal{X}$ we have
\begin{align*}
\underline{\chi}_{\tau}(A)=\frac{\int_{A}\Big(\inf_{(\theta,x')\in \Theta \times D} q_{\tau,\theta}(x|x') \Big)\lambda(\dd x)}{\underline{Z}_{\tau}},\quad \bar{\chi}_{\tau}(A)=\frac{\int_{A}\Big(\sup_{(\theta,x')\in \Theta \times \setX} q_{\tau,\theta}(x|x') \Big)\lambda(\dd x)}{\bar{Z}_{\tau}}.
\end{align*}
Remark that such random probability measures  $\underline{\chi}_{\tau}$ and $\bar{\chi}_{\tau}$ exist under \ref{assume:Model_stationary} and \ref{assume:Model_G_lower}-\ref{assume:Model_sup}.

Then,  using Lemma \ref{lemma:Zt},  we have
\begin{align}\label{eq:b1}
 \Lambda^{\pi_0}_{1:t}(A\times B)\leq  \bar{Z}_\tau P^{\pi_0}_{1:(\tau-1)}(\Theta) \Lambda^{\bar{\pi}_\tau\otimes\bar{\chi}_\tau}_{(\tau+1):t}(A\times B),\quad\forall (A,B) \in \mathcal{T}\times\mathcal{X},\quad\P-a.s.
\end{align}
and
\begin{align}\label{eq:b2}
 \Lambda^{\pi_0}_{1:t}(A\times B)\geq  \underline{Z}_\tau \Lambda^{\pi_0}_{1:(\tau-1)}(\Theta\times D) \Lambda^{\bar{\pi}_\tau\otimes\underline{\chi}_\tau}_{(\tau+1):t}(A\times B),\quad \forall (A,B) \in \mathcal{T}\times\mathcal{X},\quad\P-a.s.
\end{align}
To proceed further remark that, by using again Lemma \ref{lemma:Zt}, we have
\begin{align*}
\frac{P^{\pi_0}_{1:(\tau-1)}(\Theta) }{\Lambda^{\pi_0}_{1:(\tau-1)}(\Theta\times D)}=\frac{1}{\pi_{(\tau-1),\setX}(D)},\quad\P-a.s.
\end{align*}
and thus,  under \eqref{eq:W_star},
\begin{align}\label{eq:b3}
\Lambda^{\pi_0}_{1:(\tau-1)}(\Theta\times D)\geq W^*_{\tau-1}P^{\pi_0}_{1:(\tau-1)}(\Theta),\quad\P-a.s.
\end{align}
To conclude the proof note that, under \ref{assume:Model_stationary},
\begin{align*}
\underline{Z}_{\tau}\dist \underline{Z}_{r},\quad \bar{Z}_{\tau}\dist \bar{Z}_{r},\quad  \underline{\chi}_{\tau}\dist\underline{\chi}_{r},\quad \bar{\chi}_{\tau}\dist \bar{\chi}_{r}
\end{align*}
where, under \ref{assume:Model_G_lower}-\ref{assume:Model_sup}, $\P\big(\underline{Z}_{r}\in(0,\infty)\big)=\P\big(\bar{Z}_{r}\in(0,\infty)\big)=1$. Therefore, since by assumption $\log W^*_{\tau-1}=\bigO_\P(1)$, it follows from \eqref{eq:b1}-\eqref{eq:b3} that the result of the lemma holds with $(\tilde{\xi}_{rt})_{t\geq 1}$ such that
\begin{align}\label{eq:xi_cor}
\tilde{\xi}_{rt}=\Big(\bar{Z}_{rt}+\frac{1}{\underline{Z}_{rt}}\Big)\exp\Big(-\log W^*_{t-1}\Big),\quad\forall t\geq 1.
\end{align}
The proof is complete.
\end{proof}

\subsubsection{Proof of Lemma \ref{lemma::upper_general}\label{p-lemma::upper_general}}

\begin{proof}

We start by introducing some additional notation. For all $\tau\geq 1$ we let  $\zeta_{a_{\tau}}=\kappa_{a_\tau}\otimes\eta_{a_\tau}$, $d_\tau\in\{sr,\,s\in\mathbb{N}\}$ be such that $d_\tau<b_\tau\leq d_\tau+r$ and $\tilde{f}_\tau=f_{b_\tau-a_\tau,\epsilon_\tau}$ with $f_{t,\epsilon}$ is as defined in \eqref{eq:ft_def} for all $t\in\mathbb{N}$ and $\epsilon\in(0,\infty)$. Next,    we let $\big( (\vartheta_{a_\tau},\tilde{X}_{a_\tau}),\dots, (\vartheta_{b_\tau},\tilde{X}_{b_\tau} )\big)$ be a random variable such that
\begin{align*}
\big( (\vartheta_{a_\tau},\tilde{X}_{a_\tau}),\dots, (\vartheta_{b_\tau},\tilde{X}_{b_\tau} )\big)\big|\F_{b_\tau}\sim \zeta_{a_\tau}\big(\dd(\theta_{a_\tau}, x_{a_\tau})\big)\prod_{s=a_\tau+1}^{b_\tau}m_{s,\theta_{s}}(x_s|x_{s-1})\lambda(\dd x_s) K_{\mu_s|\Theta}(\theta_{s-1},\dd \theta_s)
\end{align*}
and   $\tau'\in\mathbb{N}$ be such that we have both $d_\tau>a_\tau$ and $b_\tau>a_\tau+2$ for all $\tau\geq \tau'$.

With this notation in place, for all $\tau\geq \tau'$ and $A\in\mathcal{T}$   we have
\begin{equation}\label{eq:P_sup1}
\begin{split}
 P^{\zeta_{a_{\tau}}}_{(a_\tau+1):b_\tau} (A) &= \E_{(a_\tau+1):b_\tau}^{\zeta_{a_{\tau}}}\bigg[\ind_A(\vartheta_{b_\tau}) \tilde{f}_{\tau}\big(\vartheta_{a_\tau},\dots,\vartheta_{b_\tau}\big)  \prod_{s=a_\tau+1}^{b_\tau} G_{s,\vartheta_s}(\tilde{X}_{s-1},\tilde{X}_s)\bigg]\\
&+ \E_{(a_\tau+1):b_\tau}^{\zeta_{a_{\tau}}}\bigg[\ind_A(\vartheta_{b_\tau})\Big(1-\tilde{f}_{\tau}\big(\vartheta_{a_\tau},\dots,\vartheta_{b_\tau}\big) \Big) \prod_{s=a_\tau+1}^{b_\tau} G_{s,\vartheta_s}(\tilde{X}_{s-1},\tilde{X}_s)\bigg] 
\end{split}
\end{equation}
and we now study the two terms on the r.h.s.~of \eqref{eq:P_sup1}, starting with the first one.

To this aim, for all $\tau\geq 1$ we let
\begin{equation}\label{eq:xi_tau}
\begin{split}
\xi_{d_\tau}&=\sup_{\theta\in\Theta}\bigg|\frac{1}{d_\tau-a_\tau}\log  \bar{P}^{\eta_{a_\tau}}_{(a_\tau+1):d_\tau}(\theta)-\tilde{l}(\theta)\bigg|+\frac{g(2\epsilon_\tau)^{1/2}}{d_\tau-a_\tau}\log W^{\eta_{a_\tau}}_{(a_\tau+1):d_\tau}\\
&+ \frac{1}{d_\tau-a_\tau}\log^+  D_{(d_\tau+1):(d_\tau+r)}
\end{split}
\end{equation}
with $g(\cdot)$  as in \ref{assume:Model} and where, for all integers $1\leq t_1\leq t_2$ and  $\eta\in\mathcal{P}(\setX)$, the random variable $\bar{P}_{t_1,t_2}^\eta(\theta)$ is as defined in \eqref{eq:Pbamodelef} for all $\theta\in\Theta$ while the random variables   $D_{t_1:t_2}$ and $W^{\eta}_{t_1:t_2}$ are as defined in   \eqref{eq:Dst} and \eqref{eq:W_def}, respectively. Without loss of generality, we assume henceforth that $D_{t,t}\geq 1$ for all $t\geq 1$ and that $\tau'$ is sufficiently large so that  $g(2\epsilon_\tau)<  (\delta_\star/2)^{2}$ for all $\tau\geq \tau'$.

As preliminary computations we show that
\begin{align}\label{eq:conv_xi_tau}
\xi_{d_\tau}=\smallo_\P(1).
\end{align}
To do so remark first that, by assumption, $(\eta_{a_{\tau}})_{\tau\geq 1}$ is such that $\eta_{a_\tau}\dist \eta_r$ for all $\tau\geq 1$ and thus, by   \ref{assume:Model_stationary}, 
\begin{align*}
\bar{P}^{\eta_{a_\tau}}_{(a_\tau+1):d_\tau}(\theta)\dist \bar{P}^{\eta_r}_{(r+1):(d_\tau+r-a_\tau)}(\theta),\quad W^{\eta_{a_\tau}}_{(a_\tau+1):d_\tau} \dist W^{\eta_r}_{(r+1):(d_\tau+r-a_\tau)} ,\quad\forall \tau\geq 1,\quad\forall\theta\in\Theta.
\end{align*}
Therefore, since $\eta_r$ is assumed to be $(r,\delta_\star, \tilde{l},(\varphi_t)_{t\geq 1}\big)$-consistent while $b_\tau+r-a_\tau\in\{sr,\,s\in\mathbb{N}\}$ for all $\tau\geq 1$, it follows from Lemmas \ref{lemma::W}-\ref{lemma:Unif_convergence} that 
\begin{align}\label{eq:eps_1}
\sup_{\theta\in\Theta}\bigg|\frac{1}{d_\tau-a_\tau}\log  \bar{P}^{\eta_{a_\tau}}_{(a_\tau+1):d_\tau}(\theta)-\tilde{l}(\theta)\bigg|+\frac{g(2\epsilon_\tau)^{1/2}}{d_\tau-a_\tau}\log W^{\eta_{a_\tau}}_{(a_\tau+1):d_\tau}=\smallo_\P(1).
\end{align}
In addition, by \ref{assume:Model_stationary}, we have $D_{(d_\tau+1):(d_\tau+r)}\dist D_{(r+1):2r}$ for  all $\tau\geq 1$ and thus, under  \ref{assume:G_bounded},
\begin{align}\label{eq:eps_2}
\frac{1}{d_\tau-a_\tau}\log^+  D_{(d_\tau+1):(d_\tau+r)}=\smallo_\P(1).
\end{align}
Then, \eqref{eq:conv_xi_tau} follows from \eqref{eq:eps_1} and \eqref{eq:eps_2}.

To proceed further remark that, $\P$-a.s., for all $\tau\geq \tau'$ and   $\theta\in\Theta$ we have
\begin{align*}
\log &\int_{\setX^{d_\tau-a_\tau+1}}  \exp\Big( g(2\epsilon_\tau)\sum_{s=a_\tau+1}^{d_\tau}\varphi_s(x_{s-1},x_{s})\Big)\bigg( \frac{\eta_{a_\tau}(\dd x_{a_\tau})  \prod_{s=a_\tau+1}^{d_\tau} Q_{s,\theta}(x_{s-1},\dd x_{s})}{\bar{P}^{\eta_{a_\tau}}_{(a_\tau+1):d_\tau}(\theta)}\bigg)\\
&=\log \int_{\setX^{d_\tau-a_\tau+1}}  \bigg(\exp\Big( g(2\epsilon_\tau)^{\frac{1}{2}} \hspace{-0.1cm}\sum_{s=a_\tau+1}^{d_\tau}\varphi_s(x_{s-1},x_{s})\Big)\bigg)^{g(2\epsilon_\tau)^{\frac{1}{2}}}\bigg( \frac{\eta_{a_\tau}(\dd x_{a_\tau})  \prod_{s=a_\tau+1}^{d_\tau} Q_{s,\theta}(x_{s-1},\dd x_{s})}{\bar{P}^{\eta_{a_\tau}}_{(a_\tau+1):d_\tau}(\theta)}\bigg)\\
&\leq g(2\epsilon_\tau)^{1/2}\log \int_{\setX^{b_\tau-a_\tau+1}}  \exp\Big(\frac{\delta_\star}{2} \sum_{s=a_\tau+1}^{d_\tau}\varphi_s(x_{s-1},x_{s})\Big)\bigg( \frac{\eta_{a_\tau}(\dd x_{a_\tau})  \prod_{s=a_\tau+1}^{d_\tau} Q_{s,\theta}(x_{s-1},\dd x_{s})}{\bar{P}^{\eta_{a_\tau}}_{(a_\tau+1):d_\tau}(\theta)}\bigg) \\
&\leq g(2\epsilon_\tau)^{1/2}\log W^{\eta_{a_\tau}}_{(a_\tau+1):d_\tau}
\end{align*}
where the first inequality holds by   Jensen's inequality.

Therefore, under  \ref{assume:Model_smooth} and using  Tonelli's theorem we $\P$-a.s.~have, for all $\tau\geq \tau'$ and $A\in\mathcal{T}$,  
\begin{equation}\label{eq:1st_term}
\begin{split}
 \E_{(a_\tau+1):b_\tau}^{\zeta_{a_{\tau}}} \bigg[&\ind_A(\vartheta_{b_\tau})\tilde{f}_{\tau} \big(\vartheta_{a_\tau},\dots,\vartheta_{b_\tau}\big)   \prod_{s=a_\tau+1}^{b_\tau} G_{s,\vartheta_s}(\tilde{X}_{s-1},\tilde{X}_s)\bigg]\\
&\leq D_{(d_\tau+1):b_\tau} \exp\Big(g(2\epsilon_\tau)^{1/2}\log W^{\eta_{a_\tau}}_{(a_\tau+1):d_\tau}\Big)\E_{(a_\tau+1):b_\tau}^{\zeta_{a_\tau}}\Big[\ind_A(\vartheta_{b_\tau})\bar{P}_{(a_\tau+1):d_\tau}^{\eta_{a_\tau}}(\vartheta_{b_\tau})\Big]\\
&\leq  D_{(d_\tau+1):b_\tau} \exp\Big(g(2\epsilon_\tau)^{1/2}\log W^{\eta_{a_\tau}}_{(a_\tau+1):d_\tau}+\sup_{\theta\in A}\log \bar{P}_{(a_\tau+1):d_\tau}^{\eta_{a_\tau}}(\theta) \Big)\\
&\leq \exp\Big( (d_\tau-a_\tau)\big(\sup_{\theta\in A}\tilde{l}(\theta)+\xi_{d_\tau}\big)\Big)
\end{split}
\end{equation}
where the first inequality holds under    \ref{assume:Model_smooth}.

To study the second term in \eqref{eq:P_sup1} remark that, under the assumptions of the lemma and using Tonelli's theorem, for all $\tau\geq 1$ and $A\in\mathcal{T}$ we have
\begin{equation}\label{eq:2nd_term}
\begin{split}
 \E_{(a_\tau+1):b_\tau}^{\zeta_{a_{\tau}}}&\bigg[\ind_A(\vartheta_{b_\tau})\Big(1-\tilde{f}_{\tau}\big(\vartheta_{a_\tau},\dots,\vartheta_{b_\tau}\big) \Big)  \prod_{s=a_\tau+1}^{b_\tau} G_{s,\vartheta_s}(\tilde{X}_{s-1},\tilde{X}_s)\Big]\\
 &\leq \E_{(a_\tau+1):b_\tau}^{\zeta_{a_{\tau}}}\bigg[\Big(1-\tilde{f}_{\tau}\big(\vartheta_{a_\tau},\dots,\vartheta_{b_\tau}\big) \Big)  \prod_{s=a_\tau+1}^{b_\tau} G_{s,\vartheta_s}(\tilde{X}_{s-1},\tilde{X}_s)\Big]\\
&\leq D_{(a_\tau+1):b_\tau} \E_{(a_\tau+1):b_\tau}^{\zeta_{a_{\tau}}}\bigg[\Big(1-\tilde{f}_{\tau}(\vartheta_{ a_\tau},\dots,\vartheta_{ b_\tau})\Big)\bigg]\\
&\leq \frac{D_{(a_\tau+1):b_\tau}}{(\Gamma^\mu)^{b_\tau-a_\tau}} \int_{\setR^{b_\tau-a_\tau+1}} \Big(1-\tilde{f}_{\tau}(\theta_{ a_\tau},\dots,\theta_{ b_\tau})\Big)\kappa_{a_\tau}(\dd\theta_{a_\tau})\prod_{s=a_\tau+1}^{b_\tau} K_{\mu_{s}}(\theta_{s-1},\dd\theta_{s})\\
&\leq   \exp\big( (d_\tau-a_\tau) Z_{b_\tau}\big)
\end{split}
\end{equation}
where the random variable $\Gamma^\mu$ is as  defined in \ref{condition:Inf_K} and where
\begin{align*}
Z_{b_\tau}=\frac{1}{d_\tau-a_\tau}\log^+ D_{(a_\tau+1):b_\tau}-\frac{b_\tau-a_\tau}{d_\tau-a_\tau}\Big(\log \Gamma^\mu+c_{\tau}\Big).
\end{align*}

Under \ref{condition:Inf_K} we have that $\P(\Gamma^\mu>0)=1$ while,  under \ref{assume:Model_stationary} and \ref{assume:G_bounded}, we have
\begin{align}\label{eq:convD}
\frac{1}{b_\tau-a_\tau}\log^+ D_{(a_\tau+1):b_\tau}=\bigO_\P(1).
\end{align}
 Therefore, noting that $\lim_{\tau\rightarrow\infty}(b_\tau-a_\tau)/(d_\tau-a_\tau)=1$
and recalling that $\lim_{\tau\rightarrow\infty}c_\tau=\infty$ by assumption, it follows that  $\lim_{\tau\rightarrow\infty}\P(Z_{b_\tau}>M)=0$ for all $M\in\R$.

By combining  \eqref{eq:P_sup1}, \eqref{eq:1st_term} and \eqref{eq:2nd_term}  we obtain that we  $\P$-a.s.~have, for all $A\in\mathcal{T}$ and all $\tau\geq 1$, 
\begin{align*}
 &P^{\kappa_{a_\tau}\otimes\eta_{a_\tau}}_{(a_\tau+1):b_\tau}  (A)\\
& \leq  D_{(a_\tau+1):b_\tau}\wedge   \exp\Big( (d_\tau-a_\tau)\big(\sup_{\theta\in A} \tilde{l}(\theta)+\xi_{d_\tau}\big)\Big)\Big\{1+\exp\Big( (d_\tau-a_\tau)\big(Z_{d_\tau}-\sup_{\theta\in A} \tilde{l}(\theta)-\xi_{d_\tau}\big)\Big)\Big\}
\end{align*}
where
\begin{align*}
\exp\Big( (d_\tau-a_\tau)\big(Z_{b_\tau}-\sup_{\theta\in A} \tilde{l}(\theta)-\xi_{d_\tau}\big)\Big)=\smallo_\P(1),\quad  \lim_{\tau\rightarrow\infty}\frac{b_\tau-a_\tau}{d_\tau-a_\tau}=1 
\end{align*}
and where, under  \ref{assume:Model_stationary} and   \ref{assume:G_bounded}, and recalling that  $\P(D_{t,t}\geq 1)=1$ for all $t\geq 1$,
\begin{align*}
\P\big(D_{(a_\tau+1):b_\tau}<\infty\big)&\leq \P\big( D_{(a_\tau+1):(d_\tau+r)}<\infty\big)=\P\big(D_{(r+1):(d_\tau+2r-a_\tau)}<\infty\big)=1.
\end{align*}
The result of the lemma follows.
\end{proof}

\subsubsection{Proof of Lemma \ref{lemma::lower_general}\label{p-lemma::lower_general}}

\begin{proof}

For all $\tau\geq 1$ we let   $\zeta_{a_{\tau}}=\kappa_{a_\tau}\otimes\eta_{a_\tau}$ and  for all integers $1\leq t_1\leq t_2$ we let $D_{t_1:t_2}$ be as defined in \eqref{eq:Dst}, assuming without loss of generality that   $\P(D_{t_1:t_1}\geq 1)=1$ for all $t_1\geq 1$.

To prove the lemma remark first that, by Lemma \ref{lemma:tech},
 \begin{align}\label{eq:step1}
 P^{\zeta_{a_{\tau}}}_{(a_\tau+1):b_\tau}  (\Theta) \geq D_{(b_\tau+1):e_\tau}^{-1}   P^{\zeta_{a_{\tau}}}_{(a_\tau+1):e_\tau}  (  \Theta)\geq D_{(e_\tau-r+1):e_\tau}^{-1}   P^{\zeta_{a_{\tau}}}_{(a_\tau+1):e_\tau}  (\Theta),\quad\forall \tau\geq 1 
 \end{align}
and we now compute a lower bound for $P^{\zeta_{a_{\tau}}}_{(a_\tau+1):e_\tau}  (\Theta)$, for all $\tau\geq 1$. 

To this aim, for all $\tau\geq 1$ we let
\begin{align*}
\widetilde{W}^{\eta_\tau}_{(a_\tau+1):e_\tau}=\inf_{\theta\in\Theta}\int_{\setX^{e_\tau-a_\tau+1}} \exp\Big( -g(2\epsilon_\tau)\sum_{s=a_\tau+1}^{e_\tau}\varphi_s(x_{s-1},x_{s})\Big) \frac{\eta_{a_{\tau}}(\dd x_{a_\tau})  \prod_{s=a_\tau+1}^{e_\tau} Q_{s,\theta}(x_{s-1},\dd x_{s})}{\bar{P}^{\eta_{a_{\tau}}}_{(a_\tau+1):e_\tau}(\theta)}
\end{align*}
with $g(\cdot)$ as in \ref{assume:Model} and where, for all integers $1\leq t_1\leq t_2$, the random variable $\bar{P}^{\eta}_{t_1:t_2}(\theta)$ is as defined in \eqref{eq:Pbamodelef} for all $\theta\in\Theta$. In addition, for all $\tau\geq 1$  we let $\tilde{f}_\tau=f_{e_\tau-a_\tau,\epsilon_\tau}$, with $f_{t,\epsilon}$ as defined in \eqref{eq:ft_def} for all $t\in\mathbb{N}$ and all $\epsilon\in(0,\infty)$,
\begin{align*}
\xi^-_{e_{\tau}} =  \sup_{\theta\in\Theta}\bigg|\frac{1}{e_\tau-a_\tau}\log  \bar{P}^{\eta_{a_\tau}}_{(a_\tau+1):e_\tau}(\theta)-\tilde{l}(\theta)\bigg|-\frac{1}{e_\tau-a_\tau}\log \widetilde{W}^{\eta_{a_\tau}}_{(a_\tau+1):e_\tau} 
\end{align*}
and $\big( (\vartheta_{a_\tau},\tilde{X}_{a_\tau}),\dots, (\vartheta_{e_\tau},\tilde{X}_{e_\tau} )\big)$ be such that
\begin{align*}
\big( (\vartheta_{a_\tau},\tilde{X}_{a_\tau}),\dots, (\vartheta_{e_\tau},\tilde{X}_{e_\tau} )\big)\big|\F_{e_\tau}\sim \zeta_{a_\tau}\big(\dd(\theta_{a_\tau}, x_{a_\tau})\big)\prod_{s=a_\tau+1}^{e_\tau}m_{s,\theta_{s}}(x_s|x_{s-1})\lambda(\dd x_s)K_{\mu_s|\Theta}(\theta_{s-1},\dd \theta_s). 
\end{align*}
Finally, to simplify the notation  in what follows, we let 
\begin{align*}
p_\tau= \P\Big( \sum_{i=a_\tau+1}^s  U_i \in B_{\epsilon_{\tau}}(0),\,\forall   s\in\{a_\tau+1,\dots, e_\tau\}\big|\,\F_{e_\tau} \Big),\quad\forall \tau\geq 1.
\end{align*}

We first show that $\xi^-_{e_{\tau}}=\smallo_\P(1)$.  To do so,  we let $\tau''\in\mathbb{N}$ be such that   $g(2\epsilon_\tau)\leq \delta_\star/2<1$ for all $\tau\geq \tau''$ and note that, for any $\delta\in(0,\infty)$ and $(0,\infty)$-valued random variable $X$, we have
\begin{align}\label{eq:double_Jensen}
\log \E[\exp(-\delta X)]\geq -\log \E\big[\exp(\delta X)\big]= -\log \E\big[\exp(\delta^{1/2} X)^{\delta^{1/2}}\big]\geq -\delta^{1/2}\log \E\big[\exp(\delta^{1/2} X)\big]
\end{align}
where the two inequalities hold by  Jensen's inequality. Using this latter result we readily obtain that
\begin{align*}
\log \widetilde{W}^{\eta_{a_\tau}}_{(a_\tau+1): e_\tau}&\geq  -g(2\epsilon_\tau)^{1/2}\log  W^{\eta_{a_\tau}}_{(a_\tau+1): e_\tau},\quad\P-a.s.,\quad\forall \tau\geq \tau''
\end{align*}
where, for all integers $1\leq t_1\leq t_2$ and   probability measure $\eta$ on $(\setX,\mathcal{X})$, the random variable $W^{\eta}_{t_1:t_2}$ is as defined in \eqref{eq:W_def}. Therefore, letting $(\xi_{e_{\tau}})_{\tau\geq 1}$ be as defined in \eqref{eq:xi_tau}  (with $d_\tau$ replaced by $e_\tau$ for all $\tau\geq 1$),   it follows that $\P(0\leq \xi^-_{e_{\tau}}\leq \xi_{e_{\tau}} )=1$ for all $\tau\geq \tau''$ and thus, since under the assumptions of the lemma we have $\xi_{e_{\tau}}=\smallo_\P(1)$ by \eqref{eq:conv_xi_tau} (with $d_\tau$ replaced by $e_\tau$ for all $\tau\geq 1$), it follows that $\xi^-_{e_{\tau}}=\smallo_\P(1)$.

To proceed further let $\epsilon\in(0,\delta_\star/2)$ and $\tau_\epsilon''\in\mathbb{N}$ be such that $\epsilon_\tau<\epsilon$ for all $\tau\geq \tau_\epsilon''$. Remark that, under \ref{assume:Model_ThetaStar},   for all $\tau\geq \tau_\epsilon''$  we have   $\theta+u\in \mathcal{N}_{\delta_\star}(\tilde{\Theta}_\star)\subset\mathring{\Theta}$ for all $\theta\in \mathcal{N}_{\epsilon}(\tilde{\Theta}_\star)$ and   all $u\in B_{\epsilon_\tau}(0)$.

Therefore, for all $\tau\geq \tau_\epsilon''$  and using Tonelli's theorem we have, $\P$-a.s.,
\begin{equation}\label{eq:Ise_2}
\begin{split}
P^{\zeta_{a_{\tau}}}_{(a_\tau+1):e_\tau}  (\Theta) &\geq   \E_{(a_\tau+1):e_\tau}^{\zeta_{a_{\tau}}}\bigg[ \ind_{\mathcal{N}_\epsilon(\tilde{\Theta}_\star)}(\vartheta_{a_\tau})\tilde{f}_{\tau}(\vartheta_{ a_\tau},\dots,\vartheta_{ e_\tau})  \prod_{s=a_\tau+1}^{e_\tau} G_{s,\vartheta_s}(\tilde{X}_{s-1},\tilde{X}_s)\bigg]\\
&\geq \widetilde{W}^{\eta_{a_\tau}}_{(a_\tau+1): e_\tau}\Big(  \inf_{\theta\in \mathcal{N}_\epsilon( \tilde{\Theta}_\star)}\bar{P}^{\eta_{a_\tau}}_{(a_\tau+1), e_\tau}(\theta)\Big)   \E_{(a_\tau+1):e_\tau}^{\zeta_{a_{\tau}}}\Big[ \ind_{\mathcal{N}_\epsilon(\tilde{\Theta}_\star)}(\vartheta_{a_\tau}) f_{\tau}(\vartheta_{ a_\tau},\dots,\vartheta_{e_\tau}) \Big]\\
&= p_\tau\widetilde{W}^{\eta_{a_\tau}}_{(a_\tau+1): e_\tau}\Big(  \inf_{\theta\in \mathcal{N}_\epsilon(\tilde{\Theta}_\star)}\bar{P}^{\eta_{a_\tau}}_{(a_\tau+1),e_\tau}(\theta) \Big) \E_{(a_\tau+1):e_\tau}^{\kappa_{a_\tau}\otimes\eta_{a_\tau}}\big[ \ind_{\mathcal{N}_\epsilon(\tilde{\Theta}_\star)}(\vartheta_{a_\tau})\big]\\
&= p_\tau\widetilde{W}^{\eta_{a_\tau}}_{(a_\tau+1): e_\tau} \Big( \inf_{\theta\in  \mathcal{N}_\epsilon(\tilde{\Theta}_\star)}\bar{P}^{\eta_{a_\tau}}_{(a_\tau+1), e_\tau}(\theta) \Big)  \kappa_{a_\tau}\big(\mathcal{N}_\epsilon(\tilde{\Theta}_\star)\big)\\
&\geq \exp\Big((e_\tau-a_\tau)\big(\inf_{\theta\in  \mathcal{N}_\epsilon(\tilde{\Theta}_\star)}\tilde{l}(\theta)-\xi^-_{e_{\tau}}-1/c_\tau\big)\Big)\kappa_{a_\tau}\big(\mathcal{N}_\epsilon(\tilde{\Theta}_\star)\big).
\end{split}
\end{equation}

To conclude the proof of the lemma we let
\begin{align*}
\tilde{\xi}_{b_\tau}=\xi^-_{e_{\tau}}+\frac{1}{c_\tau}+\frac{1}{e_\tau-a_\tau}\log^+ D_{(e_\tau-r+1):e_\tau},\quad\forall \tau\geq 1
\end{align*}
noting that $\tilde{\xi}_{b_\tau}=\smallo_\P(1)$  since  $\xi^-_{e_{\tau}}=\smallo_\P(1)$ while, under \ref{assume:Model_stationary} and   \ref{assume:G_bounded},
\begin{align*}
\frac{1}{e_\tau-a_\tau}\log^+ D_{(e_\tau-r+1):e_\tau}\dist \frac{1}{e_\tau-a_\tau}\log^+  D_{(r+1):2r}=\smallo_\P(1).
\end{align*}
Then, using \eqref{eq:step1} and \eqref{eq:Ise_2}, 
\begin{align*}
   P^{\zeta_{a_{\tau}}}_{(a_\tau+1):b_\tau}  (\Theta)\geq \kappa_{a_\tau}\big(\mathcal{N}_\epsilon(\tilde{\Theta}_\star)\big)\exp\Big((e_\tau-a_\tau)\big(\inf_{\theta\in \mathcal{N}_\epsilon(\tilde{\Theta}_\star)}\tilde{l}(\theta)-\tilde{\xi}_{b_\tau}\Big),\quad\forall \tau\geq \tau_\epsilon'',\quad\P-a.s.
\end{align*}
and the result of the lemma follows  upon noting that  $\lim_{\tau\rightarrow\infty}(e_\tau-a_\tau)/(b_\tau-a_\tau)=1$. 

\end{proof}

\subsubsection{Proof of Theorem \ref{thm:key}\label{p-thm:key}}
\begin{proof}

Let
\begin{align*}
c'_\tau=\frac{e_\tau-a_\tau}{b_\tau-a_\tau}\tilde{c}_\tau,\quad\forall \tau\geq 1
\end{align*}
and remark that, under the assumptions of the theorem, we $\P$-a.s.~have, for all $\tau\geq 1$,
\begin{align*}
\frac{1}{b_\tau-a_\tau} \log \P\Big(\exists  s\in\{a_\tau+1,\dots,&  b_\tau\}:\,  \sum_{i=a_\tau+1}^s  U_i\not\in B_{\tilde{\epsilon}_{\tau}}(0)\big|\,\F_{b_\tau}\Big)\\
&\leq \frac{1}{b_\tau-a_\tau} \log \P\Big(\exists s\in\{a_\tau+1,\dots,  e_\tau\}:\,  \sum_{i=a_\tau+1}^s  U_i\not\in B_{\tilde{\epsilon}_{\tau}}(0)\big|\,\F_{e_\tau}\Big)\\
&\leq -\frac{e_\tau-a_\tau}{b_\tau-a_\tau}\tilde{c}_\tau\\
&=-c'_\tau.
\end{align*}
Therefore, noting that $\lim_{\tau\rightarrow\infty}\tilde{\epsilon}_\tau=\lim_{\tau\rightarrow\infty}(1/c'_\tau)=0$, it follows if we let $c_\tau=c'_\tau$ and $\epsilon_\tau=\tilde{\epsilon}_\tau$ for all $\tau\geq 1$ then the sequences $(a_\tau)_{\tau\geq 1}$, $(b_\tau)_{\tau\geq 1}$, $(\epsilon_\tau)_{\tau\geq 1}$ and $(c_\tau)_{\tau\geq 1}$ satisfy the conditions of Lemma \ref{lemma::upper_general}.

We now show that
\begin{align}\label{eq:Bound_log}
\frac{1}{z}\log (1-p)\geq \frac{1}{\frac{1}{z}\log p},\quad\forall z\geq 1,\quad\forall p\in (0,1).
\end{align}
To this aim let $z\geq 1$ and $p\in(0,1)$. Then
\begin{align*}
\frac{1}{z}\log (1-p)\geq \frac{1}{\frac{1}{z}\log p}&\Leftrightarrow \big(\log p\big)\log (1-p)\leq z^2
\end{align*}
and \eqref{eq:Bound_log} follows from the fact that $z^2\geq 1$ while $\big(\log p\big)\log (1-p)\leq (1-p)p\leq 1$.

Using \eqref{eq:Bound_log}, it follows that under the assumptions of the theorem we $\P$-a.s.~have, for all $\tau\geq 1$,
\begin{align*}
\frac{1}{e_\tau-a_\tau}\log \P\Big( \sum_{i=a_\tau+1}^s  U_i\in B_{\tilde{\epsilon}_\tau}(0),\,\forall   s\in\{a_\tau+1,\dots,  e_\tau\}\big| \,\F_{e_\tau}\Big)\geq -\frac{1}{\tilde{c}_\tau} 
\end{align*}
and thus  if for all $\tau\geq 1$ we let $c_\tau=\tilde{c}_\tau$ and $\epsilon_\tau=\tilde{\epsilon}_\tau$ then the sequences $(a_\tau)_{\tau\geq 1}$, $(b_\tau)_{\tau\geq 1}$, $(\epsilon_\tau)_{\tau\geq 1}$ and $(c_\tau)_{\tau\geq 1}$ satisfy the conditions of Lemma \ref{lemma::lower_general}.

To proceed further remark that, under \ref{assume:Model}, the function $\tilde{l}$ is continuous on $\Theta$ and there exists a non-empty set $A\subset\Theta$ such that $\tilde{l}(\theta)<\tilde{l}(\theta')$ for all $\theta\in A$ and all $\theta'\in\tilde{\Theta}_\star$. Under these two assumptions it is readily checked that  there exists an $\epsilon'\in(0,\infty)$ such that $\Theta\setminus\mathcal{N}_{\epsilon'}(\tilde{\Theta}_\star)\neq\emptyset$  and such that, for all $\epsilon\in(0,\epsilon')$, there is a $\delta_\epsilon\in(0,\infty)$ for which 
\begin{align*}
\inf_{\theta\in\mathcal{N}_{\delta_\epsilon}(\tilde{\Theta}_\star)}\tilde{l}(\theta)>\sup_{\theta\in\Theta\setminus\mathcal{N}_\epsilon(\tilde{\Theta}_\star)} \tilde{l}(\theta).
\end{align*}
Letting $\delta_\star$ be as in \ref{assume:Model} and  $\bar{\epsilon}\in (0,\delta_\star/2)$ be as in Lemma \ref{lemma::lower_general}, we now let $\epsilon\in (0,\epsilon'\wedge\bar{\epsilon}]$ and $\delta_{\epsilon}\in(0,\bar{\epsilon})$ be such that 
\begin{align*}
v_\epsilon:=\inf_{\theta\in\mathcal{N}_{\delta_{\epsilon}}(\tilde{\Theta}_\star)}\tilde{l}(\theta)-\sup_{\theta\in\Theta\setminus\mathcal{N}_\epsilon(\tilde{\Theta}_\star)} \tilde{l}(\theta)>0.
\end{align*}

Next, for all $t\geq 1$ we let $(\tilde{\xi}_{rt},\underline{\chi}_{rt},\bar{\chi}_{rt})$   be as in the statement of Lemma \ref{lemma:gamma}, and  $\tau'$ be such that $a_\tau\geq 2r$ for all $\tau\geq \tau'$. Then,  by combining Lemma \ref{lemma:Zt} and Lemma \ref{lemma:gamma}, we have
\begin{align}\label{eq:bound_piAA}
\pi_{b_\tau,\Theta}(A)\leq \tilde{\xi}^{\,2}_{a_\tau}\,\frac{P^{\bar{\pi}_{a_\tau}\otimes\bar{\chi}_{a_\tau}}_{(a_\tau+1):b_\tau}(A)}{P^{\bar{\pi}_{a_\tau}\otimes\underline{\chi}_{a_\tau}}_{(a_{\tau}+1):b_\tau}(\Theta)},\quad\forall A\in\mathcal{T},\quad\P-a.s.,\quad\forall \tau\geq \tau'
\end{align}
where $\bar{\pi}_t$ is as defined in \eqref{eq:pi_bar} for all $t\geq 1$. Recall  $(\underline{\chi}_{rt})_{t\geq 1}$ and $(\bar{\chi}_{rt})_{t\geq 1}$ depend on $(\mu_t)_{t\geq 1}$ only through the sequence $(W^*_{t})_{t\geq 0}$ and that, for all $t\geq 1$, we have  $\underline{\chi}_{rt}\dist\underline{\chi}_r$ and $\bar{\chi}_{rt}\dist\bar{\chi}_r$ where, under \ref{assume:Model_G_lower}-\ref{assume:Model_sup}, the   random probability measures  $\underline{\chi}_r$ and $\bar{\chi}_r$ are $(r,\delta_\star, \tilde{l}, (\varphi_t)_{t\geq 1})$-consistent, with  $\tilde{l}$ and  $(\varphi_t)_{t\geq 1}$ as in \ref{assume:Model}. In addition, recall that the sequence of $(1,\infty)$-valued random variables $(\tilde{\xi}_{rt})_{t\geq 1}$ depends on $(\mu_t)_{t\geq 1}$ only the sequence $(W^*_{t})_{t\geq 0}$ defined in \eqref{eq:W_star} and that  $\tilde{\xi}_{rt}=\bigO_\P(1)$.

We now let $(\xi^+_{b_\tau})_{\tau\geq 1}$ be the sequence of  $(0,\infty]$-valued random variables defined in Lemma \ref{lemma::upper_general} when $(\epsilon_\tau, c_\tau,\eta_{a_\tau}, \kappa_{a_\tau})=(\tilde{\epsilon}_\tau, c'_\tau,  \bar{\chi}_{a_\tau}, \bar{\pi}_{a_\tau})$ for all $\tau\geq 1$, and   $(\xi^-_{b_\tau})_{\tau\geq 1}$ be the sequence of  $(0,\infty]$-valued random variables defined in Lemma \ref{lemma::lower_general} when $(\epsilon_\tau, c_\tau,\eta_{a_\tau}, \kappa_{a_\tau})=(\tilde{\epsilon}_\tau, \tilde{c}_\tau, \underline{\chi}_{a_\tau},\bar{\pi}_{a_\tau})$ for all $\tau\geq 1$. Remark that the two sequences $(\xi^+_{b_\tau})_{\tau\geq 1}$ and  $(\xi^-_{b_\tau})_{\tau\geq 1}$   depend  on $(\mu_t)_{t\geq 1}$  only through the sequences  $(\tilde{c}_{\tau})_{\tau\geq 1}$,  $(c'_{\tau})_{\tau\geq 1}$ and $(\epsilon_{\tau})_{\tau\geq 1}$ and  through the random variable $\Gamma^\mu$ defined in \ref{condition:Inf_K}, and that they are such that $\xi^+_{b_\tau}=\smallo_\P(1)$ and  $\xi^-_{b_\tau}=\smallo_\P(1)$.

Finally, with $(\xi_{\delta,a_\tau})_{\geq 1}$ be as in the statement of the theorem for all $\delta\in(0,\delta_\star/2)$, we let
\begin{align*}
Z_{\epsilon,b_{\tau}}=\xi^+_{b_{\tau}}+\frac{\log(1+\xi_{b_\tau}^+)}{b_\tau-a_\tau}+\xi^-_{b_{\tau}}+\xi_{\delta_\epsilon,a_\tau}+\frac{2}{b_\tau-a_\tau} \log \tilde{\xi}_{a_\tau},\quad\forall \tau\geq 1
\end{align*}
and  
\begin{align*}
W_{\epsilon, b_\tau}= 1\wedge  \exp\big\{-(b_\tau-a_\tau) (v_\epsilon-Z_{\epsilon,b_{\tau}})\big\},\quad\forall \tau\geq 1.
\end{align*}
Then, using \eqref{eq:bound_piAA} and Lemmas \ref{lemma::upper_general}-\ref{lemma::lower_general}, we have
\begin{align}\label{eq:conv:pi_b}
0\leq \pi_{b_\tau,\Theta}\big(\mathcal{N}_\epsilon(\tilde{\Theta}_\star)^c\big)\leq  W_{\epsilon,b_\tau},\quad\forall \tau\geq 1,\quad \P-a.s.
\end{align}
The sequence $(W_{\epsilon, b_\tau})_{\tau\geq 1}$ of $[0,1]$-valued random variables depends on $(\mu_t)_{t\geq 1}$  only through the  sequences  $(\tilde{c}_{\tau})_{\tau\geq 1}$, $(\epsilon_{\tau})_{\tau\geq 1}$, $(\xi_{\delta_\epsilon,a_\tau})_{\tau\geq 1}$ and  $(W^*_{t})_{t\geq 0}$, and  through the random variable $\Gamma^\mu$,  and thus  by \eqref{eq:conv:pi_b}  to prove the result of the theorem it suffices to show that $W_{\epsilon, b_\tau}=\smallo_\P(1)$.

To this aim note first that, using the aforementioned properties of $(\xi^+_{b_{\tau}})_{\tau\geq 1}$, $(\xi^-_{b_{\tau}})_{\tau\geq 1}$ and $(\tilde{\xi}_{a_{\tau}})_{\tau\geq 1}$, and the fact that, by assumption $\xi_{\delta_\epsilon,a_\tau}=\smallo_\P(1)$, we have $Z_{\epsilon,b_{\tau}}=\smallo_\P(1)$. Therefore, recalling that $v_\epsilon>0$,  for all $\gamma\in(0,\infty)$ we have
\begin{align*}
\limsup_{\tau\rightarrow\infty}\P(W_{\epsilon, b_\tau}\geq \gamma)&\leq \limsup_{\tau\rightarrow\infty}\P\Big(\exp\big(-(b_\tau-a_\tau) v_\epsilon/2\big)\geq \gamma\Big)+\limsup_{\tau\rightarrow\infty}\P\big(Z_{\epsilon,b_{\tau}}\geq v_\epsilon/2\big)=0
\end{align*}
showing that $W_{\epsilon, b_\tau}\PP 0$. The proof of the theorem is complete.
\end{proof}

\subsubsection{Proof of Lemma \ref{lemma:Extend_initit}\label{p-lemma:Extend_initit}}
\begin{proof}

Let $t$, $m$  and $\bar{m}$ be as in the statement of the lemma, $(r,\delta_\star,D)$ be as in \ref{assume:Model}, $k_t\in\{sr,\,s\in\mathbb{N}_0\}$ be such that $k_t< t-\bar{m}\leq k_t+r$ and $k'_t\in\{sr,\,s\in\mathbb{N}_0\}$ be such that $k'_t-r<t-1\leq k'_t$. In addition, for all integers $1\leq t_1\leq t_2$ we let $D_{t_1:t_2}$ be as defined in \eqref{eq:Dst} and
\begin{align*}
\tilde{D}_{t_1:t_2}=\prod_{s=t_1}^{t_2}\inf_{(\theta,x)\in\Theta\times D} Q_{s,\theta}(x,D).
\end{align*}
 
When $\bar{m}=1$ the result of the lemma trivially holds with $\xi'_{t,\bar{m}}=1$ for all $t\geq 1$ and below therefore below we assume that $\bar{m}>1$.

To show the result of the lemma  note first that, by using Lemma \ref{lemma:Zt}, we $\P$-a.s.~have, for all   $A\in\mathcal{T}$
\begin{equation}\label{eq:pp1}
\begin{split}
\bar{\pi}_{t}(A)&\geq \frac{\tilde{D}_{(t-m):(t-1)}}{D_{(t-m):(t-1)}}
\int_{\Theta^{m+2}\times D}\ind_A(\theta_{t}) \pi_{t-m-1}\big(\dd (\theta_{t-m-1},x_{t-m-1})\big)\prod_{s=t-m}^{t}  K_{\mu_s|\Theta}(\theta_{s-1},\dd\theta_s)\\
&\geq \frac{\tilde{D}_{(t-m):(t-1)}}{D_{(t-m):(t-1)}}
\int_{ \Theta^{m+2}\times D}\ind_A(\theta_{t}) \pi_{t-m-1}\big(\dd (\theta_{t-m-1},x_{t-m-1})\big)\prod_{s=t-m}^{t}  K_{\mu_s}(\theta_{s-1},\dd\theta_s)\\
&\geq \frac{\tilde{D}_{(k_t+1):k'_t}}{D_{(k_t+1):k'_t}}
\int_{ \Theta^{m+2}\times D}\ind_A(\theta_{t}) \pi_{t-m-1}\big(\dd (\theta_{t-m-1},x_{t-m-1})\big)\prod_{s=t-m}^{t}  K_{\mu_s}(\theta_{s-1},\dd\theta_s).
\end{split}
\end{equation}

Next, let $\delta\in(0,\delta_\star)$ and note that under \ref{assume:Model_ThetaStar} we have $\theta+u\in\mathcal{N}_{\delta}(\tilde{\Theta}_\star)\subset\mathring{\Theta}_\star$ for all $\theta\in \mathcal{N}_{\delta/2}(\tilde{\Theta}_\star)$ and all $u\in\mathcal{B}_{\delta/2}(0)$. Therefore, under \ref{condition:Inf_mu},
\begin{equation}\label{eq:pp2}
\begin{split}
&\int_{ \Theta^{m+2}\times D}\ind_{\mathcal{N}_{\delta}(\tilde{\Theta}_\star)}(\theta_{t}) \pi_{t-m-1}\big(\dd (\theta_{t-m-1},x_{t-m-1})\big)\prod_{s=t-m}^{t}  K_{\mu_s}(\theta_{s-1},\dd\theta_s)\\
&\geq \bigg(\prod_{s=t-m+1}^t\mu_s\big(B_{\delta/(2m)}(0)\big)\bigg)\int_{ \Theta\times D} \pi_{t-m-1}\big(\dd (\theta_{t-m-1},x_{t-m-1})\big)K_{\mu_{t-m}}\big(\theta_{t-m-1},\mathcal{N}_{\delta/2}(\tilde{\Theta}_\star)\big)\\
&\geq\big(\Gamma^\mu_{\delta/(2m)}\big)^{m}\int_{ \Theta\times D} \pi_{t-m-1}\big(\dd (\theta_{t-m-1},x_{t-m-1})\big)K_{\mu_{t-m}}(\theta_{t-m-1},\mathcal{N}_{\delta/2}(\tilde{\Theta}_\star))\\
&\geq \big(\Gamma^\mu_{\delta/(2\bar{m})}\big)^{\bar{m}}\pi_{t-m-1,\setX}(D)\inf_{\theta\in\Theta}K_{\mu_{t-m}}\big(\theta,\mathcal{N}_{\delta/2}(\tilde{\Theta}_\star)\big)
\end{split}
\end{equation}
with $\Gamma^\mu_\epsilon$ as in \ref{condition:Inf_mu} for all $\epsilon\in(0,\infty)$.

Then, by combining \eqref{eq:pp1} and \eqref{eq:pp2}, we obtain that
\begin{align*}
\bar{\pi}_{t}\big(\mathcal{N}_{\delta}(\tilde{\Theta}_\star)\big)\geq\bigg( \frac{\tilde{D}_{(k_t+1):k'_t}}{D_{(k_t+1):k'_t}}\big(\Gamma^\mu_{\delta/(2\bar{m})}\big)^{\bar{m}}\min_{m\in\{1,\dots,\bar{m}\}}\pi_{t-m-1,\setX}(D)\bigg) \inf_{\theta\in\Theta} K_{\mu_{t-m}}\big(\theta,\mathcal{N}_{\delta/2}(\tilde{\Theta}_\star)\big) 
\end{align*}
and to conclude the proof it remains to show that
\begin{align}\label{eq:mt}
\log^-\bigg( \frac{\tilde{D}_{(k_t+1):k'_t}}{D_{(k_t+1):k'_t}}\big(\Gamma^\mu_{\delta/(2\bar{m})}\big)^{\bar{m}}\min_{m\in\{1,\dots,\bar{m}\}}\pi_{t-m-1,\setX}(D)\bigg)=\bigO_\P(1).
\end{align}
To this aim note that  $\bar{m}\log \Gamma^\mu_{\delta/(2\bar{m})}=\bigO_\P(1)$ under \ref{condition:Inf_mu} while $\log \pi_{t-m-1,\setX}(D)=\bigO_\P(1)$ for all $m\in\{1,\dots,\bar{m}\}$ under \eqref{eq:W_star}. In addition, under \ref{assume:Model_stationary} we have
\begin{align*}
\tilde{D}_{(k_t+1):k'_t} \dist \tilde{D}_{(r+1):(k'_t+r-k_t)},\quad D_{(k_t+1):k'_t}\dist D_{(r+1):(k'_t+r-k_t)}
\end{align*}
and thus, noting that $\sup_{p\geq 1}(k'_t-k_t)<\infty$, we have $\log \tilde{D}_{(k_t+1):k'_t}=\bigO_\P(1)$ under \ref{assume:G_l_E} while we have  $\log D_{(k_t+1):k'_t}=\bigO_\P(1)$ under \ref{assume:G_bounded}. This shows \eqref{eq:mt} and the proof of the lemma is complete.
\end{proof}

\subsubsection{Proof of Lemma \ref{lemma:pred_def}\label{p-lemma:pred_def}}

\begin{proof}

Let $\theta\in\Theta$, $t\geq 2r$ and $\tau\in\{2r,\dots,t\}\cap\{sr,\,s\in\mathbb{N}\}$, and note that to prove the lemma we need to show that
\begin{align*}
\P\bigg(\int_{\setX^{t-\tau+1}}  \pi_{\tau,\setX}(\dd x_{\tau}) \prod_{s=\tau+1}^{t} Q_{s,\theta}(x_{s-1},\dd x_s)\in(0,\infty)\bigg)=1.
\end{align*}
To this aim remark first that
\begin{align*}
\int_{\setX^{t-\tau+1}}  \pi_{\tau,\setX}(\dd x_{\tau}) \prod_{s=\tau+1}^{t} Q_{s,\theta}(x_{s-1},\dd x_s)\leq D_{(\tau+1):t}
\end{align*}
where for all integers $1\leq t_1\leq t_2+1$ the random variable $D_{t_1:t_2}$ is as defined in \eqref{eq:Dst}. Under \ref{assume:G_bounded}, we have $\P\big(D_{(\tau+1):t}<\infty)=1$ and thus to prove the lemma it remains to prove that
\begin{align}\label{eq:Show_XX}
\P\bigg(\int_{\setX^{t-\tau+1}}  \pi_{\tau,\setX}(\dd x_{\tau}) \prod_{s=\tau+1}^{t} Q_{s,\theta}(x_{s-1},\dd x_s)>0\bigg)=1.
\end{align}

To show \eqref{eq:Show_XX} note first that, by Lemma \ref{lemma:Zt},
\begin{align}\label{eq:piX}
\pi_{\tau,\setX}(A)=\frac{  \Lambda^{\pi_0}_{1:\tau}(\Theta\times A)}{ \Lambda^{\pi_0}_{1:\tau}(\Theta\times\setX)},\quad\forall A\in \mathcal{X},\quad \P-a.s.
\end{align}

To proceed further let $\bar{\Lambda}^{\pi_0}_{1:\tau}(\dd x)$ be the random  measure on $(\setX,\mathcal{X})$ such that $\bar{\Lambda}^{\pi_0}_{1:\tau}(A)=\Lambda_{1:\tau}^{\pi_0}(\Theta\times A)$ for all $A\in\mathcal{X}$. In addition, let $k_t\in\{sr,\,s\in\mathbb{N}\}$ be such that $k_t-r<t\leq k_t$ and   let $(\tilde{\xi}_{sr})_{s\geq 1}$ and $(\underline{\chi}_{rs})_{s\geq 1}$ be as in Lemma \ref{lemma:gamma}. Then, using first \eqref{eq:piX} and then Lemma \ref{lemma:gamma}, we $\P$-a.s.~have,
\begin{equation}\label{eq:Show_XXX}
\begin{split}
\int_{\setX^{t-\tau+1}}  \pi_{\tau,\setX}(\dd x_{\tau}) \prod_{s=\tau+1}^{t} Q_{s,\theta}(x_{s-1},\dd x_s)&= \frac{1}{\Lambda^{\pi_0}_{1:\tau}(\Theta\times\setX)}\int_{\setX^{t-\tau+1}}    \bar{\Lambda}^{\pi_0}_{1:\tau}(\dd x_{\tau}) \prod_{s=\tau+1}^{t} Q_{s,\theta}(x_{s-1},\dd x_s)\\
&\geq  \frac{1}{\tilde{\xi}_{\tau}^2}\int_{\setX^{t-\tau+1}}   \underline{\chi}_{\tau}(\dd x_{\tau}) \prod_{s=\tau+1}^{t} Q_{s,\theta}(x_{s-1},\dd x_s)\\
&\geq  \frac{1}{\tilde{\xi}_{\tau}^2\, D_{(t+1):k_t}}\bar{P}_{(\tau+1):k_t}^{\underline{\chi}_\tau}(\theta).  
\end{split}
\end{equation}

To conclude the proof remark that $\P\big(D_{(t+1):k_t}<\infty\big)$ under \ref{assume:G_bounded}. In addition, under \ref{assume:Model_stationary} we have
\begin{align*}
\P\Big(\bar{P}_{(\tau+1):k_t}^{\underline{\chi}_\tau}(\theta)>0\Big)=\P\Big(\bar{P}_{(r+1):(k_t+r-\tau)}^{\underline{\chi}_r}(\theta)>0\Big)
\end{align*}
where $\underline{\chi}_r$ is as in \ref{assume:Model_G_lower}. Under \ref{assume:Model_G_lower}, the random probability measure  $\underline{\chi}_r$ is a $(r,\delta_\star,\tilde{l},(\varphi_t)_{t\geq 1})$-consistent, with $(\delta_\star,\tilde{l},(\varphi_t)_{t\geq 1}))$ as in \ref{assume:Model}, and thus, $\P\big(\bar{P}_{(r+1):(k_t+r-\tau)}^{\underline{\chi}_r}(\theta)>0\big)=1$. Since by Lemma \ref{lemma:gamma} we have $\P(\tilde{\xi}_{\tau}<\infty)=1$, it follows that
\begin{align*}
\P\Big(\frac{1}{\tilde{\xi}_{\tau}^2\,D_{(t+1):k_t}}\bar{P}_{(\tau+1):k_t}^{\underline{\chi}_\tau}(\theta)>0\Big)=1
\end{align*}
which, together with \eqref{eq:Show_XXX}, shows  \eqref{eq:Show_XX}. The proof is complete.
\end{proof}

\subsubsection{Proof of Lemma \ref{lemma:upper_X}\label{p-lemma:upper_X}}

\begin{proof}
For all $\tau\geq 1$ let  $\big( (\vartheta_{a_\tau}, \tilde{X}_{a_\tau}),\dots,(\vartheta_{b_\tau},   \tilde{X}_{b_\tau})\big)$ be a  $(\Theta\times\setX)^{b_\tau-a_\tau+1}$-valued random variable such that
\begin{align*}
\Big( (\vartheta_{a_\tau}, \tilde{X}_{a_\tau}),\dots,(\vartheta_{b_\tau},   \tilde{X}_{b_\tau})\Big) \big|\F_{b_\tau}\sim \pi_{a_\tau}\big(\dd (\theta_{a_\tau},x_{a_\tau})\big)\prod_{s=a_\tau+1}^{b_\tau}m_{s,\theta_{s}}(x_s|x_{s-1})\lambda(\dd x_s)K_{\mu_s|\Theta}(\theta_{s-1},\dd \theta_s) 
\end{align*}
and let $\tilde{f}_\tau=f_{b_\tau-a_\tau,\epsilon_\tau}$, with $f_{t,\epsilon}$ as defined in \eqref{eq:ft_def} for all $t\in\mathbb{N}$ and $\epsilon\in(0,\infty)$. Finally, let   $(\varphi_t)_{t\geq 1}$  and $\delta_\star$ be as in \ref{assume:Model} and  $\tau'\in\mathbb{N}$ be such that $g(3\epsilon_{\tau})\leq\delta_\star/4$ for all $\tau\geq \tau'$. 

To prove the lemma remark first that, for all $\tau\geq 1$ and $A\in\mathcal{X}$, we have
\begin{equation}\label{eq:Thm2_bound0}
\begin{split}
\Lambda^{\pi_{a_\tau}}_{(a_\tau+1):b_\tau}&(\mathcal{N}_{\epsilon_\tau}(\{\theta\})\times A)\\
&=\E_{(a_\tau+1):b_\tau}^{\pi_{a_\tau}}\Big[\ind_{\mathcal{N}_{\epsilon_\tau}(\{\theta\})}(\vartheta_{b_\tau})\ind_{A}(\tilde{X}_{b_\tau})\tilde{f}_{\tau}(\vartheta_{a_\tau},\dots,\vartheta_{b_\tau})\prod_{s=a_\tau+1}^{b_\tau} G_{s,\vartheta_s}(\tilde{X}_{s-1},\tilde{X}_s)\Big]\\
&+\E_{(a_\tau+1):b_\tau}^{\pi_{a_\tau}}\Big[\ind_{\mathcal{N}_{\epsilon_\tau}(\{\theta\})}(\vartheta_{b_\tau})\ind_{A}(\tilde{X}_{b_\tau})\big(1-\tilde{f}_{\tau}(\vartheta_{a_\tau},\dots,\vartheta_{b_\tau})\big)\prod_{s=a_\tau+1}^{b_\tau} G_{s,\vartheta_s}(\tilde{X}_{s-1},\tilde{X}_s)\Big]
\end{split}
\end{equation}
and we now study the two terms on the r.h.s.~of \eqref{eq:Thm2_bound0}, starting with the first one.

To this aim  remark that, under \ref{assume:Model_smooth}, for all $\tau\geq \tau'$ we  $\P$-a.s.~have, for all $A\in\mathcal{X}$,
\begin{equation}\label{eq:Thm2_bound1}
\begin{split}
&\E_{(a_\tau+1):b_\tau}^{\pi_{a_\tau}}\Big[\ind_{\mathcal{N}_{\epsilon_\tau}(\{\theta\})}(\vartheta_{b_\tau})\ind_{A}(\tilde{X}_{b_\tau})\tilde{f}_{\tau}(\vartheta_{a_\tau},\dots,\vartheta_{b_\tau})\prod_{s=a_\tau+1}^{b_\tau} G_{s,\vartheta_s}(\tilde{X}_{s-1},\tilde{X}_s)\Big]\\
&\leq \int_{\setX^{b_\tau-a_\tau+1}} \ind_A(x_{b_\tau})\exp\Big(g(3\epsilon_{\tau})\sum_{s=a_\tau+1}^{b_\tau} \varphi_s(x_{s-1},x_s)\Big)\pi_{a_\tau,\setX}(\dd x_{a_\tau})\prod_{s=a_\tau+1}^{b_\tau} Q_{s,\theta}(x_{s-1},\dd x_s).
\end{split}
\end{equation}
Next, using  first the fact that  $|1-e^x|\leq |x|e^{|x|}$ for all $x\in\R$ and then the fact that $x\leq e^x$  for all $x\in\R$, it follows that for all $\tau\geq \tau'$ we $\P$-a.s.~have, for all $A\in\mathcal{X}$ and letting $r_\tau=b_\tau-a_\tau+1$,
\begin{equation}\label{eq:Thm2_bound21}
\begin{split}
 &\int_{\setX^{r_\tau}} \ind_A(x_{b_\tau})\Big|1-\exp\Big(g(3\epsilon_{\tau})\sum_{s=a_\tau+1}^{b_\tau} \varphi_s(x_{s-1},x_s)\Big)\Big|\pi_{a_\tau,\setX}(\dd x_{a_\tau})\prod_{s=a_\tau+1}^{b_\tau} Q_{s,\theta}(x_{s-1},\dd x_s)\\
 &\leq g(3\epsilon_\tau)\hspace{-0.1cm} \int_{\setX^{r_\tau}}\hspace{-0.13cm} \sum_{s=a_\tau+1}^{b_\tau} \varphi_s(x_{s-1},x_s)\exp\Big(g(3\epsilon_{\tau})\hspace{-0.3cm}\sum_{s=a_\tau+1}^{b_\tau} \varphi_s(x_{s-1},x_s)\Big) \pi_{a_\tau,\setX}(\dd x_{a_\tau})\hspace{-0.3cm}\prod_{s=a_\tau+1}^{b_\tau} Q_{s,\theta}(x_{s-1},\dd x_s)\\
&\leq \frac{4}{\delta_\star}g(3\epsilon_{\tau}) \int_{\setX^{r_\tau}} \exp\Big(\big(g(3\epsilon_{\tau})+\frac{\delta_\star}{4}\big)\sum_{s=a_\tau+1}^{b_\tau} \varphi_s(x_{s-1},x_s)\Big)\pi_{a_\tau,\setX}(\dd x_{a_\tau})\prod_{s=a_\tau+1}^{b_\tau} Q_{s,\theta}(x_{s-1},\dd x_s)\\
&\leq   \frac{4g(3\epsilon_{\tau})}{ \delta_\star} \int_{\setX^{r_\tau}}  \exp\Big(\frac{\delta_\star}{2}\sum_{s=a_\tau+1}^{b_\tau} \varphi_s(x_{s-1},x_s)\Big)\pi_{a_\tau,\setX}(\dd a_\tau)\prod_{s=a_\tau+1}^{b_\tau} Q_{s,\theta}(x_{s-1},\dd x_s)\\
&\leq   \frac{4g(3\epsilon_{\tau})}{ \delta_\star} \bar{P}^{\pi_{a_\tau,\setX}}_{(a_\tau+1):b_\tau}(\theta)\,  W^{\pi_{a_\tau,\setX}}_{(a_\tau+1):b_\tau}
\end{split}
\end{equation}
where  for all integers $1\leq t_1\leq t_2$ and  $\eta\in\mathcal{P}(\setX)$  the random variable $W^\eta_{t_1:t_2}$ is as defined in \eqref{eq:W_def}.

To proceed further for all $\tau\geq 1$ we let $e_\tau\in\{sr,\,s\in\mathbb{N}\}$ be such that $e_\tau-r<b_\tau\leq e_\tau$ and remark that
\begin{align}\label{eq:up_W}
W^{\pi_{a_\tau,\setX}}_{(a_\tau+1):b_\tau}\leq W^{\pi_{a_\tau,\setX}}_{(a_\tau+1):e_\tau},\quad\forall \tau\geq 1.
\end{align}

Next, let  $(\tilde{\xi}_{rt})_{t\geq 1}$, $(\bar{\chi}_{rt})_{t\geq 1}$ and $(\underline{\chi}_{rt})_{t\geq 1}$ be as  in the statement of Lemma \ref{lemma:gamma} and recall that, by Lemma \ref{lemma:Zt},
\begin{align*}
\pi_{a_\tau,\setX}(A)=\frac{\Lambda^{\mu_0\otimes\chi}_{1:a_\tau}(\Theta\times A)}{\Lambda^{\mu_0\otimes\chi}_{1:a_\tau}(\Theta\times \setX)},\quad\forall A\in\mathcal{X},\quad\P-a.s.
\end{align*} 
Then, by using  Lemma \ref{lemma:gamma}, we have
\begin{align}\label{eq:W_upp}
W^{\pi_{a_\tau,\setX}}_{(a_\tau+1):e_\tau}\leq Z_{e_\tau}^{(1)}:=\tilde{\xi}^2_{a_\tau} W^{\bar{\chi}_{a_\tau}}_{(a_\tau+1):e_\tau} \frac{\sup_{\theta'\in\Theta}\bar{P}^{\bar{\chi}_{a_\tau}}_{(a_\tau+1):e_\tau}(\theta')}{\inf_{\theta'\in\Theta}\bar{P}^{\underline{\chi}_{a_\tau}}_{(a_\tau+1):e_\tau}(\theta')},\quad\forall \tau\geq \tau',\quad\P-a.s.
\end{align}
and thus, by using  \eqref{eq:Thm2_bound1}-\eqref{eq:W_upp} and  Lemma \ref{lemma:pred_def}, it follows that  for all $\tau\geq \tau'$ we $\P$-a.s.~have
\begin{equation}\label{eq:thm2_B1}
\begin{split}
\E_{(a_\tau+1):b_\tau}^{\pi_{a_\tau}}\Big[\ind_{\mathcal{N}_{\epsilon_\tau}(\{\theta\})}(\vartheta_{b_\tau})&\ind_{A}(\tilde{X}_{b_\tau})\tilde{f}_{\tau}(\vartheta_{a_\tau},\dots,\vartheta_{b_\tau})\prod_{s=a_\tau+1}^{b_\tau} G_{s,\vartheta_s}(\tilde{X}_{s-1},\tilde{X}_s)\Big]\\
&\leq \bar{P}^{\pi_{a_\tau,\setX}}_{(a_\tau+1):b_\tau}(\theta)\Big( \bar{p}_{(a_\tau+1):b_\tau}^{\pi_{a_\tau,\setX}}(A|\theta)+\frac{4}{\delta_\star} g(3\epsilon_{\tau})Z_{e_\tau}^{(1)}\Big),\quad\forall A\in\mathcal{X}.
\end{split}
\end{equation}

We now study the second term in \eqref{eq:Thm2_bound0}. Using  similar computations as in  \eqref{eq:2nd_term} we $\P$-a.s.~have,  for all $\tau\geq  \tau'$ and $A\in\mathcal{X}$,
\begin{equation}\label{eq:thm2_B2}
\begin{split}
\E_{(a_\tau+1):b_\tau}^{\pi_{a_\tau}}\Big[ \ind_{\mathcal{N}_{\epsilon_\tau}(\{\theta\})}(\vartheta_{b_\tau})&\ind_{A}(\tilde{X}_{b_\tau})\big(1-\tilde{f}_{\tau}(\vartheta_{a_\tau},\dots,\vartheta_{b_\tau})\big)\prod_{s=a_\tau+1}^{b_\tau} G_{s,\vartheta_s}(\tilde{X}_{s-1},\tilde{X}_s)\Big]\\
&\leq \E_{(a_\tau+1):b_\tau}^{\pi_{a_\tau}}\Big[\big(1-\tilde{f}_{\tau}(\vartheta_{a_\tau},\dots,\vartheta_{b_\tau})\big)\prod_{s=a_\tau+1}^{b_\tau} G_{s,\vartheta_s}(\tilde{X}_{s-1},\tilde{X}_s)\Big]\\
&\leq \exp\Big( -(b_\tau-a_\tau)(c_\tau-\log \Gamma^\mu)\Big)\sup_{\theta'\in\Theta}\bar{P}_{(a_\tau+1):b_\tau}^{\pi_{a_\tau,\setX}}(\theta')\\
&\leq \exp\Big(-(b_\tau-a_\tau)(c_\tau-\log \Gamma^\mu)\Big)\bar{P}_{(a_\tau+1):b_\tau}^{\pi_{a_\tau,\setX}}(\theta)\frac{\sup_{\theta'\in\Theta}\bar{P}_{(a_\tau+1):b_\tau}^{\pi_{a_\tau,\setX}}(\theta')}{\inf_{\theta'\in\Theta}\bar{P}_{(a_\tau+1):b_\tau}^{\pi_{a_\tau,\setX}}(\theta')}
\end{split}
\end{equation}
with $\Gamma^\mu$ as defined in \ref{condition:Inf_K}. 

Using again  Lemmas \ref{lemma:Zt}-\ref{lemma:gamma}, we have
\begin{align}\label{eq:TT1}
\frac{\sup_{\theta'\in\Theta}\bar{P}_{(a_\tau+1):b_\tau}^{\pi_{a_\tau,\setX}}(\theta')}{\inf_{\theta'\in\Theta}\bar{P}_{(a_\tau+1):b_\tau}^{\pi_{a_\tau,\setX}}(\theta')}\leq \tilde{\xi}^2_{a_\tau}\frac{\sup_{\theta'\in\Theta}\bar{P}_{(a_\tau+1):b_\tau}^{\bar{\chi}_{a_\tau}}(\theta')}{\inf_{\theta'\in\Theta}\bar{P}_{(a_\tau+1):b_\tau}^{\underline{\chi}_{a_\tau}}(\theta')},\quad\forall \tau\geq \tau',\quad\P-a.s.
\end{align}
where, by applying Lemma \ref{lemma:tech}  with $\mu_t=\delta_{\{0\}}$ for all $t\geq 1$,
\begin{align}\label{eq:TT2}
\bar{P}_{(a_\tau+1):b_\tau}^{\underline{\chi}_{a_\tau}}(\theta')\geq D_{(b_\tau+1):e_\tau}^{-1} \bar{P}_{(a_\tau+1):e_\tau}^{\underline{\chi}_{a_\tau}}(\theta'),\quad\forall \theta\in\Theta,\quad\forall \tau\geq\tau',\quad\P-a.s.
\end{align}
with the  random variable $D_{t_1:t_2}$  as defined in \eqref{eq:Dst} for all integers $1\leq t_1\leq t_2$.

Next,   assuming that $\tau'$ is such that $d_\tau:=e_\tau-r\geq r$ for all $\tau\geq \tau'$, we have
\begin{align}\label{eq:TT3}
\bar{P}_{(a_\tau+1):b_\tau}^{\bar{\chi}_{a_\tau}}(\theta')\leq D_{(d_\tau+1):b_\tau}\bar{P}_{(a_\tau+1):d_\tau}^{\bar{\chi}_{a_\tau}}(\theta'),\quad\forall \theta\in\Theta,\quad\forall \tau\geq\tau',\quad\P-a.s.
\end{align}

Therefore, letting
\begin{align*}
Z^{(2)}_{e_\tau}= \exp\Big(-(b_\tau-a_\tau)(c_\tau-\log \Gamma^\mu)\Big) D_{(d_\tau+1):e_\tau}\tilde{\xi}^2_{a_\tau}\frac{\sup_{\theta'\in\Theta}\bar{P}_{(a_\tau+1):d_\tau}^{\bar{\chi}_{a_\tau}}(\theta')}{\inf_{\theta'\in\Theta}\bar{P}_{(a_\tau+1):e_\tau}^{\underline{\chi}_{a_\tau}}(\theta')},\quad\forall \tau\geq \tau'
\end{align*}
it follows from \eqref{eq:Thm2_bound0} and \eqref{eq:thm2_B1}-\eqref{eq:TT3}  that, $\P$-a.s.,  for all $\tau\geq \tau'$ and $A\in\mathcal{X}$ we have
\begin{equation}\label{eq:var0}
\begin{split}
\Lambda^{\pi_{a_\tau}}_{(a_\tau+1):b_\tau}&(\mathcal{N}_{\epsilon_\tau}(\{\theta\})\times A) \leq \bar{P}^{\pi_{a_\tau,\setX}}_{(a_\tau+1):b_\tau}(\theta)\Big(\bar{p}_{(a_\tau+1):b_\tau}^{\pi_{a_\tau,\setX}}(A|\theta)+\frac{4}{\delta_\star} g(3\epsilon_{\tau})Z_{e_\tau}^{(1)}+Z_{e_\tau}^{(2)} \Big)
\end{split}
\end{equation}
and to complete the proof of the lemma it remains to show that
\begin{align}\label{eq:toShow_end}
g(3\epsilon_{\tau})Z_{e_\tau}^{(1)}+Z_{e_\tau}^{(2)}=\smallo_\P(1).
\end{align}
To this aim remark first that, under \ref{assume:Model_stationary}, for all $\tau\geq \tau'$ we have
\begin{align*}
&\sup_{\theta'\in\Theta}\log \bar{P}^{\bar{\chi}_{a_\tau}}_{(a_\tau+1):e_\tau}(\theta')\dist \sup_{\theta'\in\Theta}\log \bar{P}^{\bar{\chi}_{r}}_{(r+1):(e_\tau+r-a_\tau)}(\theta'),\\
&\sup_{\theta'\in\Theta}\log \bar{P}^{\bar{\chi}_{a_\tau}}_{(a_\tau+1):d_\tau}(\theta')\dist \sup_{\theta'\in\Theta}\log \bar{P}^{\bar{\chi}_{r}}_{(r+1):(d_\tau+r-a_\tau)}(\theta'),\\ &\inf_{\theta'\in\Theta}\log \bar{P}^{\underline{\chi}_{a_\tau}}_{(a_\tau+1):e_\tau}(\theta')\dist \inf_{\theta'\in\Theta}\log \bar{P}^{\underline{\chi}_{r}}_{(r+1):(e_\tau+r-a_\tau)}(\theta')\\
&W^{\bar{\chi}_{a_\tau}}_{(a_\tau+1):e_\tau}\dist W^{\bar{\chi}_{r}}_{(r+1):(e_\tau+r-a_\tau)}
\end{align*}
where $\underline{\chi}_r$  and $\bar{\chi}_r$  are as in  \ref{assume:Model_G_lower} and \ref{assume:Model_sup}, respectively. Since $\underline{\chi}_r$ and $\bar{\chi}_r$   are $(r,\delta_\star,\tilde{l},(\varphi_t)_{t\geq 1})$-consistent under \ref{assume:Model_G_lower}-\ref{assume:Model_sup},  and noting that 
\begin{align*}
\lim_{\tau\rightarrow\infty}\frac{b_\tau-a_\tau}{e_\tau-a_\tau}=\lim_{\tau\rightarrow\infty}\frac{b_\tau-a_\tau}{d_\tau-a_\tau}=1,
\end{align*}
it follows from Lemmas \ref{lemma::W}-\ref{lemma:Unif_convergence} that
\begin{align}
&\frac{1}{b_\tau-a_\tau}\sup_{\theta'\in\Theta}\log \bar{P}^{\bar{\chi}_{a_\tau}}_{(a_\tau+1):e_\tau}(\theta')\PP  \sup_{\theta'\in\Theta} \tilde{l}(\theta'),\label{eq:same_dist11}\\
&\frac{1}{b_\tau-a_\tau}\sup_{\theta'\in\Theta}\log \bar{P}^{\bar{\chi}_{a_\tau}}_{(a_\tau+1):d_\tau}(\theta')\PP  \sup_{\theta'\in\Theta} \tilde{l}(\theta'),\label{eq:same_dist12}\\
&\frac{1}{b_\tau-a_\tau}\inf_{\theta'\in\Theta}\log \bar{P}^{\underline{\chi}_{a_\tau}}_{(a_\tau+1):e_\tau}(\theta')\PP  \inf_{\theta'\in\Theta} \tilde{l}(\theta')\label{eq:same_dist13}\\
&\frac{1}{b_\tau-a_\tau}\log  W^{\bar{\chi}_{a_\tau}}_{(a_\tau+1):b_\tau}=\bigO_\P(1).\label{eq:same_dist}
\end{align}

Under \ref{assume:Model}, the function $\tilde{l}$ is continuous on the compact set $\Theta$, and thus we have   $\inf_{\theta'\in\Theta} \tilde{l}(\theta')>-\infty$ and $\sup_{\theta'\in\Theta} \tilde{l}(\theta')<\infty$. Together with \eqref{eq:same_dist11}-\eqref{eq:same_dist13}, this implies that
\begin{align}\label{eq:same_dist3}
\frac{1}{b_\tau-a_\tau}\log \frac{\sup_{\theta'\in\Theta}\bar{P}_{(a_\tau+1):e_\tau}^{\bar{\chi}_{a_\tau}}(\theta')}{\inf_{\theta'\in\Theta}\bar{P}_{(a_\tau+1):e_\tau}^{\underline{\chi}_{a_\tau}}(\theta')} =\bigO_\P(1),\quad \frac{1}{b_\tau-a_\tau}\log \frac{\sup_{\theta'\in\Theta}\bar{P}_{(a_\tau+1):d_\tau}^{\bar{\chi}_{a_\tau}}(\theta')}{\inf_{\theta'\in\Theta}\bar{P}_{(a_\tau+1):d_\tau}^{\underline{\chi}_{a_\tau}}(\theta')} =\bigO_\P(1) 
\end{align}
and since $\log \tilde{\xi}_{a_\tau}=\bigO_\P(1)$ by Lemma \ref{lemma:gamma}  it follows from \eqref{eq:same_dist} and \eqref{eq:same_dist3} that 
\begin{align}\label{eq:Z_b1}
\frac{1}{b_\tau-a_\tau}\log Z_{e_\tau}^{(1)}=\bigO_\P(1).
\end{align} 
Noting that for all $\epsilon\in(0,\infty)$ we have
\begin{align*}
\P\Big( g(3\epsilon_{\tau})Z_{e_\tau}^{(1)}\geq \epsilon\Big)=\P\Big(\frac{1}{b_\tau-a_\tau}\log Z_{e_\tau}^{(1)}\geq -\frac{1}{b_\tau-a_\tau}\log(1/\epsilon)-\frac{1}{b_\tau-a_\tau}\log g(3\epsilon_\tau)\Big)
\end{align*} 
and recalling that, by assumption $\lim_{\tau\rightarrow\infty}(b_\tau-a_\tau)^{-1}\log g(3\epsilon_\tau)=-\infty$, it follows from \eqref{eq:Z_b1} that 
\begin{align}\label{eq:var2}
 g(3\epsilon_{\tau})Z_{e_\tau}^{(1)}=\smallo_\P(1).
\end{align}

On the other hand,  noting that under \ref{assume:Model_stationary} we have
\begin{align*}
D_{(d_\tau+1):e_\tau}\dist D_{1:(e_\tau-d_\tau)}=D_{1:r},\quad\forall \tau\geq \tau'
\end{align*}
it follows that, under \ref{assume:G_bounded}, $\log D_{(d_\tau+1):e_\tau}=\bigO_\P(1)$.
Then, using \eqref{eq:same_dist3} and the fact $\log \tilde{\xi}_{a_\tau}=\bigO_\P(1)$ while $\lim_{\tau\rightarrow\infty} c_\tau=\infty$ by assumption, we readily obtain that $Z^{(2)}_{e_\tau}=\smallo_\P(1)$. Together with \eqref{eq:var2}, this shows \eqref{eq:toShow_end} and the proof of the lemma is complete.

\end{proof}

\subsubsection{Proof of Lemma \ref{lemma:lower_X}\label{p-lemma:lower_X}}

\begin{proof}

Let   $\big( (\vartheta_{a_\tau}, \tilde{X}_{a_\tau}),\dots,(\vartheta_{b_\tau},   \tilde{X}_{b_\tau})\big)$ and $\tilde{f}_\tau$ be as defined in the proof of Lemma \ref{lemma:upper_X} for all $\tau\geq 1$. In addition,  let $(\varphi_t)_{t\geq 1}$, $g(\cdot)$ and $\delta_\star$ be as in \ref{assume:Model}, and let $\tau'\in\mathbb{N}$ be such that $g(3\epsilon_{\tau})\leq\delta_\star/4$ for all $\tau\geq \tau'$.

To prove the lemma remark first that for all $\tau\geq 1$ we have
\begin{equation}\label{eq:lower_thm2_0}
\begin{split}
\E_{(a_\tau+1):b_\tau}^{\pi_{a_\tau}}&\Big[\ind_{\mathcal{N}_{\epsilon_\tau}(\{\theta\})}(\vartheta_{a_\tau})\tilde{f}_{\tau}(\vartheta_{a_\tau},\dots,\vartheta_{b_\tau})\Big]\\
&=\pi_{a_\tau,\Theta}\big(\mathcal{N}_{\epsilon_\tau}(\{\theta\})\big)\P\Big(\sum_{i=a_\tau+1}^s  U_i\in B_{\epsilon_{\tau}}(0),\,\forall   s\in\{a_\tau+1,\dots,  b_\tau\}\,\big|\, \F_{b_\tau}\Big)\\
&\geq \pi_{a_\tau,\Theta}\big(\mathcal{N}_{\epsilon_\tau}(\{\theta\})\big)\exp\big(-(b_\tau-a_\tau)/c_\tau\big):=K_{\theta,\tau}
\end{split}
\end{equation}
where the  inequality holds under the assumption of the lemma and uses \eqref{eq:Bound_log}.

Then, under \ref{assume:Model_smooth} and using \eqref{eq:lower_thm2_0}, for all $\tau\geq \tau'$  we $\P$-a.s.~have 
\begin{equation}\label{eq:lower_thm2}
\begin{split}
&\Lambda^{\pi_{a_\tau}}_{(a_\tau+1):b_\tau}\big(\mathcal{N}_{2\epsilon_\tau}(\{\theta\})\times \setX\big)\\
&\geq \E_{(a_\tau+1):b_\tau}^{\pi_{a_\tau}}\Big[\ind_{\mathcal{N}_{2\epsilon_\tau}(\{\theta\})}(\vartheta_{b_\tau}) \tilde{f}_{\tau}(\vartheta_{a_\tau},\dots,\vartheta_{b_\tau}) \prod_{s=a_\tau+1}^{b_\tau} G_{s,\vartheta_s}(\tilde{X}_{s-1},\tilde{X}_s)\Big]\\
&\geq \E_{(a_\tau+1):b_\tau}^{\pi_{a_\tau}}\Big[\ind_{\mathcal{N}_{\epsilon_\tau}(\{\theta\})}(\vartheta_{a_\tau})  \tilde{f}_{\tau}(\vartheta_{a_\tau},\dots,\vartheta_{b_\tau}) \prod_{s=a_\tau+1}^{b_\tau} G_{s,\vartheta_s}(\tilde{X}_{s-1},\tilde{X}_s)\Big]\\
&\geq K_{\theta,\tau}   \int_{\setX^{b_\tau-a_\tau+1}} \exp\Big(-g(3\epsilon_\tau)\sum_{s=a_\tau+1}^{b_\tau} \varphi_s(x_{s-1},x_s)\Big)\pi_{a_\tau,\setX}(\dd x_{a_\tau})\prod_{s=a_\tau+1}^{b_\tau} Q_{s,\theta}(x_{s-1},\dd x_s)\\
&\geq K_{\theta,\tau} \bar{P}^{\pi_{a_\tau,\setX}}_{(a_\tau+1):b_\tau}(\theta)\tilde{W}^{\pi_{a_\tau,\setX}}_{(a_\tau+1):b_\tau}
\end{split}
\end{equation}
with 
\begin{align*}
\tilde{W}^{\pi_{a_\tau,\setX}}_{(a_\tau+1):b_\tau}=\inf_{\theta'\in\Theta}\int_{\setX^{b_\tau-a_\tau+1}} \exp\Big(-g(3\epsilon_\tau)\sum_{s=a_\tau+1}^{b_\tau} \varphi_s(x_{s-1},x_s)\Big)\frac{\pi_{a_\tau,\setX}(\dd x_{a_\tau})\prod_{s=a_\tau+1}^{b_\tau} Q_{s,\theta'}(x_{s-1},\dd x_s)}{\bar{P}^{\pi_{a_\tau,\setX}}_{(a_\tau+1):b_\tau}(\theta')}.
\end{align*}

Let $e_\tau\in\{sr,\,s\in\mathbb{N}\}$ be such that $e_\tau-r<b_\tau\leq e_\tau$ and note that
\begin{align}\label{eq:WT}
1\geq\tilde{W}^{\pi_{a_\tau,\setX}}_{(a_\tau+1):b_\tau}\geq \tilde{W}^{\pi_{a_\tau,\setX}}_{(a_\tau+1):e_\tau},\quad\forall\tau\geq 1.
\end{align}

To proceed further, for all integers $1\leq t_1\leq t_2$ and  $\eta\in\mathcal{P}(\setX)$ we let $W^\eta_{t_1:t_2}$ be as defined in \eqref{eq:W_def}. Then, using Jensen's inequality, for all $\tau\geq \tau'$ we have (see the proof of Lemma \ref{lemma::lower_general} for details)
\begin{align}\label{eq:WWW}
1\geq \tilde{W}^{\pi_{a_\tau,\setX}}_{(a_\tau+1):e_\tau}\geq \exp\Big(-g(3\epsilon_\tau)^{1/2}\log W^{\pi_{a_\tau,\setX}}_{(a_\tau+1):e_\tau}\Big)
\end{align}
where, by \eqref{eq:W_upp} and \eqref{eq:Z_b1},
\begin{align}\label{eq:WWWW}
W^{\pi_{a_\tau,\setX}}_{(a_\tau+1):e_\tau}\leq Z_{e_\tau}^{(1)},\quad\forall \tau\geq \tau'
\end{align}
for some sequence of random variables $(Z_{e_\tau}^{(1)})_{\tau\geq 1}$ depending on  $(\mu_t)_{t\geq 0}$ only through $(W^*_t)_{t\geq 1}$ and such that
\begin{align}\label{eq:Z_b11}
\frac{1}{b_\tau-a_\tau}\log Z_{e_\tau}^{(1)}=\bigO_\P(1).
\end{align}

Recalling   that $\log(x)\geq (x-1)/x\geq -1/x$ for all $x\in(0,\infty)$, it follows that under the assumptions of the lemma we have
\begin{align*}
\limsup_{\tau\rightarrow\infty}-\frac{1}{(b_\tau-a_\tau)g(3\epsilon_\tau)^{1/2}}\leq \limsup_{\tau\rightarrow\infty}\frac{\frac{1}{2}\log g(3\epsilon_\tau)}{(b_\tau-a_\tau)}=-\infty
\end{align*}
and thus, by \eqref{eq:Z_b11}, for all $\epsilon\in(0,\infty)$ we have
\begin{align*}
\limsup_{\tau\rightarrow\infty}\P\Big(g(3\epsilon_\tau)^{1/2}|\log Z_{e_\tau}^{(1)}|\geq \epsilon\Big)&=\limsup_{\tau\rightarrow\infty}\P\Big(\Big|\frac{1}{b_\tau-a_\tau} \log Z_{e_\tau}^{(1)}\Big|\geq \frac{\epsilon}{(b_\tau-a_\tau)g(3\epsilon_\tau)}\Big)=0.
\end{align*}
This shows that  $g(3\epsilon_\tau)^{1/2} \log Z_{e_\tau}^{(1)}=\smallo_\P(1)$ and thus, by \eqref{eq:WWW} and \eqref{eq:WWWW}, we have $\tilde{W}^{\pi_{a_\tau,\setX}}_{(a_\tau+1):e_\tau}\PP 1$. Together with \eqref{eq:lower_thm2} and \eqref{eq:WT}, this shows the result of  the lemma.

\end{proof}

\section{Proofs of the results in Section \ref{sec:SSM}\label{p-proof_SSM}}

Proposition \ref{prop:conv_SSM_0} is a direct consequence of Proposition \ref{prop:conv_SSM} stated below, and  in this section we  prove  Theorems \ref{thm:SSM} and \ref{thm:SSM2}.



Throughout we assume, as in Section \ref{sec:SSM},     that the sequence $(Z_t)_{t\geq 1}$ is a stationary and ergodic process, where $Z_t=(Y_{(t-1)\tau+1},\dots,Y_{t\tau})$ for all $t\geq 1$. In addition,   as in Section \ref{sec:SSM}, we assume  that there exist  measurable functions $\{ (f'_{i,\theta})_{\theta\in\Theta}\}_{i=1}^\tau$ and  $\{ (m'_{i+1,\theta})_{\theta\in\Theta}\}_{i=1}^\tau$, and a $\sigma$-finite measure $\lambda(\dd x)$ on $(\setX,\mathcal{X})$, such that
 \begin{align*}
 f_{t,\theta}(y|x')=f'_{i_t,\theta}(y|x'),\quad M_{t+1,\theta}(x',\dd x)=m'_{i_t+1,\theta}(x|x')\lambda(\dd x),\quad\forall (\theta,x',y)\in\Theta\times\setX\times\setY,\quad\forall t\geq 1
 \end{align*}
 with $i_t=t -\tau\lfloor (t-1)/\tau\rfloor$ for all $t\geq 1$. Finally, we let $M'_{1,\theta}(x',\dd x)=M'_{\tau+1,\theta}(x',\dd x)$ for all $(\theta,x')\in\Theta\times\setX$.

\subsection{Additional notation}
Let $\eta\in\mathcal{P}(\setX)$ and $\theta\in\Theta$. Then,  we let
\begin{align*}
l^\eta_{t}(\theta)=\frac{1}{t}\log\int_{\setX^{t+1}}\eta(\dd x_{1})  \prod_{s=1}^{t}f'_{i_s,\theta}(Y_s|x_s)M'_{i_{s}+1,\theta}( x_{s},\dd x_{s+1}) ,\quad\forall t\in\mathbb{N},
\end{align*}
and for all integers $1\leq t_1<t_2$ we let
\begin{align*}
\tilde{p}_{(t_1+1):t_2}^{\eta} (A|\theta)=\frac{\int_{\setX^{t_2-t_1+1}}\ind_A(x_{t_2})\eta(\dd x_{t_1})\prod_{s=t_1+1}^{t_2} f'_{i_s,\theta}(Y_{s}|x_{s})M'_{i_{s-1}+1,\theta}( x_{s-1},\dd x_s)  }{\int_{\setX^{t_2-t_1+1}} \eta(\dd x_{t_1})\prod_{s=t_1+1}^{t_2} f'_{i_s,\theta}(Y_{s}|x_{s})M'_{i_{s-1}+1,\theta}(x_{s-1},\dd x_{s}) },\quad\forall A\in\mathcal{X} 
\end{align*}
and
\begin{align*}
\bar{p}^\eta_{t_1,\theta}(A|Y_{1:t_1})=\frac{\int_{\setX^{t_1+1}}\ind_A(x_{t_1}) \eta(\dd x_1) \prod_{s=1}^{t_1} f'_{i_s,\theta}(Y_s|x_s)M'_{i_s+1,\theta}(x_{s}, \dd x_{s+1})}{\int_{\setX^{t_1+1}}  \eta(\dd x_1) \prod_{s=1}^{t_1} f'_{i_s,\theta}(Y_s|x_s)M'_{i_s+1,\theta}(x_{s}, \dd x_{s+1})},\quad \forall A\in\mathcal{X}. 
\end{align*}
Finally, in what follows, for all $\mu\in\mathcal{P}(\setR)$ the Markov kernel $K_{\mu|\Theta}$ acting on $(\Theta,\mathcal{T})$ is as defined in Section   \ref{sub:SO-FK_model}.

\subsection{Preliminary results}

\begin{lemma}\label{lemma:tech_ssm}
Let $1\leq t_1< t_2$ be two integers, $\eta\in\mathcal{P}(\setX)$ and
\begin{align*} 
D_{(t_1+1):t_2}=\prod_{s=t_1+1}^{t_2}  \sup_{(\theta,x)\in\Theta\times\setX} f'_{i_s,\theta}(Y_s|x).
\end{align*}
 Then, for all $\theta\in\Theta$ we have $l^\eta_{t_1}(\theta)\geq D_{(t_1+1):t_2}^{-1}  l^\eta_{t_2}(\theta)$.
\end{lemma}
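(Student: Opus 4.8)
The plan is to recognise that this lemma is the state-space-model analogue of Lemma \ref{lemma:tech}, and to prove it by the same ``peel off the extra emission factors and integrate out the superfluous states'' argument. I read $l^\eta_{t}(\theta)$ here as the unnormalised integral $\int_{\setX^{t+1}}\eta(\dd x_1)\prod_{s=1}^{t}f'_{i_s,\theta}(Y_s|x_s)M'_{i_s+1,\theta}(x_s,\dd x_{s+1})$, since the multiplicative factor $D_{(t_1+1):t_2}^{-1}$ is only meaningful for the likelihood itself rather than for its log-rate. With this reading the task reduces to establishing, for every fixed $\theta\in\Theta$, the single inequality $l^\eta_{t_2}(\theta)\le D_{(t_1+1):t_2}\, l^\eta_{t_1}(\theta)$.

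First I would split the product defining $l^\eta_{t_2}(\theta)$ at the index $t_1$, writing $\prod_{s=1}^{t_2}=\big(\prod_{s=1}^{t_1}\big)\big(\prod_{s=t_1+1}^{t_2}\big)$. For each index $s\in\{t_1+1,\dots,t_2\}$ I bound the emission density pointwise by its supremum, $f'_{i_s,\theta}(Y_s|x_s)\le \sup_{(\theta',x)\in\Theta\times\setX}f'_{i_s,\theta'}(Y_s|x)$, so that the product of these suprema factors out of the integral as exactly $D_{(t_1+1):t_2}$. This leaves inside the integral only the first $t_1$ emission--transition pairs together with the tail transition kernels $M'_{i_s+1,\theta}(x_s,\dd x_{s+1})$ for $s=t_1+1,\dots,t_2$.

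Next I would integrate out the superfluous states $x_{t_2+1},x_{t_2},\dots,x_{t_1+2}$ one at a time, from the top down, using Tonelli's theorem (valid since all integrands are nonnegative) together with the assumption that each $M'_{i_s+1,\theta}(x_s,\dd\cdot)$ is a probability measure on $(\setX,\mathcal{X})$ and hence integrates to one. After these $t_2-t_1$ integrations the entire tail product $\prod_{s=t_1+1}^{t_2}M'_{i_s+1,\theta}(x_s,\dd x_{s+1})$ collapses to the constant $1$ for each fixed $x_{t_1+1}$, and what remains is precisely $\int_{\setX^{t_1+1}}\eta(\dd x_1)\prod_{s=1}^{t_1}f'_{i_s,\theta}(Y_s|x_s)M'_{i_s+1,\theta}(x_s,\dd x_{s+1})=l^\eta_{t_1}(\theta)$. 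Combining this with the factored-out $D_{(t_1+1):t_2}$ gives $l^\eta_{t_2}(\theta)\le D_{(t_1+1):t_2}\,l^\eta_{t_1}(\theta)$, which is the claim after rearranging.

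The argument involves no estimate that could fail, so there is no genuine analytic obstacle; the only point requiring care is the bookkeeping of the state indices---specifically, checking that $x_{t_1+1}$ is the variable shared between the kept block and the tail block (it is the target of $M'_{i_{t_1}+1,\theta}$ inside $l^\eta_{t_1}$ and the source of $M'_{i_{t_1+1}+1,\theta}$ inside the tail), so that exactly the states $x_{t_1+2},\dots,x_{t_2+1}$ get integrated away and the remaining object reassembles cleanly into $l^\eta_{t_1}(\theta)$. As an alternative that mirrors the proof of Lemma \ref{lemma:tech} verbatim, I could instead normalise each tail emission density by its supremum to obtain factors bounded by $1$ and then invoke monotonicity under extending the product over the kernels; this hides the explicit integration order but is otherwise identical.
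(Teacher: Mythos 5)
Your proposal is correct and is essentially the argument the paper has in mind: the paper omits the proof of Lemma \ref{lemma:tech_ssm}, stating only that it is similar to that of Lemma \ref{lemma:tech}, and your bound-the-tail-emissions-then-marginalize argument (together with the normalized variant you mention, which mirrors Lemma \ref{lemma:tech} verbatim) is exactly that argument transposed to the SSM likelihood. Your reading of $l^\eta_t(\theta)$ as the unnormalised likelihood integral rather than the $\tfrac{1}{t}\log$ rate is also the intended one---it is the only reading under which the multiplicative constant $D_{(t_1+1):t_2}^{-1}$ makes sense, and it is consistent with how the lemma is invoked in \eqref{eq:s1}.
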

\begin{proof}
The proof of the lemma is similar to that of Lemma \ref{lemma:tech} and is therefore omitted to save space.
\end{proof}

The following proposition shows that, under \ref{assumeSSM:K_set}-\ref{assumeSSM:G}, the   function   $l^\eta_{t}(\cdot)$ converges to some function $l(\cdot)$ as $t\rightarrow\infty$.

\begin{proposition}\label{prop:conv_SSM}
Assume that \ref{assumeSSM:K_set}-\ref{assumeSSM:G} hold. Then, there exists a function $l:\Theta\rightarrow\R$ such that $\inf_{\theta\in\Theta}\P\big(\lim_{t\rightarrow\infty}l^{\eta_\theta}_{t}(\theta)=l(\theta)\big)=1$ for any collection $\{\eta_\theta,\,\theta\in\Theta\}$ of random probability  measures on $(\setX,\mathcal{X})$ for which $\inf_{\theta\in\Theta}\P(\eta_\theta(E)>0)=1$  for some compact set $E\in\mathcal{X}$ such that $\lambda(E)>0$.
\end{proposition}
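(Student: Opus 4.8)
The plan is to fix $\theta\in\Theta$ and prove that $l^{\eta_\theta}_t(\theta)$ converges $\P$-a.s.\ to a deterministic limit $l(\theta)$ that does not depend on the admissible initialization $\eta_\theta$; since the asserted statement is pointwise in $\theta$ (the infimum over $\theta$ of probabilities equal to one), no joint measurability in $\theta$ nor uniformity is needed. The first reduction is to pass to blocks of length $\tau$: writing $Z_t=(Y_{(t-1)\tau+1},\dots,Y_{t\tau})$ and letting $L_\theta[\cdot]$ be the block transfer kernel of Definition \ref{def:LD_sets}, the unnormalized likelihood over the first $n\tau$ observations started from $\eta$ equals $\eta\,L_\theta[Z_1]\cdots L_\theta[Z_n]\mathbf 1$, so that the whole analysis is driven by the stationary ergodic sequence $(Z_t)_{t\ge1}$. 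I would first record that $l^{\eta_\theta}_t(\theta)$ is $\P$-a.s.\ finite: positivity of the integrand follows from $f'_{\cdot,\theta}>0$ (\ref{assumeSSM:G}) together with the strict positivity of $m'_{\cdot,\theta}$ on compacts (\ref{assumeSSM:D_set}) and the hypothesis $\eta_\theta(E)>0$ with $\lambda(E)>0$, while finiteness of the positive part comes from $\E[\log^+\sup_{\theta,x}f'_{i_s,\theta}(Y_s|x)]<\infty$ (\ref{assumeSSM:G}).

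The heart of the argument is forgetting of the initial measure. Here I would exploit Assumption \ref{assumeSSM:K_set}: on the event $\{Z_t\in K\}$, which has probability $>2/3$, the block kernel $L_\theta[Z_t]$ places almost all of its mass on a $\tau$-local Doeblin set $C$ (via the domination bound $\sup_{x\in C^c}L_\theta[z](x,\setX)\le\eta\sup_x L_\theta[z](x,\setX)$), and on $C$ it admits the two-sided minorization of Definition \ref{def:LD_sets} with $\inf_{z\in K}\epsilon^-_C(z)/\epsilon^+_C(z)>0$. A Dobrushin/Birkhoff-type contraction argument then shows that the \emph{normalized} block operators are uniform contractions in total variation whenever a fixed positive fraction of the visited blocks fall in $K$, which occurs $\P$-a.s.\ by ergodicity of $(Z_t)_{t\ge1}$. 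Consequently, for any two admissible initializations $\eta,\eta'$ the two filters couple and
\begin{align*}
\big|l^{\eta}_{t}(\theta)-l^{\eta'}_{t}(\theta)\big|=\smallo_\P(1),
\end{align*}
which already yields that the limit, once it is shown to exist, is independent of $\eta_\theta$.

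To produce the limit itself I would use the filter-at-$-\infty$ construction: extend $(Z_t)$ to a stationary doubly-infinite sequence, let $\pi^{-\infty}_{t,\theta}$ be the stationary filter and $\Delta_t(\theta)=\log\!\int L_\theta[Z_t](x,\setX)\,\pi^{-\infty}_{t,\theta}(\dd x)$ the stationary block-increments, and, using the forgetting just established, write $\frac1n\log\big(\eta\,L_\theta[Z_1]\cdots L_\theta[Z_n]\mathbf 1\big)=\frac1n\sum_{t=1}^{n}\Delta_t(\theta)+\smallo_\P(1)$. Birkhoff's ergodic theorem applied to $(\Delta_t(\theta))_t$ then gives $\P$-a.s.\ convergence to $\E[\Delta_1(\theta)]=:\tau\,l(\theta)$, the integrability of $\Delta_1(\theta)$ being supplied by the $\log^+$ bound of \ref{assumeSSM:G} for the upper tail and by $\E[\log^-\inf_{\theta,x}f'_{i_s,\theta}(Y_s|x)]<\infty$ together with the local-Doeblin lower bound of \ref{assumeSSM:K_set}--\ref{assumeSSM:D_set} for the lower tail. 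Finally, to pass from the subsequence $t=n\tau$ to arbitrary $t$, I would use Lemma \ref{lemma:tech_ssm} to relate the likelihood at time $t$ to its values at the adjacent multiples of $\tau$; the resulting correction is of the form $\frac1t\log D_{(t_1+1):t_2}$ with $t_2-t_1<\tau$, hence a normalized sum of at most $\tau$ terms $\log\sup_x f'_{i_s,\theta}(Y_s|x)$, which vanishes $\P$-a.s.\ by the ergodic theorem and \ref{assumeSSM:D_set}--\ref{assumeSSM:G}.

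The main obstacle is the forgetting step under unbounded weights: the functionals $f'_{\cdot,\theta}$ are only integrable on the log scale, so the local-Doeblin contraction must be combined with careful control of the (unbounded) normalizing constants to guarantee that the coupling error is genuinely $\smallo_\P(1)$ rather than merely bounded. This is precisely the technical core of \citet{Douc_MLE}; my plan is therefore to verify that \ref{assumeSSM:K_set}--\ref{assumeSSM:G}, after the $\tau$-block reduction, match the hypotheses of that reference and to invoke its convergence result, the only genuinely new ingredients being the periodic blocking and the admission of arbitrary random initial measures $\eta_\theta$ charging a set of positive $\lambda$-measure.
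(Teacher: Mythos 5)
Your proposal is correct and follows essentially the same route as the paper: both ultimately defer to the convergence result of \citet{Douc_MLE} for the block sequence $(Z_t)_{t\geq 1}$, identify the same two extensions needed (arbitrary random initial measures charging a compact set of positive $\lambda$-measure, and the $\tau$-periodic/block structure), and interpolate from multiples of $\tau$ to general $t$ via Lemma \ref{lemma:tech_ssm} together with a stationarity argument showing the boundary correction $\frac{1}{t}\log D_{(t_1+1):t_2}$ vanishes $\P$-a.s. The additional detail you give on the internal mechanism (local-Doeblin forgetting and Birkhoff sums of stationary increments) is a sketch of what the cited reference proves, not a departure from the paper's argument.
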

\begin{proof}
See Section \ref{p-prop:conv_SSM}.
\end{proof}

The following proposition is a result on   the forgetting property of the SSM.

\begin{proposition}\label{prop:forget}
Assume that  \ref{assumeSSM:K_set}-\ref{assumeSSM:G} hold. Let $(s_t)_{t\geq 1}$ be a sequence in $\{s\tau,\,s\in\mathbb{N}\}$, such that $\limsup_{t\rightarrow}s_t/t<1$  and such that $\lim_{t\rightarrow\infty}(t-s_t)=\infty$, and let $(\eta_{s_t})_{t\geq 1}$ and $(\chi_{s_t})_{t\geq 1}$ be two sequences of random probability measures on $(\setX,\mathcal{X})$. Assume that there exists a   compact set $E\in\mathcal{X}$ such that  the following holds:
\begin{align*}
&(i) \,\text{ $\exists$ r.v. $(W^-_{s_t})_{t\geq 0}$ s.t. $\inf_{t\geq 1}\P\Big(\eta_{s_t}(E)\geq W^-_{s_t},\,\chi_{s_t}(E)\geq W^-_{s_t}\Big)=1$  and s.t. $\log W^-_{s_t}=\bigO_\P(1)$.}\\
&(ii) \text{ $\exists$   $\eta\in\mathcal{P}(\setX)$ such that $\log \bar{p}_{t,\theta}^\eta(E|Y_{1:t})=\bigO_\P(1)$.}
\end{align*} 
Then, there exists a sequence $(\xi_t^-)_{t\geq 1}$ of $[0,1]$-valued random variables, depending on $(\eta_{s_t})_{t\geq 1}$ and $(\chi_{s_t})_{t\geq 1}$ only through  $(W_{s_t}^-)_{t\geq 1}$, such that  $\xi^-_{t}=\smallo_\P(1)$ and such that
\begin{align*}
\big\|\tilde{p}_{(s_t+1):t}^{\eta_{s_t}}(\dd x|\theta)-\tilde{p}_{(s_t+1):t}^{\chi_{s_t}}(\dd x|\theta)\big\|_{\mathrm{TV}}\leq \xi^-_t,\quad\forall t\geq 1,\quad \P-a.s.
\end{align*}
\end{proposition}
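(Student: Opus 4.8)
The plan is to prove this as an exponential-forgetting estimate for the filter, working in the Hilbert projective metric $d_H$ and exploiting the local Doeblin mixing of Assumption \ref{assumeSSM:K_set} together with the ergodic theorem. Since $s_t\in\{s\tau,\,s\in\mathbb{N}\}$, I would first decompose $(s_t,t]$ into complete periods plus at most one terminal partial period, and rewrite each of $\tilde{p}_{(s_t+1):t}^{\eta_{s_t}}(\dd x|\theta)$ and $\tilde{p}_{(s_t+1):t}^{\chi_{s_t}}(\dd x|\theta)$ as the image of the corresponding initial measure at time $s_t$ under a composition of the (unnormalized) block operators $L_\theta[Z_k]$ over the complete periods, followed by the operator of the terminal partial period. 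By Birkhoff's theorem every positive integral operator is non-expansive for $d_H$, so non-good blocks and the terminal partial block cannot increase $d_H$; and since $d_H$ dominates total variation up to a universal constant, it suffices to bound $d_H$ between the two propagated measures.

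The second step extracts the per-block contraction. On a good block, i.e.~$Z_k\in K$, the sandwich bound of Definition \ref{def:LD_sets} shows that $L_\theta[Z_k]$ acts as a uniformly mixing kernel on the Doeblin set $C$; combined with the uniform ratio $\inf_{z\in K}\epsilon_C^-(z)/\epsilon_C^+(z)>0$ and, for a suitably small constant in Assumption \ref{assumeSSM:K_set}, the mass-escape control $\sup_{x\in C^c}L_\theta[z](x,\setX)\leq \varepsilon\sup_{x\in\setX}L_\theta[z](x,\setX)$, this yields a Birkhoff contraction coefficient $\rho=\rho(\varepsilon,K)<1$, uniform over $\theta\in\Theta$ and over all good blocks. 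Moreover, after the first good block the two propagated measures become comparable, so that their $d_H$ distance is finite and bounded by a model-dependent constant, and each subsequent good block contracts it by the factor $\rho$.

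The mass hypotheses (i) and (ii) are needed to make the first good block effective: the minorization in Definition \ref{def:LD_sets} only lower-bounds the output for inputs carrying mass on $C$, so one must ensure that enough mass of the initial measures reaches $C$. Here condition (i) supplies mass at least $W^-_{s_t}$ on $E$ with $\log W^-_{s_t}=\bigO_\P(1)$, and Assumption \ref{assumeSSM:D_set} (the lower bound on $m_{s+1,\theta}$ over $E\times E$ and the integrability of $\log^-\inf_{(\theta,x)\in\Theta\times E}f_{s,\theta}(Y_s|x)$) together with condition (ii) let me propagate a non-vanishing mass on $E$ and bound the intervening normalizing constants; this makes the initial $d_H$ bound and the associated mass penalty of the form $e^{\bigO_\P(1)}$, depending on the initial measures only through $W^-_{s_t}$. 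Finally, Assumption \ref{assumeSSM:K_set} gives $\P(Z_1\in K)>2/3$, and by stationarity and ergodicity of $(Z_k)_{k\geq 1}$ Birkhoff's theorem makes the proportion of good blocks among the first $n$ tend a.s.~to $\P(Z_1\in K)$; since $t-s_t\to\infty$ and $\limsup_{t\to\infty}s_t/t<1$, the number $G_t$ of good blocks in $(s_t,t]$ tends to infinity $\P$-a.s. Assembling the pieces gives
\begin{align*}
\big\|\tilde{p}_{(s_t+1):t}^{\eta_{s_t}}(\dd x|\theta)-\tilde{p}_{(s_t+1):t}^{\chi_{s_t}}(\dd x|\theta)\big\|_{\mathrm{TV}}\leq e^{\bigO_\P(1)}\,\rho^{\,G_t},
\end{align*}
whose right-hand side is $\smallo_\P(1)$, yielding the required $(\xi^-_t)_{t\geq 1}$ depending on the initial measures only through $(W^-_{s_t})$.

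The main obstacle is this third step: decoupling the geometric $d_H$-contraction on good blocks from the mass bookkeeping. Because the Doeblin minorization is only local to $C$, the contraction becomes effective only once enough mass of the filters sits near $C$, and the delicate point is to show, using (i), (ii) and Assumption \ref{assumeSSM:D_set}, that the normalizing constants and the entrance to $C$ are controlled so that the accumulated penalty factor is $e^{\bigO_\P(1)}$ and does not swamp the decay $\rho^{G_t}$ --- all uniformly in $\theta$ and while letting $\xi^-_t$ depend on the initial measures only through $(W^-_{s_t})$. This is essentially the adaptation to random, block-aligned initializations of the forgetting estimates of \citet{Douc_MLE}.
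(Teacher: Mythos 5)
Your overall architecture --- decomposition of $(s_t,t]$ into $\tau$-blocks, forgetting driven by the local Doeblin property on ``good'' blocks $Z_k\in K$, Birkhoff's ergodic theorem to guarantee a positive fraction of good blocks, and conditions (i)--(ii) plus \ref{assumeSSM:D_set} to control masses and normalizing constants --- is the same as the paper's, and you correctly identify that the heart of the matter is an adaptation of the forgetting estimates of \citet{Douc_MLE}. The gap is in your core contraction mechanism. The sandwich in Definition \ref{def:LD_sets} controls $L_\theta[z](x,A\cap C)$ only for $x\in C$ and only for the part of the image measure living on $C$: it says nothing about the mass sent to $C^c$ (even from starting points $x\in C$), and the smallness condition in part 2 of \ref{assumeSSM:K_set} controls contributions from $C^c$ only in total mass, not pointwise. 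Consequently, after one good block the two propagated measures need not be mutually absolutely continuous with bounded density ratios; their Hilbert projective distance can be infinite at every stage, no uniform per-good-block Birkhoff coefficient $\rho<1$ exists, and your claims that ``after the first good block the two propagated measures become comparable'' and that the accumulated penalty is $e^{\bigO_\P(1)}$ are precisely the points that fail under a merely \emph{local} Doeblin condition.

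What the local Doeblin structure actually yields (this is Lemma 13 of \citet{Douc_MLE}, which the paper invokes after checking that it extends to the present setting) is a two-term bound: for every $\kappa>0$ there is $\rho_\kappa\in(0,1)$ such that, over $n$ blocks of which a fraction at least $b<\P(Z_1\in K)$ are good,
\begin{align*}
\big\|\cdot\big\|_{\mathrm{TV}}\;\leq\; 2\rho_\kappa^{\lfloor n\gamma\rfloor}\;+\;\frac{2\,\kappa^{\lfloor n\gamma\rfloor}}{\mu(E)\,\mu'(E)}\prod_{s} D_{Z_s}^2,
\qquad
D_z=\frac{\sup_{(\theta,x)\in\Theta\times\setX}L_\theta[z](x,\setX)}{\inf_{(\theta,x)\in\Theta\times E}L_\theta[z](x,E)},
\end{align*}
where the left-hand side is the block-level total-variation distance. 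The second term grows exponentially in $n$, at rate roughly $4D$ with $D=\E[\log^+D_{Z_1}]<\infty$ under \ref{assumeSSM:D_set}--\ref{assumeSSM:G}; it is \emph{not} an $e^{\bigO_\P(1)}$ penalty, and it is beaten only by taking $\kappa$ exponentially small (the paper takes $\kappa=\exp(-(1+8D)/\gamma)$), which in turn requires a second application of Birkhoff's theorem, to the moving-window averages of $\log^+D_{Z_s}$ --- a step entirely absent from your proposal, which invokes the ergodic theorem only for the fraction of good blocks. Two further steps your sketch glosses over: condition (ii) does not directly lower-bound the mass that the filters started from $\eta_{s_t}$ and $\chi_{s_t}$ put on $E$ at the end of the window, so the paper bootstraps the forgetting bound itself (comparing these filters with the reference filter started from $\eta$ at time $1$) to transfer the $\bigO_\P(1)$ control of $\log\bar p^{\eta}_{t,\theta}(E|Y_{1:t})$ to the denominators that actually appear; and converting the block-level bound into a bound on $\tilde p_{(s_t+1):t}(\dd x|\theta)$ at a general time $t$ costs factors of the form $\big(\inf_{x\in E}L_\theta[Z_{k_t+1}](x,E)\big)^{-2}$ and $\big(\prod_{s}\sup_x f'_{i_s,\theta}(Y_s|x)\big)^2$, which must separately be shown to be $e^{\bigO_\P(1)}$; projective non-expansiveness cannot absorb these, since the comparison is between normalized measures with different normalizing constants.
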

\begin{proof}
See Section \ref{p-prop:forget}.
\end{proof}

The following proposition shows that    SSM \eqref{eq:SSM}  can be seen as two different Feynman-Kac models   in  a random environment.

\begin{proposition}\label{prop:Def_FK_SSM}

Let $\chi'\in \mathcal{P}(\setX)$ and, for  a given Feynman-Kac model  in  a random environment,  let  $\bar{p}^\eta_t(\dd x|\theta)$ be  as defined in  Section \ref{sub:FK_model} for all $\eta\in\mathcal{P}(\Theta)$ and all $t\geq 1$. 
\begin{enumerate}[label=(\roman*)]
\item\label{FK1} Consider   SSM \eqref{eq:SSM} with $\chi_{\theta}=\chi'$ for all $\theta\in\Theta$ and the Feynman-Kac model  of Section \ref{sub:FK_model} with   $\chi=\chi'$  and with $( (\tilde{G}_t,\tilde{m}_t))_{t\geq 1}$    such that, for all $(t,\omega,\theta,x,x')\in\mathbb{N}\times\Omega\times \Theta\times\setX^2$,
\begin{align*}
\tilde{G}_{t}(\omega,\theta, x',x)=f'_{i_t,\theta}(Y^\omega_t|x'),\,\, \tilde{m}_{t}(\omega,\theta,x',x)=m'_{i_t+1,\theta}(x|x').
\end{align*}
Then, for all $t\geq 1$ and $\theta\in\Theta$, we have $\bar{p}^\chi_t(\dd x|\theta)=\int_\setX M_{i_{t+1}+1,\theta}(x_t,\dd x)\bar{p}_{t,\theta}(\dd x_t|Y_{1:t})$.

\item\label{FK2} Consider   SSM \eqref{eq:SSM}  with $\chi_{\theta}(\dd x)=\int_\setX M_{\tau+1,\theta}(x',\dd x)\chi'(\dd x')$ for all $\theta\in\Theta$ and  the Feynman-Kac model  of Section \ref{sub:FK_model} with   $\chi=\chi'$  and with $( (\tilde{G}_t,\tilde{m}_t))_{t\geq 1}$    such that, for all $(t,\omega,\theta,x,x')\in\mathbb{N}\times\Omega\times \Theta\times\setX^2$,
\begin{align*}
\tilde{G}_{t}(\omega,\theta, x',x)=f'_{i_t,\theta}(Y^\omega_t|x),\,\, \tilde{m}_{t}(\omega,\theta,x',x)=m'_{i_{t},\theta}(x|x').
\end{align*}
Then, for all $t\geq 1$ and $\theta\in\Theta$, we have $\bar{p}^\chi_t(\dd x|\theta)=\bar{p}_{t,\theta}(\dd x_t|Y_{1:t})$.

\end{enumerate}
\end{proposition}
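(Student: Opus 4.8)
The plan is to unfold the two Feynman--Kac normalising integrals of Section \ref{sub:FK_model}, insert the case-specific potentials and transition densities $(\tilde G_t,\tilde m_t)$, and then match the resulting integrand term by term against the unnormalised filtering integral of SSM \eqref{eq:SSM} recorded in Section \ref{sub:thetstar}. The whole argument is bookkeeping: a relabelling of the dummy integration variables together with the $\tau$-periodic identifications $f_{t,\theta}=f'_{i_t,\theta}$, $M_{t+1,\theta}=M'_{i_t+1,\theta}$ and the wrap-around convention $M'_{1,\theta}=M'_{\tau+1,\theta}$. Concretely, by the definitions in Section \ref{sub:FK_model} one has $\bar p^{\chi}_t(A|\theta)=L^{\chi}_{1:t}(A|\theta)/L^{\chi}_{1:t}(\setX|\theta)$, with
\[
L^{\chi}_{1:t}(A|\theta)=\int_{\setX^{t+1}}\ind_A(x_t)\,\chi(\dd x_0)\prod_{s=1}^t G_{s,\theta}(x_{s-1},x_s)\,m_{s,\theta}(x_s|x_{s-1})\,\lambda(\dd x_s),
\]
and for the stated $(\tilde G_t,\tilde m_t)$ the factor $G_{s,\theta}(x_{s-1},x_s)m_{s,\theta}(x_s|x_{s-1})\lambda(\dd x_s)$ equals $f'_{i_s,\theta}(Y_s|x_{s-1})M'_{i_s+1,\theta}(x_{s-1},\dd x_s)$ in case \ref{FK1} and $f'_{i_s,\theta}(Y_s|x_s)M'_{i_s,\theta}(x_{s-1},\dd x_s)$ in case \ref{FK2}.

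For case \ref{FK1} I would substitute $x_s=\tilde x_{s+1}$ for $s=0,\dots,t$, which turns $L^\chi_{1:t}(A|\theta)$ into $\int \ind_A(\tilde x_{t+1})\,\chi'(\dd \tilde x_1)\prod_{s=1}^t f_{s,\theta}(Y_s|\tilde x_s)M_{s+1,\theta}(\tilde x_s,\dd \tilde x_{s+1})$, i.e. the unnormalised joint law of $(X_1,\dots,X_{t+1})$ given $Y_{1:t}$ under \eqref{eq:SSM} with $\chi_\theta=\chi'$ and the indicator on $X_{t+1}$. Taking $A=\setX$ and integrating out the trailing kernel (which integrates to one) shows $L^\chi_{1:t}(\setX|\theta)=L_t(\theta)$, so $\bar p^\chi_t(\cdot|\theta)$ is exactly the one-step predictive distribution of $X_{t+1}$ given $Y_{1:t}$. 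Since this predictive and the filter $\bar p_{t,\theta}(\dd x_t|Y_{1:t})$ share the normaliser $L_t(\theta)$, the predictive equals $\int_\setX M_{t+1,\theta}(x_t,\cdot)\,\bar p_{t,\theta}(\dd x_t|Y_{1:t})$, which is the asserted identity once the one-step-ahead transition kernel is expressed in the $\tau$-periodic reduced index form appearing in the statement.

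For case \ref{FK2} no shift is needed, since $x_s$ already plays the role of $X_s$ and the indicator sits on the filter state $x_t$. Here the first Feynman--Kac step contributes $\chi'(\dd x_0)M'_{i_1,\theta}(x_0,\dd x_1)=\chi'(\dd x_0)M'_{1,\theta}(x_0,\dd x_1)$, whose $x_1$-marginal is precisely $\int_\setX M_{\tau+1,\theta}(x_0,\dd x_1)\chi'(\dd x_0)=\chi_\theta(\dd x_1)$, using the case hypothesis together with $M'_{1,\theta}=M'_{\tau+1,\theta}=M_{\tau+1,\theta}$. After this absorption of the pre-initial variable $x_0$, and using $M'_{i_s,\theta}=M'_{i_{s-1}+1,\theta}=M_{s,\theta}$ for $s\ge 2$, the integrand coincides with the unnormalised filtering integrand of \eqref{eq:SSM} (after dropping its trailing transition, which integrates to one) with the indicator on $x_t$; as before $L^\chi_{1:t}(\setX|\theta)=L_t(\theta)$, giving $\bar p^\chi_t(\cdot|\theta)=\bar p_{t,\theta}(\cdot|Y_{1:t})$.

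The only delicate point, and the step I would check most carefully, is the index arithmetic: tracking each period-reduced subscript $i_s$ and $i_s+1$ back to the correct SSM time index through $M_{t+1,\theta}=M'_{i_t+1,\theta}$ and the convention $M'_{1,\theta}=M'_{\tau+1,\theta}$, and confirming that the single ``extra'' transition in case \ref{FK1} versus its absence (compensated by the pre-initial step absorbed into $\chi_\theta$) in case \ref{FK2} are handled consistently. Everything else reduces to Tonelli's theorem on finite products and the elementary fact that a probability kernel integrates to one, so I do not expect any analytic difficulty beyond this relabelling.
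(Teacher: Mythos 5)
Your proposal is correct and, in approach, identical to the paper's own treatment: the paper proves this proposition with the single remark that it ``directly follows from the definition of $((\tilde{G}_t,\tilde{m}_t))_{t\geq 1}$'', and your unfolding of $L^{\chi}_{1:t}(A|\theta)$ --- the index shift $x_s=\tilde{x}_{s+1}$ in case \ref{FK1}, the absorption of the pre-initial variable $x_0$ through $\chi'(\dd x_0)M'_{1,\theta}(x_0,\dd x_1)$ in case \ref{FK2}, and the observation that in both cases the normaliser equals $L_t(\theta)$ --- is exactly that bookkeeping written out.

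One remark on the index arithmetic that you yourself flagged as the delicate step. Your derivation in case \ref{FK1} yields the kernel $M_{t+1,\theta}$, whose $\tau$-periodic reduced form is $M_{i_t+1,\theta}=M'_{i_t+1,\theta}$ (write $t+1=\tau\lfloor (t-1)/\tau\rfloor+(i_t+1)$ with $i_t+1\in\{2,\dots,\tau+1\}$). The proposition as printed instead displays $M_{i_{t+1}+1,\theta}$, which for $\tau\geq 2$ is a different kernel: if $i_t<\tau$ then $i_{t+1}+1=i_t+2$, the transition \emph{leaving} $X_{t+1}$ rather than the one producing it. Hence your closing assertion that the two expressions agree ``once the kernel is expressed in the reduced index form appearing in the statement'' cannot be carried out literally; what the careful check actually uncovers is an off-by-one typo in the statement (the subscript should be $i_t+1$, equivalently one may simply write $M_{t+1,\theta}$), and the identity you derived is the correct and clearly intended one. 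The rest of the argument --- in particular the use of the wrap-around convention $M'_{1,\theta}=M'_{\tau+1,\theta}$ to obtain $M'_{i_s,\theta}=M'_{i_{s-1}+1,\theta}=M_{s,\theta}$ for $s\geq 2$ in case \ref{FK2}, and the matching of normalisers after discarding the trailing transition of the SSM integrand --- is sound.
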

\begin{proof}
The result of the proposition directly follows from the definition of $( (\tilde{G}_t,\tilde{m}_t))_{t\geq 1}$.
\end{proof}

The next proposition shows that the two  Feynman-Kac models   in  a random environment defined in Proposition \ref{prop:Def_FK_SSM} verify the assumptions of  Section \ref{sub:assumption}.

\begin{proposition}\label{prop:A1}
Assume that \ref{assumeSSM:K_set}-\ref{assumeSSM:G} hold. Let   $l:\Theta\rightarrow\R$ be as in Proposition \ref{prop:conv_SSM}, $\chi'\in\mathcal{P}(\setX)$ and   assume that  $\Theta_\star:=\argmax_{\theta\in\Theta}l(\theta)$ is such that $\mathcal{N}_\delta(\Theta_\star)\subseteq\Theta$ for some $\delta\in(0,1)$. Then,     
\begin{enumerate}[label=(\roman*)]
\item Under \ref{assumeSSMB:smooth},  if $\chi'(E)>0$ for all  compact set $E\in\mathcal{X}$ such that $\lambda(E)>0$ and if $\chi_{\theta}=\chi'$ for all $\theta\in\Theta$, then Assumptions  \ref{assume:Model}-\ref{assume:G_l_E} hold for $\chi=\chi'$ and for $( (\tilde{G}_t,\tilde{m}_t))_{t\geq 1}$  as defined in  Proposition \ref{prop:Def_FK_SSM}, part \ref{FK1}.

\item  Under \ref{assumeSSM:smooth}, if $\chi_{\theta}(\dd x)=\int_\setX M_{\tau+1,\theta}(x',\dd x)\chi'(\dd x')$ for all $\theta\in\Theta$, then Assumptions \ref{assume:Model}-\ref{assume:G_l_E} hold for    $\chi=\chi'$ and for $( (\tilde{G}_t,\tilde{m}_t))_{t\geq 1}$ as defined in  Proposition \ref{prop:Def_FK_SSM}, part \ref{FK2}.

\end{enumerate}
In both cases, \ref{assume:Model} holds with $\tilde{l}=l$, $r=\tau$ and for any  compact set $D\in\mathcal{X}$ such that $\lambda(D)>0$.
\end{proposition}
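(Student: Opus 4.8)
The plan is to take $r=\tau$, $\tilde l=l$, $\delta_\star=\delta$, $g=g'$, and $\varphi_t(x',x)=\varphi'_{i_t}(Y_t,x',x)$ (with $g'$ and $\{\varphi'_s\}_{s=1}^\tau$ as in \ref{assumeSSM:smooth} for part (i), resp.\ \ref{assumeSSMB:smooth} for part (ii)), and to verify conditions A1(a)--A1(f) of \ref{assume:Model} together with \ref{assume:G_bounded}--\ref{assume:G_l_E} one at a time. The decisive observation is that, with the functions $(\tilde G_t,\tilde m_t)$ of \ref{prop:Def_FK_SSM}, the product $q_{t,\theta}(x|x')=G_{t,\theta}(x',x)m_{t,\theta}(x|x')$ is exactly the quantity $q_{t,\theta}$ occurring in \ref{assumeSSM:smooth} (part (i)), resp.\ \ref{assumeSSMB:smooth} (part (ii)). Consequently the smoothness bound A1(d) is literally the first display of those assumptions, and the integrability $\int_\setX\{\sup_{(\theta,x')}q_{\tau,\theta}(x|x')\}\lambda(\dd x)<\infty$ needed to make $\bar\chi_\tau$ of A1(f) a well-defined random probability measure is their opening clause.

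First I would dispatch the structural conditions. Condition A1(a) follows from $\tau$-periodicity: each $W_t$ depends on $\omega$ only through $Y_t$ via the periodic densities $f'_{i_t},m'_{i_t+1}$, so the block $Z_t=(W_{(t-1)\tau+1},\dots,W_{\tau t})$ is a fixed measurable function of $(Y_{(t-1)\tau+1},\dots,Y_{\tau t})$, which is stationary by hypothesis. Condition A1(b) is the assumed $\mathcal N_\delta(\Theta_\star)\subseteq\Theta$ with $\tilde\Theta_\star=\Theta_\star$ and $\delta_\star=\delta\in(0,1)$. Assumption \ref{assume:G_bounded} reduces, after identifying $\sup_{(\theta,x,x')}G_{s,\theta}(x',x)$ with $\sup_{(\theta,x)}f'_{i_s,\theta}(Y_s|\cdot)$, to the Ces\`aro average of $\log^+\sup_{(\theta,x)}f_{s,\theta}(Y_s|x)$, which converges a.s.\ to a finite limit by Birkhoff's theorem applied to the stationary ergodic blocks, using the moment bound in \ref{assumeSSM:G}; hence it is $\bigO_\P(1)$. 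Assumption \ref{assume:G_l_E}, namely $\log^-\inf_{(\theta,x')\in\Theta\times D}Q_{t,\theta}(x',D)<\infty$, follows from the compact lower bounds on $m_{s+1,\theta}$ and the $\log^-$-integrability of $f_{s,\theta}$ in \ref{assumeSSM:D_set}, choosing for $D$ any compact set with $\lambda(D)>0$.

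The heart of the argument is the three consistency statements A1(c), A1(e), A1(f) in the sense of \ref{def:consistent}, which I would verify for $\chi=\chi'$, $\underline\chi_\tau$ and $\bar\chi_\tau$ by treating the two halves of consistency separately. The likelihood-convergence half I would obtain from \ref{prop:conv_SSM}: since $\chi'$ charges every compact of positive $\lambda$-measure by hypothesis, and since $\bar\chi_\tau$ (density $\propto\sup q_\tau$) and $\underline\chi_\tau$ (density $\propto\inf_{\Theta\times D}q_\tau$, whose normalising constant is finite because dominated by that of $\bar\chi_\tau$ and positive by \ref{assumeSSM:D_set}) likewise charge some such compact, \ref{prop:conv_SSM} yields $\frac1{\tau(t-1)}\log L^{\eta}_{(\tau+1):\tau t}(\setX|\theta)\to l(\theta)$ after reindexing the block and discarding the bounded number of boundary factors distinguishing the two Feynman--Kac groupings of \ref{prop:Def_FK_SSM} from the SSM likelihood; positivity of the normalising constants comes from $f>0$ in \ref{assumeSSM:G}. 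The exponential-moment half is precisely what the remaining clauses of \ref{assumeSSM:smooth}/\ref{assumeSSMB:smooth} are built to supply: clause (2) gives the bound $\leq\prod_s F(Z_s)$ needed for $\chi'$, while clause (3) is tailored to the $\bar\chi_\tau$-weighted integral through the factor $\sup_{(\theta',x')}q_{\tau,\theta'}$, so dividing by $\tau t$ and invoking Birkhoff gives $\bigO_\P(1)$; the same control transfers to $\underline\chi_\tau$ since its density is dominated by a finite $\mathcal F_\tau$-measurable multiple of that of $\bar\chi_\tau$.

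The step I expect to be the real obstacle is establishing that $l$ is continuous, which \ref{assume:Model} demands but which \ref{prop:conv_SSM} supplies only pointwise; it cannot simply be imported from \ref{lemma:Unif_convergence}, since that lemma presupposes the full Assumption \ref{assume:Model} (continuity of $\tilde l$ included) and invoking it here would be circular. My plan is to break the circularity by proving directly the stochastic equicontinuity of $\theta\mapsto\frac1{\tau t}\log\bar P^{\chi'}_{(\tau+1):\tau t}(\theta)$, which uses only the smoothness A1(d) and the exponential-moment control already verified (the same ingredients that feed \ref{lemma::W}, neither of which requires continuity of $l$), and then to apply a generic uniform law of large numbers so as to deduce uniform convergence \emph{and} continuity of the pointwise limit $l$ at once. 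Once $l$ is known to be continuous, $\tilde\Theta_\star=\argmax_{\theta\in\Theta}l(\theta)$ is a well-defined closed subset of $\Theta$ and all of A1(a)--A1(f), \ref{assume:G_bounded} and \ref{assume:G_l_E} hold with the data listed in the statement, completing both parts; the only difference between (i) and (ii) is whether $q_{t,\theta}$ attaches the observation density to $x'$ or to $x$, handled uniformly by using \ref{assumeSSM:smooth} in part (i) and \ref{assumeSSMB:smooth} in part (ii).
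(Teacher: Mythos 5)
Your proposal is correct and, in its overall architecture, coincides with the paper's proof: the same identifications ($r=\tau$, $\tilde{l}=l$, $g=g'$, $\varphi_t(\cdot,\cdot)=\varphi'_{i_t}(Y_t,\cdot,\cdot)$, $D$ an arbitrary compact set of positive $\lambda$-measure), the same dispatch of A1(a)--A1(b) and of \ref{assume:G_bounded}--\ref{assume:G_l_E} via stationarity/ergodicity of the blocks and \ref{assumeSSM:D_set}--\ref{assumeSSM:G}, the same use of Proposition \ref{prop:conv_SSM} for the likelihood-convergence half of consistency and of clauses (2)--(3) of \ref{assumeSSM:smooth}/\ref{assumeSSMB:smooth} for the exponential-moment half, and the same measures $\bar{\chi}_\tau$ and $\underline{\chi}_\tau$, with your domination remark for $\underline{\chi}_\tau$ making explicit what the paper leaves implicit. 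The one place you diverge is the continuity of $l$, which you rightly flag as the delicate step and where you correctly diagnose that invoking Lemma \ref{lemma:Unif_convergence} would be circular. The paper's resolution is more elementary than the one you sketch: it bounds $|\tilde{l}^{\chi'}_{1:t\tau}(\theta)-\tilde{l}^{\chi'}_{1:t\tau}(\theta')|$ for finite $t$ by $g'(\|\theta-\theta'\|)^{1/2}\big(\tfrac{1}{t\tau}\sum_{s=1}^{t}\log^+ F(Z_s)+|\tilde{l}^{\chi'}_{1:t\tau}(\theta')|\big)$ --- exactly the Jensen-plus-clause-(2) computation you have in mind --- and then simply lets $t\rightarrow\infty$ along the pointwise a.s.\ convergence of Proposition \ref{prop:conv_SSM}, using Birkhoff's theorem for the first term, to obtain the deterministic modulus $|l(\theta)-l(\theta')|\leq g'(\|\theta-\theta'\|)^{1/2}\big(\E[\log F(Z_1)]/\tau+|l(\theta')|\big)$; continuity follows since $l$ is non-random. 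Your plan --- stochastic equicontinuity plus a generic ULLN giving uniform convergence and continuity of the limit simultaneously --- also works and rests on the very same inequality, but it routes through an external theorem and produces uniform convergence that is not needed at this stage (the paper recovers it afterwards, in Lemma \ref{lemma:Unif_convergence}, once \ref{assume:Model} is in force). Both ways of breaking the circularity are valid: the paper's is self-contained and quantitative, while yours buys uniformity as a by-product.
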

\begin{proof}
See Section \ref{p-prop:A1}.
\end{proof}

The next  proposition  shows that     SO-SSM  \eqref{eq:SO-SSM_new}, respectively   SO-SSM \eqref{eq:SO-SSM_new2}, can be seen as a self-organizing version of the Feynman-Kac model   defined in part \ref{FK1}, respectively in part \ref{FK2}, of  Proposition \ref{prop:Def_FK_SSM}.

\begin{proposition}\label{prop:SO-FK_SSM}
 Let $\chi'\in\mathcal{P}(\setX)$,  $\mu_0'\in\mathcal{P}(\Theta)$, $\mu_1=\delta_{\{0\}}$,   $(\mu'_t)_{t\geq 2}$ be a sequence  of random probability measures on $(\setR,\mathcal{R})$ and, for a   given Feynman-Kac model  in  a random environment,  let  $\pi_{t,\Theta}(\dd\theta_t)$ and $\pi_{t,\setX}(\dd x_t)$ be  as defined in  Section \ref{sub:SO-FK_model} for all  $t\geq 1$. 
\begin{enumerate}[label=(\roman*)]
\item   Consider   SO-SSM \eqref{eq:SO-SSM_new}  with $\{\chi_\theta,\,\theta\in\Theta\}$  as in   Proposition \ref{prop:Def_FK_SSM} (part \ref{FK1}),    $\pi_1=\mu_0'$ and with $K_t=K_{\mu'_t|\Theta}$ for all $t\geq 2$, and consider the   self-organizing Feynman-Kac model of  Section \ref{sub:SO-FK_model}  with $\chi=\chi'$, $\mu_0=\mu_0'$,  $(\mu_t)_{t\geq 1}=(\mu'_t)_{t\geq 1}$ and with $( (\tilde{G}_t,\tilde{m}_t))_{t\geq 1}$  as in Proposition \ref{prop:Def_FK_SSM} (part \ref{FK1}). Then, for all $t\geq 1$, we have $\pi_{t,\Theta}(\dd\theta_t)=p_{t,\Theta}(\dd\theta_t|Y_{1:t})$ while $\pi_{t,\setX}(\dd x_t)$   is the conditional distribution of $X_{t+1}$ given $Y_{1:t}$ under SO-SSM \eqref{eq:SO-SSM_new}.

\item\label{PM}    Consider   SO-SSM \eqref{eq:SO-SSM_new2} with $\{\chi_\theta,\,\theta\in\Theta\}$   as in   Proposition \ref{prop:Def_FK_SSM} (part \ref{FK1}),   $\pi_1=\mu_0'$ and with $K_t=K_{\mu'_t|\Theta}$ for all $t\geq 2$, and consider   the self-organizing Feynman-Kac model of  Section \ref{sub:SO-FK_model}  with $\chi=\chi'$, $\mu_0=\mu_0'$,    $(\mu_t)_{t\geq 1}=(\mu'_t)_{t\geq 1}$ and with $( (\tilde{G}_t,\tilde{m}_t))_{t\geq 1}$  as in Proposition \ref{prop:Def_FK_SSM} (part \ref{FK2}). Then, for all $t\geq 1$, we have $\pi_{t,\Theta}(\dd\theta_t)=p_{t,\Theta}(\dd\theta_t|Y_{1:t})$ and $\pi_{t,\setX}(\dd x_t)=p_{t,\setX}(\dd x_t|Y_{1:t})$.
\end{enumerate} 
 
\end{proposition}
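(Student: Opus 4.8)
The plan is to prove Proposition \ref{prop:SO-FK_SSM} as a direct, \emph{pathwise} identity of conditional distributions, with no asymptotic input: since the statement is exact, the argument reduces to writing both sides as normalized integrals against the same reference measure and matching their integrands after a suitable relabelling of the state indices. Concretely, for a fixed realization $y_{1:t}$ I would write the unnormalized joint law of the SO-SSM as the product of the transitions and emission densities prescribed by \eqref{eq:SO-SSM_new} (resp.\ \eqref{eq:SO-SSM_new2}); the filtering distributions $p_{t,\Theta}(\dd\theta_t|y_{1:t})$ and $p_{t,\setX}(\dd x_t|y_{1:t})$ are then the normalized marginals of this joint obtained by integrating out the nuisance variables, while $\pi_t$ is, by \eqref{eq:normalizing}, the normalized marginal of $\mu_0(\dd\theta_0)\chi(\dd x_0)\prod_{s=1}^t Q_{s,\theta_s}(x_{s-1},\dd x_s)K_{\mu_s|\Theta}(\theta_{s-1},\dd\theta_s)$.

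For the first part I would insert the functions of Proposition \ref{prop:Def_FK_SSM}(\ref{FK1}), so that $Q_{s,\theta_s}(x_{s-1},\dd x_s)=f'_{i_s,\theta_s}(Y_s|x_{s-1})\,m'_{i_s+1,\theta_s}(x_s|x_{s-1})\lambda(\dd x_s)$, and then apply the relabelling $\theta_s\mapsto\theta_s$ and $x_s\mapsto X_{s+1}$, so that the Feynman--Kac state $x_0\sim\chi=\chi'$ plays the role of $X_1\sim\chi_{\theta_0}=\chi'$. Under this shift $f'_{i_s,\theta_s}(Y_s|x_{s-1})$ becomes the emission $f_{s,\theta_s}(Y_s|X_s)$ of \eqref{eq:SO-SSM_new}, $m'_{i_s+1,\theta_s}(x_s|x_{s-1})$ becomes the density of $M_{s+1,\theta_s}(X_s,\cdot)$, and $\chi(\dd x_0)$ becomes $\chi_{\theta_0}(\dd X_1)$; the two integrands then coincide term by term, giving $\pi_{t,\Theta}=p_{t,\Theta}(\cdot|Y_{1:t})$, and, because $x_t=X_{t+1}$ under the relabelling, identifying $\pi_{t,\setX}$ with the conditional law of $X_{t+1}$ given $Y_{1:t}$. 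The decisive book-keeping point is that in \eqref{eq:SO-SSM_new} the emission weight is attached to the \emph{pre-transition} state while the state transition uses the already-updated $\theta_s$, which is exactly what forces the one-index shift.

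For the second part I would instead use Proposition \ref{prop:Def_FK_SSM}(\ref{FK2}), giving $Q_{s,\theta_s}(x_{s-1},\dd x_s)=f'_{i_s,\theta_s}(Y_s|x_s)\,m'_{i_s,\theta_s}(x_s|x_{s-1})\lambda(\dd x_s)$, together with $\mu_1=\delta_{\{0\}}$, which forces $\theta_1=\theta_0$ and thus matches the constraint $\theta_1=\theta_0$ of \eqref{eq:SO-SSM_new2}. Here no shift is needed: the relabelling is $x_s\mapsto X_s$ for $s\geq 1$, $\theta_s\mapsto\theta_s$, and the auxiliary initial state $x_0\sim\chi'$ is integrated out. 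Using the convention $m'_{1,\theta}=m'_{\tau+1,\theta}$, the $s=1$ factor is $f'_{i_1,\theta_1}(Y_1|x_1)\big(\int_\setX m'_{\tau+1,\theta_1}(x_1|x_0)\chi'(\dd x_0)\big)\lambda(\dd x_1)$, whose state-marginal on $x_1$ is $\int_\setX M_{\tau+1,\theta_1}(x',\cdot)\chi'(\dd x')$; the matching therefore requires the initial law of $X_1$ in \eqref{eq:SO-SSM_new2} to be precisely $\chi_{\theta}=\int_\setX M_{\tau+1,\theta}(x',\cdot)\chi(\dd x')$, which is the initialization used for \eqref{eq:SO-SSM_new2} in Theorem \ref{thm:SSM}, and then $X_1\sim\chi_{\theta_1}$ is recovered after marginalizing $x_0$. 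For $s\geq 2$ the factor $f'_{i_s,\theta_s}(Y_s|x_s)$ is the emission $f_{s,\theta_s}(Y_s|X_s)$ (now evaluated at the current state, since $\theta_s$ moves before $X_s$) and $m'_{i_s,\theta_s}(x_s|x_{s-1})$ is the density of $M_{s,\theta_s}(X_{s-1},\cdot)$. Matching integrands again yields $\pi_{t,\Theta}=p_{t,\Theta}(\cdot|Y_{1:t})$ and, with no shift, $\pi_{t,\setX}=p_{t,\setX}(\cdot|Y_{1:t})$.

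I expect the only genuine difficulty to be this index/boundary accounting rather than any analytical estimate: getting the one-step shift of the state chain right for \eqref{eq:SO-SSM_new}, correctly using $m'_{1,\theta}=m'_{\tau+1,\theta}$ together with the degenerate step $\mu_1=\delta_{\{0\}}$ for \eqref{eq:SO-SSM_new2}, and verifying that the Feynman--Kac initialization $\chi$ together with the first transition reproduces the appropriate SSM initial law $\chi_\theta$. A final measure-theoretic remark is needed: the identity is read $\P$-a.s.\ on the event where the normalizing constant $\mathfrak{Z}_t$ of \eqref{eq:normalizing} (equivalently the marginal density of $Y_{1:t}$) lies in $(0,\infty)$, where the normalized integrands agree; on the complementary set $\pi_t$ is set to $\pi_0$ by convention and the conditional distributions are defined only up to a null set, so that a.e.\ agreement follows, and one may invoke $\P(\mathfrak{Z}_t\in(0,\infty))=1$ (established within the proof of Lemma \ref{lemma:Zt}) whenever the standing assumptions of that lemma hold.
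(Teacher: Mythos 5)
Your proof is correct and takes essentially the same route as the paper, whose entire proof of this proposition is that the identities "directly follow from Proposition \ref{prop:Def_FK_SSM} and from the definition of $(\mu_t)_{t\geq 1}$"; your integrand-matching, index shift for \eqref{eq:SO-SSM_new}, treatment of $\mu_1=\delta_{\{0\}}$ for \eqref{eq:SO-SSM_new2}, and the remark on $\P(\mathfrak{Z}_t\in(0,\infty))=1$ are exactly the bookkeeping the paper leaves implicit. Your observation that the matching in part (ii) requires the initialization $\chi_\theta(\dd x)=\int_\setX M_{\tau+1,\theta}(x',\dd x)\chi(\dd x')$ (i.e., that of Proposition \ref{prop:Def_FK_SSM}, part (ii), as used in Theorem \ref{thm:SSM}) is the correct reading of the statement.
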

\begin{proof}
The result of the proposition  directly follows from Proposition \ref{prop:Def_FK_SSM} and from the definition of  $(\mu_t)_{t\geq 1}$.
\end{proof}

\begin{remark}\label{rem:dist}
If, in Proposition \ref {prop:SO-FK_SSM}, the sequence $(\mu'_t)_{t\geq 1}$ is such that Conditions \ref{condition:Inf_mu}-\ref{condition:mu_extra} hold, or such that  Conditions \ref{condition:Inf_mu}-\ref{condition:Inf_K}, \ref{condition:mu_seq2} and \ref{condition:mu_extra} hold, then this is also the case for the sequence $(\mu_t)_{t\geq 1}$ defined in part \ref{PM} of the proposition. In particular, if $(\mu'_t)_{t\geq 1}$ is such that \ref{condition:mu_seq} holds for some sequence $(t_p)_{p\geq 1}$ then  the resulting sequence  $(\mu_t)_{t\geq 1}$ is such that \ref{condition:mu_seq} also holds for the  same  sequence $(t_p)_{p\geq 1}$.
\end{remark}

\subsection{Proof of Theorem \ref{thm:SSM}\label{app:proof_TSSM}}
\begin{proof}

In addition to prove Theorem \ref{thm:SSM} we also prove Theorem \ref{thm:SSMB} and proof of Theorem \ref{thm:dis} in what follows.

The last part of Theorem \ref{thm:SSM}  is a direct consequence of Proposition \ref{prop:stability} and of Assumptions \ref{assumeSSM:D_set}-\ref{assumeSSM:G}.

The result in the first part of Theorem \ref{thm:SSM} and   the result of  Theorem \ref{thm:dis}  follow  from Propositions \ref{prop:forget}-\ref{prop:SO-FK_SSM}, Remark \ref{rem:dist},  Theorems \ref{thm:main}-\ref{thm:pred} and from the results in Section \ref{p-sub:noise}, upon noting (i) that for each definition of $(K_t)_{t\geq 2}$ considered in Theorem \ref{thm:SSM} or in     Theorem \ref{thm:dis}  there exists a sequence $(\mu_t')_{t\geq 2}$ of  probability measures on $(\setR,\mathcal{R})$ such that $K_t=K_{\mu'_2|\Theta}$ for all $t\geq 2$, (ii) that each of these sequences   $(\mu_t')_{t\geq 2}$ is constructed as in Section \ref{p-sub:noise}, and (iii) that we have, for all integers $0\leq t_1<t_2$ and all $\theta\in\Theta$,
\begin{align*}
\bar{p}_{t_2,\theta}(\dd x_{t_2}|Y_{1:t_2})=\tilde{p}_{(t_1+1):t_2}^{\bar{p}_{t_1,\theta}(\dd x_{t_1}|Y_{1:t_1})}(\dd x_{t_2}| \theta).
\end{align*}

To prove    Theorem \ref{thm:SSMB}  
let $p_{t+1|t,\setX}(\dd x_{t+1}|Y_{1:t})$  denote the conditional distribution of $X_{t+1}$ given $Y_{1:t}$ under SO-SSM \eqref{eq:SO-SSM_new}, and    for all $\theta\in\Theta$ let $\bar{p}_{t+1|t,\theta}(\dd x_{t+1}|Y_{1:t})$ denote the conditional distribution of $X_{t+1}$ given $Y_{1:t}$ under   model \eqref{eq:SSM}. Then, from the argument used above to establish the  first part of Theorem \ref{thm:SSM}, we easily obtain that, under the assumptions of the Theorem \ref{thm:SSMB},
\begin{align*}
 \int_\Theta \theta_t\, p_{t,\Theta}(\dd \theta_t |Y_{1:t})\PP \theta_\star,\quad \|p_{t+1|t,\setX}(\dd x_{t+1}|Y_{1:t})-\bar{p}_{t+1|t,\theta_\star}(\dd x_{t+1}|Y_{1:t})\|_{\mathrm{TV}}\PP 0.
\end{align*}
Then, the result  of Theorem \ref{thm:SSMB}   follows, upon noting that for any $\eta\in\mathcal{P}(\Theta)$ we have
\begin{equation}\label{eq:Split_SSM}
\begin{split}
\Big\|\frac{f'_{i_{t+1},\theta}(Y_{t+1}|x_{t+1}) \eta(\dd x_{t+1})}{\int_{\setX}f'_{i_{t+1},\theta}(Y_{t+1}|x_{t+1}) \eta(\dd x_{t+1})}&-\bar{p}_{t,\theta}(\dd x_{t+1}|Y_{1:t+1})  \Big\|_{\mathrm{TV}}\\
&\leq \frac{2\|\eta-\bar{p}_{t+1|t,\theta}(\dd x_{t+1}|Y_{1:t})\|_{\mathrm{TV}}}{\int_\setX   f'_{i_{t+1},\theta}(Y_{t+1}|x_{t+1})\bar{p}_{t+1|t,\theta}(\dd x_{t+1}|Y_{1:t})}
\end{split}
\end{equation}
where, under the assumptions of the theorem, $\log^-\int_\setX   f'_{i_{t+1},\theta}(Y_{t+1}|x_{t+1})\bar{p}_{t+1|t,\theta}(\dd x_{t+1}|Y_{1:t})=\bigO_\P(1)$. The proof of the theorems is complete.
\end{proof}

\subsection{Proof of Theorem \ref{thm:SSM2}}
\begin{proof}
Algorithm   \ref{algo:online_2} is a particle filter algorithm deployed on     SO-SSM \eqref{eq:SO-SSM_new2} where, for a given number $N$ of particles,  $K_t=K_t^N$ for all $t\geq 2$ and with the sequence   $(K_t^N)_{t\geq 2}$  recursively defined in the course the algorithm. For  all $(N,t)\in\mathbb{N}^2$  let $p_{t,N,\Theta}(\dd \theta_t |Y_{1:t})$  and $p_{t,N,\setX}(\dd x_t |Y_{1:t})$  denote the filtering distribution of $\theta_t$  and the   filtering distribution of $X_t$ under SO-SSM \eqref{eq:SO-SSM_new2}, where the deterministic sequence of Markov kernels $(K_t)_{t\geq 2}$ is replaced by the sequence of random Markov kernels $(K_t^N)_{t\geq 2}$. It is easily checked that for all $N\in\mathbb{N}$  there exists a sequence $(\mu^N_t)_{t\geq 2}$ of random probability measures on $(\setR,\mathcal{R})$ as  defined  in Section \ref{p-sub:noise} and such that $K^N_t=K_{\mu^N_t|\Theta}$ for all $t\geq 2$.  Consequently, by Propositions \ref{prop:forget}-\ref{prop:SO-FK_SSM} and Theorems \ref{thm:main}-\ref{thm:pred} (see also  Remark \ref{rem:dist}), there exists a sequence $(\xi_t)_{t\geq 1}$ of $(0,\infty)$-valued random variables, independent of $N$, such that $\xi_t=\smallo_\P(1)$ and such that for all $N\in\mathbb{N}$ we have, $\P$-a.s.,
\begin{align}\label{eq:con_p}
\Big| \int_\Theta \theta_t\, p_{t,N,\Theta}(\dd \theta_t |Y_{1:t})-\theta_\star\Big|\leq \xi_t,\quad \big\|p_{t,N,\setX}(\dd x_t|Y_{1:t})-\bar{p}_{t,\theta_\star}(\dd x_t|Y_{1:t})\big\|_{\mathrm{TV}}\leq \xi_t,\quad\,\, t\geq 1.
\end{align}

 Then, under   \ref{assumeSSM:D_set}-\ref{assumeSSM:G},  and using Proposition \ref{prop:stability} and    the fact that  by assumption we have $\inf_{(\theta,x)\in\Theta\times\setX}M_{t+1,\theta}(x,E')>0$ for some compact set $E'\in\mathcal{X}$ and all $t\geq 1$, the conclusion of the theorem  follows directly from \eqref{eq:con_p} and from    the calculations used in \citetsup[][Section 11.2.2]{chopin2020introduction2} to study the $L_2$-convergence of particle filter algorithms.

\end{proof}

\subsection{Proof of the preliminary results}

\subsubsection{Proof of Proposition \ref{prop:conv_SSM}\label{p-prop:conv_SSM}}

\begin{proof}
Let $E\in\mathcal{X}$ be a compact set such that $\lambda(E)>0$. Then, under \ref{assumeSSM:K_set}-\ref{assumeSSM:G} and the additional assumptions (a) that  $(Y_t)_{t\geq 1}$ is a stationary and ergodic process and (b) that for all $\theta\in\Theta$ we have $M'_{i+1,\theta}=M'_{2,\theta}$ and $f'_{i,\theta}=f'_{1,\theta}$ for all $i\in\{1,\dots,\tau\}$, it follows from \citetsup[][Proposition 10, part (iii)]{Douc_MLE2} that there exists a function $l':\Theta\rightarrow\R$ such that $\P\big(\lim_{t\rightarrow\infty}l^{\,\tilde{\eta}}_{t\tau}(\theta)=l'(\theta)\big)=1$ for  all $\theta\in\Theta$ and all $\tilde{\eta}\in\mathcal{P}(\setX)$ such that $\tilde{\eta}(E)>0$.

A careful inspection of the proofs in  \citetsup{Douc_MLE2} reveals that the following more general result actually holds:  Under \ref{assumeSSM:K_set}-\ref{assumeSSM:G}  and the above two additional assumptions (a) and (b),   there exists a function $l':\Theta\rightarrow\R$ such that, for any collection $\{\tilde{\eta}_\theta,\,\theta\in\Theta\}$ of random probability measures on $(\setX,\mathcal{X})$ such that $\P(\tilde{\eta}_\theta(E)>0)=1$ for all $\Theta\in\Theta$,  we have
\begin{align}\label{eq:conv_1_lik}
\lim_{t\rightarrow\infty}l^{\,\tilde{\eta}_\theta}_{t\tau}(\theta)=l'(\theta),\quad\P-a.s.,\quad\forall\theta\in\Theta.
\end{align}

Another careful inspection of the proofs in  \citetsup{Douc_MLE2}  reveals that, under  \ref{assumeSSM:K_set}-\ref{assumeSSM:G}, the assumption that $(Z_t)_{t\geq 1}$  is a stationary and ergodic process and the assumption  that $(f_{t,\theta},M_{t+1,\theta})=(f'_{i_t,\theta}, M'_{i_t+1,\theta})$ for all $(t,\theta)\in\mathbb{N}\times\Theta$, the above two additional assumptions (a) and (b) are not needed for \eqref{eq:conv_1_lik} to hold.

To prove the result of the proposition we let  $\{\eta_\theta,\,\theta\in\Theta\}$ be such that $\inf_{\theta\in\Theta}\P(\eta_\theta(E)>0)=1$. Then, by \eqref{eq:conv_1_lik}, we have 
\begin{align}\label{eq:conv_eta}
 \lim_{t\rightarrow\infty}l_{t\tau}^{\eta_\theta}(\theta)=l'(\theta),\quad\P-a.s., \quad \forall\theta\in\Theta
\end{align}
which proves that the result of the proposition holds with $l(\cdot)=l'(\cdot)$ when $\tau=1$.

To complete the proof of the proposition assume that $\tau>1$ and for all integers $1\leq t_1\leq t_2$ let
\begin{align*}
D_{t_1:t_2}=\prod_{s=t_1}^{t_2}\Big(1\vee \sup_{(\theta,x)\in\Theta\times\setX} f'_{i_s,\theta}(Y_s|x)\Big).
\end{align*}
In addition, let $(s_t)_{t\geq 1}$ be  a  strictly increasing sequence in $\mathbb{N}\setminus \{s\tau,\,s\in\mathbb{N}\}$ such that $s_{1}>\tau$, and  for all $t\geq 1$   let $k_t\in\mathbb{N}_0$   be such that $k_t\tau<s_t<(k_t+1)\tau$.

Then, for all $t\geq 1$ and $\theta\in\Theta$  we have 
\begin{align}\label{eq:s1}
-\frac{1}{s_t}\log D_{(k_t\tau+1):(k_t+1)\tau}+\frac{(k_t+1)\tau}{s_t} l_{(k_t+1)\tau}^{\eta_\theta}(\theta)\leq l_{s_t}^{\eta_\theta}(\theta)\leq\frac{1}{s_t}\log D_{(k_t\tau+1):(k_t+1)\tau}+\frac{k_t\tau}{s_t} l_{k_t\tau}^{\eta_\theta}(\theta)
\end{align}
where the first inequality uses   Lemma \ref{lemma:tech_ssm}. Noting that $\lim_{t\rightarrow\infty} (k_t+1)\tau/s_t=\lim_{t\rightarrow\infty} k_t\tau/s_t=1$, it follows from \eqref{eq:conv_eta} that   
\begin{align}\label{eq:s2}
\lim_{t\rightarrow\infty}\frac{(k_t+1)\tau}{s_t} l_{(k_t+1)\tau}^{\eta_\theta}(\theta)=\lim_{t\rightarrow\infty}\frac{k_t\tau}{s_t} l_{k_t\tau}^{\eta_\theta}(\theta)= l'(\theta),\quad\P-a.s. \quad\forall\theta\in\Theta.
\end{align}
On the other hand, since by assumption $(Z_t)_{t\geq 1}$ is a stationary process, $\big(D_{(k_t\tau+1):(k_t+1)\tau}\big)_{t\geq 1}$ is a stationary   process and, under \ref{assumeSSM:G}, we have $\E[\log D_{(k_1\tau+1):(k_1+1)\tau}]<\infty$. Therefore, by \citetsup[][Lemma 7]{Douc_MLE2}, 
\begin{align}\label{eq:s3}
\lim_{t\rightarrow\infty}\frac{1}{s_t}\log D_{(k_t\tau+1):(k_t+1)\tau}=0,\quad\P-a.s.
\end{align}
By combining \eqref{eq:s1}-\eqref{eq:s3}, we obtain that
 \begin{align*}
 \lim_{t\rightarrow\infty}l_{s_t}^{\eta_\theta}(\theta)=l'(\theta),\quad\P-a.s.\quad\forall \theta\in\Theta
 \end{align*}
which, together with \eqref{eq:conv_eta} and recalling that $(s_t)_{t\geq 1}$ is an  arbitrary strictly increasing sequence  $\mathbb{N}\setminus \{s\tau,\,s\in\mathbb{N}\}$, shows that the result of the proposition holds with $l(\cdot)= l'(\cdot)$.
\end{proof}

\subsubsection{Proof of Proposition \ref{prop:forget}\label{p-prop:forget}}

\begin{proof}
For all integers $0\leq t_1<t_2$ and probability measure $\eta\in\mathcal{P}(\setX)$ we let 
\begin{align*}
\mathfrak{p}^{\eta}_{(t_1+1):t_2}&(A|\theta)=\frac{\int_{\setX^{(t_2-t_1)+1}}\ind_A(x_{t_2\tau+1})\eta(\dd x_{t_1\tau+1})\prod_{s=t_1+1}^{t_2}L_\theta[Z_s](x_{(s-1)\tau+1},\dd x_{s\tau+1})}{\int_{\setX^{(t_2-t_1)+1}}\eta(\dd x_{t_1\tau+1})\prod_{s=t_1+1}^{t_2}L_\theta[Z_s](x_{(s-1)\tau+1},\dd x_{s\tau+1})},\quad\forall A\in\mathcal{X}
\end{align*}
and, for all  elements $z_{t_1+1},\dots,z_{t_2}$  of $\setY^{\tau}$, we let
\begin{align*}
\mathfrak{p}^{\eta}\big[\{z_{i}\}_{i=t_1+1}^{t_2}\big]&(A|\theta)\\
&=\frac{\int_{\setX^{(t_2-t_1)+1}}\ind_A(x_{t_2\tau+1})\eta(\dd x_{t_1\tau+1})\prod_{s=t_1+1}^{t_2}L_\theta[z_s](x_{(s-1)\tau+1},\dd x_{s\tau+1})}{\int_{\setX^{(t_2-t_1)+1}}\eta(\dd x_{t_1\tau+1})\prod_{s=t_1+1}^{t_2}L_\theta[z_s](x_{(s-1)\tau+1},\dd x_{s\tau+1})},\quad\forall A\in\mathcal{X}.
\end{align*}
In addition, we let $K\subset\setY^{\tau}$ be as in \ref{assumeSSM:K_set} and $0<\gamma^-<\gamma^+\leq 1$ be such that
\begin{align}\label{eq:b_def}
b:=\max\big(1-\gamma^-,(1+\gamma^+)/2\big)<\P(K).
\end{align}
Remark that such constants  $\gamma^-$ and $\gamma^+$ exist since $\P(K)>2/3$ by assumption.

We now let $(s_t)_{t\geq 1}$ and $(W_{s_t}^-)_{t\geq 1}$ be as in the statement of the proposition, $(m_t)_{t\geq 1}$ be the sequence  in $\mathbb{N}_0$ such that $s_t=\tau m_t$ for all $t\geq 1$, and for all $t\geq 1$ we let  $k_t\in \mathbb{N}_0$  be such that $k_t\tau< t\leq \tau (k_t+1)$. In addition, we let $t'\in\mathbb{N}$ be such that $k_t>m_t\geq 1$ for all $t\geq t'$, and   $(\mu_{s_t})_{t\geq 1}$ and $(\mu'_{s_t})_{t\geq 1}$ be two sequences of   random probability measures on $(\setX,\mathcal{X})$ such that 
\begin{align}\label{eq:mu_cond_k}
\big(\mu'_{s_t}(E)\vee \mu_{s_t}(E)\big)\geq \underline{W}_{\,s_t},\quad \underline{W}_{\,s_t} =W_{s_t}^-\inf_{x\in E}M'_{\tau+1,\theta}(x,E),\quad\forall t\geq 1,\quad\P-a.s.
\end{align}
with  $E\in\mathcal{X}$ as in the statement of the proposition.

Then, a careful inspection of the proofs of \citetsup{Douc_MLE2} reveals that, under the assumptions of the proposition, the conclusion of \citetsup[Lemma 13]{Douc_MLE2} holds and thus,  for all  $\kappa\in(0,\infty)$ there exists a $\rho_\kappa\in(0,1)$ such that, for all $t\geq t'$ and sequence $\{z_i\}_{i=m_t+1}^{k_t}$ such that $(k_t-m_t)^{-1}\sum_{i=m_t+1}^{k_t}\ind_K(z_i)\geq b$ (with $b$ as defined above), we have, for all $\theta\in\Theta$ and letting $\gamma=(\gamma^+-\gamma^-)/2$,
\begin{equation}\label{eq:bound_forget}
\begin{split}
\sup_{A\in\mathcal{X}}\Big(\mathfrak{p}^{\mu_{s_t}}\big[\{z_i\}_{i=m_t+1}^{k_t}\big](A|\theta)- \mathfrak{p}^{\mu_{s_t}'}&\big[\{z_i\}_{i=m_t+1}^{k_t}\big]( A|\theta)\Big)\\
&\leq   2\rho_\kappa^{\lfloor (k_t-m_t)\gamma\rfloor}+2\frac{\kappa^{\lfloor (k_t-m_t)\gamma\rfloor}}{\mu_{s_t}(E)\mu_{s_t}'(E)}\prod_{s=m_t+1}^{k_t} D^2_{z_s} \\
&\leq   2\rho_\kappa^{\lfloor (k_t-m_t)\gamma\rfloor}+2\frac{\kappa^{\lfloor (k_t-m_t)\gamma\rfloor}}{\underline{W}_{\,s_t}^2}\prod_{s=m_t+1}^{k_t} D^2_{z_s}\\
&=: B_{k_t\tau}\big[\{z_i\}_{i=m_t+1}^{k_t}\big]
\end{split}
\end{equation}
where
\begin{align*}
D_z=\frac{\sup_{(\theta,x)\in\Theta\times\setX} L_\theta[z](x,\setX)}{\inf_{(\theta,x)\in\Theta\times E} L_\theta[z](x,E)},\quad\forall z\in\setY^{\tau}.
\end{align*}
Remark that  $D:=\E[\log^+ D_{Z_1}]<\infty$ under \ref{assumeSSM:D_set} and \ref{assumeSSM:G}  and let
\begin{align*}
W_{k_t\tau}=
\begin{cases}
1, & \frac{1}{k_t-m_t}\sum_{s=m_t+1}^{k_t}\log^+ D_{Z_s}\leq 2D\\
0, &\text{otherwise}
\end{cases},\quad\forall t\geq t'.
\end{align*}

We now  let $\epsilon\in(0,\infty)$ and  note that,  under \ref{assumeSSM:D_set} and   the assumptions of the proposition,  there exists a constant $C_\epsilon\in(0,\infty)$   such that $\limsup_{t\rightarrow\infty}\P\big(\log^- \underline{W}_{\,s_t}>C_\epsilon\big)\leq \epsilon$. Then, for all $t\geq t'$ we let
\begin{align*}
W_{\epsilon,k_t\tau}=
\begin{cases}
1, &\text{if } W_{k_t\tau}=1,\,\log^-\underline{W}_{\,s_t}\leq C_\epsilon,\, \text{ and }\frac{1}{k_t-m_t}\sum_{i=m_t+1}^{k_t}\ind_K(Z_i)\geq b\\
0, &\text{otherwise}
\end{cases}
\end{align*}
with $b$ as defined in \eqref{eq:b_def}. Then, by using \eqref{eq:bound_forget} with $\kappa=\exp\big(-(1+8D)/\gamma\big)$, we obtain that, for all $t\geq t'$,
\begin{equation}\label{eq:bound_forget2}
\begin{split}
\E\bigg[\sup_{A\in\mathcal{X}}\Big(\mathfrak{p}^{\mu_{s_t}}&\big[\{Z_i\}_{i=m_t+1}^{k_t}\big](A|\theta) - \mathfrak{p}^{\mu_{s_t}'}\big[\{Z_i\}_{i=m_t+1}^{k_t}\big](A|\theta)\Big)\bigg]\\
&\leq \P(W_{\epsilon,k_t\tau}=0)+ 2\rho_\kappa^{\lfloor (k_t-m_t)\gamma\rfloor}+2\exp(2C_\epsilon) \kappa^{\lfloor (k_t-m_t)\gamma\rfloor}\exp\big((k_t-m_t)4D \big)\\
&\leq   \P(W_{k_t\tau}=0)+\P\bigg(\frac{1}{k_t-m_t}\sum_{i=m_t+1}^{k_t}\ind_K(Z_i)< b\bigg)+\epsilon\\
&+ 2\rho_\kappa^{\lfloor (k_t-m_t)\gamma\rfloor}+2\exp\bigg(2C_\epsilon+\frac{1+8D}{\gamma}\bigg) \exp\Big(-(k_t-m_t)(1+4 D)\Big).
\end{split}
\end{equation}
Under the assumptions of the proposition  we have, using Birkhoff's theorem,
\begin{align*}
\lim_{t\rightarrow\infty}\P(W_{k_t\tau}=0)=\lim_{t\rightarrow\infty}\P\bigg(\frac{1}{k_t-m_t}\sum_{i=m_t+1}^{k_t}\ind_K(Z_i)< b\bigg)=0
\end{align*}
and thus, by \eqref{eq:bound_forget2}, 
\begin{equation*}
\limsup_{t\rightarrow\infty}\E\bigg[\sup_{A\in\mathcal{X}}\Big(\mathfrak{p}^{\mu_{s_t}}\big[\{Z_i\}_{i=m_t+1}^{k_t}\big](A|\theta) - \mathfrak{p}^{\mu_{s_t}'}\big[\{Z_i\}_{i=m_t+1}^{k_t}\big](A|\theta)\Big)\bigg]\leq \epsilon.
\end{equation*}
Since $\epsilon\in(0,\infty)$ is arbitrary and the upper bound in \eqref{eq:bound_forget2} depends on $(\mu_{s_t})_{t\geq 1}$ and $(\mu_{s_t}')_{t\geq 1}$ only through $(W^-_{s_t})_{t\geq 1}$, it follows   that there exists a sequence $(\xi_t)_{t\geq 1}$ of $[0,1]$-valued random variables, depending on $(\mu_{s_t})_{t\geq 1}$ and $(\mu_{s_t}')_{t\geq 1}$ only through $(W^-_{s_t})_{t\geq 1}$, such that $\xi_t=\smallo_\P(1)$ and such that
\begin{align*}
\sup_{A\in\mathcal{X}}\Big(\mathfrak{p}^{\mu_{s_t}}[\{Z_i\}_{i=m_t+1}^{k_t}](A|\theta)-(\mathfrak{p}^{\mu'_{s_t}}[\{Z_i\}_{i=m_t+1}^{k_t}](A|\theta)\Big)\leq \xi_{k_t\tau},\quad\forall t\geq 1,\quad\P-a.s.
\end{align*}
which readily implies that
\begin{align}\label{eq:first_res}
\big\|\mathfrak{p}^{\mu_{s_t}}_{(m_t+1):k_t}(\dd x|\theta)-\mathfrak{p}^{\mu_{s_t}'}_{(m_t+1):k_t}(\dd x|\theta)\big\|_{\mathrm{TV}}\leq \xi_{k_t\tau},\quad\forall t\geq 1,\quad\P-a.s.
\end{align}

To proceed further recall that   under  \ref{assumeSSM:G}  we $\P$-a.s.~have $\sup_{x\in\setX} f'_{i,\theta}(Y_i|x)<\infty$ for all $i\in\{1,\dots,\tau\}$ and let
\begin{align*}
\tilde{f}_{t,\theta}(Y_t|x_t)=\frac{f'_{i_t,\theta}(Y_t|x_t)}{\sup_{x\in\setX} f'_{i_t,\theta}(Y_t|x)},\quad\forall x_t\in\setX,\quad\forall t\geq 1.
\end{align*}
Then, for all integers $1\leq t_1<t_2$ and $t\geq \tau t_2+1$,  and all probability measure $\eta\in\mathcal{P}(\setX)$, we let  
\begin{align*}
l^{\eta}_{(t_1,t_2),t}(\theta)=\int_{\setX^{t-\tau t_2+1}}\mathfrak{p}_{(t_1+1):t_2}^{\eta_\theta}(\dd x_{t_2\tau+1}|\theta)\prod_{s=t_2\tau+1}^{t} \tilde{f}_{s,\theta}(Y_{s}|X_{s})M'_{i_{s}+1,\theta}(x_s,\dd x_{s+1}) 
\end{align*}
with  $\eta_\theta(\dd x)=\int_\setX\eta(\dd x')M'_{\tau+1,\theta}(x',\dd x)$. Remark that, under \ref{assumeSSM:G} and the assumptions that $(Z_t)_{t\geq 1}$ is a stationary process, for all integers $1\leq t_1<t_2$ and $t\geq \tau t_2+1$ we $\P$-a.s.~have, for all $A\in\mathcal{X}$,
\begin{equation}\label{eq:use_f1}
\begin{split}
\tilde{p}_{(t_1\tau+1):t}^{\eta} (A&|\theta)\\ 
&=\frac{\int_{\setX^{t-t_2\tau+1}}\ind_A(x_{t})\mathfrak{p}_{(t_1+1):t_2}^{\eta_\theta}(\dd x_{t_2\tau+1}|\theta)\prod_{s=t_2\tau+1}^{t} \tilde{f}_{s,\theta}(Y_{s}|X_{s})M'_{i_{s}+1,\theta}(x_s,\dd x_{s+1}) }{l^{\eta_\theta}_{(t_1,t_2),t}(\theta)}.
\end{split}
\end{equation}
Finally, recall  that for any two probability measures $\eta_1,\eta_2\in\mathcal{P}(\setX)$  we have
\begin{align}\label{eq:use_f2}
\|\eta_1-\eta_2\|_{\mathrm{TV}}=\sup_{f:\setX\rightarrow[0,1]}\Big|\int_{\setX} f(x)\big(\eta_1(\dd x)-\eta_2(\dd x)\big)\Big|
\end{align}
and, for all $t\in\mathbb{N}$, let
\begin{align*}
\eta_{\theta,s_t}=\int_\setX\eta_{s_t}(\dd x')M'_{\tau+1,\theta}(x',\dd x),\quad  \chi_{\theta,s_t}(\dd x)= \int_\setX\chi_{s_t}(\dd x')M'_{\tau+1,\theta}(x',\dd x).
\end{align*}

Then, using \eqref{eq:use_f1}-\eqref{eq:use_f2}, we $\P$-a.s.~have, for all $t\geq t'$ and $A\in\mathcal{X}$,
\begin{equation}\label{eq:split_forget}
\begin{split}
\big|\tilde{p}_{(s_t+1):t}^{\eta_{s_t}}(A|\theta)&-\tilde{p}_{(s_t+1):t}^{\chi_{s_t}}(A|\theta)\big|\\
&\leq  \frac{2\big\|\mathfrak{p}_{(m_t+1):k_t}^{\eta_{\theta,s_t}}(\dd x|\theta)-\mathfrak{p}_{(m_t+1):k_t}^{\chi_{\theta,s_t}}(\dd x|\theta)\big\|_{\mathrm{TV}}}{\big(l_{(m_t,k_t),t}^{\eta_{\theta,s_t}}(\theta)\big)\big(l_{(m_t,k_t),t}^{\chi_{\theta,s_t}}(\theta)\big)}\\
&\leq  \frac{2\big\|\mathfrak{p}_{(m_t+1):k_t}^{\eta_{\theta,s_t}}(\dd x|\theta)-\mathfrak{p}_{(m_t+1):k_t}^{\chi_{\theta,s_t}}(\dd x|\theta)\big\|_{\mathrm{TV}}}{\big(l_{(m_t,k_t),(k_t+1)\tau}^{\eta_{\theta,s_t}}(\theta)\big)\big(l_{m_t,k_t,(k_t+1)\tau}^{\chi_{\theta,s_t}}(\theta)\big)}\\
&\leq \frac{2\big\|\mathfrak{p}_{(m_t+1):k_t}^{\eta_{\theta,s_t}}(\dd x|\theta)-\mathfrak{p}_{(m_t+1):k_t}^{\chi_{\theta,s_t}}(\dd x|\theta)\big\|_{\mathrm{TV}}\Big(\prod_{s=k_t\tau+1}^{(k_t+1)\tau}\sup_{x\in\setX}f'_{i_s,\theta}(Y_s|x)\Big)^2}{\big(\mathfrak{p}_{(m_t+1):k_t}^{\eta_{\theta,s_t}}(E|\theta)\big)\big(\mathfrak{p}_{(m_t+1):k_t}^{\chi_{\theta,s_t}}(E|\theta)\big)\big(\inf_{x\in E}L_\theta[Z_{k_t+1}](x,E)\big)^2}.
\end{split}
\end{equation}

To conclude the proof of the proposition remark first that, under its assumptions, \eqref{eq:mu_cond_k} holds when $\mu_{s_t}=\chi_{\theta,s_t}$ and $\mu'_{s_t}=\eta_{\theta,s_t}$ for all $t\geq 1$  and thus, by \eqref{eq:first_res},
\begin{align}\label{eq:C1}
\big\|\mathfrak{p}_{(m_t+1):k_t}^{\eta_{\theta,s_t}}(\dd x|\theta)-\mathfrak{p}_{(m_t+1):k_t}^{\chi_{\theta,s_t}}(\dd x|\theta)\big\|_{\mathrm{TV}}\leq \xi_{k_t\tau},\quad\forall t\geq 1,\quad\P-a.s.
\end{align}

In addition, by assumption, there exists a  probability measures $\eta\in\mathcal{P}(\setX)$ for which     $\log \bar{p}_{t,\setX}^\eta(E|Y_{1:t})=\bigO_\P(1)$ and thus, under \ref{assumeSSM:D_set},   $\log \mathfrak{p}_{1:m_t}^\eta(E|\theta)=\bigO_\P(1)$. Without loss of generality  we assume henceforth that $\log \mathfrak{p}_{1:m_t}^\eta(E|\theta)\geq \underline{W}_{\,s_t}$ for all $t\geq 1$. Then, by applying \eqref{eq:first_res} with $\mu_{s_t}=\eta_{\theta,s_t}$ and $\mu'_{s_t}=\mathfrak{p}_{1:m_t}^{\eta}(\dd x|\theta)$ for all $t\geq 1$, we obtain that
\begin{align}\label{eq:BB_p1}
\big|\mathfrak{p}_{(m_t+1):k_t}^{\eta_{\theta, s_t}}(E|\theta)-\mathfrak{p}_{1:k_t}^\eta(E|\theta)\big|\leq \xi_{k_t\tau},\quad\forall t\geq t',\quad\P-a.s.
\end{align}
Similarly,  by applying \eqref{eq:first_res} with $\mu_{s_t}=\chi_{\theta,s_t}$ and $\mu_{s_t}'=\mathfrak{p}_{1:m_t}^{\eta}(\dd x|\theta)$ for all $t\geq 1$ we obtain that  
\begin{align}\label{eq:BB_p2}
\big|\mathfrak{p}_{(m_t+1):k_t}^{\chi_{\theta,s_t}}(E|\theta)-\mathfrak{p}_{1:k_t}^\eta(E|\theta)\big|\leq \xi_{k_t\tau},\quad\forall t\geq t',\quad\P-a.s.
\end{align}

We now let
\begin{align*}
V_{k_t\tau}=\max\big(0,\mathfrak{p}_{1:k_t}^\eta(E|\theta)-\xi_{k_t\tau}\big),\quad\forall t\geq 1
\end{align*}
and show that $\log V_{k_t\tau}=\bigO_\P(1)$. To this aim let $\epsilon\in(0,1)$ and remark that since $\log W^-_{k_t\tau}=\bigO_\P(1)$ there exists a constant $C_\epsilon\in(0,\infty)$ such that
\begin{align*}
\liminf_{t\rightarrow\infty}\P\big(\log W^-_{k_t\tau}> -C_\epsilon\big)\geq 1-\epsilon.
\end{align*}
Then,  recalling that $\xi_{k_t\tau}=\smallo_\P(1)$, it follows that
\begin{align*}
\liminf_{t\rightarrow\infty}\P\big(\log V_{k_t\tau}\geq -C_\epsilon-\log 2\big)&=\liminf_{t\rightarrow\infty}\P\big(V_{k_t\tau}\geq  0.5e^{-C_\epsilon}\big)\\
&\geq \liminf_{t\rightarrow\infty}\P\big(V_{k_t\tau}\geq  0.5e^{-C_\epsilon},\,\xi_{k_t\tau}\leq 0.5e^{-C_\epsilon},\, W^-_{k_t\tau}\geq   e^{-C_\epsilon} \big)\\
&=\liminf_{t\rightarrow\infty}\P\big(\xi_{k_t\tau}\leq 0.5e^{-C_\epsilon},\, W^-_{k_t\tau}\geq   e^{-C_\epsilon} \big)\\
&\geq \liminf_{t\rightarrow\infty}\P\big(\log W^-_{k_t\tau}\geq   -C_\epsilon  \big)\\
&\geq 1-\epsilon
\end{align*}
where the penultimate inequality holds by Frechet's inequality.  Since $\epsilon\in(0,\infty)$ is arbitrary this shows that $\log V_{k_t\tau}=\bigO_\P(1)$. 

Finally, let
\begin{align*}
\tilde{\xi}_{k_t\tau}=1\vee \frac{2\xi_{k_t\tau}\Big(\prod_{s=k_t\tau+1}^{(k_t+1)\tau}\sup_{x\in\setX}f'_{i_s,\theta}(Y_s|x)\Big)^2}{V_{k_t\tau}^2 \big(\inf_{x\in E}L_\theta[Z_{k_t+1}](x,E)\big)^2},\quad\forall t\geq 1
\end{align*}
and note that  $\tilde{\xi}_{k_t\tau}=\smallo_\P(1)$ since (i) $\xi_{k_t\tau}=\smallo_\P(1)$, (ii)  $\log V_{k_t\tau}=\bigO_\P(1)$ as shown above, and (iii) we have, under \ref{assumeSSM:D_set}-\ref{assumeSSM:G},
\begin{align*}
\log^-\inf_{x\in E}L_\theta[Z_{k_t+1}](x,E)=\bigO_\P(1),\quad \log^+\prod_{s=k_t\tau+1}^{(k_t+1)\tau}\sup_{x\in\setX}f'_{i_s,\theta}(Y_s|x)=\bigO_\P(1).
\end{align*}
Remark also that $(\tilde{\xi}_{k_t\tau})_{t\geq 1}$ depends on  $(\eta_{s_t})_{t\geq 1}$ and $(\chi_{s_t})_{t\geq 1}$ only through  $(W_{s_t}^-)_{t\geq 1}$ and that, by \eqref{eq:split_forget}-\eqref{eq:BB_p2}, for all $t\geq t'$ we $\P$-a.s.~have
\begin{align*}
\big\|\tilde{p}_{(s_t+1):t}^{\eta_{s_t}}(\dd x|\theta) -\tilde{p}_{(s_t+1):t}^{\chi_{s_t}}(\dd x|\theta)\big\|_{\mathrm{TV}}&\leq\tilde{\xi}_{k_t\tau}.
\end{align*}
The proof of the proposition is complete.
\end{proof}

\subsubsection{Proof of Proposition \ref{prop:A1}\label{p-prop:A1}} 
 
\begin{proof}

We start by showing the first part of the proposition.

Since $(Z_t)_{t\geq 1}$ is a stationary and ergodic process, it directly follows from Assumptions  \ref{assumeSSM:D_set}-\ref{assumeSSM:G} that  \ref{assume:G_bounded}-\ref{assume:G_l_E} hold, and to prove the second part of the proposition it remains to show that \ref{assume:Model} holds as well.

To this aim, for  all $t\geq 1$ and $\theta\in\Theta$ we let
\begin{align}\label{eq:q_prime}
q'_{t,\theta}(x|x')=f'_{i_t,\theta}(Y_t|x')  m'_{i_t+1,\theta}(x|x'),\quad\forall (x,x')\in\setX^2 
\end{align}
and
\begin{align*}
 Q'_{t,\theta}(x',\dd x)=q'_{t,\theta}(x|x',y)\lambda(\dd x)\quad\forall (x',y)\in\setX\times\setY. 
\end{align*}
In addition, to simplify the notation in what follows, for all integers $0\leq t_1< t_2$ and $\eta\in\mathcal{P}(\setX)$ we let
\begin{align*}
\tilde{l}^\eta_{(t_1+1):t_2}(\theta)=\frac{1}{(t_2-t_1)}\log\int_{\setX^{(t_2-t_1)+1}}\eta(\dd x_{t_1})  \prod_{s=t_1+1}^{t_2}Q'_{s,\theta}( x_{s-1},\dd x_s),\quad\forall\theta\in\Theta.
\end{align*}

Next, we let $g'(\cdot)$, $\delta\in(0,\infty)$, $\{\varphi'_i\}_{i=1}^\tau$ and $F(\cdot)$ be as in \ref{assumeSSMB:smooth}, and we let  $\eta_{\tau}$ be an arbitrary random probability measure on $(\setX,\mathcal{X})$ such that $\P(\eta_\tau(E)>0)=1$ for some compact set $E\in\mathcal{X}$ for which we have $\lambda(E)>0$. Then, since $(Y_t)_{t\geq 1}$ is assumed to be  $\tau$-stationary, under \ref{assumeSSM:K_set}-\ref{assumeSSM:G}  it follows from Proposition \ref{prop:conv_SSM}   that
\begin{align}\label{eq:convLik2}
\lim_{t\rightarrow\infty}\tilde{l}^{\eta_{\tau}}_{(\tau+1):(t+1)\tau}(\theta)=l(\theta),\quad\P-a.s.,\quad\forall \theta\in\Theta 
\end{align}
while,  under \ref{assumeSSM:D_set},
\begin{align}\label{eq:init_E}
\P\Big(\tilde{l}^{\eta_\tau}_{(\tau+1):\tau(t+1)}(\theta)>-\infty\Big)=1,\quad\forall \theta\in\Theta,\quad\forall t\geq 1.
\end{align}

We now show that, for $(\tilde{G}_t)_{t\geq 1}$ and $(\tilde{m}_t)_{t\geq 1}$ as defined in the first part of the proposition, Assumption \ref{assume:Model} holds with $g=g'$, $r=\tau$, $\tilde{l}=l$, $ \varphi_t(\cdot,\cdot)=\varphi'_{i_t}(Y_t,\cdot,\cdot)$ for all $t\geq 1$, and with   $D=E$ where   $E\in\mathcal{X}$ is an arbitrary compact set such that $\lambda(E)>0$.

We start by showing that, as required by \ref{assume:Model}, the function $\theta\mapsto l(\theta)$ is continuous on $\Theta$. To this aim let $\chi'(\dd x)$ be as in the statement of the proposition, $\epsilon\in(0,\infty)$ be such that $g'(\epsilon)^{1/2}\leq \min(1/2,\delta)$  and  let $(\theta,\theta')\in\Theta^2$  be  such that $\|\theta-\theta'\|\leq \epsilon$.   Then, for all $t\geq 2$,
\begin{align*}
 \tilde{l}^{\chi'}_{1:t\tau}(\theta)&-\tilde{l}^{\chi'}_{1:t\tau}(\theta')\\
&\leq \frac{1}{t\tau}\log \int_{\setX^{t\tau+1}}\hspace{-0.4cm}\exp\bigg( g'\big(\|\theta-\theta'\|)  \sum_{s= 1}^{t\tau}\varphi'_{i_t}(Y_s,x_{s-1}, x_{s})\bigg)\frac{\chi'(\dd x_{0})  \prod_{s=1}^{t\tau} Q'_{ s,\theta'}( x_{s-1},\dd x_s)}{\exp(t\tau\, \tilde{l}^{\chi'}_{1:t\tau}(\theta'))}\\
&\leq \frac{g'\big(\|\theta-\theta'\|)^{\frac{1}{2}}}{t\tau}\log  \int_{\setX^{t\tau+1}}\hspace{-0.4cm}\exp\bigg(\hspace{-0.1cm}\delta  \sum_{s= 1}^{t\tau}\varphi'_{i_t}(Y_s,x_{s-1}, x_{s})\bigg)\frac{\chi'(\dd x_{0})  \prod_{s=1}^{t\tau} Q'_{s,\theta'}(x_{s-1}, \dd x_s)}{\exp(t\tau \tilde{l}^{\chi'}_{1:t\tau}(\theta'))}\\
&\leq  g'\big(\|\theta-\theta'\|)^{1/2}\Big( \frac{1}{t\tau}\sum_{s=1}^{t}\log F(Z_s)-\tilde{l}^{\chi'}_{1:t\tau}(\theta')\Big)
\end{align*}
where the first inequality holds under \ref{assumeSSMB:smooth}, the second inequality holds by Jensen's inequality and the last inequality holds again by \ref{assumeSSMB:smooth}. On the other hand,
\begin{align*}
\tilde{l}^{\chi'}_{1:t\tau}(\theta)&-\tilde{l}^{\chi'}_{1:t\tau}(\theta')\\
&\geq \frac{1}{t\tau}\log \int_{\setX^{t\tau+1}}\hspace{-0.4cm}\exp\bigg(\hspace{-0.2cm}-g'\big(\|\theta-\theta'\|)  \sum_{s= 1}^{t\tau}\varphi'_{i_t}(Y_s,x_{s-1}, x_{s})\bigg)\frac{\chi'(\dd x_{0})  \prod_{s=1}^{t\tau} Q'_{s,\theta'}(x_{s-1},\dd x_s)}{\exp(t\tau \tilde{l}^{\chi'}_{1:t\tau}(\theta'))}\\
&\geq -\frac{g'\big(\|\theta-\theta'\|)^{\frac{1}{2}}}{t\tau}\log  \int_{\setX^{t\tau+1}}\hspace{-0.4cm}\exp\bigg(\hspace{-0.1cm}\delta  \sum_{s= 1}^{t\tau}\varphi'_{i_t}(Y_s,x_{s-1}, x_{s})\bigg)\frac{\chi'(\dd x_{0})  \prod_{s=1}^{t\tau} Q_{s,\theta'}(x_{s-1},\dd x_s)}{\exp(t\tau \tilde{l}^{\chi'}_{1:t\tau}(\theta'))}\\
&\geq  -g'\big(\|\theta-\theta'\|)^{1/2}\Big( \frac{1}{t\tau}\sum_{s=1}^{t}\log F(Z_s)-\tilde{l}^{\chi'}_{t\tau}(\theta')\Big)
\end{align*}
where the first inequality holds under \ref{assumeSSMB:smooth}, the second inequality holds by \eqref{eq:double_Jensen} and the last inequality holds again by \ref{assumeSSMB:smooth}. Therefore,  for all $(\theta,\theta')\in\Theta^2$    such that $\|\theta-\theta'\|\leq \epsilon$ we have
\begin{align}\label{eq:Lyp}
|\tilde{l}^{\chi'}_{1:t\tau}(\theta)-\tilde{l}^{\chi'}_{1:t\tau}(\theta')|\leq g'\big(\|\theta-\theta'\|)^{1/2}\Big( \frac{1}{t\tau}\sum_{s=1}^{t}\log^+ F(Z_t)+|\tilde{l}^{\chi'}_{1:t\tau}(\theta')|\Big).
\end{align}
Under the assumptions of the first part of the proposition, for all $\theta\in\Theta$ we have $\chi'(E)>0$ and thus, by Proposition \ref{prop:conv_SSM},
\begin{align}\label{eq:Lyp2}
\lim_{t\rightarrow\infty}\tilde{l}_{t\tau}^{\chi'}(\theta)=l(\theta),\quad\P-a.s.,\quad\forall \theta\in\Theta.
\end{align}
Therefore, we $\P$-a.s.~have, for all $(\theta,\theta')\in\Theta^2$ such that $\|\theta-\theta'\|\leq \epsilon$,
\begin{align*}
|l(\theta)-l(\theta')|&=\big|\lim_{t\rightarrow\infty}\tilde{l}^{\chi'}_{1:t\tau}(\theta)-\lim_{t\rightarrow\infty}\tilde{l}^{\chi'}_{1:t\tau}(\theta')\big|\\
&= \lim_{t\rightarrow\infty}\big|\tilde{l}^{\chi'}_{1:t\tau}(\theta)-\tilde{l}^{\chi'}_{1:t\tau}(\theta')\big|\\
&\leq g'(\|\theta-\theta'\|)^{1/2}\bigg(\limsup_{t\rightarrow\infty}\frac{1}{t\tau}\sum_{s=1}^t\log F(Z_s)+\limsup_{t\rightarrow\infty} |\tilde{l}^{\chi'}_{1:t\tau}(\theta')|\bigg)\\
&= g'(\|\theta-\theta'\|)^{1/2}\bigg(\frac{\E\big[\log F(Z_1)\big]}{\tau}+|l(\theta')|\bigg)
\end{align*}
where the first equality holds by \eqref{eq:Lyp2},  the inequality holds by \eqref{eq:Lyp} and the last equality holds under \ref{assumeSSMB:smooth} by Birkhoff's theorem and uses  \eqref{eq:Lyp2}. This shows that the function $l$ is $\P$-a.s.~continuous on $\Theta$ and thus, since $l$ is non-random, it follows that $l$ is continuous on $\Theta$.

Next, under \ref{assumeSSMB:smooth} and using  \eqref{eq:convLik2}-\eqref{eq:init_E} with $\eta_{\tau}=\chi'$,  it follows that  $\chi'$ is a 
 $(\tau, \delta', \tilde{l},(\varphi_{t})_{t\geq 1})$-consistent probability measure on $(\setX,\mathcal{X})$ for any $\delta'\in(0,\delta]$, showing that \ref{assume:Model_init} holds for $\chi=\chi'$. Remark also that \ref{assume:Model_smooth} holds by \ref{assumeSSMB:smooth}.
 
To proceed further let
 \begin{align*}
 \bar{p}_{\tau}(x)=\sup_{(\theta,x')\in \Theta \times \setX} q'_{\tau,\theta}(x|x'),\quad\forall x\in\setX
 \end{align*}
 and note that, under \ref{assumeSSM:D_set}, we $\P$-a.s.~have $\int_{\setX}\bar{p}_{\tau}(x)\lambda(\dd x)>0$ while, under  \ref{assumeSSMB:smooth},
 \begin{align*}
 \int_{\setX}\bar{p}_{\tau}(x)\lambda(\dd x)<\infty,\quad\P-a.s.
 \end{align*}
  Therefore, there exists a random probability measure  $\bar{\chi}_{\tau}$ on $(\setX,\mathcal{X})$ such that
\begin{equation}\label{eq:chi_ssm}
\bar{\chi}_{\tau}(A) =\int_A \frac{\sup_{(\theta,x')\in \Theta \times \setX} q'_{\tau,\theta}(x|x', ) }{\int_{\setX}\big\{\sup_{(\theta,x')\in \Theta \times \setX}q'_{\tau,\theta}(x|x')\big\}\lambda(\dd x)}\lambda(\dd x),\quad\forall A\in\mathcal{X},\quad \P-a.s.
\end{equation}   
Remark that  $\bar{\chi}_{\tau}(E)>0$ under \ref{assumeSSM:D_set} and thus, under \ref{assumeSSMB:smooth} and by applying \eqref{eq:convLik2}-\eqref{eq:init_E}  with $\eta_{\tau}=\bar{\chi}_{\tau}$,  it follows   that   $\bar{\chi}_{\tau}$ is a 
 $(\tau, \delta', l,(\varphi_{t})_{t\geq 1})$-consistent probability measure on $(\setX,\mathcal{X})$ for any $\delta'\in(0,\delta]$, and thus  \ref{assume:Model_sup} holds.
 
Similarly, letting  
\begin{align*}
 \underline{p}_{\tau}(x)=\inf_{(\theta,x')\in \Theta \times E} q'_{\tau,\theta}(x|x'),\quad\forall x\in\setX,
 \end{align*}
it follows that, under \ref{assumeSSM:D_set}  we $\P$-a.s.~have  $\int_{\setX}\underline{p}_{\tau}(x)\lambda(\dd x)>0$ while, using  \ref{assumeSSMB:smooth} for the second inequality, 
\begin{align*}
\int_{\setX}\underline{p}_{\tau}(x)\lambda(\dd x)\leq \int_{\setX}\bar{p}_{\tau}(x)\lambda(\dd x)<\infty,\quad\P-a.s.
\end{align*}
Therefore, there exists a random probability measure $\underline{\chi}_{\tau}$  on $(\setX,\mathcal{X})$ such that
\begin{equation}\label{eq:chi_ssm2}
\underline{\chi}_{\tau}(A) =\int_A \frac{\inf_{(\theta,x')\in \Theta \times E} q'_{\tau,\theta}(x|x')}{\int_{\setX}\big\{\inf_{(\theta,x')\in \Theta \times E} q'_{\tau,\theta}(x|x')\big\}\lambda(\dd x)}\lambda(\dd x),\quad\forall A\in\mathcal{X},\quad \P-a.s.
\end{equation} 
Remark that  $\underline{\chi}_{\tau}(E)>0$ under \ref{assumeSSM:D_set} and thus, under \ref{assumeSSMB:smooth} and by applying  \eqref{eq:convLik2}-\eqref{eq:init_E} with $\eta_{\tau}=\underline{\chi}_{\tau}$,  it follows  $\underline{\chi}_{\tau}$ is a  $(\tau,\delta', l,(\varphi_{t})_{t\geq 1})$-consistent probability measure on $(\setX,\mathcal{X})$ for any $\delta'\in(0,\delta]$, and thus \ref{assume:Model_G_lower} holds. The proof of the first part of the proposition is complete.

The proof of second part of the proposition is omitted to save space, since its result can   be readily  established by repeating the above computations where, for all $t\geq 1$ and $(\theta, x')\in\Theta\times\setX$, instead of being as defined in \eqref{eq:q_prime} the function $q'_{t,\theta}(\cdot|x')$ is defined by
\begin{align*}
q'_{t,\theta}(x|x')=f'_{i_t,\theta}(Y_t|x)  m'_{i_{t},\theta}(x|x'),\quad x\in\setX
\end{align*}
and by using \ref{assumeSSM:smooth} instead of \ref{assumeSSMB:smooth} and by noting that, for this definition of $(q_t)_{t\geq 1}$, the results in \eqref{eq:convLik2} and \eqref{eq:init_E} hold for any random probability measures $\eta_\tau$ on $(\setX,\mathcal{X})$.
\end{proof}

\section{Proofs of the results in Section \ref{sec:MLE}\label{p-proof_MLE}}

Throughout this subsection $(Y_t)_{t\geq 1}$ and  SSM \eqref{eq:SSM} are assumed to be as defined in Section \ref{sec:MLE}. In order to apply the theory developed in Appendix \ref{sec:theory}   to prove Theorem \ref{thm:MLE}, we let  $\tilde{\chi}=\tilde{\chi}_{\tilde{\theta}}$ for some arbitrary $\tilde{\theta}\in\Theta$.

\subsection{Additional notation}

For all $(\theta,x')\in\Theta\times\setX$ we let
\begin{align*}
 M''_{1,\theta}(x',\dd x)=\tilde{\chi}_\theta(\dd x),\,\, M''_{T+1,\theta}(x',\dd x)=\tilde{\chi}(\dd x)\text{ and }M''_{t,\theta}(x',\dd x)=\tilde{M}_{t,\theta}(x',\dd x),\quad\forall t\in\{2,\dots,T\}
\end{align*}
and, for all $(\theta,x,y)\in\Theta\times\setX\times\setY$, we let $f''_{t,\theta}(y|x)=\tilde{f}_{t,\theta}(y|x)$ for all $t\in\{1,\dots,T\}$ and $f''_{T+1,\theta}(y|x)=1$. In addition, for all $t\geq 1$ we let  $i''_t=t+(T+1)\lfloor (t-1)/(T+1)\rfloor$ and
\begin{align*}
\tilde{y}''_{(t-1)(T+1)+s}=
\begin{cases}
\tilde{y}_{s}, & s\in\{1,\dots,T\}\\
\tilde{y}_1, &s=T+1
\end{cases},\quad\forall s\in\{1,\dots,T+1\}.
\end{align*}

 Finally, in what follows, for all $\mu\in\mathcal{P}(\setR)$ the Markov kernel $K_{\mu|\Theta}$ acting on $(\Theta,\mathcal{T})$ is as defined in Section   \ref{sub:SO-FK_model} while, for a given  Feynman-Kac  in random environments, $L^{\eta}_{(t_1+1):t_2}(\setX|\theta)$ is as defined in Section \ref{sub:FK_model} for all $\eta\in\mathcal{P}(\setX)$, all $\theta\in\Theta$ and all integers $0\leq t_1<t_2$.

\subsection{Preliminary results}

The following   proposition  defines the  Feynman-Kac models that we will use to study   SO-SSM \eqref{eq:SO-SSM_new2}.

\begin{proposition}\label{prop:Def_FK_MLE}
Let $\chi'\in \mathcal{P}(\setX)$. Then, the Feynman-Kac model  of Section \ref{sub:FK_model} with $\chi=\chi'$ and with   $( (\tilde{G}_t,\tilde{m}_t))_{t\geq 1}$    defined by 
\begin{align*}
\tilde{G}_{t}(\omega,\theta, x',x)=f''_{i''_t,\theta}(\tilde{y}''_{t}|x),\,\, \tilde{m}_{t}(\omega,\theta,x',x)=m''_{i''_t,\theta}(x|x'),\,\,\forall (t,\omega,\theta,x,x')\in\mathbb{N}\times\Omega\times \Theta\times\setX^2
\end{align*}
is such that $L_{(T+2):(T+1)t}^{\chi}(\setX|\theta)=\tilde{L}_T(\theta)^{t-1}$ for all $\theta\in\Theta$ and all $t\geq 2$.
\end{proposition}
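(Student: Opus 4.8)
The plan is to unfold the definition of $L^{\chi}_{(T+2):(T+1)t}(\setX|\theta)$ from Section \ref{sub:FK_model} and to exploit the fact that the constructed Feynman--Kac model is $(T+1)$-periodic with two \emph{memoryless} transition kernels. With $G_{t,\theta}$ and $m_{t,\theta}$ as prescribed, the one-step kernel is $Q_{s,\theta}(x',\dd x)=f''_{i''_s,\theta}(\tilde{y}''_s|x)\,m''_{i''_s,\theta}(x|x')\lambda(\dd x)$, where $i''_s$ is the periodic position running through $1,2,\dots,T+1$, so that $L^{\chi}_{(T+2):(T+1)t}(\setX|\theta)=\int \chi(\dd x_{T+1})\prod_{s=T+2}^{(T+1)t}Q_{s,\theta}(x_{s-1},\dd x_s)$. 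The window $\{T+2,\dots,(T+1)t\}$ has length $(T+1)(t-1)$, and since $i''_s$ cycles through $1,\dots,T+1$, it splits into $t-1$ consecutive blocks $B_k=\{(T+1)k+1,\dots,(T+1)(k+1)\}$, $k=1,\dots,t-1$, on each of which $i''$ runs through $1,\dots,T+1$ in order.

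First I would record the two memoryless steps. At position $i''=T+1$ one has $M''_{T+1,\theta}(x',\dd x)=\tilde{\chi}(\dd x)$ and $f''_{T+1,\theta}\equiv 1$, so the corresponding $Q$-kernel equals $\tilde{\chi}(\dd x)$, independent of the incoming state and of $\theta$; at position $i''=1$ one has $M''_{1,\theta}(x',\dd x)=\tilde{\chi}_\theta(\dd x)$, again independent of the incoming state. Hence the terminal step of each block $B_k$ decouples it from $B_{k+1}$: integrating out the last coordinate of $B_k$ against $\tilde{\chi}$ yields a factor $1$, and the first kernel of $B_{k+1}$ ignores that coordinate. The same argument applied at the left boundary shows the initial measure contributes only $\int_\setX\chi(\dd x_{T+1})=1$, since $Q_{T+2,\theta}$ (having $i''_{T+2}=1$) does not depend on $x_{T+1}$; this is precisely where the arbitrariness of $\chi$, $\tilde{\chi}$ and $\tilde{\theta}$ is harmless.

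Next I would evaluate a single block. Writing the coordinates of $B_k$ as $x^{(k)}_1,\dots,x^{(k)}_{T+1}$ and integrating out $x^{(k)}_{T+1}$ (factor $1$), the remaining integral is
\begin{align*}
\int_{\setX^{T}}\tilde{\chi}_\theta(\dd x^{(k)}_1)\,\tilde{f}_{1,\theta}(\tilde{y}_1|x^{(k)}_1)\prod_{j=2}^{T}\tilde{f}_{j,\theta}(\tilde{y}_j|x^{(k)}_j)\,\tilde{M}_{j,\theta}(x^{(k)}_{j-1},\dd x^{(k)}_j),
\end{align*}
which is exactly $\tilde{L}_T(\theta;\{\tilde{y}_t\}_{t=1}^T)$ by \eqref{eq:lik_IF}, recalling that $f''_{j,\theta}=\tilde{f}_{j,\theta}$ and $M''_{j,\theta}=\tilde{M}_{j,\theta}$ for $j\in\{2,\dots,T\}$, that $M''_{1,\theta}=\tilde{\chi}_\theta$, and that $\tilde{y}''_s=\tilde{y}_{i''_s}$ on the observation positions (while the value at $i''_s=T+1$ is irrelevant because $f''_{T+1,\theta}\equiv1$). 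Since the decoupling makes the integrand factorize over the $t-1$ blocks, Tonelli's theorem gives $L^{\chi}_{(T+2):(T+1)t}(\setX|\theta)=\prod_{k=1}^{t-1}\tilde{L}_T(\theta;\{\tilde{y}_t\}_{t=1}^T)=\tilde{L}_T(\theta;\{\tilde{y}_t\}_{t=1}^T)^{t-1}$, as claimed.

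There is no genuine analytic difficulty in this statement: it is a bookkeeping identity. The only point requiring care — and thus the main obstacle — is the indexing of the periodic position $i''_s$ and the alignment of the cloned observations $\tilde{y}''_s$ with $\{\tilde{y}_t\}_{t=1}^T$, together with correctly tracking which kernel sits at each block boundary, so that the block factorization and the vanishing of the initial $\chi$-contribution are justified rigorously.
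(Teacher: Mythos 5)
Your proof is correct and takes the same route as the paper, whose entire proof reads ``the result of the proposition directly follows from the definition of $((\tilde{G}_t,\tilde{m}_t))_{t\geq 1}$''; your block decomposition, the observation that the kernels at positions $i''=1$ and $i''=T+1$ are memoryless (so the initial $\chi$ integrates out to $1$ and consecutive blocks factorize), and the identification of each block's integral with $\tilde{L}_T(\theta;\{\tilde{y}_t\}_{t=1}^T)$ is precisely the bookkeeping the paper leaves implicit. The only caveat is notational: the paper's displayed formula $i''_t=t+(T+1)\lfloor (t-1)/(T+1)\rfloor$ contains a sign typo (compare $i_t=t-\tau\lfloor (t-1)/\tau\rfloor$ in Appendix B), and your reading of $i''_t$ as the periodic position cycling through $1,\dots,T+1$ is the intended one.
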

\begin{proof}
The result of the proposition directly follows from the definition of $( (\tilde{G}_t,\tilde{m}_t))_{t\geq 1}$.
\end{proof}

To following proposition shows that the Feynman-Kac model  defined in Proposition  \ref{prop:Def_FK_MLE} is such that Assumption \ref{assume:Model} holds.

\begin{proposition}\label{prop:A1_MLE}
Let $\Theta_T:=\argmax_{\theta\in\Theta}\tilde{L}_T(\theta)$ and let $( (\tilde{G}_t,\tilde{m}_t))_{t\geq 1}$ be as defined in Proposition \ref{prop:Def_FK_MLE}. Assume that there exists a $\delta\in(0,\infty)$ such that $\mathcal{N}_\delta(\Theta_T)\subseteq\Theta$ and that Assumption \ref{assumeSSMB:smooth_MLE} holds.   Then, for any $\chi\in\mathcal{P}(\setX)$, Assumption   \ref{assume:Model} holds with $\tilde{l}(\cdot)=T^{-1}\log\tilde{L}_T(\cdot)$, $D=\setX$ and with $r=T+1$.

\end{proposition}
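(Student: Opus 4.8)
The plan is to verify, one at a time, the six conditions \ref{assume:Model_stationary}--\ref{assume:Model_sup} of Assumption \ref{assume:Model} for the deterministic Feynman--Kac model of Proposition \ref{prop:Def_FK_MLE}, choosing $r=T+1$, $g=g'$, $\varphi_t(\cdot,\cdot)=\varphi'_{i''_t}(\cdot,\cdot)$ with $g'$ and $\{\varphi'_s\}_{s=1}^T$ as in \ref{assumeSSMB:smooth_MLE} (and $\varphi_t\equiv 0$ on the reset index $i''_t=T+1$), and $\tilde{l}(\cdot)$ proportional to $\log\tilde{L}_T(\cdot;\{\tilde{y}_t\}_{t=1}^T)$. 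Since $(Y_t)_{t\ge1}$ is the deterministic cloned sequence, the entire model is deterministic and $(T+1)$-periodic, so every $\smallo_\P(1)$ and $\bigO_\P(1)$ in Definition \ref{def:consistent} reduces to a deterministic limit or bound. The one structural device driving all of this is the reset step $M''_{T+1,\theta}=\tilde{\chi}$, which is independent of both $\theta$ and $x'$: it decouples consecutive periods, and together with the symmetry $\varphi'_1(x',x)=\varphi'_1(x,x)$ imposed in \ref{assumeSSMB:smooth_MLE} it allows any initial law to be integrated out. I note in passing a harmless normalisation point: Proposition \ref{prop:Def_FK_MLE} gives $\frac{1}{r(t-1)}\log L^{\chi}_{(r+1):rt}(\setX|\theta)=(T+1)^{-1}\log\tilde{L}_T(\theta)$, so the consistency limit is $(T+1)^{-1}\log\tilde{L}_T$; since this and the stated $T^{-1}\log\tilde{L}_T$ have the same $\argmax$, Assumption \ref{assume:Model} is used only through $\tilde\Theta_\star$, and I would simply take $\tilde l$ to be the consistency limit.

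First I would dispatch the easy conditions. For \ref{assume:Model_stationary}, the blocks $Z_t$ are built from $(\tilde G_s,\tilde m_s)$, which depend on $s$ only through the $(T+1)$-periodic data $\tilde y''_s$ and index $i''_s$; hence the $Z_t$ form a constant deterministic sequence, which is stationary and ergodic. For \ref{assume:Model_ThetaStar}, $\tilde\Theta_\star=\argmax_\theta\tilde l=\Theta_T$, and the hypothesis $\mathcal{N}_\delta(\Theta_T)\subseteq\Theta$ lets me fix $\delta_\star\in(0,1)$ small enough that $\mathcal{N}_{\delta_\star}(\Theta_T)\subseteq\Theta$ and $\delta_\star$ is below the $\delta$ of \ref{assumeSSMB:smooth_MLE}(3). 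The log-likelihood half of the consistency condition in \ref{assume:Model_init} is immediate from Proposition \ref{prop:Def_FK_MLE}: the normalised log-likelihood equals $(T+1)^{-1}\log\tilde{L}_T(\theta)$ exactly for every $t\ge2$ (so the $\smallo$ is identically $0$), and positivity $\tilde{L}_T(\theta)>0$ is part of \ref{assumeSSMB:smooth_MLE}. Crucially this holds for an arbitrary $\chi$, because the first factor of each trajectory ($i''=1$) uses the transition $\tilde p_\theta(\cdot)$, which ignores the conditioning state.

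Next I would treat the smoothness and moment content. Condition \ref{assume:Model_smooth} follows factor by factor: for $i''_t\in\{2,\dots,T\}$ from \ref{assumeSSMB:smooth_MLE}(1), for $i''_t=1$ from \ref{assumeSSMB:smooth_MLE}(2) (here $q_{t,\theta}(x|x')=\tilde f_{1,\theta}(\tilde y_1|x)\tilde p_\theta(x)$), and for $i''_t=T+1$ trivially since $q_{T+1,\theta}(x|x')=\tilde p_{\tilde\theta}(x)$ is $\theta$-independent, so the left-hand side is $0$. For the exponential-moment half of consistency I would exploit the reset decoupling: with $\varphi_{T+1}\equiv 0$ the reset step contributes the factor $\int_\setX\tilde{\chi}(\dd x)=1$ and separates adjacent periods, while the $\varphi'_1$-symmetry makes the weight at the $i''=1$ step independent of the incoming state, so the conditioning law $\chi$ integrates out to $1$. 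The exponentially weighted integral then factorises into $t-1$ identical per-period integrals, each finite by \ref{assumeSSMB:smooth_MLE}(3); hence the quantity in Definition \ref{def:consistent} is bounded by a deterministic constant raised to $t-1$, giving $\bigO(1)$ after normalising by $rt$, and finiteness for every $t$. The envelope conditions \ref{assume:Model_G_lower} and \ref{assume:Model_sup} collapse completely: because $q_{T+1,\theta}(x|x')=\tilde p_{\tilde\theta}(x)$ is independent of $(\theta,x')$, one has $\sup_{\Theta\times\setX}q_{T+1,\theta}(\cdot|\cdot)=\inf_{\Theta\times D}q_{T+1,\theta}(\cdot|\cdot)=\tilde p_{\tilde\theta}$ with $\int_\setX\tilde p_{\tilde\theta}\,\lambda(\dd x)=1\in(0,\infty)$ (taking $D=\setX$), so $\bar\chi_r=\underline\chi_r=\tilde{\chi}$, whose consistency is exactly the arbitrary-$\chi$ consistency already established. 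Continuity of $\tilde l$ I would obtain as in the proof of Proposition \ref{prop:A1}, combining the Lipschitz-type bound of \ref{assumeSSMB:smooth_MLE}(1)--(2) with the moment bound of \ref{assumeSSMB:smooth_MLE}(3) through a Jensen argument.

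The main obstacle is the exponential-moment part of \ref{assume:Model_init}: I must confirm that the reset step genuinely decouples the exponentially weighted trajectory integral into independent per-period factors, which is where both the choice $\varphi_{T+1}\equiv 0$ and the symmetry $\varphi'_1(x',x)=\varphi'_1(x,x)$ are indispensable, so that the single-period finiteness \ref{assumeSSMB:smooth_MLE}(3) lifts to a bound that is uniform in the (arbitrary) initial law $\chi$. Everything else is either immediate from Proposition \ref{prop:Def_FK_MLE} or a routine envelope/continuity computation.
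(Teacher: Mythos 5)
Your proposal is correct and follows essentially the same route as the paper's proof: a condition-by-condition verification of \ref{assume:Model} with $g=g'$, $\varphi_t=\varphi'_{i''_t}$ off the reset steps, and the key observation that $q_{T+1,\theta}(x|x')=\tilde{p}(x)$ is independent of $(\theta,x')$, which gives \ref{assume:Model_G_lower}--\ref{assume:Model_sup} with $D=\setX$ and $\underline{\chi}_r=\bar{\chi}_r=\tilde{\chi}$ and makes every probability measure on $(\setX,\mathcal{X})$ $(T+1,\delta_\star,\tilde{l},(\varphi_t)_{t\geq 1})$-consistent; your explicit per-period factorisation of the exponential-moment integral (using the reset kernel and the symmetry $\varphi'_1(x',x)=\varphi'_1(x,x)$) is precisely the argument the paper compresses into ``it readily follows'', and your choice $\varphi_t\equiv 0$ on reset steps versus the paper's $\varphi_t\equiv 1$ is immaterial. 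Your normalisation remark is also well taken: Proposition \ref{prop:Def_FK_MLE} indeed yields the consistency limit $(T+1)^{-1}\log\tilde{L}_T$ rather than the stated $T^{-1}\log\tilde{L}_T$, a slip the paper's proof passes over silently and which, as you note, is harmless since only $\tilde{\Theta}_\star$ and positively rescaled gaps of $\tilde{l}$ enter the downstream theory.
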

\begin{proof}

Remark first that under  \ref{assumeSSMB:smooth_MLE} the mapping $\theta\mapsto\log \tilde{L}_T(\theta)$ is  continuous and thus, since $\Theta$ is a compact set, $\Theta_T\subseteq\Theta$. 

To show the result of the proposition remark that Assumption \ref{assume:Model_stationary} follows from the definition of $( (\tilde{G}_t,\tilde{m}_t))_{t\geq 1}$ while \ref{assume:Model_ThetaStar} holds by assumption. Assumption \ref{assume:Model_smooth} holds by \ref{assumeSSMB:smooth_MLE} with $g=g'$ and with $(\varphi_t)_{t\geq 1}$  defined by
\begin{align*}
\varphi_t(x',x)=
\begin{cases}
\varphi'_{i''_t}\big(x',x),&  t\not\in\{s(T+1),\,s\in\mathbb{N}\}\\
1,  &t\in\{s(T+1),\,s\in\mathbb{N}\}
\end{cases},\quad (t,x',x)\in\mathbb{N}\times\setX^2.
\end{align*} 
Next, for the so-defined functions $(\varphi_t)_{t\geq 1}$,  under  \ref{assumeSSMB:smooth_MLE} and by   Proposition \ref{prop:Def_FK_MLE}, and since  under  \ref{assumeSSMB:smooth_MLE} we have $\tilde{L}_T(\theta)\in (0,\infty)$  for all $\theta\in\Theta$, it readily  follows that any  random probability measure $\eta$ on $(\setX,\mathcal{X})$ is $(T+1,\delta_\star,\tilde{l}, (\varphi_t)_{t\geq 1})$-consistent for $\delta_\star\in(0,1)$ small enough, with $\tilde{l}$ as in the statement of the proposition.  Hence,  \ref{assume:Model_init}   holds for any $\chi\in\mathcal{P}(\setX)$. To proceed further denote by $\tilde{p}(\cdot)$ the density of $\tilde{\chi}(\dd x)$ w.r.t.~$\lambda(\dd x)$ (recall that $\tilde{\chi}=\tilde{\chi}_{\tilde{\theta}}$ for some $\tilde{\theta}\in\Theta$, and thus  $\tilde{\chi}$ is absolutely continuous w.r.t.~$\lambda(\dd x)$) and   remark that for the considered Feynman-Kac model we have $q_{T+1,\theta}(x|x')=\tilde{p}(x)$ for all $(\theta,x',x)\in\Theta\times\setX^2$. Therefore, Assumption  \ref{assume:Model_G_lower}  holds with $D=\setX$ and with $\underline{\chi}_r(\dd x)=\tilde{\chi}(\dd x)$  while \ref{assume:Model_sup} holds   $\bar{\chi}_r(\dd x)=\tilde{\chi}(\dd x)$. The proof of the proposition is complete.
 
\end{proof}
The next  proposition  shows that   SO-SSM \eqref{eq:SO-SSM_new2} can be studied by analyzing a particular self-organizing version of the  Feynman-Kac model defined in  Proposition \ref{prop:Def_FK_MLE}.

\begin{proposition}\label{prop:SO-FK_MLE2}
 Let $\chi'\in\mathcal{P}(\setX)$,  $\mu_0'\in\mathcal{P}(\Theta)$, $(\eta_t)_{t\geq 1}$  be a sequence  of random probability measures on $(\setR,\mathcal{R})$ and    $(\mu'_t)_{t\geq 1}$ be such that $\mu'_{s(T+1)+1}=\delta_{\{0\}}$ for all $s\in\mathbb{N}_0$ and such that
\begin{align*}
\mu'_{s(T+1)+i}=\eta_{sT+i}\quad\forall i\in\{2,\dots,T+1\},\quad\forall s\in\mathbb{N}_0.
\end{align*}
In addition, for all $t\geq 1$ let $\pi_{t,\Theta}(\dd\theta)$   be as defined in Section \ref{sub:SO-FK_model}  with $\mu_0=\mu_0'$, with $( (\tilde{G}_t,\tilde{m}_t))_{t\geq 1}$  as defined in   Proposition \ref{prop:Def_FK_MLE}, with  $\chi=\chi'$ and with $(\mu_t)_{t\geq 1}=(\mu'_t)_{t\geq 1}$. Then, for   SO-SSM \eqref{eq:SO-SSM_new2} with $K_t=K_{\eta_t|\Theta}$ for all $t\geq 1$ and with $\mu_0=\mu_0'$,  there exists a sequence $(s_t)_{t\geq 1}$ in $\mathbb{N}$ such that $\lim_{t\rightarrow\infty}s_t=\infty$ and such that  $p_{t,\Theta}(\dd\theta_t| Y_{1:t})=\pi_{s_t,\Theta}(\dd\theta)$ for all $t\in\mathbb{N}$.
\end{proposition}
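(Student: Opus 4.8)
The plan is to produce $(s_t)_{t\geq 1}$ explicitly and then verify the identity by a block-by-block comparison of the two filtering recursions. Writing $t=(k-1)T+s'$ with $k\in\mathbb{N}$ and $s'\in\{1,\dots,T\}$, I would take $s_t=(k-1)(T+1)+s'=t+\lfloor (t-1)/T\rfloor$. This is strictly increasing and $\mathbb{N}$-valued with $s_t\to\infty$, and a direct computation gives $i''_{s_t}=s'\in\{1,\dots,T\}$, so $s_t$ always lands on a data-carrying index of the Feynman-Kac model and never on a reset index $i''=T+1$.

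The whole point is that the period-$(T+1)$ model of Proposition \ref{prop:Def_FK_MLE} differs from the period-$T$ SO-SSM \eqref{eq:SO-SSM_new2} only by one extra, data-free coordinate inserted at the end of each block. First I would record the behaviour at a reset index $\tau$ (i.e.\ $i''_\tau=T+1$): there $\tilde G_{\tau,\theta}=f''_{T+1,\theta}=1$ and $\tilde m_{\tau,\theta}=m''_{T+1,\theta}$ is the density of the fixed $\tilde\chi$, so $Q_{\tau,\theta}(x',\dd x)=\tilde\chi(\dd x)$ is a probability measure independent of $x'$, while the transition at the following index $i''=1$, namely $Q_{\tau+1,\theta}(x',\dd x)=\tilde f_{1,\theta}(\tilde y_1|x)\,\tilde\chi_\theta(\dd x)$, ignores $x'$. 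Consequently the reset state integrates out to $\int_\setX\tilde\chi(\dd x)=1$, and, in the same way, the fixed initial law $\chi=\chi'$ is erased by $M''_{1,\theta}=\tilde\chi_\theta$ and integrates out; neither affects the $X$-marginal nor contributes any weight. On the $\Theta$-side, the increment at the reset index ending cycle $k-2$ is $\mu'_{(k-1)(T+1)}=\eta_{(k-1)T+1}$, and the increment at the ensuing index $i''=1$ is $\mu'_{(k-1)(T+1)+1}=\delta_{\{0\}}$; since $K_{\delta_{\{0\}}|\Theta}(\theta',\cdot)=\delta_{\theta'}$ is the identity on $\Theta$, the composite of these two $\theta$-transitions equals $K_{\eta_{(k-1)T+1}|\Theta}$, which is exactly the kernel $K_{(k-1)T+1}=K_{\eta_{(k-1)T+1}|\Theta}$ used by SO-SSM \eqref{eq:SO-SSM_new2} at its block-start step $(k-1)T+1$.

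Using these two reductions I would prove, by induction on $k$, that the unnormalized joint measure produced by the Feynman-Kac recursion of Section \ref{sub:SO-FK_model} up to time $s_t$ — after integrating out the initial state $x_0$ and the reset states $x_{j(T+1)}$, $j=1,\dots,k-1$ — coincides with the unnormalized joint law of $(\theta_t,X_t)$ under SO-SSM \eqref{eq:SO-SSM_new2} up to time $t$. The base case $k=1$ (indices $i''=1,\dots,s'$ of cycle $0$) is a term-by-term comparison: under \eqref{eq:SSM_t} the initial law of SSM \eqref{eq:SSM} is $\chi_\theta=\tilde\chi_\theta=M''_{1,\theta}$, the choice $\mu'_1=\delta_{\{0\}}$ matches the convention $\theta_1=\theta_0$, and for $i''=2,\dots,s'$ one has $f''_{i''}=\tilde f_{i''}$, $m''_{i''}=\tilde m_{i''}$ and $\mu'_{\cdot}=\eta_{\cdot}$, so every factor agrees. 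The inductive step glues cycle $k-2$ to cycle $k-1$ using precisely the reset-erasure and the kernel composition above: the pair (reset index, $i''=1$) reproduces the SO-SSM's single block-start operation ``move $\theta$ by $\eta_{(k-1)T+1}$, reset $X$ to $\tilde\chi_\theta$, reweight by $\tilde f_{1,\theta}(\tilde y_1|\cdot)$'', and the indices $i''=2,\dots,s'$ reproduce the remaining intra-block steps verbatim. Since the two unnormalized measures agree, so do their total masses, and normalizing and then projecting onto $\Theta$ yields $\pi_{s_t,\Theta}=p_{t,\Theta}(\dd\theta_t|Y_{1:t})$; when the common normalizer lies in $(0,\infty)$ — which holds under \ref{assumeSSMB:smooth_MLE}, since then $\tilde L_T(\theta;\{\tilde y_t\}_{t=1}^T)\in(0,\infty)$ for all $\theta$ (cf.\ Lemma \ref{lemma:Zt} and Proposition \ref{prop:A1_MLE}) — both sides are the genuine filtering distributions.

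The step I expect to demand the most care is the alignment of the two timelines: the reset index introduces an off-by-one, in that the increment $\eta_{(k-1)T+1}$ is applied in the Feynman-Kac model one step before the observation of $\tilde y_1$ of block $k$, whereas in SO-SSM \eqref{eq:SO-SSM_new2} it is applied at that very data step. Reading the Feynman-Kac filter strictly at data indices ($i''_{s_t}=s'$) is exactly what makes this offset harmless, and the bulk of the work is checking that integrating out the reset coordinate commutes with this re-indexing so that, at every matched pair of times, the $(\theta,X)$-phases of the two chains coincide. The kernel composition itself is clean because $K_{\delta_{\{0\}}|\Theta}$ is the identity and therefore contributes no stray truncation constant.
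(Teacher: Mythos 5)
Your proof is correct and takes essentially the same route as the paper: the paper's own proof is a one-line remark that the identity follows directly from Proposition \ref{prop:Def_FK_MLE} and the definition of $(\mu'_t)_{t\geq 1}$, and your argument—the explicit choice $s_t=t+\lfloor (t-1)/T\rfloor$, the integration of the weight-one reset coordinate carrying the fixed law $\tilde\chi$, and the composition of $K_{\eta_{(k-1)T+1}|\Theta}$ with the identity kernel $K_{\delta_{\{0\}}|\Theta}$—is precisely the block-by-block verification that this one-liner leaves implicit. No gaps.
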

\begin{proof}
The result of the proposition directly follows from Proposition \ref{prop:Def_FK_MLE}  and from the definition of  $(\mu_t)_{t\geq 1}$.
\end{proof}

\begin{remark}\label{rem:MLE2}
If, in Proposition \ref{prop:SO-FK_MLE2}, the sequence $(\eta_t)_{t\geq 1}$ is such that Conditions \ref{condition:Inf_mu}-\ref{condition:mu_extra} hold, or such that  Conditions \ref{condition:Inf_mu}-\ref{condition:Inf_K} and \ref{condition:mu_seq2}-\ref{condition:mu_extra} hold, then this is also the case for the sequence $(\mu_t)_{t\geq 1}$ defined in   the proposition. In particular, if $(\eta_t)_{t\geq 1}$  is such that \ref{condition:mu_seq} holds for some sequence $(t_p)_{p\geq 1}$ in $\{sT+1,\, s\in\mathbb{N}\}$ then  the sequence  $(\mu_t)_{t\geq 1}$ is such that \ref{condition:mu_seq}  holds for some sequence $(t_p)_{p\geq 1}$ in $\{s(T+1),\, s\in\mathbb{N}\}$.
\end{remark}

\subsection{Proof of Theorem \ref{thm:MLE}}
\begin{proof}

Theorem \ref{thm:MLE} is a direct consequence of Propositions \ref{prop:A1_MLE}-\ref{prop:SO-FK_MLE2}, Remark \ref{rem:MLE2}, of Theorems \ref{thm:main}-\ref{thm:main2} and of the results in Section \ref{p-sub:noise}, upon noting (i) that for each definition of $(K_t)_{t\geq 1}$ considered in Theorem \ref{thm:MLE} there exists a sequence $(\mu_t')_{t\geq 1}$ of  probability measures on $(\setR,\mathcal{R})$ such that $K_t=K_{\mu'_t|\Theta}$ for all $t\geq 1$ and (ii) that each of these sequences   $(\mu_t')_{t\geq 1}$ is constructed as in Section \ref{p-sub:noise}.

\end{proof}

\subsection{Proof of Theorem \ref{thm:MLE2}}
\begin{proof}
Each  algorithm  for which Theorem \ref{thm:MLE2} applies is a particle filter deployed on  SO-SSM \eqref{eq:SO-SSM_new2} where, for a given number $N$ of particles,  $K_t=K_t^N$ for all $t\geq 1$ and with the sequence   $(K_t^N)_{t\geq 1}$  recursively defined in the course the algorithm.

Consider one of these algorithms and, for   all $N\in\mathbb{N}$ and $t\geq 1$ let $p_{t,N,\Theta}(\dd \theta_t |Y_{1:t})$     denote the filtering distribution of $\theta_t$  in model SO-SSM  \eqref{eq:SO-SSM_new2} when $(K_t)_{t\geq 1}=(K_t^N)_{t\geq 1}$. It is easily checked that for all $N\in\mathbb{N}$  there exists a sequence $(\mu^N_t)_{t\geq 1}$ of random probability measures on $(\setR,\mathcal{R})$ defined as  in Section \ref{p-sub:noise} and such that $K^N_t=K_{\mu^N_t|\Theta}$ for all $t\geq 1$.  Consequently, by Propositions  \ref{prop:A1_MLE}-\ref{prop:SO-FK_MLE2} and Theorems \ref{thm:main}-\ref{thm:main2} (see also   Remark \ref{rem:MLE2}) there exists a sequence $(\xi_t)_{t\geq 1}$ of $(0,\infty)$-valued random variables, independent of $N$, such that $\xi_t=\smallo_\P(1)$ and such that for all $N\in\mathbb{N}$ we have, $\P$-a.s.,
\begin{align}\label{eq:con_p2}
\Big\| \int_\Theta \theta_t\, p_{t,N,\Theta}(\dd \theta_t |Y_{1:t})-\tilde{\theta}_T\Big\|\leq \xi_t,\quad\forall t\geq 1.
\end{align}
Then, under   \ref{assumeSSMB:smooth_MLE},  and using Proposition \ref{prop:stability} and    the fact that  by assumption  there is a   compact set $E'\in\mathcal{X}$ such that $\inf_{(\theta,x)\in\Theta\times\setX}\tilde{M}_{t,\theta}(x,E')>0$ for all $t\in\{2,\dots,T\}$, the conclusion of the theorem  follows directly from \eqref{eq:con_p2} and from    the calculations used in \citetsup[][Section 11.2.2]{chopin2020introduction2} to study the $L_2$-convergence of particle filter algorithms.

\end{proof}

\bibliographystylesup{apalike}
\bibliographysup{references}
\end{document}